\numberwithin{equation}{section}
\theoremstyle{plain}
\newtheorem{theo}{Theorem}[section]
\newtheorem{prop}[theo]{Proposition}
\newtheorem{coro}[theo]{Corollary} 
\newtheorem{lemm}[theo]{Lemma}
\theoremstyle{definition}
\newtheorem{defi}[theo]{Definition}
\newtheorem{rema}[theo]{Remark}
\newtheorem{theo-defi}[theo]{Theorem-Definition}
\newtheorem{prop-defi}[theo]{Proposition-Definition}
\newtheorem{rema-defi}[theo]{Remark-Definition}
\def \al{\alpha}
\def \bet{\beta}
\def \bul{\bullet}
\def \col{\colon}
\def \Del{\Delta}
\def \del{\delta}
\def \eps{\epsilon}
\def \Gam{\Gamma}
\def \gam{\gamma}
\def \kap{\kappa}
\def \Lam{\Lambda}
\def \lam{\lambda}
\def \Lo{\Longrightarrow}
\def \lo{\longrightarrow}
\def \lom{\longmapsto}
\def \mab{\mathbb}
\def \Om{\Omega}
\def \om{\omega}
\def \ol{\overline}
\def \os{\overset}
\def \parno{\par\noindent}
\def \part{\partial}
\def \sus{\subset}
\def \ul{\underline}
\def \us{\underset}
\def \vp{\varpi}
\def \vil{\varinjlim}
\def \vpl{\varprojlim}
\def \we{\wedge}
\def \wh{\widehat}
\def \wt{\widetilde}
\newcommand{\getsfrom}
{\ensuremath{\longleftarrow\kern-.
52em\lower-.1ex\hbox%
{$\shortmid\,$}}}
\begin{document}
\title{Theory of weights for log convergent cohomologies I: 
the case of a proper smooth scheme with an SNCD in characteristic $p>0$}
\author{Yukiyoshi Nakkajima and Atsushi Shiho
\date{}
\thanks{2020 Mathematics subject 
classification number: 14F30, 14F40, 14F08.  
The authors were supported by 
JSPS Core-to-Core Program 18005 whose representative 
is Makoto Matsumoto. 
The first-named author was and is supported from JSPS
Grant-in-Aid for Scientific Research (C)'s
(Grant No.~20540025, 18K03224). 
The second-named author was supported from JSPS
Grant-in-Aid for Young Scientists (B)
(Grant No.~21740003). 
\endgraf}}
\maketitle

\begin{flushright}
\end{flushright}

$${\rm ABSTRACT}$$
\bigskip 
\parno
Using log convergent topoi,  
we define two fundamental filtered complexes 
$(E_{\rm conv},P)$ and $(C_{\rm conv},P)$ 
for the log scheme obtained by a smooth scheme 
with a relative simple normal crossing divisor 
over a scheme of characteristic $p>0$. 
Using $(C_{\rm conv},P)$, we prove the 
$p$-adic purity. As a corollary of this purity, 
we prove that $(E_{\rm conv},P)$ and $(C_{\rm conv},P)$ 
are canonically isomorphic.  
These filtered complexes produce 
the weight spectral sequence of 
the log convergent cohomology sheaf 
of the log scheme. 
We also give the comparison theorem 
between the projections of $(E_{\rm conv},P)$ and 
$(C_{\rm conv},P)$   
to the derived category of 
bounded below filtered complexes of sheaves of modules in 
the Zariski topos of the log scheme and 
the weight-filtered isozariskian filtered complex 
$(E_{\rm zar},P)\otimes^L_{\mab Z}{\mab Q}$ 
of the log scheme defined in \cite{nh2}. 
\bigskip

$${\bf Contents}$$
\bigskip 
\parno
\S\ref{sec:intro}. Introduction 
\medskip 
\parno 
\S\ref{sec:logcd}. 
Log convergent topoi and the systems of 
log universal enlargements 
\medskip 
\parno
\S\ref{sec:lll}. Log convergent linearization functors.~I 
\medskip 
\parno
\S\ref{sec:llf}. Log convergent linearization functors.~II 
\medskip 
\parno
\S\ref{sec:vflvc}. Vanishing cycle sheaves in log convergent topoi 
\medskip 
\parno
\S\ref{sec:lcs}. 
Log convergent linearization functors 
of smooth schemes with relative SNCD's 
\medskip 
\parno
\S\ref{sec:rlct}. 
(Bi)simplicial log convergent topoi 
\medskip 
\parno
\S\ref{sec:wfcipp}. Weight-filtered convergent complexes  
and $p$-adic purity 
\medskip 
\parno
\S\ref{sec:fpw}.
The functoriality of weight-filtered convergent 
and isozariskian complexes 
\medskip 
\parno
\S\ref{sec:bd}. Boundary morphisms 
\medskip 
\parno
\S\ref{sec:ct}. 
Comparison theorem 
\medskip 
\parno
\S\ref{sec:cptsc}.  
Log convergent cohomologies  
with compact supports 
\medskip
\parno
References

\section{Introduction}\label{sec:intro} 
This paper is a continuation of \cite{nh2}, a companion of 
\cite{s3} and \cite{nh3}. 
\par 
First we recall results in \cite{nh2}. 
\par 
Let ${\cal V}$ be a complete discrete valuation ring 
of mixed characteristics $(0,p)$ 
whose residue field $\kap$ is perfect. 
Let $\pi$ be a nonzero element of the maximal ideal of 
${\cal V}$. 
Let $S$ be a $p$-adic formal ${\cal V}$-scheme in the sense of \cite{of}, 
that is, a noetherian formal scheme over ${\rm Spf}({\cal V})$ 
with the $p$-adic topology which is topologically of finite type 
over ${\rm Spf}({\cal V})$. 
Set $S_n:=\ul{\rm Spec}_S({\cal O}_S/\pi^n{\cal O}_S)$ 
$(n\in {\mab Z}_{\geq 1})$. 
Let $X$ be a smooth scheme over $S_1$ and 
let $D$ be a relative SNCD(=simple normal crossing divisor) 
on $X/S_1$. 
Stimulated by \cite{klog1} and \cite{fao}, 
we have constructed an 
fs(=fine and saturated) log structure 
$M(D)$ on $X_{\rm zar}$ 
with morphism $M(D) \lo ({\cal O}_X,*)$ in \cite{nh2}. 
If there exists the following cartesian diagram 
\begin{equation*}  
\begin{CD} 
D\cap V @>{\subset}>> V \\ 
@VVV @VVV \\ 
\ul{\rm Spec}_{S_1}
({\cal O}_{S_1}[x_1,\ldots,x_d]/(x_1\cdots x_a))
@>{\subset}>> 
\ul{\rm Spec}_{S_1}({\cal O}_{S_1}[x_1,\ldots,x_d]) 
\end{CD}
\end{equation*}
($a,d\in {\mab Z}_{\geq 1}$, $a\leq d$)
for an open subscheme $V$ of $X$, 
where the right vertical morphism 
in the diagram above is \'{e}tale, 
then 
$$(M(D)\vert_V \lo ({\cal O}_V,*))
=(({\cal O}_V^*x_1^{\mab N}\cdots x_a^{\mab N},*) 
\os{\sus}{\lo} ({\cal O}_V,*)).$$ 
By abuse of notation, we denote 
the log scheme $(X,M(D))$ by $(X,D)$.  
\par 
Let $f\col X \lo S_1 \os{\sus}{\lo} S$ be 
the structural morphism. 
In this Introduction, we assume 
that the ideal sheaf $\pi{\cal O}_S$ of 
${\cal O}_S$ has a PD-structure $\gam$ 
when we consider (an object of) 
a (restricted) (log) crystalline site or 
(that of) a (restricted) (log) crystalline topos. 
Let ${\rm Crys}(X/S)$ and $(X/S)_{\rm crys}$ 
be the crystalline site of $X/(S,\pi {\cal O}_S,\gam)$ 
and the crystalline topos of $X/(S,\pi {\cal O}_S,\gam)$ defined in 
\cite{bb} and \cite{bob}, respectively. 
Let ${\rm Rcrys}(X/S)$ be the restricted crystalline site of 
$X/(S,\pi {\cal O}_S,\gam)$: 
${\rm Rcrys}(X/S)$ is a subsite of ${\rm Crys}(X/S)$
whose objects are isomorphic to 
$(U,\mathfrak{D}_{U,\gam}({\cal U}),[~])$'s, 
where $U$ is an open subscheme of $X$ and 
${\cal U}$ is a smooth scheme over $S_n$ for some $n$ 
which contains $U$ as a closed subscheme over $S_n$ 
and $\mathfrak{D}_{U,\gam}({\cal U})$ is 
the PD-envelope of $U$ in ${\cal U}$ 
over $(S_n,\pi{\cal O}_{S_n},\gam)$. 
(The definition of ${\rm Rcrys}(X/S)$ here 
is slightly different from that  in \cite{bb} 
and it is a special case of the definition in \cite{tsp}.) 
Let 
$(X/S)_{\rm Rcrys}$ be the restricted crystalline topos of 
$X/(S,\pi {\cal O}_S,\gam)$ associated to ${\rm Rcrys}(X/S)$. 
Let $Q^{\rm crys}_{X/S} \col (X/S)_{\rm Rcrys} \lo (X/S)_{\rm crys}$ 
be a morphism of topoi defined in \cite{bb}: 
for an object $E$ of 
$(X/S)_{\rm crys}$, $Q^{{\rm crys}*}_{X/S}(E)$ 
is the natural restriction of $E$ to 
${\rm Rcrys}(X/S)$. 
Let ${\cal O}_{X/S}$ be the structure sheaf in $(X/S)_{\rm crys}$. 
Then we have a natural morphism 
$$Q^{\rm crys}_{X/S} \col 
(({X/S})_{\rm Rcrys},Q^{{\rm crys}*}_{X/S}({\cal O}_{X/S})) 
\lo (({X/S})_{\rm crys},{\cal O}_{X/S})$$ 
of ringed topoi. 
Let $((X,D)/S)_{\rm crys}$ be 
the log crystalline topos of $(X,D)$ over 
$(S,\pi{\cal O}_S,\gam)$ defined in \cite{klog1} and 
let ${\cal O}_{(X,D)/S}$ be 
the structure sheaf in $((X,D)/S)_{\rm crys}$. 
Let 
$$\eps^{\rm crys}_{(X,D)/S} \col (((X,D)/S)_{\rm crys},{\cal O}_{(X,D)/S}) 
\lo ((X/S)_{\rm crys},{\cal O}_{X/S})$$ 
be the morphism of topoi induced by   
the natural morphism 
$(X,M(D)) \lo (X,{\cal O}^*_X)$  of log schemes over $S_1$.  
Let 
$$u^{\rm crys}_{X/S} \col 
((X/S)_{\rm crys},{\cal O}_{X/S}) 
\lo (X_{\rm zar},f^{-1}({\cal O}_{S}))$$ 
be the natural projection. Set 
$\ol{u}^{\rm crys}_{X/S}:=u^{\rm crys}_{X/S}\circ Q^{\rm crys}_{X/S}$ 
as in \cite{bb}. 
Let $\tau$ be the canonical filtration on a complex. 
Let ${\rm D}^+{\rm F}({\cal O}_{X/S})$ be the derived category of bounded below 
filtered complexes of ${\cal O}_{X/S}$-modules and let 
${\rm D}^+{\rm F}(f^{-1}({\cal O}_S))$ be the derived category of bounded below 
filtered complexes of $f^{-1}({\cal O}_S)$-modules. 
In \cite{nh2} 
we have defined the following four filtered complexes:  
$$(E_{\rm crys}({\cal O}_{(X,D)/S}),P):=
(R\eps^{\rm crys}_{(X,D)/S*}({\cal O}_{(X,D)/S}),\tau)\in 
{\rm D}^+{\rm F}({\cal O}_{X/S}),$$ 
$$(E_{\rm zar}({\cal O}_{(X,D)/S}),P)
:=Ru^{\rm crys}_{X/S*}((E_{\rm crys}({\cal O}_{(X,D)/S}),P))
\in 
{\rm D}^+{\rm F}(f^{-1}({\cal O}_S)),$$ 
$$(C_{\rm Rcrys}({\cal O}_{(X,D)/S}),P)\in 
{\rm D}^+{\rm F}(Q^{{\rm crys}*}_{X/S}({\cal O}_{X/S})),$$ 
$$(C_{\rm zar}({\cal O}_{(X,D)/S}),P)
:=R\ol{u}^{\rm crys}_{X/S*}((C_{\rm Rcrys}({\cal O}_{(X,D)/S}),P))
\in {\rm D}^+{\rm F}(f^{-1}({\cal O}_S)).$$  
The definition of $(C_{\rm Rcrys}({\cal O}_{(X,D)/S}),P)$ 
is as follows (the definition below 
is essentially the same as that in 
\cite{nh2} and \cite{nh3}; but it is 
a more general description than that). 
\par  
Let $X=\bigcup_iX_i$ be an open covering of $X$. 
Set $X_0:=\coprod_iX_i$.  
Let $\pi_0 \col X_0 \lo X$ be 
the natural morphism. 
Set $D_0:=\pi^{-1}_0(D)$ and 
$(X_{\bul},D_{\bul})_{\bul \in {\mab N}}
:={\rm cosk}_0^{(X,D)}(X_0,D_0)$.  
Let $\pi \col X_{\bul} \lo X$ be 
the augmentation morphism.
Assume that there exists 
a simplicial immersion 
$(X_{\bul},D_{\bul})\os{\sus}{\lo} {\cal P}_{\bul}$ 
into a formally log smooth 
simplicial log $p$-adic formal scheme over $S$. 
(If each member of the open covering of $X$ is 
an affine scheme, we can easily show the existence of 
this immersion.) 
Assume that $\os{\circ}{\cal P}_{\bul}$ is 
topologically of finite type over $S$. 
For an immersion $Y\os{\sus}{\lo} {\cal Q}$ 
of fine log formal schemes over $S$, 
let $Y\os{\sus}{\lo} {\cal Q}^{\rm ex}$ 
be the exactification of this immersion 
defined in \cite{s3}. 
(In (\ref{prop:exad}) below we shall recall the exactification.) 
Then, by the universality of the exactification, we have 
the simplicial exact closed immersion 
$(X_{\bul},D_{\bul}) 
\os{\sus}{\lo} {\cal P}^{\rm ex}_{\bul}$ over $S$.  
Let 
\begin{align*} 
L^{\rm crys}_{X_{\bul}/S} \col 
\{{\cal O}_{{\cal P}^{\rm ex}_{\bul}}\text{-modules}\} 
\lo \{{\cal O}_{X_{\bul}/S}\text{-modules}\} 
\tag{1.0.1}\label{ali:lxbs} 
\end{align*}
be the linearization functor for 
${\cal O}_{{\cal P}^{\rm ex}_{\bul}}$-modules in \cite{bob}. 
(Note that, though the topology of 
$\os{\circ}{\cal P}{}^{\rm ex}_{\bul}$ 
is not necessarily $p$-adic, the topology of 
the simplicial PD envelope of the simplicial immersion 
$X_{\bul} \os{\sus}{\lo} \os{\circ}{\cal P}{}^{\rm ex}_{\bul}$ 
is $p$-adic; we can use the theory in [loc.~cit.].) 
For a log formal scheme ${\cal Q}$ over $S$, let us denote by 
$\Om^{\bul}_{{\cal Q}/S}$ 
the log de Rham complex of relative log differential forms on 
${\cal Q}/S$. 
Let $P=\{P_k\}_{k\in {\mab Z}}$ be the filtration 
on $\Om^{\bul}_{{\cal Q}/S}$ defined by the following formula:  
for $0\leq k\leq i$, 
$P_k\Om^i_{{\cal Q}/S}:=
{\rm Im}(\Om^k_{{\cal Q}/S}
{\otimes}_{{\cal O}_{\cal Q}}
\Om^{i-k}_{\os{\circ}{\cal Q}/S} \lo 
\Om^i_{{\cal Q}/S})$. 
By the essentially same proof as that of \cite[(2.2.17)]{nh2}, 
we can prove that the natural morphism 
\begin{equation*}
Q^{{\rm crys}*}_{X_{\bul}/S}L^{\rm crys}_{X_{\bul}/S}
(P_k\Om^{\bul}_{{\cal P}^{\rm ex}_{\bul}/S}) \lo 
Q^{{\rm crys}*}_{X_{\bul}/S}L^{\rm crys}_{X_{\bul}/S}
(\Om^{\bul}_{{\cal P}^{\rm ex}_{\bul}/S})
\tag{1.0.2}\label{eqn:qli}
\end{equation*}
is injective in the category of 
$Q^{{\rm crys}*}_{X_{\bul}/S}({\cal O}_{X_{\bul}/S})
\text{-modules}$.  
Here we need to take good care of the following related fact  with this injectivity 
and the following related question with it. 
In \cite[(2.7.11)]{nh2} we have also proved that 
the natural morphism 
\begin{equation*}
L^{\rm crys}_{X_{\bul}/S}
(P_k\Om^{\bul}_{{\cal P}^{\rm ex}_{\bul}/S}) 
\lo 
L^{\rm crys}_{X_{\bul}/S}(\Om^{\bul}_{{\cal P}^{\rm ex}_{\bul}/S})
\tag{1.0.3}\label{eqn:lni}
\end{equation*}
is not injective in the category of 
${\cal O}_{X_{\bul}/S}\text{-modules}$ in general. 
Let 
\begin{align*} 
L^{\rm crys}_{X_{\bul}/S} \col 
\{{\cal O}_{{\cal P}_{\bul}}\text{-modules}\} 
\lo \{{\cal O}_{X_{\bul}/S}\text{-modules}\} 
\tag{1.0.4}\label{ali:lnbi}
\end{align*}
be the linearization functor for 
${\cal O}_{{\cal P}_{\bul}}$-modules in \cite{bob}. 
(Though the sources of the linearization functors (\ref{ali:lxbs}) and (\ref{ali:lnbi}), 
we use the same notation $L^{\rm crys}_{X_{\bul}/S}$ by abuse of notation.)
Though one may think about whether the natural morphism 
\begin{equation*}
Q^{{\rm crys}*}_{X_{\bul}/S}L^{\rm crys}_{X_{\bul}/S}
(P_k\Om^{\bul}_{{\cal P}_{\bul}/S}) \lo 
Q^{{\rm crys}*}_{X_{\bul}/S}L^{\rm crys}_{X_{\bul}/S}
(\Om^{\bul}_{{\cal P}_{\bul}/S})
\end{equation*}
is injective in the category of 
$Q^{{\rm crys}*}_{X_{\bul}/S}({\cal O}_{X_{\bul}/S})
\text{-modules}$, we are not interested in this question 
because the filtration $P$ on 
$\Om^{\bul}_{{\cal P}_{\bul}/S}$ is useless in general   
(see (\ref{rema:unusefil}) below for this). 
\par 
Let 
$$\pi_{{\rm Rcrys}} \col 
(({X_{\bul}/S})_{\rm Rcrys}, 
Q^{{\rm crys}*}_{X_{\bul}/S}({\cal O}_{X_{\bul}/S}))_{\bul \in {\mab N}}
\lo 
(({X/S})_{\rm Rcrys},Q^{{\rm crys}*}_{X/S}({\cal O}_{X/S}))$$  
be the natural morphism of ringed topoi (\cite[(1.6)]{nh2}). 
Then, set   
\begin{align*} 
(C_{\rm Rcrys}({\cal O}_{(X,D)/S}),P)
:= R\pi_{{\rm Rcrys}*}
(Q^{{\rm crys}*}_{X_{\bul}/S}
L^{\rm crys}_{X_{\bul}/S}(\Om^{\bul}_{{\cal P}^{\rm ex}_{\bul}/S}),
\{Q^{{\rm crys}*}_{X_{\bul}/S}L^{\rm crys}_{X_{\bul}/S}
(P_k\Om^{\bul}_{{\cal P}^{\rm ex}_{\bul}/S})\}_{k\in {\mab N}}).
\end{align*} 
By the essentially same proof as that of the well-definedness 
of $(C_{\rm Rcrys}({\cal O}_{(X,D)/S}),P)$ in \cite{nh2} or 
by the log crystalline version of 
the proof of (\ref{theo:ifc}) below 
or (\ref{eqn:qpc}) below, we can prove that  
$(C_{\rm Rcrys}({\cal O}_{(X,D)/S}),P)$ depends only on 
$(X,D)/(S,\pi{\cal O}_S,\gam)$. 
\par 
In \cite[(2.7.3)]{nh2} we have proved that 
there exists an isomorphism
\begin{equation*}
Q^{{\rm crys}*}_{X/S}(E_{\rm crys}({\cal O}_{(X,D)/S}),P)
\os{\sim}{\lo}(C_{\rm Rcrys}({\cal O}_{(X,D)/S}),P)
\tag{1.0.5}\label{eqn:qpc}
\end{equation*}
by showing the following $p$-adic purity
\begin{equation*}
Q^{{\rm crys}*}_{X/S}
R^k\eps^{\rm crys}_{(X,D)/S*}({\cal O}_{(X,D)/S})
\os{\sim}{\lo} 
Q^{{\rm crys}*}_{X/S}a^{(k)}_{{\rm crys}*}({\cal O}_{D^{(k)}/S}
\otimes_{\mab Z}
\vp^{(k)}_{\rm crys}(D/S))
\quad (k\in {\mab N}),
\tag{1.0.6}\label{eqn:vpuros}
\end{equation*}
where $D^{(k)}$ is a scheme over $S_1$ 
defined in \cite[(2.2.13.2)]{nh2} 
and will be recalled in \S\ref{sec:lcs} below, 
$a^{(k)} \col D^{(k)} \lo X$ is the natural  
morphism  of schemes over $S_1$ and 
$\vp^{(k)}_{\rm crys}(D/S)\in ({D^{(k)}/S})_{\rm crys}$ 
is the crystalline orientation sheaf defined in 
\cite[(2.2.18)]{nh2}, 
which is isomorphic to ${\mab Z}$ non-canonically 
(cf.~\S\ref{sec:lcs} below). 
As a corollary of the existence of 
the canonical isomorphism (\ref{eqn:qpc}), 
we obtain the following canonical isomorphism 
\begin{equation*}
(E_{\rm zar}({\cal O}_{(X,D)/S}),P)
\os{\sim}{\lo} 
(C_{\rm zar}({\cal O}_{(X,D)/S}),P) 
\tag{1.0.7}\label{eqn:upuros}
\end{equation*} 
by using the formula 
$R\ol{u}_{X/S*}^{\rm crys}Q^{{\rm crys}*}_{X/S}
=Ru_{X/S*}^{\rm crys}$ 
in \cite{bb}.  
\par 
However, unfortunately,   
\begin{equation*}
R^k\eps^{\rm crys}_{(X,D)/S*}({\cal O}_{(X,D)/S})\not=
a^{(k)}_{{\rm crys}*}({\cal O}_{D^{(k)}/S}
\otimes_{\mab Z}
\vp^{(k)}_{\rm crys}(D/S))
\quad (k\in {\mab N})
\tag{1.0.8}\label{eqn:nvpcrs}
\end{equation*}
in general; the natural morphism 
${\cal O}_{X/S} \lo 
R^0\eps^{\rm crys}_{(X,D)/S*}({\cal O}_{(X,D)/S})$
is not an isomorphism in general (\cite[(2.7.11)]{nh2}). 
The non-equality (\ref{eqn:nvpcrs}) tells us that 
we have to consider the undesirable pull-back 
$Q^{{\rm crys}*}_{X/S}$ in the isomorphism 
(\ref{eqn:vpuros}). 
To be worse, the non-equality gives us 
an undesirable consequence. 
Indeed, set $S_n:=\ul{\rm Spec}_S({\cal O}_S/\pi^n{\cal O}_S)$ 
$(n\in {\mab Z}_{\geq 2})$ and endow 
$S_n$ with the induced PD-structure  
on $\pi{\cal O}_S/\pi^n{\cal O}_S$ by $\gam$. 
In \cite{bb} Berthelot has pointed out that, 
for a commutative diagram 
\begin{equation*} 
\begin{CD} 
Y @>{g}>> Z \\ 
@VVV @VVV \\ 
S_1 @>>> S_n
\end{CD} 
\end{equation*} 
of schemes, 
there does not exist a morphism 
$g_{\rm Rcrys} \col ({Y/S_1})_{\rm Rcrys} 
\lo  ({Z/S_n})_{\rm Rcrys}$ of topoi fitting into 
the following commutative diagram 
\begin{equation*} 
\begin{CD} 
({Y/S_1})_{\rm Rcrys} 
@>{g_{\rm Rcrys}}>> ({Z/S_n})_{\rm Rcrys} \\ 
@V{Q_{Y/S_1}}VV @VV{Q_{Z/S_n}}V \\ 
({Y/S_1})_{\rm crys} @>{g_{\rm crys}}>> ({Z/S_n})_{\rm crys}
\end{CD} 
\end{equation*}  
in general. As a result, 
we do not know whether 
$(C_{\rm Rcrys}({\cal O}_{(X,D)/S}),P)$ is 
contravariantly functorial. 
(However we know the contravariant functoriality of 
$(C_{\rm zar}({\cal O}_{(X,D)/S}),P)$ 
by the canonical isomorphism (\ref{eqn:upuros}).) 
\par 
Let ${\rm A}^{\geq 0}{\rm F}({\cal O}_{X/S})$ 
be the category of 
filtered positively-graded graded commutative dga's 
of ${\cal O}_{X/S}$-algebras 
and let ${\rm D}({\rm A}^{\geq 0}{\rm F}({\cal O}_{X/S}))$ 
be the localized category of  
${\rm A}^{\geq 0}{\rm F}({\cal O}_{X/S})$  
inverting the weakly equivalent morphisms 
in ${\rm A}^{\geq 0}{\rm F}({\cal O}_{X/S})$ 
(cf.~\cite{gelma}).   
Let 
$U_{\rm crys} \col 
{\rm D}({\rm A}^{\geq 0}{\rm F}({\cal O}_{X/S})) 
\lo {\rm D}^+{\rm F}({\cal O}_{X/S})$ be 
the natural forgetful functor. 
By the non-injectivity of the morphism  
(\ref{eqn:lni}), it seems that there does not exist 
a functorial filtered object 
$(\wt{C},\wt{P})\in 
{\rm D}({\rm A}^{\geq 0}{\rm F}({\cal O}_{X/S}))$ 
such that 
$Q^{{\rm crys}*}_{X/S}U_{\rm crys}((\wt{C},\wt{P})) 
\simeq (C_{\rm Rcrys}({\cal O}_{(X,D)/S}),P)$.  
\par 
To avoid these undesirable consequences, 
we ignore torsions and 
use the framework of (log) convergent topoi by Ogus
(\cite{oc}, \cite{ofo}); 
our approach in this paper turns out a success as follows.   
\par 
Let $({X/S})_{\rm conv}$ be the convergent topos 
of $X/S$ which is the relative version of 
the convergent topos in \cite{oc}. 
Let ${\cal K}_{X/S}$ be the isostructure sheaf in 
$({X/S})_{\rm conv}$.  
Let $({(X,D)/S})_{\rm conv}(=
({(X,D)/S})_{{\rm conv},{\rm zar}})$ be the log convergent topos 
of $(X,D)/S$ which is the relative version of 
the log convergent topos in \cite{s2}.  
(In \S\ref{sec:logcd} below we shall define 
the topoi $({X/S})_{\rm conv}$ and 
$({(X,D)/S})_{\rm conv}$.) 
Let ${\cal K}_{(X,D)/S}$ be the isostructure sheaf in 
$({(X,D)/S})_{\rm conv}$.  
Let 
$$\eps^{\rm conv}_{(X,D)/S} \col 
(({(X,D)/S})_{\rm conv},{\cal K}_{(X,D)/S}) 
\lo (({X/S})_{\rm conv},{\cal K}_{X/S})$$ 
be the morphism of ringed topoi forgetting the log structure along $D$.  
Set ${\cal K}_S:={\cal O}_S\otimes_{\mab Z}{\mab Q}$. 
Let 
$$u^{\rm conv}_{X/S} \col 
(({X/S})_{\rm conv},{\cal K}_{X/S})
\lo ({X}_{\rm zar},f^{-1}({\cal K}_S))$$ 
be the canonical projection. 
Let ${\rm D}^+{\rm F}({\cal K}_{(X,D)/S})$ be the derived category of bounded below 
filtered complexes of ${\cal K}_{(X,D)/S}$-modules and let 
${\rm D}^+{\rm F}(f^{-1}({\cal K}_S))$ be the derived category of bounded below 
filtered complexes of $f^{-1}({\cal K}_S)$-modules. 
In this paper we define  the following four filtered complexes 
$$(E_{\rm conv}({\cal K}_{(X,D)/S}),P):=
(R\eps^{\rm conv}_{(X,D)/S*}({\cal K}_{(X,D)/S}),\tau)\in 
{\rm D}^+{\rm F}({\cal K}_{X/S}),$$ 
$$(E_{\rm isozar}({\cal K}_{(X,D)/S}),P)
:=Ru^{\rm conv}_{X/S*}
((E_{\rm conv}({\cal K}_{(X,D)/S}),P))
\in 
{\rm D}^+{\rm F}(f^{-1}({\cal K}_S)),$$ 
$$(C_{\rm conv}({\cal K}_{(X,D)/S}),P)\in 
{\rm D}^+{\rm F}({\cal K}_{X/S})$$   
and 
$$(C_{\rm isozar}({\cal K}_{(X,D)/S}),P)
:=
Ru^{\rm conv}_{X/S*}((C_{\rm conv}({\cal K}_{(X,D)/S}),P))
\in {\rm D}^+{\rm F}(f^{-1}({\cal K}_S)).$$ 
In the text we shall give the definition of 
$(C_{\rm conv}({\cal K}_{(X,D)/S}),P)$,  
which is analogous to that of 
$(C_{\rm Rcrys}({\cal O}_{(X,D)/S}),P)$.  
We call $(C_{\rm conv}({\cal K}_{(X,D)/S}),P)$ 
the {\it weight-filtered convergent complex} of $(X,D)/S$, 
which plays a central role in this paper.  
\medskip
\par 
In this paper we prove the following three theorems:

\begin{theo}[{\bf $p$-adic purity}]\label{theo:pap} 
There exists a canonical isomorphism 
\begin{equation*} 
R^k\eps^{\rm conv}_{(X,D)/S*}({\cal K}_{(X,D)/S}) \os{\sim}{\lo} 
a^{(k)}_{{\rm conv}*}({\cal K}_{D^{(k)}/S}
\otimes_{\mab Z}
\vp^{(k)}_{\rm conv}(D/S))
\quad (k\in {\mab N}),
\tag{1.1.1}\label{eqn:vpcs}
\end{equation*}
where $\vp^{(k)}_{\rm conv}(D/S)$ is the convergent  
orientation sheaf which will be defined in 
\S{\rm \ref{sec:lcs}} below. 
\end{theo}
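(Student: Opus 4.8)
Since the assertion (\ref{eqn:vpcs}) is local on $X_{\rm zar}$, the plan is to first reduce, using the local description of $(X,D)$ recalled above and local formal lifts, to the case where $X$ is affine, admits an \'etale morphism to $\ul{\rm Spec}_{S_1}({\cal O}_{S_1}[x_1,\dots,x_d])$ carrying $D$ to the inverse image of $\{x_1\cdots x_a=0\}$, and admits an exact closed immersion $(X,D)\os{\sus}{\lo}{\cal P}$ over $S$ into a formally log smooth log $p$-adic formal scheme whose underlying formal scheme $\os{\circ}{\cal P}$ is an \'etale lift of $X$ over the $p$-adic completion of $\mab A^d_S$, the log structure being the one associated with $x_1\cdots x_a$; since everything is done with isostructure sheaves one may take ${\cal P}^{\rm ex}={\cal P}$ in this local model. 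The task is then to compute the cohomology sheaves of $R\eps^{\rm conv}_{(X,D)/S*}({\cal K}_{(X,D)/S})$ on $({X/S})_{\rm conv}$.

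The first step is to identify this complex locally with the weight-filtered convergent complex itself, namely with $L^{\rm conv}_{X/S}(\Om^{\bul}_{{\cal P}/S})$ --- the linearization \emph{for $X/S$} of the relative log de Rham complex $\Om^{\bul}_{{\cal P}/S}$ --- together with its filtration $\{L^{\rm conv}_{X/S}(P_k\Om^{\bul}_{{\cal P}/S})\}_k$. For this I would compose the log convergent Poincar\'e lemma of \S\ref{sec:lll}--\S\ref{sec:llf}, giving a quasi-isomorphism ${\cal K}_{(X,D)/S}\os{\sim}{\lo}L^{\rm conv}_{(X,D)/S}(\Om^{\bul}_{{\cal P}/S})$, with the compatibility of $R\eps^{\rm conv}_{(X,D)/S*}$ and the linearization functors established in \S\ref{sec:vflvc}: since $\eps^{\rm conv}_{(X,D)/S}$ only forgets the log structure along $D$ and leaves $\os{\circ}{\cal P}$ untouched, one has $R\eps^{\rm conv}_{(X,D)/S*}\circ L^{\rm conv}_{(X,D)/S}\simeq L^{\rm conv}_{X/S}$ on ${\cal O}_{\cal P}$-module complexes. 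Here it is essential that we work over ${\mab Q}$: the convergent counterpart of the injectivity (\ref{eqn:qli}) --- needed so that applying $L^{\rm conv}_{X/S}$ to $\{P_k\Om^{\bul}_{{\cal P}/S}\}_k$ still yields short exact sequences --- holds directly, without passing to a restricted topos, because tensoring with ${\mab Q}$ kills the torsion responsible for the failure of (\ref{eqn:lni}).

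The second step computes $\mathcal H^k$ through the weight filtration $P$. Since the differential of $\Om^{\bul}_{{\cal P}/S}$ annihilates the classes $\tfrac{dx_{j_1}}{x_{j_1}}\we\cdots\we\tfrac{dx_{j_k}}{x_{j_k}}$, the Poincar\'e residue yields an isomorphism of complexes $\mathrm{gr}^P_k\Om^{\bul}_{{\cal P}/S}\os{\sim}{\lo}a^{(k)}_*(\Om^{\bul}_{\wt D^{(k)}/S})\otimes_{\mab Z}\wt\vp^{(k)}[-k]$, where $\wt D^{(k)}\hookrightarrow{\cal P}$ is the canonical formal lift of $D^{(k)}$ and $\wt\vp^{(k)}$ is the orientation module generated by wedges of the $\tfrac{dx_j}{x_j}$. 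Applying the exact functor $L^{\rm conv}_{X/S}$, using its compatibility with the closed immersion $a^{(k)}$ from \S\ref{sec:lcs} (so that $L^{\rm conv}_{X/S}\circ a^{(k)}_*\simeq a^{(k)}_{\rm conv*}\circ L^{\rm conv}_{D^{(k)}/S}$), and then the ordinary convergent Poincar\'e lemma for the smooth lift $\wt D^{(k)}/S$ (giving $L^{\rm conv}_{D^{(k)}/S}(\Om^{\bul}_{\wt D^{(k)}/S})\simeq{\cal K}_{D^{(k)}/S}$), one finds that $\mathrm{gr}^P_kC_{\rm conv}({\cal K}_{(X,D)/S})$ has cohomology concentrated in degree $k$, equal there to $a^{(k)}_{\rm conv*}({\cal K}_{D^{(k)}/S}\otimes_{\mab Z}\vp^{(k)}_{\rm conv}(D/S))$. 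As the successive graded pieces $\mathrm{gr}^P_k$ live in pairwise distinct degrees, the spectral sequence of $(C_{\rm conv},P)$ degenerates at $E_1$ for trivial reasons; an induction on $k$, starting from $P_0C_{\rm conv}\simeq{\cal K}_{X/S}$ (the non-log convergent Poincar\'e lemma, which is also the case $k=0$, $D^{(0)}=X$) and using at each stage the triangle linking $P_{k-1}C_{\rm conv}$, $P_kC_{\rm conv}$ and $\mathrm{gr}^P_kC_{\rm conv}$, shows that $P_kC_{\rm conv}\os{\sim}{\lo}\tau_{\le k}C_{\rm conv}$ and $\mathcal H^k(C_{\rm conv})\os{\sim}{\lo}\mathcal H^k(\mathrm{gr}^P_kC_{\rm conv})=a^{(k)}_{\rm conv*}({\cal K}_{D^{(k)}/S}\otimes_{\mab Z}\vp^{(k)}_{\rm conv}(D/S))$.

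Combining the two steps gives (\ref{eqn:vpcs}) locally. It then remains to check that the isomorphism is canonical and glues globally: the residue map, the connecting maps of the $P$-filtration and the Poincar\'e lemmas are all canonical, and independence of the auxiliary lift is the well-definedness of $(C_{\rm conv},P)$ proved in \S\ref{sec:wfcipp}; alternatively one constructs the residue morphism (\ref{eqn:vpcs}) globally and verifies it is an isomorphism by the local computation. I expect the main obstacle to be the first step --- the precise comparison of $R\eps^{\rm conv}_{(X,D)/S*}$ with the change of linearization functor and the bookkeeping around the exactification ${\cal P}^{\rm ex}$, whose underlying formal scheme is not a naive lift of $X$ --- which is exactly what the vanishing-cycle-sheaf formalism of \S\ref{sec:vflvc}, together with the linearization machinery of \S\ref{sec:lll}--\S\ref{sec:lcs}, is designed to handle.
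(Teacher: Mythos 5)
Your proposal is correct and follows essentially the same route as the paper: the Poincar\'e lemma for the vanishing cycle sheaf (Theorem \ref{theo:cpvcs}) identifies $R\eps^{\rm conv}_{(X,D)/S*}({\cal K}_{(X,D)/S})$ with the linearized filtered log de Rham complex, the Poincar\'e residue together with the injectivity/exactness statements (\ref{theo:injf}) (2) and (\ref{prop:afex}) computes the graded pieces as in (\ref{prop:grla}) and (\ref{lemm:grkpd}), and the spectral sequence of the filtration degenerates because each graded piece is concentrated in a single degree, which is exactly the argument of (\ref{theo:calvc}). The globalization and canonicity you defer to the well-definedness of $(C_{\rm conv},P)$ is precisely what the \v{C}ech-simplicial machinery of \S\ref{sec:wfcipp} supplies.
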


\begin{theo}[{\bf Comparison theorem between $(E_{\rm conv},P)$ 
and $(C_{\rm conv},P)$}]\label{theo:cteck} 
There exists a canonical isomorphism 
\begin{equation*}
(E_{\rm conv}({\cal K}_{(X,D)/S}),P)\os{\sim}{\lo} 
(C_{\rm conv}({\cal K}_{(X,D)/S}),P).
\tag{1.2.1}\label{eqn:excrs}
\end{equation*}
Consequently there exists a canonical isomorphism
\begin{equation*}
(E_{\rm isozar}({\cal K}_{(X,D)/S}),P)\os{\sim}{\lo} 
(C_{\rm isozar}({\cal K}_{(X,D)/S}),P), 
\tag{1.2.2}\label{eqn:uec}
\end{equation*} 
and $(C_{\rm conv}({\cal K}_{(X,D)/S}),P)$ and 
$(C_{\rm isozar}({\cal K}_{(X,D)/S}),P)$ are 
contravariantly functorial.
\end{theo}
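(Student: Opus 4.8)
The plan is to deduce Theorem~\ref{theo:cteck} from the $p$-adic purity of Theorem~\ref{theo:pap} by a formal argument in the filtered derived category, in the spirit of the derivation of \cite[(2.7.3)]{nh2}, but now without the undesirable pull-back $Q^{{\rm crys}*}_{X/S}$ intervening. First I would record that, after forgetting filtrations, the underlying complex of $C_{\rm conv}({\cal K}_{(X,D)/S})$ is canonically isomorphic to $R\eps^{\rm conv}_{(X,D)/S*}({\cal K}_{(X,D)/S})$, the underlying complex of $E_{\rm conv}({\cal K}_{(X,D)/S})$; this is the log convergent Poincar\'{e} lemma for $L^{\rm conv}_{X_{\bul}/S}(\Om^{\bul}_{{\cal P}^{\rm ex}_{\bul}/S})$ together with cohomological descent along $\pi_{\rm conv}$, all set up in \S\ref{sec:lll}--\S\ref{sec:lcs}. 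In this step one also uses the convergent counterpart of~(\ref{eqn:qli})---that $L^{\rm conv}_{X_{\bul}/S}(P_k\Om^{\bul}_{{\cal P}^{\rm ex}_{\bul}/S})\lo L^{\rm conv}_{X_{\bul}/S}(\Om^{\bul}_{{\cal P}^{\rm ex}_{\bul}/S})$ is injective, in contrast to the failure of~(\ref{eqn:lni}) in the restricted crystalline setting---so that $P_kC_{\rm conv}({\cal K}_{(X,D)/S})$ is genuinely a subobject and ${\rm gr}^P_k$ makes sense.

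The decisive input is the computation of the graded pieces. By Theorem~\ref{theo:pap} together with the description of the vanishing cycle sheaves in \S\ref{sec:vflvc}, ${\rm gr}^P_kC_{\rm conv}({\cal K}_{(X,D)/S})$ is concentrated in cohomological degree $k$, where it equals $R^k\eps^{\rm conv}_{(X,D)/S*}({\cal K}_{(X,D)/S})\cong a^{(k)}_{{\rm conv}*}({\cal K}_{D^{(k)}/S}\otimes_{\mab Z}\vp^{(k)}_{\rm conv}(D/S))$. Since every ${\rm gr}^P_kC_{\rm conv}({\cal K}_{(X,D)/S})$ is concentrated in degree $k$, the complex $P_kC_{\rm conv}({\cal K}_{(X,D)/S})$ has no cohomology above degree $k$, so the truncation morphism $P_kC_{\rm conv}({\cal K}_{(X,D)/S})=\tau_{\leq k}P_kC_{\rm conv}({\cal K}_{(X,D)/S})\lo\tau_{\leq k}C_{\rm conv}({\cal K}_{(X,D)/S})$ is defined, and it is a quasi-isomorphism---on ${\cal H}^j$ for $j\leq k$ both sides agree, compatibly, with ${\cal H}^j({\rm gr}^P_jC_{\rm conv}({\cal K}_{(X,D)/S}))$, the $P$-spectral sequence degenerating by the concentration just obtained. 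These quasi-isomorphisms are compatible for varying $k$, hence constitute a filtered isomorphism $(C_{\rm conv}({\cal K}_{(X,D)/S}),P)\os{\sim}{\lo}(C_{\rm conv}({\cal K}_{(X,D)/S}),\tau)$; composing with the identification of the underlying complex with $(R\eps^{\rm conv}_{(X,D)/S*}({\cal K}_{(X,D)/S}),\tau)=(E_{\rm conv}({\cal K}_{(X,D)/S}),P)$ and inverting yields~(\ref{eqn:excrs}).

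Applying $Ru^{\rm conv}_{X/S*}$ to~(\ref{eqn:excrs}) gives~(\ref{eqn:uec}) at once. For the contravariant functoriality, note that $(E_{\rm conv}({\cal K}_{(X,D)/S}),P)=(R\eps^{\rm conv}_{(X,D)/S*}({\cal K}_{(X,D)/S}),\tau)$ is manifestly functorial---the log convergent topoi, the morphism $\eps^{\rm conv}_{(X,D)/S}$, and the canonical filtration $\tau$ all being carried along by morphisms of pairs---as is $(E_{\rm isozar}({\cal K}_{(X,D)/S}),P)=Ru^{\rm conv}_{X/S*}(E_{\rm conv}({\cal K}_{(X,D)/S}),P)$; transporting along~(\ref{eqn:excrs}) and~(\ref{eqn:uec})---whose compatibility with pull-backs is the naturality of the residue isomorphism, the linearization functor and cohomological descent, treated carefully in \S\ref{sec:fpw}---endows $(C_{\rm conv}({\cal K}_{(X,D)/S}),P)$ and $(C_{\rm isozar}({\cal K}_{(X,D)/S}),P)$ with the stated functoriality. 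I expect the main obstacle to be not analytic---purity and the acyclicity of ${\rm gr}^P_kC_{\rm conv}({\cal K}_{(X,D)/S})$ outside degree $k$ are supplied by Theorem~\ref{theo:pap} and \S\ref{sec:vflvc}---but organizational: making the identifications of ${\rm gr}^P_kC_{\rm conv}({\cal K}_{(X,D)/S})$ with $R^k\eps^{\rm conv}_{(X,D)/S*}({\cal K}_{(X,D)/S})[-k]$ canonical (orientation sheaves $\vp^{(k)}_{\rm conv}(D/S)$ and the signs in the de Rham differentials included), and checking that the resulting quasi-isomorphisms $P_kC_{\rm conv}({\cal K}_{(X,D)/S})\os{\sim}{\lo}\tau_{\leq k}C_{\rm conv}({\cal K}_{(X,D)/S})$ truly assemble into a morphism in ${\rm D}^+{\rm F}({\cal K}_{X/S})$, not merely into an isomorphism of associated graded objects. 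This succeeds precisely because in the convergent framework purity holds on the nose, with no pull-back $Q^{{\rm crys}*}_{X/S}$ obstructing the filtered comparison.
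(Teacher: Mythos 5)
Your proposal is correct and is essentially the paper's own argument: the proof of (\ref{theo:wtvsca}) likewise rests entirely on the $p$-adic purity computation (\ref{caseali:grtkr})--(\ref{eqn:ckdzs}) of the graded pieces of $P^D$ on $C_{\rm conv}$, the only cosmetic difference being that the paper builds the comparison map in the direction $(E_{\rm conv},\tau)\to(C_{\rm conv},P^D)$ from the inclusion $\tau_k\subset P^D_k$ on the linearized complex (via (\ref{lemm:canhigh})) and then verifies that it is an isomorphism on ${\rm gr}$, whereas you run the truncation argument in the opposite direction. Your closing remarks on functoriality also match the paper, where (\ref{eqn:fdfcc}) is deduced immediately from (\ref{theo:wtvsca}).
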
 


\begin{theo}[{\bf Comparison theorem 
between $(C_{\rm isozar},P)$ 
and $(C_{\rm zar},P)$}]\label{theo:izz} 
There exists a canonical isomorphism 
\begin{equation*}
(C_{\rm isozar}({\cal K}_{(X,D)/S}),P) 
\os{\sim}{\lo} 
(C_{\rm zar}({\cal O}_{(X,D)/S}),P)\otimes_{\mab Z}{\mab Q}.
\tag{1.3.1}\label{eqn:crs}
\end{equation*} 
\end{theo}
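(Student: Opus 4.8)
The plan is to reduce both filtered complexes to the common combinatorial datum used to construct them --- a simplicial exact immersion $(X_{\bul},D_{\bul})\os{\sus}{\lo}{\cal P}^{\rm ex}_{\bul}$ over $S$ with $\os{\circ}{\cal P}{}^{\rm ex}_{\bul}$ topologically of finite type over $S$ --- and to compare the resulting $P$-filtered log de Rham objects after projecting to $X_{\rm zar}$. On the convergent side $(C_{\rm conv}({\cal K}_{(X,D)/S}),P)$ is already a genuine $P$-filtered complex on $({X/S})_{\rm conv}$, no restriction being needed; on the crystalline side $(C_{\rm zar}({\cal O}_{(X,D)/S}),P)=R\ol{u}^{\rm crys}_{X/S*}(C_{\rm Rcrys}({\cal O}_{(X,D)/S}),P)$, and the pull-back $Q^{{\rm crys}*}$ disappears upon projecting to $X_{\rm zar}$ by the formula $R\ol{u}^{\rm crys}_{X_{\bul}/S*}Q^{{\rm crys}*}_{X_{\bul}/S}=Ru^{\rm crys}_{X_{\bul}/S*}$ --- which is exactly why the statement only concerns the (iso)zariskian projections. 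Writing $\pi\col X_{\bul}\lo X$ for the augmentation, it thus suffices to produce a canonical isomorphism in ${\rm D}^+{\rm F}(f^{-1}({\cal K}_S))$, functorial in the simplicial degree, between $Ru^{\rm conv}_{X_{\bul}/S*}$ of the $P$-filtered log convergent linearization of $\Om^{\bul}_{{\cal P}^{\rm ex}_{\bul}/S}$ and $Ru^{\rm crys}_{X_{\bul}/S*}$ of the $P$-filtered log crystalline linearization of $\Om^{\bul}_{{\cal P}^{\rm ex}_{\bul}/S}$ tensored with ${\mab Q}$, and then to descend via $R\pi_{*}$.

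By the log convergent Poincaré lemma of \S\S\ref{sec:lll}--\ref{sec:lcs}, the first object is represented by the $P$-filtered log de Rham complex of the system of log universal enlargements of $(X_{\bul},D_{\bul})$ in ${\cal P}^{\rm ex}_{\bul}$; by the log crystalline Poincaré lemma of \cite{bob}, the second is, after $\otimes_{\mab Z}{\mab Q}$, represented by the $P$-filtered log de Rham complex of the $p$-adic completion of the log PD-envelope of $(X_{\bul},D_{\bul})$ in ${\cal P}^{\rm ex}_{\bul}$. The heart of the proof is the rational comparison of these two: the natural morphism from the ($p$-adically completed) log PD-envelope to the system of log universal enlargements induces a quasi-isomorphism of log de Rham complexes after $\otimes_{\mab Z}{\mab Q}$. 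For trivial log structure this is Ogus' comparison theorem between crystalline and convergent cohomology; in the present relative log setting it follows from the exactification machinery recalled from \cite{s3} together with the structure results of the preceding sections, and, being functorial in the exact immersion, it is automatically compatible with the simplicial maps.

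For the filtered refinement one argues on graded pieces. By the Poincaré residue, ${\rm gr}^P_k$ of the $P$-filtered log convergent (resp.\ log crystalline) linearization is supported on $D^{(k)}_{\bul}$ and is a shift of the log convergent (resp.\ log crystalline) structure-sheaf complex there, twisted by the convergent (resp.\ crystalline) orientation sheaf $\vp^{(k)}_{\rm conv}(D/S)$ (resp.\ $\vp^{(k)}_{\rm crys}(D/S)$) --- this is the local mechanism underlying the purity statements (\ref{eqn:vpcs}) and (\ref{eqn:vpuros}). Since each $D^{(k)}_{\bul}$ is smooth over $S_1$ and carries the trivial log structure, on ${\rm gr}^P_k$ the comparison reduces to the classical isomorphism $Ru^{\rm conv}_{Y/S*}({\cal K}_{Y/S})\os{\sim}{\lo}Ru^{\rm crys}_{Y/S*}({\cal O}_{Y/S})\otimes_{\mab Z}{\mab Q}$ for $Y$ smooth over $S_1$ (due to Ogus), the two orientation sheaves corresponding. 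Hence the comparison is an isomorphism on every ${\rm gr}^P_k$, and therefore on the $P$-filtered objects themselves; applying $R\pi_{*}$ and cohomological descent yields the isomorphism (\ref{eqn:crs}).

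The step I expect to be the main obstacle is realizing the crystalline--convergent comparison \emph{at the level of the $P$-filtered linearized log de Rham complexes themselves}, rather than merely on cohomology sheaves: one must check that it commutes with the Gysin/residue boundary morphisms of \S\ref{sec:bd} and is independent of the chosen simplicial immersion, for only then is the resulting map a well-defined isomorphism in ${\rm D}^+{\rm F}(f^{-1}({\cal K}_S))$ and only then does the isomorphism constructed on graded pieces lift to the filtered object. A subsidiary technical nuisance is that the system of log universal enlargements and the $p$-adic completion of the log PD-envelope live on genuinely different formal schemes, carrying different (non-$p$-adic versus $p$-adic) topologies; the comparison must be arranged so that the linearization functor of \cite{bob} and the log convergent linearization functor of \S\S\ref{sec:lll}--\ref{sec:lcs} are simultaneously applicable and match up exactly.
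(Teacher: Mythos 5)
Your proposal is correct and follows essentially the same route as the paper: both compare the two filtered complexes through their explicit descriptions as (iso)zariskian projections of the $P$-filtered linearized log de Rham complexes of a simplicial immersion, use the natural morphism from the log PD-envelope to the system of log universal enlargements (coming from $a^p=p!\,a^{[p]}$) to produce the comparison map, and verify it is a filtered isomorphism on ${\rm gr}^P_k$ via the Poincar\'e residue and the Ogus--Shiho crystalline--convergent comparison for the smooth schemes $D^{(k)}$, finishing with an independence-of-choices check. The only point you leave implicit that the paper makes explicit is the preliminary reduction (via the invariance of both sides under change of $\pi$) to the case $\pi{\cal V}=p{\cal V}$, so that the crystalline side and Shiho's comparison apply in their standard PD setting.
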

\par 
Let ${\rm D}({\rm A}^{\geq 0}{\rm F}({\cal K}_{X/S}))$ 
be the analogous localized category to   
${\rm D}({\rm A}^{\geq 0}{\rm F}({\cal O}_{X/S}))$ 
which is obtained by the replacement of 
${\cal O}_{X/S}$ by ${\cal K}_{X/S}$.  
Let 
$U_{\rm conv} \col 
{\rm D}({\rm A}^{\geq 0}{\rm F}({\cal K}_{X/S})) 
\lo {\rm D}^+{\rm F}({\cal K}_{X/S})$ be 
the natural forgetful functor. 
By the construction of 
$(C_{\rm conv}({\cal K}_{(X,D)/S}),P)$, 
by the derived direct image of Thom-Whitney (\cite{nav})
and by the single complex of Thom-Whitney ([loc.~cit.]), 
we see that there exists  
a functorial filtered object 
$(\wt{C},\wt{P})\in 
{\rm D}({\rm A}^{\geq 0}{\rm F}({\cal K}_{X/S}))$ 
such that 
$U_{\rm conv}((\wt{C},\wt{P})) 
\simeq (C_{\rm conv}({\cal K}_{(X,D)/S}),P)$.  
Thus we need not consider the undesirable consequences 
in the log crystalline case mentioned above if we ignore torsions 
and use log convergent topoi. 
\par 
As in the crystalline case in \cite{bb}, 
we give a well-known remark about 
(\ref{eqn:vpcs}) as follows.  
\par 
Set $U:=X\setminus D$. Let $j \col U \os{\sus}{\lo} X$ 
be the open immersion and let $j_{\rm conv} \col 
({U/S})_{\rm conv} \lo ({X/S})_{\rm conv}$ 
be the induced morphism of topoi by $j$. 
We  remark that   
\begin{equation*} 
R^kj_{{\rm conv}*}({\cal K}_{U/S}) =0 \quad (\forall k\in {\mab Z}_{\geq 1}). 
\tag{1.3.2}\label{eqn:jnvp}
\end{equation*}
In particular, 
\begin{equation*} 
R^kj_{{\rm conv}*}({\cal K}_{U/S})
\not\simeq
a^{(k)}_{{\rm conv}*}({\cal K}_{D^{(k)}/S}
\otimes_{\mab Z}\vp^{(k)}_{\rm conv}(D/S))
\quad (\exists k\in {\mab N})
\tag{1.3.3}\label{eqn:jnnsvp}
\end{equation*}
in general ((\ref{rema:wnjp}) below). 
Thus $R\eps^{\rm conv}_{(X,D)/S*}$ is better 
than $Rj_{{\rm conv}*}$ 
in the $p$-adic case, which is different from 
the $l$-adic case in \cite{fu}, \cite{fk} and \cite{illl}. 
\par  
Let $(X,D)$ be as above or 
a smooth scheme with an SNCD 
over the complex number field ${\mab C}$. 
Set $U:=X\setminus D$ and let $j \col U \os{\sus}{\lo} X$ 
be the open immersion. 
Then, as in  \cite[(0.0.0.22)]{nh2}, 
we can give the following translation, which fills blanks (modulo torsion) in [loc.~cit.]: 
\begin{equation*}
\begin{tabular}{|l|l|} \hline 
$/{\mab C}$  & (log) convergent topoi\\ \hline    
$U_{\rm an}$, $(X_{\rm an},D_{\rm an})$ & 
${((X,D)/S)}_{\rm conv}$\\ \hline 
$X_{\rm an}$ & ${(X/S)}_{\rm conv}$ \\ \hline 
$j_{\rm an} \col U_{\rm an} \os{\subset}{\lo} 
X_{\rm an}$, & {} \\
$\eps_{\rm top} \col (X_{\rm an},D_{\rm an})^{\log} 
\lo X_{\rm an}$ & 
$\eps^{\rm conv}_{(X,D)/S} \col {((X,D)/S)}_{\rm conv} \lo 
{(X/S)}_{\rm conv}$ \\ \hline 
$Rj_{{\rm an}*}({\mab Q})=R\eps_{{\rm top}*}({\mab Q})~$ 
(\cite{kn})  & $
R\eps^{\rm conv}_{(X,D)/S*}({\cal K}_{(X,D)/S})$\\ \hline 
$X_{\rm an} \lo X$ & 
$u^{\rm conv}_{X/S} \col ({X/S})_{\rm conv} \lo 
{X}_{\rm zar}$ \\ \hline
${\mab Q}_{(X_{\rm an},D_{\rm an})}$ & ${\cal K}_{(X,D)/S}$ \\ \hline
${\mab Q}_{X_{\rm an}}$ 
& ${\cal K}_{X/S}$ \\ \hline 
$(\Om^{\bul}_{X_{\rm an}/{\mab C}}(\log D_{\rm an}),\tau)$ 
& $(C_{\rm conv}({\cal K}_{(X,D)/S}),\tau)$\\ 
$=(\Om^{\bul}_{X_{\rm an}/{\mab C}}(\log D_{\rm an}),P)$ & 
$=(C_{\rm conv}({\cal K}_{(X,D)/S}),P)$ \\ \hline
$(\Om^{\bul}_{X/{\mab C}}(\log D),P)$  
& $(C_{\rm isozar}({\cal K}_{(X,D)/S}),P)$ \\ \hline 
\end{tabular}
\tag{1.3.4}\label{tab:intop}
\end{equation*}
Here $(X_{\rm an},D_{\rm an})^{\log}$ is 
the real blow up of $(X_{\rm an},D_{\rm an})$ defined 
in \cite{kn} and 
$\eps_{\rm top}$ 
is the natural morphism of topological spaces 
which is denoted by $\tau$ in [loc.~cit.]. 
(See \cite[(2.7.0.1)]{nh2} for the analogous objects 
in the $l$-adic case which can be inserted 
in (\ref{tab:intop}).) 
\par
The contents of this paper are as follows. 
\par  
In \S\ref{sec:logcd} (resp.~\S\ref{sec:lll})
we give the definition of a log convergent topos  
and the system of log universal enlargements 
(resp.~a log convergent linearization functor) 
and give basic properties of them. 
\par 
In \S\ref{sec:llf} we give the compatibility of 
the log convergent linearization functor with 
a closed immersion of log schemes. 
\par 
In \S\ref{sec:vflvc}   
we prove the Poincar\'{e} lemma of the vanishing cycle sheaf 
$R\eps^{\rm conv}_{(X,D)/S*}
(\eps^{{\rm conv}*}_{(X,D)/S}(E))$  
for a coherent crystal $E$ of ${\cal K}_{X/S}$-modules. 
\par 
In \S\ref{sec:lcs} we  give fundamental results for  
log convergent linearization functors 
of smooth schemes with relative SNCD's.  
\par 
In \S\ref{sec:rlct} we review 
the (bi)simplicial topoi of log convergent topoi. 
\par 
In \S\ref{sec:wfcipp} 
we give the definition of 
$(C_{\rm conv}({\cal K}_{(X,D)/S}),P)$  
and prove (\ref{theo:pap}) and (\ref{theo:cteck}).  
\par 
Let $f^{(k)}\col D^{(k)}\lo S$ be the structural morphism. 
Set $f^{\rm conv}_{D^{(k)}/S}
:=f^{(k)}\circ u^{\rm conv}_{D^{(k)}/S}$  
and $f^{\rm conv}_{(X,D)/S}:=f\circ u^{\rm conv}_{(X,D)/S}$.  
In \S\ref{sec:fpw} 
we construct the following weight spectral sequence
\begin{align*}
E_1^{-k,h+k} & = 
R^{h-k}f^{\rm conv}_{D^{(k)}/S*}({\cal K}_{D^{(k)}/S}
\otimes_{\mab Z}
\vp^{(k)}_{{\rm conv}}(D/S))(-k) \tag{1.3.5}\label{ali:wtwtc}\\
{} & \Lo  R^hf^{\rm conv}_{(X,D)/S*}({\cal K}_{(X,D)/S}).  
\end{align*} 
\par 
In \S\ref{sec:fpw} we give the contravariant functoriality of 
the weight-filtered convergent and isozariskian complexes. 
\par 
In \S\ref{sec:bd} we give an explicit description of 
the boundary morphism of the weight spectral sequence 
(\ref{ali:wtwtc}). 
\par 
In \S\ref{sec:ct} we prove (\ref{theo:izz}). 
Set 
$f^{\rm crys}_{D^{(k)}/S}
:=f^{(k)}\circ u^{\rm crys}_{D^{(k)}/S}$ 
and 
$f^{\rm crys}_{(X,D)/S}:=f\circ u^{\rm crys}_{(X,D)/S}$. 
As a corollary of (\ref{theo:izz}),  
(\ref{ali:wtwtc}) turns out to be canonically isomorphic to 
the following weight spectral sequence 
\begin{align*}
E_1^{-k,h+k} & = 
R^{h-k}f^{\rm crys}_{D^{(k)}/S*}({\cal O}_{D^{(k)}/S}
\otimes_{\mab Z}
\vp^{(k)}_{{\rm crys}}(D/S))(-k)\otimes_{\mab Z}{\mab Q} 
\tag{1.3.6}\label{ali:wtwcc}\\
{} & \Lo  R^hf^{\rm crys}_{(X,D)/S*}({\cal O}_{(X,D)/S})
\otimes_{\mab Z}{\mab Q}, 
\end{align*}
which has been constructed in \cite{nh2}. 
As a corollary of the comparison of (\ref{ali:wtwtc}) 
with (\ref{ali:wtwcc}), 
if $X$ is proper over $S_1$, 
then we obtain the $E_2$-degeneration of (\ref{ali:wtwtc}) 
by reducing it to that of (\ref{ali:wtwcc}) 
which we have already obtained in \cite{nh2}.  
In the same section, as another corollary of (\ref{theo:izz}), 
we deduce the filtered base change theorem 
(resp.~filtered K\"{u}nneth formula) 
of $(E_{\rm conv}({\cal K}_{(X,D)/S}),P)$ from 
the filtered base change theorem 
(resp.~filtered K\"{u}nneth formula) of 
$(E_{\rm crys}({\cal O}_{(X,D)/S}),P)$ which 
we have already obtained in [loc.~cit.]. 
\par 
In fact, we give the theorems above 
for a truncated simplicial smooth scheme with 
a truncated simplicial SNCD over $S_1$ 
satisfying a certain condition. 
(A split truncated simplicial smooth scheme 
with a split truncated simplicial SNCD over $S_1$ 
satisfies this condition.) 
\par 
In \S\ref{sec:cptsc} we define the 
filtered complex
$(E_{\rm conv,c}({\cal K}_{(X,D)/S}),P)$ 
in ${\rm D}^+{\rm F}({\cal K}_{X/S})$ 
which can be considered as 
the compact support version 
of $(E_{\rm conv}({\cal K}_{(X,D)/S}),P)$.   
We prove a filtered Poincar\'{e} duality 
by using $(E_{\rm conv,c}({\cal K}_{(X,D)/S}),P)$ 
and $(E_{\rm conv}({\cal K}_{(X,D)/S}),P)$. 
\par 
In a future paper we would like to 
construct an analogous theory 
for a proper SNCL(=simple normal crossing log) scheme 
over a family of log points.

\bigskip
\parno
{\bf Notations.}
(1) The log structure of a log scheme in this paper is 
a sheaf of monoids in the Zariski topos. 
For a log (formal) scheme $S$, 
$\os{\circ}{S}$ denotes 
the underlying (formal) scheme of $S$ and 
${\cal K}_S$ denotes ${\cal O}_S\otimes_{\mab Z}{\mab Q}$. 
More generally, for a ringed topos $({\cal T},{\cal O}_{\cal T})$,  
${\cal K}_{\cal T}$ denotes 
${\cal O}_{\cal T}\otimes_{\mab Z}{\mab Q}$. 
For a morphism $f\col X\lo S$ of log (formal) schemes, 
$\os{\circ}{f}$ denotes the underlying morphism 
$\os{\circ}{X} \lo \os{\circ}{S}$ of (formal) schemes. 
\par
(2) SNCD=simple normal crossing divisor.
\par 
(3) For a morphism $X \lo S$ of log (formal) schemes, 
we denote by 
$\Om^i_{X/S}$ ($=\om^i_{X/S}$ in 
\cite{klog1}) the sheaf of 
(formal) relative logarithmic differential forms on $X/S$
of degree $i$ $(i\in {\mab N})$. 
\par 
(4) For a commutative monoid $P$ with unit element 
and for a formal scheme $S$ 
with an ideal of definition of ${\cal I}\subset {\cal O}_S$, 
we denote by ${\cal O}_S\{P\}$ 
the sheaf $\vpl_n({\cal O}_S[P]/{\cal I}^n{\cal O}_S[P])$ 
in the Zariski topos ${S}_{\rm zar}$ of $S$. 
We denote by $\ul{\rm Spf}_S({\cal O}_S\{P\})$ 
the log scheme whose underlying scheme is 
$\ul{\rm Spf}_{S}({\cal O}_S\{P\})$ 
(with an ideal of definition ${\cal I}{\cal O}_S\{P\}$) 
and whose log structure 
is associated to the natural morphism 
$P \lo {\cal O}_S\{P\}$ of sheaves of monoids in $S_{\rm zar}$. 
When ${\cal I}=0$, we denote 
$\ul{\rm Spf}_{S}({\cal O}_S\{P\})$ by 
$\ul{\rm Spec}_{S}({\cal O}_S\{P\})$.  
\par 
For a log formal scheme $S$ 
with an ideal of definition of ${\cal I}\subset {\cal O}_S$, 
we denote by $\ul{\rm Spec}_S({\cal O}_S/{\cal I})$ 
the log scheme whose underlying scheme is 
$\ul{\rm Spec}_{\os{\circ}{S}}({\cal O}_S/{\cal I})$ 
and whose log structure 
is the inverse image of the log structure of $S$ 
by the natural closed immersion 
$\ul{\rm Spec}_{\os{\circ}{S}}({\cal O}_S/{\cal I})\os{\sus}{\lo} S$. 
\par 
(5) 
For a morphism $f\col X\lo S$ and $i\in {\mab Z}_{\geq 1}$, 
we denote 
$\underset{(i+1)~{\rm pieces}}{\underbrace{
X\times_SX\times_S
\times \cdots \times_SX}}$ by $X(i)$. 

\bigskip
\par\noindent
{\bf Conventions.}
We use the conventions about signs in 
\cite{nh2} and \cite{nh3}.

\section{Log convergent topoi and the systems of 
log universal enlargements}\label{sec:logcd}
\par 
In this section we define the relative versions of basic notions 
about the log convergent topos in \cite{ofo} and \cite{s2} 
which is a log version of the convergent topos in \cite{oc}. 
In \cite{s3} the second named author 
has also developed an analogous theory. 
As in \cite{oc},  \cite{s2} and  \cite{s3}, 
we define a (pre)widening and 
the log universal enlargement of it and 
show basic properties of 
the log universal enlargement. 
\par 
Let ${\cal V}$ be a complete discrete valuation ring 
of mixed characteristics $(0,p)$ 
with perfect residue field $\kap$. 
Let $S$ be a fine log formal scheme 
whose underlying formal scheme is a 
$p$-adic formal ${\cal V}$-scheme 
in the sense of \cite{of}: 
$\os{\circ}{S}$ is a noetherian formal scheme 
over ${\rm Spf}({\cal V})$ 
with the $p$-adic topology 
which is topologically of finite type over ${\rm Spf}({\cal V})$.  
In this paper we always assume that $\os{\circ}{S}$ is 
flat over ${\rm Spf}({\cal V})$. 
We call such a log formal scheme $S$ a fine log flat $p$-adic formal ${\cal V}$-scheme.  
Let $\pi$ be a non-zero element of the maximal ideal 
of ${\cal V}$.  Following \cite{oc}, we denote by $S_1$ 
the exact closed log subscheme of $S$ defined by 
the ideal sheaf $\pi{\cal O}_S$. 
Let $Y$ be a fine log scheme over $S_1$ 
whose underlying scheme is of finite type 
over $\os{\circ}{S}_1$. 
As in \cite[p.~212]{ofo} and \cite[Definition 2.1.8]{s2} 
(see also \cite[I (2.1)]{s3}), 
we consider a quadruple $(U,T,\iota,u)$, 
where $u \col U \lo Y$ is a strict morphism of 
fine log schemes over $S_1$ such that 
$\os{\circ}{u}\col \os{\circ}{U}\lo \os{\circ}{Y}$ is a morphism of schemes 
of finite type,  
$T$ is a fine log noetherian (not necessarily $p$-adic) 
formal scheme over $S$ 
whose underlying formal scheme 
is topologically of finite type over $\os{\circ}{S}$ 
and $\iota \col U \os{\sus}{\lo} T$ is 
a closed immersion over $S$. 
Here we have assumed the condition 
``the strictness of $u$'',  
which has not been assumed 
in \cite[Definition 2.1.1, 2.1.9]{s2} 
and \cite[I (2.1), (2.2)]{s3}. 
We shall use this strictness 
essentially in \S\ref{sec:vflvc} below. 
We call the $(U,T,\iota,u)$  
a {\it prewidening} of $Y/S$. 
If $\os{\circ}{T}$ is $p$-adic, we say that 
$(U,T,\iota,u)$ is a $p$-{\it adic} {\it prewidening} of $Y/S$.  
Let $T_0$ be the maximal reduced log subscheme of 
the exact closed log subscheme of $T$ 
defined by $\pi{\cal O}_T$.  
Let $\wt{T}$ be a log formal scheme whose underlying scheme 
is $\ul{\rm Spf}_{\os{\circ}{T}}
({\cal O}_T/(\pi{\textrm -}{\rm torsion}))$ 
and whose log structure is 
the inverse image of the log structure of $T$. 
By imitating \cite[(1.1)]{oc} and 
\cite[Definition 2.1.1, 2.1.9]{s2}, 
we obtain the notion of a widening, an exact (pre)widening 
and an enlargement of $Y/S$ as follows (see also \cite[I (2.2)]{s3}). 
\par 
(1) We say that 
a ($p$-adic) prewidening 
$(U,T, \iota,u)$ of $Y/S$
is a ($p$-{\it adic}) {\it widening} of $Y/S$ if 
$\os{\circ}{\iota} \col \os{\circ}{U} \os{\sus}{\lo} \os{\circ}{T}$ 
is defined by an ideal sheaf of definition of $\os{\circ}{T}$. 
\par 
(2)  We say that a ($p$-adic) (pre)widening 
$(U,T, \iota,u)$ of $Y/S$
is {\it exact} if $\iota$ is exact. 
\par 
(3) We say that an exact widening 
$(U,T,\iota,u)$ of $Y/S$ is 
an {\it enlargement} of $Y/S$ if $\os{\circ}{T}$ is flat over 
${\rm Spf}({\cal V})$ and if $T_0 
\subset U$; 
the latter condition is equivalent to the following condition  
\begin{equation*} 
({\rm Ker}({\cal O}_T \lo {\cal O}_U))^N \subset \pi{\cal O}_T  
\quad (\exists N\in {\mab N}).  
\tag{2.0.1}\label{eqn:kotu} 
\end{equation*} 
(As noted in \cite[(1.2)]{oc}, the topology of $\os{\circ}{T}$ is $p$-adic.) 
\par 
We define a {\it morphism of {\rm (}$p$-adic{\rm )} 
{\rm (}exact{\rm )} {\rm (}pre{\rm )}widenings and 
a morphism of  enlargements} in an obvious way. 
For a morphism 
$f\col (U,T,\iota,u) \lo (U',T',\iota',u')$ of 
($p$-adic) (exact) (pre)widenings 
or enlargements of $Y/S$, we denote the morphism 
$T\lo T'$ of fine log ($p$-adic) formal schemes over $S$ 
by $f\col T \lo T'$ by abuse of notation. 
As in the log crystalline case in \cite[(5.2)]{klog1},  
for a morphism $(U,T,\iota,u) \lo (U',T',\iota',u')$ 
of exact widenings of $Y/S$, we can easily show 
that the morphism $T \lo T'$ is strict by using 
the strictness of the morphisms $U\lo Y$ and $U'\lo Y$. 
As usual, we denote $(U,T,\iota,u)$ simply by $T$ 
if there is no risk of confusion. 
\par  
By imitating \cite[p.~135]{oc} and 
\cite[Definition (2.1.3)]{s2}, we define 
the log convergent site 
${\rm Conv}(Y/S)(={\rm Conv}_{\rm zar}(Y/S))$ 
as follows (see also \cite[I (2.4)]{s3}):  
the objects of ${\rm Conv}(Y/S)$
are the enlargements of $Y/S$; the morphisms 
of ${\rm Conv}(Y/S)$ are the morphisms of enlargements of 
$Y/S$; for an object $(U,T,\iota,u)$ of 
${\rm Conv}(Y/S)$, a family 
$\{(U_{\lam},T_{\lam},\iota_{\lam},u_{\lam})
\lo (U,T,\iota,u)\}_{\lam}$ 
of morphisms in  ${\rm Conv}(Y/S)$ is defined to 
be a covering family of $(U,T,\iota,u)$ if 
$T=\bigcup_{\lam}T_{\lam}$ is a Zariski open covering and if 
the natural morphism $U_{\lam}\lo T_{\lam}\times_TU$ is an isomorphism.  
Let $(Y/S)_{\rm conv}$ 
be the topos associated to the site ${\rm Conv}(Y/S)$.  
An object $E$ in $(Y/S)_{\rm conv}$ 
gives a Zariski sheaf $E_T$ 
in the Zariski topos $T_{\rm zar}$ of $T$ 
for $(U,T, \iota,u)\in {\rm Ob}({\rm Conv}(Y/S))$ 
and a morphism 
$\rho_f \col f^{-1}(E_{T'}) \lo E_T$ 
for a morphism 
$f\col T \lo T'$ of enlargements of $Y/S$. 
As in \cite{oc} and \cite{s2}, 
for a prewidening $T$ of $Y/S$, 
we denote simply by $T$ 
the following sheaf 
\begin{equation*} 
h_T \col  {\rm Conv}(Y/S)\owns T' \lom 
\{\text{the morphisms $T' \lo T$ of prewidenings 
of $Y/S$}\} \in ({\rm Sets})  
\end{equation*} 
on ${\rm Conv}(Y/S)$.  
Let ${\cal K}_{Y/S}$ be an isostructure sheaf in 
$(Y/S)_{\rm conv}$ defined by 
the following formula:  
$$\Gam((U,T,\iota,u),{\cal K}_{Y/S}):=
\Gam(T, {\cal O}_T)\otimes_{\mab Z}{\mab Q}\quad ((U,T, \iota,u)\in {\rm Ob}({\rm Conv}(Y/S)).$$ 
\par 
Let $f \col Y \lo S_1\os{\sus}{\lo} S$ 
be the structural morphism. 
Let $Y_{\rm zar}$ be the Zariski topos of $Y$ and 
let 
\begin{equation*}
u^{\rm conv}_{Y/S} \col (Y/S)_{\rm conv} \lo Y_{\rm zar} 
\tag{2.0.2}\label{eqn:uwks}
\end{equation*}
be a morphism of topoi 
characterized by a formula 
$u^{{\rm conv}*}_{Y/S}(E)(T)=
\Gam(U,u^{-1}(E))$ for a sheaf  
$E$ in $Y_{\rm zar}$ and an enlargement $T=(U,T,\iota,u)$ of $Y/S$. 
This morphism is 
the obvious relative version of 
the natural projection in 
\cite[p.~146]{oc} and \cite[pp.90--91]{s2}.  
Set $f^{\rm conv}_{Y/S}:=f\circ u^{\rm conv}_{Y/S}$. 
The morphism (\ref{eqn:uwks}) induces the following 
morphism of ringed topoi: 
\begin{equation*}
u^{\rm conv}_{Y/S} \col 
((Y/S)_{\rm conv},{\cal K}_{Y/S})
\lo (Y_{\rm zar}, f^{-1}({\cal K}_S)). 
\tag{2.0.3}\label{eqn:uwksr}
\end{equation*} 
\par
Let ${\cal V}'$ be another complete discrete valuation ring 
of mixed characteristics $(0,p)$ with perfect residue field. 
Let $\pi'$ be a non-zero element of the maximal ideal 
of ${\cal V}'$. Let ${\cal V} \lo {\cal V}'$ be a morphism of 
commutative rings with unit elements inducing a morphism 
${\cal V}/\pi{\cal V} \lo {\cal V}'/\pi' {\cal V}'$. 
Let $S'$ be a fine log flat $p$-adic formal ${\cal V}'$-scheme.  
Set $S'_1:=\ul{\rm Spec}_{S'}({\cal O}_{S'}/\pi'{\cal O}_{S'})$. 
Let $Y'$ be a fine log scheme over $S'_1$ such that 
the underlying structural morphism 
$\os{\circ}{Y}{}'\lo \os{\circ}{S}{}'_1$ is of finite type. 
For a commutative diagram 
\begin{equation*} 
\begin{CD}
Y' @>>> S'_1 @>{\subset}>> S' @>>>{\rm Spf}({\cal V}')\\
@V{g}VV @VVV  @VVV @VVV \\ 
Y    @>>> S_1 @>{\subset}>> S  @>>>{\rm Spf}({\cal V}) 
\end{CD}
\tag{2.0.4}\label{cd:yypssp}
\end{equation*}
of fine log (formal) schemes, we have a natural morphism 
\begin{equation*} 
g_{\rm conv} \col ((Y'/S')_{\rm conv},{\cal K}_{Y'/S'}) 
\lo ((Y/S)_{\rm conv},{\cal K}_{Y/S})  
\end{equation*} 
of ringed topoi. 
(We can check that $g^*_{\rm conv}$ commutes with the projective limit of 
projective system of finitely many sheaves in $(Y/S)_{\rm conv}$ as in the crystalline case 
in \cite{bob}.)
We remark 
that there is a point of difference in the nontrivial log case 
from the trivial log case in 
\cite[(2.2.1)]{of}: 
for an enlargement $(U',T',\iota',u')$ of  $Y'/S'$, 
the quadruple $(U',T',\iota',g\circ u')$ is 
not necessarily an enlargement of  $Y/S$ 
since $g\circ u'$ is not necessarily strict. 
\par 
The third statement in the following proposition 
tells us that the choice of $\pi$ is not important 
for the log convergent cohomology of 
the trivial coefficient as in the crystalline case 
(\cite[III Th\'{e}or\`{e}me 2.3.4 
(Th\'{e}or\`{e}me d'invariance)]{bb}).  

\begin{prop}\label{prop:toi} 
Assume that ${\cal V}'={\cal V}$ and that 
the morphism ${\cal V} \lo {\cal V}'$ is the identity of ${\cal V}$ 
$(\pi'$ may be different from $\pi)$. 
Assume also that $S'=S$ and $Y'=Y\times_{S_1}S'_1$. 
Denote $g$ in {\rm (\ref{cd:yypssp})} by $i$. 
Let $f' \col Y' \lo S$ 
be the structural morphism. 
Then the following hold$:$ 
\par 
$(1)$ The direct image $i_{{\rm conv}*}$ 
from the category of abelian sheaves in 
$(Y'/S')_{\rm conv}$ to 
the category of abelian sheaves in 
$(Y/S)_{\rm conv}$ 
is exact. 
\par 
$(2)$ For a bounded below filtered complex 
$(E^{\bul},P)$ of ${\cal K}_{Y'/S}$-modules, 
\begin{equation*} 
Rf^{\rm conv}_{Y/S*}(i_{{\rm conv}*}((E^{\bul},P)))
=Rf'{}^{\rm conv}_{Y'/S*}((E^{\bul},P)) 
\tag{2.1.1}\label{eqn:ysep}
\end{equation*} 
in ${\rm D}^+{\rm F}({\cal K}_S)$. 
\par 
$(3)$ 
\begin{equation*} 
Rf^{\rm conv}_{Y/S*}({\cal K}_{Y/S})=
Rf'{}^{\rm conv}_{Y'/S*}({\cal K}_{Y'/S}) 
\tag{2.1.2}\label{eqn:ksep}
\end{equation*} 
in $D^+({\cal K}_S)$. 
\end{prop}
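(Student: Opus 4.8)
The plan is to describe $i_{{\rm conv}*}$ completely explicitly on realizations over enlargements, to deduce $(1)$ from that description, and then to obtain $(2)$ and $(3)$ by the formalism of composite derived functors. Throughout I use that the existence of the fibre product $Y'=Y\times_{S_1}S'_1$ entails a morphism $S'_1\to S_1$ over $S$, i.e.\ $\pi{\cal O}_S\subset\pi'{\cal O}_S$, so that $i\col Y'\hookrightarrow Y$ is a closed immersion; moreover, since $v(\pi')\leq v(\pi)$ forces $(\pi'{\cal O}_Y)^m\subset\pi{\cal O}_Y=0$ for a suitable $m$, the ideal $\pi'{\cal O}_Y$ is nilpotent and $\os{\circ}{i}$ is a homeomorphism. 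The key point is a pair of adjoint functors of enlargements. For an enlargement $(U,T,\iota,u)$ of $Y/S$, set $U_{Y'}:=U\times_YY'$ and $\psi(U,T,\iota,u):=(U_{Y'},T,\iota\circ{\rm pr},u\times_YY')$. This is an enlargement of $Y'/S'$: the immersion $U_{Y'}\hookrightarrow U\hookrightarrow T$ is exact (a strict closed immersion followed by an exact one) and, since $U_{Y'}\hookrightarrow U$ has nilpotent ideal and $\os{\circ}{T}$ is unchanged, $\os{\circ}{(U_{Y'})}$ is still an ideal-of-definition subscheme of $\os{\circ}{T}$; finally $\os{\circ}{T}$ is still flat over ${\cal V}$, and the condition $T_0\subset U_{Y'}$ holds because $T_0=V(\pi{\cal O}_T)_{\rm red}=V(\pi'{\cal O}_T)_{\rm red}$ is reduced with $\pi'{\cal O}_{T_0}$ nilpotent, hence $T_0\to Y$ factors through $Y'=V(\pi'{\cal O}_Y)$. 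Thus $\psi\col{\rm Conv}(Y/S)\to{\rm Conv}(Y'/S')$ is a functor; it is continuous (it does not touch the $T$-component, so it sends covering families to covering families), and it has a left adjoint $\phi$, namely $(U',T',\iota',u')\mapsto(U',T',\iota',i\circ u')$, as one checks using that $Y'\times_YY'=Y'$ (whence $\psi\circ\phi={\rm id}$ and $\phi\circ\psi\Rightarrow{\rm id}$). By the construction of $g_{\rm conv}$ recalled above this $\psi$ is precisely what computes $i_{{\rm conv}*}$: for $E'\in(Y'/S')_{\rm conv}$ and an enlargement $T=(U,T,\iota,u)$ of $Y/S$ one has $(i_{{\rm conv}*}E')_T=E'_{\psi(U,T,\iota,u)}$ as Zariski sheaves on the common space $\os{\circ}{T}$, with no sheafification intervening.

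From this formula, part $(1)$ is immediate. Both $(Y/S)_{\rm conv}$ and $(Y'/S')_{\rm conv}$ admit the conservative family of exact functors $E\mapsto (E_T)_t$, indexed by enlargements $T$ and points $t$ of $\os{\circ}{T}$; the displayed formula shows that $E'\mapsto\bigl((i_{{\rm conv}*}E')_T\bigr)_t=\bigl(E'_{\psi(T)}\bigr)_t$ is itself one of the stalk functors of $(Y'/S')_{\rm conv}$, so it is exact. Since a family of exact functors that jointly reflect exactness detects the exactness of $i_{{\rm conv}*}$, the functor $i_{{\rm conv}*}$ is exact. (Equivalently, $\psi$ is a morphism of sites and $i_{{\rm conv}*}=(\psi^{\rm top})^{*}$ for the induced morphism of topoi $\psi^{\rm top}\col(Y/S)_{\rm conv}\to(Y'/S')_{\rm conv}$, which makes exactness manifest.) In particular $i_{{\rm conv}*}$ has the exact left adjoint $i_{\rm conv}^{-1}$, hence carries injective objects, and likewise filtered injective objects, to injective objects.

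For $(2)$, I would first record the compatibility $u^{\rm conv}_{Y/S}\circ i_{\rm conv}=i_{\rm zar}\circ u^{\rm conv}_{Y'/S}$ (standard, by the defining formula for $u^{\rm conv}$), which together with $f'=f\circ i$ gives $f^{\rm conv}_{Y/S}\circ i_{\rm conv}=f'{}^{\rm conv}_{Y'/S}$ as morphisms of ringed topoi. Because $i_{{\rm conv}*}$ is exact it already represents $Ri_{{\rm conv}*}$, and because it sends filtered injectives to filtered injectives, hence to $f^{\rm conv}_{Y/S*}$-acyclics, the composite-functor identity degenerates to $Rf^{\rm conv}_{Y/S*}\circ i_{{\rm conv}*}=Rf'{}^{\rm conv}_{Y'/S*}$ on ${\rm D}^+{\rm F}({\cal K}_{Y'/S})$, which is exactly $(\ref{eqn:ysep})$. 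Finally $(3)$ is the case $(E^{\bul},P)=({\cal K}_{Y'/S},{\rm trivial})$ of $(2)$, once we note $i_{{\rm conv}*}({\cal K}_{Y'/S})={\cal K}_{Y/S}$: indeed $(i_{{\rm conv}*}{\cal K}_{Y'/S})_T=({\cal K}_{Y'/S})_{\psi(U,T)}=\Gam(T,{\cal O}_T)\otimes_{\mab Z}{\mab Q}=({\cal K}_{Y/S})_T$, since the $U$-component plays no role in the definition of the isostructure sheaf. Forgetting filtrations yields $(\ref{eqn:ksep})$ in $D^+({\cal K}_S)$.

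The main obstacle is part $(1)$, and within it the two verifications underpinning the explicit formula for $i_{{\rm conv}*}$: first, that $\psi(U,T,\iota,u)$ really is an enlargement of $Y'/S'$ (the ideal-of-definition and $T_0\subset U_{Y'}$ conditions, which is where the hypothesis $v(\pi')\leq v(\pi)$, i.e.\ the existence of $Y'=Y\times_{S_1}S'_1$, is genuinely used and where one must compare $V(\pi{\cal O}_T)_{\rm red}$ with $V(\pi'{\cal O}_T)_{\rm red}$); and second, that $(i_{{\rm conv}*}E')_T$ equals $E'_{\psi(U,T)}$ without any sheafification, which rests on $\psi$ not altering the $T$-component together with the precise construction of $g_{\rm conv}$. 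One also needs $(Y/S)_{\rm conv}$ to have enough points of the asserted "enlargement-and-point" form (or, alternatively, that $\psi$ qualifies as a morphism of sites even though the convergent site lacks general fibre products). These are facts of the type developed in \cite{oc}, \cite{s2} and in the earlier sections. Everything after $(1)$ is purely formal.
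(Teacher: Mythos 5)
Your proof is correct and follows essentially the same route as the paper: you identify $i^*_{\rm conv}((U,T,\iota,u))$ with the enlargement $(U\times_YY',T,\iota',u')$ of $Y'/S'$, deduce the pointwise formula $(i_{{\rm conv}*}E)_{(U,T,\iota,u)}=E_{(U',T,\iota',u')}$ and hence exactness, and then obtain (2) from $Ri_{{\rm conv}*}=i_{{\rm conv}*}$ and (3) from $i_{{\rm conv}*}({\cal K}_{Y'/S'})={\cal K}_{Y/S}$. The only difference is that you spell out the verification the paper calls ``immediate'' (that $(U',T,\iota',u')$ satisfies the enlargement conditions, in particular $T_0\subset U'$), which is a welcome but not divergent elaboration.
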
 
\begin{proof} 
(1): For an object $(U,T,\iota,u)$ of ${\rm Conv}(Y/S)$, 
set $U':=U\times_{Y}Y'$, and 
let $u'\col U'\lo Y'$ be the projection and 
$\iota' \col U' \lo T$ the composite morphism 
$U' \lo U \os{\iota}{\lo} T$. 
Then it is immediate to check that $(U',T,\iota',u')$ 
is an object of ${\rm Conv}(Y'/S')$ and  that 
\begin{equation*} 
i^*_{\rm conv}((U,T,\iota,u))= (U',T,\iota',u'). 
\tag{2.1.3}\label{eqn:utiu} 
\end{equation*} 
Hence, for an abelian sheaf $E$ in 
$(Y'/S')_{\rm conv}$, 
\begin{equation*} 
i_{{\rm conv}*}(E)_{(U,T,\iota,u)}= E_{(U',T,\iota',u')}. 
\tag{2.1.4}\label{eqn:iuiue} 
\end{equation*}
In particular, $i_{{\rm conv}*}$ is exact. 
\par 
(2) By (1) we see that 
$Ri_{{\rm conv}*} =i_{{\rm conv}*}$. 
Thus we obtain (\ref{eqn:ysep}). 
\par 
(3): By (\ref{eqn:iuiue}) we have the following equality: 
\begin{equation*} 
i_{{\rm conv}*}({\cal K}_{Y'/S'})= {\cal K}_{Y/S}. 
\tag{2.1.5}\label{eqn:iuoe} 
\end{equation*} 
Hence (\ref{eqn:ksep}) follows from (\ref{eqn:ysep}). 
\end{proof} 

\par 
For a fine log flat $p$-adic formal ${\cal V}$-scheme $T$, 
let  ${\rm Coh}({\cal O}_T)$ be 
the category of coherent ${\cal O}_T$-modules. 
Let  ${\rm Coh}({\cal K}_T)$ be the full subcategory 
of the category of ${\cal K}_T$-modules 
whose objects are isomorphic to $K\otimes_{\cal V}{\cal F}$ 
for some ${\cal F}\in {\rm Coh}({\cal O}_T)$
(\cite[(1.1)]{of}). 
Following \cite{of}, \cite{oc}, \cite{ofo} and \cite{s2},  
we say that a ${\cal K}_{Y/S}$-module $E$ is 
a coherent isocrystal 
if $E_T\in {\rm Coh}({\cal K}_T)$ 
for any enlargement $T$ of $Y/S$  and if 
$\rho_f \col f^*(E_{T'}) \lo E_T$ is an isomorphism 
for any morphism $f\col T\lo T'$ of enlargements of  $Y/S$. 
\par 
Next we recall the relative version of 
the system of universal enlargements in \cite[(2.1)]{oc} 
and \cite[Definition 2.1.22]{s2} 
with an immediate generalization(=without using the 
existence of an exactification of a (pre)widening). 
(See \cite[I (2.10), (2.11), (2.12)]{s3} 
for a slightly different formulation using the existence of the exactification; 
we recall it in (\ref{prop:exad}) and 
(\ref{rema:rfite}) below.)
\par  
Let $(U,T,\iota,u)$ be 
an enlargement of $Y/S$. 
We say that $(U,T,\iota,u)$ is 
an enlargement of $Y/S$ {\it with radius} 
$\leq \vert \pi \vert$
if ${\rm Ker}({\cal O}_T \lo {\cal O}_U) \subset \pi{\cal O}_T$. 
The following is a log version of \cite[(2.1)]{oc} 
and an immediate generalization of 
\cite[Definition 2.1.22, Lemma 2.1.23]{s2} 
for the case $n=1$ in [loc.~cit.], 
which is a log version of \cite[(2.3)]{of} and \cite[(2.1)]{oc}:

\begin{prop}\label{prop:exue} 
Let $T=(U,T,\iota,u)$ be 
a $(p$-adic$)$ $($pre$)$widening of  
$Y/S$. Then there exists an enlargement 
$T_1=(U_1,T_1,\iota_1,u_1)$ of $Y/S$ 
with radius $\leq \vert \pi \vert$ 
with a morphism $T_1 \lo T$ of 
$(p$-adic$)$ $($pre$)$widenings of $Y/S$ 
satisfying the following universality$:$ 
any morphism $`T:=(`U,`T,`\iota,`u) \lo T$ 
of $(p$-adic$)$ $($pre$)$widenings of $Y/S$ 
from an enlargement of $Y/S$ 
with radius $\leq \vert \pi \vert$ 
factors through 
a unique morphism   
$`T \lo T_1$ of enlargements of $Y/S$. 
\end{prop}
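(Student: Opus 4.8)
The plan is to imitate the construction of the universal enlargement in \cite[(2.1)]{oc} and \cite[Lemma 2.1.23]{s2}, carried out directly on the $(p$-adic$)$ $($pre$)$widening $T$. Since $T_1$ is to be characterized by the stated universal property, the construction is Zariski local on $\os{\circ}{T}$ and the local pieces glue, so I may assume that $\os{\circ}{T}={\rm Spf}(A)$ with $A$ noetherian and that ${\cal I}:={\rm Ker}({\cal O}_T\lo {\cal O}_U)$ is generated by finitely many elements $a_1,\ldots,a_r\in A$. First I would form the $A$-algebra
\begin{equation*}
B_0:=A[x_1,\ldots,x_r]/(\pi x_1-a_1,\ldots,\pi x_r-a_r),
\end{equation*}
which is the universal one in which each $a_i$ becomes divisible by $\pi$, let $B_1$ be the quotient of $B_0$ by the ideal of elements annihilated by a power of $p$, and let $B:=\vpl_n B_1/p^nB_1$ be the $p$-adic completion of $B_1$. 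Then I would set $\os{\circ}{T}_1:={\rm Spf}(B)$, equip $T_1$ with the log structure pulled back from $T$ along $\os{\circ}{T}_1\lo \os{\circ}{T}$, let $U_1$ be the exact closed log subscheme of $T_1$ defined by $\pi{\cal O}_{T_1}$ (which contains ${\cal I}{\cal O}_{T_1}$, since $a_i=\pi x_i$ in $B$), let $\iota_1\col U_1\os{\sus}{\lo}T_1$ be the inclusion, and let $u_1\col U_1\lo Y$ be the composite of $u$ with the natural morphism $U_1\lo U$ coming from the fact that ${\cal O}_{U_1}=B/\pi B$ is a quotient of ${\cal O}_U[x_1,\ldots,x_r]$; the morphism $T_1\lo T$ is the one induced by $A\lo B$.

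\par
Next I would check that $(U_1,T_1,\iota_1,u_1)$ is an enlargement of $Y/S$ with radius $\leq\vert\pi\vert$ and that $T_1\lo T$ is a morphism of $(p$-adic$)$ $($pre$)$widenings. Flatness of $\os{\circ}{T}_1$ over ${\rm Spf}({\cal V})$ holds because the $p$-power torsion has been removed and the $p$-adic completion of a noetherian ring is flat over it; the radius condition ${\cal I}{\cal O}_{T_1}\subset\pi{\cal O}_{T_1}$ is immediate from the construction, and the inclusion (\ref{eqn:kotu}) for $(U_1,T_1,\iota_1,u_1)$ holds with $N=1$ because ${\rm Ker}({\cal O}_{T_1}\lo{\cal O}_{U_1})=\pi{\cal O}_{T_1}$; the exactness of $\iota_1$ and the strictness of $u_1$ are checked as in \cite[Lemma 2.1.23]{s2}. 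One further point is that $\os{\circ}{T}_1$ carries the $p$-adic topology even when $T$ is not $p$-adic: if ${\cal J}\subset A$ is an ideal of definition, then ${\cal J}^n\subset{\cal I}$ for some $n$ (as $\os{\circ}{U}$ is a scheme, so ${\cal I}$ is open), whence ${\cal J}^nB_1\subset{\cal I}B_1\subset\pi B_1$, so the ${\cal J}$-adic and $p$-adic topologies of $B_1$ coincide and $A\lo B$ is continuous; when $T$ is $p$-adic, $T_1$ is visibly a $p$-adic prewidening.

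\par
For the universality, let $T'=(U',T',\iota',u')\lo T$ be a morphism of $(p$-adic$)$ $($pre$)$widenings with $T'$ an enlargement of radius $\leq\vert\pi\vert$, and let $\phi\col A\lo{\cal O}_{T'}$ be the induced homomorphism. Compatibility of $T'\lo T$ with $U'$ and $U$ together with the radius condition on $T'$ give $\phi({\cal I})\subset{\rm Ker}({\cal O}_{T'}\lo{\cal O}_{U'})\subset\pi{\cal O}_{T'}$, so $\phi(a_i)=\pi b_i$ for uniquely determined elements $b_i\in{\cal O}_{T'}$, since $\pi$ is a nonzero-divisor in the ${\cal V}$-flat ring ${\cal O}_{T'}$. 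The assignment $x_i\mapsto b_i$ extends $\phi$ to a homomorphism $B_0\lo{\cal O}_{T'}$, which factors through $B_1$ because ${\cal O}_{T'}$ is $p$-torsion free and then through $B$ because ${\cal O}_{T'}$ is $p$-adically complete. The resulting morphism $T'\lo T_1$ is compatible with the morphisms to $T$, carries $U'$ into $U_1$ because $\pi{\cal O}_{U'}=0$, and is the unique such morphism by the uniqueness of the $b_i$; gluing the local constructions along these universal properties produces the desired $T_1$. The flatness and completion bookkeeping above is routine, and the step that genuinely uses the log hypotheses --- and where the argument departs from the classical non-log statement of \cite{oc} --- is the verification that $\iota_1$ is exact and $u_1$ strict, which I would carry out as in \cite{s2}, working directly with $T$ rather than passing to an exactification.
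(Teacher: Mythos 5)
Your construction and your verification of the universal property are, for an \emph{exact} closed immersion $\iota$, exactly the argument the paper records (compare (\ref{rema:exdras}), whose description $(A[t_{\ul{m}}]/((f^{\ul{m}}-\pi t_{\ul{m}})+(p{\textrm -}{\rm torsion})))^{\wh{}}$ is your $B$ for $n=1$, and which is stated only under the hypothesis that $\iota$ is exact). The gap is precisely the step you defer to the end and announce you will carry out ``working directly with $T$ rather than passing to an exactification'': that step cannot be carried out on $T$ itself. You endow $T_1$ with the pull-back of $M_T$ and take $U_1$ to be the exact closed log subscheme cut out by $\pi$, so $M_{U_1}$ is forced to be the pull-back of $M_T$. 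But an enlargement must also have $u_1$ \emph{strict}, i.e. $M_{U_1}\simeq u_1^*(M_Y)=(U_1\to U)^*(M_U)$, and when $\iota$ is not exact the natural map $\iota^*(M_T)\lo M_U$ is not an isomorphism, so these two requirements are incompatible. Concretely, in the situation of (\ref{rema:utne}) ($T={\cal Y}(i)$ with the diagonal immersion of $Y$), the pull-back of $M_T$ to a point of $U_1$ has $M/{\cal O}^*\simeq{\mab N}^{i+1}$ while $u^*(M_Y)$ has $M/{\cal O}^*\simeq{\mab N}$; whichever log structure you put on $U_1$, either $\iota_1$ fails to be exact or $u_1$ fails to be strict, so $(U_1,T_1,\iota_1,u_1)$ is not an enlargement. (The underlying formal scheme of your $T_1$ is fine; the failure is purely in the log structure.)

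The paper avoids this by first replacing $T$, Zariski locally, by the exactification $T''$ of $\iota$ as in (\ref{cd:utut}) --- a log \'etale modification supplied by \cite[(5.6)]{klog1}, cf.~(\ref{prop:exad}) --- and only then performing the blow-up/completion; the log structure of $T_1$ is pulled back from $T''$, whose restriction to $U$ \emph{is} $M_U$, so both the exactness of $\iota_1$ and the strictness of $u_1$ come for free. The universal property survives this replacement because the structural immersion of any enlargement of radius $\leq\vert\pi\vert$ is exact and homeomorphic, so a morphism $`T\lo T$ of prewidenings factors uniquely through $T''$ by the adjunction of (\ref{prop:exad}) (this is the content of (\ref{rema:rfite})). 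If you insert this exactification step at the start of your argument, the rest of what you wrote goes through essentially verbatim.
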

\begin{proof} 
Because the question is local, we may assume 
that there exists an exactification of $\iota$ 
(cf.~\cite[(5.6)]{klog1}), that is, 
we may have the following commutative diagram 
\begin{equation*} 
\begin{CD} 
U@>{\sus}>> T''\\
@| @VVV \\
U@>{\sus}>> T, 
\end{CD}
\tag{2.2.1}\label{cd:utut}
\end{equation*}
where the upper horizontal morphism is 
an exact closed immersion and 
the right vertical morphism is formally log \'{e}tale. 
In this case, (\ref{prop:exue}) follows from 
the relative version of   
\cite[Definition 2.1.22, Lemma 2.1.23]{s2} 
(cf.~\cite[(2.3)]{of}, \cite[(2.1)]{oc}). 
For the completeness of this paper, we give 
the construction of $(U_1,T_1,\iota_1,u_1)$ as follows. 
\par 
Set $\os{\circ}{\wt{T''}}:=
\ul{\rm Spf}_{\os{\circ}{T}{}''}
({\cal O}_{T''}/(\pi{\textrm -}{\rm torsion}))$ 
and $\os{\circ}{U}{}'':=\os{\circ}{U}
\times_{\os{\circ}{T}}\os{\circ}{\wt{T''}}$. 
Let ${\cal I}''$ be the defining ideal sheaf of the immersion 
$\os{\circ}{U}{}'' \os{\sus}{\lo}  \os{\circ}{\wt{T''}}$. 
Take a formal blow up of $\os{\circ}{\wt{T''}}$ with respect to 
the ideal sheaf ${\cal I}''+\pi {\cal O}_{\os{\circ}{\wt{T''}}}$. 
Let $\os{\circ}{B}(U,T)$ be the $p$-adic completion of 
this formal blowing up and let $\os{\circ}{B}(U,T) \lo \os{\circ}{T''}$ 
be the natural morphism. 
Endow $\os{\circ}{B}(U,T)$ with 
the inverse image of the log structure of $T''$. 
Let $B(U,T)$ be the resulting log formal scheme. 
Let $T_1$ be the maximal log formal open subscheme of 
$B(U,T)$ such that 
${\cal I}''{\cal O}_{B(U,T)} \subset \pi {\cal O}_{B(U,T)}$. 
Set $U_1:=U\times_TT_1$. 
Then we have a natural immersion 
$\iota_1 \col U_1 \os{\sus}{\lo} T_1$ and 
a natural morphism $u_1 \col U_1 \lo Y$. 
The quadruple $(U_1,T_1,\iota_1,u_1)$ 
is a desired enlargement of $Y/S$ 
with radius $\leq \vert \pi \vert$. 
\end{proof}

\par 
Now, as in \cite{of}, \cite{oc} and \cite{s2}, 
by replacing the ideal sheaf ${\cal I}''$ in the proof of (\ref{prop:exue})
by ${\cal I}''{}^n$ 
$(n\in {\mab Z}_{\geq 1})$, 
we have the inductive system $\{T_n\}_{n=1}^{\infty}$ 
($T_n=(U_n, T_n,\iota_n,u_n)$). 
We call $\{T_n\}_{n=1}^{\infty}$ 
the {\it system of the universal enlargements} 
of $(U,T,\iota,u)$.  
By using the condition (\ref{eqn:kotu}) and 
by the definition of $T_n$,  
we have the following equality 
\begin{equation*}
h_T=\vil_{n}h_{T_n} 
\tag{2.2.2}\label{eqn:chynhn}
\end{equation*}
as in the proof of 
\cite[(2.1)]{oc} and \cite[Lemma 2.1.23]{s2}.  
We denote the log formal scheme $T_n$ 
by ${\mathfrak T}_{U,n}(T)$. 
By abuse of notation, we also denote 
the enlargement $(U_n,T_n,\iota_n,u_n)$ 
by ${\mathfrak T}_{U,n}(T)$.  
(In \cite{oc} 
(the log structures of $Y$ and $T$ are trivial there), 
Ogus has distinguished the quadruple $T_{Y,n}((U,T,u)):=(U_n,T_n,\iota_n,u_n)$ 
($T_{Y,n}((T,U,u)):=(U_n,T_n,\iota_n,u_n)$ in his notation)
from the formal scheme $T_{U,n}(T):=T_n$.) 

\begin{rema}\label{rema:exdras}
Let the notations be as above. 
Assume that $\os{\circ}{T}$ is an affine formal scheme 
${\rm Spf}(A)$ and that $\iota$ is an exact closed immersion. 
Let $I\subset A$ be the defining ideal of $\os{\circ}{\iota}$.
Let $f_1,\ldots, f_r$ $(r\in {\mab Z}_{\geq 1})$ 
be a system of generators of $I$. 
For $\ul{m}:=(m_1,\ldots,m_r)\in {\mab N}^r$, set 
$\vert \ul{m} \vert :=\sum_{j=1}^{r}m_j$ and  
$f^{\ul{m}}:=f^{m_1}_1\cdots f^{m_r}_r$. 
For $n\in {\mab Z}_{\geq 1}$ and 
$\ul{m}\in {\mab N}^r$ 
with $\vert \ul{m}\vert=n$, 
let $t_{\ul{m}}$ be independent variables.  
Then, in the proof of \cite[(2.3)]{of}, Ogus has (essentially) given  
the following description: 
\begin{align*}
\Gam({\mathfrak T}_{U,n}(T),
{\cal O}_{{\mathfrak T}_{U,n}(T)})
&=(A[t_{\ul{m}}~\vert~\ul{m} 
\in {\mab N}^r,\vert\ul{m}\vert =n]
/((f^{\ul{m}}-\pi t_{\ul{m}})+
(p{\textrm -}{\rm torsion})))^{\wh{}}\tag{2.3.1}\label{eqn:ldra} \\
&=\wh{A}\{t_{\ul{m}}~\vert~\ul{m} 
\in {\mab N}^r,\vert\ul{m}\vert =n\}
/((f^{\ul{m}}-\pi t_{\ul{m}})+
(p{\textrm -}{\rm torsion})), 
\end{align*}  
where the upper $\,\,\wh{~}\,\,$ on the right hand side 
means the $p$-adic completion.  
\end{rema}

\par 
The following is a relative version of 
\cite[Lemma 2.1.25]{s2} 
which is a log version of \cite[(2.4.4)]{of} 
(see also \cite[I (2.15)]{s3}): 

\begin{prop}\label{prop:fc} 
Let $(U,T,\iota,u)$ be a $(p$-adic$)$ prewidening of $Y/S$.  
Let $\wh{T}$ be the formal completion of $T$ along $U$ 
$(\wh{T}$ is, by definition, the log scheme whose underlying scheme 
is the formal completion of $\os{\circ}{T}$ along $\os{\circ}{U}$ 
and whose log structure is 
the inverse image of that of $T)$. 
Then the natural morphism 
${\mathfrak T}_{U,n}(\wh{T}) \lo {\mathfrak T}_{U,n}(T)$ 
$(n\in {\mab Z}_{\geq 1})$ 
of enlargements of $Y/S$ is an isomorphism. 
\end{prop}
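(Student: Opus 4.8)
\textbf{Proof proposal for Proposition \ref{prop:fc}.}
The plan is to reduce everything to the affine, exact case, where one can use the explicit description of the universal enlargement in Remark \ref{rema:exdras}, and then observe that the formal completion along $\os{\circ}{U}$ does not change the relevant data. First I would note that the statement is local on $\os{\circ}{T}$, so I may assume $\os{\circ}{T}={\rm Spf}(A)$ is affine. Next, by taking an exactification of $\iota$ as in the proof of (\ref{prop:exue}) (cf.~\cite[(5.6)]{klog1}) and using the universality of the construction, I may assume that $\iota$ is an exact closed immersion; indeed, passing to $\wh{T}$ commutes with exactification since both are built from the underlying ideal sheaves and the pulled-back log structures, and the log \'{e}tale map in the exactification diagram (\ref{cd:utut}) base-changes along $\wh{T}\lo T$. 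So it suffices to treat the case $\os{\circ}{T}={\rm Spf}(A)$ with defining ideal $I\subset A$ of the exact closed immersion $\os{\circ}{\iota}$.

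In this case, let $\wh{A}$ denote the $I$-adic completion of $A$ (the ring of $\wh{T}$), and let $\wh{I}=I\wh{A}$ be the defining ideal of $\os{\circ}{U}$ in $\os{\circ}{\wh{T}}$. Choose a system of generators $f_1,\ldots,f_r$ of $I$; their images in $\wh{A}$ generate $\wh{I}$. By Remark \ref{rema:exdras}, $\Gam({\mathfrak T}_{U,n}(T),{\cal O}_{{\mathfrak T}_{U,n}(T)})$ is the $p$-adic completion of $A[t_{\ul m}\mid |\ul m|=n]/((f^{\ul m}-\pi t_{\ul m})+(p\text{-torsion}))$, and likewise $\Gam({\mathfrak T}_{U,n}(\wh{T}),{\cal O}_{{\mathfrak T}_{U,n}(\wh{T})})$ is the $p$-adic completion of the analogous ring built from $\wh{A}$ and the same generators $f^{\ul m}$. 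The natural morphism ${\mathfrak T}_{U,n}(\wh{T})\lo {\mathfrak T}_{U,n}(T)$ corresponds, on rings, to the map induced by $A\lo \wh{A}$. Thus I must show this map becomes an isomorphism after adjoining the $t_{\ul m}$'s, killing $p$-torsion, and $p$-adically completing.

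The key point is that the relation $f^{\ul m}=\pi t_{\ul m}$ for all $\ul m$ with $|\ul m|=n$ forces the image of $I^n$ into the ideal $\pi{\cal O}$ of the resulting ring, so on ${\mathfrak T}_{U,n}(T)$ the ideal $I^n{\cal O}_{{\mathfrak T}_{U,n}(T)}$ is contained in $\pi{\cal O}_{{\mathfrak T}_{U,n}(T)}$; since the structure sheaf is $p$-adically complete and $\pi\in(p)^{1/?}$-adically nilpotent modulo each $p^k$ is not quite right, the precise statement I will use is that ${\cal O}_{{\mathfrak T}_{U,n}(T)}$ is already $I$-adically complete because it is $p$-adically complete and $I^n\subset\pi{\cal O}$ (so the $I$-adic and $p$-adic topologies agree after this finite thickening). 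Hence the $I$-adic completion $A\lo\wh{A}$ induces an isomorphism after the base change defining ${\mathfrak T}_{U,n}$, because completing an $I$-adically complete object along $I$ changes nothing. More carefully, I will argue that $A[t_{\ul m}]/(f^{\ul m}-\pi t_{\ul m})$ and $\wh{A}[t_{\ul m}]/(f^{\ul m}-\pi t_{\ul m})$ have the same $p$-adic completion: the difference $\wh{A}/A$ is $I$-adically (hence, in the quotient, $p$-adically) topologically nilpotent, so it dies in the $p$-adic completion of the quotient. Finally I invoke (\ref{eqn:chynhn}) or simply patch the affine isomorphisms to conclude the global statement. The main obstacle I anticipate is the bookkeeping around $p$-torsion and the interplay of the $I$-adic and $p$-adic topologies: one must check that killing $p$-torsion commutes with the completions involved and that $I^n{\cal O}_{{\mathfrak T}_{U,n}(T)}\subset \pi{\cal O}_{{\mathfrak T}_{U,n}(T)}$ genuinely makes the two topologies coincide on the thickened ring, which is exactly the content of the radius $\leq|\pi|$ condition built into the universal enlargement.
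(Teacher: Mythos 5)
Your proposal is correct and follows essentially the same route as the paper: reduce to the local, exactified case, use the explicit presentation of $\Gam({\mathfrak T}_{U,n}(-),{\cal O})$ from (\ref{eqn:ldra}), and exploit the fact that the relation $f^{\ul m}=\pi t_{\ul m}$ forces $I^n{\cal O}_{{\mathfrak T}_{U,n}(T)}\subset\pi{\cal O}_{{\mathfrak T}_{U,n}(T)}$, so that $p$-adic completeness of the universal enlargement lets the map from $A$ extend to the $I$-adic completion $\wh{A}$, yielding the inverse morphism. This is precisely the paper's argument ("we can easily construct the inverse morphism \ldots since the topology of ${\mathfrak T}_{U,n}(T'')$ is $p$-adic").
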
 
\begin{proof}(We need an additional slight care to 
\cite[Lemma 2.1.25]{s2} 
since $\iota$ is not necessarily exact.) 
The question is local. 
Let the notations be as in the proof of (\ref{prop:exue}). 
Let $\os{\circ}{\wh{T}{}''}$ be the formal completion of 
$\os{\circ}{T}{}''$ along $U$. 
Endow $\os{\circ}{\wh{T}{}''}$ with 
the inverse image of the log structure of $T''$ 
and let $\wh{T}{}''$ be the resulting log formal scheme. 
Then we have to prove that 
the natural morphism 
${\mathfrak T}_{U,n}(\wh{T}{}'') \lo {\mathfrak T}_{U,n}(T'')$ 
is an isomorphism, which is proved by 
the same proof as that of \cite[Lemma 2.1.25]{s2} 
using the description (\ref{eqn:ldra}): 
we can easily construct the inverse morphism 
${\mathfrak T}_{U,n}(T'')\lo {\mathfrak T}_{U,n}(\wh{T}{}'')$ 
of the morphism ${\mathfrak T}_{U,n}(\wh{T}{}'') \lo {\mathfrak T}_{U,n}(T'')$. 
Indeed, set ${\cal J}:={\rm Ker}({\cal O}_{T''}\lo {\cal O}_U)$. 
Since ${\cal J}^n{\cal O}_{{\mathfrak T}_{U,n}(T'')}\subset 
\pi{\cal O}_{{\mathfrak T}_{U,n}(T'')}$, 
we have a natural morphism 
${\cal O}_{\wh{T}{}''}\lo {\cal O}_{{\mathfrak T}_{U,n}(T'')}$ 
which induces a natural morphism 
${\cal O}_{{\mathfrak T}_{U,n}(\wh{T}{}'')}\lo {\cal O}_{{\mathfrak T}_{U,n}(T'')}$. 
\end{proof}

\begin{lemm}\label{lemm:uys1} 
Let $u \col U \lo Y$ be a strict morphism of 
fine log schemes over $S_1$ 
such that $\os{\circ}{u}$ is of finite type. 
Let $T$ be a fine log formal scheme over $S$ 
whose underlying formal scheme is a 
noetherian formal scheme 
which is topologically of finite type over $S$.   
Let $\iota \col U \os{\sus}{\lo} T$ be a $($not necessarily closed$)$ 
immersion over $S$. 
Let $`T$ be a log formal open subscheme of $T$ 
such that $`\os{\circ}{T}$ contains $\os{\circ}{U}$ 
as a closed subscheme. Then the enlargement 
${\mathfrak T}_{U,n}(`T)$ is independent of the choice of 
the fine log formal open subscheme $`T$ of $T$. 
\end{lemm}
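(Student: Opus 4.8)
The plan is to reduce to the case of two nested open formal subschemes by intersecting the given ones, and then to apply (\ref{prop:fc}), according to which the system of universal enlargements $\maf{T}_{U,n}$ of a prewidening depends only on its formal completion along $U$.

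Concretely, I would take two fine log open formal subschemes $`T$ and $`T'$ of $T$ as in the statement and put $`T'':=`T\cap `T'$, endowed with the log structure restricted from $T$. Since $\os{\circ}{U}$ is closed in the underlying formal schemes of both $`T$ and $`T'$, it is closed in that of $`T''$, which is therefore again a fine log open formal subscheme of $T$ containing $\os{\circ}{U}$ as a closed subscheme, and which is moreover an open formal subscheme of both $`T$ and $`T'$. As $\os{\circ}{T}$ is noetherian, every open formal subscheme of it is noetherian and quasi-compact, hence topologically of finite type over $\os{\circ}{S}$; combined with the hypotheses on $u$ and with the fact that the restriction of $\iota$ to each of these open subschemes is still a closed immersion of log schemes, this shows that $(U,`T,\iota,u)$, $(U,`T',\iota,u)$ and $(U,`T'',\iota,u)$ are (resp.~$p$-adic) prewidenings of $Y/S$ in the sense used above. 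By symmetry it then suffices to produce, for every $n\in{\mab Z}_{\geq 1}$, a canonical isomorphism $\maf{T}_{U,n}(`T'')\os{\sim}{\lo}\maf{T}_{U,n}(`T)$ compatible with the transition morphisms in $n$.

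For this, let $\wh{`T}$ and $\wh{`T''}$ denote the formal completions of $`T$ and $`T''$ along $U$ in the sense of (\ref{prop:fc}). Because $`T''$ is an open formal subscheme of $`T$ containing $U$, and the formal completion of a formal scheme along a closed subscheme is computed in an arbitrary open neighbourhood of that subscheme, the natural morphism $\wh{`T''}\lo \wh{`T}$ is an isomorphism of log formal schemes: the underlying morphism is an isomorphism since the structure sheaf of either completion is supported on $\os{\circ}{U}$, and the log structures of both completions are the pull-back of the log structure of $`T$. Now (\ref{prop:fc}), applied to the prewidenings $(U,`T,\iota,u)$ and $(U,`T'',\iota,u)$, shows that the natural morphisms $\maf{T}_{U,n}(\wh{`T})\os{\sim}{\lo}\maf{T}_{U,n}(`T)$ and $\maf{T}_{U,n}(\wh{`T''})\os{\sim}{\lo}\maf{T}_{U,n}(`T'')$ are isomorphisms of enlargements for all $n$; composing them with the isomorphism $\maf{T}_{U,n}(\wh{`T''})\os{\sim}{\lo}\maf{T}_{U,n}(\wh{`T})$ induced by $\wh{`T''}\os{\sim}{\lo}\wh{`T}$ yields the required isomorphism $\maf{T}_{U,n}(`T'')\os{\sim}{\lo}\maf{T}_{U,n}(`T)$. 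Running the same argument with $`T'$ in place of $`T$ and composing the two gives a canonical isomorphism $\maf{T}_{U,n}(`T')\os{\sim}{\lo}\maf{T}_{U,n}(`T)$, which, being built from the canonical morphisms of (\ref{prop:fc}), is compatible with the transition morphisms; this is the asserted independence of $\maf{T}_{U,n}(`T)$ of the choice of $`T$.

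I expect the only genuinely non-formal ingredients to be the two standard facts invoked above: that an open formal subscheme of the noetherian $T$ is again topologically of finite type over $\os{\circ}{S}$ (so that (\ref{prop:fc}) applies to $`T$, $`T'$ and $`T''$), and that the formal completion along a closed subscheme does not change when the ambient formal scheme is shrunk to an open neighbourhood of that subscheme (which is what gives $\wh{`T''}\os{\sim}{\lo}\wh{`T}$). Granting these, the proof is the purely formal chain of isomorphisms displayed above.
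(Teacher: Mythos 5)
Your proof is correct and is exactly the argument the paper intends: the paper's own proof is a one-line reduction to (\ref{prop:fc}) (citing Berthelot's analogous statement for PD-envelopes), and your fleshed-out version — intersect two choices $`T$, $`T'$ to a common open $`T''$, observe that the formal completions along $U$ of $`T''$ and $`T$ coincide, and apply (\ref{prop:fc}) to each of the resulting prewidenings — is precisely that reduction made explicit.
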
 
\begin{proof} 
(cf.~\cite[I Proposition 4.2.1]{bb}) 
Let $``T$ be another log formal open subscheme of $T$ 
such that $``\os{\circ}{T}$ contains $\os{\circ}{U}$ 
as a closed subscheme. We may assume that there exists 
an open immersion $``T\os{\sus}{\lo} `T$ such that the composite morphism 
$U\os{\sus}{\lo} ``T\os{\sus}{\lo} `T$ is the given immersion 
$U\os{\sus}{\lo} `T$. Let $\wh{?}$ be the formal completion along $U$.  
Then $\wh{``T}=\wh{`T}$ since they are supported on $U$. 
Hence (\ref{lemm:uys1}) follows from (\ref{prop:fc}).  
\end{proof}

\begin{defi}\label{defi:qpw} 
We call the quadruple $(U,T,\iota,u)$ in 
(\ref{lemm:uys1}) a {\it quasi-prewidening} 
of $Y/S$. We define a morphism of quasi-prewidenings 
of $Y/S$ in an obvious way. We often denote 
$(U,T,\iota,u)$ simply by $T$. 
We denote ${\mathfrak T}_{U,n}(`T)$ by 
${\mathfrak T}_{U,n}(T)$ and call 
$\{{\mathfrak T}_{U,n}(T)\}_{n=1}^{\infty}$ 
the {\it system of the universal enlargements} of 
$(U,T,\iota,u)$.  
\end{defi}

\par 
We recall the following(=the exactification) 
which has been proved in \cite{s3}.

\begin{prop}
[{\rm {\bf \cite[Proposition 2.10]{s3}}}]\label{prop:exad}
Let $T$ be a fine log formal scheme.    
Let ${\cal C}^{\rm ex}_{\rm hom}$ $($resp.~${\cal C})$ 
be the category of homeomorphic exact immersions 
$($resp.~the category of immersions$)$ of 
fine log formal schemes over $T$.  
Let $\iota$ be the inclusion functor 
${\cal C}^{\rm ex}_{\rm hom} \os{\sus}{\lo} {\cal C}$. 
Then $\iota$ has a right adjoint functor 
$(\quad)^{\rm ex}\col 
{\cal C}\lo {\cal C}^{\rm ex}_{\rm hom}$. 
\end{prop}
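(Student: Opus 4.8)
The plan is to establish, for each object $c$ of ${\cal C}$, a universal arrow from the inclusion functor $\io$ to $c$; by the standard characterization of right adjoints this is equivalent to the existence of $(\quad)^{\rm ex}$ together with its counit. So let $c=(Y\os{\sus}{\lo} Z)$ be an immersion of fine log formal schemes over $T$. First I would reduce to a local closed situation. Writing the immersion as a closed immersion $Y\os{\sus}{\lo} W$ into an open formal subscheme $W$ of $Z$, and noting that any morphism $c'\lo c$ in ${\cal C}$ with $c'=(Y'\os{\sus}{\lo} Z')$ in ${\cal C}^{\rm ex}_{\rm hom}$ necessarily factors $\os{\circ}{Z}{}'\lo \os{\circ}{Z}$ through $W$ (the underlying space of $\os{\circ}{Z}{}'$ equals that of $\os{\circ}{Y}{}'$ since the immersion is homeomorphic, and maps into $\os{\circ}{Y}\sus W$), I may replace $Z$ by $W$; then I may take $\os{\circ}{Z}$ affine and choose a chart $Q\to\Gam(Z,{\cal M}_Z)$ of $Z$ over $T$ together with a chart $P\to\Gam(Y,{\cal M}_Y)$ of $Y$ and a homomorphism of fine monoids $Q\to P$ inducing the closed immersion.

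Next I would carry out Kato's exactification (\cite[(4.10),(5.6)]{klog1}): after shrinking, there is a fine monoid $\wt Q$ with $\wt Q^{\rm gp}=Q^{\rm gp}$ through which $Q\to P$ factors and with $\wt Q\to P$ exact; set $Z^{\rm ex}$ to be the formal completion along $\os{\circ}{Y}$ of the fibre product of log formal schemes $Z\times_{\ul{\rm Spf}_T({\cal O}_T\{Q\})}\ul{\rm Spf}_T({\cal O}_T\{\wt Q\})$. By construction $Y\os{\sus}{\lo} Z^{\rm ex}$ is an exact immersion whose underlying morphism of formal schemes is a homeomorphism, $Z^{\rm ex}$ is fine over $T$, and $Z^{\rm ex}\lo Z$ is log \'etale (this last property being what is used in (\ref{prop:exue})); thus $c^{\rm ex}:=(Y\os{\sus}{\lo} Z^{\rm ex})$ is an object of ${\cal C}^{\rm ex}_{\rm hom}$, equipped with an evident morphism $\eps_c\col c^{\rm ex}\lo c$ which is the identity on the sources. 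For the local universal property, let $c'=(Y'\os{\sus}{\lo} Z')$ in ${\cal C}^{\rm ex}_{\rm hom}$ and a morphism $c'\lo c$ be given, i.e. compatible $T$-morphisms $Y'\lo Y$ and $Z'\lo Z$; pulling the chart $Q$ back to $Z'$ and using the exactness of $Y'\os{\sus}{\lo} Z'$ together with the factorisation of $Q\to P$ through $\wt Q$, one obtains a unique extension of the pulled-back chart over $\wt Q$, hence a unique $Z'\lo Z\times_{\ul{\rm Spf}_T({\cal O}_T\{Q\})}\ul{\rm Spf}_T({\cal O}_T\{\wt Q\})$ over $Z$, and this, since $Y'\os{\sus}{\lo} Z'$ is homeomorphic, factors uniquely through the completion $Z^{\rm ex}$. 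This is the required unique factorisation, and it shows in particular that $Z^{\rm ex}$ does not depend on the chosen chart.

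Finally I would globalise: over the members of an affine open covering of a neighbourhood of $\os{\circ}{Y}$ in $Z$ the local exactifications agree on overlaps by the uniqueness just proved, hence glue to a global fine log formal scheme $Z^{\rm ex}$ over $T$ carrying a homeomorphic exact immersion $Y\os{\sus}{\lo} Z^{\rm ex}$ and a morphism $\eps_c\col c^{\rm ex}\lo c$; the universal property, being testable on an affine open covering of $Z'$, passes to the glued object. Hence $\eps_c$ is a universal arrow, and letting $c$ vary produces the right adjoint $(\quad)^{\rm ex}$ with counit $\eps$. I expect the main difficulty to lie not in the monoid bookkeeping (which is exactly as in \cite{klog1}) but in handling two non-standard features simultaneously while keeping everything natural in $c$: the reduction from a general immersion to a closed one, and the systematic use of formal completions along $\os{\circ}{Y}$ --- which is the device realising the ``homeomorphic'' condition and which one must verify is compatible with the chart-level construction, with a change of chart, and with the gluing, so that the local universal arrows genuinely assemble into a functor satisfying the adjunction.
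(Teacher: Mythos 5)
Your construction is the one the paper recalls immediately after the statement (the paper itself defers the proof to \cite[Proposition 2.10]{s3}): reduce to a local closed immersion admitting a chart, base-change along $\ul{\rm Spf}_T({\cal O}_T\{P\})\lo \ul{\rm Spf}_T({\cal O}_T\{P^{\rm ex}\})$, complete along $\os{\circ}{Y}$, verify the universal property using exactness and homeomorphy of the test immersion, and glue — so this is essentially the same approach, with the universal-property verification and the gluing (which the paper leaves to \cite{s3}) filled in correctly. The only slip is the parenthetical claim that $Z^{\rm ex}\lo Z$ is log \'etale: that holds for the uncompleted fibre product ${\cal Q}^{\rm loc,ex}\lo {\cal Q}$ (the version appearing in the proof of (\ref{prop:exue})), whereas after completing along $\os{\circ}{Y}$ the paper only asserts that ${\cal Q}^{\rm ex}\lo {\cal Q}$ is affine — but nothing in your adjunction argument actually uses log \'etaleness.
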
 
Because we need the following notations  later, 
we recall the construction of $(\quad)^{\rm ex}$. 
By the universality, the problem is local. 
Let $Y\os{\sus}{\lo} {\cal Q}$ be an object 
of ${\cal C}$.
We may assume that the immersion above is closed and 
that it has a global chart $(P\lo Q)$ such that the morphism 
$P^{\rm gp}\lo Q^{\rm gp}$ is surjective.  
Let $P^{\rm ex}$ be the inverse image of 
$Q$ by the morphism $P^{\rm gp} \lo Q^{\rm gp}$. 
Then we have a fine log formal ${\cal V}$-scheme 
${\cal Q}^{{\rm loc,ex}}:=
{\cal Q}\wh{\times}_{\ul{\rm Spf}_T({\cal O}_T\{P\})}
\ul{\rm Spf}_T({\cal O}_T\{P^{\rm ex}\})$ 
over ${\cal Q}$ with an exact closed immersion 
$Y\os{\sus}{\lo} {\cal Q}^{{\rm loc,ex}}$. 
(Recall $\ul{\rm Spf}_T$ in the {\bf Notations} (4).) 
We take the formal completion of 
${\cal Q}^{{\rm loc,ex}}$ along $Y$, which 
we denote by ${\cal Q}^{\rm ex}$. 
Since $\os{\circ}{\cal Q}{}^{\rm ex}=\os{\circ}{Y}$ 
and 
$M_{{\cal Q}^{\rm ex},x}/{\cal O}^*_{{\cal Q}^{\rm ex},x}
=M_{Y,x}/{\cal O}^*_{Y,x}$ $(x\in \os{\circ}{Y})$, 
we see that the morphism $Y\os{\sus}{\lo} {\cal Q}^{\rm ex}$ 
is a desired morphism over $T$. 
By the construction above, we see that the natural morphism 
${\cal Q}^{\rm ex}\lo {\cal Q}$ is affine.  

\begin{rema}\label{rema:rfite} 
We also recall the formulation in 
\cite[I (2.10), (2.11), (2.12)]{s3}.   
Let the notations be as in (\ref{prop:exue}), 
before (\ref{rema:exdras}) and in (\ref{lemm:uys1}). 
Let $\iota^{\rm ex} \col U\os{\sus}{\lo} T^{\rm ex}$ 
be the exactification of $\iota$. 
Set 
$T^{\rm ex}:=(U,T^{\rm ex},\iota^{\rm ex},u)$ 
by abuse of notation. 
Then $T_n=(T^{\rm ex})_n$ $(n\in {\mab Z}_{\geq 1})$. 
Indeed 
we can obtain $T_n$ by replacing $(`T)''$ with $T^{\rm ex}$. 
\end{rema}

The following is a relative log version of 
a slight modification of \cite[Lemma 2.4]{oc} 
and \cite[Lemma 2.1.26]{s2} (see also \cite[I (2.16)]{s3}):

\begin{lemm}\label{lemm:bcue} 
Let the notations be as in {\rm (\ref{lemm:uys1})}. 
Let $(U',T',\iota',u')\lo (U,T,\iota,u)$ be a morphism 
of quasi-prewidenings of $Y/S$ such that $U'=U\times_TT'$.  
Then the natural morphism 
${\mathfrak T}_{U',n}(T') \lo 
\wt{{\mathfrak T}_{U,n}(T)\times_{T}{T'}}$ 
$(n\in {\mab Z}_{\geq 1})$ is an isomorphism. 
If $T' \lo T$ is flat, then the natural morphism 
${\mathfrak T}_{U',n}(T') \lo 
{\mathfrak T}_{U,n}(T)\times_{T}T'$ 
$(n\in {\mab Z}_{\geq 1})$ is an isomorphism. 
\end{lemm}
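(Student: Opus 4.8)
The plan is to reduce everything to the affine, exact situation via \'etale localization and the exactification, and then to perform the computation with the explicit description of the universal enlargements in (\ref{eqn:ldra}). First I would observe that the statement is local on $\os{\circ}{T}$ (hence on $\os{\circ}{T}{}'$ since $T'\to T$ factors through an open of the pullback), so by (\ref{lemm:uys1}) and (\ref{rema:rfite}) we may replace $T$ and $T'$ by the exactifications of the relevant immersions; thus we may assume $\iota\col U\os{\sus}{\lo}T$ and $\iota'\col U'\os{\sus}{\lo}T'$ are exact closed immersions and, further shrinking, that $\os{\circ}{T}={\rm Spf}(A)$ is affine with defining ideal $I=(f_1,\dots,f_r)\subset A$. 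Since $U'=U\times_TT'$, writing $\os{\circ}{T}{}'={\rm Spf}(A')$ the defining ideal $I'$ of $\os{\circ}{\iota}{}'$ is $IA'$, generated by the images $f'_j$ of the $f_j$.

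Next I would apply the formula (\ref{eqn:ldra}) to both $(U,T,\iota,u)$ and $(U',T',\iota',u')$ with the chosen generators. For $\vert\ul m\vert=n$ we get
\begin{equation*}
\Gam({\mathfrak T}_{U,n}(T),{\cal O})=\bigl(A[t_{\ul m}]/((f^{\ul m}-\pi t_{\ul m})+(p\text{-torsion}))\bigr)^{\wh{}},
\end{equation*}
and the analogous formula over $A'$ with the same variables $t_{\ul m}$ and $f'^{\ul m}$ in place of $f^{\ul m}$. Since $A'\otimes_A A[t_{\ul m}]=A'[t_{\ul m}]$ and the relations $f^{\ul m}-\pi t_{\ul m}$ base-change to $f'^{\ul m}-\pi t_{\ul m}$, the only delicate point is the behaviour of the $p$-torsion: one needs that killing $p$-torsion commutes with the base change $A\to A'$ here, equivalently that forming $\wt{T}$ (the quotient by $\pi$-torsion) is compatible with $\times_T T'$ in the relevant sense. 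This is exactly why the target of the first isomorphism is $\times_{\wt T}\wt{T'}$ rather than $\times_TT'$: after base-changing the presentation over $A$ to $A'$ and then dividing by $p$-torsion, one lands in $\Gam({\mathfrak T}_{U,n}(T),{\cal O})\,\wh\otimes_{{\cal O}_{\wt T}}{\cal O}_{\wt{T'}}$, and I would identify this with $\Gam({\mathfrak T}_{U',n}(T'),{\cal O})$ by comparing universal properties (or directly, since both are $p$-adically complete quotients of $A'[t_{\ul m}]$ by the same ideal modulo $p$-torsion). The log structures match on both sides because $\iota,\iota'$ are exact and the log structure on each universal enlargement is pulled back from $T$, resp.\ $T'$.

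For the second assertion, when $T'\to T$ is flat I would argue that flatness makes $A\to A'$ carry $p$-torsion to $p$-torsion and, more to the point, that ${\cal O}_{\wt T}\otimes_{{\cal O}_T}{\cal O}_{T'}\to{\cal O}_{\wt{T'}}$ is an isomorphism, i.e.\ $\wt{T'}=\wt T\times_T T'$; granting this, $\times_{\wt T}\wt{T'}=\times_{\wt T}(\wt T\times_T T')=\times_T T'$ and the claim follows from the first part. Concretely, $\wt T$ is cut out in $T$ by the $\pi$-torsion of ${\cal O}_T$, and for $A\to A'$ flat one has $(\pi\text{-torsion of }A)\otimes_A A'=\pi\text{-torsion of }A'$, since tensoring the exact sequence $0\to A[\pi^\infty]\to A\xrightarrow{\cdot\pi^k}A$ with the flat module $A'$ stays exact.

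\textbf{Main obstacle.} The routine part is the base-change of the polynomial presentation; the real content is controlling the $p$-torsion (equivalently the passage to $\wt{T}$) under the base change. I expect the crux to be verifying that $\wt{T}\times_T T'\to\wt{T'}$ is an isomorphism in the flat case (and, in the non-flat case, correctly recognizing that only $\times_{\wt T}\wt{T'}$ survives), which is a completed-tensor-product and $p$-torsion bookkeeping matter rather than anything deep — it is precisely the point where the proof of \cite[Lemma 2.1.26]{s2} / \cite[Lemma 2.4]{oc} has to be adapted, now with the extra care that $\iota$ need only be an immersion so that one first replaces $T$ by $T^{\rm ex}$ as in (\ref{rema:rfite}).
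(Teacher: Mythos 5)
Your overall route coincides with the paper's: the authors' entire proof is the citation ``the proof is the same as that of \cite[Lemma 2.4]{oc} and \cite[Lemma 2.1.26]{s2}'', and those arguments run exactly as you propose --- pass to the exactification as in (\ref{rema:rfite}), localize to an affine exact closed immersion, and base-change the presentation (\ref{eqn:ldra}). Your treatment of the flat case is also sound: flatness of $A\lo A'$ gives $A[\pi^{\infty}]\otimes_AA'=A'[\pi^{\infty}]$, hence $\wt{T'}=\wt{T}\times_TT'$, and flat base change preserves $\pi$-torsion-freeness, so no new torsion appears in ${\mathfrak T}_{U,n}(T)\times_TT'$.

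In the non-flat case, however, there is a genuine gap at precisely the step you single out as the crux, and the parenthetical by which you dispose of it is wrong. Write $J=(f^{\ul{m}}-\pi t_{\ul{m}})$, $B_n=((A[t_{\ul{m}}]/J)/(\pi\text{-torsion}))^{\wh{}}$ and $B'_n$ for the analogue over $A'$. The completed tensor product $B_n\otimes_{\wt{A}}\wt{A'}$ is the quotient of $A'[t_{\ul{m}}]/JA'[t_{\ul{m}}]$ by the \emph{image} of the torsion of $A[t_{\ul{m}}]/J$, whereas $B'_n$ is the quotient by the \emph{full} $\pi$-torsion of $A'[t_{\ul{m}}]/JA'[t_{\ul{m}}]$; these are not ``the same ideal'', and the natural surjection $B_n\otimes_{\wt{A}}\wt{A'}\lo B'_n$ has nonzero kernel whenever base change creates new torsion. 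Concretely, take $S={\rm Spf}({\cal V})$, $T=\ul{\rm Spf}_S({\cal O}_S\{x\})$, $U=V(x,\pi)$, $T'=V(x)\os{\sus}{\lo}T$, so that $U'=U\times_TT'$. Then ${\mathfrak T}_{U,1}(T)={\rm Spf}({\cal V}\{t\})$ with $x\lom\pi t$, and ${\mathfrak T}_{U',1}(T')={\rm Spf}({\cal V})$, while ${\mathfrak T}_{U,1}(T)\times_{\wt{T}}\wt{T'}={\rm Spf}({\cal V}\{t\}/(\pi t))$ has $\pi$-torsion and is not isomorphic to ${\rm Spf}({\cal V})$. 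What your computation actually establishes is that ${\mathfrak T}_{U',n}(T')$ is the $\pi$-torsion-free quotient $\bigl({\mathfrak T}_{U,n}(T)\times_{\wt{T}}\wt{T'}\bigr){}^{\sim}$; you must either prove the statement in that form (which is all the applications (\ref{eqn:tfib}), (\ref{lemm:dpinc}) and (\ref{eqn:emcoym}) need, since they take place after $\otimes_{\mab Z}{\mab Q}$, where the extra torsion dies), or retain a flatness hypothesis. As literally written, the first assertion is not provable because it is not true, so your proof should at minimum make the additional passage to $\wt{(~)}$ explicit rather than identifying the two rings ``directly''.
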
  
\begin{proof} 
Indeed, the natural morphism ${\mathfrak T}_{U',n}(T') \lo 
{\mathfrak T}_{U,n}(T)\times_{T}{T'}$ induces the morphism 
${\mathfrak T}_{U',n}(T') \lo \wt{{\mathfrak T}_{U,n}(T)\times_{T}{T'}}$.  
By the assumption, 
${\rm Ker}({\cal O}_{T'}\lo {\cal O}_{U'})=
{\rm Ker}({\cal O}_T\lo {\cal O}_U)\otimes_{{\cal O}_T}{\cal O}_{T'}$. 
Hence the first claim follows from the local descriptions of 
${\mathfrak T}_{U',n}(T')$ and $\wt{{\mathfrak T}_{U,n}(T)\times_{T}{T'}}$. 
By the assumption and the definition of 
${\mathfrak T}_{U,n}(T)$, 
${\mathfrak T}_{U,n}(T)\times_{T}T'$  is ${\cal V}$-flat. 
Hence 
$\wt{{\mathfrak T}_{U,n}(T)\times_{T}{T'}}={\mathfrak T}_{U,n}(T)\times_{T}{T'}$.  
\end{proof} 
\par 
Let the notations be as in 
(\ref{lemm:uys1}) and (\ref{defi:qpw}). 
Set $T_n:={\mathfrak T}_{U,n}(T)$. 
Let $\bet_n \col T_n \lo T$ $(n\in {\mab Z}_{\geq 1})$ 
be the natural morphism. 
Because $h_{`T}=\vil_{n}h_{T_n}$ 
is independent of the choice of $`T$, 
we can set 
\begin{equation*}
h_T=h_{`T}  
\tag{2.9.1}\label{eqn:hynhn}
\end{equation*} 
on ${\rm Conv}(Y/S)$. 
Set also $(Y/S)_{\rm conv}\vert_T
:=(Y/S)_{\rm conv}\vert_{`T}$. 
Let $E$ be a sheaf in $(Y/S)_{\rm conv}$. 
Let  $E_T$ be a sheaf in $T_{\rm zar}$ defined by 
a formula $E_T(T')={\rm Hom}_{(Y/S)_{\rm conv}}(h_{T'},E)$
as in \cite[p.~140]{oc} for a log formal open subscheme $T'$ of $T$. 
By (\ref{eqn:hynhn}) and (\ref{eqn:chynhn}), 
we see that $E_T={\rm Hom}_{(Y/S)_{\rm conv}}(h_{T},E)=
\vpl_n\bet_{n*}(E_{T_n})$ 
as in [loc.~cit.]. 
Let $\os{\to}{T}$ be the associated topos 
to the site defined in \cite[Definition 2.1.28]{s2},  
which is the log version of the site in \cite[\S3]{oc}. 
An object of this site is 
a log formal open subscheme $V_n$ of $T_n$ 
for a positive integer  $n$; let $V_m$ be 
a log formal open subscheme of $T_m$;  
for $n>m$, ${\rm Hom}(V_n,V_m):= \emptyset$, 
and, for $n\leq m$, 
${\rm Hom}(V_n,V_m)$ 
is the set of morphisms $V_n \lo V_m$'s of 
log formal subschemes over the natural morphism $T_n \lo T_m$.
A covering of $V_n$ is a Zariski open covering of $V_n$.  
Let $\phi_n \col T_n \lo T_{n+1}$ be the natural morphism 
of enlargements of $Y/S$. 
Then an object ${\cal F}$ in $\os{\to}{T}$
is a family $\{({\cal F}_n,\psi_n)\}_{n=1}^{\infty}$ of pairs, 
where ${\cal F}_n$ is a sheaf in $(T_n)_{\rm zar}$ 
and $\psi_n \col \phi^{-1}_n({\cal F}_{n+1}) \lo {\cal F}_n$ 
is a morphism of sheaves in $(T_n)_{\rm zar}$. 
Let ${\cal O}_{\os{\to}{T}}
=\{({\cal O}_{T_n},\phi_n^{-1})\}_{n=1}^{\infty}$ 
(resp.~${\cal K}_{\os{\to}{T}}
=\{({\cal K}_{T_n},\phi_n^{-1})\}_{n=1}^{\infty}$)
be the structure sheaf (resp.~isostructure sheaf) of $\os{\to}{T}$, 
where $\phi_n^{-1}$ is the natural pull-back morphism
$\phi_n^{-1}({\cal O}_{T_{n+1}}) \lo {\cal O}_{T_n}$ 
(resp.~$\phi_n^{-1}({\cal K}_{T_{n+1}}) \lo {\cal K}_{T_n}$).  
Following \cite[p.~141]{oc}, 
we say that an ${\cal O}_{\os{\to}{T}}$-module 
${\cal F}$ is {\it coherent}  
if ${\cal F}_n \in {\rm Coh}({\cal O}_{T_n})$ 
for all $n\in {\mab Z}_{>0}$ 
and 
we say that ${\cal F}$ is {\it admissible} (resp.~{\it crystalline})
if the natural morphism 
${\cal O}_{T_n}\otimes_{\phi^{-1}_{n+1}({\cal O}_{T_{n+1}})}
\phi^{-1}_{n+1}({\cal F}_{n+1}) \lo {\cal F}_n$ 
is surjective (resp.~isomorphic). 
As in [loc.~cit.], we can give the definitions of 
a coherent ${\cal K}_{\os{\to}{T}}$-module, 
an admissible ${\cal K}_{\os{\to}{T}}$-module 
and a crystalline ${\cal K}_{\os{\to}{T}}$-module. 
For an object $E$ of 
$(Y/S)_{\rm conv}\vert_{T}$ and 
for a morphism $T' \lo T$ of quasi-prewidenings of $Y/S$, 
then we have an associated object 
${\cal E}'=\{{\cal E}'_n\}_{n=1}^{\infty}$ 
$({\cal E}'_n:=E_{T'_n})$ in $\os{\to}{T'}$. 
Following \cite[p.~147]{oc}, we say that 
a ${\cal K}_{Y/S}\vert_T$-module $E$ is 
{\it admissible} (resp.~{\it crystalline})
if ${\cal E}'$ is coherent and admissible (resp.~crystalline) for any morphism $T' \lo T$ 
of quasi-prewidenings.  
When $E$ is crystalline, 
we also say that $E$ is a crystal of 
${\cal K}_{Y/S}\vert_T$-module. 
Let $\bet \col \os{\to}{T} \lo T_{\rm zar}$ 
be the natural morphism of topoi 
which is denoted by $\gam$ in \cite[p.~55]{s2} 
(=the log version of $\gam$ in \cite[p.~141]{of}).  
\par 

\par 
Let ${\cal Z}$, ${\cal Z}'$ and $T$ 
be fine log noetherian formal schemes over $S$ 
whose underlying formal schemes are 
topologically of finite type over $S$. 
Assume that $\os{\circ}{T}$ is $p$-adic 
and that $\os{\circ}{T}$ is flat 
over ${\rm Spf}({\cal V})$.  
Let ${\cal Z} \os{\sus}{\lo} {\cal Z}'$ be a closed immersion 
of fine log formal schemes over $S$. 
Let $T \lo {\cal Z}'$ and 
let $u\col T\otimes_{\cal V}({\cal V}/\pi) \lo {\cal Z}$ 
be morphisms over $S$. 
We say that $(T,u)$ is an {\it enlargement} of ${\cal Z}/S$ 
in ${\cal Z}'$ {\it with radius} $\leq \vert \pi \vert$  
if the following diagram 
\begin{equation*} 
\begin{CD} 
T\otimes_{\cal V}({\cal V}/\pi) @>{\subset}>> T \\
@V{u}VV @VVV \\ 
{\cal Z} @>{\subset}>> {\cal Z}'
\end{CD} 
\end{equation*} 
is commutative (cf.~\cite[p.~784]{of}). 
We define a morphism of 
enlargements of ${\cal Z}/S$ in ${\cal Z}'$ 
with radius $\leq \vert \pi \vert$ in an obvious way. 
\par 
The following is a log version of \cite[(2.3)]{of}: 

\begin{prop}\label{prop:tzu} 
There exists an enlargement 
$({\mathfrak T}_{{\cal Z},1}({\cal Z}'),u_{\mathfrak T})$ 
of ${\cal Z}/S$ in ${\cal Z}'$ with radius $\leq \vert \pi \vert$  
such that, for any enlargement $(T,u)$ 
of ${\cal Z}/S$ in ${\cal Z}'$ with radius 
$\leq \vert \pi \vert$, 
there exists a unique morphism 
$(T,u) \lo 
({\mathfrak T}_{{\cal Z},1}({\cal Z}'),u_{\mathfrak T})$ 
such that the morphism $T\lo {\cal Z}'$ factors through 
$T\lo {\mathfrak T}_{{\cal Z},1}({\cal Z}')$. 
\end{prop}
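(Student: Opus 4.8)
The plan is to reduce the statement to (\ref{prop:exue}) by realizing the data here as a prewidening together with its universal enlargement, in the log analogue of Ogus' argument for \cite[(2.3)]{of}. Since $\os{\circ}{\cal Z}$ is topologically of finite type over the $p$-adic formal scheme $\os{\circ}{S}$, it is itself $p$-adic, so ${\cal Z}\otimes_{\cal V}({\cal V}/\pi)=\ul{\rm Spec}_{\cal Z}({\cal O}_{\cal Z}/\pi{\cal O}_{\cal Z})$ is an honest fine log scheme over $S_1$ whose underlying scheme is of finite type over $\os{\circ}{S}_1$; moreover it is an exact closed log subscheme of ${\cal Z}$ and the composite ${\cal Z}\otimes_{\cal V}({\cal V}/\pi)\os{\sus}{\lo}{\cal Z}\os{\sus}{\lo}{\cal Z}'$ is a closed immersion. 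Hence the quadruple $({\cal Z}\otimes_{\cal V}({\cal V}/\pi),{\cal Z}',\subset,{\rm id})$ is a prewidening of $({\cal Z}\otimes_{\cal V}({\cal V}/\pi))/S$ in the sense of \S\ref{sec:logcd} (the structural morphism ${\rm id}$ being trivially strict). Applying (\ref{prop:exue}) to it, I obtain an enlargement ${\mathfrak T}$ of $({\cal Z}\otimes_{\cal V}({\cal V}/\pi))/S$ with radius $\leq|\pi|$, together with a morphism ${\mathfrak T}\lo{\cal Z}'$; its underlying formal scheme is $p$-adic and flat over ${\rm Spf}({\cal V})$, and its ``$U$''-component is $U_1=({\cal Z}\otimes_{\cal V}({\cal V}/\pi))\times_{{\cal Z}'}{\mathfrak T}$. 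I set ${\mathfrak T}_{{\cal Z},1}({\cal Z}'):={\mathfrak T}$. The radius condition ${\rm Ker}({\cal O}_{\mathfrak T}\lo{\cal O}_{U_1})\subset\pi{\cal O}_{\mathfrak T}$ — equivalently, the image of the ideal of ${\cal Z}$ in ${\cal O}_{\mathfrak T}$ is contained in $\pi{\cal O}_{\mathfrak T}$ — forces ${\mathfrak T}\otimes_{\cal V}({\cal V}/\pi)\lo{\cal Z}'$ to factor through ${\cal Z}$, hence through $U_1$; composing with the projection $U_1\lo{\cal Z}\otimes_{\cal V}({\cal V}/\pi)$ and with ${\cal Z}\otimes_{\cal V}({\cal V}/\pi)\os{\sus}{\lo}{\cal Z}$ produces a morphism $u_{\mathfrak T}\col{\mathfrak T}\otimes_{\cal V}({\cal V}/\pi)\lo{\cal Z}$ over $S$, and $({\mathfrak T}_{{\cal Z},1}({\cal Z}'),u_{\mathfrak T})$ is then an enlargement of ${\cal Z}/S$ in ${\cal Z}'$ with radius $\leq|\pi|$, the defining square commuting by construction. (Equivalently one may run the construction directly, as in \cite[(2.3)]{of}: work locally, exactify ${\cal Z}\os{\sus}{\lo}{\cal Z}'$ by (\ref{prop:exad}), blow up the sum of the ideal of ${\cal Z}$ and $\pi{\cal O}$, pass to the $p$-adic completion of the open locus on which this ideal becomes generated by $\pi$, divide out the $\pi$-torsion, and pull back the log structure.)

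For the universal property, let $(T,u)$ be an arbitrary enlargement of ${\cal Z}/S$ in ${\cal Z}'$ with radius $\leq|\pi|$. As $\os{\circ}{T}$ is $p$-adic and flat over ${\rm Spf}({\cal V})$, the quadruple $(T\otimes_{\cal V}({\cal V}/\pi),T,\subset,u)$ — with $u$ regarded as a morphism to ${\cal Z}\otimes_{\cal V}({\cal V}/\pi)$, which is legitimate because the source is annihilated by $\pi$ — is an enlargement of $({\cal Z}\otimes_{\cal V}({\cal V}/\pi))/S$ with radius $\leq|\pi|$, since ${\rm Ker}({\cal O}_T\lo{\cal O}_{T\otimes_{\cal V}({\cal V}/\pi)})=\pi{\cal O}_T$; together with the given $T\lo{\cal Z}'$ it constitutes a morphism of prewidenings into $({\cal Z}\otimes_{\cal V}({\cal V}/\pi),{\cal Z}',\subset,{\rm id})$, the required compatibility being exactly the commutativity of the square defining $(T,u)$. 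By the universality in (\ref{prop:exue}), this morphism factors through a unique morphism of enlargements $T\lo{\mathfrak T}$ compatible with the morphisms to ${\cal Z}'$. Since $U_1\os{\sus}{\lo}{\mathfrak T}$ is a closed immersion, such a morphism over ${\cal Z}'$ determines, and is determined by, the induced morphism of ``$U$''-components; unwinding the definition of $u_{\mathfrak T}$ then shows this to be precisely the unique morphism $(T,u)\lo({\mathfrak T}_{{\cal Z},1}({\cal Z}'),u_{\mathfrak T})$ through which $T\lo{\cal Z}'$ factors.

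I expect the single genuinely delicate point to be that the factorization ${\mathfrak T}\otimes_{\cal V}({\cal V}/\pi)\lo{\cal Z}$ entering the definition of $u_{\mathfrak T}$ is a priori available only on underlying schemes and must be promoted to a morphism of log schemes; this is, however, exactly what is arranged inside the proof of (\ref{prop:exue}), where — by the exactification (\ref{prop:exad}) — the immersion is replaced locally by an exact closed immersion before the blow-up, so that the relevant log structures match automatically. The remaining work — reconciling the two notions of ``morphism of enlargements'' and checking the compatibilities asserted above — is routine.
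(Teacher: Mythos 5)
Your proposal is correct and follows essentially the same route as the paper, whose entire proof is the remark that the argument of \cite[(2.3)]{of} and of (\ref{prop:exue}) applies; your reduction via the prewidening $({\cal Z}\otimes_{\cal V}({\cal V}/\pi),{\cal Z}',\subset,{\rm id})$ and the universality in (\ref{prop:exue}) is a faithful expansion of that remark. The only caveat is that $\os{\circ}{\cal Z}$ is not assumed $p$-adic (only topologically of finite type over $\os{\circ}{S}$), so ${\cal Z}\otimes_{\cal V}({\cal V}/\pi)$ is a priori a formal scheme rather than a scheme over $S_1$; your parenthetical direct construction (exactify, blow up the ideal plus $\pi{\cal O}$, complete $p$-adically, restrict to the admissible locus) sidesteps this and is what the paper actually intends.
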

\begin{proof} 
The proof is the same as that of \cite[(2.3)]{of} and (\ref{prop:exue}). 
\end{proof} 

As in \cite[(2.5)]{of}, replacing 
${\rm Ker}({\cal O}_{{\cal Z}'} \lo {\cal O}_{\cal Z})$ 
by $\{{\rm Ker}({\cal O}_{{\cal Z}'} \lo 
{\cal O}_{\cal Z})^n\}_{n=1}^{\infty}$,  
we obtain the system 
$\{{\mathfrak T}_{{\cal Z},n}({\cal Z}')\}_{n=1}^{\infty}$ 
of enlargements of ${\cal Z}/S$ in ${\cal Z}'$. 
We call $\{{\mathfrak T}_{{\cal Z},n}({\cal Z}')\}_{n=1}^{\infty}$
the {\it system of the universal enlargements} of ${\cal Z}/S$ in ${\cal Z}'$. 
\par

\begin{prop}\label{prop:fci} 
The following hold$:$ 
\par 
$(1)$ Let the notations be as above. 
Let $\wh{\cal Z}{}'$ be the formal completion of ${\cal Z}'$ 
along ${\cal Z}$. 
Then the natural morphism 
${\mathfrak T}_{{\cal Z},n}(\wh{\cal Z}') \lo 
{\mathfrak T}_{{\cal Z},n}({\cal Z}')$ is an isomorphism. 
\par 
$(2)$ 
Let ${\cal Z} \os{\sus}{\lo} {\cal Z}''$ be  an immersion 
of fine log noetherian formal schemes over $S$ 
whose underlying formal schemes are 
topologically of finite type over $S$. 
Let ${\cal Z}'$ be a log formal open subscheme of ${\cal Z}''$ 
such that $\os{\circ}{\cal Z}{}'$ contains $\os{\circ}{\cal Z}$ 
as a closed formal subscheme. Then the enlargement 
${\mathfrak T}_{{\cal Z},n}({\cal Z}')$ $(n\in {\mab N})$ 
is independent of the choice of 
the fine log formal open subscheme ${\cal Z}'$ of ${\cal Z}''$. 
$($We denote ${\mathfrak T}_{{\cal Z},n}({\cal Z}')$ 
by ${\mathfrak T}_{{\cal Z},n}({\cal Z}'')$.$)$
\end{prop}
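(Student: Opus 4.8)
The plan is to establish (1) as the analogue, for enlargements of ${\cal Z}/S$ \emph{in} ${\cal Z}'$, of (\ref{prop:fc}), and then to deduce (2) from (1) exactly as (\ref{lemm:uys1}) was deduced from (\ref{prop:fc}). Both statements are local on ${\cal Z}'$ (resp.~on ${\cal Z}''$), so I would first reduce to the case in which $\os{\circ}{\cal Z}{}'$ is affine; if the closed immersion ${\cal Z}\os{\sus}{\lo}{\cal Z}'$ is not exact, I would replace ${\cal Z}'$ by an exactification as in the proof of (\ref{prop:exue}), using (\ref{prop:exad}) together with the fact that passing to the exactification does not change the system of universal enlargements (the analogue of (\ref{rema:rfite})). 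Throughout, (\ref{prop:tzu}) and its iteration over $n$ — obtained by replacing ${\rm Ker}({\cal O}_{{\cal Z}'}\lo{\cal O}_{\cal Z})$ by its $n$-th power — play here the role that (\ref{prop:exue}) plays in the quasi-prewidening setting.

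For (1): the natural morphism ${\mathfrak T}_{{\cal Z},n}(\wh{\cal Z}{}')\lo{\mathfrak T}_{{\cal Z},n}({\cal Z}')$ exists tautologically, since any enlargement of ${\cal Z}/S$ in $\wh{\cal Z}{}'$ with radius $\leq\vert\pi\vert$ becomes such an enlargement in ${\cal Z}'$ after composing its structural morphism with $\wh{\cal Z}{}'\lo{\cal Z}'$. For the inverse I would show, conversely, that $T:={\mathfrak T}_{{\cal Z},n}({\cal Z}')$ is \emph{itself} an enlargement of ${\cal Z}/S$ in $\wh{\cal Z}{}'$. Indeed, commutativity of the defining square forces $J{\cal O}_T\subset\pi{\cal O}_T$, where $J:={\rm Ker}({\cal O}_{{\cal Z}'}\lo{\cal O}_{\cal Z})$; since $\os{\circ}{T}$ is $p$-adic and flat over ${\rm Spf}({\cal V})$, the ideal $\pi{\cal O}_T$ is an ideal of definition, so $J{\cal O}_T$ is topologically nilpotent, and hence the structural morphism $T\lo{\cal Z}'$ factors through the formal completion $\wh{\cal Z}{}'$ of ${\cal Z}'$ along ${\cal Z}$. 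The universal property of ${\mathfrak T}_{{\cal Z},n}(\wh{\cal Z}{}')$ then produces a morphism ${\mathfrak T}_{{\cal Z},n}({\cal Z}')\lo{\mathfrak T}_{{\cal Z},n}(\wh{\cal Z}{}')$, and the two morphisms are mutually inverse by the uniqueness clauses in the two universal properties. (Equivalently: the categories of enlargements of ${\cal Z}/S$ with radius $\leq\vert\pi\vert$ in ${\cal Z}'$ and in $\wh{\cal Z}{}'$ are canonically identified, hence so are their terminal objects for each $n$; this is the relative log version of \cite[(2.4.4)]{of}, proved in the same way.)

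Granting (1), part (2) follows by the argument of \cite[I Proposition 4.2.1]{bb} that proves (\ref{lemm:uys1}), now with (\ref{prop:fc}) replaced by (1). Given two fine log open formal subschemes ${\cal Z}'_1,{\cal Z}'_2$ of ${\cal Z}''$ whose underlying formal schemes contain $\os{\circ}{\cal Z}$ as a closed formal subscheme, the intersection ${\cal Z}'_1\cap{\cal Z}'_2$ is again such a subscheme, and the formal completion of ${\cal Z}'_i$ along ${\cal Z}$ depends only on a formal neighbourhood of ${\cal Z}$, so $\wh{\cal Z}{}'_1=\wh{{\cal Z}'_1\cap{\cal Z}'_2}=\wh{\cal Z}{}'_2$. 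Applying (1) to each of the inclusions ${\cal Z}'_1\cap{\cal Z}'_2\os{\sus}{\lo}{\cal Z}'_i$ identifies ${\mathfrak T}_{{\cal Z},n}({\cal Z}'_i)$ canonically with ${\mathfrak T}_{{\cal Z},n}(\wh{{\cal Z}'_1\cap{\cal Z}'_2})$ for $i=1,2$, and hence ${\mathfrak T}_{{\cal Z},n}({\cal Z}'_1)$ with ${\mathfrak T}_{{\cal Z},n}({\cal Z}'_2)$; this identification is independent of all choices.

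The one genuinely non-formal point — and the step I expect to be the main obstacle — is the verification in (1) that the structural morphism $\os{\circ}{T}\lo\os{\circ}{\cal Z}{}'$ factors through $\wh{\cal Z}{}'$, i.e.~controlling the interaction between the defining ideal of ${\cal Z}$, the element $\pi$ and the $p$-adic topology of the universal enlargement, together with the accompanying bookkeeping for the (possibly non-exact) log structures, along the lines already flagged in the proof of (\ref{prop:fc}). Both are mild and are handled exactly as in \cite{of} and \cite{s2}.
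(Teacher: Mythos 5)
Your proposal is correct and follows essentially the same route as the paper: part (1) is proved exactly as (\ref{prop:fc}) (i.e.\ the log version of \cite[(2.4.4)]{of} and \cite[Lemma 2.1.25]{s2}), the key point being that the defining ideal becomes topologically nilpotent on the $p$-adic universal enlargement so that the structural morphism factors through the formal completion, giving the inverse morphism by universality; and part (2) is deduced from (1) exactly as (\ref{lemm:uys1}) is deduced from (\ref{prop:fc}) via \cite[I Proposition 4.2.1]{bb}.
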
 
\begin{proof} 
(1): The proof of (1) is the same as that of 
(\ref{prop:fc}) and \cite[Lemma 2.1.25]{s2}. 
\par 
(2): (2) follows from (1) as in (\ref{lemm:uys1}).  
\end{proof}

\par 
Next we would like to prove (\ref{prop:lef}) below. 
To prove it, we need the following: 

\begin{lemm}[{\bf Weak fibration theorem for log formal schemes 
(cf.~\cite[(1.3.2)]{bpre})}]\label{lemm:eisd}  
Let $Z$ be a fine log scheme over $S_1$ such that 
$\os{\circ}{Z}$ is of finite type over $\os{\circ}{S}_1$. 
Let 
\begin{equation*}
\begin{CD}
Z @>{\subset}>> {\cal U}' \\ 
@| @VV{g}V \\
Z @>{\subset}>> {\cal U}
\end{CD}
\tag{2.12.1}\label{cd:zuup}
\end{equation*}
be a commutative diagram over $S$, 
where the horizontal morphisms are 
closed immersions into 
fine log noetherian $($not necessarily $p$-adic$)$ 
formal schemes over $S$ whose underlying formal schemes 
are topologically of finite type over $\os{\circ}{S}$. 
Let $g^{\rm ex}\col {\cal U}'{}^{\rm ex}\lo {\cal U}^{\rm ex}$ 
be the morphism obtained by $(\ref{cd:zuup})$. 
Then the following hold$:$
\par 
$(1)$ Assume that 
$\os{\circ}{g}{}^{\rm ex} 
\col \os{\circ}{\cal U}{}'{}^{\rm ex}\lo 
\os{\circ}{\cal U}{}^{\rm ex}$ is formally \'{e}tale.  
Let ${\mathfrak T}_n(g) \col {\mathfrak T}_{Z,n}({\cal U}') 
\lo {\mathfrak T}_{Z,n}({\cal U})$ $(n \in {\mab Z}_{\geq 1})$  
be the induced morphism by $g$. 
Then the natural morphism 
${\cal O}_{{\mathfrak T}_{Z,n}({\cal U})} \lo 
{\mathfrak T}_n(g)_*({\cal O}_{{\mathfrak T}_{Z,n}({\cal U}')})$ 
is an isomorphism.  
\par 
$(2)$ If 
$\os{\circ}{g}{}^{\rm ex}\col 
\os{\circ}{\cal U}{}'{}^{\rm ex}\lo \os{\circ}{\cal U}{}^{\rm ex}$ 
is formally smooth,  
then 
${\mathfrak T}_n(g)_*({\cal K}_{{\mathfrak T}_{Z,n}({\cal U}')})$ 
$(n\in {\mab Z}_{\geq 1})$ is a flat 
${\cal K}_{{\mathfrak T}_{Z,n}({\cal U})}$-algebra. 
\end{lemm}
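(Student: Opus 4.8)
The plan is to deduce both statements from the invariance properties of the systems of universal enlargements established above, after reducing to an exact, affine situation, and---for (2)---to the case of a relative formal polydisc by means of (1). First, by (\ref{rema:rfite}) we have ${\mathfrak T}_{Z,n}({\cal U})={\mathfrak T}_{Z,n}({\cal U}^{\rm ex})$ and ${\mathfrak T}_{Z,n}({\cal U}')={\mathfrak T}_{Z,n}({\cal U}'{}^{\rm ex})$, and ${\mathfrak T}_n(g)$ is the morphism induced by $g^{\rm ex}$; so, replacing $({\cal U},{\cal U}')$ by $({\cal U}^{\rm ex},{\cal U}'{}^{\rm ex})$, we may assume that $Z\os{\sus}{\lo}{\cal U}$ and $Z\os{\sus}{\lo}{\cal U}'$ are exact closed immersions with $\os{\circ}{\cal U}$ and $\os{\circ}{\cal U}'$ having the same underlying space as $\os{\circ}{Z}$, and that $\os{\circ}{g}$ is \'etale, resp.~formally smooth. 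Both assertions are local on ${\mathfrak T}_{Z,n}({\cal U})$, and by (\ref{lemm:uys1}) neither side is affected by shrinking ${\cal U}$ or ${\cal U}'$ Zariski-locally around $Z$; so we may further assume $\os{\circ}{\cal U}$ and $\os{\circ}{\cal U}'$ are affine.

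(1): The section $Z\os{\sus}{\lo}Z\times_{\cal U}{\cal U}'$ of the \'etale morphism $Z\times_{\cal U}{\cal U}'\lo Z$ is an open and closed immersion, so after shrinking ${\cal U}'$ we may assume $Z=Z\times_{\cal U}{\cal U}'$; then $g$ restricts to the identity on $Z$. Since $Z\os{\sus}{\lo}{\cal U}$ and $Z\os{\sus}{\lo}{\cal U}'$ are exact, $\ol{M}_{\cal U}$ and $\ol{M}_{{\cal U}'}$ are both the pull-back of $\ol{M}_Z$, so $g$ is strict; and $\os{\circ}{g}$, being \'etale and the identity on $\os{\circ}{Z}$ (which has the same space as $\os{\circ}{\cal U}$ and $\os{\circ}{\cal U}'$), induces an isomorphism between the formal completions of $\os{\circ}{\cal U}'$ and $\os{\circ}{\cal U}$ along $\os{\circ}{Z}$. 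Hence the induced morphism $\wh{{\cal U}'}\lo\wh{{\cal U}}$ of completions along $Z$ is an isomorphism of log formal schemes, and by (\ref{prop:fc}) we get ${\mathfrak T}_{Z,n}({\cal U}')={\mathfrak T}_{Z,n}(\wh{{\cal U}'})\os{\sim}{\lo}{\mathfrak T}_{Z,n}(\wh{{\cal U}})={\mathfrak T}_{Z,n}({\cal U})$ compatibly with ${\mathfrak T}_n(g)$; in particular ${\cal O}_{{\mathfrak T}_{Z,n}({\cal U})}\os{\sim}{\lo}{\mathfrak T}_n(g)_*({\cal O}_{{\mathfrak T}_{Z,n}({\cal U}')})$.

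(2): By the local structure of formally smooth morphisms, choose $x_1,\ldots,x_d\in{\cal O}_{{\cal U}'}$ ($d$ the relative dimension) with $dx_1,\ldots,dx_d$ a basis of $\Om^1_{{\cal U}'/{\cal U}}$, and set ${\cal W}:=\wh{\mab A}{}^d_{\cal U}$ with the log structure pulled back along $p\col{\cal W}\lo{\cal U}$. Then the $x_i$ define a morphism $h\col{\cal U}'\lo{\cal W}$ with $\os{\circ}{h}$ \'etale and $g=p\circ h$, and, after replacing each $x_i$ by $x_i$ minus a lift of $x_i\vert_Z$, the immersion $Z\os{\sus}{\lo}{\cal W}$ is exact and closed, cut out by $(I_{\cal U},x_1,\ldots,x_d)$, where $I_{\cal U}$ is the ideal of $Z$ in ${\cal U}$. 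Applying (1) to $h$ gives ${\mathfrak T}_n(h)_*({\cal K}_{{\mathfrak T}_{Z,n}({\cal U}')})={\cal K}_{{\mathfrak T}_{Z,n}({\cal W})}$, so it remains to show that ${\mathfrak T}_n(p)_*({\cal K}_{{\mathfrak T}_{Z,n}({\cal W})})$ is a flat ${\cal K}_{{\mathfrak T}_{Z,n}({\cal U})}$-algebra. By the explicit description (\ref{rema:exdras}), ${\mathfrak T}_{Z,n}({\cal U})$ and ${\mathfrak T}_{Z,n}({\cal W})$ are affine over ${\cal U}$, so ${\mathfrak T}_n(p)$ is affine; and, using that description together with $f_i^n\in\pi{\cal O}_{{\mathfrak T}_{Z,n}({\cal U})}$ and $x_j^n\in\pi{\cal O}_{{\mathfrak T}_{Z,n}({\cal W})}$, one computes that after inverting $p$ (where $\pi$ becomes a unit) ${\cal K}_{{\mathfrak T}_{Z,n}({\cal W})}$ is identified with the Weierstrass subdomain $\{\,\vert x_1^n/\pi\vert\le1,\ldots,\vert x_d^n/\pi\vert\le1\,\}$ of the Tate algebra ${\cal K}_{{\mathfrak T}_{Z,n}({\cal U})}\langle x_1,\ldots,x_d\rangle$ (the remaining generators $\pi^{-1}x^{\ul m}f^{\ul l}$ with $\vert\ul m\vert\ge1$ and $\vert\ul m\vert+\vert\ul l\vert=n$ lying in it automatically, since $\vert x^{\ul m}f^{\ul l}\vert\le\vert\pi\vert$ there). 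Since a Tate algebra is flat over its base ring and a Weierstrass localization of an affinoid algebra is flat over it, the composite is flat.

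The main obstacle is the explicit computation in (2): one must identify ${\cal O}_{{\mathfrak T}_{Z,n}({\cal W})}$ from the formal blow-up construction underlying (\ref{prop:exue}) and (\ref{rema:exdras}), keep track of the pulled-back log structure, and then recognize that $\otimes_{\mab Z}{\mab Q}$ converts it into the ring of functions on a Weierstrass subdomain of a Tate algebra---which is exactly where passing to ${\cal K}$, so that $\pi$ becomes invertible, is used. A secondary point requiring care is the reduction to the exact affine situation and the verification in (1) that $g$ is strict, so that ``\'etale and the identity on $Z$'' upgrades to ``isomorphism of log formal schemes after completion along $Z$''.
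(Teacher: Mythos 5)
Your overall route coincides with the paper's: exactify, reduce to the affine exact case, handle (1) by the topological invariance of \'etale morphisms, and handle (2) by using (1) to reduce to the formal affine space over ${\cal U}$ and then proving flatness after inverting $p$. Part (1) is fine: where you pass through the isomorphism of formal completions $\wh{\cal U}{}'\os{\sim}{\lo}\wh{\cal U}$ and then invoke (\ref{prop:fc}), the paper instead applies topological invariance of \'etale morphisms directly to the presentations $A[t_{\ul m}]/((f^{\ul m}-\pi t_{\ul m})+ (\pi^l))\lo A'[t_{\ul m}]/(\cdots)$ from (\ref{eqn:ldra}); the two arguments are the same in substance (both hinge on $Z\times_{\cal U}{\cal U}'=Z$, which you and the paper obtain the same way).

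In (2), however, there is a genuine gap at exactly the point you flag as ``the main obstacle''. Your identification of ${\cal K}_{{\mathfrak T}_{Z,n}({\cal W})}$ with the Weierstrass subdomain is justified only by the sup-norm estimate $\vert x^{\ul m}f^{\ul l}\vert\leq\vert\pi\vert$, which shows that the mixed generators $z^{\ul m}/\pi$ are power-bounded on the subdomain. That is not enough: what must be shown is that two $p$-adic \emph{completions} agree after $\otimes_{\mab Z}{\mab Q}$. Before completion the identification is immediate (since $\pi u_{\ul m}=z^{\ul m}$, the rings agree after inverting $p$), but completion does not commute with inverting $p$ for arbitrary inclusions of rings, so one cannot pass from ``equal after inverting $p$'' to ``equal completions after inverting $p$'' without an extra input. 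The paper's extra input is the integrality computation $u^n_{\ul m}=s^{k_1}_{(n,0,\ldots,0)}\cdots t^{l_r}_{(0,\ldots,0,n)}$, which makes $C$ \emph{finite} over $D\otimes_AE$; finiteness over a noetherian ring gives $\wh{C}=(D\otimes_AE)^{\wh{}}\otimes_{D\otimes_AE}C$, and only then does $C\otimes_{\cal V}K=(D\otimes_AE)\otimes_{\cal V}K$ yield $\wh{C}\otimes_{\cal V}K=(D\otimes_AE)^{\wh{}}\otimes_{\cal V}K$, i.e.\ the isomorphism (\ref{eqn:taltz}). You need to supply this finiteness step (or, if you insist on the rigid-geometric phrasing, a comparison between universal enlargements and Berthelot's tubes, which is not free either). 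The remaining ingredients you use --- flatness of the relative Tate algebra over its affinoid base and flatness of Weierstrass localizations --- are acceptable substitutes for the paper's free-basis-mod-$\pi^k$ plus Bourbaki argument for the flatness of $B_n$ over ${\cal O}_{\wt{\cal U}}$.
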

\begin{proof} In the proof of \cite[(1.3.1)]{bpre} 
(resp.~\cite[(1.3.2)]{bpre}: Th\'{e}or\`{e}me de fibration faible), 
Berthelot has essentially proved  (1) 
(resp.~an analogous theorem to (2)) 
in the terms of rigid analytic spaces. Here we also give a proof of (1) (resp.~(2)) 
for the completeness of this paper. 
\par 
(1): The question is local; we may assume that the 
horizontal closed immersions in (\ref{cd:zuup}) have global charts 
and that $\os{\circ}{\cal U}{}'$ and $\os{\circ}{\cal U}$ 
are affine.  
By replacing ${\cal U}$ with ${\cal U}^{\rm ex}$ 
and ${\cal U}'$ with ${\cal U}'{}^{\rm ex}$, 
respectively,  
we may assume that the horizontal closed immersions are exact 
and that $\os{\circ}{g}$ is formally \'{e}tale. 
We may also assume that the immersions 
induce isomorphisms 
$\os{\circ}{Z} \os{\sim}{\lo} \os{\circ}{\cal U}$ 
and  
$\os{\circ}{Z} \os{\sim}{\lo} \os{\circ}{\cal U}{}'$ 
as topological spaces ((\ref{prop:fc})).  
\par 
Set $A':=\Gam({\cal U}',{\cal O}_{{\cal U}'})$ 
and $A:=\Gam({\cal U},{\cal O}_{\cal U})$. 
Let $I\subset A$ be the defining ideal of 
the closed immersion $Z \os{\subset}{\lo} {\cal U}$.  
Since a chart of the log structure of ${\cal U}$ gives 
those of ${\cal U}'$ and $Z$, we see that 
the underlying scheme of 
$Z\times_{\cal U}{\cal U}'$ is equal to 
$\os{\circ}{Z}\times_{\os{\circ}{\cal U}}\os{\circ}{\cal U}{}'$. 
The natural morphism 
$\os{\circ}{Z} \lo \os{\circ}{Z}
\times_{\os{\circ}{\cal U}}\os{\circ}{\cal U}{}'$ 
is a closed immersion and \'{e}tale, 
and hence an open immersion by \cite[(17.9.1)]{ega4}. 
Consequently the morphism above is an 
isomorphism of schemes. 
Hence $IA'$ is the defining ideal of 
the closed immersion 
$Z \os{\subset}{\lo} {\cal U}'$ and 
the morphism $A/I \lo A'/IA'$ is an 
isomorphism. 
Let the notations be as in (\ref{rema:exdras}). 
Then, by (\ref{eqn:ldra}), 
it suffices to prove that the morphism 
\begin{equation*}
A[t_{\ul{m}}~\vert~\ul{m} \in {\mab N}^r,\vert\ul{m}\vert =n]
/(f^{\ul{m}}-\pi t_{\ul{m}},\pi^l) \lo 
A'[t_{\ul{m}}~\vert~\ul{m} \in {\mab N}^r,\vert\ul{m}\vert =n]
/(f^{\ul{m}}-\pi t_{\ul{m}},\pi^l)
\tag{2.12.2}\label{eqn:axfpi}
\end{equation*}  
is an isomorphism for any $l \in {\mab Z}_{>0}$. 
However this is clear because 
the morphism (\ref{eqn:axfpi}) is formally \'{e}tale, 
because $I$ is nilpotent in 
$A[t_{\ul{m}}~\vert~\ul{m} \in {\mab N}^r,\vert\ul{m}\vert =n]
/(f^{\ul{m}}-\pi t_{\ul{m}},\pi^l)$ 
and because 
the reduction mod $I$ of the morphism 
(\ref{eqn:axfpi}) is an isomorphism. 
\par 
(2): 
Let $\wh{\mab A}^r_S$ $(r\in {\mab N})$ 
be the formal affine space of 
relative dimension $r$ over $S$ 
($\os{\circ}{\wh{\mab A}}{}^r_S
=\ul{\rm Spf}_{\os{\circ}{S}}({\cal O}_S[[x_1,\ldots,x_r]])$; 
the log structure of $\wh{\mab A}^r_S$ is, 
by definition, the inverse image of 
the log structure of $S$). 
By (1) and (\ref{prop:fc}), we may assume that 
${\cal U}'={\cal U}\times_S\hat{\mab A}^r_S$.  
The zero-section 
$S \os{\sus}{\lo} \hat{\mab A}^r_S$ 
induces the closed immersion 
${\cal U} \os{\sus}{\lo} {\cal U}'$. 
For $\ul{m}:=(m_1,\ldots,m_r)\in 
{\mab N}^r$, set  
$x^{\ul{m}}:=x^{m_1}_1\cdots x^{m_r}_r$. 
Let $\{{\mathfrak T}_{{\cal U},n}({\cal U}')\}_{n=1}^{\infty}$ 
be the system of the universal enlargements 
of the closed immersion 
${\cal U} \os{\sus}{\lo} {\cal U}'$. 
Set 
$\wt{\cal U}:=\ul{\rm Spf}_{\cal U}({\cal O}_{\cal U}/
(\pi{\textrm -}{\rm torsion}))$. 
Then  
\begin{align*}
{\cal O}_{{\mathfrak T}_{{\cal U},n}({\cal U}')}  
& =
({\cal O}_{{\cal U}'}[t_{\ul{m}}~\vert~\ul{m} \in 
{\mab N}^r,\vert\ul{m}\vert =n]
/((x^{\ul{m}}-\pi t_{\ul{m}})+(p{\textrm -}{\rm torsion})))^{\wh{}} 
\tag{2.12.3}\label{ali:xte}\\ 
{} & =({\cal O}_{\wt{\cal U}}[[x_1,\ldots, x_r]]
[t_{\ul{m}}~\vert~\ul{m} \in {\mab N}^r,\vert\ul{m}\vert =n]
/(x^{\ul{m}}-\pi t_{\ul{m}}))^{\wh{}}. 
\end{align*}
Set 
$$B_n:=
({\cal O}_{\wt{\cal U}}[[x_1,\ldots, x_r]]
[t_{\ul{m}}~\vert~\ul{m} \in {\mab N}^r,\vert\ul{m}\vert =n]
/(x^{\ul{m}}-\pi t_{\ul{m}}))^{\wh{}}. $$
Since 
$B_n \! \! \mod \pi^k$ $(k\in {\mab N})$
is a free ${\cal O}_{\wt{\cal U}}
/\pi^k{\cal O}_{\wt{\cal U}}$-modules 
$(x^{\ul{l}}t_{\ul{m}}^{\ul{e}}$ 
$(\vert \ul{l} \vert <n$, 
$\vert \ul{m} \vert =n$, $\ul{e}\in {\mab N}^r)$ 
form a basis), since 
$\os{\circ}{\wt{\cal U}}$ is noetherian 
and since 
$B_n$ is separated for the $\pi$-adic topology, 
the sheaf of rings ${\cal O}_{{\mathfrak T}_{{\cal U},n}({\cal U}')}  
\simeq B_n$ 
is a flat 
${\cal O}_{\wt{\cal U}}$-module by 
\cite[Chapitre III \S5, Th\'{e}or\`{e}me 1]{bou2}. 
Let $\wt{\cal U}{}^{\wedge}$ 
be the $p$-adic completion of $\wt{\cal U}$. 
Then  
${\mathfrak T}_n(g)_*
({\cal O}_{{\mathfrak T}_{{\cal U},n}({\cal U}')})$ 
is a flat 
${\cal O}_{\wt{\cal U}{}^{\wedge}}=
{\cal O}_{{\mathfrak T}_{{\cal U},n}({\cal U})}$-module 
for any $n\in {\mab Z}_{\geq 1}$. 
By the universality, we have the following natural morphism 
\begin{equation*} 
\alpha_n \col  
{\mathfrak T}_{Z,n}({\cal U}') \lo 
{\mathfrak T}_{Z,n}({\cal U})
\times_{\wt{\cal U}{}^{\wedge}}
{\mathfrak T}_{{\cal U},n}({\cal U}').  
\tag{2.12.4}\label{eqn:tzuu}
\end{equation*} 
To prove (2), it suffices to prove that the following induced morphism 
\begin{equation*} 
\alpha_n^* \col {\cal K}_{{\mathfrak T}_{Z,n}({\cal U})
\times_{\wt{\cal U}{}^{\wedge}}
{\mathfrak T}_{{\cal U},n}({\cal U}')}\lo 
\alpha_{n*}({\cal K}_{{\mathfrak T}_{Z,n}({\cal U}')})  
\tag{2.12.5}\label{eqn:taltz}
\end{equation*} 
is an isomorphism. 
(Though the proof for this claim is essentially the same as the proof of \cite[(2.6.2)]{of}, 
we give the proof for this claim. See also (\ref{rema:ntb}) (2) below.) 
We may assume that 
$\os{\circ}{\cal U}$ is an affine formal scheme ${\rm Spf}(A)$. 
Let $y_1,\ldots, y_q$ $(q\in {\mab Z}_{\geq 1})$ 
be a system of generators of the defining ideal sheaf 
of the closed immersion $Z\os{\sus}{\lo} {\cal U}$. 
Then 
\begin{equation*} 
{\cal O}_{{\mathfrak T}_{Z,n}({\cal U})}
=
({\cal O}_{\cal U}
[s_{\ul{m}}~\vert~\ul{m} \in {\mab N}^q,\vert \ul{m}\vert =n]
/((y^{\ul{m}}-\pi s_{\ul{m}})+(p{\textrm -}{\rm torsion})))^{\wh{}}.
\tag{2.12.6}\label{eqn:tatz}  
\end{equation*} 
The ideal 
$(y_1,\ldots,y_q,x_1,\ldots,x_r)$ in 
$\Gam({\cal U}',{\cal O}_{{\cal U}'})$ gives the defining ideal sheaf 
of the closed immersion $Z\os{\sus}{\lo}{\cal U}'$. 
Set $(z_1,\ldots,z_{q+r})=(y_1,\ldots, y_q,x_1,\ldots, x_r)$. 
Then 
\begin{equation*} 
{\cal O}_{{\mathfrak T}_{Z,n}({\cal U}')}
=({\cal O}_{{\cal U}'}
[u_{\ul{m}}~\vert~\ul{m} \in {\mab N}^{q+r},\vert \ul{m}\vert =n]
/((z^{\ul{m}}-\pi u_{\ul{m}})+(p{\textrm -}{\rm torsion})))^{\wh{}}.
\tag{2.12.7}\label{eqn:tutz}  
\end{equation*}  
Set 
$$A':=A[[x_1,\ldots, x_r]],$$ 
$$C:=A'[u_{\ul{m}}~\vert~\ul{m} \in {\mab N}^{q+r},\vert \ul{m}\vert =n]
/((z^{\ul{m}}-\pi u_{\ul{m}})+(p{\textrm -}{\rm torsion})),$$  
$$D:=A[s_{\ul{m}}~\vert~\ul{m} \in {\mab N}^q,\vert\ul{m}\vert =n]
/((y^{\ul{m}}-\pi s_{\ul{m}})+(p{\textrm -}{\rm torsion}))$$ 
and 
$$E:=A'[t_{\ul{m}}~\vert~\ul{m} \in {\mab N}^r,\vert\ul{m}\vert =n]
/((x^{\ul{m}}-\pi t_{\ul{m}})+(p{\textrm -}{\rm torsion})).$$ 
Then $C$ is of finite type over $D\otimes_AE$. 
The ring $C$ is integral over $D\otimes_AE$. 
Indeed,  express $z^{\ul{m}}=y^{\ul{k}}x^{\ul{l}}$ 
such that $\vert \ul{k}\vert +\vert \ul{l}\vert =\vert \ul{m}\vert=n$. 
Then 
\begin{align*} 
\pi^nu^n_{\ul{m}}&=(z^{\ul{m}})^n
=y^{n{\ul{k}}}x^{n{\ul{l}}}=
y_1^{nk_1}\cdots y_q^{nk_q}x_1^{nl_1}\cdots x_r^{nl_r}\\
&=
(\pi^{k_1} s_{(n,0,\ldots,0)}^{k_1})
\cdots (\pi^{k_q} s_{(0,\ldots,0,n)}^{k_q})
(\pi^{l_1} t_{(n,0,\ldots,0)}^{l_1})\cdots (\pi^{l_r} t_{(0,\ldots,0,n)}^{l_r}) \\
&=\pi^ns_{(n,0,\ldots,0)}^{k_1}\cdots  s_{(0,\ldots,0,n)}^{k_q}
t_{(n,0,\ldots,0)}^{l_1}\cdots t_{(0,\ldots,0,n)}^{l_r}.
\end{align*} 
Hence $u^n_{\ul{m}}=
s_{(n,0,\ldots,0)}^{k_1}
\cdots s_{(0,\ldots,0,n)}^{k_q}
t_{(n,0,\ldots,0)}^{l_1}\cdots t_{(0,\ldots,0,n)}^{l_r}$ in $C$ 
and this tells us that $C$ is integral over  $D\otimes_AE$. 
Consequently $C$ is finite over  $D\otimes_AE$ 
and 
$\wh{C}=
(D\otimes_AE)^{\wh{}}\otimes_{D\otimes_AE}C$. 
By this equality we obtain 
\begin{align*} 
\wh{C}\otimes_{\cal V}K=
(D\otimes_AE)^{\wh{}}\otimes_{\cal V}K
\otimes_{(D\otimes_AE)\otimes_{\cal V}K}
(C\otimes_{\cal V}K)
=(D\otimes_AE)^{\wh{}}\otimes_{\cal V}K
\end{align*} 
since $(D\otimes_AE)\otimes_{\cal V}K=
C\otimes_{\cal V}K$. 
This shows that the morphism (\ref{eqn:taltz}) 
is an isomorphism. We complete the proof of (2). 
\end{proof}

\begin{rema}\label{rema:ntb} 
(1) Let the notations be as in \cite[(2.6.2)]{of}. We find a claim 
that there exists a map 
$\al \col T_{I^{(n)}_Z}(Y) \lo T_{I^{n}_Z}(Y)$ 
in [loc.~cit.]. However, since $I^{n}_Z$ contains $I^{(n)}_Z$, 
the source and the target of $\al$ in [loc.~cit.] should be in reverse, that is, there exists a map 
$\al \col T_{I^{n}_Z}(Y)\lo T_{I^{(n)}_Z}(Y)$.  
\par 
(2) Though we find a claim 
in the proof of \cite[(2.12)]{of} 
that ``It suffices to prove this for $n=1$.'', 
the reduction for the case of any $n\in{\mab Z}_{\geq 1}$ 
to the case $n=1$ seems impossible: in the notation of 
the proof of [loc.~cit.], we have to consider 
the structure sheaf of $T_{Z,n}(Y(1))$.   
Note that, though the natural morphism 
\begin{equation*} 
T_{Z}(Y(1))\lo T_{Z}(Y)\times_YT_Y(Y(1)) 
\end{equation*} 
is an isomorphism 
as claimed in [loc.~cit.], 
while the natural morphism 
\begin{equation*} 
T_{Z,n}(Y(1)) \lo T_{Z,n}(Y)\times_YT_{Y,n}(Y(1)) \quad (n\geq 2)
\end{equation*} 
is not an isomorphism in general. 
In conclusion, we do not know whether the claim in [loc.~cit.] 
that $p_i \col T_{Z,n}(Y(1)) \lo T_{Z,n}(Y)$ is flat is correct; 
by (\ref{lemm:eisd}) (2), 
we know only that $p_{i*}({\cal K}_{T_{Z,n}(Y(1))})$ 
is a flat ${\cal K}_{T_{Z,n}(Y)}$-algebra.  
\end{rema}

\begin{prop}\label{prop:lef} 
Let $Y/S_1$ be as in the beginning of this section. 
Let $\iota_Y \col Y \os{\subset}{\lo} {\cal Y}$ 
be an immersion into 
a formally log smooth log noetherian formal scheme over $S$ 
which is topologically of finite type over $S$. 
Set ${\cal Y}(1):={\cal Y}\times_S{\cal Y}$. 
Let $\{{\mathfrak T}_{Y,n}({\cal Y}(1))\}_{n=1}^{\infty}$ 
be the system of the universal enlargements of 
the diagonal immersion 
$Y \os{\sus}{\lo} {\cal Y}(1)$. 
Let 
$p_{i,n} \col {\mathfrak T}_{Y,n}({\cal Y}(1)) \lo 
{\mathfrak T}_{Y,n}({\cal Y})$ $(i=1,2)$ 
be the natural morphism induced by the  
$i$-th projection 
${\cal Y}(1) \lo {\cal Y}$. 
Then $p_{i,n*}({\cal K}_{{\mathfrak T}_{Y,n}({\cal Y}(1))})$ 
$(i=1,2)$ is a flat 
${\cal K}_{{\mathfrak T}_{Y,n}({\cal Y})}$-algebra.   
\end{prop}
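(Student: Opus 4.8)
The plan is to deduce this from the weak fibration theorem (\ref{lemm:eisd})~(2). The assertion is Zariski local on $Y$, and by (\ref{lemm:uys1}) and (\ref{prop:fci})~(2) the systems of universal enlargements occurring in the statement are unchanged if, replacing ${\cal Y}$ and ${\cal Y}(1)$ by suitable log open formal subschemes still containing $Y$ and the diagonal image of $Y$ respectively, we assume that $\iota_Y\col Y\os{\subset}{\lo}{\cal Y}$ and the diagonal immersion $\Del_Y\col Y\os{\subset}{\lo}{\cal Y}(1)$ are closed immersions. Now $\Del_Y$ is the composite of $\iota_Y$ with the diagonal morphism $\Del_{{\cal Y}/S}\col {\cal Y}\lo{\cal Y}(1)$ over $S$, and $\Del_{{\cal Y}/S}$ is an immersion of fine log formal schemes: its underlying morphism is the usual diagonal immersion of schemes, and the structural homomorphism $\Del_{{\cal Y}/S}^{*}M_{{\cal Y}(1)}\lo M_{\cal Y}$ of log structures is surjective. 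Since $p_i\circ \Del_{{\cal Y}/S}={\rm id}_{\cal Y}$ $(i=1,2)$, the square
\begin{equation*}
\begin{CD}
Y @>{\subset}>> {\cal Y}(1)\\
@| @VV{p_i}V\\
Y @>{\subset}>> {\cal Y}
\end{CD}
\end{equation*}
is commutative, and $p_{i,n}$ is exactly the morphism ${\mathfrak T}_n(p_i)$ attached to it in (\ref{lemm:eisd}). Therefore, once we know that $\os{\circ}{p_i}{}^{\rm ex}\col \os{\circ}{\cal Y}(1){}^{\rm ex}\lo \os{\circ}{\cal Y}{}^{\rm ex}$ is formally smooth, (\ref{lemm:eisd})~(2) immediately gives the flatness of $p_{i,n*}({\cal K}_{{\mathfrak T}_{Y,n}({\cal Y}(1))})$ over ${\cal K}_{{\mathfrak T}_{Y,n}({\cal Y})}$.

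Hence everything comes down to the formal smoothness of $\os{\circ}{p_i}{}^{\rm ex}$, which is again local on $Y$ and which I expect to be the main obstacle. I would prove it by a chart computation, patterned on the log crystalline case in \cite{klog1}. By the local structure of formally log smooth morphisms one reduces, locally on $Y$, to the case where there is a chart $P\lo Q$ of $S\lo{\cal Y}$ with $P^{\rm gp}\lo Q^{\rm gp}$ injective and $\os{\circ}{\cal Y}$ formally smooth over the corresponding toric formal model; then ${\cal Y}(1)$ carries the chart $Q\oplus_P Q$, the cochart of $\Del_{{\cal Y}/S}$ is the addition homomorphism $(Q\oplus_P Q)^{\rm gp}=Q^{\rm gp}\oplus_{P^{\rm gp}}Q^{\rm gp}\lo Q^{\rm gp}$, whose kernel is isomorphic to $Q^{\rm gp}/P^{\rm gp}$, and the exactification chart (construction recalled after (\ref{prop:exad})) is $(Q\oplus_P Q)^{\rm ex}\cong Q\oplus (Q^{\rm gp}/P^{\rm gp})$, in which $P$ lands in the first summand. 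The delicate points to verify are then: that this splitting is compatible with the structural morphism from $P$; that after passing to the exactification and completing along the diagonal copy of $Y$ (rather than along ${\cal Y}$) the log structure carried by the second summand becomes trivial, so that the corresponding monoid variables turn into ordinary power-series variables and any torsion in $Q^{\rm gp}/P^{\rm gp}$ is absorbed; that the non-exact, formally smooth directions of ${\cal Y}/S$ can be carried through at the same time; and that the exactification of the composite $Y\os{\subset}{\lo}{\cal Y}\os{\subset}{\lo}{\cal Y}(1)$ agrees with the exactification of $\iota_Y$ performed inside the resulting relative formal polydisc (a transitivity property that follows from the universal property of exactification in (\ref{prop:exad})). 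Granting these, one obtains a Zariski local isomorphism ${\cal Y}(1)^{\rm ex}\cong {\cal Y}^{\rm ex}\wh{\times}_S\hat{\mab A}^{N}_S$ for some $N$, with $\hat{\mab A}^N_S$ carrying the pull-back log structure of $S$, under which $p_i^{\rm ex}$ becomes the first projection; in particular $\os{\circ}{p_i}{}^{\rm ex}$ is formally smooth. This is the log analogue of the classical fact that the tubular neighbourhood of the diagonal of a formally smooth formal scheme is a relative polydisc.

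Once this local normal form is in hand, the proof is concluded by the argument of (\ref{lemm:eisd})~(2) itself: the reduction to the model case ${\cal U}'={\cal U}\times_S\hat{\mab A}^{r}_S$, the explicit description (\ref{ali:xte}) of the structure sheaves of the universal enlargements, and the integral-dependence argument comparing ${\mathfrak T}_{Y,n}({\cal U}')$ with ${\mathfrak T}_{Y,n}({\cal U})\times_{\wt{\cal U}{}^{\wedge}}{\mathfrak T}_{{\cal U},n}({\cal U}')$ (see (\ref{eqn:tzuu}), (\ref{eqn:taltz})) all apply without change, and yield the asserted flatness of $p_{i,n*}({\cal K}_{{\mathfrak T}_{Y,n}({\cal Y}(1))})$ over ${\cal K}_{{\mathfrak T}_{Y,n}({\cal Y})}$.
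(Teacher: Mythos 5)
Your proposal is correct and follows essentially the same route as the paper: the paper's proof also passes to the exactifications ${\cal Y}^{\rm ex}$ and ${\cal Y}(1)^{\rm ex}$, observes that the underlying morphisms of the induced projections are formally smooth "as in \cite[(6.6)]{klog1}", and then invokes the weak fibration theorem (\ref{lemm:eisd}) (2). The only difference is that you spell out the chart computation behind the formal smoothness of $\os{\circ}{p_i}{}^{\rm ex}$, which the paper delegates to the citation of Kato.
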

\begin{proof} 
Let ${\cal Y}^{\rm ex}$ and ${\cal Y}(1)^{\rm ex}$ 
be the exactifications of 
$\iota_Y$ and the diagonal immersion 
$Y \os{\sus}{\lo} {\cal Y}(1)$, respectively.  
Let 
$p_1, p_2\col {\cal Y}(1)^{\rm ex} \lo {\cal Y}^{\rm ex}$ 
be the induced morphism 
by the first and the second projections 
${\cal Y}(1) \lo {\cal Y}$, respectively. 
As in \cite[(6.6)]{klog1}, we see that 
the morphisms $\os{\circ}{p}_1$ and $\os{\circ}{p}_2$
are formally smooth over $\os{\circ}{S}$. 
Now (\ref{prop:lef}) follows from (\ref{lemm:eisd}) (2). 
\end{proof}

\section{Log convergent linearization functors.~I}\label{sec:lll}
Let $S$ and $Y/S_1$ be as in \S\ref{sec:logcd}.  
Let $\iota_Y \col Y \os{\subset}{\lo} {\cal Y}$ 
be an immersion into 
a log noetherian $p$-adic formal scheme over $S$ 
which is topologically of finite type over $S$. 
Let ${\mathfrak T}_Y({\cal Y})$ be 
the quasi-prewidening 
$(Y,{\cal Y},\iota_Y, {\rm id}_{Y})$ of $Y/S$. 
Let 
$\{{\mathfrak T}_{Y,n}({\cal Y})\}_{n=1}^{\infty}$ 
(${\mathfrak T}_{Y,n}({\cal Y})
:=(Y_n,{\mathfrak T}_{Y,n}({\cal Y}),Y_n\os{\sus}{\lo}
{\mathfrak T}_{Y,n}({\cal Y}),Y_n\lo Y)$,  
$Y_n:=Y\times_{\cal Y}{\mathfrak T}_{Y,n}({\cal Y})$)
be the system of the universal enlargements of 
${\mathfrak T}_Y({\cal Y})$. 
We call $\{{\mathfrak T}_{Y,n}({\cal Y})\}_{n=1}^{\infty}$ 
the {\it system of the universal enlargements of} 
$\iota_Y$ for short.  By (\ref{rema:rfite}) 
we obtain the equality 
${\mathfrak T}_{Y,n}({\cal Y})=
{\mathfrak T}_{Y,n}({\cal Y}^{\rm ex})$.  
Let 
$((Y/S)_{\rm conv}
\vert_{{\mathfrak T}_Y({\cal Y})},
{\cal K}_{Y/S}\vert_{{\mathfrak T}_Y({\cal Y})})$ 
be the localized ringed topos 
of $((Y/S)_{\rm conv},{\cal K}_{Y/S})$ 
at ${\mathfrak T}_Y({\cal Y})$ 
and let  
$$j_{{\mathfrak T}_Y({\cal Y})} 
\col 
((Y/S)_{\rm conv}\vert_{{\mathfrak T}_Y({\cal Y})},
{\cal K}_{Y/S}\vert_{{\mathfrak T}_Y({\cal Y})}) 
\lo ((Y/S)_{\rm conv}, {\cal K}_{Y/S})$$ 
be the natural morphism of the ringed topoi and let 
\begin{align*}
\varphi^*_{\os{\to}{\mathfrak T}_Y({\cal Y})} 
\col & \{\text{the category of coherent crystals of }
{\cal K}_{\os{\to}{\mathfrak T}_Y({\cal Y})}\text{-modules}\} 
\tag{3.0.1}\label{eqn:phyt} \\
{} & \lo \{\text{the category of coherent crystals of }
{\cal K}_{Y/S}\vert_{{\mathfrak T}_Y({\cal Y})}\text{-modules}\}
\end{align*}
be a natural functor which is 
a relative log version (with $\otimes_{\mab Z}{\mab Q}$) 
of the functor in \cite[p.~147]{oc}.  
We also have the following natural functor 
\begin{equation*}
\varphi_{\os{\to}{\mathfrak T}_Y({\cal Y})*} 
\col 
((Y/S)_{\rm conv}
\vert_{{\mathfrak T}_Y({\cal Y})},
{\cal K}_{Y/S}\vert_{{\mathfrak T}_Y({\cal Y})}) 
\lo 
({\os{\to}{\mathfrak T}_Y({\cal Y})}_{\rm zar}, 
{\cal K}_{\os{\to}{\mathfrak T}_Y({\cal Y})})
\tag{3.0.2}\label{eqn:ltts}
\end{equation*} 
by setting $\varphi_{\os{\to}{\mathfrak T}_Y({\cal Y})*}(E)(U) 
:=E(U\os{\subset}{\lo} {\mathfrak T}_{Y,n}({\cal Y})\lo {\mathfrak T}_Y({\cal Y}))$. 
For a coherent crystal of 
${\cal K}_{\os{\to}{\mathfrak T}_Y({\cal Y})}$-module 
${\cal E}$, set 
\begin{equation*} 
L^{\rm UE}_{Y/S}({\cal E})
:=j_{{\mathfrak T}_Y({\cal Y})*}
\varphi^*_{\os{\to}{\mathfrak T}_Y({\cal Y})}({\cal E})
\tag{3.0.3}\label{eqn:luys}
\end{equation*} 
by abuse of notation (cf.~\cite[p.~152]{oc}, \cite[p.~95]{s2}) 
(``UE'' is the abbreviation of the universal enlargement). 
\par

\begin{defi}\label{defi:lf}  
(1) We call the functor 
\begin{equation*}
L^{\rm UE}_{Y/S} \col 
\{{\rm coherent}~{\rm crystals}~{\rm of}~
{\cal K}_{\os{\to}{\mathfrak T}_Y({\cal Y})}
{\textrm -}{\rm modules}\} 
\lo \{{\cal K}_{Y/S}\text{-}{\rm modules}\} 
\tag{3.1.1}\label{eqn:lvnc}
\end{equation*} 
the {\it log convergent linearization functor with respect to} 
$\iota_Y$ for coherent crystals of
${\cal K}_{\os{\to}{\mathfrak T}_Y({\cal Y})}$-modules. 
\par 
(2) Let 
$\bet \col \os{\to}{\mathfrak T}_{Y}({\cal Y}) \lo {\cal Y}$ 
be the natural morphism. 
Let $\bet^{\rm ex} \col 
\os{\to}{\mathfrak T}_{Y}({\cal Y}) \lo {\cal Y}^{\rm ex}$ 
be also the natural morphism. 
Let ${\cal Z}$ be ${\cal Y}$ or ${\cal Y}^{\rm ex}$ 
and let 
$\gam \col \os{\to}{\mathfrak T}_{Y}({\cal Y})\lo {\cal Z}$ 
be $\bet$ or $\bet^{\rm ex}$. 
We call the composite functor 
\begin{equation*}
L^{\rm conv}_{Y/S}:= L^{\rm UE}_{Y/S} \circ \gam^* 
\col \{{\rm coherent}~
{\cal K}_{\cal Z}{\textrm -}{\rm modules}\} 
\lo 
\{{\cal K}_{Y/S}\text{-}{\rm modules}\} 
\tag{3.1.2}\label{eqn:lue}
\end{equation*} 
the {\it log convergent linearization functor 
with respect to} $\iota_Y$ {\it for coherent} 
${\cal K}_{\cal Z}$-{\it modules}. 
\end{defi}


\begin{lemm}\label{lemm:rex} 
Let $T=(U,T,\iota,u)$ be 
an exact quasi-$($pre$)$widening of  
$Y/S$ and let $\{T_n\}_{n=1}^{\infty}$ be 
the system of the universal enlargements of $T$. 
Let $\bet_n \col T_n \lo T$ be the natural morphism. 
Then, for each $n$, 
the pull-back functor 
\begin{equation*} 
\beta^*_n \col 
\{{\rm coherent}~{\cal K}_T{\textrm -}{\rm modules}\} 
\lo 
\{{\rm coherent }~{\cal K}_{T_n}{\textrm -}{\rm modules}\} 
\end{equation*} 
is exact. 
\end{lemm}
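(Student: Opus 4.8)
The plan is to deduce the exactness of $\beta^*_n$ from a flatness statement for the corresponding homomorphism of rings of sections, exploiting the fact that, although the formal blow-up used to construct ${\mathfrak T}_{U,n}(T)$ makes $\beta_n$ non-flat integrally, it becomes flat after inverting $p$: modulo torsion and before $p$-adic completion it simply recovers $A\otimes_{\mab Z}{\mab Q}$, and the remaining completion step is harmless rationally.

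First I would reduce to a local affine situation. By (\ref{lemm:uys1}) we may replace $T$ by a log open formal subscheme in which $\os{\circ}{U}$ is closed; covering this by affine opens $V$ and using (\ref{lemm:bcue})(2) for the (flat) open immersions $V\os{\sus}{\lo}T$, which gives ${\mathfrak T}_{U\cap V,n}(V)=T_n\times_TV$, we reduce — since exactness of a functor is local — to the case $\os{\circ}{T}={\rm Spf}(A)$ with $A$ noetherian adic; by the exactness hypothesis $\iota$ is then an exact closed immersion, and we set $I:={\rm Ker}({\cal O}_T\lo {\cal O}_U)=(f_1,\ldots,f_r)$. By (\ref{rema:exdras}), with $C:=A[t_{\ul m}\mid \ul m\in {\mab N}^r,\ \vert\ul m\vert=n]/(f^{\ul m}-\pi t_{\ul m}\mid \vert\ul m\vert=n)$, we have $B:=\Gam(T_n,{\cal O}_{T_n})=(C/(p{\textrm -}{\rm torsion}))^{\wh{}}$. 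Inverting $\pi$ solves each relation uniquely as $t_{\ul m}=\pi^{-1}f^{\ul m}$, so $C[1/\pi]=A[1/\pi]$ canonically as $A[1/\pi]$-algebras, and hence ${\rm Ker}(C\lo A[1/\pi])$ is the $\pi$-power (equivalently $p$-power) torsion of $C$, which is bounded since $C$ is noetherian. Thus $B':=C/(p{\textrm -}{\rm torsion})$ is a noetherian ring, identified with the $A$-subalgebra $A[\pi^{-1}f^{\ul m}\mid \vert\ul m\vert=n]$ of $A[1/\pi]$, with $B'[1/p]=A[1/p]=A\otimes_{\mab Z}{\mab Q}$. Since the $p$-adic completion of a noetherian ring is flat over it, $B=\wh{B'}$ is flat over $B'$, and therefore $B\otimes_{\mab Z}{\mab Q}=\wh{B'}\otimes_{B'}B'[1/p]$ is flat over $A\otimes_{\mab Z}{\mab Q}$. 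Consequently the canonical map $\beta_n^{-1}({\cal K}_T)\lo {\cal K}_{T_n}$ makes ${\cal K}_{T_n}$ a flat $\beta_n^{-1}({\cal K}_T)$-algebra, so $\beta^*_n(-)={\cal K}_{T_n}\otimes_{\beta_n^{-1}({\cal K}_T)}\beta_n^{-1}(-)$ is exact on all ${\cal K}_T$-modules, in particular on coherent ones.

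The flatness computation itself is short; I expect the delicate part to be the bookkeeping of the reductions in the previous paragraph — checking that passing to an affine base, to the open where $\os{\circ}{U}$ is closed, and to an exact closed immersion leaves $T_n$ unchanged, so that the explicit description (\ref{rema:exdras}) genuinely applies to the quasi-prewidening at hand — together with the final passage from flatness of the ring map $A\otimes{\mab Q}\lo B\otimes{\mab Q}$ to flatness of the associated sheaves of isostructure algebras, where one uses that $\os{\circ}{T}_n={\rm Spf}(B)$ is again affine and that coherent ${\cal K}$-modules over such affine adic formal schemes correspond to finitely generated modules over their global sections.
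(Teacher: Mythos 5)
Your proof is correct, and the opening reductions (to a log open formal subscheme via (\ref{lemm:uys1}), then to an affine exact situation via the flat-base-change part of (\ref{lemm:bcue}), so that the description (\ref{eqn:ldra}) applies) match what the paper does implicitly when it says ``we may assume that $\iota$ is an exact closed immersion'' and that $\os{\circ}{T}={\rm Spf}(A)$. The heart of your argument, however, is genuinely different from the paper's. Writing $A_n$ for $\Gam(T_n,{\cal O}_{T_n})$ up to $p$-torsion, the paper produces an explicit $A$-linear map $\rho \col A_n \lo A$ with $\rho\circ\iota=\pi\cdot{\rm id}_A$ and $\iota\circ\rho=\pi\cdot{\rm id}_{A_n}$, whence $\pi\cdot{\rm Tor}_1^A(A_n,E)=0$ for every finitely generated $A$-module $E$; after $\otimes_{\mab Z}{\mab Q}$ this Tor-vanishing is exactly the exactness needed on coherent ${\cal K}_T$-modules. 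You instead exploit $C[1/\pi]=A[1/\pi]$ to identify $B'=C/(p\text{-torsion})$ with an $A$-subalgebra of $A[1/p]$ satisfying $B'[1/p]=A[1/p]$, and then invoke flatness of the $p$-adic completion of the noetherian ring $B'$ together with base change along $B'\lo B'[1/p]$. Each step checks out ($\pi$ and $p$ generate the same radical in ${\cal V}$, so $\pi$-power and $p$-power torsion and localizations coincide, and $B'$ is noetherian as a quotient of a finite-type $A$-algebra), and your route buys a strictly stronger conclusion: ${\cal K}_{T_n}$ is honestly flat over $\beta_n^{-1}({\cal K}_T)$, so $\beta_n^*$ is exact on all ${\cal K}_T$-modules, not only coherent ones. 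What it gives up is the integral refinement recorded by the paper's retraction trick, namely that ${\rm Tor}_1^A(A_n,E)$ is already killed by $\pi$ before inverting $p$ — information the flatness-of-completion argument does not see.
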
  
\begin{proof} 
We may assume that $\iota$ is an exact closed immersion. 
Let the notations be as in (\ref{rema:exdras}). 
We may assume that $\os{\circ}{T}$ is 
an affine formal scheme ${\rm Spf}(A)$ 
over ${\cal V}$. 
Set $A_n:=
(A[t_{\ul{m}}~\vert~\ul{m}\in {\mab N}^k,\vert \ul{m} \vert =n]
/(f^{\ul{m}}-\pi t_{\ul{m}}))^{\wh{}}$. 
Then $\Gam(T_n,{\cal O}_{T_n})\otimes_{\mab Z}{\mab Q}
=A_n\otimes_{\mab Z}{\mab Q}$ by (\ref{eqn:ldra}). 
It suffices to prove that, for a finitely generated 
$A$-module $E$,  
\begin{equation*}   
{\rm Tor}_1^A(A_n,E\otimes_{\mab Z}{\mab Q})=0. 
\end{equation*} 
In fact, we claim that $\pi \cdot{\rm Tor}_1^A(A_n,E)=0$. 
(This implies $p \cdot{\rm Tor}_1^A(A_n,E)=0$, and hence 
${\rm Tor}_1^A(A_n,E\otimes_{\mab Z}{\mab Q})=0$.)  
Indeed,  
let $\iota \col A \lo A_n$ be the natural inclusion 
and let $\rho \col A_n \lo A$ be 
a morphism of $A$-modules defined by $\rho(1_A)=\pi$   
and $\rho(t_{\ul{m}})=f^{\ul{m}}$. 
Then 
$\iota \circ \rho =\pi \times {\rm id}_{A_n}$. 
The two morphisms $\rho$ and $\iota$ induce
the following two zero morphisms:   
\begin{equation*} 
\rho_* \col {\rm Tor}_1^A(A_n,E) \lo {\rm Tor}_1^A(A,E)=0
\quad  
\iota_* \col 0={\rm Tor}_1^A(A,E) 
\lo  {\rm Tor}_1^A(A_n,E).  
\end{equation*} 
Since
$\iota_*\circ \rho_*=\pi \cdot {\rm id}_{{\rm Tor}_1^A(A_n,E)}$, 
$\pi \cdot {\rm Tor}_1^A(A_n,E)=0$. 
We finish the proof of (\ref{lemm:rex}). 
\end{proof}

\begin{coro}\label{coro:ckt} 
Let the notations be as in {\rm (\ref{lemm:rex})}. 
The category of coherent crystals of 
${\cal K}_{\os{\to}{T}}$-modules 
is an abelian category. The kernel and the cokernel in  
this category  
are equal to those in the category of 
coherent ${\cal K}_{\os{\to}{T}}$-modules.  
\end{coro}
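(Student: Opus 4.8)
The plan is to reduce everything to the levelwise structure and to (\ref{lemm:rex}). First I would note that for each $n$ the category ${\rm Coh}({\cal K}_{T_n})$ is abelian: the kernel and the cokernel of a morphism between modules of the form $K\otimes_{\cal V}{\cal F}$ with ${\cal F}\in{\rm Coh}({\cal O}_{T_n})$ are again of this form, since $\os{\circ}{T}_n$ is noetherian (cf.~\cite[(1.1)]{of}). Hence the category of coherent ${\cal K}_{\os{\to}{T}}$-modules is additive, and for a morphism $f=\{f_n\}_n$ the assignments $n\mapsto\ker(f_n)$ and $n\mapsto\mathrm{coker}(f_n)$, with the transition morphisms induced from those of the source and target (using that $\phi_n^{-1}$ is exact, where $\phi_n\col T_n\lo T_{n+1}$ is the transition morphism of the universal enlargement system), define its kernel and its cokernel in that category. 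It therefore suffices to prove that if $f$ is a morphism of coherent crystals then these levelwise kernel and cokernel are again crystalline; the statement then follows from the general fact that a full additive subcategory of an abelian category which is closed under the kernels and cokernels computed in the ambient category is itself abelian, with the same kernels and cokernels.

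So let $f\col{\cal F}\lo{\cal G}$ be a morphism of coherent crystals, so that $\phi_n^*({\cal F}_{n+1})\os{\sim}{\lo}{\cal F}_n$ and $\phi_n^*({\cal G}_{n+1})\os{\sim}{\lo}{\cal G}_n$, where $\phi_n^*={\cal K}_{T_n}\otimes_{\phi_n^{-1}({\cal K}_{T_{n+1}})}\phi_n^{-1}(-)$. For the cokernel ${\cal C}_n:=\mathrm{coker}(f_n)$ one only needs the right exactness of $\phi_n^*$ (tensor product is right exact and $\phi_n^{-1}$ is exact): applying $\phi_n^*$ to ${\cal F}_{n+1}\lo{\cal G}_{n+1}\lo{\cal C}_{n+1}\lo 0$ and using crystallinity of ${\cal F}$ and ${\cal G}$ gives ${\cal F}_n\lo{\cal G}_n\lo\phi_n^*({\cal C}_{n+1})\lo 0$, so $\phi_n^*({\cal C}_{n+1})\os{\sim}{\lo}{\cal C}_n$ and $\{{\cal C}_n\}_n$ is crystalline. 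For the kernel ${\cal K}_n:=\ker(f_n)$ the same device requires the left exactness of $\phi_n^*$ on coherent ${\cal K}_{T_{n+1}}$-modules, equivalently ${\rm Tor}_1^{{\cal K}_{T_{n+1}}}({\cal K}_{T_n},M)=0$ for $M$ coherent; granting this, $\phi_n^*({\cal K}_{n+1})=\ker(\phi_n^*(f_{n+1}))=\ker(f_n)={\cal K}_n$, and we are done.

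The hard part is thus this last exactness. I would prove it in the style of (\ref{lemm:rex}): the question is local, so we may take $\os{\circ}{T}=\mathrm{Spf}(A)$ affine with $\iota$ exact and use the explicit presentations (\ref{eqn:ldra}) of $A_n=\Gam(T_n,{\cal O}_{T_n})$ and $A_{n+1}=\Gam(T_{n+1},{\cal O}_{T_{n+1}})$; it then suffices to construct morphisms of $A_{n+1}$-modules $A_{n+1}\lo A_n$ and $A_n\lo A_{n+1}$ whose two composites are multiplication by a power of $\pi$, since these force $\pi^N\cdot{\rm Tor}_1^{A_{n+1}}(A_n,E)=0$ for every finitely generated $A_{n+1}$-module $E$, hence ${\rm Tor}_1^{A_{n+1}}(A_n,E\otimes_{\mab Z}{\mab Q})=0$. (Alternatively one can prove directly that ${\cal K}_{T_n}$ is a flat $\phi_n^{-1}({\cal K}_{T_{n+1}})$-algebra by the integrality argument of (\ref{lemm:eisd}) (2), which yields the same conclusion.) The delicate point, compared with (\ref{lemm:rex}), is that here both $T_n$ and $T_{n+1}$ are genuine universal enlargements, so producing the required $\pi$-power splitting maps (or the integral generators) amounts to a careful bookkeeping with the variables $t_{\ul m}$ of (\ref{eqn:ldra}) at the two adjacent levels; everything else is formal.
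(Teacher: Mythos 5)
Your reduction is the right one and matches what the paper calls ``easy calculations'': levelwise kernels and cokernels with the induced transition maps compute kernels and cokernels of coherent ${\cal K}_{\os{\to}{T}}$-modules, the cokernel of a morphism of crystals is automatically a crystal by right exactness of $\phi_n^*$, and everything comes down to the left exactness of the transition pull-backs on coherent ${\cal K}$-modules. But it is exactly at that point that you leave a gap, and the route you sketch for closing it does not work. The missing idea is the one the paper's proof turns on: for each $m$, the system of the universal enlargements of the \emph{exact} widening $T_m$ is $\{T_1,T_2,\ldots,T_{m-1},T_m,T_m,\ldots\}$, so that for $m'\leq m$ the transition morphism $\iota_{m,m'}\col T_{m'}\lo T_m$ is itself the structure morphism of a universal enlargement of an exact quasi-prewidening, and (\ref{lemm:rex}) applies \emph{verbatim} to give the exactness of $\iota_{m,m'}^*$. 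Once $T_{m'}$ is presented as $(T_m)_{m'}$, i.e.\ via generators of ${\rm Ker}({\cal O}_{T_m}\to{\cal O}_{U_m})$ over $\Gam(T_m,{\cal O}_{T_m})$, the $\pi$-splitting of (\ref{lemm:rex}) is already there and no new bookkeeping is needed.

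By contrast, the direct computation you propose --- producing $A_{n+1}$-linear maps $A_{n+1}\lo A_n$ and $A_n\lo A_{n+1}$ whose composites are multiplication by a fixed power of $\pi$, using the presentations (\ref{eqn:ldra}) of $A_n$ and $A_{n+1}$ over $A$ --- cannot succeed as stated: it would require $\pi^N A_n\subseteq{\rm im}(A_{n+1}\lo A_n)$ for a single $N$, and this fails. The image of $A_{n+1}$ in $A_n$ is the $A$-subalgebra generated by the elements $t_{\ul{m}}f_j$ ($\vert\ul{m}\vert=n$), and for a monomial $t_{\ul{m}_1}\cdots t_{\ul{m}_k}$ one only gets $\pi^{k-q}t_{\ul{m}_1}\cdots t_{\ul{m}_k}\in{\rm im}(A_{n+1})$ with $q=\lfloor kn/(n+1)\rfloor$, so the required power of $\pi$ grows like $k/(n+1)$ with the degree $k$ and is unbounded. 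Your fallback to the integrality argument of (\ref{lemm:eisd}) (2) also does not apply: that argument compares enlargements at the \emph{same} level $n$ inside two ambient formal schemes related by a formally smooth morphism, not two adjacent levels over the same ambient scheme. So the proof stands or falls with the identification $T_{m'}=(T_m)_{m'}$, which you need to supply.
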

\begin{proof} 
For each $m\in {\mab Z}_{>0}$, 
the system of the universal enlargements of $T_m$ is 
$$\{T_1,T_2,\ldots, T_{m-1}, T_m, T_m, T_m,\ldots\}.$$ 
Let $\iota_{m,m'} \col T_{m'} \lo T_m$ $(m' \leq m)$ 
be the natural morphism. 
Because $T_m$ $(\forall m\in {\mab Z}_{\geq 1})$ is 
an exact enlargement of $Y/S$, 
the pull-back functor 
\begin{equation*} 
\iota_{m,m'}^*  \col 
\{\text{coherent ${\cal K}_{T_m}$-modules}\}
\lo \{\text{coherent ${\cal K}_{T_{m'}}$-modules}\}
\end{equation*}
is exact by (\ref{lemm:rex}). 
Now (\ref{coro:ckt}) follows from easy calculations. 
\end{proof}

\begin{rema}\label{rema:utne} 
If a quasi-prewidening $(U,T,\iota,u)$ of $Y/S$
is not exact, the obvious analogue of 
(\ref{lemm:rex}) does not hold in general.  
Indeed, set $S:=({\rm Spf}({\cal V}),{\cal V}^*)$ 
and let $Y$ 
(resp.~${\cal Y}$) 
be the following log scheme over $S_1$ (resp.~$S$): 
$Y=
({\rm Spec}(({\cal V}/\pi)[x]),
({\mab N}\owns 1\lom x\in ({\cal V}/\pi)[x])^a)$ 
(resp.~${\cal Y}=({\rm Spf}({\cal V}\{x\}),
({\mab N}\owns 1\lom x\in {\cal V}\{x\})^a)$. 
Consider the case where $U=Y$,  
$T:={\cal Y}(i):= 
\underset{(i+1)~{\rm pieces}}{\underbrace{
{\cal Y}\times_S{\cal Y}\times_S
\times \cdots \times_S{\cal Y}}}$, 
$\iota$ is the diagonal embedding 
$Y \os{\sus}{\lo} {\cal Y}(i)$ 
$(i\in {\mab Z}_{\geq 1})$ 
and $u={\rm id}_Y$. 
Then we claim that 
the pull-back functor 
\begin{equation*} 
\{{\rm coherent}~{\cal K}_T{\textrm -}{\rm modules}\} 
\lo 
\{{\rm coherent }~{\cal K}_{T^{\rm ex}}
{\textrm -}{\rm modules}\} 
\end{equation*} 
is not exact. 
Indeed, set 
$A_i=\vpl_n{\cal V}\{u^{\pm 1}_1,\ldots 
u^{\pm 1}_{i},x_0,x_1,\ldots,x_{i}\}/I^n$ 
$(I:=(x_1-u_1x_0,\ldots,x_{i}-u_{i}x_0,
u_1-1,\ldots,u_{i}-1))$. 
It is easy to check that 
$T^{\rm ex}=
({\rm Spf}(A_i),({\mab N}\owns 1 \lom x_0\in A_i)^a)$. 
We have a natural isomorphism
$A_i\os{\sim}{\lo} {\cal V}\{x_0\}
[[u_1-1,\ldots u_{i}-1]]$. 
Set $K\{x_1,\ldots,x_{i}\}:=K\otimes_{\cal V}{\cal V}\{x_1,\ldots,x_{i}\}$. 
Consider the exact sequence 
\begin{equation*} 
0\lo K\{x_1,\ldots,x_{i}\} \os{x_{i}\times}{\lo}  
K\{x_1,\ldots,x_{i}\}
\tag{3.4.1}\label{eqn:xel}
\end{equation*} 
and consider 
$K\{x_1,\ldots, x_{i}\}$ as $K\{x_0,\ldots, x_{i}\}/(x_0)$ 
with natural $K\{x_0,\ldots, x_{i}\}$-module strucuture. 
Apply the tensorization 
$\otimes_{K\{x_0,\ldots, x_{i}\}}
(A_i\otimes_{\cal V}K)$ to (\ref{eqn:xel}). 
Then we have the following sequence 
\begin{equation*} 
0\lo K\otimes_{\cal V}{\cal V}[[u_1-1,\ldots,u_{i}-1]] \os{0}{\lo}  
K\otimes_{\cal V}{\cal V}[[u_1-1,\ldots,u_{i}-1]],
\tag{3.4.2}\label{eqn:unei}
\end{equation*}  
which is not exact. 
Thus we have shown that the claim holds. 
In fact, set $B_n:=\Gam(T_n,{\cal O}_{T_n})$ 
$(n\in {\mab Z}_{\geq 1})$. 
Then 
$(\ref{eqn:xel})\otimes_{K\{x_0,\ldots, x_{i}\}}
(B_n\otimes_{\cal V}K)$ is not exact because 
$(B_n/(x_0))\otimes_{\cal V}K\not=0$ similarly shown as 
in the proof of (\ref{lemm:eisd}) (1) 
and 
$(\ref{eqn:xel})\otimes_{K\{x_0,\ldots, x_{i}\}}(B_n\otimes_{\cal V}K)$ 
is equal to the following sequence 
$$0\lo (B_n/(x_0))\otimes_{\cal V}K\os{0}{\lo} (B_n/(x_0))\otimes_{\cal V}K.$$ 
\end{rema}

\begin{prop}\label{prop:afex} 
If the morphism $\os{\circ}{\cal Y} \lo \os{\circ}{S}$ is affine, 
then the functor $L^{\rm UE}_{Y/S}$ in {\rm (\ref{eqn:lvnc})} is right exact.   
\end{prop}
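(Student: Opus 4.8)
The plan is to reduce everything to an explicit computation of the value of $L^{\rm UE}_{Y/S}$ on enlargements. First I would observe that the assertion is local on $\os{\circ}{Y}$, so I may assume $\os{\circ}{\cal Y}={\rm Spf}(A)$ is affine over $\os{\circ}{S}$; then each $\os{\circ}{\mathfrak T}_{Y,n}({\cal Y})$ is affine over $\os{\circ}{S}$ as well, because ${\mathfrak T}_{Y,n}({\cal Y})={\mathfrak T}_{Y,n}({\cal Y}^{\rm ex})$ by (\ref{rema:rfite}), the morphism ${\mathfrak T}_{Y,n}({\cal Y}^{\rm ex})\lo {\cal Y}^{\rm ex}$ is affine by the explicit local description (\ref{eqn:ldra}), and ${\cal Y}^{\rm ex}\lo {\cal Y}$ is affine by the construction recalled after (\ref{prop:exad}). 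By (\ref{coro:ckt}) the source category is abelian with kernels and cokernels formed level-wise, so it suffices to show that $L^{\rm UE}_{Y/S}$ carries a sequence ${\cal E}'\lo {\cal E}\lo {\cal E}''\lo 0$ which is exact at every level $n$ to an exact sequence.

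Next I would make the value explicit. Let $T'=(U',T',\iota',u')$ be an enlargement of $Y/S$. Then $T'\times_S{\cal Y}$ is a quasi-prewidening of $Y/S$ via the immersion $U'\os{\sus}{\lo}T'\times_S{\cal Y}$ determined by $\iota'$ and by $\iota_Y\circ u'$, and by (\ref{eqn:chynhn}) and (\ref{lemm:bcue}) the sheaf $h_{T'}\times h_{{\mathfrak T}_Y({\cal Y})}$ on ${\rm Conv}(Y/S)$ is $\vil_n h_{{\mathfrak T}_{U',n}(T'\times_S{\cal Y})}$. Unwinding the definition (\ref{eqn:luys}) of $L^{\rm UE}_{Y/S}$ and the standard description of the direct image along the localization morphism $j_{{\mathfrak T}_Y({\cal Y})}$, one gets
$$L^{\rm UE}_{Y/S}({\cal E})_{T'}=\vpl_n\, q_{n*}(r^*_n{\cal E}_n),$$
where $q_n\col{\mathfrak T}_{U',n}(T'\times_S{\cal Y})\lo T'$ and $r_n\col{\mathfrak T}_{U',n}(T'\times_S{\cal Y})\lo{\mathfrak T}_{Y,n}({\cal Y})$ are induced by the two projections of $T'\times_S{\cal Y}$, and $r^*_n{\cal E}_n$ denotes the coherent crystal ${\cal E}$ evaluated on ${\mathfrak T}_{U',n}(T'\times_S{\cal Y})$.

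Then I would check two points. First, for fixed $n$ the functor $q_{n*}r^*_n$ is exact on coherent crystals: $q_n$ factors as ${\mathfrak T}_{U',n}(T'\times_S{\cal Y})\lo T'\times_S{\cal Y}\lo T'$, where the first morphism is affine by (\ref{rema:rfite}), (\ref{eqn:ldra}) and the construction after (\ref{prop:exad}), and the second is affine because $\os{\circ}{\cal Y}\lo\os{\circ}{S}$ is affine and this is stable under base change; hence $q_n$ is affine and $q_{n*}$ is exact on coherent ${\cal K}$-modules, while $r^*_n$ is exact on coherent crystals by (\ref{lemm:rex}) and (\ref{coro:ckt}) since ${\mathfrak T}_{U',n}(T'\times_S{\cal Y})$ is an exact enlargement. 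Second, $\vpl_n$ is exact on the inverse systems so obtained: applying the first point to ${\cal E}'\lo{\cal E}\lo{\cal E}''\lo 0$ gives, for each $n$, a short exact sequence $0\lo q_{n*}r^*_n{\cal F}_n\lo q_{n*}r^*_n{\cal E}_n\lo q_{n*}r^*_n{\cal E}''_n\lo 0$ with ${\cal F}:={\rm Ker}({\cal E}\lo{\cal E}'')$ a coherent crystal (\ref{coro:ckt}), and the surjectivity of $\vpl_n q_{n*}r^*_n{\cal E}_n\lo\vpl_n q_{n*}r^*_n{\cal E}''_n$ amounts to the vanishing of $\vpl^1_n q_{n*}r^*_n{\cal F}_n$, which I would obtain from a Mittag--Leffler argument for coherent crystals on the system of universal enlargements, as in the treatment of the convergent site in \cite{oc}. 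Since each $q_{n*}$, each $r^*_n$ and $\vpl_n$ is left exact, $L^{\rm UE}_{Y/S}$ is automatically left exact; combined with the two points and (\ref{coro:ckt}), it is then right exact, as claimed.

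The hard part is the second point, namely ruling out the $\vpl^1$-obstruction. Note that the affineness hypothesis is used exactly to make the finite-level pushforwards $q_{n*}$ exact; without it $q_{n*}$ is only left exact, which explains why only right exactness (and not exactness in general) is asserted and why the hypothesis is imposed precisely where it is.
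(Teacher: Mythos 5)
Your proposal is correct and follows essentially the same route as the paper's proof: the same explicit formula $L^{\rm UE}_{Y/S}({\cal E})_{T'}=\vpl_n q_{n*}r_n^*({\cal E}_n)$ over the universal enlargements of $U'\os{\sus}{\lo}T'\times_S{\cal Y}$, the same use of affineness of $\os{\circ}{\cal Y}\lo\os{\circ}{S}$ to get exactness of each finite-level term, and the same appeal to the vanishing of $\vpl^1_n$ for coherent crystals (the log version of \cite[(3.8)]{oc}) to pass to the inverse limit. The only cosmetic difference is that the paper makes the affineness of ${\mathfrak T}_n$ itself (via closedness of the immersion into $T\times_S{\cal Y}$) the pivot rather than affineness of $q_n$, but the content is identical.
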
 
\begin{proof} 
Let $T:=(U,T,\iota,u)$ be an object of 
${\rm Conv}(Y/S)$. 
Then we have the natural immersion 
$(\iota,\iota_Y\circ u)\col U\lo T\times_S{\cal Y}$ over $S$. 
We may assume that $\os{\circ}{T}{}$ is affine. 
Let 
$\{{\mathfrak T}_n\}_{n=1}^{\infty}$ 
be the system of the universal enlargements 
of this immersion.  
Then we have the following diagram 
of the inductive systems of enlargements of $Y/S$:  
\begin{equation*}
\begin{CD}
\{{\mathfrak T}_n\}_{n=1}^{\infty} 
@>{\{p_n\}_{n=1}^{\infty}}>> 
\{{\mathfrak T}_{Y,n}({\cal Y})\}_{n=1}^{\infty}\\ 
@V{\{p'_n\}_{n=1}^{\infty}}VV \\
T@. @. .
\end{CD}
\tag{3.5.1}\label{cd:ppp}
\end{equation*} 
Let ${\cal E}=\{{\cal E}_n\}_{n=1}^{\infty}$ 
be a coherent crystal of 
${\cal K}_{\os{\to}{\mathfrak T}_Y({\cal Y})}$-modules.  
Then, as in the proof of \cite[(5.4)]{oc} and 
\cite[Theorem 2.3.5]{s2},   
we have  
\begin{equation*}
L^{\rm UE}_{Y/S}({\cal E})_{T}
=\vpl_np'_{n*}p_n^*({\cal E}_n) 
\tag{3.5.2}\label{eqn:lueyset}
\end{equation*} 
by the definition of $L^{\rm UE}_{Y/S}$ ((\ref{eqn:luys})). 
\par 
Let 
\begin{equation*} 
{\cal E}' \lo {\cal E} \lo {\cal E}'' \lo 0 \quad 
({\cal E}'=\{{\cal E}'_n\}_{n=1}^{\infty},
{\cal E}''=\{{\cal E}''_n\}_{n=1}^{\infty}) 
\tag{3.5.3}\label{eqn:eee0}
\end{equation*} 
be an exact sequence of coherent crystals of 
${\cal K}_{\os{\to}{\mathfrak T}_Y({\cal Y})}$-modules. 
Set  $E_n:=\Gam({\mathfrak T}_n,p_n^*({\cal E}_n))$, 
$E'_n:=\Gam({\mathfrak T}_n,p_n^*({\cal E}'_n))$
and $E''_n:=\Gam({\mathfrak T}_n,p_n^*({\cal E}''_n))$. 
We have to prove that 
\begin{equation*} 
\vpl_n{E}'_n \lo \vpl_n{E}_n \lo 
\vpl_n{E}''_n \lo 0
\tag{3.5.4}\label{eqn:lmee0}
\end{equation*}  
is exact. 
\par 
Because the morphism 
$\os{\circ}{T}{}\times_{\os{\circ}{S}}
\os{\circ}{\cal Y}\lo \os{\circ}{T}{}$ is affine and 
because the integralization of a log structure is affine 
(\cite[(2.7)]{klog1}), 
the underlying scheme of $T\times_S{\cal Y}$ is affine. 
Because $\os{\circ}{\cal Y}\lo \os{\circ}{S}$ is affine, 
it is separated: the diagonal immersion 
$\os{\circ}{\cal Y}\lo 
\os{\circ}{\cal Y}\times_{\os{\circ}{S}}\os{\circ}{\cal Y}$ 
is a closed immersion. 
Factorize this immersion into the following composite morphism: 
$\os{\circ}{\cal Y}\lo 
({\cal Y}\times_{S}{\cal Y})^{\circ}
\lo 
\os{\circ}{\cal Y}\times_{\os{\circ}{S}}\os{\circ}{\cal Y}$. 
Then, since 
$({\cal Y}\times_{S}{\cal Y})^{\circ}\lo 
\os{\circ}{\cal Y}\times_{\os{\circ}{S}}\os{\circ}{\cal Y}$ 
is affine, it is separated. 
Hence the morphism 
$\os{\circ}{\cal Y}\lo 
({\cal Y}\times_{S}{\cal Y})^{\circ}$ 
is a closed immersion. 
Because $T=(T\times_S{\cal Y})
\times_{{\cal Y}\times_{S}{\cal Y}}{\cal Y}$, 
the morphism $T\lo T\times_S{\cal Y}$ is a closed immersion. 
Consequently the immersion 
$U\os{\sus}{\lo} T\times_S{\cal Y}$ is closed, 
and ${\mathfrak T}_n$ is also affine. 
\par   
We have the following sequence of projective systems 
of coherent crystals of 
$\{\Gam({\mathfrak T}_n,
{\cal K}_{{\mathfrak T}_n})\}_{n=1}^{\infty}$-modules: 
$$\{{E}'_n\}_{n=1}^{\infty} \lo \{{E}_n\}_{n=1}^{\infty} \lo 
\{{E}''_n\}_{n=1}^{\infty} \lo 0.$$
Because ${\mathfrak T}_n$ is affine, 
the sequence 
$E'_n \lo E_n \lo E''_n \lo 0$ for each $n$ 
is exact. Set $F_n:={\rm Ker}(E_n\lo E''_n)$
and $G_n:={\rm Ker}(E'_n\lo E_n)$. 
Since the sequences 
\begin{equation*} 
0\lo F_n \lo E_n \lo E''_n \lo 0 
\tag{3.5.5;$n$}\label{eqn:fee}
\end{equation*}  and 
\begin{equation*} 
0\lo G_n \lo E'_n \lo F_n \lo 0
\tag{3.5.6;$n$}\label{eqn:gef}
\end{equation*}  
are exact, 
the families $\{F_n\}_{n=1}^{\infty}$ and 
$\{G_n\}_{n=1}^{\infty}$ are coherent crystals of 
$\{\Gam({\mathfrak T}_n,
{\cal K}_{{\mathfrak T}_n})\}_{n=1}^{\infty}$-modules by  
(\ref{coro:ckt}). 
For a coherent crystal $\{H_n\}_{n=1}^{\infty}$ of 
$\{\Gam({\mathfrak T}_n,
{\cal K}_{{\mathfrak T}_n})\}_{n=1}^{\infty}$-modules, 
$R^1\vpl_nH_n=0$ by the log version of \cite[(3.8)]{oc} 
which follows from the proof of [loc.~cit.]. 
Hence the sequences $\vpl_n$(\ref{eqn:fee}) and 
$\vpl_n$(\ref{eqn:gef}) are exact.  
Thus we have proved the exactness of (\ref{eqn:lmee0}). 
\par 
We finish the proof of (\ref{prop:afex}).  
\end{proof}

\par 
From now on, assume that ${\cal Y}/S$ is formally log smooth. 
Let $i$ and $m$ be nonnegative integers. 
Set 
${\cal Y}(i):= 
\underset{(i+1)~{\rm pieces}}{\underbrace{
{\cal Y}\times_S{\cal Y}\times_S\times \cdots \times_S{\cal Y}}}$.  
Let ${\cal Y}_m(i)$ 
be the $m$-th infinitesimal neighborhood of 
the diagonal ${\cal Y}$ in ${\cal Y}(i)$ defined in 
\cite[(5.8)]{klog1}.   
\par 
Let $p_j(i) \col {\cal Y}_m(i) \lo {\cal Y}$ 
$(1\leq j \leq i+1)$ be the $j$-th projection. 
Let $p_{ij} \col {\cal Y}_m(2) \lo {\cal Y}_m(1)$ be 
the $(i,j)$-th projection $(1\leq i < j \leq 3)$. 
\par 
For an object $(U,T,\iota,u)$ of ${\rm Conv}(Y/S)$, 
set $U_j(i):=
U\times_{\iota_Y \circ u,{\cal Y},p_j(i)}{\cal Y}_m(i)$. 
Then we have immersions  
$$\Del(i):= \Del_m(i) \col U 
\os{\iota \times (\iota_Y \circ u)}{\lo}
T\times_S{\cal Y} 
\os{{\rm id}_T \times {\rm diag}.}{\lo} 
T\times_S{\cal Y}_m(i)$$  
and 
$$\iota_j(i) \col U_j(i) 
\os{\iota \times {\rm id}_{{\cal Y}_m(i)}}{\lo}
T\times_S{\cal Y}_m(i).$$

\begin{prop}\label{prop:lcl} 
Let ${\cal E}:=\{{\cal E}_n\}_{n=1}^{\infty}$ be 
a coherent crystal of 
${\cal K}_{\os{\to}{\mathfrak T}_Y({\cal Y})}$-modules. 
Then there exists a family $\{\nabla^i\}_{i=0}^{\infty}$ 
of natural morphisms  
\begin{equation*} 
\nabla^i  \col 
L^{\rm UE}_{Y/S}({\cal E}\otimes_{{\cal O}_{\cal Y}}
\Om^i_{{\cal Y}/S}) 
\lo 
L^{\rm UE}_{Y/S}({\cal E}\otimes_{{\cal O}_{\cal Y}}
\Om^{i+1}_{{\cal Y}/S}) 
\end{equation*} 
which makes $\{(L^{\rm UE}_{Y/S}({\cal E}\otimes_{{\cal O}_{\cal Y}}
\Om^i_{{\cal Y}/S}),\nabla^i)\}_{i=0}^{\infty}$ 
a complex of ${\cal K}_{Y/S}$-modules. 
\end{prop} 
\begin{proof}
(The following proof is the log version of that of \cite[(5.4)]{oc}.) 
Let the notations be as in the proof of (\ref{prop:afex}). 
Let $u_j(i) \col U_j(i) \lo Y$ $(i\in {\mab N})$ 
be the composite morphism 
$U_j(i) \os{\rm 1st~proj.}{\lo} U \os{u}{\lo} Y$. 
Then, for $1\leq j\leq i+1$, 
the following two commutative diagrams 
\begin{equation*} 
\begin{CD} 
U @>{({\rm id}_U,{\rm diag}.\circ \iota_Y \circ u)}>> U_j(i) \\ 
@V{\Del(i)}VV @VV{\iota_j(i)}V  \\ 
T\times_S{\cal Y}_m(i) @= T\times_S{\cal Y}_m(i) 
\end{CD} 
\end{equation*} 
and 
\begin{equation*} 
\begin{CD} 
U_j(i) @>{{\rm 1st~proj.}}>> U  \\ 
@V{\iota_j(i)}VV @VV{\Del(0)}V  \\ 
T\times_S{\cal Y}_m(i) 
@>{{\rm id}_T\times p_j(i)}>> T\times_S{\cal Y} 
\end{CD} 
\end{equation*} 
give us the following two morphisms 
\begin{equation*} 
\Del_j(i) \col  
(U,T\times_S{\cal Y}_m(i),\Del(i),u) 
\lo 
(U_j(i),T\times_S{\cal Y}_m(i),\iota_j(i),u_j(i))   
\end{equation*} 
and 
\begin{equation*} 
p_j(i) \col (U_j(i),T\times_S{\cal Y}_m(i),\iota_j(i),u_j(i))  \lo 
(U,T\times_S{\cal Y},\Del(0),u)
\end{equation*} 
of prewidenings of $Y/S$, respectively.  
Hence we have the following natural morphisms of 
log formal schemes for $n\in {\mab Z}_{\geq 1}$: 
\begin{equation*} 
\Del_{j,n}(i) \col {\mathfrak T}_{U,n}(T\times_S{\cal Y}_m(i)) \lo 
{\mathfrak T}_{U_j(i),n}(T\times_S{\cal Y}_m(i)),  
\end{equation*} 
\begin{equation*} 
p_{j,n}(i) \col {\mathfrak T}_{U_j(i),n}(T\times_S{\cal Y}_m(i)) 
\lo 
{\mathfrak T}_{U,n}(T\times_S{\cal Y})
={\mathfrak T}_n.    
\end{equation*} 
\par 
Set $\del_j(i):=({\rm id}_U,{\rm diag}.\circ \iota_Y \circ u)\col U \os{\sus}{\lo}U_j(i)$. 
Let ${\cal I}_U$ and ${\cal I}_{U_j(i)}$ be 
the defining ideal sheaves of the immersions 
$\Del(i) \col U \os{\sus}{\lo} T\times_S{\cal Y}_m(i)$ and 
$\iota_j(i) \col 
U_j(i) \os{\sus}{\lo} T\times_S{\cal Y}_m(i)$, respectively. 
We have the following commutative diagram: 
\begin{equation*} 
\begin{CD} 
U @>{\sus}>> T\times_S U @>{\rm proj.}>> U 
@>{\iota_Y\circ u}>> {\cal Y} \\ 
@V{\bigcap}V{\del_j(i)}V @V{\bigcap}VV @V{\bigcap}VV
@VV{{\rm diag}.}V  \\ 
U_j(i) @>{\sus}>> T\times_S U_j(i) 
@>{\rm proj.}>> U_j(i) 
@>{\rm proj.}>> {\cal Y}_m(i),   
\end{CD} 
\tag{3.6.1}\label{cd:utsu} 
\end{equation*} 
where each square in this diagram is cartesian. 
Let ${\cal J}$ be the defining ideal sheaf of 
the diagonal immersion ${\cal Y} \os{\sus}{\lo} {\cal Y}_m(i)$. 
Set ${\cal J}(T):={\cal J}{\cal O}_{T\times_S{\cal Y}_m(i)}$. 
Because each square in (\ref{cd:utsu}) is cartesian,  
${\cal J}(T){\cal O}_{U_j(i)}$ is the defining ideal sheaf of 
the immersion $U \os{\sus}{\lo} U_j(i)$. 
Hence ${\cal I}_U={\cal I}_{U_j(i)}+{\cal J}(T)$. 
Since ${\cal J}(T)^{m+1}=0$,  
${\cal I}_U^{n+m} \subset {\cal I}_{U_j(i)}^n$.  
Therefore we have
the following natural morphism of log formal schemes  
\begin{equation*} 
h_{j,n+m,n}(i)\col 
{\mathfrak T}_{U_j(i),n}(T\times_S{\cal Y}_m(i)) \lo 
{\mathfrak T}_{U,n+m}(T\times_S{\cal Y}_m(i)) \quad 
(n\in {\mab Z}_{\geq 1}) 
\tag{3.6.2}\label{eqn:hjmn}
\end{equation*} 
as in the proof of \cite[(5.4)]{oc} using [loc.~cit., (2.5)]. 
Then $\Del_{j,n+m}(i)\circ h_{j,n+m,n}(i)$ 
(resp.~$h_{j,n+m,n}(i)\circ \Del_{j,n}(i)$) 
is the transition morphism 
${\mathfrak T}_{U_j(i),n}(T\times_S{\cal Y}_m(i)) 
\lo {\mathfrak T}_{U_j(i),n+m}(T\times_S{\cal Y}_m(i))$
(resp.~${\mathfrak T}_{U,n}(T\times_S{\cal Y}_m(i)) 
\lo {\mathfrak T}_{U,n+m}(T\times_S{\cal Y}_m(i))$). 
As stated in [loc.~cit., (2.5)], 
we have the following isomorphism of inductive objects: 
\begin{equation*} 
\{{\mathfrak T}_{U,n}
(T\times_S{\cal Y}_m(i))\}_{n=1}^{\infty}
\os{\sim}{\lo} 
\{{\mathfrak T}_{U_j(i),n}
(T\times_S{\cal Y}_m(i))\}_{n=1}^{\infty}.  
\tag{3.6.3}\label{eqn:hjimn}
\end{equation*} 
We identify 
$\{{\mathfrak T}_{U_j(i),n}(T\times_S{\cal Y}_m(i))\}_{n=1}^{\infty}$ 
with 
$\{{\mathfrak T}_{U,n}(T\times_S{\cal Y}_m(i))\}_{n=1}^{\infty}$ 
by this isomorphism. 
For a morphism 
$\alpha \col T'\lo {\mathfrak T}_Y({\cal Y})$ 
of quasi-prewidenings of $Y/S$, 
let $\alpha_n \col T'_n \lo 
{\mathfrak T}_{Y,n}({\cal Y})$ be 
the induced morphism by $\alpha$. 
Set ${\cal E}_{T'_n}:=\alpha^*_n({\cal E}_n)$. 
Let ${\cal I}_1$ be the defining ideal sheaf of 
the immersion 
${\cal Y} \os{\sus}{\lo} {\cal Y}_1(1)$. 
Then there exists a functorial isomorphism 
\begin{equation*} 
{\cal I}_1 \simeq 
\Om^1_{{\cal Y}/S} 
\tag{3.6.4}\label{eqn:iom1} 
\end{equation*}  
(\cite[(5.8.1)]{klog1}, \cite[Proposition 3.2.5]{s1}).  
By using this fact, 
we see that ${\cal O}_{{\cal Y}_m(1)}$ is 
a locally free ${\cal O}_{\cal Y}$-module of finite rank.  
By (\ref{lemm:bcue}) 
we have the following isomorphism 
\begin{align*} 
\eps^m  
\col & {\cal O}_{{\cal Y}_m(1)}\otimes_{{\cal O}_{\cal Y}}
\vpl_n{\cal E}_{{\mathfrak T}_n}
=\vpl_np_{2,n}(1)^*
{\cal E}_{{\mathfrak T}_n}
= 
\vpl_n
{\cal E}_{{\mathfrak T}_{U_2(1),n}(T\times_S{\cal Y}_m(1))} 
\tag{3.6.5}\label{eqn:emcoym}\\
& \os{\sim}{\longleftarrow}  
\vpl_n{\cal E}_{{\mathfrak T}_{U,n}(T\times_S{\cal Y}_m(1))}
\os{\sim}{\lo}  
\vpl_n{\cal E}_{{\mathfrak T}_{U_1(1),n}
(T\times_S{\cal Y}_m(1))} 
=\vpl_np_{1,n}(1)^*
{\cal E}_{{\mathfrak T}_n}
\\
& = (\vpl_n
{\cal E}_{{\mathfrak T}_n})
\otimes_{{\cal O}_{\cal Y}}{\cal O}_{{\cal Y}_m(1)}. 
\end{align*} 
Set ${\mathfrak E}:=
\vpl_n({\cal E}_{{\mathfrak T}_n})$ and 
$\nabla^0(x):=\eps^1(1\otimes x)-x\otimes 1$  
for a local section $x$ of ${\mathfrak E}$. 
The morphism $\eps^1~{\rm mod}~ {\cal I}_1$ 
is the identity.  
Thus $\nabla^0$ induces the following morphism  
\begin{equation*} 
\nabla^0 \col {\mathfrak E}
\lo  {\mathfrak E}
\otimes_{{\cal O}_{\cal Y}}\Om^1_{{\cal Y}/S}. 
\tag{3.6.6}\label{eqn:nbqe} 
\end{equation*}  
By (\ref{eqn:lueyset}) we obtain the following equality: 
\begin{equation*}
L^{\rm UE}_{Y/S}({\cal E})_{T}
=\vpl_np'_{n*}
({\cal E}_{{\mathfrak T}_n}).  
\tag{3.6.7}\label{eqn:etu}
\end{equation*}
Hence we have the following morphism  
\begin{align*} 
\nabla^0 \col L^{\rm UE}_{Y/S}({\cal E})_{T}
= 
\vpl_np'_{n*}
({\cal E}_{{\mathfrak T}_n}) 
& \lo \vpl_n p'_{n*}
({\cal E}_{{\mathfrak T}_n})
\otimes_{{\cal O}_{\cal Y}}\Om^1_{{\cal Y}/S} 
\tag{3.6.8}\label{eqn:nlqe}\\
{} & = L^{\rm UE}_{Y/S}({\cal E}\otimes_{{\cal O}_{\cal Y}}
\Om^1_{{\cal Y}/S})_{T}. 
\end{align*} 
The morphism $\nabla^0$ induces $\nabla^i$ $(i\in {\mab N})$. 
\par 
The rest is to prove that 
$\nabla^{i+1} \circ \nabla^i=0$ $(i\in {\mab N})$. 
This is a routine work. For the completeness of this paper, 
we give the following proof of this fact.
\par  
As usual, it suffices to prove that 
$\nabla^{1} \circ \nabla^0=0$.  
The natural isomorphism 
${\cal Y}(1)\times_{\cal Y}{\cal Y}(1)\os{\sim}{\lo} {\cal Y}(2)$ 
induces a morphism 
${\cal Y}_m(1)\times_{\cal Y}{\cal Y}_n(1)\lo {\cal Y}_{m+n}(2)$ 
(\cite[p.~561]{s1}). 
Hence we have the following morphism as usual: 
\begin{align*} 
\del^{m,n}  \col {\cal O}_{{\cal Y}_{m+n}(1)} \os{p_{13}^*}{\lo} 
{\cal O}_{{\cal Y}_{m+n}(2)}\lo {\cal O}_{{\cal Y}_m(1)\times_{\cal Y}{\cal Y}_n(1)}. 
\end{align*}
The rest of the proof is well-known. 
Indeed, 
let 
\begin{align*} 
\theta^m\col {\mathfrak E} \lo {\cal O}_{{\cal Y}_m(1)}\otimes_{{\cal O}_{\cal Y}}
{\mathfrak E} \os{\eps^m}{\lo}  
{\mathfrak E}\otimes_{{\cal O}_{\cal Y}}{\cal O}_{{\cal Y}_m(1)}
\end{align*} 
be the usual composite morphism obtained by $\eps^m$. 
Because ${\cal E}$ is a coherent crystal of 
${\cal K}_{\os{\to}{\mathfrak T}_Y({\cal Y})}$-modules, 
$\{\eps^m\}_{m\in {\mab N}}$ satisfies the 
usual cocycle condition.
Hence the following diagram is commutative as usual: 
\begin{equation*} 
\begin{CD}
{\mathfrak E}
@>{\theta^{n}}>> {\mathfrak E}\otimes_{{\cal O}_{\cal Y}}{\cal O}_{{\cal Y}_n(1)}\\
@V{\theta^{m+n}}VV @VV{\theta^m\otimes{\rm id}}V \\
{\mathfrak E}\otimes_{{\cal O}_{\cal Y}}{\cal O}_{{\cal Y}_{m+n}(1)}
@>{\del^{m,n}}>> {\mathfrak E}\otimes_{{\cal O}_{\cal Y}}{\cal O}_{{\cal Y}_m(1)}
\otimes_{{\cal O}_{\cal Y}}{\cal O}_{{\cal Y}_n(1)}. 
\end{CD}
\tag{3.6.9}\label{cd:ooym}
\end{equation*} 
For a ${\cal K}_{\cal Y}$-module ${\cal F}$, 
set 
$${\rm Diff}^m_{{\cal Y}/S}({\cal F}):=
{\rm Hom}_{{\cal K}_{\cal Y}}({\cal K}_{{\cal Y}_m(1)}\otimes_{{\cal K}_{\cal Y}}
{\cal F},{\cal F}).$$
As usual, we have the following morphism 
\begin{align*} 
\nabla \col {\rm Diff}^m_{{\cal Y}/S}({\cal K}_{\cal Y})\lo {\rm Diff}^m_{{\cal Y}/S}({\mathfrak E})
\end{align*} 
by the following composite morphism 
$$\nabla(h) \col {\cal K}_{{\cal Y}_m(1)}\otimes_{{\cal K}_{\cal Y}}{\mathfrak E}\os{\eps^m}{\lo} 
{\mathfrak E}\otimes_{{\cal K}_{\cal Y}}{\cal K}_{{\cal Y}_m(1)}
\os{{\rm id}\otimes h}{\lo} {\mathfrak E}\otimes_{{\cal K}_{\cal Y}}{\cal K}_{{\cal Y}}
={\mathfrak E}$$
for $h\in {\rm Diff}^m_{{\cal Y}/S}({\cal K}_{\cal Y})$. 
Hence, for $h_1\in  {\rm Diff}^m_{{\cal Y}/S}({\cal K}_{\cal Y})$ and  
$h_2\in  {\rm Diff}^n_{{\cal Y}/S}({\cal K}_{\cal Y})$, 
$\nabla(h_1\circ h_2)=\nabla(h_1)\circ \nabla(h_2)$. 
Indeed, we see that 
$\nabla(h_1\circ h_2)$ is equal to the induced morphism by the following morphism: 
\begin{align*} 
\nabla(h_1\circ h_2) \col & 
{\mathfrak E}\os{\theta^{m+n}}{\lo} 
{\mathfrak E}\otimes_{{\cal K}_{\cal Y}}{\cal K}_{{\cal Y}_{m+n}(1)}
\os{{\rm id}\otimes \del^{m,n}}{\lo} 
{\mathfrak E}\otimes_{{\cal K}_{\cal Y}}{\cal K}_{{\cal Y}_{m}(1)}
\otimes_{{\cal K}_{\cal Y}}{\cal K}_{{\cal Y}_{n}(1)}\\
& \os{{\rm id}\circ h_2}{\lo} 
{\mathfrak E}\otimes_{{\cal K}_{\cal Y}}{\cal K}_{{\cal Y}_{m}(1)}
\os{{\rm id}\circ h_1}{\lo} {\mathfrak E}.
\end{align*} 
On the other hand, we see that 
$\nabla(h_1)\circ \nabla(h_2)$ is equal to the induced morphism by 
the following morphism: 
\begin{align*} 
\nabla(h_1)\circ \nabla(h_2) \col & 
{\mathfrak E}\os{\theta^n}{\lo} 
{\mathfrak E}\otimes_{{\cal K}_{\cal Y}}{\cal K}_{{\cal Y}_{n}(1)}
\os{\theta^m\otimes {\rm id}}{\lo} 
{\mathfrak E}\otimes_{{\cal K}_{\cal Y}}{\cal K}_{{\cal Y}_{m}(1)}
\otimes_{{\cal K}_{\cal Y}}{\cal K}_{{\cal Y}_{n}(1)}\\
& \os{{\rm id}\circ h_2}{\lo} 
{\mathfrak E}\otimes_{{\cal K}_{\cal Y}}{\cal K}_{{\cal Y}_{m}(1)}
\os{{\rm id}\circ h_1}{\lo} {\mathfrak E}.
\end{align*} 
By (\ref{cd:ooym}) we obtain the equality 
$\nabla(h_1\circ h_2)=\nabla(h_1)\circ \nabla(h_2)$. 
%
%
%
%
%
%
%
%
%
%
%
%
%
%
\par 
Now (\ref{prop:lcl}) follows from the lemma (\ref{lemm;k}) below 
since $\part_i \circ \part_j=\part_j \circ \part_i$ in the notation in (\ref{lemm;k}) 
(cf.~\cite[Lemma 3.2.7]{s1}). 
\end{proof}

\begin{lemm}\label{lemm;k}
Let $\{d\log t_i\}_{i=1}^n$ be a local basis of 
$\Om^1_{{\cal Y}/S}$, where 
$t_i$ is a local section of the log structure of ${\cal Y}$. 
Let $\{\part_i(:=t_i\dfrac{\part}{\part t_i})\}_{i=1}^n\in {\rm Diff}^1_{{\cal Y}/S}({\cal O}_{\cal Y})$ 
be the dual basis of $\{d\log t_i\}_{i=1}^n$. 
Set 
$$K:=\nabla^1 \circ \nabla^0 \col 
{\mathfrak E}\lo {\mathfrak E}\otimes_{{\cal O}_{\cal Y}}\Om^2_{{\cal Y}/S}.$$ 
Then 
$$K=\sum_{1\leq i<j\leq n}[\nabla(\part_i), \nabla(\part_j)]\otimes (d\log t_i\wedge d\log t_j).$$
\end{lemm}
\begin{proof} 
By noting $d(d\log t_i)=0$, we obtain (\ref{lemm;k}) by a straight calculation. 
\end{proof}

The following is a relative version of 
\cite[(5.4)]{oc} and \cite[Theorem 2.3.5 (2)]{s2}, 
though the formulation of it is rather different from them:

\begin{theo}[{\bf Log convergent Poincar\'{e} lemma}]\label{theo:pl}
Let $E$ be a coherent crystal of ${\cal K}_{Y/S}$-modules. 
Set ${\cal E}_n:=E_{{\mathfrak T}_{Y,n}({\cal Y})}$ and 
${\cal E}:=\{{\cal E}_n\}_{n=1}^{\infty}$. 
Then the natural morphism 
\begin{equation*}
E \lo
L^{\rm UE}_{Y/S}({\cal E}\otimes_{{\cal O}_{\cal Y}}
\Om^{\bul}_{{\cal Y}/S})
\tag{3.8.1}\label{eqn:epdlym}
\end{equation*}
is a quasi-isomorphism. 
\end{theo}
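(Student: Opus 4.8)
The plan is to reduce the assertion to a local computation on the log convergent site and then, by a K\"unneth-type d\'evissage, to a one-variable Poincar\'e lemma, treated separately for the smooth and the logarithmic directions. Since the statement is local on $Y$ and a morphism of complexes of abelian sheaves on $(Y/S)_{\rm conv}$ is a quasi-isomorphism as soon as its evaluation at every enlargement $T=(U,T,\iota,u)$ of $Y/S$ is one (as complexes of Zariski sheaves on $T$), I would fix such a $T$ and evaluate the right-hand side of (\ref{eqn:epdlym}). Let $\{{\mathfrak T}_n\}_{n=1}^{\infty}$ be the system of universal enlargements of the immersion $U\os{\sus}{\lo}T\times_S{\cal Y}$, with projections $p_n\col{\mathfrak T}_n\lo{\mathfrak T}_{Y,n}({\cal Y})$ and $p'_n\col{\mathfrak T}_n\lo T$ as in the proof of (\ref{prop:afex}). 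Because $E$ is a crystal, $p^*_n({\cal E}_n)=E_{{\mathfrak T}_n}$, so by (\ref{eqn:lueyset}) the complex $(L^{\rm UE}_{Y/S}({\cal E}\otimes_{{\cal O}_{\cal Y}}\Om^{\bul}_{{\cal Y}/S}))_T$ equals $\vpl_n p'_{n*}(E_{{\mathfrak T}_n}\otimes_{{\cal O}_{{\mathfrak T}_n}}p^*_n\Om^{\bul}_{{\cal Y}/S})$ with differential induced by (\ref{lemm:lcl}), and I must show $E_T\lo\vpl_n p'_{n*}(E_{{\mathfrak T}_n}\otimes p^*_n\Om^{\bul}_{{\cal Y}/S})$ is a quasi-isomorphism.

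Next I would put ${\cal Y}$ into a standard local form. Using the exactification (\ref{prop:exad}), (\ref{rema:rfite}), the weak fibration theorem (\ref{lemm:eisd}), (\ref{prop:fc}) and the base-change isomorphism (\ref{lemm:bcue}), I may, Zariski-locally on $T$, arrange that ${\cal Y}$ is, \'etale-locally over $S$, a fibre product over $S$ of copies of $\wh{\mab A}^1_S$ and of $\ul{\rm Spf}_S({\cal O}_S\{{\mab N}\})$: the \'etale factor is absorbed by (\ref{lemm:eisd})(1), and a monoid chart is split into $\mab N$-factors by the usual d\'evissage on monoid algebras. Then $\Om^{\bul}_{{\cal Y}/S}$ is the completed tensor product over ${\cal O}_{\cal Y}$ of the one-variable de Rham complexes $[{\cal O}_{\cal Y}\os{d}{\lo}{\cal O}_{\cal Y}\,dx]$ and $[{\cal O}_{\cal Y}\os{d}{\lo}{\cal O}_{\cal Y}\,d\!\log t]$; since the $\Om^i_{{\cal Y}/S}$ are locally free of finite rank, the functor $L^{\rm UE}_{Y/S}$ is exact enough by (\ref{lemm:rex}), (\ref{coro:ckt}), (\ref{prop:afex}), and $\vpl_n$ is compatible with the K\"unneth decomposition of $\Om^{\bul}_{{\cal Y}/S}$ (using the flatness in (\ref{prop:lef}) and (\ref{lemm:eisd})(2)), a K\"unneth argument reduces the problem to these two one-variable cases.

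The core is then an explicit integration. For ${\cal Y}=\wh{\mab A}^1_S$, the description (\ref{eqn:ldra})/(\ref{ali:xte}) shows that $\Gam({\mathfrak T}_n,{\cal O}_{{\mathfrak T}_n})\otimes_{\mab Z}{\mab Q}$ consists of convergent power series in $x$ with bounded denominators, so the term-by-term homotopy $\sum_m a_m x^m\,dx\mapsto\sum_m\tfrac{a_m}{m+1}x^{m+1}$ is well-defined on it (the $m+1$ contribute only bounded powers of $p$, which become invertible after $\otimes{\mab Q}$), is compatible with the transition maps, hence passes to $\vpl_n$, and exhibits $E_T$ as the only cohomology, in degree $0$. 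For ${\cal Y}=\ul{\rm Spf}_S({\cal O}_S\{{\mab N}\})$ the analogous homotopy sends $t^m\,d\!\log t\mapsto\tfrac1m t^m$ for $m\neq0$ and treats $m=0$ by means of the zero-section $S\os{\sus}{\lo}{\cal Y}$, with the same estimate. Commuting $\vpl_n$ past cohomology uses $R^1\vpl_n=0$ for coherent crystalline inverse systems, the log version of \cite[(3.8)]{oc} already invoked in the proof of (\ref{prop:afex}).

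The step I expect to be the main obstacle is the verification that these integration homotopies, manifestly well-defined term by term, are genuinely continuous on each $\Gam({\mathfrak T}_n,{\cal O}_{{\mathfrak T}_n})\otimes_{\mab Z}{\mab Q}$ and compatible with the transition morphisms of $\{{\mathfrak T}_n\}$, so that they descend to the inverse limit --- this is exactly where inverting torsion is indispensable (in the integral log crystalline setting the corresponding statement fails, as reflected by the non-injectivity (\ref{eqn:lni}) and the non-equality (\ref{eqn:nvpcrs})), and where the flatness results (\ref{lemm:eisd})(2) and (\ref{prop:lef}) are needed to keep the base change of the tubes ${\mathfrak T}_n$ under control. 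Everything else amounts to bookkeeping with charts, the exactification, and (\ref{lemm:bcue}).
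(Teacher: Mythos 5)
Your overall skeleton --- evaluate the linearized complex at an arbitrary enlargement $T$ via (\ref{eqn:lueyset}), reduce to a local model, and conclude by an integration homotopy whose convergence requires inverting $p$ --- is exactly the substance of the proofs of \cite[(5.4)]{oc} and \cite[Theorem 2.3.5]{s2}, to which the paper's own (purely formal) proof delegates: the paper only identifies $L^{\rm UE}_{Y/S}({\cal E}\otimes_{{\cal O}_{\cal Y}}\Om^{\bul}_{{\cal Y}/S})$ with the complex $\Om^{\bul}_{{\mathfrak T}/S}(E)$ of those references using the crystal property of $E$ and then cites them. So you are not taking a different route so much as re-proving the cited results, and the re-proof has a genuine gap at its core.

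The gap is in the local computation. The complex you must contract is not ${\cal E}\otimes\Om^{\bul}_{{\cal Y}/S}$ with the naive log de Rham differential in the coordinates of ${\cal Y}$: by (\ref{eqn:lueyset}) and the construction of $\nabla^0$ in the proof of (\ref{lemm:lcl}) (see (\ref{eqn:emcoym}) and (\ref{eqn:nbqe})), its value at $T$ is the relative de Rham complex, over $T$, of the exactified tube of $U$ in $T\times_S{\cal Y}$, and one integrates in the ``difference'' coordinates of that tube with the differential twisted by the stratification $\eps^1$. This matters decisively in the logarithmic directions: the one-variable statement you invoke for ${\cal Y}=\ul{\rm Spf}_S({\cal O}_S\{{\mab N}\})$ is false as posed --- $d\log t$ is not exact in $[{\cal O}_{\cal Y}\to{\cal O}_{\cal Y}\,d\log t]$, the homotopy $t^m\,d\log t\lom t^m/m$ does not address $m=0$, and the zero section does not repair this. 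What actually saves the log direction is that after exactifying $U\os{\sus}{\lo}T\times_S{\cal Y}$ the log coordinate coming from ${\cal Y}$ becomes $u\cdot(\text{coordinate from }T)$ with $u$ a unit congruent to $1$ (exactly as in the ring ${\cal V}\{x_0\}[[u_1-1,\ldots]]$ of (\ref{rema:utne})); one then integrates $d\log u=d(u-1)/(1+(u-1))$ on the open disc $\vert u-1\vert<1$, where it \emph{is} exact after $\otimes_{\mab Z}{\mab Q}$. Without routing the argument through the tube and the twisted differential, your K\"unneth reduction asks you to prove a false statement in the log directions. A second, smaller but still real, error is the convergence justification: the denominators $m+1$ (and $m$, and the $k+1$ occurring in $\log(1+v)$) contribute \emph{unbounded} powers of $p$; were they bounded, the lemma would hold integrally, contradicting (\ref{eqn:nvpcrs}). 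The correct estimate is that $v_p(m+1)$ grows only logarithmically in $m$ while the valuations of the coefficients of a function on the radius-$\vert\pi\vert^{1/n}$ tube grow linearly, so the integrated series converges on every strictly smaller tube and hence defines a map on $\vpl_n$ --- it does not preserve each level $n$, contrary to what you assert.
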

\begin{proof} 
Set ${\mathfrak T}:={\mathfrak T}_Y({\cal Y})
=(Y,{\cal Y},\iota,{\rm id}_Y)$ and 
${\mathfrak T}_n:={\mathfrak T}_{Y,n}({\cal Y})$ 
$(n\in {\mab Z}_{>0})$. 
Following \cite[p.~152]{oc} and 
\cite[p.~95]{s2}, set 
$$\Om^i_{{\mathfrak T}/S}(E)
:=j_{{\mathfrak T}*}
(j^*_{\mathfrak T}(E)\otimes_{{\cal K}_{Y/S}
\vert_{\mathfrak T}}
\varphi^*_{\os{\to}{\mathfrak T}}
({\cal K}_{\os{\to}{\mathfrak T}}
\otimes_{{\cal O}_{\cal Y}}\Om^i_{{\cal Y}/S})) 
\quad (i\in {\mab N}).$$ 
Let $T$ be an enlargement of $Y/S$ with a morphism 
$T \lo {\mathfrak T}$ of prewidenings. 
Then, by (\ref{eqn:chynhn}), 
the morphism $T\lo {\mathfrak T}$ factors through 
$T \lo {\mathfrak T}_n$ for some $n\in {\mab N}$. 
Hence $(\varphi_{\os{\to}{\mathfrak T}}^*({\cal E}))_{T}=
{\cal K}_{T}\otimes_{{\cal K}_{{\mathfrak T}_n}}
E_{{\mathfrak T}_n}=E_T$. 
On the other hand, 
$j_{\mathfrak T}^*(E)_{(T\lo {\mathfrak T})}=E_T$. 
Thus 
\begin{align*}
\Om^i_{{\mathfrak T}/S}(E)& 
=j_{{\mathfrak T}*}
(\varphi^*_{\os{\to}{\mathfrak T}}({\cal E})
\otimes_{{\cal K}_{Y/S}\vert_{{\mathfrak T}}}
\varphi^*_{\os{\to}{\mathfrak T}}
({\cal K}_{\os{\to}{\mathfrak T}}
\otimes_{{\cal O}_{\cal Y}}\Om^i_{{\cal Y}/S}))
\\
&
=j_{{\mathfrak T}*}
\varphi^*_{\os{\to}{\mathfrak T}}({\cal E}
\otimes_{{\cal O}_{\cal Y}}\Om^i_{{\cal Y}/S})  
=L^{\rm UE}_{Y/S}
({\cal E}\otimes_{{\cal O}_{\cal Y}}\Om^i_{{\cal Y}/S}).
\end{align*}
Moreover, by the proof of 
\cite[(5.4)]{oc} and \cite[Theorem 2.3.5 (2)]{s2} 
and (\ref{prop:lcl}),  
$\Om^{\bul}_{{\mathfrak T}/S}(E)=
L^{\rm UE}_{Y/S}
({\cal E}\otimes_{{\cal O}_{\cal Y}}\Om^{\bul}_{{\cal Y}/S})$ 
as complexes. 
Hence, by the proof of 
\cite[(5.4)]{oc} and \cite[Theorem 2.3.5 (3)]{s2}, 
$E=L^{\rm UE}_{Y/S}
({\cal E}\otimes_{{\cal O}_{\cal Y}}\Om^{\bul}_{{\cal Y}/S})$ 
in $D^+({\cal K}_{Y/S})$. 
\end{proof}


The following has been essentially proved 
in \cite[(6.6)]{oc} and \cite[Corollary 2.3.8]{s2}: 

\begin{prop}\label{prop:cdfza}
Let the notations be as in {\rm (\ref{theo:pl})}. 
Set $Y_n:=Y\times_{\cal Y}{\mathfrak T}_{Y,n}({\cal Y})
=Y\times_{\wh{\cal Y}}{\mathfrak T}_{Y,n}(\wh{\cal Y})$, 
where $\wh{\cal Y}$ is 
the formal completion of ${\cal Y}$ along $Y$.   
Let 
$\bet_n \col Y_n \lo Y$ be 
the projection and identify 
${\mathfrak T}_{Y,n}({\cal Y})_{\rm zar}$ with 
$Y_{n,{\rm zar}}$. 
Then 
\begin{equation*}
Ru^{\rm conv}_{Y/S*}(E)=
\vpl_n\bet_{n*}({\cal E}_n{\otimes}_{{\cal O}_{{\cal Y}}} 
\Om^{\bul}_{{\cal Y}/S})
\tag{3.9.1}\label{eqn:uybo}
\end{equation*}
in $D^+(f^{-1}({\cal K}_S))$. 
\end{prop}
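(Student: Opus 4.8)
The plan is to reduce the statement to the log convergent Poincar\'e lemma (\ref{theo:pl}) together with the computation of $Ru^{\rm conv}_{Y/S*}$ of a linearized coherent crystal, the latter being the relative log version (with $\otimes_{\mab Z}{\mab Q}$) of \cite[(6.6)]{oc} and \cite[Corollary 2.3.8]{s2}.

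First I would invoke (\ref{theo:pl}): the natural morphism
$$E \os{\sim}{\lo} L^{\rm UE}_{Y/S}({\cal E}\otimes_{{\cal O}_{\cal Y}}\Om^{\bul}_{{\cal Y}/S})$$
is a quasi-isomorphism in $D^+({\cal K}_{Y/S})$, hence
$$Ru^{\rm conv}_{Y/S*}(E)=Ru^{\rm conv}_{Y/S*}(L^{\rm UE}_{Y/S}({\cal E}\otimes_{{\cal O}_{\cal Y}}\Om^{\bul}_{{\cal Y}/S})).$$
Since $\os{\circ}{\cal Y}$ is noetherian and $\os{\circ}{\cal Y}\lo \os{\circ}{S}$ is formally log smooth, the log de Rham complex $\Om^{\bul}_{{\cal Y}/S}$ is bounded, so the right-hand side is the image under $Ru^{\rm conv}_{Y/S*}$ of a bounded-below complex each of whose terms ${\cal E}\otimes_{{\cal O}_{\cal Y}}\Om^i_{{\cal Y}/S}$ corresponds to a coherent crystal of ${\cal K}_{\os{\to}{\mathfrak T}_Y({\cal Y})}$-modules.

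The main step is then to establish, for a coherent crystal ${\cal F}=\{{\cal F}_n\}_{n=1}^{\infty}$ of ${\cal K}_{\os{\to}{\mathfrak T}_Y({\cal Y})}$-modules, the acyclicity $R^qu^{\rm conv}_{Y/S*}(L^{\rm UE}_{Y/S}({\cal F}))=0$ for $q>0$ together with the identification $u^{\rm conv}_{Y/S*}(L^{\rm UE}_{Y/S}({\cal F}))=\vpl_n\bet_{n*}({\cal F}_n)$ under the identification of ${\mathfrak T}_{Y,n}({\cal Y})_{\rm zar}$ with $Y_{n,{\rm zar}}$. For this I would follow the argument of \cite[(6.6)]{oc}: the description (\ref{eqn:lueyset}) of $L^{\rm UE}_{Y/S}$, the fact that $u^{{\rm conv}*}_{Y/S}$ is computed by sections over the schemes $U$ appearing in the enlargements $(U,T,\iota,u)$, and the $u^{\rm conv}_{Y/S*}$-acyclicity of linearized coherent sheaves on the convergent site give the $0$-th direct image; the vanishing of the higher direct images is the local computation of [loc.~cit.], reduced to the affine situation where (\ref{prop:afex}) is available. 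Since a coherent crystal satisfies $R^1\vpl_n=0$ (the log version of \cite[(3.8)]{oc} already used in the proof of (\ref{prop:afex})), the naive inverse limit computes the derived one, so no $R\vpl_n$-correction is necessary.

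Applying this termwise to the bounded complex ${\cal E}\otimes_{{\cal O}_{\cal Y}}\Om^{\bul}_{{\cal Y}/S}$ — and checking, as in \cite[Corollary 2.3.8]{s2}, that $u^{\rm conv}_{Y/S*}$ carries the differential $\nabla^i$ of (\ref{lemm:lcl}) to the connection-induced differential ${\cal E}_n\otimes_{{\cal O}_{\cal Y}}\Om^i_{{\cal Y}/S}\lo {\cal E}_n\otimes_{{\cal O}_{\cal Y}}\Om^{i+1}_{{\cal Y}/S}$ — yields
$$Ru^{\rm conv}_{Y/S*}(L^{\rm UE}_{Y/S}({\cal E}\otimes_{{\cal O}_{\cal Y}}\Om^{\bul}_{{\cal Y}/S}))=\vpl_n\bet_{n*}({\cal E}_n\otimes_{{\cal O}_{\cal Y}}\Om^{\bul}_{{\cal Y}/S})$$
in $D^+(f^{-1}({\cal K}_S))$, which is (\ref{eqn:uybo}). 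The hard part is the acyclicity $R^qu^{\rm conv}_{Y/S*}(L^{\rm UE}_{Y/S}({\cal F}))=0$ $(q>0)$ in the relative log setting: one localizes so that $\os{\circ}{\cal Y}\lo \os{\circ}{S}$ is affine and transcribes the proof of \cite[(6.6)]{oc}, verifying that the nontrivial log structures and the localization $\otimes_{\mab Z}{\mab Q}$ cause no new difficulty, exactly as in \cite[Corollary 2.3.8]{s2}.
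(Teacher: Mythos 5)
Your proposal is correct and follows essentially the same route as the paper: reduce via the Poincar\'e lemma (\ref{theo:pl}) to the linearized de Rham complex, then use the $u^{\rm conv}_{Y/S*}$-acyclicity of linearized coherent crystals (the relative log version of \cite[Corollary 4.4]{oc}, i.e.\ \cite[Corollary 2.3.4]{s2}) together with the identity $u^{\rm conv}_{Y/S*}j_{{\mathfrak T}*}=(\vpl_n\bet_{n*})\varphi_{\os{\to}{\mathfrak T}*}$ to identify the direct image with $\vpl_n\bet_{n*}({\cal F}_n)$ termwise. Your explicit remark that $R^1\vpl_n$ vanishes on coherent crystals is a point the paper leaves implicit, but it does not change the argument.
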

\begin{proof} 
(The proof is the same as that of \cite[Corollary 2.3.4]{s2} 
which is the log version of \cite[Corollary 4.4]{oc}.) 
By (\ref{eqn:epdlym}) we have the following isomorphism 
\begin{equation*} 
Ru^{\rm conv}_{Y/S*}(E) \os{\sim}{\lo}
Ru^{\rm conv}_{Y/S*}
(L^{\rm UE}_{Y/S}({\cal E}\otimes_{{\cal O}_{\cal Y}}
\Om^{\bul}_{{\cal Y}/S})). 
\tag{3.9.2}\label{eqn:uymse} 
\end{equation*} 
By the obvious relative version of 
\cite[Corollary 2.3.4]{s2} which is the log version of 
\cite[Corollary 4.4]{oc},  
for a coherent crystal $F$ of ${\cal K}_{Y/S}\vert_{\mathfrak T}$-modules, 
$j_{{\mathfrak T}*}(F)$ is $u^{\rm conv}_{Y/S*}$-acyclic. 
Hence the right hand side of (\ref{eqn:uymse})
is equal to 
$u^{\rm conv}_{Y/S*}
L^{\rm UE}_{Y/S}({\cal E}{\otimes}_{{\cal O}_{{\cal Y}}} 
\Om^{\bul}_{{\cal Y}/S})$. 
\par
By the relative version of 
the commutativity of the diagram of topoi 
in \cite[p.~91]{s2} which is a log version of that 
of topoi in \cite[p.~147]{oc}, we have 
$(\vpl_n\bet_{n*}){\varphi}_{\os{\to}{{\mathfrak T}}*}
=u^{\rm conv}_{Y/S*}j_{{\mathfrak T}*}$. 
Hence, for a ${\cal K}_{\os{\to}{{\mathfrak T}}}$-module 
${\cal F}=\{{\cal F}_n\}_{n=1}^{\infty}$, we have the following: 
\begin{equation*}
u^{\rm conv}_{Y/S*}L^{\rm UE}_{Y/S}({\cal F})
= u^{\rm conv}_{Y/S*}j_{{\mathfrak T}*}
{\varphi}^*_{\os{\to}{{\mathfrak T}}}({\cal F})
= 
(\vpl_n\bet_{n*})
{\varphi}_{\os{\to}{{\mathfrak T}}*}{\varphi}^*_{\os{\to}{{\mathfrak T}}}({\cal F}) 
= \vpl_n\bet_{n*}({\cal F}_n).  
\end{equation*} 
We complete the proof of (\ref{prop:cdfza}). 
\end{proof}

\section{Log convergent linearization functors.~II}\label{sec:llf}
In this section we prove that  
log convergent linearization functors are 
compatible with closed immersions of log schemes. 
This is the log convergent version of the compatibility of 
log crystalline linearization functors with closed immersions of log schemes 
in \cite[(2.2)]{nh2}.   
\par 
Let ${\cal V}$, $\pi$, $S$ and $S_1$ 
be as in \S\ref{sec:logcd}. 

The following is the log convergent version of 
\cite[(2.2.6)]{nh2} whose proof is 
much simpler than that of [loc.~cit.]:

\begin{lemm}[{\bf {cf.~\cite[(2.2.6)]{nh2}}}]\label{lemm:increp}
Let $\iota_Y \col Y_1 \os{\sus}{\lo} Y_2$ be 
a closed immersion of 
fine log schemes over $S_1$ 
whose underlying schemes are of finite type 
over $\os{\circ}{S}_1$.   
Let $T_j:=(U_j,T_j,\iota_j,u_j)$ 
$(j=1,2)$ be a $($pre$)$widening of $Y_j/S$.
Let ${\iota}_U \col U_1 \os{\sus}{\lo} U_2$ and 
$\iota_T \col T_1 \os{\sus}{\lo} T_2$ 
be closed immersions  
of fine log formal schemes over $S$ 
whose underlying formal schemes 
are noetherian formal schemes 
which are topologically of finite type over $\os{\circ}{S}$.  
Assume that ${\iota}_U$ and $\iota_T$
induce a morphism 
$(\iota_U,\iota_T) \col (U_1,T_1,\iota_1,u_1) 
\lo (U_2,T_2,\iota_2,u_2)$ 
of $($pre$)$widenings of $Y_1$ and $Y_2$. 
Let 
$$\iota^{\rm loc}_{\rm conv} \col 
(Y_1/S)_{\rm conv}\vert_{T_1} 
\lo 
(Y_2/S)_{\rm conv}\vert_{T_2}$$ 
be the induced morphism of topoi by $\iota_Y$ and 
$(\iota_U,\iota_T)$.
Let $(U,T,\iota,u)$ be a $($pre$)$widening of  $Y_2/S$ 
over $(U_2,T_2,\iota_2,u_2)$. 
Then $\iota^{{\rm loc}*}_{\rm conv}((U,T,\iota,u))=
(U\times_{U_2}U_1,T\times_{T_2}T_1,
\iota \times_{\iota_2}\iota_1,u\times_{u_2}u_1)$ 
as sheaves. 
\end{lemm}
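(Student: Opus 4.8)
The idea is to identify $\iota^{\rm loc}_{\rm conv}$ as a localization and then compute the pullback of a representable sheaf by the standard two-step recipe. First, let $\iota_{Y,{\rm conv}}\col (Y_1/S)_{\rm conv}\lo (Y_2/S)_{\rm conv}$ be the morphism of topoi attached to $\iota_Y$ (the special case of the morphism following (\ref{cd:yypssp}) in which the base $S$ is unchanged). The compatibility $(\iota_U,\iota_T)$ gives a morphism of $($pre$)$widenings of $Y_1/S$
\[
(U_1,T_1,\iota_1,u_1)\lo \iota_{Y,{\rm conv}}^*\!\big(h_{(U_2,T_2,\iota_2,u_2)}\big),
\]
hence a morphism $h_{T_1}\lo \iota_{Y,{\rm conv}}^*(h_{T_2})$, and $\iota^{\rm loc}_{\rm conv}$ is precisely the localization of $\iota_{Y,{\rm conv}}$ at this morphism. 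So I would prove the statement in two steps: determine $\iota_{Y,{\rm conv}}^*$ on representable sheaves, then apply the fibre-product formula for an inverse image between localized topoi.

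\textbf{Step 1 (base change along $\iota_Y$).} I would first establish the exact analogue of (\ref{eqn:utiu})--(\ref{eqn:iuiue}) for $\iota_{Y,{\rm conv}}$: for any $($pre$)$widening $(U,T,\iota,u)$ of $Y_2/S$,
\[
\iota_{Y,{\rm conv}}^*\big(h_{(U,T,\iota,u)}\big)=h_{(U\times_{Y_2}Y_1,\,T,\,\iota'',\,{\rm pr})},
\]
where $\iota''$ is the composite $U\times_{Y_2}Y_1\lo U\os{\iota}{\lo}T$ and ${\rm pr}\col U\times_{Y_2}Y_1\lo Y_1$ is the projection. The key point, exactly as in the proof of (\ref{eqn:utiu}), is that $(U\times_{Y_2}Y_1,\,T,\,\iota'',\,{\rm pr})$ is a legitimate $($pre$)$widening of $Y_1/S$: since $u$ is strict, the log structure of the log fibre product $U\times_{Y_2}Y_1$ equals the pull-back of that of $Y_1$, so ${\rm pr}$ is strict; and $\iota''$ is a closed immersion as a composite of $U\times_{Y_2}Y_1\os{\sus}{\lo}U$ (base change of $\iota_Y$) and $\iota$. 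Once one knows this, the universal property of the log fibre product shows that the site-level functor $(U,T,\iota,u)\mapsto(U\times_{Y_2}Y_1,\ldots)$ underlies $\iota_{Y,{\rm conv}}$, and the displayed formula follows (it is immediate to check on sections over any enlargement, exactly as (\ref{eqn:utiu}) is checked).

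\textbf{Step 2 (localization).} Write $B:=h_{(U,T,\iota,u)}$ with the structure morphism $B\lo h_{T_2}$ coming from the given morphism of $($pre$)$widenings $(U,T,\iota,u)\lo(U_2,T_2,\iota_2,u_2)$. The standard description of an inverse image between localized topoi gives
\[
\iota^{{\rm loc}*}_{\rm conv}(B)=h_{T_1}\times_{\iota_{Y,{\rm conv}}^*(h_{T_2})}\iota_{Y,{\rm conv}}^*(B).
\]
By Step 1 both terms on the right are representable, and the assignment ``$($pre$)$widening $\mapsto$ representable sheaf'' turns fibre products of $($pre$)$widenings into fibre products of sheaves (sheafification is left exact); hence the right-hand side is the representable sheaf of the fibre product, taken in $($pre$)$widenings of $Y_1/S$, of $(U_1,T_1,\iota_1,u_1)$ and $(U\times_{Y_2}Y_1,\,T,\ldots)$ over $(U_2\times_{Y_2}Y_1,\,T_2,\ldots)$. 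Computing this fibre product componentwise — cancelling the redundant $Y_1$-factor in the $U$-component and the $Y_2$-factor in the $T$-component, using $\iota_Y\circ u_1=u_2\circ\iota_U$ for the compatibility — yields exactly $(U\times_{U_2}U_1,\ T\times_{T_2}T_1,\ \iota\times_{\iota_2}\iota_1,\ u\times_{u_2}u_1)$ together with its projection to $T_1$, as asserted.

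\textbf{Main obstacle.} The only genuine content is Step 1: making the definition of $\iota^{\rm loc}_{\rm conv}$ explicit enough to recognize it as the localization of $\iota_{Y,{\rm conv}}$, and proving the base-change description of $\iota_{Y,{\rm conv}}^*$ on representables (the analogue of (\ref{eqn:utiu})), where the strictness of $u_1,u_2$ and the closedness of $\iota_U,\iota_T$ are used. After that everything is a formal manipulation of representable sheaves and fibre products; this is why the argument is much shorter than the log crystalline analogue \cite[(2.2.6)]{nh2}, in which the PD-envelope occurring in the restricted crystalline site obstructs such a clean description.
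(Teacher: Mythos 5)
Your proof is correct and takes essentially the paper's route: the paper's own argument consists only of checking that $(U\times_{U_2}U_1,\,T\times_{T_2}T_1,\,\iota\times_{\iota_2}\iota_1,\,u\times_{u_2}u_1)$ is again a widening (i.e.\ that the closed immersion $U\times_{U_2}U_1\hookrightarrow T\times_{T_2}T_1$ is defined by an ideal of definition) and then declares the sheaf identity ``straightforward'', which is exactly the computation you carry out via the factorization through $\iota_{Y,{\rm conv}}$ and the localization formula. The one detail you gloss over is that same ideal-of-definition check, which is the only thing needed to cover the widening (as opposed to prewidening) case of the statement.
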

\begin{proof}
We prove (\ref{lemm:increp}) for the case of widenings. 
It is evident that the closed immersion 
$U\times_{U_2}U_1 \os{\sus}{\lo} T\times_{T_2}T_1$ 
is defined by an ideal sheaf of definition of  
$T\times_{T_2}T_1$.  
Hence 
$(U\times_{U_2}U_1,T\times_{T_2}T_1,
\iota \times_{\iota_2} \iota_1,u\times_{u_2}u_1)$ 
is a widening of $Y_1/S$. 
It is straightforward to see that
$\iota^{{\rm loc}*}_{\rm conv}
((U,T,\iota,u))=
(U\times_{U_2}U_1,T\times_{T_2}T_1,\iota \times_{\iota_2}\iota_1,
u\times_{u_2}u_1)$. 
\end{proof}

\par 
Let $\iota_{Y,Z} \col Z \os{\subset}{\lo} Y$ 
be a closed immersion of fine log 
schemes over $S_1$.
Assume that there exists the following cartesian diagram
\begin{equation*}
\begin{CD}
Z  @>{\iota_{\cal Z}}>> {\cal Z} \\ 
@V{\iota_{Y,Z}}VV @VV{\iota_{{\cal Y},{\cal Z}}}V \\
Y @>{\iota_{\cal Y}}>> {\cal Y},
\end{CD}
\tag{4.1.1}\label{cd:zyectn}
\end{equation*}
where $\iota_{\cal Z}$ and $\iota_{\cal Y}$ 
are closed immersions into 
fine log flat $p$-adic formal schemes over $S$ and 
$\iota_{{\cal Y},{\cal Z}}$ is a closed immersion of 
fine log flat $p$-adic formal schemes over $S$. 
Assume that $\os{\circ}{\cal Y}$ and $\os{\circ}{\cal Z}$ 
are noetherian formal schemes which 
are topologically of finite type over $\os{\circ}{S}$.  
Then we have the natural following closed immersion  
by the universality of the exactification: 
\begin{equation*} 
\iota_{{\cal Y}^{\rm ex},{\cal Z}^{\rm ex}}\col 
{\cal Z}^{\rm ex} \os{\sus}{\lo} {\cal Y}^{\rm ex}. 
\tag{4.1.2}\label{eqn:iexyz} 
\end{equation*} 
Set ${\mathfrak T}_Z({\cal Z}):=
(Z,{\cal Z},\iota_{\cal Z},{\rm id}_Z)$ 
and  
${\mathfrak T}_Y({\cal Y})
:=(Y, {\cal Y},\iota_{\cal Y},{\rm id}_Y)$.  
Let 
$\{{\mathfrak T}_{Z,n}({\cal Z})\}_{n=1}^{\infty}$ 
and  
$\{{\mathfrak T}_{Y,n}({\cal Y})\}_{n=1}^{\infty}$ 
be the systems of the universal enlargements 
of $\iota_{\cal Z}$ and $\iota_{\cal Y}$ 
with natural morphisms 
$g_{{\cal Y},n} \col {\mathfrak T}_{Y,n}({\cal Y}) \lo {\cal Y}$ 
and 
$g_{{\cal Z},n} \col {\mathfrak T}_{Z,n}({\cal Z}) \lo {\cal Z}$,  
respectively. 
Let 
$g_{{\cal Y}^{\rm ex},n}
\col {\mathfrak T}_{Y,n}({\cal Y}) \lo {\cal Y}^{\rm ex}$ 
and 
$g_{{\cal Z}^{\rm ex},n} 
\col {\mathfrak T}_{Z,n}({\cal Z}) \lo {\cal Z}^{\rm ex}$ 
be also the natural morphisms. 
Because the diagram (\ref{cd:zyectn}) is cartesian, 
we have the following equality by (\ref{lemm:bcue}):
\begin{equation*} 
{\mathfrak T}_{Z,n}({\cal Z})
=\wt{{\mathfrak T}_{Y,n}({\cal Y})
\times_{\cal Y}{\cal Z}}. 
\tag{4.1.3}\label{eqn:tfib}
\end{equation*} 
By the universality of the exactification, 
we see that the following equality holds$:$
\begin{equation*} 
{\cal Y}^{\rm ex}\times_{\cal Y}{\cal Z}
={\cal Z}^{\rm ex}. 
\tag{4.1.4}\label{eqn:tzcyb}
\end{equation*}
By using this equality and (\ref{lemm:bcue}), 
we obtain the following equality: 
\begin{equation*} 
{\mathfrak T}_{Z,n}({\cal Z})
=\wt{{\mathfrak T}_{Y,n}({\cal Y})
\times_{{\cal Y}{}^{\rm ex}}{\cal Z}{}^{\rm ex}}. 
\tag{4.1.5}\label{eqn:tzyb}
\end{equation*}
Set $Z_n:=Z\times_{\cal Z}{\mathfrak T}_{Z,n}({\cal Z})$ and 
$Y_n:=Y\times_{\cal Y}{\mathfrak T}_{Y,n}({\cal Y})$. 
Let $\iota_{{\cal Z},n}$ 
and $\iota_{{\cal Y},n}$ 
be the natural closed immersions 
$Z_n \os{\sus}{\lo} {\mathfrak T}_{Z,n}({\cal Z})$ 
and 
$Y_n \os{\sus}{\lo} {\mathfrak T}_{Y,n}({\cal Y})$, respectively.  
Let $\iota_{{\cal Y},{\cal Z},n} \col 
{\mathfrak T}_{Z,n}({\cal Z}) \os{\sus}{\lo} 
{\mathfrak T}_{Y,n}({\cal Y})$ be also the natural closed immersion. 
Let $u_{Z,n}$ and $u_{Y,n}$ be the first projections 
$Z_n \lo Z$ and $Y_n \lo Y$. 
Let $\star$ be nothing or ${\rm ex}$. 
Let 
$$g^*_{{\cal Z}^{\star}} 
\col \{{\cal K}_{{\cal Z}^{\star}}\text{-modules}\}
\lo \{{\cal K}_{\os{\to}{\mathfrak T}_Z({\cal Z)}}\text{-modules}\}$$ 
and 
$$g^*_{{\cal Y}^{\star}} \col 
\{{\cal K}_{{\cal Y}^{\star}}\text{-modules}\}
\lo \{{\cal K}_{\os{\to}{\mathfrak T}_Y({\cal Y)}}\text{-modules}\}$$ 
be the natural pull-backs obtained by   
$\{g_{{\cal Z}^{\star},n}\}_{n=1}^{\infty}$ and 
$\{g_{{\cal Y}^{\star},n}\}_{n=1}^{\infty}$, respectively. 
Let 
$$\os{\to}{\iota}{}_{{\cal Y},{\cal Z}*} \col 
\{{\cal K}_{\os{\to}{\mathfrak T}_Z({\cal Z})}\text{-modules}\} 
\lo \{{\cal K}_{\os{\to}{\mathfrak T}_Y({\cal Y})}\text{-modules}\}$$ 
and 
$${\iota}_{{\cal Y},{\cal Z},n*} \col 
\{{\cal K}_{{\mathfrak T}_{Z,n}({\cal Z})}\text{-modules}\} 
\lo \{{\cal K}_{{\mathfrak T}_{Y,n}({\cal Y})}\text{-modules}\} \quad 
(n \in {\mab Z}_{\geq 1})$$
be the natural direct images. 

\begin{lemm}[{\bf {cf.~\cite[(2.2.9)]{nh2}}}]\label{lemm:git}
$(1)$ 
The following diagram 
\begin{equation*}
\begin{CD} 
\{\text{coherent } {\cal K}_{{\cal Z}^{\star}}\text{-modules}\}@>{g^*_{{\cal Z}^{\star}}}>> 
\{\text{coherent crystals of }{\cal K}_{\os{\to}{\mathfrak T}_Z({\cal Z})}\text{-modules}\} \\
@V{\iota_{{\cal Y}^{\star},{\cal Z}^{\star}*}}VV 
@VV{\os{\to}{\iota}{}_{{\cal Y},{\cal Z}*}}V \\
\{\text{coherent }  {\cal K}_{{\cal Y}^{\star}}\text{-modules}\}@>{g^*_{{\cal Y}^{\star}}}>>  
\{\text{coherent crystals of }{\cal K}_{\os{\to}{\mathfrak T}_Y({\cal Y})}\text{-modules}\}. 
\end{CD}
\tag{4.2.1}
\end{equation*}
\end{lemm}
\begin{proof}
(The proof is the same as that of \cite[(2.2.9)]{nh2}.)
Since the morphisms  
${\cal Z}\lo {\cal Y}$ and 
${\cal Z}^{\rm ex}\lo {\cal Y}^{\rm ex}$ 
are affine,  (\ref{lemm:git}) immediately follows 
from (\ref{eqn:tfib}), (\ref{eqn:tzyb}) and 
the affine base change theorem (\cite[(1.5.2)]{ega2}).
\end{proof}

\begin{lemm}
[{\bf {cf.~\cite[(2.2.10)]{nh2}}}]\label{lemm:phiit}  
Let 
$\iota^{\rm loc}_{\rm conv} \col 
(Z/S)_{\rm conv}
\vert_{{\mathfrak T}_{Z,n}({\cal Z})} 
\lo 
(Y/S)_{\rm conv}
\vert_{{\mathfrak T}_{Y,n}({\cal Y})}$ 
be the natural morphism of topoi. 
Then the following diagram 
\begin{equation*}
\begin{CD}
 \{\text{coherent crystals of }
{\cal K}_{\os{\to}{\mathfrak T}_Z({\cal Z})}
\text{-modules}\}
@>{\varphi_{\os{\to}{\mathfrak T}_Z({\cal Z})}^*}>> \\
@V{\os{\to}{\iota}{}_{{\cal Y},{\cal Z}*}}VV  \\
\{\text{coherent crystals of }
{\cal K}_{\os{\to}{\mathfrak T}_{Y}({\cal Y})}
\text{-modules}\} 
@>{\varphi_{\os{\to}{\mathfrak T}_Y({\cal Y})}^*}>> 
\end{CD}
\tag{4.3.1}
\end{equation*}
\begin{equation*}
\begin{CD}
\{\text{coherent crystals of }
{\cal K}_{Z/S}\vert_{{\mathfrak T}_Z({\cal Z})} 
\text{-modules}\}  \\
@VV{\iota^{\rm loc}_{{\rm conv}*}}V\\
\{\text{coherent crystals of }{\cal K}_{Y/S}
\vert_{{\mathfrak T}_Y({\cal Y})}\text{-modules}\}
\end{CD}
\end{equation*}
is commutative.
\end{lemm}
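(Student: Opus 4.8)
The plan is to produce the natural base-change morphism
\[
\varphi^*_{\os{\to}{\mathfrak T}_Y({\cal Y})}\bigl(\os{\to}{\iota}{}_{{\cal Y},{\cal Z}*}({\cal E})\bigr)\lo
\iota^{\rm loc}_{{\rm conv}*}\bigl(\varphi^*_{\os{\to}{\mathfrak T}_Z({\cal Z})}({\cal E})\bigr)
\]
for a coherent crystal ${\cal E}=\{{\cal E}_n\}_{n=1}^{\infty}$ of ${\cal K}_{\os{\to}{\mathfrak T}_Z({\cal Z})}$-modules (by adjunction from the evident base-change transformation of the pull-back functors) and to show that it is an isomorphism by evaluating both sides on objects of the localized site; this is the log convergent analogue of the proof of \cite[(2.2.10)]{nh2}. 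To compare two abelian sheaves on $(Y/S)_{\rm conv}\vert_{{\mathfrak T}_Y({\cal Y})}$ it suffices to evaluate them on the generating family of objects consisting of enlargements $T=(U,T,\iota,u)$ of $Y/S$ equipped with a morphism to ${\mathfrak T}_Y({\cal Y})$ of (pre)widenings; by (\ref{eqn:chynhn}) any such morphism factors through $g\col T\lo{\mathfrak T}_{Y,n}({\cal Y})$ for some $n$, and then, exactly as in the proof of (\ref{theo:pl}), one has $(\varphi^*_{\os{\to}{\mathfrak T}_Y({\cal Y})}({\cal F}))_T={\cal K}_T\otimes_{{\cal K}_{{\mathfrak T}_{Y,n}({\cal Y})}}{\cal F}_n$ via $g$, for any coherent crystal ${\cal F}=\{{\cal F}_n\}$ of ${\cal K}_{\os{\to}{\mathfrak T}_Y({\cal Y})}$-modules (and likewise over ${\mathfrak T}_Z({\cal Z})$).

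Granting this, I would compute both sides at such a $T$. Since $\os{\to}{\iota}{}_{{\cal Y},{\cal Z}*}$ is, by construction, level by level the push-forward $\iota_{{\cal Y},{\cal Z},n*}$ along the closed immersion $\iota_{{\cal Y},{\cal Z},n}\col{\mathfrak T}_{Z,n}({\cal Z})\os{\sus}{\lo}{\mathfrak T}_{Y,n}({\cal Y})$, the left-hand side evaluated at $T$ is ${\cal K}_T\otimes_{{\cal K}_{{\mathfrak T}_{Y,n}({\cal Y})}}\iota_{{\cal Y},{\cal Z},n*}({\cal E}_n)$. On the other hand, by (\ref{lemm:increp}) the morphism of topoi $\iota^{\rm loc}_{\rm conv}$ carries the object $g\col T\lo{\mathfrak T}_{Y,n}({\cal Y})$ to the object whose underlying formal scheme is $T':=T\times_{{\mathfrak T}_{Y,n}({\cal Y})}{\mathfrak T}_{Z,n}({\cal Z})$, together with the induced closed immersion $\kappa\col T'\os{\sus}{\lo}T$ and the projection $q\col T'\lo{\mathfrak T}_{Z,n}({\cal Z})$; in particular the square with edges $\kappa$, $q$, $g$ and $\iota_{{\cal Y},{\cal Z},n}$ is cartesian. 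Hence, by adjunction for $\iota^{\rm loc}_{\rm conv}$ together with the formula recalled above, the right-hand side evaluated at $T$ is $\kappa_*\bigl({\cal K}_{T'}\otimes_{{\cal K}_{{\mathfrak T}_{Z,n}({\cal Z})}}{\cal E}_n\bigr)$. So the assertion is reduced to the identity
\[
{\cal K}_T\otimes_{{\cal K}_{{\mathfrak T}_{Y,n}({\cal Y})}}\iota_{{\cal Y},{\cal Z},n*}({\cal E}_n)
\os{\sim}{\lo}
\kappa_*\bigl({\cal K}_{T'}\otimes_{{\cal K}_{{\mathfrak T}_{Z,n}({\cal Z})}}{\cal E}_n\bigr),
\]
which is the commutation of quasi-coherent push-forward with base change along the above cartesian square: the morphism $\iota_{{\cal Y},{\cal Z},n}$ is affine — by (\ref{eqn:tzyb}) it is a base change of the affine morphism $\wt{\cal Z}{}^{\rm ex}\lo\wt{\cal Y}{}^{\rm ex}$, just as in the proof of (\ref{lemm:git}) — so $\kappa$ is affine as well and \cite[(1.5.2)]{ega2} applies, while the auxiliary $\otimes_{\mab Z}{\mab Q}$ commutes with the affine push-forward $\kappa_*$. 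One then checks that these isomorphisms are compatible with the restriction morphisms along morphisms of enlargements over ${\mathfrak T}_Y({\cal Y})$, so that they assemble into the desired isomorphism of sheaves, exactly as in \cite[(2.2.9), (2.2.10)]{nh2}.

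The point requiring care is not any single deep step but the compatibility of the various structures under $\iota^{{\rm loc}*}_{\rm conv}$: one must verify that the quadruple produced by (\ref{lemm:increp}) is, at least Zariski-locally on $T$, a genuine object of ${\rm Conv}(Z/S)$ over ${\mathfrak T}_Z({\cal Z})$, so that the crystal pull-back formula for $\varphi^*_{\os{\to}{\mathfrak T}_Z({\cal Z})}$ applies there, and that its log structure and the morphisms $\kappa$, $q$ coincide with the strict pull-backs, so that the affine base change used for (\ref{lemm:git}) may be invoked verbatim. Given (\ref{lemm:bcue}), (\ref{lemm:increp}) and (\ref{lemm:git}), this is purely bookkeeping, carried out just as in the crystalline case.
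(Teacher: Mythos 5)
Your proposal is correct and follows essentially the same route as the paper: evaluate both functors at an enlargement $T$ of $Y/S$ equipped with a morphism to ${\mathfrak T}_{Y,n}({\cal Y})$, use (\ref{lemm:increp}) to identify $\iota^{{\rm loc}*}_{\rm conv}(T)$ with $T\times_{{\mathfrak T}_{Y,n}({\cal Y})}{\mathfrak T}_{Z,n}({\cal Z})$, and conclude by affine base change along the closed (hence affine) immersion $\iota_{{\cal Y},{\cal Z},n}$. The only difference is presentational — you phrase the comparison sheaf-theoretically on $T$ while the paper compares global sections — so nothing further is needed.
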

\begin{proof}
Let ${\cal E}=\{{\cal E}_n\}_{n=1}^{\infty}$ 
be a coherent crystal of 
${\cal K}_{\os{\to}{\mathfrak T}_{Z}({\cal Z})}$-modules.
Let $T=(U,T,\iota,u)$ be an enlargement of $Y/S$ 
with a morphism 
$h_{{\cal Y},n} \col  T \lo {\mathfrak T}_{Y,n}({\cal Y})$ 
of enlargements of $Y/S$. 
Let $\iota' \col 
U\times_{Y_n}{Z_n} \os{\sus}{\lo} 
T\times_{{\mathfrak T}_{Y,n}({\cal Y})}
{\mathfrak T}_{Z,n}({\cal Z})$ 
and $u' \col U\times_{Y_n}{Z_n} \lo Z$
be the natural morphisms. 
By abuse of notation, 
we denote the enlargement 
$(U\times_{Y_n}{Z_n}, T\times_{{\mathfrak T}_{Y,n}({\cal Y})}
{\mathfrak T}_{Z,n}({\cal Z}),\iota',u')$ 
simply by 
$T\times_{{\mathfrak T}_{Y,n}({\cal Y})}
{\mathfrak T}_{Z,n}({\cal Z})$. 
Then we have the natural morphism 
$h_{{\cal Z},n}  \col 
T\times_{{\mathfrak T}_{Y,n}({\cal Y})}
{\mathfrak T}_{Z,n}({\cal Z})  
\lo {\mathfrak T}_{Z,n}({\cal Z})$. 
By (\ref{lemm:increp}) we have the following: 
\begin{align*}
\iota_{{\rm conv}*}^{\rm loc}
\varphi_{{\mathfrak T}_{Z,n}({\cal Z})}^*({\cal E}_n)(T) 
& =\varphi_{{\mathfrak T}_{Z,n}({\cal Z})}^*({\cal E}_n)
(T\times_{{\mathfrak T}_{Y,n}({\cal Y})}{\mathfrak T}_{Z,n}({\cal Z})) \tag{4.3.2}\label{eqn:ilpe}\\ 
{} & =\Gam(T\times_{{\mathfrak T}_{Y,n}({\cal Y})}{\mathfrak T}_{Z,n}({\cal Z}),
h^*_{{\cal Z},n}({\cal E}_n)). 
\end{align*} 
On the other hand,  
\begin{equation*}
\varphi_{{\mathfrak T}_{Y,n}({\cal Y})}^*
{\iota_{{\cal Y},{\cal Z},n*}}({\cal E}_n)(T)
=\Gam(T,h^*_{{\cal Y},n}{\iota_{{\cal Y},{\cal Z},n*}}
({\cal E}_n)).
\tag{4.3.3}\label{eqn:phdpe}
\end{equation*}
Since the morphism 
$\iota_{{\cal Y},{\cal Z},n}\col 
{\mathfrak T}_{Z,n}({\cal Z}) \lo  {\mathfrak T}_{Y,n}({\cal Y})$ 
is a closed immersion by (\ref{eqn:tfib}), in 
particular, an affine morphism, 
the affine base change theorem tells us 
that the right hand sides  of 
(\ref{eqn:ilpe}) and (\ref{eqn:phdpe}) are the same.
This completes the proof of (\ref{lemm:phiit}).
\end{proof}

\begin{lemm}[{\bf cf.~\cite[(2.2.11)]{nh2}}]\label{lemm:jilt}
The following diagram of topoi
\begin{equation*}
\begin{CD}
(Z/S)_{\rm conv} \vert_{{\mathfrak T}_Z({\cal Z})} 
@>{j_{{\mathfrak T}_{Z}({\cal Z})}}>> (Z/S)_{\rm conv}\\ 
@V{\iota_{{\rm conv}}^{\rm loc}}VV 
@VV{\iota_{\rm conv}}V \\
(Y/S)_{\rm conv}\vert_{{\mathfrak T}_Y({\cal Y})} 
@>{j_{{\mathfrak T}_Y({\cal Y})}}>> 
(Y/S)_{\rm conv} \\  
\end{CD}
\tag{4.4.1}\label{cd:loctop}
\end{equation*}
is commutative. 
\end{lemm}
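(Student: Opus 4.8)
The plan is to verify the commutativity of (\ref{cd:loctop}) at the level of inverse image functors, that is, to produce a natural isomorphism $j^*_{{\mathfrak T}_Z({\cal Z})}\circ\iota^*_{\rm conv}\cong\iota^{{\rm loc}*}_{\rm conv}\circ j^*_{{\mathfrak T}_Y({\cal Y})}$ of functors $(Y/S)_{\rm conv}\to(Z/S)_{\rm conv}\vert_{{\mathfrak T}_Z({\cal Z})}$; since a morphism of topoi is determined by its inverse image, this suffices. All four functors are exact and commute with arbitrary inductive limits (inverse images, and pull-backs along localization morphisms, always do), so by the standard fact that a morphism out of the sheaf topos $(Y/S)_{\rm conv}$ is determined by the associated continuous flat functor on the site $\mathrm{Conv}(Y/S)$, it is enough to compare the two functors on the representable sheaves $h_T$ with $T=(U,T,\iota,u)$ an enlargement of $Y/S$; compatibility with covering sieves is automatic, covers in $\mathrm{Conv}(Y/S)$ being Zariski open coverings of the underlying formal schemes, which every functor in sight preserves. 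The hypothesis $\wt{\cal Y}={\cal Y}$, $\wt{\cal Z}={\cal Z}$ is used to kill all $\pi$-torsion corrections, so that the quasi-prewidenings ${\mathfrak T}_Y({\cal Y})$, ${\mathfrak T}_Z({\cal Z})$ and the base-change formula for universal enlargements are available in their simplest form.

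The crux is the identification $\iota^*_{\rm conv}(h_{{\mathfrak T}_Y({\cal Y})})\cong h_{{\mathfrak T}_Z({\cal Z})}$ in $(Z/S)_{\rm conv}$, together with the compatibility of $\iota^{\rm loc}_{\rm conv}$ with the morphism of localized topoi it induces. First I would recall from (\ref{eqn:hynhn}) and (\ref{eqn:chynhn}) that $h_{{\mathfrak T}_Y({\cal Y})}=\varinjlim_n h_{{\mathfrak T}_{Y,n}({\cal Y})}$ and $h_{{\mathfrak T}_Z({\cal Z})}=\varinjlim_n h_{{\mathfrak T}_{Z,n}({\cal Z})}$, and then use colimit-preservation of $\iota^*_{\rm conv}$ to reduce to comparing the images of the honest enlargements ${\mathfrak T}_{Y,n}({\cal Y})$. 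Since the square (\ref{cd:zyectn}) is cartesian, pulling back the immersion $Y\os{\sus}{\lo}{\cal Y}$ along $\iota_{Y,Z}$ returns $Z\os{\sus}{\lo}{\cal Z}$, and (\ref{lemm:increp}) describes the pull-back of an enlargement as the corresponding fibre product; combining this with the base-change formula (\ref{lemm:bcue}) yields exactly (\ref{eqn:tfib}), namely ${\mathfrak T}_{Z,n}({\cal Z})={\mathfrak T}_{Y,n}({\cal Y})\times_{\wt{\cal Y}}\wt{\cal Z}$. Taking $\varinjlim_n$ and invoking (\ref{eqn:chynhn}) once more gives the desired identification; by construction (and consistently with (\ref{lemm:increp})) it realizes $\iota^{\rm loc}_{\rm conv}$ as the localization of $\iota_{\rm conv}$ along $h_{{\mathfrak T}_Y({\cal Y})}$, so that (\ref{cd:loctop}) is simply the standard $2$-cartesian square attaching a localization morphism to a morphism of topoi, whence its commutativity. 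This is in substance the argument of \cite[(2.2.11)]{nh2}.

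The step I expect to be the main obstacle is precisely the fibre-product bookkeeping in the previous paragraph: the naive fibre product ${\mathfrak T}_{Y,n}({\cal Y})\times_Y Z$ need not be an enlargement of $Z/S$ — the radius condition and the ${\cal V}$-flatness may fail — so the identification only holds after passing to universal enlargements, and one has to check that forming universal enlargements commutes with the base change along the closed immersion $\iota_{Y,Z}$. This is exactly where the cartesianity of (\ref{cd:zyectn}), the hypothesis $\wt{\cal Y}={\cal Y}$, $\wt{\cal Z}={\cal Z}$, and Lemmas (\ref{lemm:bcue}) and (\ref{prop:fci}) (together with (\ref{lemm:uys1}), which guarantees independence of the chosen log open formal subscheme) do all the real work; once this is granted, the reduction to representables and the passage to localized topoi are formal.
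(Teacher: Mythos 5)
Your overall strategy---compare the two composite inverse--image functors on representables---is the one the paper uses, but the pivot of your argument, the identification $\iota^*_{\rm conv}(h_{{\mathfrak T}_Y({\cal Y})})\cong h_{{\mathfrak T}_Z({\cal Z})}$, is false, and with it the reduction to the ``standard $2$-cartesian localization square''. The source of the error is a mis-citation: (\ref{lemm:increp}) describes $\iota^{{\rm loc}*}_{\rm conv}$ (the morphism between the \emph{localized} topoi), for which pull-back is indeed the naive fibre product; but the global $\iota^*_{\rm conv}$ applied to a representable enlargement $(U,T,\iota,u)$ of $Y/S$ is represented by the \emph{formal completion} $\wh{T}$ of $T$ along $U\times_YZ$ (this is (\ref{cd:uit}), proved via the universal property of $\wh{T}$), not by $T\times_{\cal Y}{\cal Z}$. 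Consequently $\iota^*_{\rm conv}(h_{{\mathfrak T}_Y({\cal Y})})$ is the sheaf of the ``open tube'' of $Z$ in ${\cal Y}$, which is strictly larger than $h_{{\mathfrak T}_Z({\cal Z})}$ whenever ${\cal Z}\neq{\cal Y}$. Concretely, take $S={\rm Spf}({\cal V})$, ${\cal Y}={\rm Spf}({\cal V}\{x\})$, ${\cal Z}=V(x)$, $Y=\ul{\rm Spec}_{\cal Y}({\cal O}_{\cal Y}/\pi)$ and $Z=Y\times_{\cal Y}{\cal Z}$, all log structures trivial: then ${\mathfrak T}_{Z,n}({\cal Z})={\cal Z}={\rm Spf}({\cal V})$ for every $n$, so $h_{{\mathfrak T}_Z({\cal Z})}(T')$ is a singleton for every enlargement $T'=(U',T',\iota',u')$ of $Z/S$, whereas $\iota^*_{\rm conv}(h_{{\mathfrak T}_Y({\cal Y})})(T')={\rm Ker}({\cal O}_{T'}\lo{\cal O}_{U'})$ (the possible images of $x$), which is nonzero in general. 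Since an equivalence of slice topoi compatible with the localization morphisms forces the slicing objects to agree, the appeal to the abstract localization square cannot be repaired: $\iota^{\rm loc}_{\rm conv}$ is \emph{not} the localization of $\iota_{\rm conv}$ at $h_{{\mathfrak T}_Y({\cal Y})}$.

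What is true is only the equality of the composites $j^*_{{\mathfrak T}_Z({\cal Z})}\circ\iota^{*}_{\rm conv}$ and $\iota^{{\rm loc}*}_{\rm conv}\circ j^*_{{\mathfrak T}_Y({\cal Y})}$; here $j^*$ takes the product with $h_{{\mathfrak T}_Z({\cal Z})}$ (resp.\ $h_{{\mathfrak T}_Y({\cal Y})}$), and it is exactly this extra factor that cuts the tube back down onto ${\cal Z}$. So one genuinely has to compute both composites on a test object $(U,T,\iota,u)$ and match the answers: the one composite gives $\vil_n(U\times_YZ_n,(T\times_S{\mathfrak T}_{Z,n}({\cal Z}))^{\wh{}})$ as in (\ref{eqn:jtzt}) (using the formal-completion description of $\iota^*_{\rm conv}$ and (\ref{prop:fc})), the other gives $\vil_n(U\times_YY_n\times_YZ,(T\times_S{\mathfrak T}_{Y,n}({\cal Y}))^{\wh{}}\times_{{\mathfrak T}_{Y,n}({\cal Y})}{\mathfrak T}_{Z,n}({\cal Z}))$ as in (\ref{eqn:jtit}) (using (\ref{lemm:increp})), and these are identified via $Z_n=Z\times_YY_n$ and the equality (\ref{eqn:tstzn}), which is where (\ref{eqn:tfib}), the cartesianity of (\ref{cd:zyectn}) and the hypothesis $\wt{\cal Y}={\cal Y}$, $\wt{\cal Z}={\cal Z}$ actually enter. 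Your closing paragraph correctly locates the difficulty in the interaction of universal enlargements with base change, but the ``formal'' part of your argument is not formal: it is precisely this computation.
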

\begin{proof} 
Let $(U,T,\iota,u)$ be an object of ${\rm Conv}({Y/S})$. 
Let $\wh{T}$ be the formal completion of $T$ 
along the closed log subscheme 
$U\times_YZ$ of $U$ 
(we endow $\wh{T}$ with 
the inverse image of the log structure of $T$). 
Let $\hat{\iota}$ be the closed immersion 
$U\times_YZ \os{\sus}{\lo} \wh{T}$ and 
let $u_Z \col U\times_YZ \lo Z$ be the second projection. 
We claim that 
\begin{align*} 
\iota^{*}_{\rm conv}((U,T,\iota,u))
=(U\times_YZ,\wh{T},\hat{\iota},u_Z)
\tag{4.4.2}\label{cd:uit}
\end{align*} 
as a sheaf in $({Z/S})_{\rm conv}$. 
Indeed, let $(U',T',\iota',u')$ 
be an object of ${\rm Conv}({Z/S})$ 
fitting into the following commutative diagram over $S$:  
\begin{equation*}
\begin{CD}
Z @<{u'}<< U' @>{\iota'}>> T'\\ 
@V{\iota_{Y,Z}}VV @VVV @VV{g}V \\
Y @<{u}<< U @>{\iota}>> T.  
\end{CD} 
\tag{4.4.3}\label{cd:uat}
\end{equation*}
Obviously we have the following commutative 
diagram 
\begin{equation*}
\begin{CD}
U' @>{\iota'}>> T'\\ 
@VVV @VV{g}V \\
U\times_YZ @>{\subset}>> T.    
\end{CD} 
\tag{4.4.4}\label{cd:ut}
\end{equation*} 
Let ${\cal J}$ (resp.~${\cal J}'$) be 
the ideal sheaf of 
the lower horizontal closed immersion (resp.~$\iota'$) in (\ref{cd:ut}).  
Then the commutative diagram (\ref{cd:ut}) tells us that 
the natural morphism 
$g^*({\cal O}_T) \lo {\cal O}_{T'}$ induces a morphism 
$g^*({\cal J}) \lo {\cal J}'$. 
Since ${\cal O}_{T'}=\vpl_{n}{\cal O}_{T'}/{\cal J}'{}^n$, 
we have a natural morphism 
$g^*({\cal O}_{\wh{T}}) \lo {\cal O}_{T'}$ and 
hence a morphism 
$T' \lo \wh{T}$ of log formal schemes. 
Now it is easy to check that our claim holds.    
\par 
Let $(T\times_S{\mathfrak T}_{Y,n}({\cal Y}))^{\wh{}}$ 
and $(\wh{T}\times_S{\mathfrak T}_{Z,n}({\cal Z}))^{\wh{}}$ 
be the formal completions of $T\times_S{\mathfrak T}_{Y,n}({\cal Y})$ 
and $\wh{T}\times_S{\mathfrak T}_{Z,n}({\cal Z})$
along $U\times_YY_n$ and $U\times_YZ_n=(U\times_YZ)\times_ZZ_n$, 
respectively. 
Then, by (\ref{cd:uit}), we obtain  the following equalities: 
\begin{align*}
j_{{\mathfrak T}_Z({\cal Z})}^*
\iota^{*}_{\rm conv}((U,T,\iota,u)) 
& = (U\times_YZ,\wh{T},\hat{\iota},u_Z)\times  
(Z,{\cal Z},\iota_{\cal Z},{\rm id}_Z)
\tag{4.4.5}\label{ali:jwic}\\ 
{} & =(U\times_YZ,(T\times_S{\cal Z})^{\wh{}}),
\end{align*} 
where we omit to write the closed immersion 
$U\times_YZ\os{\sus}{\lo}(T\times_S{\cal Z})^{\wh{}}$ and 
the natural morphism $U\times_YZ \lo Z$ in the last object in (\ref{ali:jwic}).  
On the other hand, by (\ref{lemm:increp}), 
\begin{align*} 
\iota_{\rm conv}^{{\rm loc}*}
j_{{\mathfrak T}_Y({\cal Y})}^*((U,T,\iota,u))  
{} & =\iota_{\rm conv}^{{\rm loc}*}((U,T,\iota,u)\times 
(Y,{\cal Y},\iota_{\cal Y},{\rm id}_Y))
\tag{4.4.6}\label{eqn:jtit}\\ 
&= \iota_{\rm conv}^{{\rm loc}*}(U,(T\times_S{\cal Y})^{\wh{}})\\
&=(U\times_YZ,{\cal Z}\times_{\cal Y}(T\times_S{\cal Y})^{\wh{}})\\
&=(U\times_YZ,(T\times_S{\cal Z})^{\wh{}}). 
\end{align*}
By (\ref{ali:jwic}) and  (\ref{eqn:jtit}), 
we have 
\begin{equation*} 
j_{{\mathfrak T}_Z({\cal Z})}^*\iota^{*}_{\rm conv}
= \iota_{\rm conv}^{{\rm loc}*}
j_{{\mathfrak T}_Y({\cal Y})}^*.  
\end{equation*} 
Therefore the diagram (\ref{cd:loctop}) is commutative.
\end{proof}

\begin{coro}[{\bf cf.~\cite[(2.2.12)]{nh2}}]\label{linc}
There exists a canonical 
isomorphism of functors
\begin{equation*}
L^{\rm UE}_{Y/S}\circ 
\os{\to}{\iota}_{{\cal Y}, {\cal Z}*} \lo 
\iota_{{\rm conv}*}\circ L^{\rm UE}_{Z/S} 
\tag{4.5.1}\label{eqn:lyiilz}
\end{equation*}
for coherent crystals of 
${\cal K}_{\os{\to}{\mathfrak T}_Z({\cal Z})}$-modules. 
Set $L^{\rm conv}_{Y/S}:= 
L^{\rm UE}_{Y/S}\circ \{g^*_{{\cal Y}^{\star},n}\}_{n=1}^{\infty}$ 
and $L^{\rm conv}_{Z/S}:=
L^{\rm UE}_{Z/S} \circ \{g^*_{{\cal Z}^{\star},n}\}_{n=1}^{\infty}$. 
Then there also exists the following canonical 
isomorphism of functors
\begin{equation*}
L^{\rm conv}_{Y/S} \circ 
\iota_{{\cal Y}^{\star}, {\cal Z}^{\star}*}
\lo 
\iota_{{\rm conv}*}\circ L^{\rm conv}_{Z/S} 
\tag{4.5.2}\label{coh:com}
\end{equation*}
for coherent ${\cal K}_{{\cal Z}^*}$-modules.
\end{coro}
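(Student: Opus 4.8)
The plan is to obtain both isomorphisms by a purely formal diagram chase, combining the three lemmas just proved with the very definition $L^{\rm UE}_{Y/S}(-)=j_{{\mathfrak T}_Y({\cal Y})*}\varphi^*_{\os{\to}{\mathfrak T}_Y({\cal Y})}(-)$ from (\ref{eqn:luys}). No new geometric input is needed beyond what (\ref{lemm:git}), (\ref{lemm:phiit}) and (\ref{lemm:jilt}) already supply.

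First I would prove (\ref{eqn:lyiilz}). Let ${\cal E}$ be a coherent crystal of ${\cal K}_{\os{\to}{\mathfrak T}_Z({\cal Z})}$-modules. Expanding the definition of $L^{\rm UE}_{Y/S}$ gives $L^{\rm UE}_{Y/S}(\os{\to}{\iota}_{{\cal Y},{\cal Z}*}({\cal E}))=j_{{\mathfrak T}_Y({\cal Y})*}\varphi^*_{\os{\to}{\mathfrak T}_Y({\cal Y})}(\os{\to}{\iota}_{{\cal Y},{\cal Z}*}({\cal E}))$. The commutativity of the diagram in (\ref{lemm:phiit}) rewrites this as $j_{{\mathfrak T}_Y({\cal Y})*}\iota^{\rm loc}_{{\rm conv}*}\varphi^*_{\os{\to}{\mathfrak T}_Z({\cal Z})}({\cal E})$, and then the commutativity of the diagram of topoi in (\ref{lemm:jilt}), which yields $j_{{\mathfrak T}_Y({\cal Y})*}\iota^{\rm loc}_{{\rm conv}*}=\iota_{{\rm conv}*}j_{{\mathfrak T}_Z({\cal Z})*}$, rewrites it further as $\iota_{{\rm conv}*}j_{{\mathfrak T}_Z({\cal Z})*}\varphi^*_{\os{\to}{\mathfrak T}_Z({\cal Z})}({\cal E})=\iota_{{\rm conv}*}L^{\rm UE}_{Z/S}({\cal E})$. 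Composing these canonical isomorphisms gives the morphism (\ref{eqn:lyiilz}), which is then an isomorphism. Here one uses, as is already implicit in the formulation of (\ref{lemm:phiit}), that $\os{\to}{\iota}_{{\cal Y},{\cal Z}*}$ carries coherent crystals to coherent crystals — each $\iota_{{\cal Y},{\cal Z},n}$ being a closed immersion, coherence and the crystal (transition) condition are preserved — and that (\ref{lemm:jilt}), hence this step, requires the hypotheses $\wt{\cal Y}={\cal Y}$, $\wt{\cal Z}={\cal Z}$, which are exactly those imposed in the statement.

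Then I would deduce (\ref{coh:com}). Let $\star$ be nothing or ${\rm ex}$, and let ${\cal F}={\cal E}\otimes_{\cal V}K$ be a coherent ${\cal K}_{{\cal Z}^{\star}}$-module with ${\cal E}\in{\rm Coh}({\cal O}_{{\cal Z}^{\star}})$. From $L^{\rm conv}_{Y/S}=L^{\rm UE}_{Y/S}\circ g^*_{{\cal Y}^{\star}}$ one gets $L^{\rm conv}_{Y/S}(\iota_{{\cal Y}^{\star},{\cal Z}^{\star}*}({\cal F}))=L^{\rm UE}_{Y/S}(g^*_{{\cal Y}^{\star}}\iota_{{\cal Y}^{\star},{\cal Z}^{\star}*}({\cal F}))$. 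By (\ref{lemm:git}) the natural morphism $g^*_{{\cal Y}^{\star}}\iota_{{\cal Y}^{\star},{\cal Z}^{\star}*}({\cal F})\os{\sim}{\lo}\os{\to}{\iota}_{{\cal Y},{\cal Z}*}g^*_{{\cal Z}^{\star}}({\cal F})$ is an isomorphism, and applying (\ref{eqn:lyiilz}) to the coherent crystal $g^*_{{\cal Z}^{\star}}({\cal F})$ of ${\cal K}_{\os{\to}{\mathfrak T}_Z({\cal Z})}$-modules turns $L^{\rm UE}_{Y/S}(\os{\to}{\iota}_{{\cal Y},{\cal Z}*}g^*_{{\cal Z}^{\star}}({\cal F}))$ into $\iota_{{\rm conv}*}L^{\rm UE}_{Z/S}(g^*_{{\cal Z}^{\star}}({\cal F}))=\iota_{{\rm conv}*}L^{\rm conv}_{Z/S}({\cal F})$. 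Composing these canonical isomorphisms gives (\ref{coh:com}).

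Since every ingredient is an isomorphism of functors, so are the composites, and functoriality is automatic. I do not expect a genuine obstacle here; the only point requiring attention is bookkeeping — checking that at each step the functor in play is applied to an object in its declared domain (coherent crystal of ${\cal K}_{\os{\to}{\mathfrak T}}$-modules, coherent ${\cal K}_{{\cal Z}^{\star}}$-module, etc.), and that the isomorphisms furnished by (\ref{lemm:git}), (\ref{lemm:phiit}) and (\ref{lemm:jilt}) are natural rather than merely objectwise, so that they glue to isomorphisms of functors. Both are routine given the preceding lemmas, which is precisely why the proof of this corollary is short.
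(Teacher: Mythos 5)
Your proof is correct and follows exactly the paper's route: (4.5.1) is obtained by combining the commutative diagram of (\ref{lemm:phiit}) with the commutativity of topoi in (\ref{lemm:jilt}) (which is where the hypotheses $\wt{\cal Y}={\cal Y}$, $\wt{\cal Z}={\cal Z}$ enter), and (4.5.2) then follows from (4.5.1) together with (\ref{lemm:git}). The paper states this in two sentences; you have merely unwound the same diagram chase explicitly.
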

\begin{proof}
The former statement of 
(\ref{linc}) immediately follows from 
(\ref{lemm:phiit}) and (\ref{lemm:jilt}).  
The latter follows from the former and (\ref{lemm:git}). 
\end{proof}

\section{Vanishing cycle sheaves in log convergent topoi}\label{sec:vflvc}
In this section we study several properties of 
the morphisms forgetting the log structures  
of log convergent topoi.  
See \cite[(2.3)]{nh2} for the analogues 
in the log crystalline case.
\par 
Let the notations be as in \S\ref{sec:logcd}. 
Let $M$ be the log structure of $Y$. 
Let $N$ be a fine sub log structure of $M$ 
on $\os{\circ}{Y}$. 
Set $Y_M:=(\os{\circ}{Y},M)(=Y)$ 
and $Y_N:=(\os{\circ}{Y},N)$. 
The inclusion $N\subset M$ induces 
a natural morphism 
\begin{equation*} 
\eps_{(\os{\circ}{Y},M,N)/S_1} 
\col Y_M \lo Y_N 
\tag{5.0.1}\label{eqn:esfl}
\end{equation*} 
of log schemes over $S_1$. 
The morphism $\eps_{(\os{\circ}{Y},M,N)/S} $ 
induces the following morphism of 
log convergent topoi: 
\begin{equation*}
\eps^{\rm conv}_{(\os{\circ}{Y},M,N)/S} \col 
(Y_M/S)_{\rm conv} 
\lo 
(Y_N/S)_{\rm conv}. 
\tag{5.0.2}\label{eqn:tsfl}
\end{equation*}
For simplicity of notations, we denote 
$\eps_{(\os{\circ}{Y},M,N)/S}$ and $\eps^{\rm conv}_{(\os{\circ}{Y},M,N)/S}$ 
simply by $\eps$ and $\eps_{\rm conv}$ for the time being. 
Let 
$(U_M,T_M,\iota_M,u_M):=
((\os{\circ}{U},M_U),(\os{\circ}{T},M_T),\iota_M,u_M)$ 
be an object of the log convergent site
${\rm Conv}(Y_M/S)$. 
Set $N_U:=\os{\circ}{u}{}^*_M(N)$ 
and $U_N:=(\os{\circ}{U},N_U)$. 
Then we easily see that $N_U \subset M_U$ since 
$u_M$ is strict. 
We have the natural morphism
$u_N \col U_N \lo Y_N$  
of log schemes over $S_1$. 

\begin{defi}\label{defi:forg}
(1) We call the morphism 
$\eps_{(\os{\circ}{Y},M,N)/S}$ in 
(\ref{eqn:esfl}) 
the {\it morphism of log schemes over} $S_1$ 
{\it forgetting the structure} $M\setminus N$,
and call the morphism 
$\eps^{\rm conv}_{(\os{\circ}{Y},M,N)/S}$ in 
(\ref{eqn:tsfl}) the {\it morphism of 
log convergent topoi forgetting the structure} $M\setminus N$. 
When $N$ is trivial, 
we call $\eps_{(\os{\circ}{Y},M,N)/S}$ and 
$\eps^{\rm conv}_{(\os{\circ}{Y},M,N)/S}$ the 
{\it morphism forgetting the log structure} of $Y/S$. 
\par 
(2) (See \cite[(2.1)]{nh2} for the definition 
of the fs(=fine and saturated) log structure $M(D)$ of 
a relative SNCD $D$ on a smooth scheme $X$ over $S_1$.)  
When $\os{\circ}{Y}$ is a smooth scheme $X$ over $S_1$,  
$M=M({D\cup Z})$ and $N=M(Z)$, 
where $D$ and $Z$ are transversal relative 
SNCD's on $X/S_1$, we call 
$\eps_{(\os{\circ}{Y},M,N)/S}$ 
and $\eps^{\rm conv}_{(\os{\circ}{Y},M,N)/S}$ 
the {\it morphism of ringed topoi 
forgetting the log structure along}  
$D$ and denote them by 
$\eps_{(X,D\cup Z,Z)/S}$ and 
$\eps^{\rm conv}_{(X,D\cup Z,Z)/S}$, respectively.
\end{defi}

\par
The morphism $\eps_{\rm conv}=
\eps^{\rm conv}_{(\os{\circ}{Y},M,N)/S}$ also induces a morphism  
\begin{equation*}
\eps_{\rm conv} \col 
((Y_M/S)_{\rm conv}, {\cal K}_{Y_M/S}) \lo 
((Y_N/S)_{\rm conv}, {\cal K}_{Y_N/S}) 
\tag{5.1.1}\label{eqn:kmono}
\end{equation*} 
of ringed topoi. 
Let $u^{\rm conv}_{Y_L/S}
\col 
((Y_L/S)_{\rm conv},{\cal K}_{Y_L/S}) 
\lo (Y_{\rm zar}, f^{-1}({\cal K}_S))$ $(L:=M, N)$ 
be the projection in (\ref{eqn:uwksr}). 
Then we have the following equality:  
\begin{equation*}
u^{\rm conv}_{Y_N/S}\circ \eps_{\rm conv}= 
u^{\rm conv}_{Y_M/S}. 
\tag{5.1.2}\label{eqn:nbepu}
\end{equation*}
As in \cite[(2.3.0.3)]{nh2}, 
we assume that, locally on $\os{\circ}{Y}$, 
there exists a finitely generated commutative monoid $P$ 
with unit element such that $P^{\rm gp}$ has no $p$-torsion 
and that there exists a chart $P \lo N$.  
\par
Next we define the localizations of 
$\eps_{\rm conv}$'s in (\ref{eqn:tsfl}) and 
(\ref{eqn:kmono}). 
Let 
$$T_M=(U_M, T_M,\iota_M,u_M)=
((\os{\circ}{U},M_U),(\os{\circ}{T},M_T),\iota_M,u_M)$$ 
be an exact widening of $Y_M/S$. 
Then we have an isomorphism 
$M_T/{\cal O}_T^* 
\os{\os{\iota^*_M}{\sim}}{\lo} M_U/{\cal O}_U^*$ 
on $U_{\rm zar}=T_{\rm zar}$. 
Set $N_U:=\os{\circ}{u}{}^*_M(N)$ 
and $U_N:=(\os{\circ}{U},N_U)$.  
Let $N_T$ be the inverse image of 
$N_U/{\cal O}_{U}^*$ by the morphism
$M_T  \lo M_{U}/{\cal O}_{U}^*$. 
Then, as in \cite[(2.3.1)]{nh2}, 
$N_T$ is shown to be fine. 
Set $T_N:=(\os{\circ}{T},N_T)$. 
Let $u_N \col U_N \lo Y_N$ 
(resp.~$\iota_N \col U_N \os{\subset}{\lo} T_N$)
be the induced morphism 
by $\os{\circ}{u}_M$ (resp.~$\os{\circ}{\iota}_M$).
Set $T_N:=(U_N,T_N,\iota_N,u_N)$.
Then we have a morphism 
\begin{equation*}
\eps_{\rm conv} \vert_T \col 
(Y_M/S)_{\rm conv}\vert_{T_M} 
\lo 
(Y_N/S)_{\rm conv}\vert_{T_N} 
\tag{5.1.3}
\end{equation*}
of topoi and a morphism 
\begin{equation*}
\eps_{\rm conv} \vert_T \col 
((Y_M/S)_{\rm conv}\vert_{T_M}, 
{\cal K}_{Y_M/S}\vert_{T_M}) 
\lo 
((Y_N/S)_{\rm conv}\vert_{T_N}, 
{\cal K}_{Y_N/S}\vert_{T_N}) 
\tag{5.1.4}
\end{equation*}
of ringed topoi.

\begin{lemm}[{\bf {cf.~\cite[(2.3.3)]{nh2}}}]\label{lemm:efstex}
Let the notations be as above. 
Then the functor 
$\eps_{\rm conv} \vert_{T*}$ is exact. 
\end{lemm}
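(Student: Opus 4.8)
The plan is to imitate the proof of the crystalline counterpart \cite[(2.3.3)]{nh2}. Exactness of $\eps_{\rm conv}\vert_{T*}$ --- equivalently, surjectivity of the image under $\eps_{\rm conv}\vert_{T*}$ of an epimorphism of abelian sheaves (or of ${\cal K}_{Y_M/S}\vert_{T_M}$-modules) --- is a local question on the site ${\rm Conv}(Y_N/S)$, so after localizing on $\os{\circ}{Y}$ and on $\os{\circ}{T_M}$ we may assume that a chart $P\lo N$ as in the paragraph before (\ref{defi:forg}) exists and that it extends to a chart $Q\lo M$; in the situation of (\ref{defi:forg})(2) one may take $Q=P\oplus{\mab N}^r$ with $r$ the number of components of $D$. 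The heart of the matter, exactly as in [loc.~cit.], is to exhibit an explicit right adjoint to the functor on convergent sites which forgets the structure $M\setminus N$, and then to observe that evaluation of a sheaf at the resulting enlargement is an exact operation.

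First I would attach to a test enlargement $T'=(V,T',\iota',v')$ of $Y_N/S$ over $T_N$ an enlargement $T'_M=(V_M,T'_M,\iota'_M,v'_M)$ of $Y_M/S$ as follows. Since $v'$ is strict and $T'$ is an enlargement, $\os{\circ}{\iota'}$ is a homeomorphism, so $V_{\rm zar}=T'_{\rm zar}$; put $V_M:=(\os{\circ}{V},v_M^*M)$ with $v_M\col\os{\circ}{V}\lo\os{\circ}{Y}$, take $\os{\circ}{T'_M}:=\os{\circ}{T'}$, and equip it with the log structure $M_{T'}$ associated to a chart $Q\lo{\cal O}_{T'}$ that extends the chart of $N_{T'}$ and lifts the images in ${\cal O}_V={\cal O}_{V_M}$ of the generators of $Q$ over $P$ (such lifts exist because ${\cal O}_{T'}\lo{\cal O}_V$ is surjective). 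As ${\rm Ker}({\cal O}_{T'}\lo{\cal O}_V)\subset\pi{\cal O}_{T'}$ is contained in the Jacobson radical of the $p$-adically complete ring ${\cal O}_{T'}$, a section of $M_{T'}$ which becomes a unit on $V_M$ is already a unit, so the immersion $V_M\os{\sus}{\lo}T'_M$ is exact. Because $\os{\circ}{T'_M}=\os{\circ}{T'}$ is flat over ${\rm Spf}({\cal V})$ and the defining ideal of $V_M$ in $T'_M$ is the defining ideal of $V$ in $T'$, hence lies in $\pi{\cal O}_{T'_M}$, the quadruple $T'_M$ is genuinely an enlargement of $Y_M/S$; in particular no passage to a universal enlargement is needed here, in contrast to (\ref{prop:exue}). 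As in the crystalline case, $T'\mapsto T'_M$ is right adjoint to forgetting $M\setminus N$: given an enlargement $W=(W_0,W,\ldots,u_W)$ of $Y_M/S$ and a morphism to $T'$ after forgetting $M\setminus N$, the underlying morphism $\os{\circ}{W}\lo\os{\circ}{T'}=\os{\circ}{T'_M}$ is forced and the morphism $N_{T'}\lo N_W$ of log structures lifts uniquely to $M_{T'_M}\lo M_W$, because $W_0\os{\sus}{\lo}W$ is exact, $u_W$ is strict, and the chart values of the generators of $Q$ over $P$ are nonzerodivisors --- the last point being where the strictness hypothesis imposed throughout this section (and, in (\ref{defi:forg})(2), the transversality of $D$ and $Z$) is used. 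Consequently $\eps_{\rm conv}\vert_T^*$ sends the sheaf $h_{T'}$ represented by $T'$ to the sheaf $h_{T'_M}$ represented by $T'_M$, and by adjunction $(\eps_{\rm conv}\vert_{T*}E)_{T'}=E_{T'_M}$ for every $E$.

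It then remains to observe that for a fixed enlargement the functor $E\mapsto E_{T'_M}$, i.e.\ the evaluation of $E$ as a Zariski sheaf on $\os{\circ}{T'_M}=\os{\circ}{T'}$, is exact. This is because a covering family in ${\rm Conv}(Y_M/S)$ is nothing but a Zariski open covering of the target enlargement, so an epimorphism $E\lo E''$ of convergent sheaves restricts on each enlargement to an epimorphism of Zariski sheaves; since $E\mapsto E_{T'_M}$ is obviously left exact, it is exact. Combining this with the previous paragraph, and recalling that exactness was a local question, we conclude that $\eps_{\rm conv}\vert_{T*}$ is exact. The step I expect to be the main obstacle is the adjunction of the middle paragraph --- concretely, the existence and uniqueness of the lift $M_{T'_M}\lo M_W$ of log structures. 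It is the convergent analogue of the role played by the log PD-envelope in \cite[(2.3.3)]{nh2}, and it is precisely here that the exactness of $V_M\os{\sus}{\lo}T'_M$ --- hence the extension of the chart from $P$ to $Q$ --- is indispensable.
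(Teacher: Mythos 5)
There is a genuine gap in your construction of the representing object $T'_M$. You build the log structure on $\os{\circ}{T}{}'$ by choosing a chart $Q\lo M$ and lifting the images in ${\cal O}_V$ of the generators of $Q$ over $P$ through the surjection ${\cal O}_{T'}\lo {\cal O}_V$. But nothing in your argument uses the structural morphism $\phi_N\col T'_N\lo T_N$ that every object of the localized topos $(Y_N/S)_{\rm conv}\vert_{T_N}$ carries, and without it the construction fails in several ways. First, it is not well defined: two lifts $m,m'$ of the same element of ${\cal O}_V$ differ by an element of ${\rm Ker}({\cal O}_{T'}\lo{\cal O}_V)\subset\pi{\cal O}_{T'}$, and their ratio need not be a unit (for a generator whose image in ${\cal O}_V$ is $0$ --- which happens for the sections coming from $D$ --- one may take $m=\pi$ and $m'=\pi^2$ when the kernel is $(\pi)$), so different choices produce genuinely different log structures and the local constructions need not glue. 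Second, the resulting quadruple carries no morphism to $T_M$, so it is not an object of $(Y_M/S)_{\rm conv}\vert_{T_M}$ at all; and your verification of the universal property rests on the assertion that the chosen lifts are nonzerodivisors, which is unjustified for the same reason. Note finally that the analogous statement without localization is false --- $R^q\eps_{{\rm conv}*}\neq 0$ for $q>0$ in general, by the $p$-adic purity theorem --- so any argument that does not actually exploit the given morphism to $T_N$ cannot be correct.

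The repair, which is what the paper does, is to dispense with charts entirely: given $\phi_N\col(U'_N,T'_N,\iota'_N,u'_N)\lo T_N$ with underlying morphisms $\phi_U\col \os{\circ}{U}{}'\lo\os{\circ}{U}$ and $\phi\col\os{\circ}{T}{}'\lo\os{\circ}{T}$, set $M_{U'}:=\phi_U^*(M_U)$ and $M_{T'}:=\phi^*(M_T)$. This is canonical, automatically yields an object over $T_M$, and one checks (as in \cite[(2.3.3)]{nh2}) the fibre-product formulas $T_M\times_{T_N}T'_N=T'_M$ and $U_M\times_{U_N}U'_N=U'_M$ in the category of fine log formal schemes, which say precisely that $(\eps_{\rm conv}\vert_T)^*$ of the object $(T'_N;\phi_N)$ is represented by $(T'_M;\phi_M)$. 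Your final step --- that the resulting formula $(\eps_{\rm conv}\vert_{T*}E)_{T'_N}=E_{T'_M}$, together with the fact that coverings in the convergent site are Zariski coverings of homeomorphic spaces, forces exactness --- is correct and coincides with the paper's.
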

\begin{proof}
Let $\phi_N \col (U'_N,T'_N,\iota'_N,u'_N)  
\lo T_N=(U_N,T_N,\iota_N,u_N)$ 
($U'_N=(\os{\circ}{U}{}',N_{U'})$,
$T'_N=(\os{\circ}{T}{}',N_{T'})$) 
be a morphism in $(Y_N/S)_{\rm conv}$. 
Let $\phi_U \col \os{\circ}{U}{}' \lo \os{\circ}{U}$ and 
$\phi \col \os{\circ}{T}{}' \lo \os{\circ}{T}$ 
be the underlying morphisms of 
(formal) schemes obtained by $\phi_N$. 
Set $M_{U'}:=\phi_U^*(M_U)$, $M_{T'}:=\phi^*(M_T)$,  $U'_M:=(\os{\circ}{U}{}',M_{U'})$ and 
$T'_M:=(\os{\circ}{T}{}',M_{T'})$. 
Let $\iota'_M \col {U}'_M \os{\sus}{\lo} T'_M$ 
and $u'_M \col U'_M \lo Y_M$ 
be the natural morphisms. 
Let 
$$\phi_M \col (U'_M,T'_M,\iota'_M,u'_M)
\lo (U_M,T_M,\iota_M,u_M)$$ 
be the natural morphism. 
Then $(U'_M,T'_M,\iota'_M,u'_M;\phi_M)$ 
is an object of $(Y_M/S)_{\rm conv} \vert_{T_M}$.  
Let $T_M\times_{T_N}T'_N$ 
be the fiber product of 
$T_M$ and $T'_N$ over $T_N$ in 
the category of fine log formal schemes. 
By the same proof as that of \cite[(2.3.3)]{nh2}, we have
\begin{equation*}
T_M\times_{T_N}T'_N=T'_M. 
\tag{5.2.1}\label{eqn:fibes}
\end{equation*} 
We also have 
\begin{equation*}
U_M\times_{U_N}U'_N=U'_M. 
\tag{5.2.2}\label{eqn:fibu}
\end{equation*}
By the formula (\ref{eqn:fibes}) and 
(\ref{eqn:fibu}), $(\eps_{\rm conv} \vert_T)^*(U'_N,
T'_N,\iota_N',u_N';\phi_N)$ 
is represented by 
$(U'_M,T'_M,\iota'_M,u'_M;\phi_M)$.  
Therefore, 
for an object $E$ in 
$(Y_M/S)_{\rm conv} \vert_{T_M}$, we have 
\begin{align*}
{} & \Gamma((U'_N,T'_N,\iota'_N,u'_N;\phi_N), 
(\eps_{\rm conv} \vert_T)_*(E)) 
\tag{5.2.3}\label{eqn:ephomep} \\ 
{} & =
{\rm Hom}_{(Y_M/S)_{\rm conv}\vert_{T_M}}
((\eps_{\rm conv} \vert_T)^*(U'_N,T'_N,\iota'_N,u'_N;\phi_N), E) \\
{} & =E(U'_M,T'_M,\iota'_M,u'_M;\phi_M).
\end{align*}
Using this formula, we see that the functor 
$\eps_{\rm conv} \vert_{T*}$ is exact. 
\end{proof}

\begin{lemm}[{\bf {cf.~\cite[(2.3.4)]{nh2}}}]\label{lemm:ymjeps}
Let the notations be as above.
Then the following diagram of 
topoi is commutative$:$
\begin{equation*}
\begin{CD}
(Y_M/S)_{\rm conv} 
\vert_{T_M} @>{j_{T_M}}>> 
(Y_M/S)_{\rm conv} \\ 
@V{\eps_{\rm conv} \vert_T}VV @VV{\eps_{\rm conv}}V  \\
(Y_N/S)_{\rm conv} \vert_{T_N} 
@>{j_{T_N}}>> 
(Y_N/S)_{\rm conv}.  \\ 
\end{CD}
\tag{5.3.1}\label{cd:ymnj}
\end{equation*}
The obvious analogue of 
{\rm (\ref{cd:ymnj})} for ringed topoi 
$((\wt{Y_L/S})_{\rm conv},{\cal K}_{Y_L/S})$ 
$(L=M,N)$ also holds.
\end{lemm}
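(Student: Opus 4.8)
The plan is to deduce the commutativity of the square (\ref{cd:ymnj}) of topoi from the equality (up to canonical isomorphism) of the two composite inverse image functors
\[
j_{T_M}^{*}\circ \eps_{\rm conv}^{*}
\qquad\text{and}\qquad
(\eps_{\rm conv}\vert_T)^{*}\circ j_{T_N}^{*}
\]
from the category of sheaves on $(Y_N/S)_{\rm conv}$ to the category of sheaves on $(Y_M/S)_{\rm conv}\vert_{T_M}$. Both composites commute with small colimits: each of $j_{T_M}^{*}$ and $j_{T_N}^{*}$ is the inverse image of a localization morphism and so has both adjoints, while $\eps_{\rm conv}^{*}$ and $(\eps_{\rm conv}\vert_T)^{*}$ are inverse images of morphisms of topoi. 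Since the representable sheaves $h_{(U,T,\iota,u)}$, $(U,T,\iota,u)\in {\rm Ob}({\rm Conv}(Y_N/S))$, form a generating family, it therefore suffices to produce a canonical isomorphism on each $h_{(U,T,\iota,u)}$, functorial in $(U,T,\iota,u)$. This is exactly the reduction carried out in the proof of (\ref{lemm:jilt}).

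First I would compute $\eps_{\rm conv}^{*}(h_{(U,T,\iota,u)})$, proceeding as in the proof of (\ref{lemm:jilt}) (equivalently, of \cite[(2.3.4)]{nh2}), but with one simplification: forgetting $M\setminus N$ changes none of the underlying (formal) schemes, so no formal completion intervenes, and $\eps_{\rm conv}^{*}(h_{(U,T,\iota,u)})$ is represented by the enlargement of $Y_M/S$ obtained from $(U,T,\iota,u)$ by enlarging the log structures of $U$ and of $T$ to the ones pulled back, resp.\ ``sandwiched'', from $M$ (cf.\ the construction preceding (\ref{lemm:efstex}) and (\ref{lemm:increp}); this is the step which uses the standing strictness of the maps $u$ and the exactness of the widenings). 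Next, $j_{T_M}^{*}$ of a representable $h_{(U',T',\iota',u')}$ is the sheaf on ${\rm Conv}(Y_M/S)\vert_{T_M}$ sending $(R;\rho\col R\lo T_M)$ to ${\rm Hom}_{{\rm Conv}(Y_M/S)}(R,(U',T',\iota',u'))$, and symmetrically $j_{T_N}^{*}(h_{(U,T,\iota,u)})$ sends $(R;\rho\col R\lo T_N)$ to ${\rm Hom}_{{\rm Conv}(Y_N/S)}(R,(U,T,\iota,u))$; finally $(\eps_{\rm conv}\vert_T)^{*}$ on a representable is computed in the proof of (\ref{lemm:efstex}), namely by base change along $T_M\lo T_N$, via the identities $T_M\times_{T_N}T'_N=T'_M$ (\ref{eqn:fibes}) and $U_M\times_{U_N}U'_N=U'_M$ (\ref{eqn:fibu}). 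Unwinding both composites on a test object $(V_M;\psi_M\col V_M\lo T_M)$ then yields, on each side, the same set of morphisms from $V_M$ to the enlargement representing $\eps_{\rm conv}^{*}(h_{(U,T,\iota,u)})$, lying over $T_M$: both operations in play — localization over $T$ and forgetting $M\setminus N$ — are ``base change along $T_M\lo T_N$'' in the category of fine log formal schemes, and these base changes commute. This gives the desired functorial isomorphism.

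For the ringed analogue one notes that the isostructure sheaves are carried along by all four functors: $\eps_{\rm conv}^{*}({\cal K}_{Y_N/S})={\cal K}_{Y_M/S}$ and $(\eps_{\rm conv}\vert_T)^{*}({\cal K}_{Y_N/S}\vert_{T_N})={\cal K}_{Y_M/S}\vert_{T_M}$, since forgetting part of the log structure leaves the underlying formal scheme of every enlargement — hence its ${\cal O}$, hence its ${\cal K}$ — unchanged, while the $j$'s are compatible with structure sheaves by the very definition of the localized ringed topoi; the isomorphism constructed above is then automatically ${\cal K}$-linear, which gives the last assertion. I expect the only real work to be the bookkeeping in the second paragraph: keeping the fiber products in the category of \emph{fine} log formal schemes (so that the integralization of \cite[(2.7)]{klog1} enters in the obvious way), invoking (\ref{eqn:fibes}) and (\ref{eqn:fibu}) precisely where needed, and checking that the ``sandwiched'' log structures $N_T$, $M_T$ are fine so that $\eps_{\rm conv}^{*}$ of a representable is again representable — after which the comparison of the two composites is the comparison of two $\mathrm{Hom}$-sets.
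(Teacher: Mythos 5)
There is a genuine gap in your second paragraph: you assert that $\eps_{\rm conv}^{*}(h_{(U,T,\iota,u)})$, for an \emph{arbitrary} object $(U,T,\iota,u)$ of ${\rm Conv}(Y_N/S)$, is representable by the enlargement obtained by ``enlarging the log structures of $U$ and of $T$ to the ones pulled back, resp.\ sandwiched, from $M$''. The sandwiching construction of this section only goes in the opposite direction: starting from an exact widening $T_M$ carrying the \emph{larger} log structure $M_T$, one produces $N_T\subset M_T$ as the inverse image of $N_U/{\cal O}_U^{*}$ under $M_T\to M_U/{\cal O}_U^{*}$. There is no canonical way to pass from an arbitrary enlargement of $Y_N/S$ to an ``$M$-enlargement'' on the same underlying formal scheme: that would require lifting the extra local sections of $M$ (e.g.\ local equations of the components of $D$ not in $Z$, in the situation of (\ref{defi:forg})~(2)) from $U$ to $T$, and such lifts exist only locally and are not canonical. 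Consequently $\eps_{\rm conv}^{*}$ of a general representable sheaf is not representable, and the comparison of the two composites that you build on this description does not get off the ground.

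The repair --- and this is all the paper uses --- is that no computation of $\eps_{\rm conv}^{*}$ on general representables is needed. The only input required is the single identity $(\eps_{\rm conv}\vert_T)^{*}(T_N)=T_M$, which follows from (\ref{eqn:fibes}) and (\ref{eqn:fibu}) applied to objects over $T_N$ (the fibre products $T_M\times_{T_N}T'_N=T'_M$ and $U_M\times_{U_N}U'_N=U'_M$); it identifies $\eps_{\rm conv}\vert_T$ with the morphism canonically induced by $\eps_{\rm conv}$ between the localized topoi at $T_N$ and at its inverse image. Once that is in place, the commutativity of (\ref{cd:ymnj}) is the general topos-theoretic fact that localizing a morphism of topoi at an object of the target and at its inverse image in the source yields a commutative (indeed $2$-cartesian) square, so one never evaluates $\eps_{\rm conv}^{*}$ on any representable other than $T_N$. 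Your reduction to generators and your remarks on the ringed analogue are fine as far as they go, but the middle step should be replaced by this single identity.
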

\begin{proof}
By (\ref{eqn:fibes}) and (\ref{eqn:fibu}),  
$(\eps_{\rm conv}\vert_T)^*(T_N)=T_M$.
The rest of the proof 
is the same as that of \cite[(2.3.4)]{nh2}. 
\end{proof}

\par 
From now on, assume that $\os{\circ}{T}$ is affine. 

\begin{lemm}[{\bf {cf.~\cite[(2.3.5)]{nh2}}}]\label{lemm:epsex}
Let $E$ be an admissible sheaf of 
${\cal K}_{Y_M/S}\vert_{T_M}$-modules in  
$(Y_M/S)_{\rm conv}\vert_{T_M}$. 
Then the canonical morphism 
\begin{equation*} 
\eps_{{\rm conv}*}j_{T_M*}(E) 
\lo R\eps_{{\rm conv}*}j_{T_M*}(E) 
\tag{5.4.1}\label{eqn:ecjme} 
\end{equation*} 
is an isomorphism in the derived category 
$D^+({\cal K}_{Y_N/S})$. 
\end{lemm}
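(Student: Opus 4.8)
The plan is to follow the proof of \cite[(2.3.5)]{nh2}, feeding in the log convergent statements proved above in place of their log crystalline analogues. It suffices to prove that $R^q\eps_{{\rm conv}*}j_{T_M*}(E)=0$ for every $q\geq 1$. This sheaf is the sheafification of the presheaf that assigns to an enlargement $W_N=(V_N,\mathcal{W}_N,\kappa_N,v_N)$ of $Y_N/S$ the group $H^q\bigl((Y_M/S)_{\rm conv}\vert_{\eps_{\rm conv}^*(h_{W_N})},\,j_{T_M*}(E)\vert\bigr)$, so the vanishing is local both on $\os{\circ}{Y}$ and on $W_N$; hence I may assume in addition that there is a chart $P\lo N$ with $P^{\rm gp}$ without $p$-torsion, extended to a chart $Q\lo M$ for $M$ through a morphism $P\lo Q$, and that $\os{\circ}{\mathcal{W}}_N$ is affine. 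By hypothesis $\os{\circ}{T}$ is affine.

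Next I would identify the localized topos and the sheaf in concrete terms, using (\ref{lemm:efstex}) and (\ref{lemm:ymjeps}) for the bookkeeping: the object $\eps_{\rm conv}^*(W_N)$ is the widening of $Y_M/S$ with the same underlying formal scheme as $W_N$ but with the $M$-enriched log structure built as in the construction preceding (\ref{lemm:efstex}), and by (\ref{eqn:chynhn}) one has $h_{T_M}=\vil_n h_{{\mathfrak T}_{U_M,n}(T_M)}$. Therefore the restriction of $j_{T_M*}(E)$ to $(Y_M/S)_{\rm conv}\vert_{\eps_{\rm conv}^*(h_{W_N})}$ is computed on the inductive system of enlargements obtained by taking, for each $n$, the universal enlargement of a suitable immersion into a log fiber product whose underlying formal scheme is a universal enlargement of a closed subscheme of $\os{\circ}{T}\times_{\os{\circ}{S}}\os{\circ}{\mathcal{W}}_N$; since $\os{\circ}{T}$ and $\os{\circ}{\mathcal{W}}_N$ are affine and the integralization of a log structure is affine (\cite[(2.7)]{klog1}, cf.~the proof of (\ref{prop:afex})), all these enlargements are affine, and on this affine inductive system $j_{T_M*}(E)$ restricts to the pull-backs of the coherent sheaves $E_{{\mathfrak T}_{U_M,n}(T_M)}$ because $E$ is admissible.

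Consequently $H^q\bigl((Y_M/S)_{\rm conv}\vert_{\eps_{\rm conv}^*(h_{W_N})},\,j_{T_M*}(E)\vert\bigr)$ is computed as the cohomology of $R\vpl_n$ applied to a projective system of coherent sheaves on affine formal schemes; the coherent cohomology vanishes in positive degrees, and $R^1\vpl_n$ of such a system vanishes by the log version of \cite[(3.8)]{oc} already invoked in the proofs of (\ref{prop:afex}) and (\ref{prop:cdfza}). Hence $H^q=0$ for $q\geq 1$, so $R^q\eps_{{\rm conv}*}j_{T_M*}(E)=0$ for $q\geq 1$ and the morphism (\ref{eqn:ecjme}) is an isomorphism. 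I expect the main difficulty to be the explicit identification carried out in the middle paragraph: one has to check that forgetting the structure $M\setminus N$ is compatible with the formation of the systems of universal enlargements and that the relevant products of enlargements remain affine, which is exactly where the affineness hypothesis on $\os{\circ}{T}$ and the affineness of integralization are used; once this is in place, the rest is the same affine--coherent--cohomology plus $R^1\vpl$ argument as in \S\ref{sec:lll}.
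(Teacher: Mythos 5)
Your overall strategy (compute $R^q\eps_{{\rm conv}*}j_{T_M*}(E)$ directly as the sheafification of local cohomology groups and kill those groups by an affine plus $R^1\vpl$ argument) differs from the paper's, which is purely formal: it quotes the acyclicities $R^qj_{T_M*}(E)=0$ and $R^qj_{T_N*}((\eps_{\rm conv}\vert_T)_*(E))=0$ for $q>0$ (the log convergent version of \cite[(4.3)]{oc}, i.e.\ the proof of \cite[Proposition 2.3.3]{s2}, applied to the admissible sheaves $E$ and $(\eps_{\rm conv}\vert_T)_*(E)$, the latter being admissible by (\ref{eqn:ephomep})), combines them with the exactness of $\eps_{\rm conv}\vert_{T*}$ from (\ref{lemm:efstex}) and the commutativity of the square (\ref{cd:ymnj}) from (\ref{lemm:ymjeps}), and reads off $\eps_{{\rm conv}*}j_{T_M*}(E)=Rj_{T_N*}(\eps_{\rm conv}\vert_T)_*(E)=R(\eps_{\rm conv}\circ j_{T_M})_*(E)=R\eps_{{\rm conv}*}j_{T_M*}(E)$. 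No charts, no explicit products of enlargements.

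As written, your middle paragraph has a genuine gap. The value of $j_{T_M*}(E)$ on an enlargement $W$ is $E(W\times T_M)=\vpl_nE_{(W\times T_M)_n}$, a projective limit of coherent modules, not a single coherent or admissible module; so the restriction of $j_{T_M*}(E)$ to $(Y_M/S)_{\rm conv}\vert_{\eps^*_{\rm conv}(h_{W_N})}$ is not ``the pull-backs of the coherent sheaves $E_{{\mathfrak T}_{U_M,n}(T_M)}$,'' and its cohomology is not on the nose ``$R\vpl_n$ of a projective system of coherent sheaves on affines'' --- there are two limits here that you conflate. To trade $H^q\bigl((Y_M/S)_{\rm conv}\vert_{\eps^*_{\rm conv}(h_{W_N})},\,j_{T_M*}(E)\bigr)$ for the cohomology of the admissible sheaf $E$ on the double localization at $\eps^*_{\rm conv}(h_{W_N})\times h_{T_M}$ --- which is where your affine/coherent/$R^1\vpl$ argument does apply --- you must first know $Rj_{T_M*}(E)=j_{T_M*}(E)$, which is exactly the acyclicity statement you never establish or cite, and which is the one non-formal ingredient of the paper's proof. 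Once it (and its analogue over $T_N$) is granted, your computation collapses to the paper's argument and the chart hypotheses become superfluous. A secondary unverified step: you assert that $\eps^*_{\rm conv}(h_{W_N})$ is representable by an $M$-enriched widening for an arbitrary enlargement $W_N$ of $Y_N/S$; in the paper this identification is only carried out for objects lying over $T_N$, via (\ref{eqn:fibes}) and (\ref{eqn:fibu}).
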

\begin{proof} 
By the proof of \cite[Proposition 2.3.3]{s2} which is 
a log version of \cite[(4.3)]{oc}, $R^qj_{T_M*}(E)=0$ $(q >0)$. 
By (\ref{eqn:ephomep}), 
$(\eps_{\rm conv} \vert_T)_*(E)$ is an admissible sheaf of 
${\cal K}_{Y_N/S}\vert_{T_N}$-modules. 
Hence, by the proof of [loc.~cit.] again, 
$R^qj_{T_N*}((\eps_{\rm conv} \vert_T)_*(E))=0$ $(q >0)$.
Therefore we obtain the following equalities: 
\begin{align*}
\eps_{{\rm conv}*}j_{T_M*}(E) 
& \os{(\ref{lemm:ymjeps})}{=}~j_{T_N*}(\eps_{\rm conv} \vert_T)_*(E) 
=Rj_{T_N*}(\eps_{\rm conv} \vert_T)_*(E)~\os{(\ref{lemm:efstex})}{=}~
R(j_{T_N}\eps_{\rm conv} \vert_{T})_*(E) \\ 
{} & ~\os{(\ref{lemm:ymjeps})}{=}~R(\eps_{\rm conv} j_{T_M})_*(E)  
= R\eps_{{\rm conv}*}Rj_{T_M*}
(E)  =R\eps_{{\rm conv}*}j_{T_M*}(E).  
\end{align*}
\end{proof}

\begin{coro}[{\bf {cf.~\cite[(2.3.6)]{nh2}}}]\label{coro:epnr}
Let the notations be as above. 
Let $\iota_{\cal M} \col Y_M \os{\subset}{\lo} {\cal Y}_{\cal M}:=(\os{\circ}{\cal Y},{\cal M})$ 
be an immersion into a log noetherian $p$-adic formal scheme 
which is topologically of finite type over $S$. 
Assume that $\os{\circ}{\cal Y}$ is affine. 
Let ${\mathfrak T}_{\cal M}$ 
be a prewidening 
$(Y_M,{\cal Y}_{\cal M},\iota_{\cal M},{\rm id}_{Y_M})$.  
Let ${\cal E}$ be a coherent crystal of 
${\cal K}_{\os{\to}{\mathfrak T}_{\cal M}}$-modules.
Let $L^{\rm UE}_{Y_M/S}({\cal E})$ be 
the log convergent linearization of 
${\cal E}$ with respect to $\iota_{\cal M}$. 
Then the canonical morphism 
\begin{equation*}
\eps_{{\rm conv}*}L^{\rm UE}_{Y_M/S}
({\cal E}) 
\lo R\eps_{{\rm conv}*}
L^{\rm UE}_{Y_M/S}({\cal E}) 
\tag{5.5.1}\label{eqn:epsre}
\end{equation*}
is an isomorphism in $D^+({\cal K}_{Y_N/S})$. 
\end{coro}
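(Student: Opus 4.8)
The plan is to reduce the statement at once to (\ref{lemm:epsex}), in the same way that its log crystalline counterpart \cite[(2.3.6)]{nh2} is deduced from \cite[(2.3.5)]{nh2}. By the very definition (\ref{eqn:luys}) of the log convergent linearization functor with respect to $\iota_{\cal M}$,
$$L^{\rm UE}_{Y_M/S}({\cal E})=j_{{\mathfrak T}_{\cal M}*}\varphi^*_{\os{\to}{\mathfrak T}_{\cal M}}({\cal E}),$$
and by the construction of the functor $\varphi^*_{\os{\to}{\mathfrak T}_{\cal M}}$ in (\ref{eqn:phyt}) the sheaf $F:=\varphi^*_{\os{\to}{\mathfrak T}_{\cal M}}({\cal E})$ is a coherent crystal of ${\cal K}_{Y_M/S}\vert_{{\mathfrak T}_{\cal M}}$-modules; in particular it is admissible, the relevant transition morphisms being even isomorphisms. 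Thus the morphism (\ref{eqn:epsre}) is precisely $\eps_{{\rm conv}*}j_{{\mathfrak T}_{\cal M}*}(F)\lo R\eps_{{\rm conv}*}j_{{\mathfrak T}_{\cal M}*}(F)$, and (\ref{lemm:epsex}) asserts that it is an isomorphism in $D^+({\cal K}_{Y_N/S})$.

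The only point to be addressed is that (\ref{lemm:epsex}) was stated for an exact widening $T_M$, whereas ${\mathfrak T}_{\cal M}=(Y_M,{\cal Y}_{\cal M},\iota_{\cal M},{\rm id}_{Y_M})$ is the $p$-adic prewidening attached to the immersion $\iota_{\cal M}$; but its proof applies verbatim here. Indeed, that proof uses only: the vanishing $R^qj_{{\mathfrak T}_{\cal M}*}(F)=0$ $(q>0)$ for the admissible sheaf $F$ (the proof of \cite[Proposition 2.3.3]{s2}), the fact that $(\eps_{\rm conv}\vert_{{\mathfrak T}_{\cal M}})_*(F)$ is again admissible (which is (\ref{eqn:ephomep})), the exactness of $(\eps_{\rm conv}\vert_{{\mathfrak T}_{\cal M}})_*$ ((\ref{lemm:efstex})), and the commutativity of the square (\ref{cd:ymnj}) ((\ref{lemm:ymjeps})); all of these hold for the quasi-prewidening ${\mathfrak T}_{\cal M}$ with affine $\os{\circ}{\cal Y}$, which is the situation at hand, and the standing chart hypothesis on $N$ — a fine monoid $P$ with $P^{\rm gp}$ having no $p$-torsion together with a chart $P\lo N$, imposed just before (\ref{eqn:kmono}) — is in force. (Alternatively, by (\ref{rema:rfite}) one may replace ${\cal Y}_{\cal M}$ by its exactification, which changes neither $L^{\rm UE}_{Y_M/S}$ nor $(Y_M/S)_{\rm conv}\vert_{{\mathfrak T}_{\cal M}}$, and so reduce to an exact ${\mathfrak T}_{\cal M}$.)

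The argument is thus essentially bookkeeping; the one step that deserves a word is the verification that $\varphi^*_{\os{\to}{\mathfrak T}_{\cal M}}({\cal E})$ is admissible (indeed crystalline) \emph{as a sheaf of ${\cal K}_{Y_M/S}\vert_{{\mathfrak T}_{\cal M}}$-modules}, i.e. that for every morphism $T'\lo {\mathfrak T}_{\cal M}$ of quasi-prewidenings the induced system on $\os{\to}{T'}$ is coherent and crystalline. This uses the crystal hypothesis on ${\cal E}$ together with the base-change isomorphism (\ref{lemm:bcue}) for universal enlargements, exactly as in the proof of the log convergent Poincar\'{e} lemma (\ref{theo:pl}) and in \cite{oc}, \cite{s2}. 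Granting this, there is no further obstacle and (\ref{eqn:epsre}) follows.
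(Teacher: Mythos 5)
Your proposal is correct and follows essentially the same route as the paper: the paper's proof simply observes that (\ref{eqn:epsre}) follows immediately from (\ref{lemm:epsex}) together with the definition (\ref{eqn:luys}) of $L^{\rm UE}_{Y_M/S}$, which is exactly your reduction. Your extra remarks on the admissibility of $\varphi^*_{\os{\to}{\mathfrak T}_{\cal M}}({\cal E})$ and on passing to the exactification via (\ref{rema:rfite}) are sound bookkeeping that the paper leaves implicit.
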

\begin{proof}
(\ref{coro:epnr}) immediately follows from 
(\ref{lemm:epsex}) and the definition of 
$L^{\rm UE}_{Y_M/S}({\cal E})$ 
((\ref{eqn:luys})).
\end{proof}

\begin{lemm}[{\bf {cf.~\cite[(2.3.7)]{nh2}}}]\label{lemm:itmne}
Let the notations and the assumption be as in {\rm (\ref{coro:epnr})}. 
Let ${\cal N}$ be a fine sub log structure of ${\cal M}$ 
and set ${\cal Y}_{\cal N}:=(\os{\circ}{\cal Y},{\cal N})$. 
Denote ${\cal O}_{\os{\circ}{\cal Y}}$ simply by ${\cal O}_{\cal Y}$. 
Let
\begin{equation*}
\begin{CD}
Y_M @>{\iota_{\cal M}}>> {\cal Y}_{\cal M}  \\ 
@V{\eps_{(\os{\circ}{Y},M,N)/S}}VV 
@VV{\eps_{(\os{\circ}{\cal Y},{\cal M}, {\cal N})/S}}V  \\
Y_N @>{\iota_{\cal N}}>> {\cal Y}_{\cal N}  \\ 
\end{CD}
\tag{5.6.1}\label{cd:cchgmn}
\end{equation*}
be a commutative diagram whose horizontal morphisms
are immersions. 
For $L=M$ or $N$ and ${\cal L}={\cal M}$ or ${\cal N}$, 
set ${\mathfrak T}_{\cal L}:=
((\os{\circ}{Y},L),(\os{\circ}{\cal Y},{\cal L}),
(\os{\circ}{Y},L)\os{\sus}{\lo} (\os{\circ}{\cal Y},{\cal L}), 
{\rm id}_{(\os{\circ}{Y},L)})$.  
Let $\{{\mathfrak T}_{{\cal L},n}\}_{n=1}^{\infty}$ be 
the system of the universal enlargements 
of $\iota_{\cal L}$ 
with the following natural commutative diagram 
for $n\in {\mab Z}_{\geq 1}$ 
$({\star}={\rm ex}$ or nothing$):$
\begin{equation*}
\begin{CD}
{\mathfrak T}_{{\cal M},n}
@>{g^{\star}_{{\cal M},n}}>> {\cal Y}^{\star}_{\cal M} \\ 
@V{h_n}VV @VVV  \\
{\mathfrak T}_{{\cal N},n} 
@>{g^{\star}_{{\cal N},n}}>> {\cal Y}^{\star}_{\cal N}.  \\ 
\end{CD}
\tag{5.6.2}\label{cd:zmnhg}
\end{equation*}
Assume that the underlying 
morphism $\os{\circ}{h}_n\col \os{\circ}{\mathfrak T}_{{\cal M},n}\lo 
\os{\circ}{\mathfrak T}_{{\cal N},n}$ of $h_n$ 
is the identity of $\os{\circ}{\mathfrak T}_{{\cal M},n}$ 
for any $n\in {\mab Z}_{\geq 1}$. 
Let $L^{\rm UE}_{Y_L/S}$ 
$($resp.~$L^{\rm conv}_{Y_L/S})$ 
be the log convergent linearization functor  
for coherent crystal of 
${\cal K}_{\os{\to}{\mathfrak T}_{\cal L}}$-modules 
$($resp.~coherent ${\cal K}_{{\cal Y}^{\star}_{\cal L}}$-modules$)$.    
Then there exist  natural isomorphisms 
\begin{equation*}
L^{\rm UE}_{Y_N/S} \os{\sim}{\lo} 
\eps_{\rm conv*}
L^{\rm UE}_{Y_M/S} 
\tag{5.6.3}\label{eqn:lnelmn}
\end{equation*} 
and 
\begin{equation*}
L^{\rm conv}_{Y_N/S} \os{\sim}{\lo} 
\eps_{\rm conv*}
L^{\rm conv}_{Y_M/S}
\tag{5.6.4}\label{eqn:lgeslg}
\end{equation*} 
of functors. Moreover, the functors 
{\rm (\ref{eqn:lnelmn})} and {\rm (\ref{eqn:lgeslg})} 
are compatible with respect to the integrable connection 
$($cf.~{\rm (\ref{prop:lcl})}$)$. 
That is, the following diagrams are commutative$:$ 
\begin{equation*}
\begin{CD}
L^{\rm UE}_{Y_N/S}({\cal E}\otimes_{{\cal O}_{{\cal Y}}}
\Om^i_{{\cal Y}_{\cal M}/S}) 
@>{\nabla^i}>> L^{\rm UE}_{Y_N/S}({\cal E}\otimes_{{\cal O}_{{\cal Y}}}
\Om^{i+1}_{{\cal Y}_{\cal M}/S})  \\ 
@V{\simeq}VV @VV{\simeq}V  \\
\eps_{\rm conv*}
L^{\rm UE}_{Y_M/S}({\cal E}\otimes_{{\cal O}_{{\cal Y}}}
\Om^i_{{\cal Y}_{\cal M}/S}) 
@>{\eps_{\rm conv*}(\nabla^i)}>> \eps_{\rm conv*}
L^{\rm UE}_{Y_M/S}({\cal E}\otimes_{{\cal O}_{{\cal Y}}}
\Om^{i+1}_{{\cal Y}_{\cal M}/S}) \\ 
\end{CD}
\tag{5.6.5}\label{cd:lnhg}
\end{equation*}
for a coherent crystal 
${\cal E}:=\{{\cal E}_n\}_{n=1}^{\infty}$ of 
${\cal K}_{\os{\to}{\mathfrak T}_{\cal M}({\cal Y})}$-modules 
and 
\begin{equation*}
\begin{CD}
L^{\rm conv}_{Y_N/S}({\cal E}\otimes_{{\cal O}_{{\cal Y}}}
\Om^i_{{\cal Y}_{\cal M}/S}) 
@>{\nabla^i}>> L^{\rm conv}_{Y_N/S}({\cal E}\otimes_{{\cal O}_{{\cal Y}}}
\Om^{i+1}_{{\cal Y}_{\cal M}/S})  \\ 
@V{\simeq}VV @VV{\simeq}V  \\
\eps_{\rm conv*}
L^{\rm conv}_{Y_M/S}({\cal E}\otimes_{{\cal O}_{{\cal Y}}}
\Om^i_{{\cal Y}_{\cal M}/S}) 
@>{\eps_{\rm conv*}(\nabla^i)}>> \eps_{\rm conv*}
L^{\rm conv}_{Y_M/S}({\cal E}\otimes_{{\cal O}_{{\cal Y}}}
\Om^{i+1}_{{\cal Y}_{\cal M}/S}) \\ 
\end{CD}
\tag{5.6.6}\label{cd:lnhlg}
\end{equation*}
for a coherent ${\cal K}_{\os{\circ}{\cal Y}}$-module
${\cal E}$.
\end{lemm}
\begin{proof}
Let $\eps_{\rm conv} \vert_{{\mathfrak T}_n} 
\col  
(Y_M/S)_{\rm conv}\vert_{{\mathfrak T}_{{\cal M},n}} 
\lo 
(Y_N/S)_{\rm conv}\vert_{{\mathfrak T}_{{\cal N},n}}$ 
$(n\in {\mab Z}_{\geq 1})$ 
be the localized morphism of topoi 
forgetting the log structure.  
Using the formula (\ref{eqn:ephomep}), 
we can immediately check that 
$(\eps_{\rm conv} \vert_{{\mathfrak T}_n})_*
\varphi_{{\mathfrak T}_{{\cal M},n}}^{*}
=\varphi_{{\mathfrak T}_{{\cal N},n}}^{*}$. 
By (\ref{lemm:ymjeps}) 
we obtain the following commutative diagram: 
\begin{equation*} 
\begin{CD} 
\{\text{coherent}
{\cal K}_{{\cal Y}^{\star}_{\cal M}}\text{-modules}\}  
@>{\{\os{\circ}{g}{}^*_{{\cal M},n}\}_{n=1}^{\infty}}>>  
\{\text{coherent crystals of }
{\cal K}_{\os{\to}{\mathfrak T}_{\cal M}}\text{-modules}\}  \\ 
@\vert @\vert \\
\{\text{coherent}
{\cal K}_{{\cal Y}^{\star}_{\cal N}}\text{-modules}\} 
@>{\{\os{\circ}{g}{}^*_{{\cal N},n}\}_{n=1}^{\infty}}>> 
\{\text{coherent crystals of }
{\cal K}_{\os{\to}{\mathfrak T}_{\cal N}}\text{-modules}\}   
\end{CD}
\tag{5.6.7}\label{cd:longcom}
\end{equation*} 
\begin{equation*} 
\begin{CD} 
@>{\varphi^*_{\os{\to}{\mathfrak T}_{\cal M}}}>> 
\{\text{crystals of }
{\cal K}_{Y_M/S}\vert_{{\mathfrak T}_{{\cal M}}}\text{-modules}\} \\
@.
@V{(\eps_{\rm conv} \vert_{{\mathfrak T}})_*}VV \\
@>{\varphi^*_{\os{\to}{\mathfrak T}_{\cal N}}}>> 
\{\text{crystals of }
{\cal K}_{Y_N/S}\vert_{{\mathfrak T}_{{\cal N}}}
\text{-modules}\}   
\end{CD} 
\end{equation*}
\begin{equation*} 
\begin{CD}  
@>{j_{{\mathfrak T}_{M*}}}>> 
\{\text{crystals of }
{\cal K}_{Y_M/S}\text{-modules}\}   \\ 
@. @V{\eps_{\rm conv*}}VV  \\
@>{j_{{\mathfrak T}_{N*}}}>> 
\{\text{crystals of }
{\cal K}_{Y_N/S}\text{-modules}\}.
\end{CD}
\end{equation*} 
Hence we have the isomorphisms 
(\ref{eqn:lnelmn}) and (\ref{eqn:lgeslg}). 
\par 
The rest we have to prove is the compatibility. 
It suffices to prove that 
\begin{equation*}
\eps^{{\rm conv}*}_{(\os{\circ}{Y},M,N)/S}
L^{\rm UE}_{Y_N/S} \lo L^{\rm UE}_{Y_M/S} 
\end{equation*} 
is compatible with respect to integrable connections. 
Let $T_M=(U_M,T_M,\iota_M,u_M)$ be an object of $(Y_M/S)_{\rm conv}$ 
and let $T_N=(U_N,T_N,\iota_N,u_N)$ be an exact widening constructed after (\ref{defi:forg}). 
Then $T_N$ is an object of $(Y_N/S)_{\rm conv}$.  
Let 
$$h_{{\mathfrak T},n}\col {\mathfrak T}_{U_M,n}
(T_M\times_S{\cal Y}_{\cal M})\lo 
{\mathfrak T}_{U_N,n}
(T_N\times_S{\cal Y}_{\cal N})$$ 
and 
$$h_{{\mathfrak T}(1),n}\col {\mathfrak T}_{U_{M},n}
(T_M\times_S{\cal Y}_{\cal M}(1))\lo 
{\mathfrak T}_{U_{N},n}
(T_N\times_S{\cal Y}_{\cal N}(1))$$ 
be the natural morphisms. 
Then we have the following (commutative) diagrams 
of the inductive systems of enlargements of $Y/S$:  
\begin{equation*}
\begin{CD}
\{{\mathfrak T}_{U_{M,j}(1),n}(T_M\times_S{\cal Y}_{M,m}(1)\}_{n=1}^{\infty} 
@>{\{p_{j,n}(1)\}_{n=1}^{\infty}}>>
\{{\mathfrak T}_{U_M,n}(T_M\times_S{\cal Y}_{\cal M})\}_{n=1}^{\infty} 
@>{\{p_n\}_{n=1}^{\infty}}>> 
\{{\mathfrak T}_{{\cal M},n}\}_{n=1}^{\infty}\\ 
@. @V{\{p'_n\}_{n=1}^{\infty}}VV \\
@. T_M,@. @. 
\end{CD} 
\end{equation*} 
\begin{equation*}
\begin{CD}
\{{\mathfrak T}_{U_{N,j}(1),n}(T_N\times_S{\cal Y}_{N,m}(1))\}_{n=1}^{\infty} 
@>{\{q_{j,n}(1)\}_{n=1}^{\infty}}>>
\{{\mathfrak T}_{U_N,n}
(T_N\times_S{\cal Y}_{\cal N})\}_{n=1}^{\infty} 
@>{\{q_n\}_{n=1}^{\infty}}>> 
\{{\mathfrak T}_{{\cal N},n}\}_{n=1}^{\infty}\\ 
@. @V{\{q'_n\}_{n=1}^{\infty}}VV \\
@. T_N,@. @. 
\end{CD} 
\end{equation*} 
\begin{equation*}
\begin{CD}
\{{\mathfrak T}_{U_M,n}
(T_M\times_S{\cal Y}_{\cal M})\}_{n=1}^{\infty} 
@>{\{p_n\}_{n=1}^{\infty}}>> 
\{{\mathfrak T}_{{\cal M},n}\}_{n=1}^{\infty}\\ 
@V{\{h_{{\mathfrak T},n}\}_{n=1}^{\infty}}VV @VV\{h_n\}_{n=1}^{\infty}V\\
\{{\mathfrak T}_{U_N,n}
(T_N\times_S{\cal Y}_{\cal N})\}_{n=1}^{\infty} 
@>{\{q_n\}_{n=1}^{\infty}}>> 
\{{\mathfrak T}_{{\cal N},n}\}_{n=1}^{\infty}
\end{CD} 
\end{equation*} 
and 
\begin{equation*}
{\footnotesize{\begin{CD}
\{{\mathfrak T}_{U_{M}(1),n}(T_M\times_S{\cal Y}_{{\cal M},m}(1))\}_{n=1}^{\infty} 
=\{{\mathfrak T}_{U_{M,j}(1),n}(T_M\times_S{\cal Y}_{{\cal M},m}(1))\}_{n=1}^{\infty} 
@>{\{p_{j,n}(1)\}_{n=1}^{\infty}}>>
\{{\mathfrak T}_{U_M,n}
(T_M\times_S{\cal Y}_{\cal M})\}_{n=1}^{\infty} \\ 
@V{\{h_{{\mathfrak T}(1),n}\}_{n=1}^{\infty}}VV @VV{\{h_{{\mathfrak T},n}\}_{n=1}^{\infty}}V \\
\{{\mathfrak T}_{U_{N},n}(T_N\times_S{\cal Y}_m(1))\}_{n=1}^{\infty} 
=
\{{\mathfrak T}_{U_{N,j}(1),n}(T_N\times_S{\cal Y}_{{\cal N},m}(1))\}_{n=1}^{\infty} 
@>{\{q_{j,n}(1)\}_{n=1}^{\infty}}>>
\{{\mathfrak T}_{U_N,n}
(T_N\times_S{\cal Y}_{\cal N})\}_{n=1}^{\infty}.  
\end{CD}}} 
\end{equation*} 
Here $q_{j,n}(1)$ $(j=1,2)$ is the morphism $p_{j,n}(1)$ in the proof of (\ref{prop:lcl})
for $T_N\times_S{\cal Y}_m(1)$ and $T_N\times_S{\cal Y}_m$. 
Let ${\cal E}=\{{\cal E}_n\}_{n=1}^{\infty}$ be a coherent crystal of 
${\cal K}_{\os{\to}{\mathfrak T}_{{\cal N}}}$-modules. 
Set $h^*({\cal E})=\{h_n^*({\cal E}_n)\}_{n=1}^{\infty}$. 
Then, by (\ref{eqn:lueyset}), 
we have the following commutative diagram: 
\begin{equation*} 
\begin{CD} 
(\eps^{{\rm conv}*}_{(\os{\circ}{Y},M,N)/S}
L^{\rm UE}_{Y_N/S}({\cal E}))_{T_M} @>>> 
(L^{\rm UE}_{Y_M/S}(h^*({\cal E})))_{T_M} \\ 
@| @| \\ 
\vpl_nq'_{n*}q_n^*({\cal E}_n) @>>> 
\vpl_np'_{n*}p_n^*h^*_n({\cal E}_n). 
\end{CD}
\end{equation*} 
We also have the following commutative diagram: 
\begin{equation*} 
\begin{CD}
\vpl_n{\cal E}_{{\mathfrak T}_{U_M,n}(T_M\times_S{\cal Y}_{{\cal M},m}(1))}
@= 
\\
@AAA  \\
\vpl_n
({\cal K}_{{\mathfrak T}_{U_{M}(1),n}(T_M\times_S{\cal Y}_{{\cal M},m}(1))}
\otimes_{{\cal K}_{{\mathfrak T}_{U_{N}(1),n}(T_N\times_S{\cal Y}_{{\cal N},m}(1))}}
{\cal E}_{{\mathfrak T}_{U_N,n}(T_N\times_S{\cal Y}_{{\cal N},m}(1))})
@= 
\end{CD}
\end{equation*} 
\begin{equation*} 
\begin{CD}
\vpl_n
{\cal E}_{{\mathfrak T}_{U_{M,j}(1),n}(T_M\times_S{\cal Y}_{{\cal M},m}(1))} \\
@AAA \\
\vpl_n
({\cal K}_{{\mathfrak T}_{U_{M,j}(1),n}(T_M\times_S{\cal Y}_{{\cal M},m}(1))}
\otimes_{{\cal K}_{{\mathfrak T}_{U_{N,j}(1),n}(T_N\times_S{\cal Y}_{{\cal N},m}(1))}}
{\cal E}_{{\mathfrak T}_{U_{N,j}(1),n}(T_N\times_S{\cal Y}_{{\cal N},m}(1))}).  
\end{CD}
\end{equation*} 
Now the desired compatibility 
follows from the construction $\nabla^0$ 
in the proof of (\ref{prop:lcl}).  
\end{proof}

\begin{defi}\label{defi:forva}
For a coherent crystal $E$ of ${\cal K}_{Y_M/S}$-modules, 
we call $R\eps^{\rm conv}_{(\os{\circ}{Y},M,N)/S*}(E)$ 
the {\it vanishing cycle sheaf} of $E$ 
{\it along} $M\setminus N$. We call 
$R\eps^{\rm conv}_{(\os{\circ}{Y},M,N)/S*}
({\cal K}_{Y_M/S})$ the 
{\it vanishing cycle sheaf} of $Y_M/S$ 
{\it along} $M\setminus N$.
If $N$ is trivial, we omit the phrase  
``along $M\setminus N$''. 
\end{defi}

\par 
The following theorem is a main result in this section:

\begin{theo}[{\bf Poincar\'{e} lemma of a vanishing cycle sheaf 
(cf.~\cite[(2.3.10)]{nh2})}]
\label{theo:cpvcs} 
Let the notations and the assumption be as in {\rm (\ref{lemm:itmne})}. 
Let $E$ be a coherent crystal of 
${\cal K}_{Y_N/S}$-modules and let 
$({\cal E}, \nabla)$ be the ${\cal K}_{\os{\to}{\mathfrak T}_{\cal M}}$-module 
with integrable connection {\rm (\ref{eqn:nbqe})} 
corresponding to 
$\eps^{{\rm conv}*}_{(\os{\circ}{Y},M,N)/S}(E):$ 
$\nabla \col {\cal E} \lo {\cal E}\otimes_{{\cal O}_{\cal Y}}
\Om^1_{{\cal Y}_{\cal M}/S}$. 
Assume that we are given 
the commutative diagrams {\rm (\ref{cd:cchgmn})}  
and {\rm (\ref{cd:zmnhg})} 
and that $\os{\circ}{h}_n$ is the identity for 
any $n\in {\mab Z}_{\geq 1}$. 
Assume that ${\cal Y}_{\cal M}$ and ${\cal Y}_{\cal N}$ are formally log smooth over $S$. 
Then there exists a canonical isomorphism 
\begin{equation*}
R\eps^{\rm conv}_{(\os{\circ}{Y},M,N)/S*}
(\eps^{{\rm conv}*}_{(\os{\circ}{Y},M,N)/S}(E)) \os{\sim}{\lo} 
L^{\rm UE}_{Y_N/S}({\cal E}\otimes_{{\cal O}_{\cal Y}}
\Om^{\bul}_{{\cal Y}_{\cal M}/S})
\tag{5.8.1}\label{eqn:ceslvc}
\end{equation*} 
in $D^+({\cal K}_{Y_N/S})$. 
\end{theo}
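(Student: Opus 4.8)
The plan is to deduce (\ref{eqn:ceslvc}) by applying the log convergent Poincar\'e lemma (\ref{theo:pl}) on $Y_M/S$ and then pushing the resulting resolution forward along $\eps_{\rm conv}:=\eps^{\rm conv}_{(\os{\circ}{Y},M,N)/S}$, computing the terms of the pushed-forward complex by (\ref{coro:epnr}) and (\ref{lemm:itmne}). The first point to record is that $\eps^{{\rm conv}*}_{(\os{\circ}{Y},M,N)/S}(E)$ is a coherent crystal of ${\cal K}_{Y_M/S}$-modules: the pull-back of a crystal is a crystal, and since the operation $T_M\mapsto T_N$ of forgetting the structure $M\setminus N$ leaves the underlying $p$-adic formal scheme of an enlargement unchanged, coherence is preserved. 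Hence ${\cal E}=\{{\cal E}_n\}_{n=1}^{\infty}$ is a coherent crystal of ${\cal K}_{\os{\to}{\mathfrak T}_{\cal M}}$-modules, and (\ref{theo:pl}), applied to the immersion $\iota_{\cal M}\col Y_M\os{\sus}{\lo}{\cal Y}_{\cal M}$, gives a quasi-isomorphism $\eps^{{\rm conv}*}_{(\os{\circ}{Y},M,N)/S}(E)\os{\sim}{\lo}L^{\rm UE}_{Y_M/S}({\cal E}\otimes_{{\cal O}_{{\cal Y}_{\cal M}}}\Om^{\bul}_{{\cal Y}_{\cal M}/S})$ in $D^+({\cal K}_{Y_M/S})$, the differential on the right being the one induced by $\nabla$ as in (\ref{lemm:lcl}).

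Applying $R\eps_{\rm conv*}$ to this quasi-isomorphism reduces the problem to identifying $R\eps_{\rm conv*}(L^{\rm UE}_{Y_M/S}({\cal E}\otimes_{{\cal O}_{{\cal Y}_{\cal M}}}\Om^{\bul}_{{\cal Y}_{\cal M}/S}))$. For each $i$ the ${\cal O}_{{\cal Y}_{\cal M}}$-module $\Om^i_{{\cal Y}_{\cal M}/S}$ is locally free of finite rank, since ${\cal Y}_{\cal M}$ is formally log smooth over $S$, so ${\cal E}\otimes_{{\cal O}_{{\cal Y}_{\cal M}}}\Om^i_{{\cal Y}_{\cal M}/S}$ is again a coherent crystal of ${\cal K}_{\os{\to}{\mathfrak T}_{\cal M}}$-modules; by (\ref{coro:epnr}) each term $L^{\rm UE}_{Y_M/S}({\cal E}\otimes_{{\cal O}_{{\cal Y}_{\cal M}}}\Om^i_{{\cal Y}_{\cal M}/S})$ is therefore $\eps_{\rm conv*}$-acyclic. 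As $\Om^{\bul}_{{\cal Y}_{\cal M}/S}$ is bounded below, the derived direct image of the complex is computed termwise, so $R\eps_{\rm conv*}(L^{\rm UE}_{Y_M/S}({\cal E}\otimes_{{\cal O}_{{\cal Y}_{\cal M}}}\Om^{\bul}_{{\cal Y}_{\cal M}/S}))=\eps_{\rm conv*}(L^{\rm UE}_{Y_M/S}({\cal E}\otimes_{{\cal O}_{{\cal Y}_{\cal M}}}\Om^{\bul}_{{\cal Y}_{\cal M}/S}))$. Finally, since $\os{\circ}{h}_n$ is the identity for all $n$ the sheaves ${\cal K}_{\os{\to}{\mathfrak T}_{\cal M}}$ and ${\cal K}_{\os{\to}{\mathfrak T}_{\cal N}}$ coincide, and the isomorphism (\ref{eqn:lnelmn}) of (\ref{lemm:itmne}) identifies $\eps_{\rm conv*}L^{\rm UE}_{Y_M/S}$ with $L^{\rm UE}_{Y_N/S}$; being functorial with respect to integrable connections it is compatible with the de Rham differentials, so it gives an isomorphism of complexes $\eps_{\rm conv*}(L^{\rm UE}_{Y_M/S}({\cal E}\otimes_{{\cal O}_{{\cal Y}_{\cal M}}}\Om^{\bul}_{{\cal Y}_{\cal M}/S}))=L^{\rm UE}_{Y_N/S}({\cal E}\otimes_{{\cal O}_{{\cal Y}_{\cal M}}}\Om^{\bul}_{{\cal Y}_{\cal M}/S})$. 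Composing the three identifications yields (\ref{eqn:ceslvc}).

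Most of the verifications above are routine: that $\eps^{{\rm conv}*}_{(\os{\circ}{Y},M,N)/S}(E)$ and its twists by $\Om^i_{{\cal Y}_{\cal M}/S}$ remain coherent crystals, and that termwise $\eps_{\rm conv*}$-acyclicity of a bounded-below complex suffices to compute its derived direct image. The one step that requires genuine care is the last identification: one must use that (\ref{eqn:lnelmn}) is an isomorphism of \emph{complexes}, i.e. intertwines the boundary maps built from $\nabla$, and not merely an isomorphism of the individual terms. This is precisely the clause in (\ref{lemm:itmne}) asserting functoriality with respect to integrable connections, which itself rests on the construction of $\nabla^0$ in the proof of (\ref{lemm:lcl}); no new computation is needed, but this is the hinge of the argument and should be invoked explicitly rather than treated as automatic.
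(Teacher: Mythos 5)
Your proof is correct and follows essentially the same route as the paper: apply the log convergent Poincar\'e lemma (\ref{theo:pl}) to $\eps^{{\rm conv}*}_{(\os{\circ}{Y},M,N)/S}(E)$, push forward with $R\eps^{\rm conv}_{(\os{\circ}{Y},M,N)/S*}$ using the acyclicity statement (\ref{coro:epnr}), and identify the result via (\ref{lemm:itmne}). Your added remarks on termwise acyclicity and on the need for the functoriality of (\ref{eqn:lnelmn}) with respect to integrable connections only make explicit what the paper's proof uses implicitly.
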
 
\begin{proof} 
By (\ref{eqn:epdlym}) we have a canonical isomorphism 
$\eps^{{\rm conv}*}_{(\os{\circ}{Y},M,N)/S}(E) \os{\sim}{\lo} 
L^{\rm UE}_{Y_M/S}({\cal E}\otimes_{{\cal O}_{\cal Y}}
\Om^{\bul}_{{\cal Y}_{\cal M}/S})$.   
Applying $R\eps^{\rm conv}_{(\os{\circ}{Y},M,N)/S*}$ to 
both hand sides of the above and  
using (\ref{coro:epnr}) and (\ref{lemm:itmne}), 
we obtain the following: 
\begin{align*}
R\eps^{\rm conv}_{(\os{\circ}{Y},M,N)/S*}
(\eps^{{\rm conv}*}_{(\os{\circ}{Y},M,N)/S}(E)) 
& \os{\sim}{\lo}  
R\eps^{\rm conv}_{(\os{\circ}{Y},M,N)/S*}
L^{\rm UE}_{Y_M/S}({\cal E}
\otimes_{{\cal O}_{\cal Y}}
\Om^{\bul}_{{\cal Y}_{\cal M}/S})\\
{} & \os{\sim}{\longleftarrow}
\eps^{\rm conv}_{(\os{\circ}{Y},M,N)/S*}
L^{\rm UE}_{Y_M/S}
({\cal E}\otimes_{{\cal O}_{\cal Y}}
\Om^{\bul}_{{\cal Y}_{\cal M}/S})\\ 
{}&= 
L^{\rm UE}_{Y_N/S}({\cal E}
\otimes_{{\cal O}_{\cal Y}}\Om^{\bul}_{{\cal Y}_{\cal M}/S}).
\end{align*}
\end{proof}

\section{Log convergent linearization functors 
of smooth schemes with relative SNCD's}\label{sec:lcs}
In this section we show several properties of 
log convergent linearization functors 
of a smooth scheme with a relative SNCD. 
We recall and generalize 
several notions in \cite{nh2} and \cite{nh3}.  
\par 
Let $S$  be a $p$-adic formal ${\cal V}$-scheme 
in the sense of \cite{of}.  
(Recall that we always assume that $\os{\circ}{S}$ is flat over ${\cal V}$.) 
Set $S_1:=\ul{\rm Spec}_S({\cal O}_S/\pi{\cal O}_S)$. 
Let $f\col X\lo S_1$ be a smooth scheme 
with a relative SNCD $D$ on $X$ 
over $S_1$. 
Let $Z$ be a relative SNCD on 
$X$ over $S_1$ which 
intersects $D$ transversally over $S_1$. 
Assume that there exist immersions 
\begin{equation*} 
(X,D) \os{\sus}{\lo} `{\cal R}
\quad {\rm and} \quad 
(X,Z) \os{\sus}{\lo} {\cal R} 
\tag{6.0.1}\label{eqn:mdml} 
\end{equation*}  
into formally log smooth log noetherian $p$-adic formal schemes over $S$ 
such that $\os{\circ}{`{\cal R}}=\os{\circ}{\cal R}$. 
Set ${\cal P}:=(\os{\circ}{\cal R},
M_{`{\cal R}}\oplus_{{\cal O}^*_{\cal R}}M_{{\cal R}})$. 
Assume that $\os{\circ}{\cal R}$ is topologically of finite type 
over $S$ and that ${\cal P}$ is log smooth over $S$. 
Then we have a natural immersion 
$(X,D\cup Z) \os{\sus}{\lo} {\cal P}$ and   
the following commutative diagram:  
\begin{equation*} 
\begin{CD} 
(X,D\cup Z) @>{\subset}>> {\cal P} \\ 
@V{\eps_{(X,D\cup Z,Z)/S_1}}VV @VVV \\ 
(X,Z) @>{\subset}>> {\cal R}.  
\end{CD} 
\tag{6.0.2}\label{eqn:pvq} 
\end{equation*} 
Let $(X,D\cup Z)\os{\sus}{\lo} {\cal P}^{\rm ex}$ 
(resp.~$(X,Z)\os{\sus}{\lo} {\cal R}^{\rm ex}$)
be the exactification of the upper immersion 
(resp.~the lower immersion) in (\ref{eqn:pvq}). 
Let $(X,D)\os{\sus}{\lo} (`{\cal R})^{\rm ex}$ 
be the exactification of the first immersion in 
(\ref{eqn:mdml}). 
Let ${\cal M}^{\rm ex}$ (resp.~${\cal N}^{\rm ex}$) be 
the inverse image of the log structure of 
$(`{\cal R})^{\rm ex}$ 
(resp.~${\cal R}^{\rm ex}$) 
by the morphism 
${\cal P}^{\rm ex} \lo (`{\cal R})^{\rm ex}$ 
(resp.~${\cal P}^{\rm ex} \lo {\cal R}^{\rm ex}$). 
Then we have the following equality: 
\begin{equation*} 
M_{{\cal P}^{\rm ex}}={\cal M}^{\rm ex}
\oplus_{{\cal O}^*_{{\cal P}^{{\rm ex}}}}{\cal N}^{\rm ex}.  
\tag{6.0.3}\label{eqn:mpn} 
\end{equation*} 
Endow $\os{\circ}{\cal P}{}^{\rm ex}$ 
with the inverse image of the log structure of ${\cal R}^{\rm ex}$ 
and let ${\cal Q}^{\rm ex}$ be the resulting log formal scheme. 
Then we have the natural exact closed immersion 
$(X,Z)\os{\sus}{\lo}{\cal Q}^{\rm ex}$ 
and we see that 
${\cal Q}^{\rm ex}=(\os{\circ}{\cal P}{}^{\rm ex},{\cal N}^{\rm ex})$. 
We have the following commutative diagram 
\begin{equation*} 
\begin{CD} 
(X,D\cup Z) @>{\subset}>>{\cal P}^{\rm ex}\\ 
@V{\eps_{(X,D\cup Z,Z)/S}}VV @VVV \\ 
(X,Z) @>{\subset}>>{\cal Q}^{\rm ex}\\  
@| @VVV \\ 
(X,Z) @>{\subset}>>{\cal R}^{\rm ex}.  
\end{CD} 
\tag{6.0.4}\label{eqn:pevq} 
\end{equation*} 
If the former immersion (resp.~the latter immersion) 
in (\ref{eqn:mdml}) 
have a chart $P_1\lo Q_1$ (resp.~$P_2\lo Q_2$), 
we denote by $P^{\rm ex}_1$ (resp.~$P^{\rm ex}_2$) 
the inverse image of $Q_1$ (resp.~$Q_2$)
by the morphism $P^{\rm gp}_1\lo Q^{\rm gp}_1$ 
(resp.~$P^{\rm gp}_2\lo Q^{\rm gp}_2$). 
We set 
${\cal P}^{{\rm loc,ex}}:=
{\cal P}\times_{{\rm Spf}^{\log}
({\mab Z}_p\{P_1\oplus P_2\})}
{\rm Spf}^{\log}
({\mab Z}_p\{P^{\rm ex}_1\oplus P^{\rm ex}_2\})$ 
and 
${\cal R}^{{\rm loc,ex}}:=
{\cal R}\times_{{\rm Spf}^{\log}({\mab Z}_p\{P_2\})}
{\rm Spf}^{\log}({\mab Z}_p\{P^{\rm ex}_2\})$, 
respectively. 
Let ${\cal Q}^{\rm loc,ex}$ be 
the fine log formal ${\cal V}$-scheme 
whose underlying formal scheme is 
$\os{\circ}{\cal P}{}^{{\rm loc,ex}}$ and 
whose log structure is the inverse image of 
the log structure of ${\cal R}^{{\rm loc,ex}}$. 
Then ${\cal Q}^{\rm ex}$ (resp.~${\cal P}^{\rm ex}$) 
is the formal completion of 
${\cal Q}^{{\rm loc,ex}}$ (resp.~${\cal P}^{{\rm loc,ex}}$) along $X$. 
Note that ${\cal Q}^{\rm ex}$ and ${\cal P}^{\rm ex}$ are formally log smooth over $S$. 
\par 
For a nonnegative integer $i$ and an integer $k$, 
set 
\begin{equation*} 
P^{{\cal P}^{\rm ex}/{\cal Q}^{\rm ex}}_k
\Om^i_{{\cal P}^{\rm ex}/S} =
\begin{cases} 
0 & (k<0), \\
{\rm Im}(\Om^k_{{\cal P}^{\rm ex}/S}
{\otimes}_{{\cal O}_{{\cal P}^{\rm ex}}}
\Om^{i-k}_{{\cal Q}^{\rm ex}/S} \lo 
\Om^i_{{\cal P}^{\rm ex}/S}) & (0\leq k\leq i), \\
\Om^i_{{\cal P}^{\rm ex}/S} & (k > i).
\end{cases}
\tag{6.0.5}\label{eqn:lexfpw} 
\end{equation*}
Then we have a filtration 
$P^{{\cal P}^{\rm ex}/{\cal Q}^{\rm ex}}
:=\{P^{{\cal P}^{\rm ex}
/{\cal Q}^{\rm ex}}_k\}_{k\in {\mab Z}}$ 
on $\Om^i_{{\cal P}^{\rm ex}/S}$.

\begin{lemm}\label{lemm:exic}
$(1)$ The following formulas hold$:$
\begin{equation*} 
\Om^i_{{\cal P}/S}\otimes_{{\cal O}_{\cal P}}
{\cal O}_{{\cal P}^{\rm ex}}
=\Om^i_{{\cal P}^{\rm ex}/S}, \quad    
\Om^i_{{\cal R}/S}\otimes_{{\cal O}_{\cal P}}
{\cal O}_{{\cal P}^{\rm ex}}
=\Om^i_{{\cal R}^{\rm ex}/S} 
\quad (i\in {\mab N}).
\tag{6.1.1}\label{eqn:cext} 
\end{equation*} 
\par 
$(2)$ Assume that 
the two horizontal immersions in {\rm (\ref{eqn:pvq})}  
are exact.  
Set  
\begin{equation*} 
P^{{\cal P}/{\cal R}}_k\Om^i_{{\cal P}/S} = 
\begin{cases} 
0 & (k<0), \\
{\rm Im}(\Om^k_{{\cal P}/S}
{\otimes}_{{\cal O}_{\cal P}}
\Om^{i-k}_{{\cal R}/S} \lo 
\Om^i_{{\cal P}/S}) & (0\leq k\leq i), \\
\Om^i_{{\cal P}/S} & (k > i). 
\end{cases}
\tag{6.1.2}\label{eqn:ldfpw} 
\end{equation*} 
Then 
\begin{equation*} 
P^{{\cal P}^{\rm ex}/{\cal Q}^{\rm ex}}
=P^{{\cal P}/{\cal R}}\otimes_{{\cal O}_{\cal P}}
{\cal O}_{{\cal P}^{\rm ex}}. 
\tag{6.1.3}\label{eqn:tenexsfil}
\end{equation*} 
\end{lemm}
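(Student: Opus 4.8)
The plan is to deduce both statements from the fact that the exactification morphisms are log \'{e}tale, together with the compatibility of log differential forms under log \'{e}tale morphisms and under flat base change.

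Since both assertions are Zariski-local on $\os{\circ}{\cal R}=\os{\circ}{\cal P}$, I would first pass to an open subset on which the immersions in {\rm (\ref{eqn:mdml})} admit charts $(P_1\lo Q_1)$ and $(P_2\lo Q_2)$ as in the construction of $(\quad)^{\rm ex}$ recalled after {\rm (\ref{prop:exad})}. With such charts, ${\cal P}^{\rm ex}$ is the formal completion along $X$ of the base change of ${\cal P}$ by $\ul{\rm Spf}_S({\cal O}_S\{P^{\rm ex}_1\oplus P^{\rm ex}_2\})\lo \ul{\rm Spf}_S({\cal O}_S\{P_1\oplus P_2\})$, while ${\cal Q}^{\rm ex}$, defined in {\rm (\ref{eqn:mpn})} and {\rm (\ref{eqn:pevq})} as $(\os{\circ}{\cal P}{}^{\rm ex},{\cal N}^{\rm ex})$ with ${\cal N}^{\rm ex}$ pulled back from ${\cal R}^{\rm ex}$, maps to ${\cal Q}$ ($={\cal R}$, since $\os{\circ}{\cal P}=\os{\circ}{\cal R}$ and the log structure of ${\cal Q}$ is that of ${\cal R}$) through ${\cal Q}^{\rm ex}\lo {\cal R}^{\rm ex}\lo {\cal R}$. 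Now $P^{\rm ex}_i$ is by definition the inverse image of $Q_i$ in $P^{\rm gp}_i$, so $(P^{\rm ex}_i)^{\rm gp}=P^{\rm gp}_i$; hence each of the monoid inclusions occurring here induces an isomorphism on associated groups, so the corresponding morphisms of log formal schemes, and after formal completion along a closed formal subscheme the morphisms ${\cal P}^{\rm ex}\lo {\cal P}$, ${\cal R}^{\rm ex}\lo {\cal R}$ and ${\cal Q}^{\rm ex}\lo {\cal R}^{\rm ex}$, are log \'{e}tale. From the first fundamental exact sequence of log differential forms and log \'{e}taleness we get $\Om^1_{{\cal P}/S}\otimes_{{\cal O}_{\cal P}}{\cal O}_{{\cal P}^{\rm ex}}\os{\sim}{\lo}\Om^1_{{\cal P}^{\rm ex}/S}$, and similarly $\Om^1_{{\cal R}/S}\otimes_{{\cal O}_{\cal R}}{\cal O}_{{\cal R}^{\rm ex}}\os{\sim}{\lo}\Om^1_{{\cal R}^{\rm ex}/S}$ and $\Om^1_{{\cal R}^{\rm ex}/S}\otimes_{{\cal O}_{{\cal R}^{\rm ex}}}{\cal O}_{{\cal Q}^{\rm ex}}\os{\sim}{\lo}\Om^1_{{\cal Q}^{\rm ex}/S}$. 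As ${\cal P}$ and ${\cal R}$ are formally log smooth over $S$, all these modules of log differentials are locally free of finite rank, so the $i$-th exterior power commutes with the displayed pull-backs; composing them (and using $\os{\circ}{\cal Q}{}^{\rm ex}=\os{\circ}{\cal P}{}^{\rm ex}$) yields {\rm (\ref{eqn:cext})}.

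For (2), the two immersions in {\rm (\ref{eqn:pvq})} being exact, their exactifications are simply the formal completions along $X$: one has ${\cal P}^{\rm ex}=\wh{\cal P}$, ${\cal R}^{\rm ex}=\wh{\cal R}$, and, since $\os{\circ}{\cal P}=\os{\circ}{\cal R}$, also ${\cal Q}^{\rm ex}={\cal R}^{\rm ex}$ and ${\cal Q}={\cal R}$. In particular ${\cal O}_{\cal P}\lo {\cal O}_{{\cal P}^{\rm ex}}$ is the completion of a noetherian ring along the ideal of $X$, hence flat. By part (1), the canonical morphism $\Om^k_{{\cal P}^{\rm ex}/S}\otimes_{{\cal O}_{{\cal P}^{\rm ex}}}\Om^{i-k}_{{\cal Q}^{\rm ex}/S}\lo \Om^i_{{\cal P}^{\rm ex}/S}$ is the base change along ${\cal O}_{\cal P}\lo {\cal O}_{{\cal P}^{\rm ex}}$ of $\Om^k_{{\cal P}/S}\otimes_{{\cal O}_{\cal P}}\Om^{i-k}_{{\cal R}/S}\lo \Om^i_{{\cal P}/S}$, whose image is $P^{{\cal P}/{\cal R}}_k\Om^i_{{\cal P}/S}$. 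Since the image of a morphism of modules is preserved by flat base change, taking images gives $P^{{\cal P}^{\rm ex}/{\cal Q}^{\rm ex}}_k\Om^i_{{\cal P}^{\rm ex}/S}=P^{{\cal P}/{\cal R}}_k\Om^i_{{\cal P}/S}\otimes_{{\cal O}_{\cal P}}{\cal O}_{{\cal P}^{\rm ex}}$ for $0\leq k\leq i$; the cases $k<0$ and $k>i$ are immediate from {\rm (\ref{eqn:cext})}. This proves {\rm (\ref{eqn:tenexsfil})}.

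The step I expect to be the main obstacle is the identification of ${\cal Q}$, ${\cal Q}^{\rm ex}$ and the verification that ${\cal Q}^{\rm ex}\lo {\cal Q}$ is log \'{e}tale: unlike ${\cal P}^{\rm ex}$, the log formal scheme ${\cal Q}^{\rm ex}$ is constructed in {\rm (\ref{eqn:pevq})} with log structure pulled back from ${\cal R}^{\rm ex}$ rather than as an exactification of an immersion into ${\cal Q}$, so one has to unwind {\rm (\ref{eqn:mpn})}, {\rm (\ref{eqn:pevq})} and the construction of $(\quad)^{\rm ex}$ to reduce the log \'{e}taleness to the monoid equality $(P^{\rm ex}_i)^{\rm gp}=P^{\rm gp}_i$. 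Once this, and the flatness of formal completion of a noetherian formal scheme along a closed formal subscheme, are in hand, the remaining steps (exterior powers of locally free modules, stability of images under flat base change) are routine.
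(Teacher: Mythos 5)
Your proof is correct and follows essentially the same route as the paper: part (1) via log \'{e}taleness of the exactification (the paper works at the level of ${\cal P}^{\rm loc,ex}\lo {\cal P}$ and then handles the formal completion by reducing to the explicit local model $({\cal X},{\cal D}\cup{\cal Z})\wh{\times}_S\wh{\mab A}{}^c$, whereas you assert the compatibility of the formal differentials with completion directly), and part (2) via exactness of $\otimes_{{\cal O}_{\cal P}}{\cal O}_{{\cal P}^{\rm ex}}$ and preservation of images under flat base change. The only caveat is terminological: the completion morphism ${\cal P}^{\rm ex}\lo {\cal P}^{\rm loc,ex}$ is not literally log \'{e}tale (it is not of finite type), so the completion step should be justified by coherence of $\Om^1$ and the standard identification of the completion of a coherent sheaf with its tensor product by the completed structure sheaf, which is what the paper's local model makes explicit.
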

\begin{proof}
(1): To prove the first formula in (\ref{eqn:cext}), 
we have only to prove 
that $\Om^1_{{\cal P}/S}\otimes_{{\cal O}_{\cal P}}
{\cal O}_{{\cal P}^{\rm ex}}
=\Om^1_{{\cal P}^{\rm ex}/S}$. 
Because we have a natural morphism 
$\Om^1_{{\cal P}/S}\otimes_{{\cal O}_{\cal P}}
{\cal O}_{{\cal P}^{\rm ex}}
\lo \Om^1_{{\cal P}^{\rm ex}/S}$,  
the question is local. 
We may assume that the immersions 
$(X,D)\os{\sus}{\lo} `{\cal R}$ 
and 
$(X,Z)\os{\sus}{\lo} {\cal R}$ 
have charts $P_1\lo Q_1$ and $P_2\lo Q_2$, respectively.  
Let the notations be after (\ref{eqn:pevq}).  
Because the natural morphism 
${\cal P}^{\rm loc,ex}\lo {\cal P}$ is formally log \'{e}tale, 
we have the following by \cite[(3.12)]{klog1}: 
\begin{equation*} 
\Om^i_{{\cal P}^{\rm loc,ex}/S}=
\Om^i_{{\cal P}/S}\otimes_{{\cal O}_{\cal P}}
{\cal O}_{{\cal P}^{\rm loc,ex}}.
\tag{6.1.4}\label{eqn:led}
\end{equation*}  
Furthermore, we may assume that 
there exists a lift $({\cal X},{\cal D}\cup {\cal Z})$ of 
$(X,D\cup Z)$ over $S$ such that 
the exact closed immersion 
$(X,D\cup Z)\os{\sus}{\lo}{\cal P}^{\rm loc,ex}$ 
factors through 
$(X,D\cup Z) \os{\sus}{\lo} ({\cal X},{\cal D}\cup {\cal Z})$. 
By the proof of \cite[(2.1.4)]{nh2}, 
we may assume that 
${\cal P}^{\rm loc,ex}=({\cal X},{\cal D}\cup {\cal Z})
\wh{\times}_S\wh{{\mab A}}{}^c$ for some $c\in {\mab Z}_{\geq 1}$ 
\'{e}tale locally.
In this case we obtain the following equality:  
$\Om^i_{{\cal P}^{\rm loc,ex}/S}
\otimes_{{\cal O}_{{\cal P}^{\rm loc,ex}}}
{\cal O}_{{\cal P}^{\rm ex}}
=\Om^i_{{\cal P}^{\rm ex}/S}$. 
Hence we obtain the first formula in 
(\ref{eqn:cext}). 
Similarly we obtain the second formula 
in (\ref{eqn:cext}) by the argument above. 
\par 
(2): 
Because we have a natural morphism 
$P^{{\cal P}/{\cal R}}
\otimes_{{\cal O}_{\cal P}}{\cal O}_{{\cal P}^{\rm ex}}
\lo P^{{\cal P}^{\rm ex}/{\cal Q}^{\rm ex}}$, 
the question is local. 
By the assumption, ${\cal P}^{\rm ex}$ is 
the formal completion of 
${\cal P}$ 
along $X$ with the inverse image of 
the log structure of ${\cal P}$.  
By (1) we see that  
$\Om^i_{{\cal P}/S}\otimes_{{\cal O}_{\cal P}}
{\cal O}_{{\cal P}^{\rm ex}}
=\Om^i_{{\cal P}^{\rm ex}/S}$ $(i\in {\mab N})$
and 
$\Om^j_{{\cal R}/S}\otimes_{{\cal O}_{\cal P}}
{\cal O}_{{\cal P}^{\rm ex}}
=\Om^j_{{\cal Q}^{\rm ex}/S}$ $(j\in {\mab N})$. 
Since ${\cal P}^{\rm ex}$ is the formal completion of 
${\cal P}$ along $X$, 
$\otimes_{{\cal O}_{\cal P}}
{\cal O}_{{\cal P}^{\rm ex}}$ is an exact functor.   
Hence, for $0\leq k\leq i$, we obtain the following equality:  
\begin{equation*}
{\rm Im}(\Om^k_{{\cal P}/S}
{\otimes}_{{\cal O}_{\cal P}}
\Om^{i-k}_{{\cal R}/S} \lo 
\Om^i_{{\cal P}/S})\otimes_{{\cal O}_{\cal P}}
{\cal O}_{{\cal P}^{\rm ex}}
={\rm Im}(\Om^k_{{\cal P}^{\rm ex}/S}
{\otimes}_{{\cal O}_{{\cal P}^{\rm ex}}}
\Om^{i-k}_{{\cal Q}^{\rm ex}/S} \lo 
\Om^i_{{\cal P}^{\rm ex}/S}).  
\tag{6.1.5}\label{eqn:pqop} 
\end{equation*} 
This is nothing but the equality (\ref{eqn:tenexsfil}). 
\end{proof}

\begin{rema}\label{rema:unusefil} 
Consider the case where the horizontal immersions in 
(\ref{eqn:pvq}) are not exact. Let  
$P^{{\cal P}/{\cal R}}$ be the filtration  
on $\Om^i_{{\cal P}/S}$ defined by 
the formula (\ref{eqn:ldfpw}). 
It is natural to ask whether 
the following equalities 
\begin{equation*} 
P^{{\cal P}^{\rm ex}/{\cal Q}^{\rm ex}}
=P^{{\cal P}/{\cal R}}\otimes_{{\cal O}_{\cal P}}
{\cal O}_{{\cal P}^{\rm ex}}, \quad 
P^{{\cal P}^{\rm ex}/{\cal R}^{\rm ex}}
=P^{{\cal P}/{\cal R}}\otimes_{{\cal O}_{\cal P}}
{\cal O}_{{\cal P}^{\rm ex}}
\tag{6.2.1}\label{eqn:tensfil}
\end{equation*} 
hold. 
Here $P^{{\cal P}^{\rm ex}/{\cal R}^{\rm ex}}$ is the 
obvious analogue of $P^{{\cal P}^{\rm ex}/{\cal Q}^{\rm ex}}$. 
However neither of these does not hold in general. 
Indeed, consider the case 
$X = {\rm Spec}(\kap[t])$, $Z = \emptyset$, $D=\{t=0\}$, 
${\cal P} = ({\rm Spf}({\cal V}\{x,y\}), \{x=0\} \cup \{y=0\})$ 
and consider the immersion 
$(X,D) \os{\sus}{\lo} {\cal P}$ 
defined by $x \mapsto t, y \mapsto t$ 
and set
${\cal R}=(\overset{\circ}{{\cal P}},{\cal O}_{\cal P}^*)$. 
Then ${\cal P}^{\rm ex}=
({\rm Spf}({\cal V}\{x\}[[u-1]]),\{x=0\})$ 
and ${\cal Q}={\cal Q}^{\rm ex}=
(\overset{\circ}{{\cal P}^{\rm ex}},
{\cal O}_{{\cal P}^{\rm ex}}^*)$. 
By the definition of 
$P^{{\cal P}/{\cal R}}$, we see that 
$$P^{{\cal P}/{\cal R}}_1\Om^2_{{\cal P}/S} = 
{\rm Im}(\Om^1_{{\cal P}/S}
{\otimes}_{{\cal O}_{\cal P}}
\Om^{1}_{{\cal R}/S} \lo \Om^2_{{\cal P}/S})$$ 
is generated by 
$x d\log x \wedge d\log y$ and $y d\log x \wedge d\log y$. 
Hence the image of the following natural morphism 
\begin{equation*}
P^{{\cal P}/{\cal R}}_1
\Om^2_{{\cal P}/S} \otimes_{{\cal O}_{{\cal P}}} 
{\cal O}_{{\cal P}^{\rm ex}} \lo \Om^2_{{\cal P}^{\rm ex}/S}
\label{eqn:q1_1}
\end{equation*}
is generated by $xd\log x\we d\log u$. 
On the other hand, 
set ${\cal U}={\cal Q}$ or ${\cal R}$. 
Then 
\begin{equation*}
P^{{\cal P}^{\rm ex}/{\cal U}^{\rm ex}}_1
\Om^2_{{\cal P}^{\rm ex}/S} = 
{\rm Im}(\Om^1_{{\cal P}^{\rm ex}/S}
{\otimes}_{{\cal O}_{{\cal P}^{\rm ex}}}
\Om^1_{{\cal U}^{\rm ex}/S} \lo 
\Om^2_{{\cal P}^{\rm ex}/S})=
\Om^2_{{\cal P}^{\rm ex}/S}. 
\end{equation*}
Since $\langle xd\log x\wedge d\log u\rangle 
\not=\Om^2_{{\cal P}^{\rm ex}/S}$, 
neither of the formulas in (\ref{eqn:tensfil}) 
does not hold. 
\par 
In this way, we think that 
the filtration 
$P^{{\cal P}/{\cal R}}$
is not good when 
$(X,D) \os{\sus}{\lo} (\overset{\circ}{{\cal P}},{\cal M})$ 
is not exact 
and we do not consider 
$P^{{\cal P}/{\cal R}}$ any more in this paper.  
\end{rema}

\par 
For a finitely generated monoid $Q$ 
and a set $\{q_1,\ldots,q_n\}$ 
$(n\in {\mab Z}_{\geq 1})$ of generators of $Q$, 
we say that $\{q_1,\ldots,q_n\}$ is {\it minimal} 
if there does not exist a set of generators of $Q$ whose cardinality is
strictly less than $n$. 
Let $Y=(\os{\circ}{Y},M_Y)$ be 
an fs(=fine and saturated) log (formal) scheme. 
Let $y$ be a point of $\os{\circ}{Y}$. 
Let $m_{1,y},\ldots, m_{r,y}$ be local sections of $M_Y$ around $y$ 
whose images in $M_{Y,y}/{\cal O}_{Y,y}^*$ 
form a minimal set of generators of $M_{Y,y}/{\cal O}_{Y,y}^*$. 
Let $D(M_Y)_i$ $(1\leq i \leq r)$ 
be the local closed subscheme of 
$\os{\circ}{Y}$ defined by the ideal sheaf generated by 
the image of $m_{i,y}$ in ${\cal O}_Y$. 
For a positive integer $k$, 
let $D^{(k)}(M_Y)$ be the disjoint union of 
the $k$-fold intersections of different $D(M_Y)_i$'s. 
Assume that 
\begin{equation*} 
M_{Y,y}/{\cal O}^*_{Y,y}\simeq {\mab N}^r
\tag{6.2.2}\label{eqn:epynr}
\end{equation*}
for any point $y$ of $\os{\circ}{Y}$ 
and for some $r\in {\mab N}$ depending on $y$. 
It is easy to show that 
${\rm Aut}({\mab N}^r)\simeq {\mathfrak S}_r$ 
(see \cite[\S4]{nh3} for the proof of this).  
Consequently 
${\rm Aut}(M_{Y,y}/{\cal O}^*_{Y,y})\simeq {\mathfrak S}_r$.  
Hence the scheme $D^{(k)}(M_Y)$ is 
independent of the choice of $m_{1,y},\ldots, m_{r,y}$ 
and it is globalized. 
We denote this globalized scheme 
by the same symbol $D^{(k)}(M_Y)$. 
Set $D^{(0)}(M_Y):=\os{\circ}{Y}$. 
Let $c^{(k)} \col D^{(k)}(M_Y) \lo \os{\circ}{Y}$ 
be the natural morphism. 
\par 
As in \cite[(3.1.4)]{dh2} and \cite[(2.2.18)]{nh2}, 
we have an orientation sheaf $\vp_{\rm zar}^{(k)}(D(M_Y))$ 
$(k\in {\mab N})$ associated to the set of $D(M_Y)_i$'s. 
We have the following equality 
\begin{equation*} 
c^{(k)}_*\vp_{\rm zar}^{(k)}(D(M_Y)) 
=\bigwedge^k(M_Y^{\rm gp}/{\cal O}_Y^*) 
\tag{6.2.3}\label{eqn:bkezps}
\end{equation*} 
as sheaves of abelian groups on $\os{\circ}{Y}$.   

\par 
The following has been proved in \cite{nh3}: 

\begin{prop}[{\bf \cite[(4.3)]{nh3}}]\label{prop:mmoo} 
Let $g\col Y \lo Y'$ be a morphism of fs log schemes 
satisfying the condition {\rm (\ref{eqn:epynr})}. 
Assume that, for each point $y\in \os{\circ}{Y}$ 
and for each member $m$ of the minimal generators 
of $M_{Y,y}/{\cal O}^*_{Y,y}$, there exists 
a unique member $m'$ of the minimal generators of 
$M_{Y',\os{\circ}{g}(y)}/{\cal O}^*_{Y',\os{\circ}{g}(y)}$ 
such that $g^*(m')\in m^{{\mab Z}_{>0}}$. 
Then there exists a canonical morphism 
$g^{(k)}\col D^{(k)}(M_Y)\lo D^{(k)}(M_{Y'})$ fitting 
into the following commutative diagram of schemes$:$
\begin{equation*} 
\begin{CD} 
D^{(k)}(M_Y) @>{g^{(k)}}>> D^{(k)}(M_{Y'}) \\ 
@VVV @VVV \\ 
\os{\circ}{Y} @>{\os{\circ}{g}}>> \os{\circ}{Y}{}'. 
\end{CD} 
\tag{6.3.1}\label{cd:dmgdm}
\end{equation*} 
\end{prop}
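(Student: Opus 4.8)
The plan is to prove the statement locally on $\os{\circ}{Y}$ and then patch. Fix a point $y\in \os{\circ}{Y}$, set $y':=\os{\circ}{g}(y)$, and choose local sections $m_1,\ldots,m_r$ of $M_Y$ around $y$ (resp.\ $m'_1,\ldots,m'_s$ of $M_{Y'}$ around $y'$) whose images in $M_{Y,y}/{\cal O}^*_{Y,y}\simeq {\mab N}^r$ (resp.\ $M_{Y',y'}/{\cal O}^*_{Y',y'}\simeq {\mab N}^s$) are the minimal systems of generators, which are unique up to order since ${\rm Aut}({\mab N}^r)\simeq {\maf S}_r$. By the hypothesis, after shrinking $\os{\circ}{Y}$ I obtain a map $\phi\colon\{1,\ldots,r\}\lo\{1,\ldots,s\}$ and integers $\ell_i\in {\mab Z}_{\geq 1}$ with $g^*(m'_{\phi(i)})=v_i\,m_i^{\ell_i}$ in $M_{Y,y}$ for suitable units $v_i\in {\cal O}^*_{Y,y}$. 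The first step is to note that $\phi$ is injective: if $\phi(i)=\phi(j)$, then $m_i^{\ell_i}=m_j^{\ell_j}$ in the sharp monoid ${\mab N}^r$ whose minimal generators are the standard basis vectors, which forces $i=j$.

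Next I would compare defining ideals. Write $\alpha\colon M_Y\lo {\cal O}_Y$, $\alpha'\colon M_{Y'}\lo {\cal O}_{Y'}$ for the structure morphisms, so that, locally, $D(M_Y)_i$ is cut out by $(\alpha(m_i))$ and $D(M_{Y'})_{\phi(i)}$ by $(\alpha'(m'_{\phi(i)}))$. The defining ideal of $\os{\circ}{g}{}^{-1}(D(M_{Y'})_{\phi(i)})$ in ${\cal O}_Y$ is generated by the image of $\alpha'(m'_{\phi(i)})$ which, by compatibility of the log structure pull-back with the structure maps, equals $\alpha(g^*(m'_{\phi(i)}))=\alpha(v_i)\alpha(m_i)^{\ell_i}$, an associate of $\alpha(m_i)^{\ell_i}$. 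Since $(\alpha(m_i)^{\ell_i})\sus(\alpha(m_i))$, the closed subscheme $D(M_Y)_i$ is contained in $\os{\circ}{g}{}^{-1}(D(M_{Y'})_{\phi(i)})$, so $\os{\circ}{g}$ restricted to $D(M_Y)_i$ factors through $D(M_{Y'})_{\phi(i)}$. Intersecting over a $k$-element subset $J\sus\{1,\ldots,r\}$, and using the injectivity of $\phi$ so that $\phi(J)$ again has $k$ elements, I get $\bigcap_{i\in J}D(M_Y)_i\lo\bigcap_{i\in J}D(M_{Y'})_{\phi(i)}$ over $\os{\circ}{g}$; this is a morphism from the $J$-component of $D^{(k)}(M_Y)$ to the $\phi(J)$-component of $D^{(k)}(M_{Y'})$.

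Finally I would globalize. The crucial point is that the generator $m'$ attached to $m$ is \emph{unique}, so the assignment $m\mapsto m'$ is intrinsic; changing the local chart of $M_Y$ (resp.\ $M_{Y'}$) replaces the minimal generators only by an element of ${\rm Aut}({\mab N}^r)\simeq {\maf S}_r$ (resp.\ ${\maf S}_s$) — exactly the datum used to glue $D^{(k)}(M_Y)$ and $D^{(k)}(M_{Y'})$ themselves — and $\phi$ transforms equivariantly for these actions. Hence the locally defined morphisms agree on overlaps and patch to a morphism $g^{(k)}\colon D^{(k)}(M_Y)\lo D^{(k)}(M_{Y'})$ fitting into the commutative square {\rm (\ref{cd:dmgdm})}; for $k=0$ it is simply $\os{\circ}{g}$, and for $k=1$ it recovers the morphism $\bigoplus_i D(M_Y)_i \to \bigoplus_j D(M_{Y'})_j$ induced by $\phi$. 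The step I expect to require the most care is precisely this patching: one must verify that the ${\maf S}_r$-equivariance of $\phi$ (guaranteed by the uniqueness in the hypothesis) is compatible with the identifications entering the global definition of the $D^{(k)}$'s, so that the local data descend — this is the content of \cite[(4.3)]{nh3}.
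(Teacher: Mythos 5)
Your argument is correct: the uniqueness hypothesis gives an injective assignment $\phi$ on minimal generators, the relation $g^*(m'_{\phi(i)})=v_i\,m_i^{\ell_i}$ yields the containment of defining ideals and hence the factorization on each $k$-fold intersection, and the uniqueness again makes the construction chart-independent so the local morphisms glue. The present paper gives no proof of this statement (it is quoted from \cite[(4.3)]{nh3}), and your argument is essentially the one carried out there, so there is nothing further to compare.
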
 

\par 
When $Y=(X,D)$, we denote $D^{(k)}(M_Y)$ by $D^{(k)}$ and 
$\vp_{\rm zar}^{(k)}(D(M_Y))$ by 
$\vp^{(k)}_{\rm zar}(D/S_1)$. 
The sheaf $\vp^{(k)}_{\rm zar}(D/S_1)$ is 
extended to an abelian sheaf  
$\vp^{(k)}_{{\rm conv}}(D/S;Z)$ 
in the log convergent topos 
$((D^{(k)},Z\vert_{D^{(k)})/S})_{\rm conv}$ 
since, for an object
$(U, T, \iota,u) 
\in {\rm Conv}({(D^{(k)},Z\vert_{D^{(k)}})/S})$, 
we have the pull-back 
$u^{-1}(\vp^{(k)}_{\rm zar}(D/S_1))$
of the sheaf 
$\vp^{(k)}_{\rm zar}(D/S_1)$ to $\os{\circ}{U}$ 
by $u$ 
and since the closed immersion 
$\iota \col \os{\circ}{U} \os{\subset}{\lo} \os{\circ}{T}$ 
is a homeomorphism of topological spaces. 
If $Z=\emptyset$,  then we denote 
$\vp^{(k)}_{{\rm conv}}(D/S;Z)$  
by 
$\vp^{(k)}_{\rm conv}(D/S)$. 

\begin{defi}\label{defi:wct}
We call 
$$\vp^{(k)}_{\rm zar}(D/S_1) 
\text{ (resp. }\vp^{(k)}_{\rm conv}(D/S), \text{ } 
\vp^{(k)}_{{\rm conv}}(D/S;Z))$$ 
the {\it zariskian orientation sheaf} 
(resp.~the {\it convergent orientation sheaf}, 
the {\it log convergent orientation sheaf}) of 
$D^{(k)}/S_1$ (resp.~$D^{(k)}/S$, 
$(D^{(k)},Z\vert_{D^{(k)}})/S$).
\end{defi}


\par 
Let $M_1$ and $M_2$ be two fine log structures 
on $\os{\circ}{Y}$ such that 
$M_Y=M_1\oplus_{{\cal O}^*_Y}M_2$. 
Assume that the condition (\ref{eqn:epynr}) 
is satisfied for $M_1$. 
Let $m_{1,y},\ldots, m_{r,y}$ be 
local sections of $M_1$ around $y$ 
whose images in $M_{1,y}/{\cal O}_{Y,y}^*$ 
form a minimal set of generators of $M_{1,y}/{\cal O}_{Y,y}^*$. 
Set $Y_i:=(\os{\circ}{Y},M_i)$ $(i=1,2)$. 
Endow $D^{(k)}(M_1)$ with the inverse image of 
the log structure of $M_2$ and denote 
the resulting log scheme by $(D^{(k)}(M_1),M_2)$ 
by abuse of notation. 
Let $b^{(k)} \col (D^{(k)}(M_1),M_2) \lo (\os{\circ}{Y},M_2)$ 
be the natural morphism. 
By abuse of notation, we denote by the same symbol 
$b^{(k)}$ the underlying morphism 
$D^{(k)}(M_1) \lo \os{\circ}{Y}$. 
\par 
Now we come back to the situation 
(\ref{eqn:mdml}) (and (\ref{eqn:pvq})).  

\begin{lemm}\label{lemm:peme}
The log scheme 
$(\os{\circ}{\cal P}{}^{\rm ex},{\cal M}^{\rm ex})$ 
satisfies the condition {\rm (\ref{eqn:epynr})}. 
In particular $D^{(k)}({\cal M}^{\rm ex})$ 
$(k\in {\mab N})$ 
is well-defined. 
\end{lemm}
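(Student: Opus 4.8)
The plan is to verify the condition (\ref{eqn:epynr}) for $(\os{\circ}{\cal P}{}^{\rm ex},{\cal M}^{\rm ex})$ stalk by stalk, reducing it to the local structure of the log structure $M(D)$ of the relative SNCD $D$ on $X/S_1$ recalled in the Introduction. First I would pin down the underlying spaces and the comparison morphism. Since ${\cal P}^{\rm ex}$ (resp.\ $(`{\cal R})^{\rm ex}$) is the exactification of the immersion $(X,D\cup Z)\os{\sus}{\lo}{\cal P}$ (resp.\ $(X,D)\os{\sus}{\lo}`{\cal R}$), the induced immersion $X\os{\sus}{\lo}{\cal P}^{\rm ex}$ (resp.\ $X\os{\sus}{\lo}(`{\cal R})^{\rm ex}$) is homeomorphic and exact; by the construction of the exactification recalled after (\ref{prop:exad}) this yields a homeomorphism $\os{\circ}{X}\os{\sim}{\lo}\os{\circ}{\cal P}{}^{\rm ex}$ (resp.\ $\os{\circ}{X}\os{\sim}{\lo}\os{\circ}{(`{\cal R})^{\rm ex}}$) together with an identification $M_{(`{\cal R})^{\rm ex}}/{\cal O}^*_{(`{\cal R})^{\rm ex}}=M(D)/{\cal O}^*_X$ of sheaves of monoids on $\os{\circ}{X}$. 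Moreover the morphism $g\col{\cal P}^{\rm ex}\lo(`{\cal R})^{\rm ex}$ used to define ${\cal M}^{\rm ex}$ is, by the commutativity of (\ref{eqn:pevq}) and the universality in (\ref{prop:exad}), compatible with the immersions from $X$, so under the two identifications above its underlying morphism $\os{\circ}{g}$ becomes the identity of $\os{\circ}{X}$.

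Next I would apply the standard fact (cf.\ \cite{klog1}) that for a morphism $g$ of ringed topoi and a log structure $M$ on the target one has $(g^{*}M)/{\cal O}^{*}=g^{-1}(M/{\cal O}^{*})$, the units of $g^{*}M$ being exactly ${\cal O}^{*}$. Applying this to ${\cal M}^{\rm ex}=g^{*}(M_{(`{\cal R})^{\rm ex}})$ and combining with the previous paragraph, one obtains, for every point $x$ of $\os{\circ}{\cal P}{}^{\rm ex}$ (i.e.\ every point $x$ of $\os{\circ}{X}$),
\begin{equation*}
{\cal M}^{\rm ex}_{x}/{\cal O}^{*}_{{\cal P}^{\rm ex},x}
=M_{(`{\cal R})^{\rm ex},x}/{\cal O}^{*}_{(`{\cal R})^{\rm ex},x}
=M(D)_{x}/{\cal O}^{*}_{X,x}.
\end{equation*}
By the local description of $M(D)$ recalled in the Introduction, \'etale-locally $M(D)=({\cal O}^{*}_{X}x_{1}^{\mab N}\cdots x_{a}^{\mab N},*)$ for a relative SNCD cut out by $x_{1}\cdots x_{a}=0$, hence $M(D)_{x}/{\cal O}^{*}_{X,x}\simeq{\mab N}^{r}$ where $r$ is the number of branches of $D$ through $x$ (the variables $x_{i}$ not vanishing at $x$ being units at $x$). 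This is exactly the condition (\ref{eqn:epynr}) for $(\os{\circ}{\cal P}{}^{\rm ex},{\cal M}^{\rm ex})$. Finally, since $(`{\cal R})^{\rm ex}$ is an fs log formal scheme (the exactification of a fine immersion is fine, and a fine log structure $M$ with $M_{x}/{\cal O}^{*}_{x}\simeq{\mab N}^{r}$ at every point is automatically saturated, because the units form a group) and the pull-back of an fs log structure is fs, $(\os{\circ}{\cal P}{}^{\rm ex},{\cal M}^{\rm ex})$ is an fs log scheme; hence $D^{(k)}({\cal M}^{\rm ex})$ is well-defined as in the discussion following (\ref{eqn:epynr}).

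The only point requiring care — and hence the main, albeit minor, obstacle — is the bookkeeping in the first paragraph: one has to make sure that $\os{\circ}{g}$ genuinely induces the identity on the common underlying space $\os{\circ}{X}$, so that the stalk of $M_{(`{\cal R})^{\rm ex}}/{\cal O}^{*}$ computing ${\cal M}^{\rm ex}_{x}/{\cal O}^{*}_{{\cal P}^{\rm ex},x}$ is the one at the same point $x$. This follows from the universal property of the exactification (\ref{prop:exad}) together with the commutativity of (\ref{eqn:pevq}), but I would spell it out rather than leave it implicit; everything else in the argument is formal.
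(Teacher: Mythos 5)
Your proof is correct and follows essentially the same route as the paper: both reduce the claim to the equality of stalks ${\cal M}^{\rm ex}_x/{\cal O}^*_{{\cal P}^{\rm ex},x}=M(D)_x/{\cal O}^*_{X,x}$ (the paper via exactness of the closed immersion $(X,D)\os{\sus}{\lo}(\os{\circ}{\cal P}{}^{\rm ex},{\cal M}^{\rm ex})$, you via the pull-back along ${\cal P}^{\rm ex}\lo(`{\cal R})^{\rm ex}$ and exactness of $(X,D)\os{\sus}{\lo}(`{\cal R})^{\rm ex}$, which amounts to the same computation) and then invoke the local ${\mab N}^r$ description of the SNCD log structure. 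The only caveat is that $\os{\circ}{g}$ is the identity only on underlying topological spaces, not of formal schemes, but that is all your stalk computation needs.
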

\begin{proof} 
Identify the points of $X$ with the image of those 
in $\os{\circ}{\cal P}{}^{\rm ex}$. 
Because 
$(X,D)\os{\sus}{\lo} 
(\os{\circ}{\cal P}{}^{\rm ex},{\cal M}^{\rm ex})$ 
is exact and closed, 
we have an equality  
${\cal M}^{\rm ex}_x
/{\cal O}^*_{{\cal P}^{\rm ex},x}=
M(D)_x/{\cal O}^*_{D,x}$ 
for a point $x\in X$. Hence (\ref{lemm:peme}) is clear. 
\end{proof} 
By abuse of notation, denote 
by $(D^{(k)}({\cal M}^{\rm ex}),{\cal N}^{\rm ex})$ 
the log formal scheme whose underlying formal scheme 
is $D^{(k)}({\cal M}^{\rm ex})$ and 
whose log structure is the inverse image of ${\cal N}^{\rm ex}$ 
by the morphism 
$D^{(k)}({\cal M}^{\rm ex})\lo \os{\circ}{\cal P}{}^{\rm ex}$. 
\par
Set $Z\vert_{D^{(k)}}:=Z\times_XD^{(k)}$. 
The scheme 
$Z\vert_{D^{(k)}}$ is a relative SNCD on $D^{(k)}/S_1$. 
Let 
$a^{(k)} \col (D^{(k)}, Z\vert_{D^{(k)}}) 
\lo (X,Z)$ and 
$b^{(k)}: 
(D^{(k)}({\cal M}^{\rm ex}),{\cal N}^{\rm ex})
\lo {\cal Q}^{\rm ex}$ 
be the morphisms induced by the natural closed immersions. 
\par 
Let the middle objects in the following table 
be the prewidenings obtained by
the left immersions over $S$ and 
let the right objects be the system 
of the universal enlargements of 
these prewidenings:

\begin{equation*}
\begin{tabular}{|l|l|l|} \hline 
$(X,D\cup Z) \os{\subset}{\lo} {\cal P}$  
& $T$ & 
$\os{\to}{T}:=\{T_n\}_{n=1}^{\infty}$\\ \hline    
$(X,Z) \os{\subset}{\lo} {\cal Q}^{\rm ex}$ 
& $T_{{\cal Q}^{\rm ex}}$ & 
$\os{\to}{T}_{{\cal Q}^{\rm ex}}
:=\{T_{{\cal Q}^{\rm ex},n}\}_{n=1}^{\infty}$ \\ \hline 
$(D^{(k)}, Z\vert_{D^{(k)}}) \os{\subset}{\lo} 
(D^{(k)}({\cal M}^{\rm ex}), {\cal N}^{\rm ex})$ &
$T^{(k)}$ & 
$\os{\to}{T}{}^{(k)}:=
\{T^{(k)}_n\}_{n=1}^{\infty}$\\ \hline 
\end{tabular}
\end{equation*}
\bigskip
\parno
Let $g_n \col T_n \lo {\cal P}$, 
$g^{\rm ex}_n \col T_n \lo {\cal P}^{\rm ex}$,  
$g_{{\cal Q}^{\rm ex},n} \col T_{{\cal Q}^{\rm ex},n}\lo {\cal Q}^{\rm ex}$ 
and  
$g^{(k)}_n \col T^{(k)}_n \lo 
(D^{(k)}({\cal M}^{\rm ex}),{\cal N}^{\rm ex})$ 
$(n\in {\mab Z}_{\geq 1})$ 
be the natural morphisms.  
Let  $c^{(k)}_n \col T^{(k)}_n \lo T_{{\cal Q}^{\rm ex},n}$
be the morphism  induced by 
$b^{(k)} \col 
(D^{(k)}({\cal M}^{\rm ex}),{\cal N}^{\rm ex})
\lo {\cal Q}^{\rm ex}$.

\begin{lemm}[{\rm {\bf cf.~\cite[(2.2.16) (1)]{nh2}}}]
\label{lemm:dpinc}
The natural morphism 
$T^{(k)}_n \lo 
T_{{\cal Q}^{\rm ex},n}\times_{{\cal Q}^{\rm ex}}
(D^{(k)}({\cal M}^{\rm ex}),{\cal N}^{\rm ex})$ 
is an isomorphism.
\end{lemm}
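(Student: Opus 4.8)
The plan is to realize the quasi-prewidening $T^{(k)}$ as a base change of $T_{{\cal Q}^{\rm ex}}$ along $b^{(k)}$ and then apply the base-change theorem (\ref{lemm:bcue}). First I would note that $T^{(k)}$ can be viewed as a quasi-prewidening of $(X,Z)/S$: replacing its structural morphism ${\rm id}_{D^{(k)}}$ by the strict morphism $a^{(k)}\col(D^{(k)},Z\vert_{D^{(k)}})\lo(X,Z)$ leaves the system of universal enlargements unchanged, since ${\mathfrak T}_{U,n}(T)$ depends only on the immersion $U\os{\sus}{\lo}T$. With this convention $b^{(k)}$ induces a morphism $T^{(k)}\lo T_{{\cal Q}^{\rm ex}}$ of quasi-prewidenings of $(X,Z)/S$, and the essential point is that the square
\begin{equation*}
\begin{CD}
(D^{(k)},Z\vert_{D^{(k)}}) @>{\iota^{(k)}}>> (D^{(k)}({\cal M}^{\rm ex}),{\cal N}^{\rm ex}) \\
@V{a^{(k)}}VV @VV{b^{(k)}}V \\
(X,Z) @>{\iota_{{\cal Q}^{\rm ex}}}>> {\cal Q}^{\rm ex}
\end{CD}
\end{equation*}
is cartesian in the category of fine log formal schemes, i.e.~that $(D^{(k)},Z\vert_{D^{(k)}})=(X,Z)\times_{{\cal Q}^{\rm ex}}(D^{(k)}({\cal M}^{\rm ex}),{\cal N}^{\rm ex})$. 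On underlying schemes this holds because $D^{(k)}({\cal M}^{\rm ex})$ is cut out in $\os{\circ}{\cal P}{}^{\rm ex}$ by (local) log coordinates of ${\cal M}^{\rm ex}$ whose restrictions to $X\os{\sus}{\lo}\os{\circ}{\cal P}{}^{\rm ex}$ form minimal generators of $M(D)/{\cal O}^*_D$ (exactness of $(X,D)\os{\sus}{\lo}(`{\cal R})^{\rm ex}$; cf.~(\ref{lemm:peme})), so that $D^{(k)}({\cal M}^{\rm ex})\times_{\os{\circ}{\cal P}{}^{\rm ex}}\os{\circ}{X}=D^{(k)}(M(D))=D^{(k)}$; on log structures one uses that the restriction of ${\cal N}^{\rm ex}$ to $X$ is $M(Z)$ (exactness of $(X,Z)\os{\sus}{\lo}{\cal R}^{\rm ex}$) together with $M(Z)\vert_{D^{(k)}}=M(Z\vert_{D^{(k)}})$, which holds since $Z\vert_{D^{(k)}}=Z\times_XD^{(k)}$.

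Granting the cartesianness, (\ref{lemm:bcue}) gives a canonical isomorphism
\begin{equation*}
T^{(k)}_n\os{\sim}{\lo}T_{{\cal Q}^{\rm ex},n}\times_{\wt{{\cal Q}^{\rm ex}}}\wt{(D^{(k)}({\cal M}^{\rm ex}),{\cal N}^{\rm ex})}\qquad(n\in{\mab Z}_{\geq 1}),
\end{equation*}
and one checks that it is the natural morphism of the statement, its compositions with the two projections being $c^{(k)}_n$ and $g^{(k)}_n$ respectively. It then remains to drop the tildes, i.e.~to verify that $\os{\circ}{\cal Q}{}^{\rm ex}=\os{\circ}{\cal P}{}^{\rm ex}$ and $D^{(k)}({\cal M}^{\rm ex})$ are flat over ${\rm Spf}({\cal V})$; for both this follows from the local structure used in the proof of (\ref{lemm:exic})(1), namely that \'{e}tale locally ${\cal P}^{\rm ex}\cong(\wt{\cal X},\wt{\cal D}\cup\wt{\cal Z})\wh{\times}_S\wh{\mab A}{}^c$ for a log smooth lift of $(X,D\cup Z)$ over $S$, so that $\os{\circ}{\cal P}{}^{\rm ex}$ is smooth over $S$ and $D^{(k)}({\cal M}^{\rm ex})$ is \'{e}tale locally a smooth lift of $D^{(k)}$ over $S$ times $\wh{\mab A}{}^c$. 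Substituting $\wt{{\cal Q}^{\rm ex}}={\cal Q}^{\rm ex}$ and $\wt{(D^{(k)}({\cal M}^{\rm ex}),{\cal N}^{\rm ex})}=(D^{(k)}({\cal M}^{\rm ex}),{\cal N}^{\rm ex})$ into the isomorphism above yields the lemma.

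I expect the main obstacle to be the cartesianness of the displayed square — keeping careful track of how the divisorial loci $D^{(k)}(-)$ and both log structures ${\cal M}^{\rm ex}$, ${\cal N}^{\rm ex}$ restrict from ${\cal Q}^{\rm ex}$ down to $X$, so that the base change of $T_{{\cal Q}^{\rm ex}}$ along $b^{(k)}$ is precisely $T^{(k)}$. Once that is in hand, the rest is a direct application of (\ref{lemm:bcue}) followed by routine flatness bookkeeping.
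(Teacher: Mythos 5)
Your argument is correct, but it follows a genuinely different route from the paper's. The paper proves the lemma by a direct local computation: it reduces to the case where the immersion $(X,D\cup Z)\os{\sus}{\lo}{\cal P}$ is exact and admits coordinates as in (\ref{cd:dxs}) and (\ref{cd:xxd}), and then compares the explicit presentation (\ref{eqn:ldra}) of ${\cal O}_{T_{{\cal Q}^{\rm ex},n}}\otimes_{{\cal O}_{\cal X}}{\cal O}_{{\cal D}_{1\cdots r}}$ with that of the universal enlargement of $D_{1\cdots r}\os{\sus}{\lo}{\cal D}_{1\cdots r}$, component by component. You instead recognize the statement as an instance of the base-change lemma (\ref{lemm:bcue}) for quasi-prewidenings of $(X,Z)/S$ --- precisely the mechanism the paper itself uses to obtain (\ref{eqn:tfib}) and (\ref{eqn:tzyb}) in \S\ref{sec:llf} --- and reduce everything to two checks: the cartesianness of your square, which does follow from the exactness of $(X,D)\os{\sus}{\lo}(\os{\circ}{\cal P}{}^{\rm ex},{\cal M}^{\rm ex})$ and of $(X,Z)\os{\sus}{\lo}{\cal Q}^{\rm ex}$ together with the transversality of $Z$ and $D$ (needed both for $M(Z)\vert_{D^{(k)}}=M(Z\vert_{D^{(k)}})$ and for the strictness of $a^{(k)}$, without which $T^{(k)}$ is not a quasi-prewidening of $(X,Z)/S$); and the $\pi$-torsion-freeness of $\os{\circ}{\cal Q}{}^{\rm ex}$ and $D^{(k)}({\cal M}^{\rm ex})$ needed to drop the tildes, since $b^{(k)}$ is not flat and only the first conclusion of (\ref{lemm:bcue}) applies. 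Your route is cleaner and avoids repeating the coordinate computation, at the cost of pushing the real work into the hypotheses of (\ref{lemm:bcue}) and into the same local structure theory (\cite[(2.1.5)]{nh2}, via the proof of (\ref{lemm:exic})) that the paper invokes to justify its coordinates; both arguments ultimately rest on the same local model of ${\cal P}^{\rm ex}$, and you should make sure the natural morphism produced by (\ref{lemm:bcue}) is identified with the one in the statement, i.e.\ that its two projections are $c^{(k)}_n$ and $g^{(k)}_n$, as you indicate.
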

\begin{proof}
By the universality of the universal enlargement, 
this is a local question. 
Let $x$ be a point of $X$. 
By taking a local exactification 
$(X,D\cup Z)\os{\sus}{\lo}{\cal P}^{\rm loc,ex}$
of the immersion $(X,D\cup Z)\os{\sus}{\lo}{\cal P}$, 
we may assume that the immersion 
$(X,D\cup Z)\os{\sus}{\lo}{\cal P}$ is exact. 
Hence, by \cite[(2.1.5)]{nh2},  
we may assume that 
there exist a formally smooth formal scheme ${\cal X}$ 
and transversal relative SNCD's 
${\cal D}$ and ${\cal Z}$ on ${\cal X}$ such that 
${\cal P}=({\cal X},{\cal D}\cup {\cal Z})$ 
and such that there exist exact immersions 
$(X,D)\os{\sus}{\lo} ({\cal X},{\cal D})$  
and $(X,Z)\os{\sus}{\lo} ({\cal X},{\cal Z})$ over $S$. 
We may assume that ${\cal X}$ is affine 
(say ${\cal X}={\rm Spf}(A)$) and 
may have the following cartesian diagrams 
\begin{equation*}
\begin{CD}
{\cal D}\cup {\cal Z}@>{\subset}>> {\cal X}\\ 
@V{}VV  @VV{g}V \\
\ul{\rm Spf}_S({\cal O}_S\{x_1, \ldots, x_{d'}\}
/(x_1\cdots x_s)) 
@>{\subset}>> \ul{\rm Spf}_S
({\cal O}_S\{x_1, \ldots, x_{d'}\}),
\end{CD}
\tag{6.6.1}\label{cd:dxs}
\end{equation*}
\begin{equation*}
\begin{CD}
X @>{\subset}>> {\cal X}\\ 
@V{}VV  @VV{g}V \\
\ul{\rm Spf}_{S_1}
({\cal O}_{S_1}\{x_1, \ldots, x_{d'}\}
/(x_{d+1}, \ldots, x_{d'}))
@>{\subset}>> \ul{\rm Spf}_S
({\cal O}_S\{x_1, \ldots, x_{d'}\})
\end{CD}
\tag{6.6.2}\label{cd:xxd}
\end{equation*}
for some $s\leq d'$, where $g$ is an \'{e}tale morphism.   
Let ${\cal D}_{1\cdots r} \subset {\cal D}$ be a 
closed subscheme defined by an equation 
$x_1=\cdots =x_r=0$. 
Set $e:=d'-d$. 
For $\ul{m}:=(m_1,\ldots,m_e)\in {\mab N}^e$, set 
$x^{\ul{m}}:=x^{m_1}_{d+1}\cdots x^{m_e}_{d'}$. 
For $n\in {\mab Z}_{\geq 1}$ and 
$\ul{m}\in {\mab N}^e$ 
with $\vert \ul{m}\vert=n$, 
let $t_{\ul{m}}$ be independent variables.  
Then, by (\ref{eqn:ldra}),   
\begin{equation*} 
{\cal O}_{T_{{\cal Q}^{\rm ex},n}}\otimes_{{\cal O}_{\cal X}}
{\cal O}_{{\cal D}_{1\cdots r}}
=
(A[t_{\ul{m}}~\vert~\ul{m} 
\in {\mab N}^r,\vert\ul{m}\vert =n]
/((x^{\ul{m}}-\pi t_{\ul{m}})+
(p{\textrm -}{\rm torsion})))^{\wh{}}/(x_1,\ldots,x_r).  
\end{equation*} 
Set $D_{1\cdots r}:={\cal D}_{1\cdots r}\times_{\cal X}X$. 
On the other hand, 
the structure sheaf of the $n$-th universal log enlargement 
of the closed immersion $D_{1\cdots r} 
\os{\subset}{\lo} {\cal D}_{1\cdots r}$ is 
\begin{align*} 
&
({\cal O}_{{\cal D}_{1\cdots r}}
[t_{\ul{m}}~\vert~\ul{m} 
\in {\mab N}^k,\vert\ul{m}\vert =n]
/((x^{\ul{m}}-\pi t_{\ul{m}})+
(p{\textrm -}{\rm torsion})))^{\wh{}} \\
&=(A[t_{\ul{m}}~\vert~\ul{m} 
\in {\mab N}^k,\vert\ul{m}\vert =n]
/((x^{\ul{m}}-\pi t_{\ul{m}})+
(p{\textrm -}{\rm torsion})))^{\wh{}}
/(x_1,\ldots, x_r)\\
&={\cal O}_{T_{{\cal Q}^{\rm ex},n}}\otimes_{{\cal O}_{\cal X}}
{\cal O}_{{\cal D}_{1\cdots r}}. 
\end{align*} 
Furthermore it is immediate to see 
that there exists a natural isomorphism 
$T_n^{(k)} \simeq 
T_{{\cal Q}^{\rm ex},n}\times_{({\cal X},{\cal Z})}
({\cal D}^{(k)}({\cal M}^{\rm ex}),{\cal N}^{\rm ex})$
as log formal schemes. Hence we can complete the proof.  
\end{proof}

\par
Set 
$(X_n,D_n\cup Z_n):=(X,D\cup Z)\times_{\cal P}T_n$, 
$(X_n,Z_n):=(X,Z)\times_{{\cal Q}^{\rm ex}}T_{{\cal Q}^{\rm ex},n}$ 
and 
$(D^{(k)}_n,Z_n\vert_{D^{(k)}_n})
:=(D^{(k)},Z\vert_{D^{(k)}})
\times_{(D^{(k)}({\cal M}^{\rm ex}),{\cal N}^{\rm ex})}
T^{(k)}_n$ $(n\in {\mab N})$.  
As usual, we denote the left representable objects 
in the following table 
by the right ones for simplicity of notation:

\bigskip
\begin{tabular}{|l|l|} \hline 
$((X_n,D_n\cup Z_n) \os{\subset}{\lo}T_n)\in 
((X,D\cup Z)/S)_{\rm conv}$
& $T_n$ \\  \hline
$((X_n,Z_n) \os{\subset}{\lo}T_{{\cal Q}^{\rm ex},n})
\in ((X,Z)/S)_{\rm conv}$
& $T_{{\cal Q}^{\rm ex},n}$ \\ \hline
$((D^{(k)}_n,Z_n\vert_{D^{(k)}_n}) 
\os{\subset}{\lo} T^{(k)}_n)\in 
((D^{(k)},Z\vert_{D^{(k)})/S})_{\rm conv}$
& $T^{(k)}_n$  \\ \hline 
\end{tabular}
\bigskip
\parno
Let $a^{(k)}_{\rm conv}: 
((D^{(k)},Z\vert_{D^{(k)})/S})_{\rm conv} 
\lo 
((X,Z)/S)_{\rm conv}$
be the morphism of topoi induced by the 
morphism 
$a^{(k)}$. 
\par

\par 
Let

\bigskip
\begin{tabular}{|l|} \hline 
$j_T: ((X,D\cup Z)/S)_{\rm conv}\vert_{T} \lo 
((X,D\cup Z)/S)_{\rm conv}$ 
\\ \hline
$j_{T_{{\cal Q}^{\rm ex}}}: 
((X,Z)/S)_{\rm conv}\vert_{T_{{\cal Q}^{\rm ex}}} \lo 
((X,Z)/S)_{\rm conv}$ 
\\ \hline
$j_{T^{(k)}}: 
((D^{(k)},
Z\vert_{D^{(k)}})/S)_{\rm conv} 
\vert_{T^{(k)}} \lo 
((D^{(k)},Z\vert_{D^{(k)}})/S)_{\rm conv}$  
\\ \hline 
\end{tabular}
\bigskip  
\parno
be localization functors and let

\bigskip
\begin{tabular}{|l} \hline 
$\varphi^*: 
\{{\rm coherent~crystals~of~}{\cal K}_{\os{\to}{T}}{\rm -modules}\}
\lo $
\\ \hline
${\varphi}^*_{{\cal Q}^{\rm ex}}: 
\{{\rm coherent~crystals~of~}
{\cal K}_{\os{\to}{T}_{{\cal Q}^{\rm ex}}}{\rm -modules}\}
\lo $ 
\\ \hline
$\varphi^{(k)*}: 
\{{\rm coherent~crystals~of~}
{\cal K}_{\os{\to}{T}{}^{(k)}}{\rm -modules}\}
\lo $  
\\ \hline 
\end{tabular}

\bigskip
\begin{tabular}{l|} 
\hline 
$\{{\rm coherent~crystals~of~}
{\cal K}_{(X,D\cup Z)/S}\vert_T
{\rm -modules}\}$ \\ \hline
$\{{\rm coherent~crystals~of~}
{\cal K}_{(X,Z)/S}\vert_{T_{{\cal Q}^{\rm ex}}}{\rm -modules}\}$ 
\\ \hline
$\{{\rm coherent~crystals~of~}{\cal K}_{(D^{(k)},
Z\vert_{D^{(k)}})/S}\vert_{T^{(k)}}
{\rm -modules}\}$  
\\ \hline 
\end{tabular}
\bigskip
\parno 
be the morphisms of pull-backs 
defined in (\ref{eqn:phyt}).  
Let $h_n$ be $g_n$ or $g^{\rm ex}_n$.  
Let ${\cal U}$ be ${\cal P}$ or ${\cal P}^{\rm ex}$.  
Let

\bigskip
\begin{tabular}{|l|} \hline 
$h_n: (\os{\circ}{T}_{n,{\rm zar}},{\cal K}_{T_n}) \lo 
(\os{\circ}{\cal U}_{\rm zar},{\cal K}_{\cal U})$ \\ \hline 
$g_{{\cal Q}^{\rm ex},n}: 
(\os{\circ}{T}_{{\cal Q}^{\rm ex},n,{\rm zar}},
{\cal K}_{T_{{\cal Q}^{\rm ex},n}}) \lo 
(\os{\circ}{\cal Q}{}^{\rm ex}_{\rm zar},{\cal K}_{{\cal Q}^{\rm ex}})$ 
\\ \hline 
$g^{(k)}_n: 
(\os{\circ}{T}{}^{(k)}_{n{\rm zar}},{\cal K}_{T_n}) \lo 
(D^{(k)}({\cal M}^{\rm ex})_{\rm zar},
{\cal K}_{D^{(k)}({\cal M}^{\rm ex})})$  \\ \hline 
\end{tabular}
\bigskip
\parno 
be natural morphisms 
of ringed topoi $(n\in {\mab Z}_{\geq 1})$.

\par 
For a coherent 
${\cal K}_{D^{(k)}({\cal M}^{\rm ex})}$-module ${\cal E}$, 
set also
$$L^{(k){\rm conv}}({\cal E}) := 
j_{T^{(k)}*} \varphi^{(k)*}\{g^{(k)*}_n({\cal E})\}_{n=1}^{\infty} 
\in 
((D^{(k)},Z\vert_{D^{(k)}})/S)_{\rm conv}.$$ 
\par 
By (\ref{prop:lcl}) we have a complex
$L^{\rm conv}_{(X, D\cup Z)/S}
(\Om^{\bul}_{{\cal P}^{\rm ex}/S}
\otimes_{\mab Z}{\mab Q})$ of 
${\cal K}_{(X,D\cup Z)/S}$-modules.
By the log convergent Poincar\'{e} lemma ((\ref{theo:pl})), 
we have a natural quasi-isomorphism
\begin{equation*}
{\cal K}_{(X,D\cup Z)/S} \os{\sim}{\lo} 
L^{\rm conv}_{(X,D\cup Z)/S}
(\Om^{\bul}_{{\cal P}^{\rm ex}/S}\otimes_{\mab Z}{\mab Q}).
\tag{6.6.3}\label{eqn:oxdz}
\end{equation*}
Similarly we have the following two quasi-isomorphisms:
\begin{equation*}
{\cal K}_{(X,Z)/S} \os{\sim}{\lo} 
L^{\rm conv}_{(X,Z)/S}(\Om^{\bul}_{{\cal Q}^{\rm ex}/S}
\otimes_{\mab Z}{\mab Q}), 
\tag{6.6.4}\label{eqn:oxz}
\end{equation*}
\begin{equation*}
{\cal K}_{(D^{(k)},Z\vert_{D^{(k)}})/S} 
\os{\sim}{\lo}
L^{(k){\rm conv}}
(\Om^{\bul}_{D^{(k)}({\cal M}^{\rm ex})/S}
\otimes_{\mab Z}{\mab Q}). 
\tag{6.6.5}\label{eqn:odkzs}
\end{equation*}
\par
The family  
$\{P^{{\cal P}^{\rm ex}/{\cal Q}^{\rm ex}}_k
\Om^i_{{\cal P}^{\rm ex}/S}\}_{i\in {\mab N}}$
gives a subcomplex 
$P^{{\cal P}^{\rm ex}/{\cal Q}^{\rm ex}}_k
\Om^{\bul}_{{\cal P}^{\rm ex}/S}$ 
of $\Om^{\bul}_{{\cal P}^{\rm ex}/S}$. 
By using the natural morphism 
${\cal P}^{\rm ex}\lo {\cal Q}^{\rm ex}$, 
we can consider 
$P^{{\cal P}^{\rm ex}/{\cal Q}^{\rm ex}}_k
\Om^i_{{\cal P}^{\rm ex}/S}$ $(i\in {\mab N})$ 
as an ${\cal O}_{{\cal Q}^{\rm ex}}$-module. 
Set 
\begin{equation*}
P^D_kL^{\rm conv}_{(X,Z)/S}
(\Om^i_{{\cal P}^{\rm ex}/S}\otimes_{\mab Z}{\mab Q})
:= L^{\rm conv}_{(X,Z)/S}
(P^{{\cal P}^{\rm ex}/{\cal Q}^{\rm ex}}_k
\Om^i_{{\cal P}^{\rm ex}/S}
\otimes_{\mab Z}{\mab Q}) 
\quad (k\in {\mab Z}). 
\tag{6.6.6}\label{eqn:plc} 
\end{equation*}
Since the derivative of $L^{\rm conv}_{(X,Z)/S}(\Om^{\bul}_{{\cal P}^{\rm ex}/S}
\otimes_{\mab Z}{\mab Q})$ produces only log poles coming from 
$\Om^1_{{\cal Q}^{\rm ex}/S}$, 
$P^D_kL^{\rm conv}_{(X,Z)/S}
(\Om^{\bul}_{{\cal P}^{\rm ex}/S}\otimes_{\mab Z}{\mab Q})$ 
is stable under the derivative of 
$L^{\rm conv}_{(X,Z)/S}(\Om^{\bul}_{{\cal P}^{\rm ex}/S}
\otimes_{\mab Z}{\mab Q})$.

\par 

\par 
The following theorem (2) is 
one of key theorems in this paper. 

\begin{theo}\label{theo:injf}
$(1)$ 
For each $n\in {\mab Z}_{\geq 1}$, 
the natural morphism 
\begin{equation*} 
{\cal K}_{T_n}{\otimes}_{{\cal O}_{{\cal P}^{\rm ex}}}
P^{{\cal P}^{\rm ex}/{\cal Q}^{\rm ex}}_k
\Om^{\bul}_{{\cal P}^{\rm ex}/S} 
\lo 
{\cal K}_{T_n}{\otimes}_{{\cal O}_{{\cal P}^{\rm ex}}}
\Om^{\bul}_{{\cal P}^{\rm ex}/S}
\tag{6.7.1}\label{eqn:yxpd}
\end{equation*}
is injective. 
\par 
\par 
$(2)$ 
The natural morphism 
\begin{equation*} 
P^D_k
L^{\rm conv}_{(X,Z)/S}
(\Om^{\bul}_{{\cal P}^{\rm ex}/S}\otimes_{\mab Z}{\mab Q}) 
\lo 
L^{\rm conv}_{(X,Z)/S}(\Om^{\bul}_{{\cal P}^{\rm ex}/S}
\otimes_{\mab Z}{\mab Q})  
\tag{6.7.2}\label{eqn:yxpdz}
\end{equation*}
is injective. 
\end{theo}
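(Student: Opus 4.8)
The plan is to reduce both assertions to one geometric fact: Zariski-locally on $\os{\circ}{\cal P}{}^{\rm ex}$, the inclusion $P^{{\cal P}^{\rm ex}/{\cal Q}^{\rm ex}}_k\Om^i_{{\cal P}^{\rm ex}/S}\os{\sus}{\lo}\Om^i_{{\cal P}^{\rm ex}/S}$ is a split injection of locally free ${\cal O}_{{\cal P}^{\rm ex}}$-modules whose cokernel is locally free, equivalently $\Om^i_{{\cal P}^{\rm ex}/S}/P^{{\cal P}^{\rm ex}/{\cal Q}^{\rm ex}}_k\Om^i_{{\cal P}^{\rm ex}/S}$ is a flat ${\cal O}_{{\cal P}^{\rm ex}}$-module. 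To establish this I would argue exactly as in the proof of (\ref{lemm:exic}): the question is local, so after a local exactification one may assume $(X,D\cup Z)\os{\sus}{\lo}{\cal P}$ is exact, and then by the local structure theory of \cite[(2.1.4), (2.1.5)]{nh2} one may assume, \'{e}tale locally, that ${\cal P}^{\rm ex}=({\cal X},{\cal D}\cup{\cal Z})\wh{\times}_S\wh{{\mab A}}{}^c$ for a formally smooth ${\cal X}/S$ carrying transversal relative SNCD's ${\cal D},{\cal Z}$, with ${\cal Q}^{\rm ex}=({\cal X},{\cal Z})\wh{\times}_S\wh{{\mab A}}{}^c$ (recall $\os{\circ}{\cal Q}{}^{\rm ex}=\os{\circ}{\cal P}{}^{\rm ex}$). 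Choosing local equations $t_1,\ldots,t_a$ for the branches of ${\cal D}$ one gets a decomposition $\Om^1_{{\cal P}^{\rm ex}/S}=\Om^1_{{\cal Q}^{\rm ex}/S}\oplus\bigoplus_{j=1}^{a}{\cal O}_{{\cal P}^{\rm ex}}\,d\log t_j$; hence $\Om^i_{{\cal P}^{\rm ex}/S}$ is free on the wedge monomials in the resulting basis, graded by the number of factors drawn from $\{d\log t_1,\ldots,d\log t_a\}$, and by its very definition $P^{{\cal P}^{\rm ex}/{\cal Q}^{\rm ex}}_k\Om^i_{{\cal P}^{\rm ex}/S}$ is the span of those monomials using at most $k$ such factors (surjectivity of $\Om^p_{{\cal Q}^{\rm ex}/S}\otimes\Om^q_{{\cal Q}^{\rm ex}/S}\lo\Om^{p+q}_{{\cal Q}^{\rm ex}/S}$ shows nothing smaller occurs). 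This is a direct summand with free complement, and ``the cokernel is locally free'' is an \'{e}tale-local coherent condition, so it holds Zariski-locally.

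Granting this, (1) is immediate: the same computation shows that for \emph{any} ${\cal O}_{{\cal P}^{\rm ex}}$-algebra ${\cal A}$ the morphism ${\cal A}\otimes_{{\cal O}_{{\cal P}^{\rm ex}}}P^{{\cal P}^{\rm ex}/{\cal Q}^{\rm ex}}_k\Om^i_{{\cal P}^{\rm ex}/S}\lo{\cal A}\otimes_{{\cal O}_{{\cal P}^{\rm ex}}}\Om^i_{{\cal P}^{\rm ex}/S}$ is injective, since ${\rm Tor}_1^{{\cal O}_{{\cal P}^{\rm ex}}}({\cal A},-)$ vanishes on the flat module $\Om^i_{{\cal P}^{\rm ex}/S}/P^{{\cal P}^{\rm ex}/{\cal Q}^{\rm ex}}_k\Om^i_{{\cal P}^{\rm ex}/S}$. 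Taking ${\cal A}={\cal O}_{T_n}$ (pull-back along $g^{\rm ex}_n\col T_n\lo{\cal P}^{\rm ex}$) and tensoring with ${\mab Q}$ gives injectivity of (\ref{eqn:yxpd}) in each degree, hence as a morphism of complexes.

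For (2), since both sides of (\ref{eqn:yxpdz}) are the termwise linearizations of their degreewise terms, it suffices to prove for each $i$ that $L^{\rm conv}_{(X,Z)/S}(P^{{\cal P}^{\rm ex}/{\cal Q}^{\rm ex}}_k\Om^i_{{\cal P}^{\rm ex}/S}\otimes{\mab Q})\lo L^{\rm conv}_{(X,Z)/S}(\Om^i_{{\cal P}^{\rm ex}/S}\otimes{\mab Q})$ is a monomorphism of sheaves on ${\rm Conv}((X,Z)/S)$. Because the kernel of a morphism of sheaves of modules is computed presheaf-wise, this is the same as injectivity at every object $T=(U,T,\iota,u)$, and we may assume $\os{\circ}{T}$ affine. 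Writing $L^{\rm conv}_{(X,Z)/S}=L^{\rm UE}_{(X,Z)/S}\circ\{g^*_{{\cal Q}^{\rm ex},n}\}_{n=1}^{\infty}$ and invoking the description (\ref{eqn:lueyset}) (cf.~(\ref{prop:afex})), the value at $T$ of the displayed morphism is $\varprojlim_n p'_{n*}$ applied to the pull-back, along the composite ${\mathfrak T}_n\lo T_{{\cal Q}^{\rm ex},n}\lo{\cal Q}^{\rm ex}$ (the second arrow being $g_{{\cal Q}^{\rm ex},n}$), of the morphism $P^{{\cal P}^{\rm ex}/{\cal Q}^{\rm ex}}_k\Om^i_{{\cal P}^{\rm ex}/S}\otimes{\mab Q}\os{\sus}{\lo}\Om^i_{{\cal P}^{\rm ex}/S}\otimes{\mab Q}$, where ${\mathfrak T}_n$ and $p'_n\col{\mathfrak T}_n\lo T$ are the data of the universal enlargements of $U\os{\sus}{\lo}T\times_S{\cal Q}^{\rm ex}$. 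By the previous paragraph (with ${\cal A}={\cal O}_{{\mathfrak T}_n}$) this pulled-back morphism is again injective with flat cokernel; since $p'_{n*}$ and $\varprojlim_n$ are left exact and $\otimes_{\mab Z}{\mab Q}$ is exact, the value at $T$ is injective. As $T$ was arbitrary, (\ref{eqn:yxpdz}) follows.

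The substantive ingredient is thus not the injectivity itself --- which, once unwound, is the elementary stability of (locally) split injections under pull-back, pushforward and ${\mab Q}$-localization --- but the fact that $L^{\rm conv}_{(X,Z)/S}$ admits the clean sectionwise description (\ref{eqn:lueyset}), which rests on the good exactness properties of the universal-enlargement formalism after $\otimes_{\mab Z}{\mab Q}$ (cf.~(\ref{lemm:rex}), (\ref{prop:afex}), (\ref{prop:lef})). This is precisely the feature absent in the log crystalline case of \cite[(2.2.17)(2)]{nh2}, where the linearization functor is built from PD-envelopes rather than universal enlargements and the analogue of (\ref{eqn:yxpdz}) fails; so the genuine work underlying Theorem~\ref{theo:injf} has already been carried out in the earlier sections, and the geometric input of the first paragraph is just the local computation already used for (\ref{lemm:exic}).
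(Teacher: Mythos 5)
Your argument rests on the claim that, locally, $\Om^1_{{\cal P}^{\rm ex}/S}=\Om^1_{{\cal Q}^{\rm ex}/S}\oplus\bigoplus_{j=1}^a{\cal O}_{{\cal P}^{\rm ex}}\,d\log t_j$, so that $P^{{\cal P}^{\rm ex}/{\cal Q}^{\rm ex}}_k\Om^i_{{\cal P}^{\rm ex}/S}$ is a direct summand of $\Om^i_{{\cal P}^{\rm ex}/S}$ with locally free cokernel. This is false. Both $\Om^1_{{\cal P}^{\rm ex}/S}$ and $\Om^1_{{\cal Q}^{\rm ex}/S}$ are locally free of the \emph{same} rank, so the asserted decomposition already fails by a rank count when $a\geq 1$. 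In the local model ${\cal P}^{\rm ex}=({\cal X},{\cal D}\cup{\cal Z})$, ${\cal Q}^{\rm ex}=({\cal X},{\cal Z})$ with ${\cal D}=\{t_1\cdots t_a=0\}$, the natural map $\Om^1_{{\cal Q}^{\rm ex}/S}\lo\Om^1_{{\cal P}^{\rm ex}/S}$ sends $dt_j\lom t_j\,d\log t_j$; it is injective but not split, and for instance $\Om^1_{{\cal P}^{\rm ex}/S}/P_0^{{\cal P}^{\rm ex}/{\cal Q}^{\rm ex}}\Om^1_{{\cal P}^{\rm ex}/S}\simeq\bigoplus_{j=1}^a({\cal O}_{{\cal P}^{\rm ex}}/t_j{\cal O}_{{\cal P}^{\rm ex}})\,d\log t_j$, a torsion module. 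More generally the Poincar\'e residue isomorphism ((\ref{eqn:regarp}), cf.~(\ref{prop:grla})) identifies ${\rm gr}^{P^{{\cal P}^{\rm ex}/{\cal Q}^{\rm ex}}}_l\Om^i_{{\cal P}^{\rm ex}/S}$ with a module supported on $D^{(l)}({\cal M}^{\rm ex})$, so the cokernel of (\ref{eqn:yxpd}) in each degree is never flat when $D\ne\emptyset$. Since your proofs of (1) and of the key step of (2) reduce everything to ${\rm Tor}_1(-,\Om^i/P_k\Om^i)=0$ via this flatness, both collapse; note also that if the cokernel were flat, the injectivity would hold integrally, which is exactly the kind of statement that fails in this circle of ideas (cf.~the discussion around (\ref{eqn:lni}) in the Introduction).

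What actually makes (1) work is (\ref{lemm:rex}): because $(X,D\cup Z)\os{\sus}{\lo}{\cal P}^{\rm ex}$ is exact, $\pi\cdot{\rm Tor}_1^{{\cal O}_{{\cal P}^{\rm ex}}}({\cal O}_{T_n},E)=0$ for coherent $E$, so the pull-back to $T_n$ becomes exact after $\otimes_{\mab Z}{\mab Q}$ even though the cokernel is not flat. For (2), your reduction to sections over each object $T$ via (\ref{eqn:lueyset}) is fine, but the remaining pull-back along ${\mathfrak T}_n\lo{\cal Q}^{\rm ex}$ cannot be disposed of by a single Tor-vanishing: one has to factor it as ${\mathfrak T}_n\os{q_n}{\lo}(T\times_S{\cal Q})^{\rm ex}\os{r}{\lo}{\cal Q}^{\rm ex}$, apply (\ref{lemm:rex}) to $q_n^*$ (the immersion $U\os{\sus}{\lo}(T\times_S{\cal Q})^{\rm ex}$ being exact), and then check separately that $r^*$ of the inclusion is still injective --- which holds because $r^*(P^{{\cal P}^{\rm ex}/{\cal Q}^{\rm ex}}_k\Om^i_{{\cal P}^{\rm ex}/S}\otimes_{\mab Z}{\mab Q})$ is identified with $P^{{\cal P}^{\rm ex}_T/{\cal Q}^{\rm ex}_T}_k\Om^i_{{\cal P}^{\rm ex}_T/T}\otimes_{\mab Z}{\mab Q}$, a submodule of $\Om^i_{{\cal P}^{\rm ex}_T/T}\otimes_{\mab Z}{\mab Q}$ by the very definition of the filtration. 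Your factorization through $T_{{\cal Q}^{\rm ex},n}$ gives no such control over the second pull-back $p_n^*$, which is not a universal-enlargement pull-back of an exact immersion.
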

\begin{proof} 
(1): (1) immediately follows from (\ref{lemm:rex}). 
\par 
(2):  Let $(U',T',\iota',u')$ be an object of 
${\rm Conv}((X,Z)/S)$. We prove that 
the morphism (\ref{eqn:yxpdz}) evaluated at $(U',T',\iota',u')$ is injective. 
The problem is local. 
We may assume that there exist the cartesian diagrams 
(\ref{cd:dxs}) and (\ref{cd:xxd}). 
As in the proof of (\ref{lemm:dpinc}), we may assume that 
${\cal P}=({\cal X},{\cal D}\cup {\cal Z})$,  
${\cal R}=({\cal X},{\cal Z})$,  
${\cal Q}^{\rm ex}=(\wh{\cal X},\wh{\cal Z})$ 
and that (\ref{cd:dxs}) and (\ref{cd:xxd}) exist and 
that $\wh{\cal Z}$ and $\wt{\cal D}$ are defined by 
$x_1\cdots x_a=0$ and $x_{a+1}\cdots x_s=0$. 
In fact, we may assume that $g$ in (\ref{cd:dxs}) and (\ref{cd:xxd}) 
is the identity morphism.  
We may also assume that $\os{\circ}{T}{}'$ is affine. 
Then we have a morphism $e\col T'\lo {\cal Q}^{\rm ex}$ over $S$ 
such that the composite morphism 
$U'\os{\sus}{\lo} T'\lo{\cal Q}^{\rm ex}$ 
is equal to the composite morphism 
$U'\lo (X,Z)\os{\sus}{\lo} {\cal Q}^{\rm ex}$. 
This composite immersion
induces an immersion 
$U'\os{\sus}{\lo} T'\times_S{\cal Q}^{\rm ex}$
and  
let $(T'\times_S{\cal Q}^{\rm ex})^{\rm ex}$ be the exactification 
of the immersion $U'\os{\sus}{\lo} T'\times_S{\cal Q}^{\rm ex}$. 
Let $P$ be a monoid generated by 
$(e_i,-e_i)\in {\mab Z}^{2a}$ $(1\leq i\leq a)$ and $(-e_i,e_i)\in {\mab Z}^{2a}$, where  
$e_i:=(0, \ldots, 0, \us{i}{1}, 0, \ldots, 0)\in {\mab N}^a$. 
By the definition of the exactfication, 
${\cal O}_{(T'\times_S{\cal Q}^{\rm ex})^{\rm ex}}$ 
is locally isomorphic to the following sheaf of commutative rings: 
\begin{align*} 
&({\cal O}_{T'\times_S{\cal Q}^{\rm ex}}
\wh{\otimes}_{{\cal O}_S\{{\mab N}^a\oplus {\mab N}^a\}}
{\cal O}_S\{({\mab N}^a\oplus {\mab N}^a)^{\rm ex}\})^{\wh{}}\tag{6.7.3}\label{ali:utesq}\\
&\simeq ({\cal O}_{T'\times_S{\cal Q}^{\rm ex}}
\wh{\otimes}_{{\cal O}_S\{{\mab N}^a\oplus {\mab N}^a\}}
{\cal O}_S\{P\oplus {\mab N}^a\})^{\wh{}} \\
&\simeq ({\cal O}_{T'}\wh{\otimes}_{{\cal O}_S\{{\mab N}^a\}}
({{\cal O}_S\{P\}}\wh{\otimes}_{{\cal O}_S}
{\cal O}_{{\cal Q}^{\rm ex}}))^{\wh{}}\\
&=(\{{\cal O}_{T'}\wh{\otimes}_{{\cal O}_S\{{\mab N}^a\}}
({{\cal O}_S\{P\}}\wh{\otimes}_{{\cal O}_S}
{\cal O}_S[[x_1,\ldots, x_a]])\}[[x_{a+1},\ldots, x_d]]\{x_{d+1},\ldots,x_{d'}\})^{\wh{}}\\
&={\cal O}_{T'}[[(e_i,-e_i)-1~\vert~(1\leq i\leq a)]]
[[x_{a+1}-\wt{\ol{x}}_{a+1},\ldots,x_d-\wt{\ol{x}}_d]][[x_{d+1},\ldots,x_{d'}]]
\end{align*} 
(cf.~the proof of (\ref{prop:lef})). 
Here $\ol{x}_j$ $(a+1\leq j\leq d)$ is the image of $x_j$ in ${\cal O}_U$ and 
$\wt{\ol{x}}_j\in {\cal O}_{T'}$  is a lift of $\ol{x}_j$. 
Let $r\col (T'\times_S{\cal Q}^{\rm ex})^{\rm ex}\lo {\cal Q}^{\rm ex}$ 
be the induced morphism by the second projection 
$T'\times_S{\cal Q}^{\rm ex}\lo {\cal Q}^{\rm ex}$. 
Then we have the following diagram (cf.~the proof of (\ref{prop:afex})):  
\begin{equation*}
\begin{CD}
\{{\mathfrak T}_{U',n}(T'\times_S{\cal Q}^{\rm ex})\}_{n=1}^{\infty} 
@>{\{q_n\}_{n=1}^{\infty}}>> (T'\times_S{\cal Q}^{\rm ex})^{\rm ex}
@>{r}>> {\cal Q}^{\rm ex}\\ 
@V{\{p'_n\}_{n=1}^{\infty}}VV \\
T' @. @.. \\
\end{CD}
\tag{6.7.4}\label{cd:utsq} 
\end{equation*} 
Set 
${\cal M}^i_k:=
P^{{\cal P}^{\rm ex}/{\cal Q}^{\rm ex}}_k
\Om^i_{{\cal P}^{\rm ex}/S}
\otimes_{\mab Z}{\mab Q}$ 
$(i,k \in {\mab N})$. 
Then, by (\ref{eqn:lueyset}), 
\begin{equation*} 
\{P^D_kL^{\rm conv}_{X/S}
(\Om^i_{{\cal P}^{\rm ex}/S}
\otimes_{\mab Z}{\mab Q})\}_{T'}
= \vpl_np'_{n*}q_n^*r^*({\cal M}^i_k).  
\tag{6.7.5}\label{eqn:utpaq} 
\end{equation*}  
Because $\vpl_n$ and $p'_{n*}$ are left exact 
for ${\mathcal{K}}_{{\mathfrak T}_{U',n}(T'\times_S{\cal Q}^{\rm ex})}$-modules 
and because $q_n^*$ is left exact 
for 
${\cal K}_{(T'\times_S {\mathcal{Q}}^{\rm ex})^{\rm ex}}$-modules
by (\ref{lemm:rex}), 
we have only to prove that the morphism 
\begin{align*} 
r^*({\cal M}^i_k)\lo r^*({\cal M}^i_{\infty}) \quad (0\leq k\leq i)
\tag{6.7.6}\label{eqn:utpq} 
\end{align*} 
is injective. 
The sheaf $\Om^1_{{\cal P}^{\rm ex}/S}
\otimes_{\mab Z}{\mab Q}$ has a basis 
$\{d\log x_1,\ldots, d\log x_s, dx_{s+1},\ldots, dx_{d'}\}$ 
and 
$P^{{\cal P}^{\rm ex}/{\cal Q}^{\rm ex}}_0\Om^1_{{\cal P}^{\rm ex}/S}
\otimes_{\mab Z}{\mab Q}$
has a basis 
$\{d\log x_1,\ldots, d\log x_a, dx_{a+1},\ldots,dx_s,dx_{s+1},\ldots, dx_{d'}\}$. 
Consider a local section 
\begin{align*} 
\om& =f_{j_1,\ldots,j_m,k_1,\ldots,k_n,\lam_1,\ldots, \lam_o}
\om_{j_1}\wedge \cdots \wedge \om_{j_m}\wedge 
dx_{k_1}\wedge \cdots \wedge dx_{k_n}\wedge 
dx_{l_1}\wedge \cdots \wedge dx_{l_o} \\
& (f_{j_1,\ldots,j_m,k_1,\ldots,k_n,\lam_1,\ldots, \lam_o}\in 
{\cal O}_{(T'\times_S{\cal Q}^{\rm ex})^{\rm ex}}, m+n+o=i, \\
&a+1\leq k_1<\cdots <k_n\leq s,s+1\leq l_1<\cdots<l_o\leq d')
\end{align*}  
of $r^*({\cal M}^i_k)$, 
where $\om_{j_1}, \ldots, \om_{j_m}$ are $d\log x_{j_1},\ldots, d\log x_{j_m}$ or 
$dx_{j_1}, \ldots, dx_{j_m}$ $(1\leq j_1<\cdots<j_m\leq a)$, 
respectively. 
Since $dx_i=x_id\log x_i$ for $a+1\leq i\leq s$ in $\Om^1_{{\cal P}^{\rm ex}/S}$, 
$$\om=
x_{k_1}\cdots x_{k_n}f_{j_1,\ldots,j_m,k_1,\ldots,x_{k_n},\lam_1,\ldots, \lam_o}
\om_{j_1}\wedge \cdots \wedge \om_{j_m}\wedge 
d\log x_{k_1}\wedge \cdots \wedge d\log x_{k_n}\wedge 
\lam_{l_1}\wedge \cdots \wedge \lam_{l_o}.$$ 
Hence it suffices to prove that $x_i$ $(a+1\leq i\leq s)$ is a nonzero divisor 
in the last sheaf in (\ref{ali:utesq}). However this is very clear. 
\end{proof}

\begin{rema}\label{rema:neagain} 
(1) 
The theorem (2) in (\ref{theo:injf}) is different from the corresponding theorem 
in the log crystalline case (\cite[(2.2.17) (2)]{nh2}). 
This is a crucial theorem for the 
construction of the filtered complex $(C_{\rm conv}({\cal O}_{(X,D\cup Z)/S}),P^D)$ 
defined in (\ref{ali:dfg}) below which will be used in the proof of 
the $p$-adic purity (\ref{theo:calvc}) below. 
\par 
(2) Let $(U,T,\iota,u)$ be an object of ${\rm Conv}((X,D\cup Z)/S)$. 
We claim that the following pull-back morphism 
\begin{equation*} 
\{{\rm coherent}~{\cal K}_{{\cal P}}
{\textrm -}{\rm modules}\} 
\lo 
\{{\rm coherent }~
{\cal K}_{(T\times_S{{\cal P}})^{\rm ex}}
{\textrm -}{\rm modules}\} 
\tag{6.8.1}\label{eqn:ktsq}
\end{equation*} 
is not exact in general. 
Indeed, let the notations be as in (\ref{rema:utne}). 
Consider the case where $U=X$, $T={\cal P}$ with the diagonal immersion 
$U \os{\sus}{\lo} T\times_S{\cal P}$.  
Then, as in  (\ref{rema:utne}), 
we can see that the functor (\ref{eqn:ktsq}) is not exact in general. 
\par 
(3) If $Z=\emptyset$, the proof of (\ref{theo:injf}) (2) becomes simpler than 
the proof above.  
Indeed, because $(T'\times_S {\cal Q}^{\rm ex})^{\rm ex}=
T'\times_S {\cal Q}$ and 
$r^*({\cal M}^i_k)=
P^{{\cal P}^{\rm ex}_{T'}/{\cal Q}^{\rm ex}_{T'}}_k
\Om^i_{{\cal P}^{\rm ex}_{T'}/T'} 
\otimes_{\mab Z}{\mab Q}$, the injectivity of 
the morphism (\ref{eqn:utpq}) is clear. 
Here the subscript ${}_{T'}$ means the base change $\times_ST'$. 
\end{rema}

By (\ref{theo:injf}) (2), the family 
$\{P^D_kL^{\rm conv}_{(X,Z)/S}
(\Om^{\bul}_{{\cal P}^{\rm ex}/S}
\otimes_{\mab Z}{\mab Q})\}_{k \in {\mab Z}}$ 
of ${\cal K}_{(X,Z)/S}$-modules 
defines a filtration on the complex
$L^{\rm conv}_{(X,Z)/S}
(\Om^{\bul}_{{\cal P}^{\rm ex}/S}
\otimes_{\mab Z}{\mab Q})$. 
Hence we obtain an object 
$$(L^{\rm conv}_{(X,Z)/S}(\Om^{\bul}_{{\cal P}^{\rm ex}/S}
\otimes_{\mab Z}{\mab Q}), 
\{P^D_k
L^{\rm conv}_{(X,Z)/S}(\Om^{\bul}_{{\cal P}^{\rm ex}/S}
\otimes_{\mab Z}{\mab Q})\}_{k\in {\mab Z}})$$  
in ${\rm C}^+{\rm F}({\cal K}_{(X,Z)/S})$. 
For simplicity of notation, we denote it by 
$$(L^{\rm conv}_{(X,Z)/S}(\Om^{\bul}_{{\cal P}^{\rm ex}/S}
\otimes_{\mab Z}{\mab Q}),P^D).$$

\par
Because it seems to us that the following proposition cannot be obtained immediately 
by the Poincar\'{e} residue isomorphism for the Zariski topos of ${\cal P}^{\rm ex}$, 
the proof of the following is rather long: 

\begin{prop}\label{prop:grla}
Assume that $\os{\circ}{\cal P}$ is affine over $S$. 
Then there exist the following quasi-isomorphisms 
\begin{align*} 
{\rm gr}_k^{P^D}
& L^{\rm conv}_{(X,Z)/S}(\Om^{\bul}_{{\cal P}^{\rm ex}/S}
\otimes_{\mab Z}{\mab Q}) 
\tag{6.9.1}\label{eqn:grpdl}\\ 
{} & \os{\sim}{\lo} 
a^{(k)}_{{\rm conv}*}
(L^{(k){\rm conv}}(\Om^{\bul}_{D^{(k)}({\cal M}^{\rm ex})/S}
\otimes_{\mab Z}{\mab Q})
\otimes_{\mab Z}\vp^{(k)}_{{\rm conv}}(D/S;Z))[-k]\\
{} & \os{\sim}{\longleftarrow}  
a^{(k)}_{{\rm conv}*}
({\cal K}_{(D^{(k)},Z\vert_{D^{(k)}})/S}
\otimes_{\mab Z}\vp^{(k)}_{{\rm conv}}(D/S;Z))[-k]
\end{align*}
in $D^+({\cal K}_{(X,Z)/S})$. 
\end{prop}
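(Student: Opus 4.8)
The plan is to establish the two maps of (\ref{eqn:grpdl}) separately; both run parallel to the corresponding statements in the log crystalline setting of \cite[\S2]{nh2}, the essential simplification here being that, by (\ref{theo:injf}) (2), $\{P^D_kL^{\rm conv}_{(X,Z)/S}(\Om^{\bul}_{{\cal P}^{\rm ex}/S}\otimes_{\mab Z}{\mab Q})\}_{k\in {\mab Z}}$ is a genuine filtration by subcomplexes, so that the graded pieces ${\rm gr}_k^{P^D}$ are honest quotient complexes.

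First I would treat the right-hand quasi-isomorphism. By the log convergent Poincar\'{e} lemma (\ref{theo:pl}) applied to the exact immersion $(D^{(k)},Z\vert_{D^{(k)}})\os{\sus}{\lo}(D^{(k)}({\cal M}^{\rm ex}),{\cal N}^{\rm ex})$ --- this is precisely the quasi-isomorphism (\ref{eqn:odkzs}) --- one has ${\cal K}_{(D^{(k)},Z\vert_{D^{(k)}})/S}\os{\sim}{\lo}L^{(k){\rm conv}}(\Om^{\bul}_{D^{(k)}({\cal M}^{\rm ex})/S}\otimes_{\mab Z}{\mab Q})$. Tensoring over ${\mab Z}$ with the flat, \'{e}tale-locally constant sheaf $\vp^{(k)}_{\rm conv}(D/S;Z)$ and using the evident analogue on $D^{(k)}({\cal M}^{\rm ex})$ of (\ref{lemm:zcor}) --- which identifies $L^{(k){\rm conv}}(-\otimes_{\mab Z}\vp^{(k)}_{\rm zar}(D({\cal M}^{\rm ex})/S))$ with $L^{(k){\rm conv}}(-)\otimes_{\mab Z}\vp^{(k)}_{\rm conv}(D/S;Z)$ --- one obtains a quasi-isomorphism from ${\cal K}_{(D^{(k)},Z\vert_{D^{(k)}})/S}\otimes_{\mab Z}\vp^{(k)}_{\rm conv}(D/S;Z)$ to $L^{(k){\rm conv}}(\Om^{\bul}_{D^{(k)}({\cal M}^{\rm ex})/S}\otimes_{\mab Z}{\mab Q}\otimes_{\mab Z}\vp^{(k)}_{\rm zar}(D({\cal M}^{\rm ex})/S))$. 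Since $a^{(k)}$ is a closed immersion, $a^{(k)}_{{\rm conv}*}$ is exact; applying it and shifting by $[-k]$ yields the asserted map.

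For the left-hand quasi-isomorphism, I would first identify ${\rm gr}_k^{P^D}$. By definition $P^D_kL^{\rm conv}_{(X,Z)/S}(\Om^i_{{\cal P}^{\rm ex}/S}\otimes_{\mab Z}{\mab Q})=L^{\rm conv}_{(X,Z)/S}(P^{{\cal P}^{\rm ex}/{\cal Q}^{\rm ex}}_k\Om^i_{{\cal P}^{\rm ex}/S}\otimes_{\mab Z}{\mab Q})$ (here ${\cal O}_{{\cal P}^{\rm ex}}={\cal O}_{{\cal Q}^{\rm ex}}$, so all the modules in sight are coherent ${\cal K}_{{\cal Q}^{\rm ex}}$-modules). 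The short exact sequence $0\to P^{{\cal P}^{\rm ex}/{\cal Q}^{\rm ex}}_{k-1}\Om^i_{{\cal P}^{\rm ex}/S}\to P^{{\cal P}^{\rm ex}/{\cal Q}^{\rm ex}}_{k}\Om^i_{{\cal P}^{\rm ex}/S}\to{\rm gr}_k^{P^{{\cal P}^{\rm ex}/{\cal Q}^{\rm ex}}}\Om^i_{{\cal P}^{\rm ex}/S}\to0$ of ${\cal O}_{{\cal Q}^{\rm ex}}$-modules has locally free terms of finite rank (by the local structure of ${\cal P}^{\rm ex}$ over ${\cal Q}^{\rm ex}$), hence splits Zariski-locally on $\os{\circ}{\cal P}$; applying the additive functor $L^{\rm conv}_{(X,Z)/S}$, which since $\os{\circ}{\cal P}$ (hence $\os{\circ}{\cal Q}{}^{\rm ex}$) is affine over $S$ is governed by the $\vpl$-description used in (\ref{prop:afex}) and commutes with Zariski localization on $\os{\circ}{\cal P}$, preserves the local splitting and hence exactness. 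Thus ${\rm gr}_k^{P^D}L^{\rm conv}_{(X,Z)/S}(\Om^{\bul}_{{\cal P}^{\rm ex}/S}\otimes_{\mab Z}{\mab Q})=L^{\rm conv}_{(X,Z)/S}({\rm gr}_k^{P^{{\cal P}^{\rm ex}/{\cal Q}^{\rm ex}}}\Om^{\bul}_{{\cal P}^{\rm ex}/S}\otimes_{\mab Z}{\mab Q})$ as complexes. Next I would invoke the Poincar\'{e} residue isomorphism of de Rham complexes (with their ${\cal Q}^{\rm ex}$-connections) ${\rm gr}_k^{P^{{\cal P}^{\rm ex}/{\cal Q}^{\rm ex}}}\Om^{\bul}_{{\cal P}^{\rm ex}/S}\os{\sim}{\lo}b^{(k)}_*(\Om^{\bul}_{D^{(k)}({\cal M}^{\rm ex})/S}\otimes_{\mab Z}\vp^{(k)}_{\rm zar}(D({\cal M}^{\rm ex})/S))[-k]$: this is local, and using the charts and the cartesian diagrams (\ref{cd:dxs}), (\ref{cd:xxd}), the exactness of $(X,D)\os{\sus}{\lo}(\os{\circ}{\cal P}{}^{\rm ex},{\cal M}^{\rm ex})$ (so ${\cal M}^{\rm ex}/{\cal O}^*\simeq {\mab N}^r$ locally by (\ref{lemm:peme})), and the splitting $M_{{\cal P}^{\rm ex}}={\cal M}^{\rm ex}\oplus_{{\cal O}^*_{{\cal P}^{\rm ex}}}{\cal N}^{\rm ex}$ of (\ref{eqn:mpn}), one has $\Om^1_{{\cal P}^{\rm ex}/S}=\Om^1_{{\cal Q}^{\rm ex}/S}\oplus\bigoplus_{j=1}^r{\cal O}_{{\cal Q}^{\rm ex}}\,d\log t_j$, whence ${\rm gr}_k^{P^{{\cal P}^{\rm ex}/{\cal Q}^{\rm ex}}}\Om^i_{{\cal P}^{\rm ex}/S}\simeq\bigoplus_{\vert J\vert =k}\Om^{i-k}_{{\cal Q}^{\rm ex}/S}\wedge d\log t_J$, and restriction to $\{t_j=0\ (j\in J)\}$ followed by the residue along $d\log t_J$ gives the isomorphism, the orientation sheaf absorbing the sign ambiguity in the ordering of $J$; the de Rham differential of the left-hand complex goes over to that of $D^{(k)}({\cal M}^{\rm ex})$ exactly as in \cite{nh2}.

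Finally, I would apply $L^{\rm conv}_{(X,Z)/S}$ to the residue isomorphism --- which, being a morphism of de Rham complexes with connection, is functorial for the linearization--de Rham construction of (\ref{lemm:lcl}) --- and then use the compatibility of the log convergent linearization functor with the closed immersion $b^{(k)}$, namely the isomorphism (\ref{coh:com}) of (\ref{linc}) with $Y=(X,Z)$, $Z=(D^{(k)},Z\vert_{D^{(k)}})$, ${\cal Y}={\cal Q}^{\rm ex}$, ${\cal Z}=(D^{(k)}({\cal M}^{\rm ex}),{\cal N}^{\rm ex})$ and $\iota_{Y,Z}=a^{(k)}$ (the relevant square being cartesian by the definitions of $D^{(k)}$ and $D^{(k)}({\cal M}^{\rm ex})$, cf.~the proof of (\ref{lemm:dpinc}), and the no-$\pi$-torsion hypotheses of (\ref{linc}) holding by flatness over ${\cal V}$), giving $L^{\rm conv}_{(X,Z)/S}\circ b^{(k)}_*\simeq a^{(k)}_{{\rm conv}*}\circ L^{(k){\rm conv}}$ on coherent ${\cal K}_{{\cal Q}^{\rm ex}}$-modules. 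Chaining the identifications gives
\begin{align*}
{\rm gr}_k^{P^D}L^{\rm conv}_{(X,Z)/S}(\Om^{\bul}_{{\cal P}^{\rm ex}/S}\otimes_{\mab Z}{\mab Q})
& = L^{\rm conv}_{(X,Z)/S}({\rm gr}_k^{P^{{\cal P}^{\rm ex}/{\cal Q}^{\rm ex}}}\Om^{\bul}_{{\cal P}^{\rm ex}/S}\otimes_{\mab Z}{\mab Q})\\
& \os{\sim}{\lo} L^{\rm conv}_{(X,Z)/S}\bigl(b^{(k)}_*(\Om^{\bul}_{D^{(k)}({\cal M}^{\rm ex})/S}\otimes_{\mab Z}{\mab Q}\otimes_{\mab Z}\vp^{(k)}_{\rm zar}(D({\cal M}^{\rm ex})/S))\bigr)[-k]\\
& = a^{(k)}_{{\rm conv}*}L^{(k){\rm conv}}(\Om^{\bul}_{D^{(k)}({\cal M}^{\rm ex})/S}\otimes_{\mab Z}{\mab Q}\otimes_{\mab Z}\vp^{(k)}_{\rm zar}(D({\cal M}^{\rm ex})/S))[-k],
\end{align*}
which is the left-hand quasi-isomorphism. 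The step I expect to be the main obstacle is the Poincar\'{e} residue isomorphism --- and, above all, the verification that it is compatible with the ${\cal Q}^{\rm ex}$-connections and with the linearization functor, so that together with (\ref{coh:com}) it genuinely induces the isomorphism of complexes written above rather than merely of their terms; the remaining points are either direct citations of earlier results or routine local computations identical in form to those of \cite{nh2}.
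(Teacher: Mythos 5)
Your proposal is correct and follows essentially the same route as the paper: identify ${\rm gr}_k^{P^D}L^{\rm conv}$ with $L^{\rm conv}({\rm gr}_k^{P^{{\cal P}^{\rm ex}/{\cal Q}^{\rm ex}}}\Om^{\bul})$ using (\ref{theo:injf}) (2) together with the exactness properties of the linearization functor, pass through the Poincar\'{e} residue isomorphism, transport along the closed immersion via (\ref{linc}) and (\ref{lemm:zcor}), and conclude with (\ref{theo:pl}). The only cosmetic differences are that the paper justifies the commutation of ${\rm gr}_k$ with $L^{\rm conv}$ by injectivity plus the right-exactness of (\ref{prop:afex}) rather than by local splitness, and establishes the residue isomorphism after tensoring with ${\cal K}_{T_n}$ by reducing to the exact case and citing \cite[(4.3)]{nh3} together with (\ref{eqn:tenexsfil}) and (\ref{lemm:dpinc}), rather than by your direct chart computation.
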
 
\begin{proof} 
First we define the following morphism 
\begin{align*} 
P^D_kL^{\rm conv}_{(X,Z)/S}(\Om^{\bul}_{{\cal P}^{\rm ex}/S}
\otimes_{\mab Z}{\mab Q}) \lo 
a^{(k)}_{{\rm conv}*}
(L^{(k){\rm conv}}(\Om^{\bul}_{D^{(k)}({\cal M}^{\rm ex})/S}
\otimes_{\mab Z}{\mab Q})
\otimes_{\mab Z}\vp^{(k)}_{{\rm conv}}(D/S;Z))[-k]. 
\tag{6.9.2}\label{ali:pdlkl}
\end{align*} 
Let $r$ be a nonnegative integer such that 
$M_{X,x}/{\cal O}_{X,x}^*\simeq {\mab N}^r$.
By abuse of notation,  
for $1\leq i\leq r$ and 
for any different $i_0,\ldots, i_k$ $(1\leq i_0,\ldots,i_k \leq r)$, 
denote $D(M_{{\cal P}^{\rm ex}})_i$  
and $\os{\circ}{\cal P}{}^{\rm ex}_{i_0}
\cap \cdots \cap \os{\circ}{\cal P}{}^{\rm ex}_{i_k}$ 
by ${\cal P}^{\rm ex}_i$ and 
$\os{\circ}{\cal P}{}^{\rm ex}_{i_0 \cdots i_k}$, respectively.  
Denote $\os{\circ}{D}{}^{(k)}(M_{{\cal P}^{{\rm ex}}})$  
and 
$\vp^{(k)}_{\rm zar}(\os{\circ}{D}(M_{{\cal P}^{\rm ex}}))$ 
by 
$\os{\circ}{\cal P}{}^{{\rm ex},(k)}$ and 
$\vp^{(k)}_{\rm zar}(\os{\circ}{\cal P}{}^{\rm ex}/\os{\circ}{S})$, respectively. 
Denote the natural local exact closed immersion 
$\os{\circ}{\cal P}{}^{\rm ex}_{i_0 \cdots i_k}\os{\sus}{\lo} 
\os{\circ}{\cal P}{}^{\rm ex}$ by $b_{i_0\cdots i_k}$. 
Let 
$b^{(k)} \col \os{\circ}{\cal P}{}^{{\rm ex},(k)}\lo 
\os{\circ}{\cal P}{}^{\rm ex}$ $(k\in {\mab N})$ 
be the natural morphism. 
We have a natural immersion 
$\os{\circ}{X}{}^{(k)} \os{\sus}{\lo} 
\os{\circ}{\cal P}{}^{{\rm ex},(k)}$ 
of (formal) schemes over $\os{\circ}{S}$. 
Because 
$\os{\circ}{X}{}^{(k)}= \os{\circ}{\cal P}{}^{{\rm ex},(k)}$ 
as topological spaces, 
we can identify $\vp^{(k)}_{\rm zar}(\os{\circ}{\cal P}{}^{\rm ex}/\os{\circ}{S})$ 
with $\vp^{(k)}_{\rm zar}(\os{\circ}{X}/\os{\circ}{S}_1)$. 
\par 
Identify the points of $\os{\circ}{T}_n$ 
with those of $\os{\circ}{X}$. 
Identify also the images of the points of $\os{\circ}{X}$ 
in $\os{\circ}{\cal P}{}^{\rm ex}$ with the points of $\os{\circ}{X}$. 
Let $x$ be a point of $\os{\circ}{T}_n$. 
Let 
$P^{{\cal P}^{\rm ex}/{\cal Q}^{\rm ex}}$ be the 
filtrration on ${\cal K}_{T_n}\otimes_{{\cal O}_{{\cal P}^{\rm ex}}}
\Om^{\bul}_{{\cal P}^{\rm ex}/S}$ induced by the filtration 
$P^{{\cal P}^{\rm ex}/{\cal Q}^{\rm ex}}$ on 
$\Om^{\bul}_{{\cal P}^{\rm ex}/S}$.  
Then, for a nonnegative integer $k$, 
there exists the following Poincar\'{e} residue morphism 
\begin{align*} 
{\rm Res}^{{\cal P}^{\rm ex}/{\cal Q}^{\rm ex}}: &
P^{{\cal P}^{\rm ex}/{\cal Q}^{\rm ex}}_k
\Om^{\bul}_{{\cal P}^{\rm ex}/S} \owns 
d\log m_{i_0}\wedge \cdots \wedge d\log m_{i_{k-1}} 
\wedge \omega 
\lom 
\tag{6.9.3}\label{eqn:regrp} \\
&b^*_{i_0\cdots i_{k-1}}(\omega)
\otimes({\rm orientation}~(i_0\cdots i_{k-1})) \\
&\in 
b^{(k)}_*(\Om^{\bul}_{\os{\circ}{\cal P}{}^{{\rm ex},(k)}/S}
\otimes_{\mab Z}
\vp^{(k)}_{\rm zar}(\os{\circ}{\cal P}{}^{{\rm ex},(k)}))[-k]\\
& (\om \in \Om^{\bul -k}_{{\cal P}^{\rm ex}/S})
\end{align*} 
(c.f.~\cite[(4.4.1)]{nh3}). 
\par 
Let $(U',T',\iota',u')$ be an object of 
${\rm Conv}((X,Z)/S)$. 
Set $U'{}^{(k)}:=U'\times_{{\cal Q}^{\rm ex}}(\os{\circ}{\cal P}{}^{{\rm ex},(k)},{\cal N}^{\rm ex})$. 
Consider the following commutative diagram: 
\begin{equation*}
\begin{CD}
\{{\mathfrak T}_{U'{}^{(k)},n}(T'\times_S
(\os{\circ}{\cal P}{}^{{\rm ex},(k)},{\cal N}^{\rm ex}))\}_{n=1}^{\infty} 
@>{\{q_n^{(k)}\}_{n=1}^{\infty}}>> \{T^{(k)}_n\}_{n=1}^{\infty} 
@>{r^{(k)}}>> (\os{\circ}{\cal P}{}^{{\rm ex},(k)},{\cal N}^{\rm ex})\\
@V{\{c^{(k)}_{nT'}\}_{n=1}^{\infty}}VV @V{\{c^{(k)}_n\}_{n=1}^{\infty}}VV @V{b^{(k)}}VV\\
\{{\mathfrak T}_{U',n}(T'\times_S{\cal Q}^{\rm ex})\}_{n=1}^{\infty} 
@>{\{q_n\}_{n=1}^{\infty}}>> \{T_{{\cal Q}^{\rm ex},n}\}_{n=1}^{\infty} 
@>{r}>> {\cal Q}^{\rm ex}\\ 
@V{\{p'_n\}_{n=1}^{\infty}}VV \\
T' @. @.. \\
\end{CD}
\tag{6.9.4}\label{cd:utqsq} 
\end{equation*} 
Here the morphism $r$ is different from the $r$ in the proof of (\ref{theo:injf}). 
The morphism (\ref{eqn:regrp}) induces the following morphism: 
\begin{align*} 
\vpl_np'_{n*}q_n^*r^*({\cal M}^{\bul}_k)\lo 
\vpl_np'_{n*}\{q_n^*r^*b^{(k)}_{*}
(\Om^{\bul}_{\os{\circ}{\cal P}{}^{{\rm ex},(k)}/S})
\otimes_{\mab Z}{\mab Q}\otimes_{\mab Z}
q_n^{-1}r^{-1}b^{(k)}_{*}(\vp^{(k)}_{\rm zar}(\os{\circ}{\cal P}{}^{{\rm ex},(k)}))[-k])\}. 
\tag{6.9.5}\label{ali:qnqsq} 
\end{align*} 
By (\ref{lemm:bcue}) we see that 
\begin{align*}
{\mathfrak T}_{U'{}^{(k)},n}(T'\times_S(\os{\circ}{\cal P}{}^{{\rm ex},(k)},{\cal N}^{\rm ex}))
=
\wt{{\mathfrak T}_{U',n}(T'\times_S{\cal Q}^{\rm ex})\times_{{\cal Q}^{\rm ex}}
(\os{\circ}{\cal P}{}^{{\rm ex},(k)},{\cal N}^{\rm ex})}.
\tag{6.9.6}\label{ali:utnqsq} 
\end{align*} 
Because $b^{(k)}$ is an affine morphism and the right square is cartesian by 
(\ref{lemm:dpinc}), 
$r^*b^{(k)}_{*}=c^{(k)}_{n*}r^{(k)*}$ 
for quasi-coherent ${\cal O}_{\os{\circ}{\cal P}{}^{{\rm ex},(k)}}$-modules
and $c^{(k)}_n$ is an affine morphism; 
because $c^{(k)}_n$ is an affine morphism, 
$q^*_nc^{(k)}_{n*}=c^{(k)}_{nT'*}q_n^{(k)*}$ for 
quasi-coherent ${\cal K}_{T^{(k)}_n}$-modules by (\ref{ali:utnqsq}). 
Moreover, because $b^{(k)}$ is a direct sum of a closed immersion, 
$q_n^{-1}r^{-1}b^{(k)}_{*}=c^{(k)}_{nT'*}q_n^{(k)-1}r^{(k)-1}$. 
Hence the right hand side of (\ref{ali:qnqsq}) is equal to 
\begin{align*} 
&\vpl_n\{c^{(k)}_{nT'*}q_n^{(k)*}r^{(k)*} (\Om^{\bul}_{\os{\circ}{\cal P}{}^{{\rm ex},(k)}/S})
\otimes_{\mab Z}{\mab Q}\otimes_{\mab Z}
c^{(k)}_{nT'*}q_n^{(k)-1}r^{(k)-1}(\vp^{(k)}_{\rm zar}(\os{\circ}{\cal P}{}^{{\rm ex},(k)}))\},
\end{align*} 
which is equal to 
$\{a^{(k)}_{{\rm conv}*}
(L^{(k){\rm conv}}(\Om^{\bul}_{\os{\circ}{\cal P}{}^{{\rm ex},(k)}/S}
\otimes_{\mab Z}{\mab Q})
\otimes_{\mab Z}\vp^{(k)}_{{\rm conv}}(D/S;Z))[-k]\}_{T'}$. 
Here we have used (\ref{linc}). 
Because  
$P^{{\cal P}^{\rm ex}/{\cal Q}^{\rm ex}}_{k-1}
\Om^{\bul}_{{\cal P}^{\rm ex}/S}$ is killed by the morphism (\ref{eqn:regrp}), 
the morphism  (\ref{ali:qnqsq}) gives us the following morphism 
\begin{align*} 
{\rm gr}_k^{P^D}
& L^{\rm conv}_{(X,Z)/S}(\Om^{\bul}_{{\cal P}^{\rm ex}/S}
\otimes_{\mab Z}{\mab Q}) 
\tag{6.9.7}\label{eqn:grepdl}\\ 
{} & {\lo} 
a^{(k)}_{{\rm conv}*}
(L^{(k){\rm conv}}(\Om^{\bul}_{\os{\circ}{\cal P}{}^{{\rm ex},(k)}/S}
\otimes_{\mab Z}{\mab Q})
\otimes_{\mab Z}\vp^{(k)}_{{\rm conv}}(D/S;Z))[-k].
\end{align*} 
\par 
We claim that this is an isomorphism.  Indeed, this is a local problem. 
To prove this, we fix a total order on 
the smooth components of $\os{\circ}{\cal P}{}^{{\rm ex},(k)}/S$. 
Then we can find an isomorphism 
$\vp^{(k)}_{{\rm conv}}(D/S;Z)\simeq {\mab Z}$. 
By replacing ${\cal P}$ with ${\cal P}^{\rm loc,ex}$ 
and ${\cal R}$ with ${\cal R}^{\rm loc,ex}$, respectively,  
we may assume that 
the immersions $(X,D\cup Z)\os{\sus}{\lo}{\cal P}$ 
and $(X,Z)\os{\sus}{\lo}{\cal R}$ are exact. 
Then, as in the existence of 
the Poincar\'{e} residue isomorphism with respect to 
${\cal P}/{\cal R}$ (\cite[(4.4)]{nh3}, cf.~\cite[(2.2.21)]{nh2}), 
we have the following isomorphism 
\begin{align*} 
{\rm Res}^{{\cal P}^{\rm ex}/{\cal Q}^{\rm ex}}: \quad
&  
{\rm gr}_k^{P^{{\cal P}^{\rm ex}/{\cal Q}^{\rm ex}}} 
({\cal K}_{T_n}\otimes_{{\cal O}_{{\cal P}^{\rm ex}}}
\Om^{\bul}_{{\cal P}^{\rm ex}/S}) 
=
{\rm gr}_k^{P^{{\cal P}/{\cal R}}\otimes_{{\cal O}_{\cal P}}
{\cal O}_{{\cal P}^{\rm ex}}} 
({\cal K}_{T_n}\otimes_{{\cal O}_{\cal P}}
\Om^{\bul}_{{\cal P}/S})
\tag{6.9.8}\label{eqn:regarp} \\
& \os{\sim}{\lo} c^{(k)}_{n*}({\cal K}_{T_n^{(k)}}
\otimes_{{\cal O}_{\os{\circ}{\cal P}{}^{{\rm ex},(k)}}}
\Om^{\bul}_{\os{\circ}{\cal P}{}^{{\rm ex},(k)}/S}
\otimes_{\mab Z}
\vp^{(k)}_{\rm zar}(\os{\circ}{\cal P}{}^{{\rm ex},(k)})[-k])\\
&\os{\sim}{\lo} 
c^{(k)}_{n*}({\cal K}_{T_n^{(k)}}
\otimes_{{\cal O}_{\os{\circ}{\cal P}{}^{{\rm ex},(k)}}}
\Om^{\bul}_{\os{\circ}{\cal P}{}^{{\rm ex},(k)}/S})[-k]. 
\end{align*} 
Here we have used (\ref{eqn:tenexsfil}) and (\ref{lemm:dpinc}) for 
the equality in (\ref{eqn:regarp}) and 
the first isomorphism in (\ref{eqn:regarp}), respectively. 
Hence, by (\ref{theo:injf}) (2) and (\ref{prop:afex}), 
we have the following equalities: 
\begin{align*}
& {\rm gr}_k^{P^D}
L^{\rm conv}_{(X,Z)/S}(\Om^{\bul}_{{\cal P}^{\rm ex}/S}
\otimes_{\mab Z}{\mab Q})   =
L^{\rm conv}_{(X,Z)/S}
({\rm gr}_k^{P^{{\cal P}^{\rm ex}/{\cal Q}^{\rm ex}}}
\Om^{\bul}_{{\cal P}^{\rm ex}/S}
\otimes_{\mab Z}{\mab Q}) \\ 
{} & 
\os{\sim}{=}
L^{\rm conv}_{(X,Z)/S}(b_*^{(k)}
(\Om^{\bul}_{\os{\circ}{\cal P}{}^{{\rm ex},(k)}/S}
\otimes_{\mab Z}{\mab Q}))[-k]. 
\end{align*}
By (\ref{linc})  
this complex is equal to 
$$a^{(k)}_{{\rm conv}*}
L^{(k){\rm conv}}(\Om^{\bul}_{\os{\circ}{\cal P}{}^{{\rm ex},(k)}/S}
\otimes_{\mab Z}{\mab Q})[-k].$$    
In conclusion, we obtain the first isomorphism in (\ref{eqn:grpdl}). 
\par 
By (\ref{theo:pl}) we obtain
the second quasi-isomorphism in (\ref{eqn:grpdl}).
\end{proof}

\begin{rema}\label{rem:gtraff}
By using the (filtered) cohomological descent 
(\cite[Lemma 1.5.1]{nh2}),  the commutativity of 
the diagram in [loc.~cit., Lemma 1.3.4.1] and (\ref{prop:rescos}) below, 
we can easily eliminate the assumption of the affineness 
in (\ref{prop:grla}). 
\end{rema}

For simplicity of notation,  set 
$$({\cal K}_{\os{\to}{T}}\otimes_{{\cal O}_{{\cal P}^{\rm ex}}}
\Om^{\bul}_{{\cal P}^{\rm ex}/S},P^D):=
({\cal K}_{\os{\to}{T}}\otimes_{{\cal O}_{{\cal P}^{\rm ex}}}
\Om^{\bul}_{{\cal P}^{\rm ex}/S}, 
\{{\cal K}_{\os{\to}{T}}\otimes_{{\cal O}_{{\cal P}^{\rm ex}}}
P^{{\cal P}^{\rm ex}/{\cal Q}^{\rm ex}}_k
\Om^{\bul}_{{\cal P}^{\rm ex}/S}\}_{k\in {\mab Z}}).$$

\begin{prop}\label{prop:ulcrz}
Let $\bet_n \col X_n \lo X$ be a natural morphism 
and 
identify $(T_n)_{{\rm zar}}$ with $(X_n)_{\rm zar}$. 
Then
\begin{equation*}
Ru^{\rm conv}_{(X,Z)/S*}
(L^{\rm conv}_{(X,Z)/S}
(\Om^{\bul}_{{\cal P}^{\rm ex}/S}
\otimes_{\mab Z}{\mab Q}),P^D)
=(\vpl_n \bet_n)({\cal K}_{\os{\to}{T}}
\otimes_{{\cal O}_{{\cal P}^{\rm ex}}}
\Om^{\bul}_{{\cal P}^{\rm ex}/S},P^D)
\tag{6.11.1}\label{eqn:uxzl}
\end{equation*}
in ${\rm D}^+{\rm F}(f^{-1}({\cal K}_S))$.
\end{prop}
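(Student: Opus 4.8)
The plan is to reduce the statement to the unfiltered pushforward computation of (\ref{prop:cdfza}), carried out in each degree and each filtration step. Recall from (\ref{eqn:plc}) that, by definition, $P^D_kL^{\rm conv}_{(X,Z)/S}(\Om^i_{{\cal P}^{\rm ex}/S}\otimes_{\mab Z}{\mab Q})=L^{\rm conv}_{(X,Z)/S}(P^{{\cal P}^{\rm ex}/{\cal Q}^{\rm ex}}_k\Om^i_{{\cal P}^{\rm ex}/S}\otimes_{\mab Z}{\mab Q})$, and that by (\ref{theo:injf}) (2) the natural morphism from this object into $L^{\rm conv}_{(X,Z)/S}(\Om^i_{{\cal P}^{\rm ex}/S}\otimes_{\mab Z}{\mab Q})$ is injective; hence $P^D$ is an increasing filtration of $L^{\rm conv}_{(X,Z)/S}(\Om^{\bul}_{{\cal P}^{\rm ex}/S}\otimes_{\mab Z}{\mab Q})$ by subcomplexes, finite in each degree. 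I also note at the outset that $\os{\circ}{\cal P}{}^{\rm ex}=\os{\circ}{\cal Q}{}^{\rm ex}$ and that, by (\ref{rema:rfite}) together with the fact that the construction of the universal enlargement depends only on the underlying formal scheme and the underlying closed subscheme $\os{\circ}{X}$, the systems $\os{\to}{T}=\{T_n\}$ and $\os{\to}{T}_{{\cal Q}^{\rm ex}}=\{T_{{\cal Q}^{\rm ex},n}\}$ have the same underlying formal schemes; consequently ${\cal K}_{T_n}={\cal K}_{T_{{\cal Q}^{\rm ex},n}}$ and ${\cal O}_{{\cal P}^{\rm ex}}={\cal O}_{{\cal Q}^{\rm ex}}$ on this common underlying formal scheme, and these identifications will silently match the two sides of (\ref{eqn:uxzl}).

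First I would prove that each term $P^D_kL^{\rm conv}_{(X,Z)/S}(\Om^i_{{\cal P}^{\rm ex}/S}\otimes_{\mab Z}{\mab Q})$ is $u^{\rm conv}_{(X,Z)/S*}$-acyclic. Writing $M:=P^{{\cal P}^{\rm ex}/{\cal Q}^{\rm ex}}_k\Om^i_{{\cal P}^{\rm ex}/S}$, a coherent ${\cal O}_{{\cal Q}^{\rm ex}}$-module, the system $\{g^*_{{\cal Q}^{\rm ex},n}(M\otimes_{\mab Z}{\mab Q})\}_{n=1}^{\infty}$ is a coherent crystalline ${\cal K}_{\os{\to}{T}_{{\cal Q}^{\rm ex}}}$-module, so $\varphi^*_{{\cal Q}^{\rm ex}}$ of it is a coherent crystal of ${\cal K}_{(X,Z)/S}\vert_{T_{{\cal Q}^{\rm ex}}}$-modules, and $L^{\rm conv}_{(X,Z)/S}(M\otimes_{\mab Z}{\mab Q})$ is $j_{T_{{\cal Q}^{\rm ex}}*}$ of this coherent crystal. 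By the log version of \cite[Corollary 4.4]{oc} (equivalently \cite[Corollary 2.3.4]{s2}, as recalled in the proof of (\ref{prop:cdfza})), $j_{T_{{\cal Q}^{\rm ex}}*}$ of a coherent crystal is $u^{\rm conv}_{(X,Z)/S*}$-acyclic. Since $P^D$ is finite in each degree, a d\'{e}vissage along the short exact sequences $0\to P^D_{k-1}\to P^D_k\to {\rm gr}^{P^D}_k\to 0$ then shows that the graded pieces ${\rm gr}^{P^D}_kL^{\rm conv}_{(X,Z)/S}(\Om^i_{{\cal P}^{\rm ex}/S}\otimes_{\mab Z}{\mab Q})$ are $u^{\rm conv}_{(X,Z)/S*}$-acyclic as well. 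Hence the filtered complex $(L^{\rm conv}_{(X,Z)/S}(\Om^{\bul}_{{\cal P}^{\rm ex}/S}\otimes_{\mab Z}{\mab Q}),P^D)$ is adapted to $u^{\rm conv}_{(X,Z)/S*}$, and $Ru^{\rm conv}_{(X,Z)/S*}$ of it is computed by applying the left exact functor $u^{\rm conv}_{(X,Z)/S*}$ directly, in each degree and each filtration step.

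Next I would compute $u^{\rm conv}_{(X,Z)/S*}L^{\rm conv}_{(X,Z)/S}(M\otimes_{\mab Z}{\mab Q})$ for a coherent ${\cal O}_{{\cal Q}^{\rm ex}}$-module $M$. By the relative log version of the identities $(\vpl_n\bet_{n*})\varphi_{\os{\to}{{\mathfrak T}}*}=u^{\rm conv}_{(X,Z)/S*}j_{T_{{\cal Q}^{\rm ex}}*}$ (cf.~\cite[p.~147]{oc}, \cite[p.~91]{s2}) and $\varphi_{\os{\to}{{\mathfrak T}}*}\varphi^*_{{\cal Q}^{\rm ex}}={\rm id}$ used in the proof of (\ref{prop:cdfza}), we obtain $u^{\rm conv}_{(X,Z)/S*}L^{\rm conv}_{(X,Z)/S}(M\otimes_{\mab Z}{\mab Q})=\vpl_n\bet_{n*}(g^*_{{\cal Q}^{\rm ex},n}(M\otimes_{\mab Z}{\mab Q}))=\vpl_n\bet_{n*}({\cal K}_{T_{{\cal Q}^{\rm ex},n}}\otimes_{{\cal O}_{{\cal Q}^{\rm ex}}}M)$, where $(T_{{\cal Q}^{\rm ex},n})_{\rm zar}$ is identified with $(X_n)_{\rm zar}$. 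Taking $M=P^{{\cal P}^{\rm ex}/{\cal Q}^{\rm ex}}_k\Om^i_{{\cal P}^{\rm ex}/S}$, using the identifications of the first paragraph, and assembling over $i$ and $k$ (legitimate by functoriality of $\vpl_n\bet_{n*}$ with respect to the differentials and the filtration inclusions), this yields $u^{\rm conv}_{(X,Z)/S*}(L^{\rm conv}_{(X,Z)/S}(\Om^{\bul}_{{\cal P}^{\rm ex}/S}\otimes_{\mab Z}{\mab Q}),P^D)=(\vpl_n\bet_n)({\cal K}_{\os{\to}{T}}\otimes_{{\cal O}_{{\cal P}^{\rm ex}}}\Om^{\bul}_{{\cal P}^{\rm ex}/S},P^D)$, which combined with the previous paragraph gives (\ref{eqn:uxzl}).

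The content beyond (\ref{prop:cdfza}) and (\ref{theo:injf}) (2) is minimal, so the anticipated difficulty is bookkeeping rather than a new idea. The one point that genuinely needs care is the passage from the unfiltered identity to the filtered one: one must know that $Ru^{\rm conv}_{(X,Z)/S*}$ is compatible with $P^D$, and this rests precisely on (\ref{theo:injf}) (2) (so that $P^D$ consists of subcomplexes and the d\'{e}vissage above is available) together with the acyclicity $R^qj_{T_{{\cal Q}^{\rm ex}}*}(F)=0$ $(q>0)$ for coherent crystals $F$. A secondary, harmless point is the identification of the two systems $\{T_n\}$ and $\{T_{{\cal Q}^{\rm ex},n}\}$, which differ only in their log structures; this is what makes the ${\cal P}^{\rm ex}$ occurring on the right-hand side of (\ref{eqn:uxzl}) and the ${\cal Q}^{\rm ex}$ implicit in $L^{\rm conv}_{(X,Z)/S}$ fit together.
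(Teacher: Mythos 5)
Your proof is correct and follows essentially the same route as the paper's: the paper likewise invokes the (relative log version of) \cite[Corollary 4.4]{oc}/\cite[Corollary 2.3.4]{s2} to replace $Ru^{\rm conv}_{(X,Z)/S*}$ by $u^{\rm conv}_{(X,Z)/S*}$ applied termwise to the filtration, and then concludes by the computation in (\ref{prop:cdfza}). Your additional care about the filtered aspect (injectivity from (\ref{theo:injf}) (2), d\'{e}vissage for the graded pieces) and the identification of the two systems of universal enlargements only makes explicit what the paper leaves implicit.
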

\begin{proof}
By the obvious relative version of 
\cite[Corollary 2.3.4]{s2} which is the log version of 
\cite[Corollary 4.4]{oc},  
the left hand side of (\ref{eqn:uxzl})
is equal to 
\begin{align*} 
& (u^{\rm conv}_{(X,Z)/S*}L^{\rm conv}_{(X,Z)/S}
(\Om^{\bul}_{{\cal P}^{\rm ex}/S}\otimes_{\mab Z}{\mab Q}), 
\{u^{\rm conv}_{(X,Z)/S*}L^{\rm conv}_{(X,Z)/S}
(P^{{\cal P}^{\rm ex}/{\cal Q}^{\rm ex}}_k
\Om^{\bul}_{{\cal P}^{\rm ex}/S} 
\otimes_{\mab Z}{\mab Q}))\}_{k\in {\mab Z}}). 
\end{align*}  
The rest of the proof is the same as that of (\ref{prop:cdfza}). 
\end{proof} 

\parno 
The following is a log convergent analogue of 
\cite[(4.8)]{nh3}: 

\begin{prop}\label{prop:rescos}
Let ${\cal V}'$, $\pi'$, $\kap'$ and $S'$ 
be similar objects to ${\cal V}$, $\pi$ $\kap$ and $S$
as in {\rm \S\ref{sec:logcd}}, respectively. 
Let 
\begin{equation*} 
(X',D') \os{\sus}{\lo} `{\cal R}'
\quad {\rm and} \quad 
(X',Z') \os{\sus}{\lo} {\cal R}' 
\end{equation*}  
be similar closed immersions to 
\begin{equation*} 
(X,D) \os{\sus}{\lo} `{\cal R}
\quad {\rm and} \quad 
(X,Z) \os{\sus}{\lo} {\cal R},
\end{equation*}  
Let ${\cal P}'$ and ${\cal Q}'$
be similar log formal schemes to  
${\cal P}$ and ${\cal Q}$, respectively.  
Let ${\cal M}'{}^{\rm ex}$ be the similar log structure 
to ${\cal M}^{\rm ex}$. 
Let 
\begin{equation*} 
\begin{CD} 
(X',D'\cup Z') @>{\subset}>> {\cal P}' \\ 
@V{\eps_{(X',D'\cup Z',Z')/S}}VV @VVV \\ 
(X',Z') @>{\subset}>> {\cal R}' 
\end{CD} 
\tag{6.12.1}\label{eqn:pvpq} 
\end{equation*}
be a similar diagram to $(\ref{eqn:pvq})$ over 
a $p$-adic formal ${\cal V}'$-scheme $S'$.   
Let $a'{}^{(k)}\col (D'^{(k)},Z\vert_{D'^{(k)}})
\lo (X',Z')$ $(k\in {\mab N})$
be the natural morphism of log schemes over $S'_1$.  
Let $L'^{(k)}$ be the log linearization functor for 
coherent ${\cal K}_{D^{(k)}({\cal M}'{}^{\rm ex})}$-modules. 
Assume that there exists a morphism 
from the commutative diagram {\rm (\ref{eqn:pvq})} to 
the commutative diagram {\rm (\ref{eqn:pvpq})} over $S\lo S'$. 
Let 
${\mathfrak g}\col 
(\os{\circ}{\cal P}{}^{\rm ex},{\cal M}^{\rm ex})\lo 
(\os{\circ}{\cal P}{}'^{\rm ex},{\cal M}{}'^{\rm ex})$ 
and 
${\mathfrak g}_{(X,Z)}\col (X,Z)\lo (X',Z')$ 
be the induced morphisms. 
Assume that, for each point $x\in \os{\circ}{\cal P}{}^{\rm ex}$ 
and for each member $m$ of the minimal generators 
of ${\cal M}^{\rm ex}_{x}/{\cal O}^*_{{\cal P}^{\rm ex},x}$, 
there exists a unique member $m'$ of the minimal generators of 
${\cal M}'{}^{\rm ex}_{\os{\circ}{g}(x)}
/{\cal O}^*_{{\cal P}'{}^{\rm ex},\os{\circ}{g}(x)}$ 
such that ${\mathfrak g}^*(m')= m$ and such that the image of 
the other minimal generators of 
${\cal M}'{}^{\rm ex}_{\os{\circ}{\mathfrak g}(x)}
/{\cal O}^*_{{\cal P}'{}^{\rm ex},\os{\circ}{\mathfrak g}(x)}$ by ${\mathfrak g}^*$ 
are the trivial element of 
${\cal M}^{\rm ex}_{x}/{\cal O}^*_{{\cal P}^{\rm ex},x}$. 
Let $b'{}^{(k)} \col D^{(k)}({\cal M}{}'^{\rm ex})
\lo \os{\circ}{\cal P}{}{}'^{\rm ex}$ 
be the similar morphism to $b^{(k)}$. 
Let $\{T'_n\}_{n=1}^{\infty}$ 
$($resp.~$\{T'{}^{(k)}_n\}_{n=1}^{\infty})$ 
be the system of the log universal enlargements of 
the immersion $(X',D'\cup Z')\os{\sus}{\lo}{\cal P}'$ 
$($resp.~$(D'{}^{(k)}, Z'\vert_{D'{}^{(k)}}) \os{\subset}{\lo} 
(D^{(k)}({\cal M}'{}^{\rm ex}), {\cal N}'{}^{\rm ex}))$
and 
let $c'{}^{(k)}_n \col T'{}^{(k)}_n \lo T'_n$
be the morphism  induced by $b'{}^{(k)}$. 
Then the morphisms 
${\cal P}\lo {\cal P}'$ and  
${\cal R}\lo {\cal R}'$ give us 
the following commutative diagram$:$
\begin{equation*} 
\begin{CD}  
{\rm Res}: 
{\mathfrak g}_{(X,Z)_{\rm conv}*}L^{\rm conv}_{(X,Z)/S}
({\rm gr}_k^{P^{{\cal P}^{\rm ex}/{\cal Q}^{\rm ex}}} 
\Om^{\bul}_{{\cal P}^{\rm ex}/S}) 
@>{\sim}>> \\
@AAA  \\
{\rm Res}: 
L^{\rm conv}_{(X',Z')/S'}
({\rm gr}_k^{P^{{\cal P}'{}^{\rm ex}/{\cal Q}'{}^{\rm ex}}} 
\Om^{\bul}_{{\cal P}'{}^{\rm ex}/S'}) 
@>{\sim}>>
\end{CD}
\tag{6.12.2}\label{eqn:relcmrp} 
\end{equation*} 
\begin{equation*} 
\begin{CD}  
{\mathfrak g}_{(X,Z)_{\rm conv}*}
a^{(k)}_{{\rm conv}*}
(L^{(k){\rm conv}}(\Om^{\bul}_{D^{(k)}({\cal M}^{\rm ex})/S}
\otimes_{\mab Z}{\mab Q})
\otimes_{\mab Z}\vp^{(k)}_{\rm conv}(D/S;Z))[-k]\\
@AAA \\
a'{}^{(k)}_{{\rm conv}*}
(L'{}^{(k){\rm conv}}
(\Om^{\bul}_{D^{(k)}({\cal M}{}'^{\rm ex})/S'}
\otimes_{\mab Z}{\mab Q})
\otimes_{\mab Z}\vp^{(k)}_{\rm conv}
(D'/S';Z'))[-k].  
\end{CD}
\end{equation*} 
\end{prop}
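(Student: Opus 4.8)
The plan is to reduce the commutativity of (\ref{eqn:relcmrp}) to a local assertion about the explicit Poincar\'{e} residue symbol maps on logarithmic de Rham complexes, and then to invoke the de Rham-level compatibility already proved in \cite[(4.8)]{nh3}, transporting it through the log convergent linearization functor. First I would observe that the two horizontal arrows in (\ref{eqn:relcmrp}) are isomorphisms by (\ref{prop:grla}) (more precisely by (\ref{eqn:regrp}), (\ref{eqn:regarp}) and (\ref{prop:afex})), applied before and after ${\mathfrak g}_*$; hence it suffices to prove that the square commutes. Since it is a diagram of sheaves on $((X,Z)/S)_{\rm conv}$, the question is local on $\os{\circ}{X}$, hence on $\os{\circ}{\cal P}{}^{\rm ex}$ and on the compatible local data for the primed objects. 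Replacing ${\cal P},{\cal R}$ by ${\cal P}^{\rm loc,ex},{\cal R}^{\rm loc,ex}$ and their primed analogues, I may assume that the immersions $(X,D\cup Z)\os{\sus}{\lo}{\cal P}$, $(X,Z)\os{\sus}{\lo}{\cal R}$ and their primed analogues are exact, and then by \cite[(2.1.5)]{nh2} that ${\cal P}=({\cal X},{\cal D}\cup {\cal Z})$, ${\cal P}'=({\cal X}',{\cal D}'\cup {\cal Z}')$ with ${\cal X},{\cal X}'$ formally smooth and ${\cal D},{\cal Z}$ (resp.\ ${\cal D}',{\cal Z}'$) transversal relative SNCD's. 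I would also record that, by the hypotheses and (\ref{lemm:bcue}), the primed systems of universal enlargements $\{T'_n\}$, $\{T'{}^{(k)}_n\}$ pull back to $\{T_n\}$, $\{T^{(k)}_n\}$, so that ${\mathfrak g}$ induces morphisms of systems of universal enlargements and hence, by naturality of $j_{\bullet*}\vphi^*$ and of $L^{\rm conv}$, the vertical arrows of (\ref{eqn:relcmrp}).

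In this exact situation the residue morphism is the one of (\ref{eqn:regrp})--(\ref{eqn:regarp}), given on symbols by $\sig\otimes d\log m_{i_0}\cdots d\log m_{i_{k-1}}\,\om\lom \sig\otimes b^*_{i_0\cdots i_{k-1}}(\om)\otimes(\text{orientation }(i_0\cdots i_{k-1}))$, and the primed residue has the same shape. The minimal-generators hypothesis guarantees that ${\mathfrak g}^*$ carries the filtration $P^{{\cal P}'{}^{\rm ex}/{\cal Q}'{}^{\rm ex}}$ into $P^{{\cal P}^{\rm ex}/{\cal Q}^{\rm ex}}$ and sends $d\log m'\mapsto d\log m$, while the generators of ${\cal M}'{}^{\rm ex}$ not hit by $m$ are sent to trivial elements and so contribute nothing on $\mathrm{gr}_k$; moreover, by (\ref{prop:mmoo}) (= \cite[(4.3)]{nh3}) there is a canonical morphism $g^{(k)}\col D^{(k)}({\cal M}^{\rm ex})\lo D^{(k)}({\cal M}'{}^{\rm ex})$ compatible with the orientation sheaves, i.e.\ $g^{(k)*}\vp^{(k)}_{\rm zar}(D({\cal M}'{}^{\rm ex}))\simeq \vp^{(k)}_{\rm zar}(D({\cal M}^{\rm ex}))$ compatibly with the residue symbols. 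Feeding these compatibilities into the explicit formulas shows that the square of symbol maps commutes already at the level of $\mathrm{gr}_k$ of the logarithmic de Rham complexes; this is exactly the content of \cite[(4.8)]{nh3}.

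Finally I would apply the log convergent linearization functors and take direct images: the compatibility (\ref{linc}) with the closed immersions $a^{(k)},b^{(k)}$ (resp.\ their primed versions), the naturality of $L^{\rm conv}$ with respect to the connection $\nabla^0$ of (\ref{lemm:lcl}), and the log convergent Poincar\'{e} lemma (\ref{theo:pl}) together transport the commuting square of $\mathrm{gr}_k$-symbol maps verbatim to (\ref{eqn:relcmrp}). The step I expect to be the main obstacle is precisely this bookkeeping: one must track the $d\log$-symbols, the local closed subschemes $D^{(k)}$, and the rank-one (only non-canonically trivial) orientation sheaves simultaneously through ${\mathfrak g}^*$ and through the linearization functor, and verify that all three transform coherently. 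It is the minimal-generators hypothesis together with (\ref{prop:mmoo}) that makes this possible; once the symbol-level statement is in place, the remainder is formal.
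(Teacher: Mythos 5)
Your proposal is correct and follows essentially the same route as the paper: reduce to the case of exact (closed) immersions, obtain the commutativity at the level of ${\cal K}_{T_n}\otimes\Om^{\bul}_{{\cal P}^{\rm ex}/S}$ from the de Rham-level residue compatibility of \cite[(4.8)]{nh3}, and then transport it through the linearization functors via the compatibility with closed immersions (\ref{coh:com}) and the orientation-sheaf identification (\ref{eqn:zcort}). The only inessential difference is that you also invoke the Poincar\'{e} lemma (\ref{theo:pl}), which is not needed for the square (\ref{eqn:relcmrp}) itself.
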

\begin{proof} 
We may assume that the immersions  
$(X,D\cup Z)\os{\sus}{\lo} {\cal P}$ 
and 
$(X',D'\cup Z')\os{\sus}{\lo} {\cal P}'$ 
are exact and closed. 
Let ${\mathfrak g}_n\col T_n\lo T'_n$ be 
the induced morphism by the morphism ${\cal P}\lo {\cal P}'$. 
Then, as in \cite[(4.8)]{nh3}, 
we obtain the following isomorphism 
\begin{equation*} 
\begin{CD}  
{\mathfrak g}_{n*}({\rm Res}^{{\cal P}/{\cal Q}}): 
{\mathfrak g}_{n*}({\rm gr}_k^{P^{{\cal P}/{\cal Q}}}
({\cal K}_{T_n}\otimes_{{\cal O}_{\cal P}}
\Om^{\bul}_{{\cal P}/S}))
@>{\sim}>> \\
@AAA \\
{\rm Res}^{{\cal P}'/{\cal Q}'}: 
{\rm gr}_k^{P^{{\cal P}'/{\cal Q}'}} 
({\cal K}_{T'_n}
\otimes_{{\cal O}_{{\cal P}'}}\Om^{\bul}_{{\cal P}'/S'})
@>{\sim}>> 
\end{CD}
\tag{6.12.3}\label{eqn:regdmrp} 
\end{equation*} 
\begin{equation*} 
\begin{CD}  
{\mathfrak g}_{n*}c^{(k)}_{n*}
({\cal K}_{T_n^{(k)}}
\otimes_{{\cal O}_{{\cal P}^{\rm ex}}}
\Om^{\bul}_{D^{(k)}({\cal M}^{\rm ex})/S}
\otimes_{\mab Z}
\vp^{(k)}_{\rm zar}(D^{(k)}({\cal M}^{\rm ex}))[-k])\\ 
@AAA\\
c'{}^{(k)}_{n*}(
{\cal K}_{T'{}^{(k)}_n}
\otimes_{{\cal O}_{{\cal P}{}'^{{\rm ex}}}}
\Om^{\bul}_{D^{(k)}({\cal M}{}'^{\rm ex})/S'}
\otimes_{\mab Z}
\vp^{(k)}_{\rm zar}(D^{(k)}({\cal M}'{}^{\rm ex}))[-k]). 
\end{CD}
\end{equation*} 
Hence we obtain (\ref{prop:rescos}) 
by the proof of (\ref{prop:grla}). 
\end{proof}

\section{(Bi)simplicial log convergent topoi}\label{sec:rlct}
In this section we give the notations of 
(bi)simplicial log convergent topoi and 
(bi)simplicial Zariski topoi for later sections. 
\par 
Let the notations be as in the beginning of 
\S\ref{sec:logcd}.  
Let $Y=\us{i\in I}{\bigcup}Y_i$ 
be an open covering of $Y$, where 
$I$ is a (not necessarily finite) set. 
Set $Y_0:=\coprod_{i\in I}Y_i$ and 
$Y_n:= {\rm cosk}_0^Y(Y_0)_n:= 
\underset{n~{\rm pieces}}{\underbrace{
Y_0\times_Y \cdots \times_YY_0}}$.  
Then we have the simplicial log scheme $Y_{\bul}=
Y_{\bul \in {\mab N}}$ over $Y$. 
We also have the ringed topos 
$(({Y_{\bul}/S})_{\rm conv},{\cal K}_{Y_{\bul}/S})$. 
Let 
${\pi}_{\rm conv} \col  
({Y_{\bul}/S})_{\rm conv} \lo 
({Y/S})_{\rm conv}$ 
be the natural morphism of topoi.
The morphism
${\pi}_{\rm conv}$ induces the following morphism 
\begin{equation*} 
\pi_{\rm conv} \col
(({Y_{\bul}/S})_{\rm conv},{\cal K}_{Y_{\bul}/S})
\lo 
(({Y/S})_{\rm conv},{\cal K}_{Y/S}) 
\tag{7.0.1}\label{eqn:pidf}
\end{equation*}
of ringed topoi and a morphism of filtered derived categories:
\begin{equation*}
R{\pi}_{{\rm conv}*} \col
{\rm D}^+{\rm F}({\cal K}_{Y_{\bul}/S}) \lo 
{\rm D}^+{\rm F}({\cal K}_{Y/S}).
\tag{7.0.2}\label{eqn:rtalc}
\end{equation*}
\par
Assume that we are given another open covering 
$\{Y_j\}_{j\in J}$ of $Y$. 
Set $Y'_0:=\coprod_{j\in J}Y_j$ 
and 
$Y_{mn}:={\rm cosk}_0^Y(Y_0)_m\times_Y
{\rm cosk}_0^Y(Y'_0)_n$. 
Then we have a bisimplicial log scheme 
$Y_{\bul \bul}$, a bisimplicial ringed topos 
$(({Y_{\bul \bul}/S})_{\rm conv},
{\cal K}_{Y_{\bul \bul}/S})$    
and the following natural  morphism of ringed topoi 
\begin{equation*} 
\eta_{\rm conv} \col 
(({Y_{\bul \bul}/S})_{\rm conv},
{\cal K}_{Y_{\bul \bul}/S}) \lo 
(({Y_{\bul}/S})_{\rm conv},
{\cal K}_{Y_{\bul}/S}).  
\tag{7.0.3}\label{eqn:edf}
\end{equation*}
We have the following two morphisms of 
filtered derived categories:
\begin{equation*}
R\eta_{{\rm conv}*} \col
{\rm D}^+{\rm F}({\cal K}_{Y_{\bul \bul}/S}) \lo 
{\rm D}^+{\rm F}({\cal K}_{Y_{\bul}/S}),
\tag{7.0.4}\label{eqn:retlcds}
\end{equation*}
\begin{equation*}
R\eta_{m,{\rm conv}*} \col
{\rm D}^+{\rm F}({\cal K}_{Y_{m \bul}/S}) \lo 
{\rm D}^+{\rm F}({\cal K}_{Y_m/S}) \quad (m\in {\mab N}).
\tag{7.0.5}\label{eqn:siretlc}
\end{equation*}
\par
By using the composite morphisms $f_i \col 
\os{\circ}{Y}_i \os{\subset}{\lo} \os{\circ}{Y} 
\os{\os{\circ}{f}}{\lo} \os{\circ}{S}$ $(i\in I)$, we 
also have an analogous simplicial ringed topos 
$({\os{\circ}{Y}}_{\bul{\rm zar}},
f^{-1}_{\bul}({\cal K}_S))$ and 
an analogous morphism 
\begin{equation*} 
{\pi}_{\rm zar} \col ({\os{\circ}{Y}}_{\bul {\rm zar}}, 
f^{-1}_{\bul}({\cal K}_S)) \lo 
({\os{\circ}{Y}}_{\rm zar}, f^{-1}({\cal K}_S)) 
\tag{7.0.6}\label{eqn:pizs}
\end{equation*} 
of ringed topoi.
We have the following 
three morphisms of filtered derived categories:
\begin{equation*}
R{\pi}_{{\rm zar}*} \col
{\rm D}^+{\rm F}(f^{-1}_{\bul}({\cal K}_S)) \lo 
{\rm D}^+{\rm F}(f^{-1}({\cal K}_S)), 
\tag{7.0.7}\label{eqn:rtazr}
\end{equation*}
\begin{equation*}
R\eta_{{\rm zar}*} \col
{\rm D}^+{\rm F}(f^{-1}_{\bul \bul}({\cal K}_S)) \lo 
{\rm D}^+{\rm F}(f^{-1}_{\bul}({\cal K}_S)),
\tag{7.0.8}\label{eqn:retz}
\end{equation*}
\begin{equation*}
R\eta_{m,{\rm zar}*} \col
{\rm D}^+{\rm F}(f^{-1}_{m\bul}({\cal K}_S)) \lo 
{\rm D}^+{\rm F}(f^{-1}_m({\cal K}_S)).
\tag{7.0.9}\label{eqn:retizr}
\end{equation*}
Let $u^{\rm conv}_{Y/S} \col 
((Y/S)_{\rm conv}, {\cal K}_{Y/S})
\lo ({\os{\circ}{Y}}_{{\rm zar}},f^{-1}({\cal K}_S))$ 
be the projection in (\ref{eqn:uwksr}). 
We also have the following natural projections
\begin{equation*}
u^{\rm conv}_{Y_{\bul}/S} 
\col 
(({Y_{\bul}/S})_{\rm conv}, {\cal K}_{Y_{\bul}/S})
\lo ({\os{\circ}{Y}}_{\bul{\rm zar}},f^{-1}_{\bul}({\cal K}_S)),
\tag{7.0.10}\label{eqn:uysr}
\end{equation*}
\begin{equation*}
u^{\rm conv}_{Y_{\bul \bul}/S} \col 
(({Y_{\bul \bul}/S})_{\rm conv},
{\cal K}_{Y_{\bul \bul}/S})
\lo ({\os{\circ}{Y}}_{\bul \bul{\rm zar}},
f^{-1}_{\bul \bul}({\cal K}_S)).
\tag{7.0.11}\label{eqn:uysrdb}
\end{equation*}
We have the following commutative diagrams: 
\begin{equation*}
\begin{CD}
{\rm D}^+{\rm F}({\cal K}_{Y_{\bul}/S}) 
@>{R\pi_{\rm conv*}}>> 
{\rm D}^+{\rm F}({\cal K}_{Y/S})  \\ 
@V{Ru^{\rm conv}_{Y_{\bul}/S*}}VV 
@VV{Ru^{\rm conv}_{Y/S*}}V \\
{\rm D}^+{\rm F}(f^{-1}_{\bul}({\cal K}_S))  
@>{R\pi_{\rm zar*}}>> 
{\rm D}^+{\rm F}(f^{-1}({\cal K}_S))
\end{CD}
\tag{7.0.12}\label{eqn:ubys}
\end{equation*}
and 
\begin{equation*}
\begin{CD}
{\rm D}^+{\rm F}({\cal K}_{Y_{\bul \bul}/S}) 
@>{R\eta_{\rm conv*}}>> 
{\rm D}^+{\rm F}({\cal K}_{Y_{\bul}/S})  \\ 
@V{Ru^{\rm conv}_{Y_{\bul \bul}/S*}}VV 
@VV{Ru^{\rm conv}_{Y_{\bul}/S*}}V \\
{\rm D}^+{\rm F}(f^{-1}_{\bul \bul}({\cal K}_S))  
@>{R\eta_{\rm zar*}}>>  
{\rm D}^+{\rm F}(f^{-1}_{\bul}({\cal K}_S)). 
\end{CD}
\tag{7.0.13}\label{eqn:retuys}
\end{equation*}
\par
Assume that we are given the commutative diagram 
(\ref{cd:yypssp}) and open coverings 
$\{Y_i\}_{i \in I}$ of $Y$ and 
$\{Y'_i\}_{i \in I}$ of $Y'$, respectively, 
such that $g$ induces a morphism 
$g_i \col Y_i \lo Y_i'$ of log schemes. 
Let
$\pi_{Y{\rm conv}}$ and 
$\pi_{Y'{\rm conv}}$ 
be morphisms of (ringed) topoi defined in (\ref{eqn:pidf})  
for $Y/S$ and $Y'/S'$, respectively. 
Let ${\pi}_{Y{\rm zar}}$ and ${\pi}_{Y'{\rm zar}}$ 
be morphisms of (ringed) topoi defined in 
(\ref{eqn:pizs}) for $Y/S$ and $Y'/S'$, respectively. 
Then the family $\{g_i\}_{i\in I}$ 
induces the natural morphism 
$g_{\bul}\col Y_{\bul} \lo Y'_{\bul}$
of simplicial log schemes over the morphism $Y\lo Y'$. 
Hence we obtain the following commutative diagram:
\begin{equation*}
\begin{CD}
{\rm D}^+{\rm F}({\cal K}_{Y_{\bul}/S}) 
@>{Rg_{\bul {\rm conv}*}}>> 
{\rm D}^+{\rm F}({\cal K}_{Y_{\bul}'/S'}) \\ 
@V{R{\pi}_{Y{\rm conv}*}}VV 
@VV{R{\pi}_{Y'{\rm conv}*}}V \\
{\rm D}^+{\rm F}({\cal K}_{Y/S}) 
@>{Rg_{{\rm conv}*}}>>  
{\rm D}^+{\rm F}({\cal K}_{Y'/S'}).
\end{CD}
\tag{7.0.14}\label{cd:rgrtc}
\end{equation*} 
We also have the following commutative diagram 
\begin{equation*}
\begin{CD}
{\rm D}^+{\rm F}(f^{-1}_{\bul}({\cal K}_S)) 
@>{Rg_{\bul{\rm zar}*}}>> 
{\rm D}^+{\rm F}(f'{}^{-1}_{\bul}({\cal K}_{S'})) \\ 
@V{R{\pi}_{Y{\rm zar}*}}VV @VV{R{\pi}_{Y'{\rm zar}*}}V \\
{\rm D}^+{\rm F}(f^{-1}({\cal K}_S)) @>{Rg_{{\rm zar}*}}>>  
{\rm D}^+{\rm F}(f'{}^{-1}({\cal K}_{S'})).
\end{CD}
\tag{7.0.15}\label{cd:rgztac}
\end{equation*}

\par 
Let $M$ be the log structure of $Y$. 
Let $N$ be the sub log structure of $M$. 
Let $M_i$ (resp.~$N_i$) be the pull-back of 
$M$ (resp.~$N$) to $Y_i$ $(i\in I)$.  
The morphism 
$\eps \col Y=(\os{\circ}{Y},M) 
\lo (\os{\circ}{Y},N)$ 
forgetting the structure $M\setminus N$ 
induces the following morphisms of ringed topoi: 
\begin{equation*}
\eps_{{\rm conv}{\bul}} \col 
(({(Y_{\bul},M_{\bul})/S})_{\rm conv},
{\cal K}_{(Y_{\bul},M_{\bul})/S}) 
\lo 
(({(Y_{\bul},N_{\bul})/S})_{\rm conv},
{\cal K}_{(Y_{\bul},N_{\bul})/S}),  
\tag{7.0.16}\label{eqn:ymnb}
\end{equation*} 
\begin{equation*}
\eps_{{\rm conv}{\bul \bul}} \col 
(({(Y_{\bul \bul},M_{\bul \bul})/S})_{\rm conv},
{\cal K}_{(Y_{\bul \bul},M_{\bul \bul})/S}) 
\lo 
(({(Y_{\bul \bul},N_{\bul \bul})/S})_{\rm conv},
{\cal K}_{(Y_{\bul \bul},N_{\bul \bul})/S}).  
\tag{7.0.17}\label{eqn:yddb}
\end{equation*} 
We have the following commutative diagram
\begin{equation*}
\begin{CD} 
(({(Y_{\bul},M_{\bul})/S})_{\rm conv},
{\cal K}_{(Y_{\bul},M_{\bul})/S}) 
@>{\eps_{{\rm conv}\bul}}>> 
(({(Y_{\bul},N_{\bul})/S})_{\rm conv},
{\cal K}_{(Y_{\bul},N_{\bul})/S})\\ 
@V{{\pi}_{M{\rm conv}}}VV  
@VV{{\pi}_{N{\rm conv}}}V \\
(({(Y,M)/S})_{\rm conv},{\cal K}_{(Y,M)/S}) 
@>{\eps_{\rm conv}}>> 
(({(Y,N)/S})_{\rm conv},{\cal K}_{(Y,M)/S}).
\end{CD}
\tag{7.0.18}\label{cd:epep}
\end{equation*}
Here 
${\pi}_{M{\rm conv}}$ and ${\pi}_{N{\rm conv}}$ 
are the morphisms of ringed topoi defined in (\ref{eqn:pidf});
we have written the symbols $M$ and $N$ in 
subscripts for clarity. 
\par 
We also have the following commutative diagram:
\begin{equation*}
\begin{CD} 
(({(Y_{\bul \bul},M_{\bul \bul})/S})_{\rm conv}, 
{\cal K}_{(Y_{\bul \bul},M_{\bul \bul})/S}) 
@>{\eps_{{\rm conv} \bul \bul}}>> 
(({(Y_{\bul \bul},N_{\bul \bul})/S})_{\rm conv},
{\cal K}_{(Y_{\bul \bul},N_{\bul \bul})/S})\\ 
@V{{\eta}_{M{\rm conv}}}VV  
@VV{{\eta}_{N{\rm conv}}}V \\
(({(Y_{\bul},M_{\bul})/S})_{\rm conv}, 
{\cal K}_{(Y_{\bul},M_{\bul})/S}) @>{\eps_{{\rm conv}\bul}}>> 
(({(Y_{\bul},N_{\bul})/S})_{\rm conv},
{\cal K}_{(Y_{\bul},N_{\bul})/S})).
\end{CD}
\tag{7.0.19}\label{cd:etmnep}
\end{equation*}
Here ${\eta}_{M{\rm conv}}$ 
and ${\eta}_{N{\rm conv}}$ 
are the morphisms of ringed topoi defined in (\ref{eqn:edf}). 
\par 
Let 
$u^{\rm conv}_{(Y_{\bul},L_{\bul})/S}$ and 
$u^{\rm conv}_{(Y_{\bul \bul},L_{\bul \bul})/S}$ $(L:=M,N)$ 
be the projections in (\ref{eqn:uysr}) and 
(\ref{eqn:uysrdb}) for $(Y,L)$, respectively. 
We have the following equalities: 
\begin{equation*}
u^{\rm conv}_{(Y_{\bul},N_{\bul})/S}\circ \eps_{{\rm conv}\bul}
= u^{\rm conv}_{(Y_{\bul},M_{\bul})/S}, \quad 
u^{\rm conv}_{(Y_{\bul \bul},N_{\bul \bul})/S}
\circ \eps_{{\rm conv}\bul \bul}= 
u^{\rm conv}_{(Y_{\bul \bul},M_{\bul \bul})/S}
\tag{7.0.20}\label{eqn:uepu}
\end{equation*}
as morphisms of ringed topoi. 
\par   

\section{Weight-filtered convergent complexes and $p$-adic purity}\label{sec:wfcipp}
Let ${\cal V}$, $\pi$, $S$ and $S_1$ 
be as in \S\ref{sec:logcd}. 
Let $f\col (X,D\cup Z)\lo S_1$ 
be a smooth scheme with transversal relative SNCD's 
over $S_1$. 
In this section we construct a filtered complex 
$(C_{\rm conv}({\cal K}_{(X,D\cup Z)/S}),P^D)$ 
in ${\rm D}^+{\rm F}({\cal K}_{(X,Z)/S})$ and prove 
the $p$-adic purity (\ref{theo:pap}). 
\par 

\par 
Let $X=\bigcup_iX_i$ be an affine open covering of $X$.  
Set $D_i:=D\vert_{X_i}$, $Z_i:=Z\vert_{X_i}$ 
and $(X_0,D_0\cup Z_0)
:=(\coprod_iX_i,\coprod_i(D_i\cup Z_i))$. 
Set also 
$(X_n,D_n\cup Z_n):={\rm cosk}_0^X(X_0,D_0\cup Z_0)_n$ 
$(n\in {\mab N})$. 
Then we have the simplicial log scheme 
$(X_{\bul},D_{\bul}\cup Z_{\bul})$ over $S_1$. 
Let 
\begin{equation*} 
(X_i,D_i) \os{\sus}{\lo} `{\cal R}_i \quad {\rm and} \quad 
(X_i,Z_i) \os{\sus}{\lo} {\cal R}_i 
\tag{8.0.1}\label{eqn:xdap} 
\end{equation*}  
be closed immersions into 
formally log smooth log noetherian $p$-adic formal schemes 
such that 
$\os{\circ}{`{\cal R}}_i=\os{\circ}{\cal R}_i$.   
Assume that $\os{\circ}{\cal R}_i$ 
is topologically of finite type over $S$.    
Set ${\cal P}_i:=(\os{\circ}{\cal R}_i,
M_{`{\cal R}_i}\oplus_{{\cal O}^*_{{\cal R}_i}}M_{{\cal R}_i})$.  
Assume that ${\cal P}_i$ is formally log smooth over $S$. 
Then we have a natural closed immersion 
$(X_i,D_i\cup Z_i) \os{\sus}{\lo} {\cal P}_i$.  
Set  
$`{\cal R}_0:=\coprod_i{`{\cal R}}_i$, 
${\cal R}_0:=\coprod_i{\cal R}_i$, 
${\cal P}_0:=\coprod_i{\cal P}_i$, 
$`{\cal R}_{\bul}:={\rm cosk}_0^S(`{\cal R}_0)$,  
${\cal R}_{\bul}:={\rm cosk}_0^S({\cal R}_0)$ 
and 
${\cal P}_{\bul}:={\rm cosk}_0^S({\cal P}_0)$.  
Then we have the following simplicial immersions  
\begin{equation*} 
(X_{\bul},D_{\bul}) \os{\sus}{\lo} `{\cal R}_{\bul} 
\quad {\rm and} \quad 
(X_{\bul},Z_{\bul}) \os{\sus}{\lo} {\cal R}_{\bul} 
\tag{8.0.2}\label{eqn:xdr}
\end{equation*} 
and the following commutative diagram:  
\begin{equation*} 
\begin{CD} 
(X_{\bul},D_{\bul}\cup Z_{\bul}) @>{\sus}>> {\cal P}_{\bul} \\ 
@VVV @VVV \\ 
(X_{\bul},Z_{\bul}) @>{\sus}>> {\cal R}_{\bul}. 
\end{CD}
\tag{8.0.3}\label{cd:pfcd}
\end{equation*} 
Note that 
$\os{\circ}{\cal P}_{\bul}=\os{\circ}{\cal R}_{\bul}$. 
Let $(X_{\bul},D_{\bul}\cup Z_{\bul})
\os{\sus}{\lo} {\cal P}^{\rm ex}_{\bul}$ 
(resp.~
$(X_{\bul},Z_{\bul})\os{\sus}{\lo} 
{\cal R}^{\rm ex}_{\bul}$)
be the exactification of 
the upper immersion (resp.~the lower immersion) 
in (\ref{cd:pfcd}). 
(Indeed, by the universality of the exactification, 
we have the simplicial log schemes 
${\cal P}^{\rm ex}_{\bul}$ and ${\cal R}^{\rm ex}_{\bul}$.) 
Let $(X_{\bul},D_{\bul})\os{\sus}{\lo} 
(`{\cal R}_{\bul})^{\rm ex}$ 
be the exactification of the former immersion in (\ref{eqn:xdr}).  
Let ${\cal M}^{\rm ex}_{\bul}$ 
(resp.~${\cal N}^{\rm ex}_{\bul}$) be 
the pull-back of the log structure of 
$(`{\cal R}_{\bul})^{\rm ex}$ 
(resp.~${\cal R}_{\bul}^{\rm ex}$) 
by the morphism ${\cal P}^{\rm ex}_{\bul} \lo 
(`{\cal R}_{\bul})^{\rm ex}$ 
(resp.~${\cal P}^{\rm ex}_{\bul} \lo {\cal R}_{\bul}^{\rm ex}$). 
Then we have the following equality: 
\begin{equation*} 
M_{{\cal P}^{\rm ex}_{\bul}}={\cal M}^{\rm ex}_{\bul}
\oplus_{{\cal O}^*_{{\cal P}^{\rm ex}_{\bul}}}
{\cal N}^{\rm ex}_{\bul}.  
\tag{8.0.4}\label{eqn:mdpn} 
\end{equation*} 
Endow $\os{\circ}{\cal P}{}^{\rm ex}_{\bul}$ 
with the pull-back of the log structure of ${\cal R}^{\rm ex}_{\bul}$ 
and let ${\cal Q}_{\bul}$ be the resulting log scheme. 
Then we have the natural simplicial exact closed immersion 
$(X_{\bul},Z_{\bul})\os{\sus}{\lo}{\cal Q}_{\bul}$ 
and we see that 
${\cal Q}_{\bul}={\cal Q}^{\rm ex}_{\bul}=
(\os{\circ}{\cal P}{}^{\rm ex}_{\bul},{\cal N}^{\rm ex}_{\bul})$. 
We have the following commutative diagram 
\begin{equation*} 
\begin{CD} 
(X_{\bul},D_{\bul}\cup Z_{\bul}) 
@>{\subset}>>{\cal P}^{\rm ex}_{\bul}\\ 
@V{\eps_{(X_{\bul},D_{\bul}\cup Z_{\bul},Z_{\bul})/S}}VV 
@VVV \\ 
(X_{\bul},Z_{\bul}) @>{\subset}>>{\cal Q}^{\rm ex}_{\bul}\\  
@| @VVV \\ 
(X_{\bul},Z_{\bul}) @>{\subset}>>{\cal R}^{\rm ex}_{\bul}.  
\end{CD} 
\tag{8.0.5}\label{eqn:pedvq} 
\end{equation*} 
\par 
Let $\{(U_{\bul n},T_{\bul n})\}_{n=1}^{\infty}$ be 
the system of 
the universal enlargements of the simplicial immersion 
$(X_{\bul},Z_{\bul})  \os{\sus}{\lo} {\cal Q}^{\rm ex}_{\bul}$ 
over $S$. 
Let $\bet_{\bul n} \col U_{\bul n} \lo X_{\bul}$ 
be the natural morphism.  
Identify $({T_{\bul n}})_{\rm zar}$ 
with $({U_{\bul n}})_{\rm zar}$. 
We have the following filtered complexes by (\ref{theo:injf}):  
\begin{align*} 
(L^{\rm conv}_{(X_{\bul},Z_{\bul})/S}
(\Om^{\bul}_{{\cal P}^{\rm ex}_{\bul}/S}),P^{D_{\bul}}):= 
(L^{\rm conv}_{(X_{\bul},Z_{\bul})/S}
(\Om^{\bul}_{{\cal P}^{\rm ex}_{\bul}/S}),
\{L^{\rm conv}_{(X_{\bul},Z_{\bul})/S}
(P^{{\cal P}^{\rm ex}_{\bul}/{\cal Q}^{\rm ex}_{\bul}}_k
\Om^{\bul}_{{\cal P}^{\rm ex}_{\bul}/S})\}_{k\in {\mab Z}})  
\end{align*} 
and 
\begin{equation*} 
\vpl_n\bet_{\bul n*}({\cal K}_{T_{\bul n}}
\otimes_{{\cal O}_{{\cal P}^{\rm ex}_{\bul}}}
\Om^{\bul}_{{\cal P}^{\rm ex}_{\bul}/S},P^{D_{\bul}})
:=
\vpl_n\bet_{\bul n*}({\cal K}_{T_{\bul n}}
\otimes_{{\cal O}_{{\cal P}^{\rm ex}_{\bul}}}
\Om^{\bul}_{{\cal P}^{\rm ex}_{\bul}/S},
\{{\cal K}_{T_{\bul n}}\otimes_{{\cal O}_{{\cal P}^{\rm ex}_{\bul}}}
P^{{\cal P}^{\rm ex}_{\bul}/{\cal Q}^{\rm ex}_{\bul}}_k
\Om^{\bul}_{{\cal P}^{\rm ex}_{\bul}/S}\}_{k\in {\mab Z}}). 
\end{equation*} 
Let $\pi_{(X,Z)/S{\rm conv}}$ 
and $\pi_{\rm zar}$ 
be the morphisms of ringed topoi in 
(\ref{eqn:pidf}) and (\ref{eqn:pizs}), respectively, 
for $Y=(X,Z)$.  
We set 
\begin{equation*} 
(C_{\rm conv}
({\cal O}_{(X,D\cup Z)/S}),P^D)
:=R\pi_{(X,Z)/S {\rm conv}*}
((L^{\rm conv}_{(X_{\bul},Z_{\bul})/S}
(\Om^{\bul}_{{\cal P}^{\rm ex}_{\bul}/S}),P^{D_{\bul}})) 
\tag{8.0.6}\label{ali:dfg}
\end{equation*} 
and 
\begin{align*} 
(C_{\rm isozar}
({\cal K}_{(X,D\cup Z)/S}),P^D)
& := Ru^{\rm conv}_{(X,Z)/S*}
(C_{\rm conv}({\cal K}_{(X,D\cup Z)/S}),P^D) 
\tag{8.0.7}\label{ali:dfztg}\\ 
{} & = R\pi_{{\rm zar}*}
(\vpl_{n*}\bet_{\bul n*}({\cal K}_{\os{\to}{T}_{\bul n}}
\otimes_{{\cal O}_{{\cal P}^{\rm ex}_{\bul}}}
\Om^{\bul}_{{\cal P}^{\rm ex}_{\bul}/S},P^{D_{\bul}})). 
\end{align*} 
Here we have used (\ref{eqn:ubys}) and (\ref{eqn:uxzl}). 
Let 
$a^{(k)}_{\bul}\col 
(D^{(k)}_{\bul},Z\vert_{D^{(k)}_{\bul}}) 
\lo 
(X_{\bul},Z_{\bul})$ $(k\in {\mab N})$ 
be the natural morphism of ringed topoi. 
\par 

The following lemma has been proved in \cite[(1.3.4)]{nh2}:

\begin{lemm}[{\rm {\bf \cite[(1.3.4)]{nh2}}}]\label{lemm:grrdld}
For a morphism $f \col ({\cal T},{\cal A}) \lo 
({\cal T}',{\cal A}')$ of ringed topoi, 
the following diagram is commutative$:$
\begin{equation*}
\begin{CD}
{\rm D}^+{\rm F}({\cal A}) 
@>{{\rm gr}_k}>> D^+({\cal A})\\ 
@V{Rf_*}VV  @VV{Rf_*}V \\
{\rm D}^+{\rm F}({\cal A}')  
@>{{\rm gr}_k}>> D^+({\cal A}'). 
\end{CD}
\tag{8.1.1}\label{cd:grfcty}
\end{equation*} 
\end{lemm}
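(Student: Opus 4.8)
The plan is to compute both composites ${\rm gr}_k\circ Rf_*$ and $Rf_*\circ {\rm gr}_k$ by means of a single filtered injective resolution, so that the commutativity becomes the tautology that $f_*$ respects the splitting of such a resolution into graded pieces. First I would recall that the functor ${\rm gr}_k\col {\rm D}^+{\rm F}({\cal A})\lo {\rm D}^+({\cal A})$ is well-defined, since a filtered quasi-isomorphism induces, by definition, a quasi-isomorphism on each ${\rm gr}_k$. Let $(K^{\bul},P)$ be a bounded below filtered complex of ${\cal A}$-modules representing an object of ${\rm D}^+{\rm F}({\cal A})$; as all the filtrations occurring in this paper are locally finite (hence exhaustive and separated), there is a filtered quasi-isomorphism $\iota\col (K^{\bul},P)\lo (I^{\bul},P)$ into a bounded below complex in which each $(I^n,P)$ is a filtered injective object of the category of filtered ${\cal A}$-modules. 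By the structure of filtered injectives, $P_{k-1}I^n$ is a direct summand of $P_kI^n$ for all $k,n$, each graded piece ${\rm gr}_kI^n$ is an injective ${\cal A}$-module, and ${\rm gr}_kI^{\bul}$ is a bounded below complex of injectives representing ${\rm gr}_k(K^{\bul},P)$ in ${\rm D}^+({\cal A})$.

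Next I would use this resolution to evaluate the two sides. By definition $Rf_*(K^{\bul},P)=(f_*I^{\bul},\{f_*(P_kI^{\bul})\}_{k\in{\mab Z}})$, so ${\rm gr}_k$ of it is the complex $f_*(P_kI^{\bul})/f_*(P_{k-1}I^{\bul})$. Since $P_{k-1}I^n$ is a direct summand of $P_kI^n$, the additive functor $f_*$ commutes with the quotient $P_kI^n/P_{k-1}I^n={\rm gr}_kI^n$, whence ${\rm gr}_k\bigl(Rf_*(K^{\bul},P)\bigr)=f_*({\rm gr}_kI^{\bul})$ in ${\rm D}^+({\cal A})$. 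On the other hand, ${\rm gr}_kI^{\bul}$ is a bounded below complex of injective, hence $f_*$-acyclic, ${\cal A}$-modules computing ${\rm gr}_k(K^{\bul},P)$, so $Rf_*\bigl({\rm gr}_k(K^{\bul},P)\bigr)=f_*({\rm gr}_kI^{\bul})$ as well. Comparing the two expressions gives the commutativity of the square, and one checks in the usual way that the resulting isomorphism is independent of the chosen filtered injective resolution, hence functorial in $({\cal T},{\cal A})\lo({\cal T}',{\cal A}')$.

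The only point that genuinely requires care is the existence, and the degreewise splitting, of filtered injective resolutions for the complexes at hand: this is where one uses that $P$ is locally finite, so that no convergence issue for the filtration arises, together with the fact that the category of ${\cal A}$-modules in the relevant topos has enough injectives and that a filtered injective is, in each degree, a direct sum of its graded pieces. Once this is granted the argument is purely formal; indeed, this is precisely the proof of \cite[(1.3.4)]{nh3}, which we simply invoke.
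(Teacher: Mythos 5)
Your proof is correct and is essentially the argument behind the cited result: the paper itself offers no in-text proof but simply refers to \cite[(1.3.4)]{nh3} (equivalently \cite[(1.3.4)]{nh2}), whose proof is exactly this computation of both composites on a filtered injective (or flasque) resolution, using that filtered injectives split degreewise so that $f_*$ commutes with ${\rm gr}_k$ and that ${\rm gr}_kI^{\bul}$ is $f_*$-acyclic. Nothing further is needed.
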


\begin{lemm}[{\rm {\bf cf.~\cite[(4.14)]{nh3}}}]
\label{lemm:knit}
Let $k$ be a nonnegative integer. Then the following hold$:$ 
\par 
$(1)$ For the morphism 
$g \col  \os{\circ}{\cal P}{}^{\rm ex}_n\lo 
\os{\circ}{\cal P}{}^{\rm ex}_{n'}$ 
corresponding to a morphism $[n']\lo [n]$ in $\Del$, 
the assumption in {\rm (\ref{prop:mmoo})} is satisfied  
for $Y=(\os{\circ}{\cal P}{}^{\rm ex}_n,{\cal M}^{\rm ex}_n)$ 
and 
$Y'=(\os{\circ}{\cal P}{}^{\rm ex}_{n'},{\cal M}^{\rm ex}_{n'})$. 
Consequently there exists a morphism 
$g^{(k)}\col D^{(k)}({\cal M}^{\rm ex}_n)
\lo D^{(k)}({\cal M}^{\rm ex}_{n'})$ 
$(k\in {\mab N})$ fitting into 
the commutative diagram {\rm (\ref{cd:dmgdm})} 
for $Y=(\os{\circ}{\cal P}{}^{\rm ex}_n,
{\cal M}^{\rm ex}_n)$ and 
$Y'=(\os{\circ}{\cal P}{}^{\rm ex}_{n'},
{\cal M}{}^{\rm ex}_{n'})$.
\par 
$(2)$ 
The family 
$\{D^{(k)}({\cal M}^{\rm ex}_n)\}_{n\in {\mab N}}$ 
gives a simplicial formal scheme $D^{(k)}({\cal M}^{\rm ex}_{\bul})$ 
with natural morphism 
$b^{(k)}_{\bul} \col 
D^{(k)}({\cal M}^{\rm ex}_{\bul})
\lo \os{\circ}{\cal P}{}^{\rm ex}_{\bul}$ 
of simplicial formal schemes. 
\end{lemm}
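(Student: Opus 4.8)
\textbf{Proof plan for (\ref{lemm:knit}).}
The plan is to reduce both assertions to the pointwise structure of the exactified log scheme $(\os{\circ}{\cal P}{}^{\rm ex}_{\bul},{\cal M}^{\rm ex}_{\bul})$ established in (\ref{lemm:peme}), and then to invoke the functorial construction in (\ref{prop:mmoo}). First I would treat (1). Fix a morphism $[n']\lo [n]$ in $\Del$ and the induced morphism $g\col \os{\circ}{\cal P}{}^{\rm ex}_n\lo \os{\circ}{\cal P}{}^{\rm ex}_{n'}$. Since $\os{\circ}{\cal P}{}^{\rm ex}_{\bul}$ and $\os{\circ}{X}_{\bul}$ coincide as (topological spaces underlying) simplicial schemes and the exact closed immersion $(X_n,D_n\cup Z_n)\os{\sus}{\lo}(\os{\circ}{\cal P}{}^{\rm ex}_n,{\cal M}^{\rm ex}_n)$ induces an isomorphism
\begin{equation*}
{\cal M}^{\rm ex}_{n,x}/{\cal O}^*_{{\cal P}^{\rm ex}_n,x}
\os{\sim}{\lo} M(D_n)_x/{\cal O}^*_{D_n,x}
\end{equation*}
at each point $x$ (as in the proof of (\ref{lemm:peme})), the verification of the hypothesis of (\ref{prop:mmoo}) is transported to the analogous statement for $M(D_{\bul})$. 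There, the key local input is that $D_{\bul}={\rm cosk}_0^X(D_0)$ is a simplicial relative SNCD obtained from the affine open covering $X=\bigcup_iX_i$, so that on each component $X_n=\coprod X_{i_0}\cap\cdots\cap X_{i_n}$ the divisor $D_n$ is the restriction of $D$; hence for any point $x$ and any minimal generator $m$ of $M(D_n)_x/{\cal O}^*$ corresponding to a local branch of $D_n$ through $x$, there is exactly one minimal generator $m'$ of $M(D_{n'})_{\os{\circ}{g}(x)}/{\cal O}^*$ (namely the branch of $D_{n'}$ restricting to that branch of $D_n$) with $g^*(m')\in m^{{\mab Z}_{>0}}$ — in fact with $g^*(m')$ equal to $m$ up to a unit, since the face/degeneracy maps of ${\rm cosk}_0^X(X_0)$ are open immersions composed with projections. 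This gives the hypothesis of (\ref{prop:mmoo}), hence the morphism $g^{(k)}\col D^{(k)}({\cal M}^{\rm ex}_n)\lo D^{(k)}({\cal M}^{\rm ex}_{n'})$ fitting into the square (\ref{cd:dmgdm}).

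For (2) I would check that the morphisms $g^{(k)}$ produced in (1) are compatible with composition, i.e.\ that $(g_1\circ g_2)^{(k)}=g_1^{(k)}\circ g_2^{(k)}$ and that identities go to identities. This follows from the uniqueness built into (\ref{prop:mmoo}): the morphism $g^{(k)}$ is the unique one lying over $\os{\circ}{g}$ and compatible with the labelling of branches, so functoriality in $g$ is automatic once we know each $g^{(k)}$ is characterized by that property. Concretely, $D^{(k)}({\cal M}^{\rm ex}_n)$ is, locally on each connected component, a disjoint union of $k$-fold intersections of the branches of the divisor $D({\cal M}^{\rm ex}_n)$, and $g^{(k)}$ is the map sending an intersection of branches labelled by $m_{j_0},\ldots,m_{j_{k-1}}$ to the intersection of the corresponding branches $m'_{j_0},\ldots,m'_{j_{k-1}}$ upstairs; this is visibly functorial. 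Therefore $\{D^{(k)}({\cal M}^{\rm ex}_n)\}_{n\in{\mab N}}$ assembles into a simplicial scheme $D^{(k)}({\cal M}^{\rm ex}_{\bul})$, and the natural closed immersions $D^{(k)}({\cal M}^{\rm ex}_n)\os{\sus}{\lo}\os{\circ}{\cal P}{}^{\rm ex}_n$ are compatible with the simplicial structure by construction, giving the morphism $b^{(k)}_{\bul}\col D^{(k)}({\cal M}^{\rm ex}_{\bul})\lo\os{\circ}{\cal P}{}^{\rm ex}_{\bul}$ of simplicial schemes.

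The main point requiring care — and the step I expect to be the real obstacle — is the verification of the hypothesis of (\ref{prop:mmoo}) in (1), specifically confirming that for the coskeleton maps no minimal generator of $M(D_{n'})$ is pulled back into the interior of the monoid (i.e.\ that $g^*(m')$ is never a nontrivial product of several minimal generators downstairs, nor a proper power). This is where one uses that the face and degeneracy maps of ${\rm cosk}_0^X(X_0)$ over $X$ are open immersions followed by projections, so étale-locally the branches of $D_n$ are simply pulled back from branches of $D$ on $X$, and distinct branches stay distinct (the relative SNCD condition on $D/S_1$ guarantees the branches are smooth and meet transversally, so no collapsing occurs). Once this local analysis is in place, everything else is a formal consequence of (\ref{prop:mmoo}) and its uniqueness clause; this is parallel to the log crystalline argument of \cite[(4.14)]{nh3}, so I would follow that proof closely, merely replacing $D^{(k)}$ there by $D^{(k)}({\cal M}^{\rm ex}_{\bul})$ here.
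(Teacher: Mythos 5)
Your proof is correct and follows essentially the same route as the paper, which simply defers to the argument of \cite[(4.14)]{nh3}: reduce via (\ref{lemm:peme}) to the log structures $M(D_{\bul})$, observe that the \v{C}ech coskeleton structure maps are componentwise open immersions so that branches of $D_n$ correspond bijectively (with exponent one) to branches of $D_{n'}$, verify the hypothesis of (\ref{prop:mmoo}), and get functoriality from the canonicity of $g^{(k)}$. No gaps.
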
 
\begin{proof}
The proof is the same as that of \cite[(4.14)]{nh3}. 
\end{proof} 

The following is a key lemma for 
(\ref{theo:ifc}) below:  

\begin{lemm}\label{lemm:grkpd}
\begin{equation*}
{\rm gr}_k^{P^D}
C_{\rm conv}({\cal K}_{(X,D\cup Z)/S})
= a^{(k)}_{{\rm conv}*}
({\cal K}_{(D^{(k)},Z\vert_{D^{(k)}})/S}\otimes_{\mab Z}
\vp^{(k){\rm log}}_{{\rm conv}}(D/S;Z))[-k]. 
\tag{8.3.1}\label{eqn:grpwwd}
\end{equation*}
\end{lemm}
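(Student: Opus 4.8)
The plan is to compute the graded pieces of $P^D$ on the simplicial side, then descend. First I would use that, by definition (\ref{ali:dfg}), $(C_{\rm conv}({\cal K}_{(X,D\cup Z)/S}),P^D)=R\pi_{(X,Z)/S{\rm conv}*}((L^{\rm conv}_{(X_{\bul},Z_{\bul})/S}(\Om^{\bul}_{{\cal P}^{\rm ex}_{\bul}/S}),P^{D_{\bul}}))$, so the commutativity of ${\rm gr}_k$ with $R\pi_{(X,Z)/S{\rm conv}*}$ established in (\ref{lemm:grrdld}) reduces the statement to identifying $R\pi_{(X,Z)/S{\rm conv}*}({\rm gr}_k^{P^{D_{\bul}}}L^{\rm conv}_{(X_{\bul},Z_{\bul})/S}(\Om^{\bul}_{{\cal P}^{\rm ex}_{\bul}/S}))$. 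Since $\os{\circ}{X}$ is quasi-compact we may take the affine open covering $\{X_i\}$ to be finite, so $\os{\circ}{X}_0$ is affine and each $`{\cal R}_i,{\cal R}_i$ may be chosen with affine underlying formal scheme; then $\os{\circ}{\cal P}_n$ is affine over $S$ in every simplicial degree, and (\ref{prop:grla}) applies degreewise to give a quasi-isomorphism
$$
{\rm gr}_k^{P^{D_n}}L^{\rm conv}_{(X_n,Z_n)/S}(\Om^{\bul}_{{\cal P}^{\rm ex}_n/S}\otimes_{\mab Z}{\mab Q})
\os{\sim}{\lo}
a^{(k)}_{n,{\rm conv}*}({\cal K}_{(D^{(k)}_n,Z\vert_{D^{(k)}_n})/S}\otimes_{\mab Z}\vp^{(k)}_{{\rm conv}}(D_n/S;Z_n))[-k]
$$
for each $n$. (Alternatively, without shrinking the $`{\cal R}_i$'s, one invokes (\ref{rem:gtraff}) to drop the affineness hypothesis of (\ref{prop:grla}).)

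Next I would assemble the degreewise equivalences into a simplicial one. By (\ref{lemm:knit}) (2) the family $\{D^{(k)}({\cal M}^{\rm ex}_n)\}_{n\in {\mab N}}$ is a simplicial scheme $D^{(k)}({\cal M}^{\rm ex}_{\bul})$, so $(D^{(k)}_{\bul},Z\vert_{D^{(k)}_{\bul}})$ and $a^{(k)}_{\bul}$ are defined; moreover (\ref{lemm:knit}) (1) verifies, through (\ref{prop:mmoo}), exactly the hypothesis needed to apply (\ref{prop:rescos}) to the morphisms of $\os{\circ}{\cal P}{}^{\rm ex}_{\bul}$ induced by the face and degeneracy maps of $\Del$. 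Since the equivalence in (\ref{prop:grla}) is realized by the explicit Poincar\'e residue morphism (\ref{eqn:regrp}) together with (\ref{theo:pl}) and (\ref{lemm:zcor}), (\ref{prop:rescos}) shows these residue isomorphisms commute with the simplicial transition morphisms; hence they glue to an isomorphism
$$
{\rm gr}_k^{P^{D_{\bul}}}L^{\rm conv}_{(X_{\bul},Z_{\bul})/S}(\Om^{\bul}_{{\cal P}^{\rm ex}_{\bul}/S})
\os{\sim}{\lo}
a^{(k)}_{\bul,{\rm conv}*}({\cal K}_{(D^{(k)}_{\bul},Z\vert_{D^{(k)}_{\bul}})/S}\otimes_{\mab Z}\vp^{(k)}_{{\rm conv}}(D_{\bul}/S;Z_{\bul}))[-k]
$$
in ${\rm D}^+({\cal K}_{(X_{\bul},Z_{\bul})/S})$.

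Then I would apply $R\pi_{(X,Z)/S{\rm conv}*}$ to both sides. Using the commutativity of the square of simplicial log convergent topoi relating $a^{(k)}$ to the augmentations $\pi$ (the $a^{(k)}$-analogue of (\ref{cd:rgrtc})) one gets $R\pi_{(X,Z)/S{\rm conv}*}\,a^{(k)}_{\bul,{\rm conv}*}=a^{(k)}_{{\rm conv}*}\,R\pi_{(D^{(k)},Z\vert_{D^{(k)}})/S{\rm conv}*}$. The simplicial sheaf ${\cal K}_{(D^{(k)}_{\bul},Z\vert_{D^{(k)}_{\bul}})/S}\otimes_{\mab Z}\vp^{(k)}_{{\rm conv}}(D_{\bul}/S;Z_{\bul})$ is the pull-back by $\pi$ of ${\cal K}_{(D^{(k)},Z\vert_{D^{(k)}})/S}\otimes_{\mab Z}\vp^{(k){\rm log}}_{{\rm conv}}(D/S;Z)$ — the orientation sheaf being compatible with pull-back because each $\os{\circ}{\iota}$ is a homeomorphism, as in the construction of $\vp^{(k)}_{{\rm conv}}$ — so by the cohomological descent for log convergent cohomology $R\pi_{(D^{(k)},Z\vert_{D^{(k)}})/S{\rm conv}*}$ of it equals ${\cal K}_{(D^{(k)},Z\vert_{D^{(k)}})/S}\otimes_{\mab Z}\vp^{(k){\rm log}}_{{\rm conv}}(D/S;Z)$. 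Combining the three displays with the reduction of the first paragraph yields (\ref{eqn:grpwwd}).

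I expect the middle step to be the main obstacle: making precise that the degreewise equivalences of (\ref{prop:grla}) constitute a genuine morphism in the derived category of simplicial filtered complexes, i.e.\ that the Poincar\'e residue identification is functorial for the simplicial structure. This is precisely what (\ref{prop:rescos}) was formulated to supply, and checking its hypotheses for the face and degeneracy morphisms is the content of (\ref{lemm:knit}) (1); once this compatibility is available, the remaining manipulations (the use of (\ref{lemm:grrdld}) and the interchange of $R\pi_*$ with $a^{(k)}_{\bul,{\rm conv}*}$ via cohomological descent) are formal.
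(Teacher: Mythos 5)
Your proposal is correct and follows essentially the same route as the paper's proof: both reduce via (\ref{lemm:grrdld}) to the simplicial level, identify the graded piece through the Poincar\'e residue isomorphism of (\ref{prop:grla})/(\ref{eqn:grpdl}) made simplicially functorial by (\ref{lemm:knit}) and (\ref{prop:rescos}) (with (\ref{eqn:zcort}) and (\ref{theo:pl})), and conclude by cohomological descent. Your extra care about the affineness hypothesis of (\ref{prop:grla}) (via a finite affine covering or (\ref{rem:gtraff})) is a reasonable refinement of a point the paper leaves implicit.
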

\begin{proof}  
Let $a_{\bul} \col (D^{(k)}_{\bul},Z_{\bul}\vert_{D^{(k)}_{\bul}}) 
\lo (X_{\bul},Z_{\bul})$ be the natural morphism of 
simplicial log schemes. 
Let $L^{(k){\rm conv}}_{\bul}$ $(k\in {\mab N})$ 
be the log convergent linearization functor for 
coherent 
${\cal K}_{D^{(k)}({\cal M}^{\rm ex}_{\bul})}$-modules. 
Then, by (\ref{eqn:grpdl}), (\ref{prop:rescos}) and (\ref{lemm:knit}) (2), 
we have the following isomorphism 
in $D^+({\cal K}_{(X_{\bul},Z_{\bul})/S})$: 
\begin{equation*} 
{\rm Res}: 
{\rm gr}_k^{P^{D_{\bul}}}
L^{\rm conv}_{(X_{\bul},Z_{\bul})/S}
(\Om^{\bul}_{{\cal P}^{\rm ex}_{\bul}/S}) 
\os{\sim}{\lo} 
a^{(k)\log}_{\bul{\rm conv}*}
(L^{(k){\rm conv}}_{\bul}(\Om^{\bul}_{D^{(k)}
({\cal M}^{\rm ex}_{\bul})/S}
\otimes_{\mab Z}{\mab Q})
\otimes_{\mab Z}
\vp^{(k)}_{\rm conv}(D_{\bul}/S;Z_{\bul}))[-k].  
\tag{8.3.2}\label{eqn:relcdbzb} 
\end{equation*} 
\begin{equation*} 
\end{equation*} 
Hence, by (\ref{lemm:grrdld}) and the cohomological descent, 
we obtain (\ref{eqn:grpwwd}) as follows:  
\begin{align*} 
{} & {\rm gr}_k^{P^D}
C_{\rm conv}({\cal K}_{(X,D\cup Z)/S})
\tag{8.3.3}\label{eqn:pdkcz}\\
{} & = R\pi^{\rm log}_{(X,Z)/S{\rm conv}*}
{\rm gr}_k^{P^{D_{\bul}}}
(L^{\rm conv}_{(X_{\bul},Z_{\bul})/S}
(\Om^{\bul}_{{\cal P}^{\rm ex}_{\bul}/S})
\otimes_{\mab Z}{\mab Q}) \\ 
{} & = 
R\pi^{\rm log}_{(X,Z)/S{\rm conv}*}
a^{(k)\log}_{\bul{\rm conv}*}
(L^{(k){\rm conv}}_{\bul}(\Om^{\bul}_{D^{(k)}({\cal M}^{\rm ex}_{\bul})/S}
\otimes_{\mab Z}{\mab Q})
\otimes_{\mab Z}
\vp^{(k)}_{\rm conv}(D_{\bul}/S;Z_{\bul}))[-k] 
\\ 
{} & = R\pi^{\rm log}_{(X,Z)/S{\rm conv}*}
a^{(k)\log}_{{\bul}{\rm conv}*}
({\cal K}_{(D^{(k)}_{\bul},Z_{\bul}\vert_{{D^{(k)}_{\bul}}})/S}\otimes_{\mab Z}
\vp^{(k)}_{{\rm conv}}(D_{\bul}/S;Z_{\bul}))[-k] \\
{} & =a^{(k)\log}_{{\rm conv}*}
({\cal K}_{(D^{(k)},Z\vert_{{D^{(k)}}})/S}\otimes_{\mab Z}
\vp^{(k){\rm log}}_{{\rm conv}}(D/S;Z))[-k]. 
\end{align*}
\end{proof}

\begin{theo}\label{theo:ifc} 
The filtered complexes  
$(C_{\rm conv}({\cal K}_{(X,D\cup Z)/S}),P^D)\in 
{\rm D}^+{\rm F}({\cal K}_{(X,Z)/S})$ 
and  
$(C_{\rm isozar}({\cal K}_{(X,D\cup Z)/S}),P^D)
\in {\rm D}^+{\rm F}(f^{-1}({\cal K}_S))$
are independent of the choice of the open covering of $X$ 
and the immersions in {\rm (\ref{eqn:xdr})}. 
\end{theo}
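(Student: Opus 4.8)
The plan is to use the standard two-step argument from the theory of convergent and crystalline cohomology: first show independence of the choice of the embedding system for a \emph{fixed} open covering, then show independence of the open covering by a cohomological descent (refinement) argument. For the first step, suppose we are given two embedding systems $(\ref{eqn:xdr})$, say $(X_\bullet,D_\bullet)\os{\sus}{\lo}\,`{\cal R}_\bullet$, $(X_\bullet,Z_\bullet)\os{\sus}{\lo}{\cal R}_\bullet$ and primed analogues $`{\cal R}'_\bullet$, ${\cal R}'_\bullet$. I would form the fibre products $`{\cal R}''_\bullet:=\,`{\cal R}_\bullet\wh\times_S\,`{\cal R}'_\bullet$ and ${\cal R}''_\bullet:={\cal R}_\bullet\wh\times_S{\cal R}'_\bullet$, which receive the diagonal immersions $(X_\bullet,D_\bullet)\os{\sus}{\lo}\,`{\cal R}''_\bullet$ and $(X_\bullet,Z_\bullet)\os{\sus}{\lo}{\cal R}''_\bullet$, and the corresponding ${\cal P}''_\bullet$; this reduces the comparison to the case where one embedding system maps to the other. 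In that situation, with $g_\bullet\col{\cal P}''^{\rm ex}_\bullet\lo{\cal P}^{\rm ex}_\bullet$ the induced (formally smooth, after exactification) morphism, I would invoke the weak fibration theorem $(\ref{lemm:eisd})$ (2) together with $(\ref{prop:lef})$ to show that the transition map induces, after applying $L^{\rm conv}_{(X_\bullet,Z_\bullet)/S}$ and taking $R\pi_{(X,Z)/S{\rm conv}*}$, a filtered quasi-isomorphism. The crucial point is that the map is compatible with the weight filtrations $P^{D_\bullet}$: this follows from the functoriality of the Poincar\'e residue isomorphism established in $(\ref{prop:rescos})$ and the computation of the graded pieces in $(\ref{lemm:grkpd})$, since both sides have $k$-th graded piece $a^{(k)}_{{\rm conv}*}({\cal K}_{(D^{(k)},Z\vert_{D^{(k)}})/S}\otimes_{\mab Z}\vp^{(k)}_{\rm conv}(D/S;Z))[-k]$, an object manifestly independent of all choices; hence a filtered morphism inducing an isomorphism on each ${\rm gr}_k$ is a filtered quasi-isomorphism by the standard spectral-sequence comparison.

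For the second step, given two affine open coverings of $X$, I would pass to a common refinement and compare the associated \v Cech-type simplicial schemes via the bisimplicial machinery of \S\ref{sec:rlct}. Concretely, if $\{X_i\}_{i\in I}$ and $\{X_j\}_{j\in J}$ are two coverings with associated $(X_\bullet,D_\bullet\cup Z_\bullet)$ and $(X'_\bullet,D'_\bullet\cup Z'_\bullet)$, I would form the bisimplicial object $(X_{\bullet\bullet},D_{\bullet\bullet}\cup Z_{\bullet\bullet})$ from $Y_{mn}={\rm cosk}_0^X(X_0)_m\times_X{\rm cosk}_0^X(X'_0)_n$ as in $(\ref{eqn:retuys})$, equip it with the induced embedding system (the fibre product of the two), and use the commutative diagram $(\ref{eqn:retuys})$ relating $R\eta_{\rm conv*}$ to the two projections. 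Cohomological descent (the filtered version, as in \cite[Lemma 1.5.1]{nh2}, already invoked in $(\ref{rem:gtraff})$) then gives that $R\pi_{(X,Z)/S{\rm conv}*}$ applied to either simplicial embedding system agrees with the bisimplicial one; combined with $(\ref{lemm:grrdld})$ for compatibility of ${\rm gr}_k$ with $R\eta_{{\rm conv}*}$, this yields the filtered comparison. The independence of $(C_{\rm isozar},P^D)$ then follows by applying $Ru^{\rm conv}_{(X,Z)/S*}$, using $(\ref{eqn:retuys})$, $(\ref{eqn:ubys})$ and $(\ref{eqn:uxzl})$.

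I expect the main obstacle to be \emph{not} the formal skeleton of descent — that is routine — but rather the careful bookkeeping needed to check that all the comparison morphisms are genuinely filtered morphisms of complexes (not just morphisms in the derived category that happen to respect filtrations up to homotopy), and that the identifications on graded pieces are \emph{canonical} and compatible with the transition maps of the relevant systems of universal enlargements. This is where $(\ref{theo:injf})$ (2) is essential: because the filtration $P^{D_\bullet}$ is realized by an honest subcomplex after the convergent linearization (the injectivity $(\ref{eqn:yxpdz})$), the weight filtration is strictly preserved under the transition morphisms $g_\bullet^*$, so no derived-category subtleties arise at the level of filtered complexes. The one place requiring genuine care is verifying that the formal-smoothness hypotheses of $(\ref{lemm:eisd})$ (2) and $(\ref{prop:lef})$ are met for the exactified morphisms ${\cal P}''^{\rm ex}_\bullet\lo{\cal P}^{\rm ex}_\bullet$ and ${\cal Q}''^{\rm ex}_\bullet\lo{\cal Q}^{\rm ex}_\bullet$ simultaneously; this uses \cite[(6.6)]{klog1} as in the proof of $(\ref{prop:lef})$, applied degreewise and then assembled over $\Del$.
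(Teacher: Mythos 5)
Your proposal is correct and follows essentially the same route as the paper: both reduce the comparison to a bisimplicial/product embedding system and then check that the morphism is a filtered isomorphism by computing the graded pieces via the Poincar\'e residue isomorphism ((\ref{lemm:grkpd}), (\ref{prop:rescos}), (\ref{lemm:grrdld})), which are canonically $a^{(k)}_{{\rm conv}*}({\cal K}_{(D^{(k)},Z\vert_{D^{(k)}})/S}\otimes_{\mab Z}\vp^{(k)}_{\rm conv}(D/S;Z))[-k]$ on both sides. The only difference is organizational — the paper treats the change of covering and the change of embedding in a single bisimplicial construction rather than in two separate steps, and does not need to invoke the weak fibration theorem directly at this stage.
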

\begin{proof}  
(Though we can give the same proof of 
(\ref{theo:ifc}) as that of \cite[(2.5.3), (2.5.4)]{nh2}, 
we give a shorter proof of it here.) 
\par 
Assume that we are given the immersions in (\ref{eqn:xdr}). 
Let $X'_0$ be the disjoint union of 
another affine open covering of $X$.  
Set $(X'_{\bul},D'_{\bul})
:={\rm cosk}_0^{(X,D)}(X'_0,D'_0)$ 
and $(X'_{\bul},Z'_{\bul})
:={\rm cosk}_0^{(X,Z)}(X'_0,Z'_0)$.  
Assume that we are given other immersions 
\begin{equation*} 
(X'_{\bul},D'_{\bul}) \os{\sus}{\lo} `{\cal R}'_{\bul} 
\quad {\rm and} \quad 
(X'_{\bul},Z'_{\bul}) \os{\sus}{\lo} {\cal R}'_{\bul} 
\tag{8.4.1}\label{eqn:xadr}
\end{equation*} 
which are analogous to (\ref{eqn:xdr}). 
Let  
\begin{equation*} 
\begin{CD} 
(X'_{\bul},D'_{\bul}\cup Z'_{\bul}) 
@>{\subset}>>{\cal P}'_{\bul} \\ 
@VVV @VVV \\ 
(X'_{\bul},Z'_{\bul})  @>{\subset}>> {\cal R}'_{\bul}   
\end{CD} 
\tag{8.4.2}\label{cd:fcxadz}
\end{equation*} 
be a commutative diagram of simplicial log schemes 
which is analogous to (\ref{cd:pfcd}). 
Set $(X_{mn},D_{mn}\cup Z_{mn}):=
{\rm cosk}_0^X(X_0,D_0\cup Z_0)_m
\times_X
{\rm cosk}_0^X(X'_0,D'_0\cup Z'_0)_n$ 
$(m,n\in {\mab N})$. 
Then we have the natural bisimplicial log schemes 
$(X_{\bul \bul},D_{\bul \bul}\cup Z_{\bul \bul})$
with natural morphisms 
$(X_{\bul \bul},D_{\bul \bul}\cup Z_{\bul \bul}) \lo 
(X_{\bul},D_{\bul}\cup Z_{\bul})$ and 
$(X_{\bul \bul},D_{\bul \bul}\cup Z_{\bul \bul}) 
\lo (X'_{\bul},D'_{\bul}\cup Z'_{\bul})$. 
Set 
${\cal P}_{\bul \bul}:={\cal P}_{\bul}\times_S{\cal P}'_{\bul}$ 
and ${\cal R}_{\bul \bul}
:={\cal R}_{\bul}\times_S{\cal R}'_{\bul}$.  
Then we have the following diagram 
\begin{equation*} 
\begin{CD} 
(X_{\bul \bul},D_{\bul \bul}\cup Z_{\bul \bul})  
@>{\subset}>> {\cal P}_{\bul \bul} \\ 
@VVV @VVV \\ 
(X_{\bul \bul},Z_{\bul \bul})  
@>{\subset}>> {\cal R}_{\bul \bul}  
\end{CD} 
\tag{8.4.3}\label{cd:qcxpdz}
\end{equation*} 
such that there exist two natural morphisms 
from this commutative diagram to 
(\ref{cd:pfcd}) and (\ref{cd:fcxadz}). 
Let ${\cal Q}_{\bul \bul}={\cal Q}^{\rm ex}_{\bul \bul}$ 
be the bisimplicial version of 
${\cal Q}_{\bul}={\cal Q}^{\rm ex}_{\bul}$. 
Let ${\cal P}^{\rm ex}_{\bul \bul}$ 
be the exactification of the immersion 
$(X_{\bul \bul},D_{\bul \bul}\cup Z_{\bul \bul})  
\os{\sus}{\lo} {\cal P}_{\bul \bul}$.  
Then there exists an fs bicosimplicial sub log structure 
${\cal M}^{\rm ex}_{\bul \bul}$ of 
$M_{{\cal P}^{\rm ex}_{\bul \bul}}$ 
such that 
\begin{equation*} 
M_{{\cal P}^{\rm ex}_{\bul \bul}}={\cal M}^{\rm ex}_{\bul \bul}
\oplus_{{\cal O}_{{\cal P}^{\rm ex}_{\bul \bul}}^*}
M_{{\cal Q}^{\rm ex}_{\bul \bul}},  
\tag{8.4.4}\label{eqn:pbq}
\end{equation*} 
such that the immersion 
$(X_{\bul \bul},D_{\bul \bul}\cup Z_{\bul \bul})  
\os{\subset}{\lo} {\cal P}_{\bul \bul}$ 
induces an immersion 
$(X_{\bul \bul},D_{\bul \bul})
\os{\sus}{\lo} 
(\os{\circ}{\cal P}{}^{\rm ex}_{\bul \bul},
{\cal M}^{\rm ex}_{\bul \bul})$. 
Let 
\begin{equation*}
R\eta_{{\rm conv}*} \col
{\rm D}^+{\rm F}({\cal K}_{(X_{\bul \bul},Z_{\bul \bul})/S}) \lo 
{\rm D}^+{\rm F}({\cal K}_{(X_{\bul},Z_{\bul})/S}) 
\end{equation*}
be the morphism (\ref{eqn:retlcds}). 
Then we have only to prove that 
\begin{align*}
(L^{\rm conv}_{(X_{\bul},Z_{\bul})/S}
(\Om^{\bul}_{{\cal P}^{\rm ex}_{\bul}/S}
\otimes_{\mab Z}{\mab Q}),P^{D_{\bul}})
\os{\sim}{\lo} R\eta_{{\rm conv}*}
((L^{\rm conv}_{(X_{\bul \bul},Z_{\bul \bul})/S}
(\Om^{\bul}_{{\cal P}^{\rm ex}_{\bul \bul}/S}
\otimes_{\mab Z}{\mab Q}),P^{D_{\bul \bul}})). 
\tag{8.4.5}\label{eqn:lxzso}
\end{align*} 
Let 
\begin{equation*}
R\eta_{m,{\rm conv}*} \col
{\rm D}^+{\rm F}({\cal K}_{(X_{m\bul},Z_{m\bul})/S}) \lo 
{\rm D}^+{\rm F}({\cal K}_{(X_m,Z_m)/S}) 
\quad (m\in {\mab N})
\end{equation*}
be the induced morphism by 
$R\eta_{{\rm conv}*}$. 
It is well-known that there exists 
a filtered flasque resolution of 
$(I^{\bul \bul \bul},P)$ of 
$(L^{\rm conv}_{(X_{\bul \bul},Z_{\bul \bul})/S}
(\Om^{\bul}_{{\cal P}^{\rm ex}_{\bul \bul}/S}
\otimes_{\mab Z}{\mab Q}),P^{D_{\bul \bul}})$ 
such that $(I^{m\bul \bul},P)$ is a 
filtered flasque resolution of 
$(L^{\rm conv}_{(X_{m \bul},Z_{m\bul})/S}
(\Om^{\bul}_{{\cal P}^{\rm ex}_{m\bul}/S}
\otimes_{\mab Z}{\mab Q}),P^{D_{m\bul}})$ 
for each $m\in {\mab N}$ 
(cf.~\cite[(1.5.0.4)]{nh2}).      
Here the third degree of $(I^{\bul \bul \bul},P)$ 
is the complex degree. 
Hence, to prove (\ref{eqn:lxzso}), 
we have only to prove that the morphism 
\begin{align*}
(L^{\rm conv}_{(X_m,Z_m)/S}
(\Om^{\bul}_{{\cal P}^{\rm ex}_m/S}
\otimes_{\mab Z}{\mab Q}),P^{D_m})
{\lo} 
R\eta_{m,{\rm conv}*}
((L^{\rm conv}_{(X_{m \bul},Z_{m\bul})/S}
(\Om^{\bul}_{{\cal P}^{\rm ex}_{m\bul}/S}
\otimes_{\mab Z}{\mab Q}),P^{D_{m\bul}}))  
\tag{8.4.6}\label{eqn:fmzso} 
\end{align*} 
is a filtered isomorphism. 
\par 
Let $a^{(k)}_{\bul \bul} \col 
(D^{(k)}_{\bul \bul},Z_{\bul \bul}
\vert_{D^{(k)}_{\bul \bul}}) \lo 
(X_{\bul \bul},Z_{\bul \bul})$ be the natural morphism 
of log schemes.  
By (\ref{lemm:grkpd}) we have the following isomorphism 
\begin{align*} 
{\rm gr}^{P^{D_m}}_k
L^{\rm conv}_{(X_m,Z_m)/S}
(\Om^{\bul}_{{\cal P}^{\rm ex}_m/S}) 
& \os{\sim}{\lo} 
a^{(k)\log}_{m,{\rm conv}*} 
({\cal K}_{(D^{(k)}_m,Z\vert_{D^{(k)}_m})/S}
\otimes_{\mab Z}
\vp^{(k)}_{{\rm conv}}(D_m/S;Z_m))[-k].  
\tag{8.4.7}\label{ali:pdlg} 
\end{align*} 
On the other hand, 
by the proof of (\ref{lemm:knit}) (1), 
we have the simplicial scheme 
$D^{(k)}({\cal M}^{\rm ex}_{m\bul})$ 
$(m\in {\mab N})$.  
As in (\ref{eqn:pdkcz}), we have the following formula  
\begin{align*} 
& {\rm gr}^{P^{D_{m}}}_k
R\eta_{m,{\rm conv}*} 
(L^{\rm conv}_{(X_{m \bul},Z_{m\bul})/S}
(\Om^{\bul}_{{\cal P}^{\rm ex}_{m\bul}/S}\otimes_{\mab Z}{\mab Q}))  
\tag{8.4.8}\label{ali:pglg} \\ 
& =a^{(k)\log}_{m,{\rm conv}*} 
({\cal K}_{(D^{(k)}_m,Z\vert_{D^{(k)}_m})/S}
\otimes_{\mab Z}\vp^{(k)}_{{\rm conv}}(D_m/S;Z_m))[-k]  
\end{align*} 
by replacing $R\pi_{(X,Z)/S{\rm conv}*}$ 
and $\bul$ with $R\eta_{m,{\rm conv}*}$ 
and $m\bul$, respectively. 
Hence the morphism (\ref{eqn:fmzso}) is 
a filtered isomorphism.    
\end{proof}

\begin{defi}\label{defi:ccx}  
We call 
$(C_{\rm conv}({\cal K}_{(X,D\cup Z)/S}),P^D)$ 
(resp.~
$(C_{\rm isozar}({\cal K}_{(X,D\cup Z)/S}),P^D)$)
the {\it weight-filtered convergent complex} 
(resp.~ the {\it weight-filtered isozariskian complex}) 
of ${\cal K}_{(X,D\cup Z)/S}$ {\it with respect to} $D$. 
\end{defi}

\begin{prop}\label{prop:czc} 
There exists a canonical isomorphism 
\begin{equation*}
C_{\rm conv}
({\cal K}_{(X,D\cup Z)/S})
\os{\sim}{\longleftarrow} 
R\eps^{\rm conv}_{(X,D\cup Z, Z)/S*}
({\cal K}_{(X,D\cup Z)/S}).
\tag{8.6.1}\label{eqn:czxdz}
\end{equation*}
\end{prop}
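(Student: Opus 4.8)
The plan is to reduce the assertion, by cohomological descent, to the degreewise vanishing-cycle Poincar\'{e} lemma (\ref{theo:cpvcs}). Write $M_{\bul}:=M(D_{\bul}\cup Z_{\bul})$ and $N_{\bul}:=M(Z_{\bul})$, so that the simplicial avatar of $\eps^{\rm conv}_{(X,D\cup Z,Z)/S}$ is the morphism $\eps_{{\rm conv}\bul}$ of (\ref{eqn:ymnb}). First I would note that, applying the log convergent Poincar\'{e} lemma (\ref{theo:pl}) termwise to ${\cal P}^{\rm ex}_{\bul}$ (equivalently, the simplicial version of (\ref{eqn:oxdz})), one gets a natural quasi-isomorphism ${\cal K}_{(X_{\bul},D_{\bul}\cup Z_{\bul})/S}\os{\sim}{\lo}L^{\rm conv}_{(X_{\bul},D_{\bul}\cup Z_{\bul})/S}(\Om^{\bul}_{{\cal P}^{\rm ex}_{\bul}/S}\otimes_{\mab Z}{\mab Q})$ of complexes of ${\cal K}_{(X_{\bul},D_{\bul}\cup Z_{\bul})/S}$-modules.

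The second step is to apply $R\eps_{{\rm conv}\bul*}$ and identify the result. This functor is computed in each simplicial degree $n$, so it suffices to treat one $n$. Decomposing $X_n$ into its affine components (the log convergent topos of a disjoint union being the product of the log convergent topoi), and, as permitted by (\ref{theo:ifc}), choosing the ${\cal R}_i$ of (\ref{eqn:xdap}) affine over $S$ after the harmless reduction to affine $\os{\circ}{S}$ (cf.~\S\ref{sec:fpw}), the hypotheses of (\ref{coro:epnr}) and (\ref{theo:cpvcs}) hold on each component: take $\os{\circ}{Y}=\os{\circ}{X}_n$, $M=M(D_n\cup Z_n)$, $N=M(Z_n)$, ${\cal Y}_{\cal M}={\cal P}^{\rm ex}_n$, ${\cal Y}_{\cal N}={\cal Q}^{\rm ex}_n$, the commutative square given by the top two rows of (\ref{eqn:pedvq}) in degree $n$ together with its system-of-universal-enlargements counterpart (for which $\os{\circ}{h}_n$ is the identity since $\os{\circ}{\cal P}{}^{\rm ex}_n=\os{\circ}{\cal Q}{}^{\rm ex}_n$ and the underlying closed immersions of $X_n$ coincide), and $E={\cal K}_{(X_n,Z_n)/S}$, so that $\eps^{{\rm conv}*}(E)={\cal K}_{(X_n,D_n\cup Z_n)/S}$ and the associated $({\cal E},\nabla)$ is the pull-back of the isostructure sheaf of ${\cal P}^{\rm ex}_n$ with its canonical connection. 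Then (\ref{eqn:ceslvc}) produces a canonical isomorphism $R\eps^{\rm conv}_{(X_n,D_n\cup Z_n,Z_n)/S*}({\cal K}_{(X_n,D_n\cup Z_n)/S})\os{\sim}{\lo}L^{\rm UE}_{(X_n,Z_n)/S}({\cal E}\otimes_{{\cal O}_{{\cal P}^{\rm ex}_n}}\Om^{\bul}_{{\cal P}^{\rm ex}_n/S})$, whose target is $L^{\rm conv}_{(X_n,Z_n)/S}(\Om^{\bul}_{{\cal P}^{\rm ex}_n/S}\otimes_{\mab Z}{\mab Q})$ by the definition (\ref{eqn:lue}). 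As (\ref{eqn:ceslvc}) is natural, these degreewise isomorphisms are compatible with the face and degeneracy maps and assemble into a canonical isomorphism $R\eps_{{\rm conv}\bul*}({\cal K}_{(X_{\bul},D_{\bul}\cup Z_{\bul})/S})\os{\sim}{\lo}L^{\rm conv}_{(X_{\bul},Z_{\bul})/S}(\Om^{\bul}_{{\cal P}^{\rm ex}_{\bul}/S}\otimes_{\mab Z}{\mab Q})$ in $D^+({\cal K}_{(X_{\bul},Z_{\bul})/S})$.

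The last step is to apply $R\pi_{(X,Z)/S{\rm conv}*}$. By the defining formula (\ref{ali:dfg}) the right-hand side becomes $C_{\rm conv}({\cal K}_{(X,D\cup Z)/S})$. For the left-hand side, the commutativity of (\ref{cd:epep}), i.e.~$\eps^{\rm conv}_{(X,D\cup Z,Z)/S}\circ\pi_{(X,D\cup Z)/S{\rm conv}}=\pi_{(X,Z)/S{\rm conv}}\circ\eps_{{\rm conv}\bul}$, gives $R\pi_{(X,Z)/S{\rm conv}*}\,R\eps_{{\rm conv}\bul*}=R\eps^{\rm conv}_{(X,D\cup Z,Z)/S*}\,R\pi_{(X,D\cup Z)/S{\rm conv}*}$; and by cohomological descent for the log convergent topos (the analogue of \cite[Lemma 1.5.1]{nh2}, using ${\cal K}_{(X_{\bul},D_{\bul}\cup Z_{\bul})/S}=\pi^*_{(X,D\cup Z)/S{\rm conv}}({\cal K}_{(X,D\cup Z)/S})$) one has $R\pi_{(X,D\cup Z)/S{\rm conv}*}({\cal K}_{(X_{\bul},D_{\bul}\cup Z_{\bul})/S})={\cal K}_{(X,D\cup Z)/S}$. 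Composing, $R\eps^{\rm conv}_{(X,D\cup Z,Z)/S*}({\cal K}_{(X,D\cup Z)/S})$ becomes canonically isomorphic to $C_{\rm conv}({\cal K}_{(X,D\cup Z)/S})$, which is (\ref{eqn:czxdz}).

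The part I expect to demand the most care is organisational rather than conceptual: checking that the degreewise isomorphisms supplied by (\ref{theo:cpvcs}) are functorial in the simplicial index, so that they descend to an isomorphism in the derived category of simplicial ${\cal K}_{(X_{\bul},Z_{\bul})/S}$-modules. Concretely this is the compatibility of (\ref{eqn:ceslvc}) with pull-back along the structural morphisms of ${\cal P}^{\rm ex}_{\bul}$, which rests on the functoriality (with respect to such morphisms and to the connections) built into (\ref{lemm:itmne}) and the construction of $\nabla^0$ in (\ref{lemm:lcl}); alternatively one argues, as in the proof of (\ref{theo:ifc}), with a compatible system of filtered flasque resolutions along the simplicial direction. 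A secondary, routine point is the affineness reduction needed to invoke (\ref{coro:epnr}): since ${\cal P}^{\rm ex}_n$ is only a disjoint union of formal schemes affine over $S$, one first passes to affine $\os{\circ}{S}$ and works one affine component of $X_n$ at a time.
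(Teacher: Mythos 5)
Your proposal is correct and follows essentially the same route as the paper: the paper's proof is exactly the chain of identifications via the definition (\ref{ali:dfg}), the vanishing-cycle Poincar\'{e} lemma (\ref{eqn:ceslvc}), the log convergent Poincar\'{e} lemma (\ref{theo:pl}), the commutativity of (\ref{cd:epep}), and cohomological descent. The only difference is presentational — you spell out the affineness reductions and the simplicial functoriality of (\ref{eqn:ceslvc}) that the paper leaves implicit.
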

\begin{proof} 
Let 
\begin{equation*} 
\pi_{{\rm conv}} \col 
(({(X_{\bul},D_{\bul}\cup Z_{\bul})/S})_{\rm conv},
{\cal K}_{(X_{\bul},D_{\bul}\cup Z_{\bul})/S})
\lo 
(({(X,D\cup Z)/S})_{\rm conv},
{\cal K}_{(X,D\cup Z)/S}) 
\tag{8.6.2}\label{eqn:pidxdzf}
\end{equation*}
be the natural morphism in (\ref{eqn:pidf}). 
We have the following equalities: 
\begin{align*}
 C_{\rm conv}
({\cal K}_{(X,D\cup Z)/S}) 
& = 
R\pi_{(X,Z)/S{\rm conv}*}
(L^{\rm conv}_{(X_{\bul},Z_{\bul})/S}
(\Om^{\bul}_{{\cal P}^{\rm ex}_{\bul}/S}\otimes_{\mab Z}{\mab Q})) \\ 
{} & = R\pi_{(X,Z)/S{\rm conv}*}
R\eps^{\rm conv}_{(X_{\bul},D_{\bul}\cup Z_{\bul},Z_{\bul})/S*}
(L^{\rm conv}_{(X_{\bul},D_{\bul}\cup Z_{\bul})/S}
(\Om^{\bul}_{{\cal P}^{\rm ex}_{\bul}/S}\otimes_{\mab Z}{\mab Q})) \\ 
{} & = R\pi_{(X,Z)/S{\rm conv}*}
R\eps^{\rm conv}_{(X_{\bul},D_{\bul}\cup Z_{\bul},Z_{\bul})/S*}
({\cal K}_{(X_{\bul},D_{\bul}\cup Z_{\bul})/S}) \\ 
{} & = R\eps^{\rm conv}_{(X,D\cup Z,Z)/S*}
R\pi_{{\rm conv}*}
({\cal K}_{(X_{\bul},D_{\bul}\cup Z_{\bul})/S}) \\
{} & = R\eps^{\rm conv}_{(X,D\cup Z,Z)/S*}
({\cal K}_{(X,D\cup Z)/S}).
\end{align*} 
Here the first equality (resp.~the second one, 
the third one, the fourth one, the fifth one) follows 
from the definition of 
$C_{\rm conv}({\cal K}_{(X,D\cup Z)/S})$ 
(resp.~(\ref{eqn:ceslvc}), 
(\ref{theo:pl}), (\ref{cd:epep}), 
the cohomological descent).  
\end{proof}

\begin{coro}\label{coro:specdz}
There exists the following spectral sequence
\begin{equation*}
E^{-k,h+k}_1 := 
R^{h-k}f^{\rm conv}_{(D^{(k)},Z\vert_{D^{(k)}})/S*}
({\cal K}_{(D^{(k)},Z\vert_{D^{(k)}})/S}
\otimes_{\mab Z}
\vp^{(k)}_{{\rm conv}}(D/S;Z))
\Longrightarrow 
\tag{8.7.1}\label{eqn:wtspkdz}
\end{equation*} 
$$R^hf^{\rm conv}_{(X,D\cup Z)/S*}({\cal K}_{(X,D\cup Z)/S}).$$  
\end{coro}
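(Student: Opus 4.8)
The plan is to derive the spectral sequence (\ref{eqn:wtspkdz}) as the weight spectral sequence associated to the filtered complex $(C_{\rm conv}({\cal K}_{(X,D\cup Z)/S}),P^D)$ via its image under the derived direct image $Rf^{\rm conv}_{(X,Z)/S*}$. First I would recall that a bounded below filtered complex in ${\rm D}^+{\rm F}$ gives rise, in the standard way, to a spectral sequence whose $E_1$-terms are the cohomologies of the graded pieces; applying $Rf^{\rm conv}_{(X,Z)/S*}$ to $(C_{\rm conv}({\cal K}_{(X,D\cup Z)/S}),P^D)$ we obtain a filtered object in ${\rm D}^+{\rm F}(f^{-1}({\cal K}_S))$ (or rather in ${\rm D}^+{\rm F}({\cal K}_S)$ after composing with $Rf_*$), and the associated spectral sequence has the form
\begin{equation*}
E_1^{-k,h+k}=R^{h-k}f^{\rm conv}_{(X,Z)/S*}({\rm gr}^{P^D}_kC_{\rm conv}({\cal K}_{(X,D\cup Z)/S}))[k]\Lo R^hf^{\rm conv}_{(X,Z)/S*}(C_{\rm conv}({\cal K}_{(X,D\cup Z)/S})).
\end{equation*}
The indexing here is forced by the convention that $P^D_k$ for $k\ge 0$ and the shift $[-k]$ appearing in the graded pieces; I would match the numbering carefully against (\ref{lemm:grkpd}).

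Next I would identify the two sides. For the abutment: by (\ref{prop:czc}) there is a canonical isomorphism $C_{\rm conv}({\cal K}_{(X,D\cup Z)/S})\os{\sim}{\longleftarrow}R\eps^{\rm conv}_{(X,D\cup Z,Z)/S*}({\cal K}_{(X,D\cup Z)/S})$, and composing $u^{\rm conv}_{(X,Z)/S}$ with the structural morphism gives $f^{\rm conv}_{(X,D\cup Z)/S}$ via (\ref{eqn:nbepu}) (the equality $u^{\rm conv}_{(X,Z)/S}\circ\eps^{\rm conv}_{(X,D\cup Z,Z)/S}=u^{\rm conv}_{(X,D\cup Z)/S}$ applied after the structural map), so $R^hf^{\rm conv}_{(X,Z)/S*}(C_{\rm conv}({\cal K}_{(X,D\cup Z)/S}))=R^hf^{\rm conv}_{(X,D\cup Z)/S*}({\cal K}_{(X,D\cup Z)/S})$. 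For the $E_1$-terms: by (\ref{lemm:grkpd}),
\begin{equation*}
{\rm gr}_k^{P^D}C_{\rm conv}({\cal K}_{(X,D\cup Z)/S})=a^{(k)}_{{\rm conv}*}({\cal K}_{(D^{(k)},Z\vert_{D^{(k)}})/S}\otimes_{\mab Z}\vp^{(k){\rm log}}_{{\rm conv}}(D/S;Z))[-k],
\end{equation*}
and since $a^{(k)}$ is the natural (finite, affine) morphism of log schemes, $R a^{(k)}_{{\rm conv}*}=a^{(k)}_{{\rm conv}*}$ on the relevant sheaves, so applying $Rf^{\rm conv}_{(X,Z)/S*}$ and using $f^{\rm conv}_{(X,Z)/S}\circ a^{(k)}_{\rm conv}=f^{\rm conv}_{(D^{(k)},Z\vert_{D^{(k)}})/S}$ gives exactly $R^{h-k}f^{\rm conv}_{(D^{(k)},Z\vert_{D^{(k)}})/S*}({\cal K}_{(D^{(k)},Z\vert_{D^{(k)}})/S}\otimes_{\mab Z}\vp^{(k){\rm log}}_{{\rm conv}}(D/S;Z))$ after accounting for the $[-k]$ shift. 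One should also check convergence: $P^D$ is a finite filtration (the graded pieces vanish outside $0\le k\le r$ where $r$ is the local number of branches of $D$, since $\vp^{(k)}=0$ for large $k$) and everything is bounded below, so the spectral sequence converges.

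The main obstacle I expect is bookkeeping rather than conceptual: getting the degree and twist conventions in the indexing $E_1^{-k,h+k}$ exactly right, in particular the interaction of the shift $[-k]$ in (\ref{lemm:grkpd}) with the standard spectral-sequence-of-a-filtered-complex indexing, and making sure the filtration $P^D$ is normalized so that the spectral sequence is the "increasing weight filtration" one and not its décalage. A secondary point to verify is that $Rf^{\rm conv}_{(X,Z)/S*}$ indeed commutes with taking ${\rm gr}_k$ in the derived filtered sense — but this is precisely (\ref{lemm:grrdld}) applied to the morphism of ringed topoi $f^{\rm conv}_{(X,Z)/S}$, so it requires only citation. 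Thus the proof reduces to: (i) invoke the general machine producing a spectral sequence from $(C_{\rm conv},P^D)\in{\rm D}^+{\rm F}$; (ii) compute ${\rm gr}_k$ by (\ref{lemm:grkpd}) and push down by (\ref{lemm:grrdld}); (iii) identify the abutment by (\ref{prop:czc}); (iv) note finiteness of $P^D$ for convergence. I would write these four steps out, leaving the index-matching as the only genuinely careful computation.
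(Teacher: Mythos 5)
Your proposal is correct and follows essentially the same route as the paper: the paper's proof is the one-line observation that the corollary follows from (\ref{prop:czc}) and (\ref{eqn:grpwwd}), i.e.\ exactly your steps (ii) and (iii), with the spectral sequence of the filtered complex and the index bookkeeping left implicit. Your additional remarks on finiteness of $P^D$ and on commuting $Rf_*$ with ${\rm gr}_k$ via (\ref{lemm:grrdld}) are correct and only make explicit what the paper suppresses.
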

\begin{proof} 
(\ref{coro:specdz}) immediately follows from 
(\ref{prop:czc}) and (\ref{eqn:grpwwd}). 
\end{proof}

\begin{theo}[{\bf $p$-adic purity, {cf.~\cite[(2.7.1)]{nh2}}}]\label{theo:calvc}
Let $k$ be a nonnegative integer. Then
\begin{equation*}
R^k\eps^{\rm conv}_{(X,D\cup Z, Z)/S*}
({\cal K}_{(X, D\cup Z)/S}) 
=a^{(k)}_{{\rm conv}*}
({\cal K}_{(D^{(k)}, Z\vert_{D^{(k)}})/S}\otimes_{\mab Z}
\vp^{(k)}_{{\rm conv}}(D/S;Z)). 
\tag{8.8.1}\label{eqn:ppuri}
\end{equation*}
\end{theo}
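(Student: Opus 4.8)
The plan is to deduce the $p$-adic purity (\ref{eqn:ppuri}) directly from the already-established isomorphism (\ref{eqn:czxdz}) together with the computation (\ref{eqn:grpwwd}) of the graded pieces of the weight filtration $P^D$ on $C_{\rm conv}({\cal K}_{(X,D\cup Z)/S})$. Concretely, by (\ref{prop:czc}) we have a canonical isomorphism $C_{\rm conv}({\cal K}_{(X,D\cup Z)/S})\os{\sim}{\lo} R\eps^{\rm conv}_{(X,D\cup Z,Z)/S*}({\cal K}_{(X,D\cup Z)/S})$ in $D^+({\cal K}_{(X,Z)/S})$, so it suffices to compute $\mathcal{H}^k(C_{\rm conv}({\cal K}_{(X,D\cup Z)/S}))$. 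The filtration $P^D$ on $C_{\rm conv}({\cal K}_{(X,D\cup Z)/S})$ is exactly the canonical filtration $\tau$ in disguise: this is the convergent analogue of \cite[(2.7.1)]{nh2}, and the point to verify is that $P^D_k C_{\rm conv}=\tau_{\leq k}C_{\rm conv}$ for all $k$, which will follow from (\ref{eqn:grpwwd}) once one checks that $\mathcal{H}^j({\rm gr}_k^{P^D}C_{\rm conv})=0$ for $j\neq k$ — and this vanishing is immediate from (\ref{eqn:grpwwd}) since the right-hand side $a^{(k)}_{{\rm conv}*}({\cal K}_{(D^{(k)},Z\vert_{D^{(k)}})/S}\otimes_{\mab Z}\vp^{(k){\rm log}}_{{\rm conv}}(D/S;Z))[-k]$ is a sheaf placed in degree $k$, because $a^{(k)}_{\rm conv}$ is (induced by) a closed immersion and hence $a^{(k)}_{{\rm conv}*}$ is exact on the relevant category.

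First I would make precise the identification of $P^D$ with $\tau$. Since each ${\rm gr}_k^{P^D}C_{\rm conv}$ has cohomology concentrated in degree $k$ by (\ref{eqn:grpwwd}), a standard d\'evissage on the finite-length filtration $P^D$ (running over $0\leq k\leq \dim$, using the long exact cohomology sequences associated to $0\to P^D_{k-1}\to P^D_k\to {\rm gr}_k^{P^D}\to 0$) shows by induction on $k$ that $P^D_k C_{\rm conv}\os{\sim}{\lo}\tau_{\leq k}C_{\rm conv}$ and that $\mathcal{H}^j(C_{\rm conv})$ vanishes except for the contribution in each degree coming from the corresponding graded piece. Then I would take cohomology: $\mathcal{H}^k(C_{\rm conv}({\cal K}_{(X,D\cup Z)/S}))=\mathcal{H}^k({\rm gr}_k^{P^D}C_{\rm conv}({\cal K}_{(X,D\cup Z)/S}))=a^{(k)}_{{\rm conv}*}({\cal K}_{(D^{(k)},Z\vert_{D^{(k)}})/S}\otimes_{\mab Z}\vp^{(k){\rm log}}_{{\rm conv}}(D/S;Z))$, where the first equality uses the vanishing of the off-diagonal graded cohomology and the second is (\ref{eqn:grpwwd}) read in degree $k$. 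Combining with (\ref{eqn:czxdz}) gives $R^k\eps^{\rm conv}_{(X,D\cup Z,Z)/S*}({\cal K}_{(X,D\cup Z)/S})=\mathcal{H}^k(R\eps^{\rm conv}_{(X,D\cup Z,Z)/S*}({\cal K}_{(X,D\cup Z)/S}))=a^{(k)}_{{\rm conv}*}({\cal K}_{(D^{(k)},Z\vert_{D^{(k)}})/S}\otimes_{\mab Z}\vp^{(k){\rm log}}_{{\rm conv}}(D/S;Z))$, which is (\ref{eqn:ppuri}).

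The main obstacle, and the only genuinely substantive point, is the exactness of the functor $a^{(k)}_{{\rm conv}*}$ on the category of coherent crystals (or at least the vanishing $R^ja^{(k)}_{{\rm conv}*}=0$ for $j>0$ on the sheaf ${\cal K}_{(D^{(k)},Z\vert_{D^{(k)}})/S}\otimes_{\mab Z}\vp^{(k){\rm log}}_{{\rm conv}}(D/S;Z)$), which is what allows one to pass from the $[-k]$-shift on the right-hand side of (\ref{eqn:grpwwd}) to a sheaf living in a single cohomological degree, and thus to identify $P^D$ with $\tau$. This is the log convergent analogue of the crystalline statement used in \cite{nh2}; I would reduce it to the fact that $a^{(k)}\col (D^{(k)},Z\vert_{D^{(k)}})\lo (X,Z)$ is a closed immersion, which was arranged right after (\ref{lemm:dpinc}), hence that on each enlargement the underlying morphism is a closed immersion of (formal) schemes, so that the induced pushforward on Zariski topoi is exact and commutes with the computation of cohomology sheaves. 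Once this is in hand, the argument is a formal d\'evissage and the assertion follows; everything else in the proof is a direct appeal to (\ref{prop:czc}) and (\ref{lemm:grkpd}).
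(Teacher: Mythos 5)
Your proposal is correct and follows essentially the same route as the paper: the paper runs the spectral sequence of the filtration $P^D$ on $C_{\rm conv}({\cal K}_{(X,D\cup Z)/S})$, observes via (\ref{eqn:grpwwd}) that $E_1^{-k,h+k}={\cal H}^{h}({\rm gr}_k^{P^D}C_{\rm conv})$ vanishes for $h\neq k$, and concludes with (\ref{eqn:czxdz}) — your d\'evissage by long exact sequences is just this spectral sequence unwound. The one remark worth making is that your ``main obstacle'' (exactness of $a^{(k)}_{{\rm conv}*}$) needs no separate argument at this stage: the right-hand side of (\ref{eqn:grpwwd}) is already the underived pushforward of a single sheaf placed in degree $k$, so the off-diagonal vanishing is immediate from that lemma as stated.
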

\begin{proof}
The ``increasing filtration''
$\{P^D_kC_{\rm conv}({\cal K}_{(X,D\cup Z)/S})\}_{k\in {\mab Z}}$  
on $C_{\rm conv}({\cal K}_{(X,D\cup Z)/S})$ gives 
the following spectral sequence
\begin{equation*}
E_1^{-k,h+k} = {\cal H}^h({\rm gr}_k^{P^D}
C_{\rm conv}
({\cal K}_{(X,D\cup Z)/S})) \Lo
{\cal H}^h(C_{\rm conv}
({\cal K}_{(X,D\cup Z)/S})). 
\tag{8.8.2}\label{eqn:ttc}
\end{equation*}
By (\ref{eqn:czxdz}),  
$ {\cal H}^h(C_{\rm conv}
({\cal K}_{(X,D\cup Z)/S}))
= 
R^h\eps^{\rm conv}_{(X,D\cup Z, Z)/S*}({\cal K}_{(X,D\cup Z)/S})$ 
and by (\ref{eqn:grpwwd}) we have the following equality: 
\begin{equation*} 
{\cal H}^h({\rm gr}_k^{P^D}
C_{\rm conv}({\cal K}_{(X,D\cup Z)/S}))
= {\cal H}^{h-k}(a^{(k)}_{{\rm conv}*}
({\cal K}_{(D^{(k)},Z\vert_{D^{(k)}})/S}\otimes_{\mab Z}
\vp^{(k)}_{{\rm conv}}(D/S;Z))).
\tag{8.8.3}\label{eqn:ckdzs}
\end{equation*}
Hence $E_1^{-k,h+k}=0$ for $h\not= k$. 
Now we obtain (\ref{eqn:ppuri}) by the following equalities: 
\begin{align*} 
R^k\eps^{\rm conv}_{(X,D\cup Z, Z)/S*}({\cal K}_{(X, D\cup Z)/S}) 
& = {\cal H}^k(C_{\rm conv}({\cal K}_{(X,D\cup Z)/S})) 
=E_1^{-k,2k} \\
{} & = 
a^{(k)}_{{\rm conv}*}
({\cal K}_{(D^{(k)},Z\vert_{D^{(k)}})/S}\otimes_{\mab Z}
\vp^{(k)}_{{\rm conv}}(D/S;Z)). 
\end{align*}
\end{proof} 

\begin{rema}\label{rema:wnjp} 
In this remark, we show (\ref{eqn:jnvp}) in general 
as in \cite[VI Lemme 1.2.2]{bb}. 
\par 
Let $Y$ be a scheme over $S_1$ 
and let $a$ be a global section 
of $\Gam(Y,{\cal O}_Y)$. Let $U$ be the largest open subscheme of 
$Y$ such that $a$ is invertible on $U$ and let 
$j \col U \os{\sus}{\lo} Y$ be the open immersion. 
Then we would like to prove that, 
for a coherent ${\cal K}_{U/S}$-module $E$, 
\begin{equation*} 
R^kj_{{\rm conv}*}(E) =0 \quad (k\in {\mab Z}_{\geq 1}). 
\tag{8.9.1}\label{eqn:ekj0}
\end{equation*}
Indeed, let $(V,T,\iota,u)$ be an object of ${\rm Conv}(Y/S)$. 
Assume that $T$ is affine. 
Let $\wt{u^*(a)}\in \Gam(T,{\cal O}_T)$ be a lift of $u^*(a)$.  
Set $V_U:=V\times_YU$ and consider the open subscheme 
$T\vert_{V_U}$ of $T$. 
Then $T\vert_{V_U}$ is the largest open subscheme of 
$T$ such that $\wt{u^*(a)}$ 
is invertible on $T\vert_{V_U}$. 
Let $j_T \col T\vert_{V_U} \lo T$ be the open immersion. 
Then  
$R^kj_{T*}(E_{(V_U,T\vert_{V_U},\iota \vert_{V_U},u\vert_{V_U})})=0$ 
for $k\in {\mab Z}_{\geq 1}$. 
As in [loc.~cit., VI Lemme 1.2.1], 
$R^kj_{{\rm conv}*}(E)_{(V,T,\iota,u)} 
=R^kj_{T*}(E_{(V_U,T\vert_{V_U},\iota \vert_{V_U},u\vert_{V_U})})$. 
Consequently we obtain (\ref{eqn:ekj0}). 
\end{rema}

\par
By the Leray spectral sequence for the functor 
$\eps^{\rm conv}_{(X,D\cup Z, Z)/S} \col 
({(X,D\cup Z)/S})_{\rm conv} \lo 
({(X,Z)/S})_{\rm conv}$ and
$f^{\rm conv}_{(X,Z)/S} \col 
({(X,Z)/S})_{\rm conv} \lo {S}_{\rm zar}$,
we obtain the following spectral sequence
\begin{equation*}
E^{st}_2 := R^sf^{\rm conv}_{(X,Z)/S*}
R^t\eps^{\rm conv}_{(X,D\cup Z, Z)/S*}
({\cal K}_{(X,D\cup Z)/S})
\Longrightarrow 
\tag{8.9.2}\label{eqn:levcs}
\end{equation*} 
$$R^{s+t}f^{\rm conv}_{(X,D\cup Z)/S*}
({\cal K}_{(X,D\cup Z)/S}).$$  
By (\ref{theo:calvc}), 
(\ref{eqn:levcs}) is equal to 
the following spectral sequence
\begin{equation*}
E^{st}_2 = R^sf^{\rm conv}_{(D^{(t)},Z\vert_{D^{(t)}})/S*}
({\cal K}_{(D^{(t)},Z\vert_{D^{(t)}})/S}\otimes_{\mab Z}
\vp^{(t)}_{{\rm conv}}(D/S;Z)) 
\Longrightarrow \tag{8.9.3}\label{eqn:lerpw}
\end{equation*} 
$$R^{s+t}f^{\rm conv}_{(X,D\cup Z)/S*}({\cal K}_{(X,D\cup Z)/S}),$$
which is equal to (\ref{eqn:wtspkdz}) by renumbering 
the $E_2$-term $E^{k,h-k}_2$ by $E_1^{-k,h+k}$.

\par
Now we give another description of 
$(C_{\rm conv}({\cal K}_{(X,D\cup Z)/S}),P^D)$. 
We need the following lemma whose proof is easy:

\begin{lemm}[{\bf {\cite[(2.7.2)]{nh2}}}]\label{lemm:canhigh}
Let $\tau$ be the canonical filtration on a complex.  
Let 
$f \col ({\cal T}, {\cal A}) 
\lo ({\cal T}', {\cal A}')$ 
be a morphism of ringed topoi. 
Then, for an object $E^{\bul}$ in 
$D^+({\cal A})$, 
there exists a canonical morphism 
\begin{equation*}
(Rf_*(E^{\bul}),\tau) \lo 
Rf_*((E^{\bul},\tau))
\tag{8.10.1}\label{eqn:taure}
\end{equation*} 
in ${\rm D}^+{\rm F}({\cal A}').$
\end{lemm}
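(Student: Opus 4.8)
\textbf{Proof plan for Lemma \ref{lemm:canhigh}.}
The statement is a formal one about the interaction of the canonical filtration $\tau$ with a derived pushforward, so the plan is to reduce everything to the level of honest complexes of injective (or at least $f_*$-acyclic) ${\cal A}$-modules and then to check that the truncation functors $\tau_{\leq k}$ are compatible with taking $f_*$ termwise. First I would replace $E^{\bul}$ by a quasi-isomorphic complex $I^{\bul}$ of $f_*$-acyclic ${\cal A}$-modules bounded below (this exists since we are in $D^+$); then $Rf_*(E^{\bul})$ is represented by $f_*(I^{\bul})$. The point is that the subcomplexes $\tau_{\leq k}I^{\bul}$ are themselves complexes of $f_*$-acyclic modules: in degrees $<k$ they coincide with $I^{\bul}$, in degrees $>k$ they are zero, and in degree $k$ the term is $\ker(I^k\to I^{k+1})$, which need not be $f_*$-acyclic --- this is the only delicate point, addressed below.

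Granting for a moment that each $\tau_{\leq k}I^{\bul}$ has $f_*$-acyclic terms, the filtered complex $(f_*(I^{\bul}),\{f_*(\tau_{\leq k}I^{\bul})\}_k)$ represents $Rf_*((E^{\bul},\tau))$ in ${\rm D}^+{\rm F}({\cal A}')$, while its underlying complex $f_*(I^{\bul})$ represents $Rf_*(E^{\bul})$. There is then a tautological comparison morphism: the canonical filtration $\tau$ on the complex $f_*(I^{\bul})$ maps to $\{f_*(\tau_{\leq k}I^{\bul})\}_k$, because $\tau_{\leq k}f_*(I^{\bul})$ is built from $\ker$ and $\mathrm{coker}$ in degrees around $k$ of $f_*(I^{\bul})$, and $f_*$ being left exact sends $\ker(I^k\to I^{k+1})$ into $\ker(f_*(I^k)\to f_*(I^{k+1}))$, giving a natural map $f_*(\tau_{\leq k}I^{\bul})\to\tau_{\leq k}f_*(I^{\bul})$ --- or, depending on the desired direction, the reverse map $\tau_{\leq k}f_*(I^{\bul})\to f_*(\tau_{\leq k}I^{\bul})$ once one knows the terms agree. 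Unwinding, this yields the desired morphism $(Rf_*(E^{\bul}),\tau)\to Rf_*((E^{\bul},\tau))$ in ${\rm D}^+{\rm F}({\cal A}')$, and one checks independence of the choice of $I^{\bul}$ by the usual argument with a second resolution and a common refinement.

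The main obstacle is precisely the acyclicity of the degree-$k$ term $\ker(I^k\to I^{k+1})$ of $\tau_{\leq k}I^{\bul}$. The standard fix, which I would adopt, is to choose the resolution more carefully: take a Cartan--Eilenberg-type resolution, i.e. resolve so that not only the terms $I^n$ but also the cocycles $Z^n=\ker(I^n\to I^{n+1})$, the coboundaries $B^n$, and the cohomology objects ${\cal H}^n$ are all $f_*$-acyclic. Equivalently one may use an injective resolution in the category of complexes, for which these sub- and quotient complexes are automatically built from injectives. With such a resolution all the complexes $\tau_{\leq k}I^{\bul}$ have $f_*$-acyclic terms, $f_*$ commutes with the formation of $\tau_{\leq k}$, and the argument above goes through verbatim; this is exactly the mechanism behind the ``easy'' proof alluded to in \cite[(2.7.2)]{nh2}. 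The remaining verifications --- functoriality of the comparison morphism in $E^{\bul}$ and compatibility with the transition maps of the filtration --- are routine and I would leave them to the reader.
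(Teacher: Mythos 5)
Your proposal breaks down at the step where you choose the resolution. You propose to resolve $E^{\bul}$ by a complex $I^{\bul}$ of $f_*$-acyclic modules whose cocycles $Z^n=\ker(I^n\to I^{n+1})$, coboundaries $B^n$ and cohomology objects ${\cal H}^n(I^{\bul})$ are all $f_*$-acyclic. Such a resolution does not exist in general: ${\cal H}^n(I^{\bul})\cong {\cal H}^n(E^{\bul})$ is an invariant of $E^{\bul}$, independent of the resolution, and the exact sequences $0\to Z^{n}\to I^{n}\to B^{n+1}\to 0$ and $0\to B^n\to Z^n\to {\cal H}^n\to 0$ show that acyclicity of the $I^n$ and the $Z^n$ already forces every ${\cal H}^n(E^{\bul})$ to be $f_*$-acyclic. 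The simplest counterexample is $E^{\bul}={\cal F}[0]$ with ${\cal F}$ not $f_*$-acyclic: for any resolution one has $Z^0(I^{\bul})={\cal F}$. (The Cartan--Eilenberg construction does not rescue this; it produces injective resolutions \emph{of} the objects $Z^p(E^{\bul})$, $B^p(E^{\bul})$, ${\cal H}^p(E^{\bul})$, not a total complex whose own cocycles are injective.) There is also a sanity check showing the strategy proves too much: since $f_*$ is a right adjoint it preserves kernels exactly, so $f_*(\tau_{\leq k}I^{\bul})=\tau_{\leq k}f_*(I^{\bul})$ on the nose; if this subcomplex also computed $Rf_*(\tau_{\leq k}E^{\bul})$, the morphism (8.10.1) would always be a filtered \emph{isomorphism}, whence ${\rm gr}_k^{\tau}Rf_*(E^{\bul})=R^kf_*(E^{\bul})[-k]$ would coincide with $Rf_*({\cal H}^k(E^{\bul}))[-k]$ for every $E^{\bul}$ --- already false for $f=\Gamma(X,-)$ and $E^{\bul}$ a single non-acyclic sheaf. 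The whole point of the lemma in this paper is that (8.10.1) is in general only a morphism, which becomes an isomorphism in (8.11.6) only after the $p$-adic purity computation.

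The missing idea is to decouple the filtration used to compute $Rf_*((E^{\bul},\tau))$ from the literal truncations. Take an injective resolution $E^{\bul}\to I^{\bul}$ and then a \emph{filtered} injective resolution $(I^{\bul},\tau)\to (J^{\bul},\{J^{\bul}_k\})$, so that $J^{\bul}_k$ is a complex of injectives quasi-isomorphic to $\tau_{\leq k}E^{\bul}$ but is not required to be the kernel truncation of $J^{\bul}$; then $Rf_*((E^{\bul},\tau))=(f_*J^{\bul},\{f_*J^{\bul}_k\})$. On $f_*J^{\bul}$ introduce the auxiliary filtration $G_k:=\tau_{\leq k}(f_*J^{\bul}_k)$. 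One has inclusions $G_k\subseteq \tau_{\leq k}(f_*J^{\bul})$ and $G_k\subseteq f_*J^{\bul}_k$, hence filtered morphisms $(f_*J^{\bul},G)\to (f_*J^{\bul},\tau)$ and $(f_*J^{\bul},G)\to (f_*J^{\bul},\{f_*J^{\bul}_k\})$. The first is a filtered quasi-isomorphism because $Rf_*(\tau_{>k}E^{\bul})$ has cohomology only in degrees $>k$, so ${\cal H}^i(f_*J^{\bul}_k)\to {\cal H}^i(f_*J^{\bul})$ is an isomorphism for $i\leq k$ and therefore $\tau_{\leq k}(f_*J^{\bul}_k)\to\tau_{\leq k}(f_*J^{\bul})$ is a quasi-isomorphism; inverting it and composing yields the canonical morphism (8.10.1). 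This is the content of the cited proof in \cite[(2.7.2)]{nh2}, and it is genuinely a one-way morphism, not an identification.
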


\begin{theo}[{\bf Comparison theorem}]\label{theo:wtvsca} 
Set 
\begin{equation*}
(E_{\rm conv}({\cal K}_{(X,D\cup Z)/S}),P^D)
:=(R\eps^{\rm conv}_{(X,D\cup Z, Z)/S*}
({\cal K}_{(X,D\cup Z)/S}),\tau).  
\end{equation*} 
Then there exists a 
canonical isomorphism 
\begin{equation*}
(E_{\rm conv}({\cal K}_{(X,D\cup Z)/S}),P^D)
\os{\sim}{\lo} (C_{\rm conv}
({\cal K}_{(X,D\cup Z)/S}), P^D).
\tag{8.11.1}\label{eqn:rtupdk}
\end{equation*} 
In particular, 
\begin{equation*}(C_{\rm conv}
({\cal K}_{(X,D\cup Z)/S}), \tau)=
(C_{\rm conv}
({\cal K}_{(X,D\cup Z)/S}),P^D). 
\tag{8.11.2}\label{eqn:tpccri}
\end{equation*}
\end{theo}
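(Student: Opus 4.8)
The plan is to produce a canonical filtered morphism
$\varphi\colon (E_{\rm conv}({\cal K}_{(X,D\cup Z)/S}),P^D)\lo (C_{\rm conv}({\cal K}_{(X,D\cup Z)/S}),P^D)$
whose underlying morphism is the canonical isomorphism of (\ref{prop:czc}), and then to check that $\varphi$ is a filtered isomorphism by a d\'evissage along the filtration. Recall that, as in the chain of isomorphisms in the proof of (\ref{prop:czc}) (which uses (\ref{theo:cpvcs}), (\ref{theo:pl}), (\ref{cd:epep}) and the cohomological descent), the underlying complex of $C_{\rm conv}({\cal K}_{(X,D\cup Z)/S})=R\pi_{(X,Z)/S{\rm conv}*}(L^{\rm conv}_{(X_{\bul},Z_{\bul})/S}(\Om^{\bul}_{{\cal P}^{\rm ex}_{\bul}/S})\otimes_{\mab Z}{\mab Q})$ is canonically identified with $R\eps^{\rm conv}_{(X,D\cup Z,Z)/S*}({\cal K}_{(X,D\cup Z)/S})$, which is the underlying complex of $E_{\rm conv}({\cal K}_{(X,D\cup Z)/S})$.

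For the construction of $\varphi$, the first point is that on the log de Rham complex one has an inclusion of filtrations $\tau\subset P^{{\cal P}^{\rm ex}_{\bul}/{\cal Q}^{\rm ex}_{\bul}}$: by the definition (\ref{eqn:lexfpw}) we have $P^{{\cal P}^{\rm ex}_{\bul}/{\cal Q}^{\rm ex}_{\bul}}_k\Om^j_{{\cal P}^{\rm ex}_{\bul}/S}=\Om^j_{{\cal P}^{\rm ex}_{\bul}/S}$ for every $j\leq k$, so that, by the injectivity (\ref{eqn:yxpdz}), $\tau_{\leq k}L^{\rm conv}_{(X_{\bul},Z_{\bul})/S}(\Om^{\bul}_{{\cal P}^{\rm ex}_{\bul}/S})$ is a subcomplex of $P^{D_{\bul}}_kL^{\rm conv}_{(X_{\bul},Z_{\bul})/S}(\Om^{\bul}_{{\cal P}^{\rm ex}_{\bul}/S})$ for all $k$. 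Hence the identity on $L^{\rm conv}_{(X_{\bul},Z_{\bul})/S}(\Om^{\bul}_{{\cal P}^{\rm ex}_{\bul}/S})\otimes_{\mab Z}{\mab Q}$ defines a filtered morphism $(L^{\rm conv}_{(X_{\bul},Z_{\bul})/S}(\Om^{\bul}_{{\cal P}^{\rm ex}_{\bul}/S})\otimes_{\mab Z}{\mab Q},\tau)\lo (L^{\rm conv}_{(X_{\bul},Z_{\bul})/S}(\Om^{\bul}_{{\cal P}^{\rm ex}_{\bul}/S})\otimes_{\mab Z}{\mab Q},P^{D_{\bul}})$. Applying $R\pi_{(X,Z)/S{\rm conv}*}$ to this, precomposing with the canonical morphism of (\ref{lemm:canhigh}) for $f=\pi_{(X,Z)/S{\rm conv}}$, and using the identification of underlying complexes recalled above (together with the fact that $R\pi_{(X,Z)/S{\rm conv}*}$ of the $P^{D_{\bul}}$-filtered complex is $(C_{\rm conv}({\cal K}_{(X,D\cup Z)/S}),P^D)$ by definition, and that the $\tau$-filtered side becomes $(R\eps^{\rm conv}_{(X,D\cup Z,Z)/S*}({\cal K}_{(X,D\cup Z)/S}),\tau)=(E_{\rm conv}({\cal K}_{(X,D\cup Z)/S}),P^D)$), one obtains the desired $\varphi$, whose underlying morphism is the canonical isomorphism.

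Finally, to see that $\varphi$ is a filtered isomorphism it is enough to show that for every $k$ the induced morphism $\tau_{\leq k}E_{\rm conv}({\cal K}_{(X,D\cup Z)/S})\lo P^D_kC_{\rm conv}({\cal K}_{(X,D\cup Z)/S})$ is an isomorphism in $D^+({\cal K}_{(X,Z)/S})$; both filtrations being bounded and exhaustive, this suffices. On the source, $\tau_{\leq k}$ is the canonical truncation, so $\tau_{\leq k}E_{\rm conv}\lo E_{\rm conv}$ is an isomorphism on ${\cal H}^h$ for $h\leq k$ and $\tau_{\leq k}E_{\rm conv}$ is acyclic in degrees $>k$. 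On the target, by (\ref{lemm:grkpd}) the graded piece ${\rm gr}_k^{P^D}C_{\rm conv}=a^{(k)}_{{\rm conv}*}({\cal K}_{(D^{(k)},Z\vert_{D^{(k)}})/S}\otimes_{\mab Z}\vp^{(k){\rm log}}_{{\rm conv}}(D/S;Z))[-k]$ is concentrated in degree $k$; the d\'evissage along the distinguished triangles $P^D_{k-1}C_{\rm conv}\lo P^D_kC_{\rm conv}\lo {\rm gr}_k^{P^D}C_{\rm conv}\os{+1}{\lo}$ (equivalently, the $E_1$-degeneration of (\ref{eqn:ttc}) already used in the proof of (\ref{theo:calvc})) then shows that $P^D_kC_{\rm conv}\lo C_{\rm conv}$ is an isomorphism on ${\cal H}^h$ for $h\leq k$ and that $P^D_kC_{\rm conv}$ is acyclic in degrees $>k$. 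Since $\varphi$ is filtered with underlying morphism an isomorphism, two-out-of-three on cohomology sheaves gives that $\tau_{\leq k}E_{\rm conv}\lo P^D_kC_{\rm conv}$ is an isomorphism on every ${\cal H}^h$, hence an isomorphism. This proves (\ref{eqn:rtupdk}), and (\ref{eqn:tpccri}) follows at once because $\varphi$ identifies $P^D$ on $C_{\rm conv}({\cal K}_{(X,D\cup Z)/S})$ with $\tau$ on its underlying complex. The one point that requires genuine care is the bookkeeping in the construction of $\varphi$: one must check that the complex-level inclusion $\tau\subset P^{D_{\bul}}$, the application of $R\pi_{(X,Z)/S{\rm conv}*}$, the comparison morphism of (\ref{lemm:canhigh}), and the identifications of underlying complexes coming from (\ref{prop:czc}) and (\ref{theo:cpvcs}) are mutually compatible filtered morphisms; once this is arranged the remainder of the argument is formal.
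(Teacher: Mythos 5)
Your proposal is correct and follows essentially the same route as the paper: the morphism is induced by the termwise inclusion $\tau\subset P^{D_{\bul}}$ on $L^{\rm conv}_{(X_{\bul},Z_{\bul})/S}(\Om^{\bul}_{{\cal P}^{\rm ex}_{\bul}/S})\otimes_{\mab Z}{\mab Q}$ together with (\ref{lemm:canhigh}), and the filtered-isomorphism check reduces via (\ref{theo:calvc}) and (\ref{eqn:ckdzs}) to the fact that both sides have graded pieces concentrated in a single degree; your d\'evissage on the filtration steps $\tau_{\leq k}\lo P^D_k$ is equivalent to the paper's comparison of ${\rm gr}_k^{\tau}$ with ${\rm gr}_k^{P^D}$. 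The only point you omit is the verification, carried out at the end of the paper's proof by passing to the \v{C}ech bisimplicial diagram as in (\ref{theo:ifc}), that the resulting isomorphism is independent of the choice of the affine open covering and of the immersions in (\ref{cd:pfcd}) — without this the word ``canonical'' in (\ref{eqn:rtupdk}) is not yet justified, though the argument is the standard refinement one.
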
 
\begin{proof}  
(The proof is the same as that of \cite[(2.7.3)]{nh2}.) 
Consider the commutative diagram (\ref{cd:pfcd}). 
Then there exists the following natural morphism 
of filtered complexes of 
${\cal K}_{(X_{\bul},Z_{\bul})/S}$-modules:
\begin{equation*}
(L^{\rm conv}_{(X_{\bul},Z_{\bul})/S}
(\Om^{\bul}_{{\cal P}^{\rm ex}_{\bul}/S}
\otimes_{\mab Z}{\mab Q}),\tau) 
\lo (L^{\rm conv}_{(X_{\bul},Z_{\bul})/S}
(\Om^{\bul}_{{\cal P}^{\rm ex}_{\bul}/S}
\otimes_{\mab Z}{\mab Q}),P^{D_{\bul}}).
\tag{8.11.3}\label{eqn:ltpd}
\end{equation*} 
By (\ref{lemm:canhigh}) there exists a canonical morphism
\begin{equation*}
(R{\pi}_{(X,Z)/S{\rm conv}*}
L^{\rm conv}_{(X_{\bul},Z_{\bul})/S}
(\Om^{\bul}_{{\cal P}^{\rm ex}_{\bul}/S}
\otimes_{\mab Z}{\mab Q}),\tau) 
\lo 
\tag{8.11.4}\label{eqn:rtlodz}
\end{equation*} 
$$R{\pi}_{(X,Z)/S{\rm conv}*}
((L^{\rm conv}_{(X_{\bul},Z_{\bul})/S}
(\Om^{\bul}_{{\cal P}^{\rm ex}_{\bul}/S}
\otimes_{\mab Z}{\mab Q}),\tau)).$$ 
By composing (\ref{eqn:rtlodz}) with the morphism 
$R{\pi}_{(X,Z)/S{\rm conv}*}
((\ref{eqn:ltpd}))$, 
we obtain the following morphism 
\begin{equation*}
(R{\pi}_{(X,Z)/S{\rm conv}*}
L^{\rm conv}_{(X_{\bul},Z_{\bul})/S}
(\Om^{\bul}_{{\cal P}^{\rm ex}_{\bul}/S}
\otimes_{\mab Z}{\mab Q}),\tau) 
\lo 
\tag{8.11.5}
\end{equation*}
$$R{\pi}_{(X,Z)/S{\rm conv}*}
((L^{\rm conv}_{(X_{\bul},Z_{\bul})/S}
(\Om^{\bul}_{{\cal P}^{\rm ex}_{\bul}/S}
\otimes_{\mab Z}{\mab Q}),P^{D_{\bul}}))$$ 
which is nothing but a morphism 
\begin{equation*}
(R\eps_{(X,D\cup Z, Z)/S*}
({\cal K}_{(X,D\cup Z)/S}),\tau) 
\lo 
(C_{\rm conv}({\cal K}_{(X,D\cup Z)/S}),P^D)
\tag{8.11.6}\label{eqn:repto}
\end{equation*}
by (\ref{theo:cpvcs}). 
To prove that the morphism (\ref{eqn:repto}) is a 
filtered isomorphism, 
it suffices to prove that the induced morphism
\begin{equation*}
{\rm gr}_k^{\tau}
R\eps_{(X,D\cup Z, Z)/S*}
({\cal K}_{(X,D\cup Z)/S}) 
\lo {\rm gr}_k^{P^D}
C_{\rm conv}
({\cal K}_{(X,D\cup Z)/S}) \quad 
(k\in {\mab Z})
\tag{8.11.7}
\end{equation*}
is an isomorphism. 
By (\ref{theo:calvc}) we have the following equalities: 
\begin{align*} 
{} & {\cal H}^h
({\rm gr}_k^{\tau}R\eps_{(X,D\cup Z, Z)/S*}
({\cal K}_{(X,D\cup Z)/S})) 
\tag{8.11.8}\label{caseali:grtkr}\\
{} & = 
\begin{cases}
R^k\eps_{(X,D\cup Z, Z)/S*}
({\cal K}_{(X,D\cup Z)/S}) & (h=k),  \\
0 & (h \not= k) 
\end{cases} \\
{} & = 
\begin{cases} 
a^{(k){\log}}_{{\rm conv}*}
({\cal K}_{(D^{(k)}, Z\vert_{D^{(k)}})/S}\otimes_{\mab Z}
\vp^{(k){\rm log}}_{{\rm conv}}(D/S;Z)) & (h=k), \\
0 & (h\not=k).  
\end{cases} 
\end{align*} 
By (\ref{eqn:ckdzs}),  
${\cal H}^h({\rm gr}_k^{P^D}
C_{\rm conv}({\cal K}_{(X,D\cup Z)/S}))$ 
is also equal to the last formulas in (\ref{caseali:grtkr}). 
Hence the morphism (\ref{eqn:repto}) 
is an isomorphism. 
\par
The rest we have to show is that 
the morphism (\ref{eqn:repto}) is independent of 
the choice of the commutative diagram (\ref{cd:pfcd}). 
Let the notations be as in the proof of (\ref{theo:ifc}).  
By the cohomological descent we have the following commutative diagram:
\begin{equation*}
\begin{CD}
(R{\pi}_{(X,Z)/S{\rm conv}*}
L^{\rm conv}_{(X_{\bul},Z_{\bul})/S}
(\Om^{\bul}_{{\cal P}^{\rm ex}_{\bul}/S}
\otimes_{\mab Z}{\mab Q}),\tau)
@>>> \\ 
@VVV  \\
(R{\pi}_{(X,Z)/S{\rm conv}*}
R\eta_{{\rm conv}*}
L^{\rm conv}_{(X_{\bul \bul},Z_{\bul \bul})/S}
(\Om^{\bul}_{{\cal P}^{\rm ex}_{\bul \bul}/S}
\otimes_{\mab Z}{\mab Q}),\tau)
@>{}>> 
\end{CD}
\tag{8.11.9}
\end{equation*}
\begin{equation*}
\begin{CD}
(R{\pi}_{(X,Z)/S{\rm conv}*}
L^{\rm conv}_{(X_{\bul},Z_{\bul})/S}
(\Om^{\bul}_{{\cal P}^{\rm ex}_{\bul}/S}
\otimes_{\mab Z}{\mab Q}),P^{D_{\bul}})\\ 
@| \\
R{\pi}_{(X,Z)/S{\rm conv}*}
R\eta_{{\rm conv}*}
(L^{\rm conv}_{(X_{\bul \bul},Z_{\bul \bul})/S}
(\Om^{\bul}_{{\cal P}^{\rm ex}_{\bul \bul}/S}
\otimes_{\mab Z}{\mab Q}),
P^{D_{\bul \bul}}).
\end{CD}
\end{equation*}
Here we obtain the equality above by (\ref{eqn:lxzso}). 
We see that, by the proof above, 
the horizontal morphisms are isomorphisms. 
Hence the vertical morphism above is an isomorphism. 
We have proved the desired independence. 
\end{proof}

\begin{defi}\label{defi:vccc} 
We call 
$(E_{\rm conv}({\cal K}_{(X,D\cup Z)/S}),P^D)$  
the {\it weight-filtered vanishing cycle convergent complex} 
of $(X,D\cup Z)/S$ {\it with respect to} $D$. 
\end{defi}


\par 
Let $\pi'$ be another non-zero element of 
the maximal ideal of ${\cal V}$. 
Assume that $\pi{\cal V} \subset \pi' {\cal V}$. 
Set  $S'_1:=\ul{\rm Spec}_S({\cal O}_S/\pi')$. 
Then we have a natural closed immersion 
$S'_1 \os{\subset}{\lo} S_1$.  
Set $(X',D'\cup Z'):=(X,D\cup Z)\times_{S_1}S'_1$. 
Let $i \col (X',D'\cup Z') \os{\subset}{\lo} (X,D\cup Z)$ 
and $i^Z \col (X',Z') \os{\subset}{\lo} (X,Z)$ 
be the natural closed immersions. 
\par 
Because $i^Z_{{\rm conv}*}$ is exact ((\ref{prop:toi}) (1)), 
we have the following functor:  
\begin{equation*} 
i^Z_{{\rm conv}*} \col 
{\rm D}^+{\rm F}({\cal K}_{(X',Z')/S}) 
\lo 
{\rm D}^+{\rm F}({\cal K}_{(X,Z)/S}). 
\tag{8.12.1}\label{eqn:dfkxs}
\end{equation*} 

\begin{prop}\label{prop:iekp}  
\begin{equation*} 
i^Z_{{\rm conv}*}
((E_{\rm conv}({\cal K}_{(X',D'\cup Z')/S}),P^{D'}))
= 
(E_{\rm conv}({\cal K}_{(X,D\cup Z)/S}),P^D). 
\tag{8.13.1}\label{eqn:ikcp}
\end{equation*} 
\end{prop}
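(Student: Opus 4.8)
\textbf{Proof proposal for (\ref{prop:iekp}).}
The plan is to reduce the statement to the definition of the weight filtration $P=\tau$ (by (\ref{eqn:tpccri}), the canonical filtration) and to the compatibility of the forgetful morphism $\eps^{\rm conv}$ with the base change along $S'_1\os{\sus}{\lo} S_1$, together with the exactness of $i^Z_{{\rm conv}*}$ already recorded in (\ref{prop:toi}) (1). First I would set up the obvious commutative diagram of log schemes over $S_1$ relating $\eps_{(X',D'\cup Z',Z')/S}$ and $\eps_{(X,D\cup Z,Z)/S}$ through the closed immersions $i$ and $i^Z$: that is, the square
\begin{equation*}
\begin{CD}
(X',D'\cup Z') @>{i}>> (X,D\cup Z) \\
@V{\eps_{(X',D'\cup Z',Z')/S}}VV @VV{\eps_{(X,D\cup Z,Z)/S}}V \\
(X',Z') @>{i^Z}>> (X,Z)
\end{CD}
\end{equation*}
which is cartesian because $(X',D'\cup Z')=(X,D\cup Z)\times_{S_1}S'_1$ and taking the product with $S'_1$ commutes with forgetting the log structure along $D$. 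Passing to log convergent topoi, this yields a commutative square of morphisms of topoi, and the key point to establish is the base-change isomorphism
\begin{equation*}
i^Z_{{\rm conv}*}R\eps^{\rm conv}_{(X',D'\cup Z',Z')/S*}({\cal K}_{(X',D'\cup Z')/S})
\os{\sim}{\lo}
R\eps^{\rm conv}_{(X,D\cup Z,Z)/S*}(i'_{{\rm conv}*}({\cal K}_{(X',D'\cup Z')/S})),
\end{equation*}
where $i'_{\rm conv}$ is the forgetful-base-change morphism $((X',D'\cup Z')/S)_{\rm conv}\lo ((X,D\cup Z)/S)_{\rm conv}$. Combined with $i'_{{\rm conv}*}({\cal K}_{(X',D'\cup Z')/S})={\cal K}_{(X,D\cup Z)/S}$ (the log analogue of (\ref{eqn:iuoe}), proved by the same computation as (\ref{eqn:iuiue}) using (\ref{eqn:utiu})), this gives $i^Z_{{\rm conv}*}R\eps^{\rm conv}_{(X',D'\cup Z',Z')/S*}({\cal K}_{(X',D'\cup Z')/S})=R\eps^{\rm conv}_{(X,D\cup Z,Z)/S*}({\cal K}_{(X,D\cup Z)/S})$ as complexes in $D^+({\cal K}_{(X,Z)/S})$.

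The base-change isomorphism itself I would prove exactly as in (\ref{prop:toi}): the fundamental input is that $i^Z_{{\rm conv}*}$ (and $i'_{{\rm conv}*}$) are \emph{exact} functors on abelian sheaves, so that $Ri^Z_{{\rm conv}*}=i^Z_{{\rm conv}*}$ and $Ri'_{{\rm conv}*}=i'_{{\rm conv}*}$; then the commutativity of the square of topoi gives $i^Z_{{\rm conv}*}R\eps^{\rm conv}_{(X',\ldots)*}=R(\eps^{\rm conv}_{(X,\ldots)}\circ i'_{\rm conv})_*=R\eps^{\rm conv}_{(X,\ldots)*}Ri'_{{\rm conv}*}=R\eps^{\rm conv}_{(X,\ldots)*}i'_{{\rm conv}*}$. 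The exactness of $i'_{{\rm conv}*}$ is the log version of (\ref{prop:toi}) (1): for an object $(U,T,\iota,u)$ of ${\rm Conv}((X,D\cup Z)/S)$, one checks as in (\ref{eqn:utiu}) that $i'^*_{\rm conv}((U,T,\iota,u))=(U\times_{(X,D\cup Z)}(X',D'\cup Z'),T,\ldots)$ using that the fiber product is again an object of ${\rm Conv}((X',D'\cup Z')/S)$ (here the strictness hypothesis built into the definition of prewidenings matters, and the fact that $(X',D'\cup Z')\lo (X,D\cup Z)$ is strict — it is a base change of $S'_1\lo S_1$ — guarantees this), whence the analogue of (\ref{eqn:iuiue}) and thus exactness.

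Finally, to upgrade the equality of complexes to an equality of \emph{filtered} complexes with the canonical filtration $\tau$, I would invoke that $i^Z_{{\rm conv}*}$, being exact, commutes with the truncation functors $\tau_{\leq k}$ and hence with the canonical filtration: for an exact functor $F$ and a complex $E^{\bul}$, one has $F((E^{\bul},\tau))=(F(E^{\bul}),\tau)$ on the nose. Applying this with $F=i^Z_{{\rm conv}*}$ to $E^{\bul}=R\eps^{\rm conv}_{(X',D'\cup Z',Z')/S*}({\cal K}_{(X',D'\cup Z')/S})$ and using the already-established unfiltered identification gives
\begin{equation*}
i^Z_{{\rm conv}*}(R\eps^{\rm conv}_{(X',\ldots)*}({\cal K}_{(X',D'\cup Z')/S}),\tau)
=(R\eps^{\rm conv}_{(X,\ldots)*}({\cal K}_{(X,D\cup Z)/S}),\tau),
\end{equation*}
which is precisely (\ref{eqn:ikcp}) in view of the definition of $(E_{\rm conv}({\cal K}_{(X,D\cup Z)/S}),P^D)$ in (\ref{theo:wtvsca}). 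The main obstacle I anticipate is the careful verification that $i'_{\rm conv}$ is well-defined and that $i'^*_{\rm conv}$ has the stated description on representable sheaves — that is, that the fiber product widening stays inside the right convergent site — since (as the remark after (\ref{cd:yypssp}) warns) a composite with a non-strict morphism need not yield an enlargement; but here all the morphisms in play are strict closed immersions obtained by base change, so the obstruction does not arise, and this is the one place where the argument must be written out with care rather than quoted.
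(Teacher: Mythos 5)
Your proposal is correct and follows essentially the same route as the paper: unwind the definition $(E_{\rm conv},P)=(R\eps^{\rm conv}_*,\tau)$, commute $i^Z_{{\rm conv}*}$ past $R\eps^{\rm conv}_*$ using the exactness of the pushforward from (\ref{prop:toi}) (1) (which also gives the compatibility with $\tau$), and conclude via $i_{{\rm conv}*}({\cal K}_{(X',D'\cup Z')/S})={\cal K}_{(X,D\cup Z)/S}$ from (\ref{eqn:iuoe}). You merely write out the base-change and strictness verifications that the paper leaves implicit.
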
 
\begin{proof}   
We have the following equalities: 
\begin{align*}  
i^Z_{{\rm conv}*}
((E_{\rm conv}({\cal K}_{(X',D'\cup Z')/S}),P^{D'}))
& = i^Z_{{\rm conv}*}
(R\eps^{\rm conv}_{(X',D'\cup Z',Z')/S*}
({\cal K}_{(X',D'\cup Z')/S}),\tau) \\ 
{} & = (R\eps^{\rm conv}_{(X,D\cup Z,Z)/S*}
i_{{\rm conv}*}({\cal K}_{(X',D'\cup Z')/S}),\tau) \\
{} & = (R\eps^{\rm conv}_{(X,D\cup Z,Z)/S*}
({\cal K}_{(X,D\cup Z)/S}),\tau) \\ 
{} & =(E_{\rm conv}({\cal K}_{(X,D\cup Z)/S}),P^D). 
\end{align*}  
Here the third equality follows from (\ref{eqn:iuoe}).  
\end{proof} 

\begin{coro}\label{coro:cc} 
$(1)$ 
\begin{equation*} 
i^Z_{{\rm conv}*}
((C_{\rm conv}({\cal K}_{(X',D'\cup Z')/S}),P^{D'}))
= 
(C_{\rm conv}({\cal K}_{(X,D\cup Z)/S}),P^D). 
\tag{8.14.1}\label{eqn:iccp}
\end{equation*} 
\par 
$(2)$ 
\begin{equation*} 
\os{\circ}{i}{}^Z_{*}
((C_{\rm isozar}({\cal K}_{(X',D'\cup Z')/S}),P^{D'}))
= 
(C_{\rm isozar}({\cal K}_{(X,D\cup Z)/S}),P^D). 
\tag{8.14.2}\label{eqn:ziccp}
\end{equation*} 
\end{coro}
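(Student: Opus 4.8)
The plan is to deduce Corollary~\ref{coro:cc} from the analogous statement for the weight-filtered vanishing cycle convergent complex, Proposition~\ref{prop:iekp}, by transporting it through the comparison isomorphism of Theorem~\ref{theo:wtvsca}, and then to obtain the isozariskian statement in (2) by pushing (1) forward along $u^{\rm conv}_{(X,Z)/S}$.

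For part~(1), I would first observe that by Theorem~\ref{theo:wtvsca}, specifically the filtered isomorphism (\ref{eqn:rtupdk}) together with (\ref{eqn:tpccri}), there is a canonical identification $(E_{\rm conv}({\cal K}_{(X,D\cup Z)/S}),P^D)\os{\sim}{\lo}(C_{\rm conv}({\cal K}_{(X,D\cup Z)/S}),P^D)$, and likewise for $(X',D'\cup Z')$. Since $i^Z_{{\rm conv}*}$ is exact by Proposition~\ref{prop:toi}~(1), it commutes with the formation of cohomology sheaves and hence with the canonical filtration $\tau$, i.e.\ with $P^D$ in view of (\ref{eqn:tpccri}); applying it to the $X'$-version of the above identification and then invoking Proposition~\ref{prop:iekp} yields a canonical isomorphism between $i^Z_{{\rm conv}*}((C_{\rm conv}({\cal K}_{(X',D'\cup Z')/S}),P^{D'}))$ and $(C_{\rm conv}({\cal K}_{(X,D\cup Z)/S}),P^D)$. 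Equivalently — and this gives the equality as stated — one can argue directly along the lines of the proof of Proposition~\ref{prop:iekp}: by Proposition~\ref{prop:czc} the two complexes are $R\eps^{\rm conv}_{(X',D'\cup Z',Z')/S*}({\cal K}_{(X',D'\cup Z')/S})$ and $R\eps^{\rm conv}_{(X,D\cup Z,Z)/S*}({\cal K}_{(X,D\cup Z)/S})$, filtered by $\tau$, and then one uses the commutative square of log convergent topoi $\eps^{\rm conv}_{(X,D\cup Z,Z)/S}\circ i_{\rm conv}=i^Z_{\rm conv}\circ\eps^{\rm conv}_{(X',D'\cup Z',Z')/S}$ (coming from the evident commutative square of log schemes over $S_1$), the equality $Ri_{{\rm conv}*}=i_{{\rm conv}*}$, and the identity $i_{{\rm conv}*}({\cal K}_{(X',D'\cup Z')/S})={\cal K}_{(X,D\cup Z)/S}$ of (\ref{eqn:iuoe}).

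For part~(2), I would apply $Ru^{\rm conv}_{(X,Z)/S*}$ to (1) and use the definition (\ref{ali:dfztg}) of the isozariskian complex. The relevant morphisms of topoi fit into the evident commutative square $u^{\rm conv}_{(X,Z)/S}\circ i^Z_{\rm conv}=\os{\circ}{i}{}^Z\circ u^{\rm conv}_{(X',Z')/S}$, where $\os{\circ}{i}{}^Z$ is induced by the closed immersion $\os{\circ}{X}{}'\os{\sus}{\lo}\os{\circ}{X}$ of underlying schemes; since both $i^Z_{{\rm conv}*}$ and $\os{\circ}{i}{}^Z_*$ are exact (the latter because $\os{\circ}{i}{}^Z$ is a closed immersion of schemes), this gives $\os{\circ}{i}{}^Z_*\circ Ru^{\rm conv}_{(X',Z')/S*}=Ru^{\rm conv}_{(X,Z)/S*}\circ i^Z_{{\rm conv}*}$ on filtered derived categories, and applying this to $(C_{\rm conv}({\cal K}_{(X',D'\cup Z')/S}),P^{D'})$ together with (1) yields (\ref{eqn:ziccp}). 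I do not expect a substantive obstacle here: the argument is entirely a matter of transporting Proposition~\ref{prop:iekp} across the comparison isomorphism and chasing two commutative squares of (ringed) topoi. The only point requiring a little care is the bookkeeping with the filtrations, which is handled by (\ref{eqn:tpccri}) (reducing $P^D$ to $\tau$) and the fact that the exact functors $i^Z_{{\rm conv}*}$ and $\os{\circ}{i}{}^Z_*$ commute with $\tau$.
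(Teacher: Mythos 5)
Your proposal is correct and follows essentially the same route as the paper: the paper deduces (1) from the comparison isomorphism (\ref{eqn:rtupdk}) together with (\ref{eqn:ikcp}), and then obtains (2) from (1), exactly as you do. The extra details you supply (exactness of $i^Z_{{\rm conv}*}$ commuting with $\tau$, and the commutation of the projections to the Zariski topoi) are the implicit content of the paper's one-line proof.
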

\begin{proof}   
(\ref{eqn:iccp}) follows from (\ref{eqn:rtupdk}) 
and (\ref{eqn:ikcp}). 
(\ref{eqn:ziccp}) follows from (\ref{eqn:iccp}). 
\end{proof}

The following has an application for the unipotent $\pi_1$ 
as in \cite{kiha}:

\begin{theo}\label{theo:afkz} 
Let ${\rm A}^{\geq 0}{\rm F}({\cal K}_{(X,Z)/S})$ 
be the category of 
filtered positively-graded graded commutative dga's 
of ${\cal K}_{(X,Z)/S}$-algebras 
and let ${\rm D}({\rm A}^{\geq 0}{\rm F}({\cal K}_{(X,Z)/S}))$ 
be the localized category of  
${\rm A}^{\geq 0}{\rm F}({\cal K}_{(X,Z)/S})$  
inverting the weakly equivalent morphisms 
in ${\rm A}^{\geq 0}{\rm F}({\cal K}_{(X,Z)/S})$ 
{\rm (\cite[p.~145, p.~334]{gelma})}.   
Let 
$U_{\rm conv} \col 
{\rm D}({\rm A}^{\geq 0}{\rm F}({\cal K}_{(X,Z)/S})) 
\lo {\rm D}^+{\rm F}({\cal K}_{(X,Z)/S})$ be 
the natural forgetful functor. 
Then there exists  
a functorial filtered object 
$(\wt{C},\wt{P})\in 
{\rm D}({\rm A}^{\geq 0}{\rm F}({\cal K}_{(X,Z)/S}))$ 
such that 
$U_{\rm conv}((\wt{C},\wt{P})) 
\simeq (C_{\rm conv}({\cal K}_{(X,D\cup Z)/S}),P^D)$. 
\end{theo}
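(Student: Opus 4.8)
The plan is to reconstruct $(C_{\rm conv}({\cal K}_{(X,D\cup Z)/S}),P^D)$ step by step inside the category of filtered positively-graded graded commutative dga's, keeping track of the multiplicative structure at each stage. The log de Rham complex already provides a strict cdga model for $R\eps^{\rm conv}_{(X,D\cup Z,Z)/S*}$, so the only genuinely derived operation that must be repaired is the direct image $R\pi_{(X,Z)/S{\rm conv}*}$ along the simplicial hypercovering, where the usual total complex of a cosimplicial cdga fails to be commutative; this is exactly what the Thom--Whitney simple complex and Thom--Whitney derived direct image of \cite{nav} are designed to correct, rationally.

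First I would observe that $\Om^{\bul}_{{\cal P}^{\rm ex}_{\bul}/S}$ is a simplicial sheaf of positively-graded graded commutative dga's over ${\cal O}_{{\cal P}^{\rm ex}_{\bul}}$ for the wedge product and the log de Rham differential, and that the weight filtration $P^{{\cal P}^{\rm ex}_{\bul}/{\cal Q}^{\rm ex}_{\bul}}$ of $(\ref{eqn:lexfpw})$ is multiplicative: since $P_k\Om^i_{{\cal P}^{\rm ex}/S}$ is the image of $\Om^k_{{\cal P}^{\rm ex}/S}\otimes_{{\cal O}_{{\cal P}^{\rm ex}}}\Om^{i-k}_{{\cal Q}^{\rm ex}/S}$, the wedge product of a local section of $P_k\Om^i_{{\cal P}^{\rm ex}/S}$ with one of $P_l\Om^j_{{\cal P}^{\rm ex}/S}$ lies in $P_{k+l}\Om^{i+j}_{{\cal P}^{\rm ex}/S}$, and the filtration is a filtration by subcomplexes, as already used in $(\ref{theo:injf})$. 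Hence $(\Om^{\bul}_{{\cal P}^{\rm ex}_{\bul}/S}\otimes_{\mab Z}{\mab Q},P^{{\cal P}^{\rm ex}_{\bul}/{\cal Q}^{\rm ex}_{\bul}})$ is an object of the simplicial version of ${\rm A}^{\geq 0}{\rm F}({\cal K}_{{\cal P}^{\rm ex}_{\bul}})$. Next, because the log convergent linearization functor $L^{\rm conv}_{(X_{\bul},Z_{\bul})/S}=j_{T_{{\cal Q}^{\rm ex}}*}\circ \varphi^*\circ\{g^*_n\}_n$ is a composite of pull-back functors and a localization direct image, it is lax symmetric monoidal, exactly as in the log crystalline case of \cite{bob}; applied to the filtered cdga above it yields a simplicial filtered cdga $(L^{\rm conv}_{(X_{\bul},Z_{\bul})/S}(\Om^{\bul}_{{\cal P}^{\rm ex}_{\bul}/S}\otimes_{\mab Z}{\mab Q}),P^{D_{\bul}})$ of ${\cal K}_{(X_{\bul},Z_{\bul})/S}$-algebras, and the Poincar\'{e} lemma quasi-isomorphism of $(\ref{theo:pl})$ is a unital morphism of cdga's.

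It remains to push this simplicial filtered cdga forward by $\pi_{(X,Z)/S{\rm conv}}$ without leaving the world of cdga's. Here I would take a Godement-type (flasque) resolution level by level --- which, being itself lax symmetric monoidal, turns the simplicial filtered cdga into a bi-cosimplicial sheaf of filtered cdga's --- and then collapse both cosimplicial directions by the Thom--Whitney simple complex of \cite{nav}, obtaining a filtered cdga $(\wt{C},\wt{P})$ of ${\cal K}_{(X,Z)/S}$-algebras, i.e. an object of ${\rm D}({\rm A}^{\geq 0}{\rm F}({\cal K}_{(X,Z)/S}))$. Since the Thom--Whitney simple complex is naturally filtered-quasi-isomorphic to the associated total complex, one gets $U_{\rm conv}((\wt{C},\wt{P}))\simeq R\pi_{(X,Z)/S{\rm conv}*}((L^{\rm conv}_{(X_{\bul},Z_{\bul})/S}(\Om^{\bul}_{{\cal P}^{\rm ex}_{\bul}/S}\otimes_{\mab Z}{\mab Q}),P^{D_{\bul}}))=(C_{\rm conv}({\cal K}_{(X,D\cup Z)/S}),P^D)$ in ${\rm D}^+{\rm F}({\cal K}_{(X,Z)/S})$.

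Finally, the functoriality of $(\wt{C},\wt{P})$ in $(X,D\cup Z,Z)$ and its independence, in ${\rm D}({\rm A}^{\geq 0}{\rm F}({\cal K}_{(X,Z)/S}))$, of the affine open covering and of the immersions $(\ref{eqn:xdr})$ follow by repeating the bisimplicial comparison argument of $(\ref{theo:ifc})$, noting that all the comparison morphisms occurring there are morphisms of filtered cdga's and that the Godement resolution and the Thom--Whitney functors are themselves functorial, so that the weak equivalences obtained are inverted in the localized category. The main obstacle is the simultaneous bookkeeping in the middle two steps: one must check that the linearization functor as constructed in this paper --- through universal enlargements, $\varphi^*$ and $j_{T_{{\cal Q}^{\rm ex}}*}$ --- genuinely preserves the cdga structure together with the multiplicative filtration (and not merely the underlying filtered complexes), and that the Thom--Whitney formalism of \cite{nav}, written for cosimplicial cdga's on a site, transposes verbatim to the filtered, simplicial, $\otimes_{\mab Z}{\mab Q}$ setting of log convergent topoi; granting these verifications, which are routine but lengthy, the theorem follows.
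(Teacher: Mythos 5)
Your proposal is correct and follows essentially the same route as the paper: the paper's proof simply sets $(\wt{C},\wt{P}):={\bf s}_{\rm TW}\,R\pi_{(X,Z)/S{\rm conv,TW}\bul'*}((L^{\rm conv}_{(X_{\bul},Z_{\bul})/S}(\Om^{\bul}_{{\cal P}^{\rm ex}_{\bul}/S}\otimes_{\mab Z}{\mab Q}),P^{{\cal P}^{\rm ex}_{\bul}/{\cal Q}^{\rm ex}_{\bul}}))$, invoking Navarro Aznar's Thom--Whitney derived direct image and single complex exactly as you do, with the comparison $U_{\rm conv}((\wt{C},\wt{P}))\simeq(C_{\rm conv},P^D)$ coming from \cite[(2.6)]{nav}. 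Your additional remarks on the multiplicativity of the filtration, the lax monoidality of the linearization functor, and the bisimplicial independence argument are exactly the (unstated) verifications the paper leaves to the reader.
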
 
\begin{proof} 
Let the notations be as in the beginning of this section. 
Let $(X_{\bul'},Z_{\bul'})$ be the constant simplicial 
scheme defined by $(X,Z)$. 
Let 
\begin{equation*} 
R{\pi}_{(X,Z)/S{\rm conv,TW}\bul'*} 
\col 
{\rm D}({\rm A}^{\geq 0}{\rm F}
({\cal K}_{(X_{\bul},Z_{\bul})/S})) \lo 
{\rm D}({\rm A}^{\geq 0}{\rm F}
({\cal K}_{(X_{\bul'},Z_{\bul'})/S}))
\end{equation*} 
be the derived direct image of Thom-Whitney in 
\cite[\S4]{nav}. 
Let   
\begin{equation*} 
{\bf s}_{\rm TW} \col 
{\rm D}({\rm A}^{\geq 0}{\rm F}
({\cal K}_{(X_{\bul'},Z_{\bul'})/S})) \lo 
{\rm D}({\rm A}^{\geq 0}{\rm F}
({\cal K}_{(X,Z)/S}))
\end{equation*} 
be the single complex of Thom-Whitney in [loc.~cit., \S3]. 
Then, by [loc.~cit., (2.6)], we have only to set 
\begin{equation*} 
(\wt{C},\wt{P}):=
{\bf s}_{\rm TW}
R{\pi}_{(X,Z)/S{\rm conv,TW}\bul'*} 
((L^{\rm conv}_{(X_{\bul},Z_{\bul})/S}
(\Om^{\bul}_{{\cal P}^{\rm ex}_{\bul}/S}\otimes_{\mab Z}{\mab Q}),
P^{{\cal P}^{\rm ex}_{\bul}/{\cal Q}^{\rm ex}_{\bul}})).  
\end{equation*} 
\end{proof}

\section{The functoriality of the weight-filtered 
convergent and isozariskian complexes}\label{sec:fpw}
Let ${\cal V}$, $K$ and $\pi$ be as in \S\ref{sec:logcd} 
and let $S$, $S_1$ and $(X,D\cup Z)$ 
be as in \S\ref{sec:lcs}. 
In this section we prove the contravariant functoriality of  
$(C_{\rm conv}({\cal K}_{(X,D\cup Z)/S}),P^D)$ 
and $(C_{\rm isozar}({\cal K}_{(X, D\cup Z)/S}),P^D)$ 
by using (\ref{theo:wtvsca}). 
\par 
Let ${\cal V}'$, $K'$, $\pi'$, $S'$ and $S'_1$ 
be analogous objects to 
${\cal V}$, $K$, $\pi$, $S$ and $S_1$, respectively.  
Let  
\begin{equation*}
\begin{CD} 
S @>{u}>> S' \\
@VVV @VVV \\
{\rm Spf}({\cal V})  @>>> {\rm Spf}({\cal V}')
\end{CD}
\tag{9.0.1}\label{cd:bpsmlgs}
\end{equation*}
be  a commutative diagram of $p$-adic 
${\cal V}$-formal schemes and $p$-adic ${\cal V}'$-formal schemes.
We assume that $u$  
induces a morphism $u_1 \col S_1 \lo S'_1$ 
of schemes.
Let $(X',D'\cup Z')$ 
be a  smooth scheme 
with transversal relative SNCD's 
over $S'_1$. Let  
\begin{equation*}
\begin{CD} 
(X,D\cup Z) @>{g}>> (X',D'\cup Z') \\
@VVV @VVV \\
S_1  @>{u_1}>> S'_1
\end{CD}
\tag{9.0.2}\label{cd:mpsmlgs}
\end{equation*}
be  a commutative diagram of log schemes.  
Assume that the morphism $g$ induces 
$g_{(X,D)} \col (X,D) \lo (X',D')$ and 
$g_{(X,Z)} \col (X,Z) \lo (X',Z')$ over
$u_1 \col S_1 \lo S'_1$.
Let 
$$\eps_{\rm conv} \col ({(X,D\cup Z)/S})_{\rm conv} \lo 
({(X,Z)/S})_{\rm conv}$$ 
and
$$\eps'_{\rm conv} \col ({(X',D'\cup Z')/S'})_{\rm conv} \lo 
({(X',Z')/S'})_{\rm conv}$$ 
be the 
morphism of topoi 
forgetting the log structure along $D$ and $D'$, 
respectively. 
Let 
$$ g_{{\rm conv}} \col 
(({(X, D\cup Z)/S})_{\rm conv},{\cal K}_{(X,D\cup Z)/S}) 
\lo 
(({(X',D'\cup Z')/S'})_{\rm conv},
{\cal K}_{(X',D'\cup Z')/S'})$$ 
be the morphism of log convergent ringed topoi induced by $g$.

\begin{theo}[{\bf Functoriality}]\label{theo:fcpwczo}
Let the notations be as above. 
Then the following hold$:$
\par
$(1)$ There exists  a canonical 
morphism 
\begin{equation*}
g^{*}_{(X,Z){\rm conv}}\col 
(C_{\rm conv}
({\cal K}_{(X',D'\cup Z')/S'}),P^{D'}) 
\lo 
Rg_{(X,Z){\rm conv}*}
(C_{\rm conv}
({\cal K}_{(X,D\cup Z)/S}),P^D). 
\tag{9.1.1}\label{eqn:fdfcc}
\end{equation*} 
\par
$(2)$ There exists  a canonical 
morphism 
\begin{equation*}
g_{\rm zar}^* \col  
(C_{{\rm isozar}}({\cal K}_{(X',D'\cup Z')/S'}),P^{D'}) 
\lo 
Rg_{{\rm zar}*}(C_{\rm isozar}({\cal K}_{(X,D\cup Z)/S}),P^D). 
\tag{9.1.2}\label{eqn:fczpwf}
\end{equation*} 
\end{theo}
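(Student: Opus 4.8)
\textbf{Proof proposal for (\ref{theo:fcpwczo}).}
The plan is to deduce the functoriality from the comparison isomorphism (\ref{eqn:rtupdk}), which identifies $(C_{\rm conv}({\cal K}_{(X,D\cup Z)/S}),P^D)$ with $(R\eps^{\rm conv}_{(X,D\cup Z,Z)/S*}({\cal K}_{(X,D\cup Z)/S}),\tau)$, and the analogous statement for $(X',D'\cup Z')/S'$. Once this identification is made, the construction of the morphism (\ref{eqn:fdfcc}) becomes the standard construction of a pull-back morphism on a higher direct image. First I would record the commutative square of topoi coming from (\ref{cd:mpsmlgs}): the morphisms $\eps_{\rm conv}$, $\eps'_{\rm conv}$ forgetting the log structures along $D$ and $D'$, together with $g_{\rm conv}$ and $g_{(X,Z){\rm conv}}$, fit into a commutative diagram
\begin{equation*}
\begin{CD}
({(X,D\cup Z)/S})_{\rm conv} @>{g_{\rm conv}}>> ({(X',D'\cup Z')/S'})_{\rm conv} \\
@V{\eps_{\rm conv}}VV @VV{\eps'_{\rm conv}}V \\
({(X,Z)/S})_{\rm conv} @>{g_{(X,Z){\rm conv}}}>> ({(X',Z')/S'})_{\rm conv}.
\end{CD}
\end{equation*}
The adjunction unit ${\cal K}_{(X',D'\cup Z')/S'}\lo Rg_{{\rm conv}*}g_{\rm conv}^{*}({\cal K}_{(X',D'\cup Z')/S'})=Rg_{{\rm conv}*}({\cal K}_{(X,D\cup Z)/S})$, followed by applying $R\eps'_{\rm conv*}$ and using base change for the square above, yields the morphism $R\eps'_{\rm conv*}({\cal K}_{(X',D'\cup Z')/S'})\lo Rg_{(X,Z){\rm conv}*}R\eps_{\rm conv*}({\cal K}_{(X,D\cup Z)/S})$; this is a morphism in $D^+({\cal K}_{(X',Z')/S'})$.

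Next I would upgrade this to a \emph{filtered} morphism. Since the filtration on both sides is the canonical filtration $\tau$, and since the canonical filtration is functorial for morphisms of complexes, the key input is (\ref{lemm:canhigh}): it produces a canonical morphism $(Rh_*(E^{\bul}),\tau)\lo Rh_*((E^{\bul},\tau))$ for a morphism of ringed topoi $h$. Applying this with $h=g_{(X,Z){\rm conv}}$ to $E^{\bul}=R\eps_{\rm conv*}({\cal K}_{(X,D\cup Z)/S})$ and composing with $Rg_{(X,Z){\rm conv}*}$ of the canonical comparison map, one obtains the filtered morphism $(R\eps'_{\rm conv*}({\cal K}_{(X',D'\cup Z')/S'}),\tau)\lo Rg_{(X,Z){\rm conv}*}((R\eps_{\rm conv*}({\cal K}_{(X,D\cup Z)/S}),\tau))$. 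Under the identifications (\ref{eqn:rtupdk}) this is exactly (\ref{eqn:fdfcc}). For (2) I would simply apply $Ru^{\rm conv}_{(X,Z)/S*}$ (resp.~$Ru^{\rm conv}_{(X',Z')/S*}$), use the commutativity of the diagram (\ref{eqn:ubys})-type square relating $u^{\rm conv}$ and $g_{\rm zar}$, and the definitions (\ref{ali:dfztg}) of $(C_{\rm isozar},P^D)$; then (\ref{eqn:fczpwf}) follows formally from (\ref{eqn:fdfcc}).

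To make the construction rigorous I would want to present it at the level of explicit filtered complexes rather than purely in the derived category, mimicking the proof of (\ref{theo:wtvsca}): choose affine open coverings of $X$ and $X'$ compatible with $g$ (refining if necessary so that $g$ carries members of the covering of $X$ into members of the covering of $X'$), choose the simplicial immersions into formally log smooth formal schemes ${\cal P}^{\rm ex}_{\bul}$, ${\cal P}'^{\rm ex}_{\bul}$, and arrange a morphism of the commutative diagrams (\ref{eqn:pedvq}) covering $g$. Then the pull-back of log de Rham complexes $g^{*}\Om^{\bul}_{{\cal P}'^{\rm ex}_{\bul}/S'}\lo \Om^{\bul}_{{\cal P}^{\rm ex}_{\bul}/S}$ is strictly compatible with the weight filtrations $P^{{\cal P}'^{\rm ex}_{\bul}/{\cal Q}'^{\rm ex}_{\bul}}$ and $P^{{\cal P}^{\rm ex}_{\bul}/{\cal Q}^{\rm ex}_{\bul}}$ (since $g$ respects the decompositions (\ref{eqn:mdpn})), and applying the log convergent linearization functors together with $R\pi_{(X',Z')/S'{\rm conv}*}$ and the cohomological descent of (\ref{theo:ifc}) gives the filtered morphism directly; independence of the choices follows by the same bisimplicial refinement argument as in the proof of (\ref{theo:ifc}).

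The main obstacle I anticipate is \emph{well-definedness and independence of choices}: one must check that the morphism (\ref{eqn:fdfcc}) does not depend on the chosen compatible coverings and simplicial immersions, and this requires the bisimplicial comparison machinery (\ref{eqn:lxzso}) applied now to a pair $(X,D\cup Z)\lo(X',D'\cup Z')$ rather than a single scheme. A secondary technical point is verifying base change for the square of topoi above in the convergent setting; here I would reduce, as in the proof of (\ref{prop:czc}), to the linearized de Rham description where base change is the exactness/commutativity statements already available, e.g.\ (\ref{lemm:bcue}), (\ref{prop:grla}) and (\ref{prop:rescos}) — indeed (\ref{prop:rescos}) was formulated precisely to provide the compatibility of the Poincar\'e residue isomorphisms with such a morphism $S\lo S'$, so the graded pieces match up and a filtered-quasi-isomorphism argument closes the construction.
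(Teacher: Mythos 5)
Your proposal is correct and follows essentially the same route as the paper: the paper's proof of (1) is simply that it "immediately follows from (\ref{theo:wtvsca})", i.e. one transports the evident functoriality of $(R\eps^{\rm conv}_{*}({\cal K}),\tau)$ (adjunction unit plus the canonical morphism of (\ref{lemm:canhigh})) through the comparison isomorphism (\ref{eqn:rtupdk}), exactly as in your first two paragraphs, and (2) is obtained by applying $Ru^{\rm conv}_{(X,Z)/S*}$ and the first equality in (\ref{ali:dfztg}). The additional explicit-complex construction and independence-of-choices discussion in your last two paragraphs is not needed once one works on the $E$-side, which is canonically defined without any choices.
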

\begin{proof} 
(1): (1) immediately follows from (\ref{theo:wtvsca}). 
\par 
(2): (2) immediately follows from (1) and the first equality in (\ref{ali:dfztg}). 
\end{proof}

\begin{coro}\label{coro:spfcz} 
Let $E_{\rm ss}((X,D\cup Z)/S)$ 
$($resp.~$E_{\rm ss}((X',D'\cup Z')/S'))$ 
be the  spectral sequence {\rm (\ref{eqn:wtspkdz})} 
$($resp.~{\rm (\ref{eqn:wtspkdz})} for 
$(X',D'\cup Z')/S')$. 
Then the morphism $g^*_{\rm conv}$
induces a morphism 
\begin{equation*}
g^*_{\rm conv} \col 
u^{-1}E_{\rm ss}((X',D'\cup Z')/S') \lo E_{\rm ss}((X,D\cup Z)/S) 
\tag{9.2.1}
\end{equation*} 
of spectral sequences. 
\end{coro}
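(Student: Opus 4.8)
The plan is to deduce (\ref{coro:spfcz}) formally from (\ref{theo:fcpwczo}) together with the identification of the weight spectral sequence as the spectral sequence attached to a $P$-filtered complex. Recall that $E_{\rm ss}((X,D\cup Z)/S)$ is, by construction, obtained by applying $Rf^{\rm conv}_{(X,Z)/S*}$ to the $P^D$-filtered complex $(C_{\rm conv}({\cal K}_{(X,D\cup Z)/S}),P^D)\in {\rm D}^+{\rm F}({\cal K}_{(X,Z)/S})$ and passing to the associated spectral sequence; by (\ref{eqn:grpwwd}), (\ref{lemm:grrdld}) and the Leray spectral sequence its $E_1$-terms are the right-hand side of (\ref{eqn:wtspkdz}), and the abutment is $R^hf^{\rm conv}_{(X,D\cup Z)/S*}({\cal K}_{(X,D\cup Z)/S})$. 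The analogous description holds over $S'$ for $(X',D'\cup Z')/S'$.

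First I would apply $Rf'{}^{\rm conv}_{(X',Z')/S'*}$ to the filtered morphism (\ref{eqn:fdfcc}) and compose with the base change identity $Rf'{}^{\rm conv}_{(X',Z')/S'*}Rg_{(X,Z){\rm conv}*}=Ru_{*}Rf^{\rm conv}_{(X,Z)/S*}$, which follows from $f'{}^{\rm conv}_{(X',Z')/S'}\circ g_{(X,Z){\rm conv}}=u\circ f^{\rm conv}_{(X,Z)/S}$. By adjunction between $u^{-1}$ and $Ru_{*}$ this yields a morphism of $P$-filtered complexes
\begin{equation*}
u^{-1}Rf'{}^{\rm conv}_{(X',Z')/S'*}(C_{\rm conv}({\cal K}_{(X',D'\cup Z')/S'}),P^{D'})
\lo
Rf^{\rm conv}_{(X,Z)/S*}(C_{\rm conv}({\cal K}_{(X,D\cup Z)/S}),P^{D})
\end{equation*}
in the filtered derived category. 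A morphism of bounded-below filtered complexes induces, functorially, a morphism between the associated spectral sequences from the $E_1$-page onward, compatible with the induced morphism on the abutments. Applied to the displayed morphism this gives the desired morphism $g^*_{\rm conv}\colon u^{-1}E_{\rm ss}((X',D'\cup Z')/S')\lo E_{\rm ss}((X,D\cup Z)/S)$, with abutment map the one induced by $g^*_{\rm conv}$ on $R^hf^{\rm conv}_{(X,D\cup Z)/S*}({\cal K}_{(X,D\cup Z)/S})$.

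It then remains to check that, under the purity identification (\ref{theo:calvc}) of the $E_1$-terms with cohomology over the strata $D^{(k)}$, this morphism is the expected pullback map. Passing to $\mathrm{gr}_k^{P^{D}}$ (resp.~$\mathrm{gr}_k^{P^{D'}}$) and using (\ref{lemm:grrdld}) together with the square (\ref{eqn:relcdbzb}) and the functoriality of the Poincar\'e residue isomorphism (\ref{prop:rescos}), the graded morphism is identified, via (\ref{coh:com}) and the compatibility (\ref{eqn:zcort}) of the orientation sheaves, with the morphism
\begin{equation*}
u^{-1}Rf'{}^{\rm conv}_{(D'^{(k)},Z'\vert_{D'^{(k)}})/S'*}({\cal K}_{(D'^{(k)},Z'\vert_{D'^{(k)}})/S'}\otimes_{\mab Z}\vp^{(k)}_{\rm conv}(D'/S';Z'))
\lo
Rf^{\rm conv}_{(D^{(k)},Z\vert_{D^{(k)}})/S*}({\cal K}_{(D^{(k)},Z\vert_{D^{(k)}})/S}\otimes_{\mab Z}\vp^{(k)}_{\rm conv}(D/S;Z))
\end{equation*}
induced by the pullback $D^{(k)}\to D'^{(k)}$ of the strata. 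Taking $\mathcal{H}^{h-k}$ and renumbering $E_2^{k,h-k}$ as $E_1^{-k,h+k}$ as in (\ref{eqn:wtspkdz}) finishes the proof. The formal part (morphism of filtered complexes gives a morphism of spectral sequences) is immediate; the main obstacle is this last compatibility on the $E_1$-page, which requires the hypotheses of (\ref{prop:rescos}) (equivalently of (\ref{prop:mmoo})) on the behaviour of the minimal generators of the log structures under $g$, so that one works with morphisms $g$ of the indicated type.
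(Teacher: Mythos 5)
Your proof is correct and is exactly the ``straightforward'' argument the paper has in mind: apply $Rf'{}^{\rm conv}_{(X',Z')/S'*}$ to the filtered morphism (\ref{eqn:fdfcc}), use $f'{}^{\rm conv}_{(X',Z')/S'}\circ g_{(X,Z){\rm conv}}=u\circ f^{\rm conv}_{(X,Z)/S}$ and the adjunction $u^{-1}\dashv Ru_*$ to get a morphism in ${\rm D}^+{\rm F}$, and invoke the functoriality of the spectral sequence of a filtered complex. Note that your third paragraph (identifying the $E_1$-morphism with the pullback on the strata $D^{(k)}$, which needs the extra hypotheses on $g$) is not required for the corollary as stated — that finer compatibility is only addressed later, in (\ref{prop:grloc}).
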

\begin{proof}
The proof is straightforward.
\end{proof}

\par 
Let 
$a'{}^{(k)} \col 
(D'{}^{(k)},Z'\vert_{D'{}^{(k)}}) \lo (X',Z')$ 
be the natural morphism.  
Let $a'{}^{(k)}_{\rm conv}: 
((D'{}^{(k)},Z'\vert_{D'{}^{(k)})/S})_{\rm conv} 
\lo 
((X',Z')/S)_{\rm conv}$
be the morphism of topoi induced by the 
morphism 
$a'{}^{(k)}$. 
By (\ref{theo:fcpwczo}) and (\ref{eqn:grpwwd}), 
the morphism 
$g^*_{(X,Z){\rm conv}}$ 
induces the following morphism 
\begin{align*}
{\rm gr}^{P^D}_k(Ru_{(X',Z')/S'*}(g^*_{(X,Z){\rm conv}})) 
\col  Ru_{(X',Z')/S'*}a'{}^{(k)}_{{\rm conv}*}
({\cal O}_{(D'{}^{(k)},Z'\vert_{D'{}^{(k)}})/S'}
\otimes_{\mab Z}
\vp^{(k)}_{\rm conv}(D'/S';Z'))[-k] 
\tag{9.2.2}\label{ali:gdkcl}\\ 
\lo 
Ru_{(X',Z')/S'*}a'{}^{(k)}_{{\rm conv}*}
Rg_{D^{(k)}{\rm conv}*}
({\cal O}_{(D^{(k)},Z\vert_{D^{(k)}})/S}
\otimes_{\mab Z}
\vp^{(k)}_{\rm conv}(D/S;Z))[-k]. 
\end{align*} 
Assume that  $g$ induces a morphism 
$g_{D^{(k)}} \col (D^{(k)},Z\vert_{D^{(k)}}) \lo 
(D'{}^{(k)},Z'\vert_{D'{}^{(k)}})$ for any $k\in {\mab N}$. 
In the following, we make the morphism 
${\rm gr}^{P^D}_k(g^*_{(X,Z){\rm conv}})$ in 
(\ref{ali:gdkcl}) explicit in a certain case.
\par
Assume that the following two conditions hold:
\bigskip
\parno  
(9.2.3): there exists the same cardinality of 
smooth components of $D$ and $D'$ over 
$S_1$ and $S'_1$, respectively:
$D=\bigcup_{\lam \in \Lam}D_{\lam}$, 
$D'=\bigcup_{\lam \in \Lam}D'_{\lam}$, 
where $D_{\lam}$ and $D'_{\lam}$ are 
smooth Cartier divisors on 
$X/S_1$ and $X'/S'_1$, respectively. 
\medskip
\parno 
(9.2.4): 
there exist positive integers 
$e_{\lam}$ 
$(\lam \in \Lam)$ such that
$g^*(D'_{\lam})=e_{\lam}D_{\lam}$. 
\bigskip 
\par
Set $\ul{\lam}:=\{\lam_1, \ldots, \lam_k\}$
$(\lam_j \in \Lam,~(\lam_i \not= \lam_j~(i\not= j)))$ 
and 
$D_{\ul{\lam}}:=D_{\lam_1}\cap \cdots \cap D_{\lam_k}$, 
$D'_{\ul{\lam}}:=D'_{\lam_1}\cap \cdots \cap D'_{\lam_k}$.  
Let $a_{\ul{\lam}} \col 
(D_{\ul{\lam}}, Z\vert_{D_{\ul{\lam}}}) 
\lo (X,Z)$ and 
$a'_{\ul{\lam}} \col
(D'_{\ul{\lam}},Z'\vert_{D'_{\ul{\lam}}}) 
\lo (X',Z')$
be the natural exact closed immersions.
Consider the following 
direct factor of the morphism (\ref{ali:gdkcl}): 
\begin{align*}
Ru_{(X',Z')/S'*}(g^*_{\ul{\lam}{\rm conv}}) \col 
Ru_{(X',Z')/S'*}a'{}_{\ul{\lam}{\rm conv}*}
({\cal O}_{(D'_{\ul{\lam}},
Z'\vert_{D'_{\ul{\lam}}})/S'}
\otimes_{\mab Z}
\vp_{\ul{\lam}{\rm conv}}(D'/S';Z'))
[-k]  \tag{9.2.5}\label{ali:grgm}\\
\lo 
Ru_{(X',Z')/S'*}a'{}_{\ul{\lam}{\rm conv}*}
Rg_{\ul{\lam}{\rm conv}*}
({\cal O}_{(D_{\ul{\lam}},
Z\vert_{D_{\ul{\lam}}})/S}
\otimes_{\mab Z}
\vp_{\ul{\lam}{\rm conv}}(D/S;Z))
[-k],  
\end{align*}
where $\vp_{\ul{\lam}{\rm conv}}(D'/S';Z')$ and 
$\vp_{\ul{\lam}{\rm conv}}(D/S;Z)$ are 
the log convergent orientation sheaves  
associated to the sets $\{D'_{\lam_j}\}_{j=1}^k$ 
and $\{D_{\lam_j}\}_{j=1}^k$, respectively,  
as defined in \cite[p.~81]{nh2} 
and 
$Ru_{(X',Z')/S'*}a'{}_{\ul{\lam}{\rm conv}*}
Rg_{\ul{\lam}{\rm conv}*}
({\cal O}_{(D_{\ul{\lam}},
Z\vert_{D_{\ul{\lam}}})/S}
\otimes_{\mab Z}
\vp_{\ul{\lam}{\rm conv}}(D/S;Z))$ 
is the ``$\ul{\lam}$-th direct factor'' of 
$Ru_{(X',Z')/S'*}a'{}^{(k)}_{{\rm conv}*}
Rg_{D^{(k)}{\rm conv}*}
({\cal O}_{(D^{(k)},Z\vert_{D^{(k)}})/S}
\otimes_{\mab Z}
\vp^{(k)}_{\rm conv}(D/S;Z))$.

\begin{prop}\label{prop:grloc}
Let the notations 
and the assumptions be as above.
Let 
$$g_{(D_{\ul{\lam}},Z\vert_{D_{\ul{\lam}}})} 
\col (D_{\ul{\lam}}, Z\vert_{D_{\ul{\lam}}}) 
\lo
(D'_{\ul{\lam}}, 
Z'\vert_{D'_{\ul{\lam}}})$$ 
be the induced morphism by $g$.
Then the morphism 
$Ru_{(X',Z')/S'*}(g^*_{\ul{\lam}{\rm conv}})$ in 
{\rm (\ref{ali:grgm})}  is equal to $(\prod_{j=1}^ke_{\lam_j})
Ru_{(X',Z')/S'*}(a'{}_{\ul{\lam}{\rm conv}*}
g^*_{(D_{\ul{\lam}},Z\vert_{D_{\ul{\lam}}})
{\rm conv}})$ for 
$k \geq 0$. Here we define 
$\prod_{j=1}^ke_{\lam_j}$ as $1$ for $k=0$.
\end{prop}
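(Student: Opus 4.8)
The plan is to reduce the statement to an explicit computation of the Poincar\'{e} residue morphism at the level of the universal enlargements, where the multiplicative factor $\prod_{j=1}^k e_{\lam_j}$ appears from the behavior of $d\log$ under the pull-back $g^*(D'_{\lam})=e_{\lam}D_{\lam}$. The morphism ${\rm gr}^{P^D}_k(g^*_{(X,Z){\rm conv}})$ is, by construction, induced by the morphism of filtered complexes $(L^{\rm conv}_{(X'_{\bul},Z'_{\bul})/S'}(\Om^{\bul}_{{\cal P}'^{\rm ex}_{\bul}/S'}),P^{D'_{\bul}}) \lo Rg_{\bul{\rm conv}*}(L^{\rm conv}_{(X_{\bul},Z_{\bul})/S}(\Om^{\bul}_{{\cal P}^{\rm ex}_{\bul}/S}),P^{D_{\bul}})$, so after passing to ${\rm gr}_k$ and using the Poincar\'{e} residue isomorphism (\ref{eqn:regarp}) (and its simplicial and functorial versions (\ref{eqn:relcdbzb}) and (\ref{prop:rescos})), everything comes down to comparing the maps $\sig \otimes d\log m'_{i_0}\cdots d\log m'_{i_{k-1}}\,\om \lom \sig \otimes b^*(\om)\otimes({\rm orientation})$ on the two sides.

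First I would choose compatible local charts: by the universality of the exactification I may assume the immersions $(X,D\cup Z)\os{\sus}{\lo}{\cal P}$ and $(X',D'\cup Z')\os{\sus}{\lo}{\cal P}'$ are exact, reduce to the affine local setting of (\ref{lemm:dpinc}) with cartesian diagrams (\ref{cd:dxs}) and (\ref{cd:xxd}), and arrange that the log structure generators $x_1,\ldots,x_r$ of ${\cal M}^{\rm ex}$ (resp.\ $x'_1,\ldots,x'_r$ of ${\cal M}'^{\rm ex}$) are chosen so that the hypothesis of (\ref{prop:rescos}) holds: each $x_j$ has a unique partner $x'_j$ with $g^*(x'_j)$ a power of $x_j$. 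Under assumption (9.2.4), $g^*(x'_{\lam_j}) = (\text{unit})\cdot x_{\lam_j}^{e_{\lam_j}}$ on the relevant component, so $g^*(d\log x'_{\lam_j}) = e_{\lam_j}\, d\log x_{\lam_j} + (\text{exact log form})$; the exact part lies in a lower step of the weight filtration relative to the chosen component, hence dies in ${\rm gr}_k$. Therefore $g^*(d\log x'_{\lam_1}\wedge\cdots\wedge d\log x'_{\lam_k}) \equiv (\prod_{j=1}^k e_{\lam_j})\, d\log x_{\lam_1}\wedge\cdots\wedge d\log x_{\lam_k}$ modulo $P^{D_m}_{k-1}$, which is exactly the claimed factor; the orientation sheaves match because $g$ carries the ordered set $\{D'_{\lam_j}\}$ to $\{D_{\lam_j}\}$ preserving the labeling. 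For $k=0$ there is no $d\log$ factor and the residue is the identity-type map, so the factor is $1$, consistent with the convention $\prod_{j=1}^0 e_{\lam_j}=1$.

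Having established the identity at the level of $\{T_n\}$ and $\{T'_n\}$ (i.e.\ on ${\cal K}_{T_n}\otimes\Om^{\bul}$ and its pushforward under ${\mathfrak g}_n$), I would then pass to $\vpl_n \bet_{n*}$, then apply $u^{\rm conv}_{(X,Z)/S*}$ using (\ref{prop:cdfza}) / (\ref{prop:ulcrz}), and finally apply $R\pi_{(X,Z)/S{\rm conv}*}$ together with cohomological descent exactly as in the proof of (\ref{lemm:grkpd}) (display (\ref{eqn:pdkcz})). Since each of these operations is additive, the scalar $\prod_{j=1}^k e_{\lam_j}$ is simply carried through, yielding the asserted equality $Ru_{(X',Z')/S'*}(g^*_{\ul{\lam}{\rm conv}}) = (\prod_{j=1}^k e_{\lam_j})\, Ru_{(X',Z')/S'*}(a'^{\log}_{\ul{\lam}{\rm conv}*}\, g^*_{(D_{\ul{\lam}},Z\vert_{D_{\ul{\lam}}}){\rm conv}})$. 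The main obstacle is the bookkeeping in the second paragraph: verifying cleanly that the ``exact log form'' correction term in $g^*(d\log x'_{\lam_j})$ genuinely lands in the lower weight step $P^{D}_{k-1}$ of the graded piece and hence contributes nothing after taking ${\rm gr}_k^{P^D}$, and that this holds uniformly over all simplicial degrees $m$ and all $n$ in the universal enlargement system, so that the descent argument applies without change. Everything else is a formal propagation of a scalar through additive functors.
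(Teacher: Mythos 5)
Your proposal is correct and is essentially the argument the paper intends: the paper's own proof is a one-line instruction to imitate the crystalline case \cite[(2.9.3)]{nh2}, which is exactly the local residue computation you carry out — writing $g^*(x'_{\lam_j})=(\text{unit})\cdot x_{\lam_j}^{e_{\lam_j}}$, so $g^*(d\log x'_{\lam_j})=e_{\lam_j}\,d\log x_{\lam_j}+d\log(\text{unit})$ with the correction term falling into $P^D_{k-1}$ and vanishing in ${\rm gr}_k^{P^D}$, after which the scalar $\prod_{j=1}^k e_{\lam_j}$ passes through the linearization, $\vpl_n\bet_{n*}$, and the descent functors. The point you flag as the main obstacle is harmless, since the filtration $P^{{\cal P}^{\rm ex}/{\cal Q}^{\rm ex}}$ is defined termwise on the de Rham complex and the computation is the same in every simplicial degree and for every $n$.
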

\begin{proof}
One has only to imitate the proof of 
\cite[(2.9.3)]{nh2}. 
\end{proof}

\begin{defi}\label{defi:wcel}
(1) We call 
$\{e_{\lam}\}_{\lam \in \Lam}\in 
({\mab Z}_{>0})^{\Lam}$ 
the {\it multi-degree} of $g$ 
with respect to the decompositions of 
$\{D_{\lam}\}_{\lam}$ 
and $\{D'_{\lam}\}_{\lam}$ of $D$ and $D'$ 
(\cite[(2.1.8)]{nh2}), respectively.   
We  denote it by 
${\rm deg}_{D,D'}(g) \in ({\mab Z}_{>0})^{\Lam}$. 
If $e_\lam$'s for all 
$\lam$'s 
are equal, 
we also denote 
$e_{\lam}\in {\mab Z}_{>0}$ 
by 
${\rm deg}_{D,D'}(g) \in 
{\mab Z}_{>0}$.
\par
(2) Assume that 
$e_\lam$'s for all $\lam$'s are equal. 
Let $v \col {\cal E} \lo {\cal F}$ 
be a morphism of  
${\cal K}_S$-modules. 
Let $k$ be a nonnegative integer. 
The $k$-{\it twist} 
$$v(-k) \col {\cal E}(-k;g;D,D') 
\lo {\cal F}(-k;g;D,D')$$  
of $v$ with respect to $g$ 
is, by definition, the morphism  
${\rm deg}_{D,D'}(g)^kv 
\col {\cal E} \lo {\cal F}$.
\end{defi}

\begin{coro}\label{coro:dts}
Assume that $e_\lam$'s for 
all $\lam$'s are equal. 
Let $E_{\rm ss}((X,D\cup Z)/S)$ be 
the following spectral sequence  
\begin{equation*} 
E_1^{-k,h+k}((X,D\cup Z)/S)=
R^{h-k}f^{\rm conv}_{(D^{(k)},Z\vert_{D^{(k)}})/S*}
({\cal K}_{(D^{(k)},Z\vert_{D^{(k)}})/S}\otimes_{\mab Z}
\vp^{(k)}_{{\rm conv}}(D/S;Z))(-k;g;D,D') 
\end{equation*}
$$\Lo 
R^hf^{\rm conv}_{(X,D\cup Z)/S*}({\cal K}_{(X,D\cup Z)/S})$$ 
and let $E_{\rm ss}((X',D'\cup Z')/S')$ be the obvious 
analogue of the above for $(X',D'\cup Z')/S'$.  
Then there exists a morphism  
\begin{equation*} 
g^*_{\rm conv}
\col u^{-1}E_{\rm ss}((X',D'\cup Z')/S') 
\lo E_{\rm ss}((X,D\cup Z)/S)
\tag{9.5.1}\label{eqn:dtw}
\end{equation*} 
of spectral sequences which induces the morphism 
$g^*_{\rm conv}\col 
R^hf^{\rm conv}_{(X',D'\cup Z')/S'*}
({\cal K}_{(X',D'\cup Z')/S'})
\lo 
R^hf^{\rm conv}_{(X,D\cup Z)/S*}({\cal K}_{(X,D\cup Z)/S})$. 
\end{coro}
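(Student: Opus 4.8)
The plan is to deduce (\ref{coro:dts}) from the functoriality morphism already produced in (\ref{theo:fcpwczo}), its graded‑piece computation (\ref{ali:gdkcl})--(\ref{ali:grgm}) together with (\ref{prop:grloc}), and the mere definition of the twist in (\ref{defi:wcel}). First I would recall how $E_{\rm ss}((X,D\cup Z)/S)$ arises: it is the spectral sequence (\ref{eqn:ttc}) attached to the filtration $\{P^D_kC_{\rm conv}({\cal K}_{(X,D\cup Z)/S})\}_k$ after applying $Rf^{\rm conv}_{(X,Z)/S*}$; by (\ref{eqn:grpwwd}) (equivalently (\ref{eqn:ckdzs})) its $E_1$‑term is $R^{h-k}f^{\rm conv}_{(D^{(k)},Z\vert_{D^{(k)}})/S*}({\cal K}_{(D^{(k)},Z\vert_{D^{(k)}})/S}\otimes_{\mab Z}\vp^{(k){\rm log}}_{{\rm conv}}(D/S;Z))$, which, via the decomposition of the orientation sheaf into the summands indexed by the subsets $\ul{\lam}$ with $\#\ul{\lam}=k$, splits further as $\bigoplus_{\#\ul{\lam}=k}R^{h-k}f^{\rm conv}_{(D_{\ul{\lam}},Z\vert_{D_{\ul{\lam}}})/S*}(\cdots\otimes_{\mab Z}\vp_{\ul{\lam}{\rm conv}}(D/S;Z))$, and similarly for $(X',D'\cup Z')/S'$. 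By (\ref{coro:spfcz}) the filtered morphism (\ref{eqn:fdfcc}) already induces a morphism $g^*_{\rm conv}\colon u^{-1}E_{\rm ss}((X',D'\cup Z')/S')\lo E_{\rm ss}((X,D\cup Z)/S)$ of the associated (untwisted) spectral sequences.

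Next I would compute the induced map on $E_1$‑terms. By construction it is $Ru_{(X',Z')/S'*}$ of ${\rm gr}^{P^D}_k$ of the functoriality morphism, i.e. the map (\ref{ali:gdkcl}); on the $\ul{\lam}$‑summand this is (\ref{ali:grgm}), and (\ref{prop:grloc}) identifies it with $\bigl(\prod_{j=1}^{k}e_{\lam_j}\bigr)$ times the ``naive'' pullback $Ru_{(X',Z')/S'*}\bigl(a'{}^{\log}_{\ul{\lam}{\rm conv}*}g^*_{(D_{\ul{\lam}},Z\vert_{D_{\ul{\lam}}}){\rm conv}}\bigr)$. Under the standing hypothesis that all $e_{\lam}$ coincide with one $e={\rm deg}_{D,D'}(g)$, the scalar $\prod_{j}e_{\lam_j}=e^{k}$ is independent of $\ul{\lam}$, so on the whole degree‑$k$ part the $E_1$‑map is $e^{k}$ times the natural pullback $a'{}^{(k){\log}}_{{\rm conv}*}g^*_{D^{(k)}{\rm conv}}$. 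By (\ref{defi:wcel}) this scalar $e^{k}$ is exactly the $k$‑twist $(-k;g;D,D')$: for a morphism $v$ one has by definition $v(-k)={\rm deg}_{D,D'}(g)^{k}v\colon{\cal E}(-k;g;D,D')\lo{\cal F}(-k;g;D,D')$.

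Consequently, reading the very morphism of (\ref{coro:spfcz}) with the $E_1$‑terms carrying the twist $(-k;g;D,D')$ on both sides, its $E_1$‑component becomes the natural stratumwise pullback $a'{}^{(k){\log}}_{{\rm conv}*}g^*_{D^{(k)}{\rm conv}}$ with no extra scalar, while the abutment carries no twist, is unchanged, and equals the map induced by $g_{\rm conv}$ on $R^{h}f^{\rm conv}_{(X,D\cup Z)/S*}({\cal K}_{(X,D\cup Z)/S})$; this is (\ref{eqn:dtw}). I expect the only point needing care — the main obstacle — is the internal consistency of this reinterpretation: that the twisted $E_1$‑terms together with the twisted $d_1\colon E_1^{-k,h+k}\to E_1^{-k+1,h+k}$ still form a spectral sequence and that the twisted $E_1$‑map commutes with these $d_1$'s. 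Concretely this reduces to the bookkeeping identity $e^{k}=e\cdot e^{k-1}$ matched against the fact that $d_1$ lowers $k$ by one; since the untwisted morphism of (\ref{coro:spfcz}) already commutes with the untwisted $d_1$ and the Tate twists were introduced precisely to absorb the resulting factor $e$, this is the same verification as in the log crystalline case of \cite{nh2} and goes through verbatim, the explicit shape of $d_1$ being the one recorded in \S\ref{sec:bd}.
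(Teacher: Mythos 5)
Your proposal is correct and follows the same route as the paper, which simply observes that the corollary is an immediate consequence of (\ref{prop:grloc}): the factor $\prod_{j}e_{\lam_j}=e^{k}$ appearing on each $\ul{\lam}$-summand of the $E_1$-map is exactly what the $k$-twist of (\ref{defi:wcel}) is defined to absorb. Your additional verification of compatibility with $d_1$ is the routine bookkeeping the paper leaves implicit.
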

\begin{proof}
(\ref{coro:dts}) immediately follows 
from (\ref{prop:grloc}).
\end{proof}

Let $F_{S_1} \col S_1 \lo S_1$ be 
the absolute Frobenius endomorphism of $S_1$.
Set 
$(X',D'\cup Z'):=(X,D\cup Z)\times_{S_1,F_{S_1}}S_1$.
The relative Frobenius morphism 
$F \col (X, D\cup Z) \lo (X', D'\cup Z')$ 
over $S_1$ induces the relative 
Frobenius morphisms $F_{(X,Z)} \col (X,Z) \lo (X',Z')$ 
and  
$F^{(k)} \col 
(D^{(k)}, Z\vert_{D^{(k)}}) \lo
(D^{(k)'}, Z'\vert_{D^{(k)'}})$. 
Let $a^{(k)} \col (D^{(k)},Z\vert_{D^{(k)}}) \lo (X,D\cup Z)$ 
and 
$a^{(k)'} \col (D^{(k)'},Z'\vert_{D^{(k)'}}) \lo (X',D'\cup Z')$ 
be the natural morphisms. 
We define the 
relative Frobenius action
$$\Phi_{(D^{(k)},Z\vert_{D^{(k)}})/S} 
\col  a^{(k)'}_{{\rm conv}*}
\vp^{(k)}_{\rm conv}(D'/S;Z') 
\lo F_{{\rm conv}*}
a^{(k)}_{{\rm conv}*}
\vp^{(k)}_{{\rm conv}}(D/S;Z)$$
as the identity 
under the natural identification
$$\vp^{(k)}_{\rm conv}(D'/S';Z')
\os{\sim}{\lo}
F^{(k)}_{{\rm conv}*}
\vp^{(k)}_{{\rm conv}}(D/S;Z).$$
When $g$ in (\ref{cd:mpsmlgs}) 
is equal to the relative Frobenius 
$F \col 
(X,D\cup Z) \lo 
(X',D'\cup Z')$,
we denote (\ref{eqn:wtspkdz})+(the compatibility with Frobenius) 
by the following:  
\begin{equation*}
\begin{split}
E_1^{-k,h+k}((X,D\cup Z)/S)= &
R^{h-k}f^{\rm conv}_{(D^{(k)},Z\vert_{D^{(k)}})/S*}
({\cal K}_{(D^{(k)},Z\vert_{D^{(k)}})/S} \\ 
{} & 
\phantom{R^{h-k}f^{\rm conv}_{(D^{(k)},Z\vert_{D^{(k)}})/S*}(}
\otimes_{\mab Z}
\vp^{(k)}_{{\rm conv}}(D/S;Z)))(-k) \\
\Lo 
{} & R^hf^{\rm conv}_{(X,D\cup Z)/S*}({\cal K}_{(X,D\cup Z)/S}).
\end{split}
\tag{9.5.2}\label{ali:wtfapwt}
\end{equation*}

\begin{defi}
We call the sequence (\ref{ali:wtfapwt}) 
the {\it weight spectral sequence of} 
$(X, D\cup Z)/S$ (more precisely over $(S_1,S)$) 
{\it with respect to} $D$. If $Z=\emptyset$, then 
we call it 
the {\it weight spectral sequence of} $(X, D)/S$. 
\end{defi}

By (\ref{eqn:levcs}) we also 
have the following Leray spectral sequence
\begin{equation*}
E^{st}_2 := R^sf^{\rm conv}_{(D^{(t)},Z\vert_{D^{(t)}})/S*}
({\cal K}_{(D^{(t)},Z\vert_{D^{(t)}})/S}\otimes_{\mab Z}
\vp^{(t)}_{{\rm conv}}(D/S;Z))(-t)
\tag{9.6.1}\label{eqn:sp}
\end{equation*} 
$$\Longrightarrow 
R^{s+t}f^{\rm conv}_{(X,D\cup Z)/S*}({\cal K}_{(X,D\cup Z)/S}).$$

\begin{prop}\label{prop:iwep}  
Let the notations be as in {\rm (\ref{prop:iekp})}. 
Then the spectral sequence 
{\rm (\ref{ali:wtfapwt})} for 
$(X',D'\cup Z')/(S'_1,S)$ is equal to that for 
$(X,D\cup Z)/(S_1,S)$. 
\end{prop} 
\begin{proof}   
(\ref{prop:iwep}) follows from (\ref{prop:iekp}). 
\end{proof}

\section{Boundary morphisms}\label{sec:bd}
In this section we define the log cycle class 
in a log convergent cohomology for a smooth divisor which 
intersects a log locus transversally 
(cf.~\cite[\S2]{fao}, \cite[(2.8)]{nh2}).
Then, as in \cite[(2.8)]{nh2}, we give a 
description of the boundary morphism between 
the $E_1$-terms of the spectral sequence 
(\ref{ali:wtfapwt}). Because the proof is the same as that of \cite[(2.8)]{nh2}, 
we omit the proofs in this section. 
\par
Let $S$ and $S_1$ be as in \S\ref{sec:lcs}. 
Let $f \col (X,Z) \lo S_1$ 
be a smooth scheme with a relative SNCD over $S_1$. 
By abuse of notation, 
we also denote by $f$ the composite 
morphism $(X,Z) \lo S_1 \os{\subset}{\lo} S$. 
Let $D$ be 
a smooth divisor on $X$ which intersects  
$Z$ transversally over $S_1$.  
Let 
$a \col (D,Z\vert_D) 
\os{\subset}{\lo} (X,Z)$ be 
the natural 
closed immersion over $S_1$.  
Let 
$a_{\rm zar} \col D_{\rm zar}\lo X_{\rm zar}$ 
be the induced morphism of Zariski topoi.
Let $a_{\rm conv} \col
(({(D, Z\vert_D)/S})_{\rm conv},
{\cal K}_{(D, Z\vert_D)/S}) 
\lo 
(({(X, Z)/S})_{\rm conv},
{\cal K}_{(X, Z)/S})$ 
be also the induced morphism of 
log convergent ringed topoi.
\par
Let the notations be as 
in the beginning of \S\ref{sec:wfcipp} for the log scheme 
$(X,D\cup Z)$ above.
Let $b_{\bul} \col 
(D^{(1)}({\cal M}^{\rm ex}_{\bul}),
{\cal N}^{\rm ex}_{\bul})
\os{\sus}{\lo} {\cal Q}_{\bul}$ 
be the natural simplicial immersion. 
By using the Poincar\'{e} residue isomorphism 
with respect to $D^{(1)}({\cal M}^{\rm ex}_{\bul})$, 
we have the following exact sequence:
\begin{equation*}
0 \lo 
\Om_{{\cal Q}^{\rm ex}_{\bul}/S}^{\bul} 
\lo \Om_{{\cal P}^{\rm ex}_{\bul}/S}^{\bul} 
\os{\rm Res}{\lo} 
b_{{\bul}{\rm zar}*}
(\Om_{D^{(1)}({\cal M}^{\rm ex}_{\bul})/S}^{\bul}
\otimes_{\mab Z}
\vp^{(1)}_{\rm zar}(D^{(1)}({\cal M}^{\rm ex}_{\bul})/S))
[-1] \lo 0.
\tag{10.0.1}\label{eqn:omresd} 
\end{equation*} 
Let $L^{\rm conv}_{(X_{\bul},Z_{\bul})/S}$ 
(resp.~$L^{\rm conv}_{(D_{\bul},Z_{\bul}\vert_{D_{\bul}})/S}$)
be the log convergent linearization functor 
with respect to the simplicial immersion  
$(X_{\bul},Z_{\bul}) \os{\subset}{\lo} {\cal Q}_{\bul}$
(resp.~$(D_{\bul},Z_{\bul}\vert_{D_{\bul}}) 
\os{\subset}{\lo} 
(D^{(1)}({\cal M}^{\rm ex}_{\bul}),
{\cal N}^{\rm ex}_{\bul})$). 
By (\ref{linc}), 
$L^{\rm conv}_{(X_{\bul},Z_{\bul})/S}b_{\bul{\rm zar}*}= 
a_{{\rm conv}\bul*}
L^{\rm conv}_{(D_{\bul},Z_{\bul}\vert_{D_{\bul}})/S}$. 
Hence we have the following exact 
sequence by (\ref{prop:grla}) and (\ref{theo:injf}) (2): 
\begin{equation*}
0 \lo 
L^{\rm conv}_{(X_{\bul},Z_{\bul})/S}
(\Om_{{\cal Q}^{\rm ex}_{\bul}/S}^{\bul}) \lo 
L^{\rm conv}_{(X_{\bul},Z_{\bul})/S}
(\Om_{{\cal P}^{\rm ex}_{\bul}/S}^{\bul})
\tag{10.0.2}\label{eqn:xzl}
\end{equation*} 
$$\lo 
a_{{\rm conv}\bul*}
(L^{\rm conv}_{(D_{\bul},Z_{\bul}\vert_{D_{\bul}})/S}
(\Om_{D^{(1)}({\cal M}^{\rm ex}_{\bul})/S}^{\bul})
\otimes_{\mab Z}\vp^{(1)}_{{\rm conv}}(D_{\bul}/S;Z_{\bul}))
[-1]) 
\lo 0.$$
Recall the  morphism 
${\pi}_{(X,Z)/S{\rm conv}}$ 
of ringed topoi in \S\ref{sec:wfcipp}
and let 
${\pi}_{(D,Z\vert_D)/S{\rm conv}}$ 
be an analogously defined morphism. 
By applying $R{\pi}_{(X,Z)/S{\rm conv}*}$ to (\ref{eqn:xzl}), 
we obtain the following triangle:  
\begin{equation*}
\lo R{\pi}_{(X,Z)/S{\rm conv}*}
L^{\rm conv}_{(X_{\bul},Z_{\bul})/S}
(\Om_{{\cal Q}^{\rm ex}_{\bul}/S}^{\bul}) \lo 
R{\pi}_{(X,Z)/S{\rm conv}*}
L^{\rm conv}_{(X_{\bul},Z_{\bul})/S}
(\Om_{{\cal P}^{\rm ex}_{\bul}/S}^{\bul}) \lo 
\tag{10.0.3} 
\end{equation*}
$$
a_{{\rm conv}*}
R{\pi}_{(D,Z\vert_D)/S{\rm conv}*}
(L^{\rm conv}_{(D_{\bul},Z_{\bul}\vert_{D_{\bul}})/S}
(\Om^{\bul}_{D^{(1)}({\cal M}^{\rm ex}_{\bul})/S})  
\otimes_{\mab Z}
\vp^{(1)}_{{\rm conv}}(D_{\bul}/S;Z_{\bul}))[-1] 
\os{+1}{\lo} 
\cdots. $$
By (\ref{theo:pl}), (\ref{theo:cpvcs}) and 
by the cohomological descent,  
we have the following triangle:
\begin{equation*}
\lo {\cal K}_{(X,Z)/S} \lo 
R\eps^{\rm conv}_{(X,D\cup Z,Z)/S*}
({\cal K}_{(X,D\cup Z)/S}) \lo 
\tag{10.0.4}\label{eqn:oealog}
\end{equation*}
$$
a_{{\rm conv}*}
({\cal K}_{(D,Z\vert_D)/S}
\otimes_{\mab Z}
\vp^{(1)}_{{\rm conv}}(D/S;Z))
[-1] 
\os{+1}{\lo} \cdots.$$ 
Hence we have the boundary morphism 
\begin{equation*}
d \col a_{{\rm conv}*}
({\cal K}_{(D,Z\vert_D)/S}\otimes_{\mab Z}
\vp^{(1)}_{{\rm conv}}(D/S;Z))[-1] \lo 
{\cal K}_{(X,Z)/S}[1]
\tag{10.0.5}
\end{equation*}
in $D^+({\cal K}_{(X,Z)/S})$.   
Hence we have the following morphism
\begin{equation*}
d \col 
a_{{\rm conv}*}({\cal K}_{(D,Z\vert_D)/S}
\otimes_{\mab Z}\vp^{(1)}_{{\rm conv}}(D/S;Z)) 
\lo {\cal K}_{(X,Z)/S}[2]
\tag{10.0.6} 
\end{equation*}
in $D^+({\cal K}_{(X,Z)/S})$. 
Set 
\begin{equation*}
G^{\rm conv}_{D/(X,Z)}:=-d.  
\tag{10.0.7}\label{eqn:gdxz}
\end{equation*}
and call $G^{\rm conv}_{D/(X,Z)}$ the 
{\it log convergent Gysin morphism} of $D$.
Then we have the following cohomology class 
\begin{align*}
c^{\rm conv}_{(X,Z)/S}(D):=G^{\rm conv}_{D/(X,Z)} \in 
{\cal E}{\it xt}^0_{{\cal K}_{(X,Z)/S}} 
(& a_{{\rm conv}*}
({\cal K}_{(D,Z\vert_D)/S}\otimes_{\mab Z}
\vp^{(1)}_{{\rm conv}}(D/S;Z)),  
\tag{10.0.8}\label{eqn:lcycls} \\
{} & 
{\cal K}_{(X,Z)/S}[2]).
\end{align*}
It is a routine work to prove that $G^{\rm conv}_{D/(X,Z)}$ 
(and $c^{\rm conv}_{(X,Z)/S}(D)$) depends only  
on $(X,D\cup Z)/S$. 
Since $\vp^{(1)}_{{\rm conv}}(D/S;Z)$ 
is canonically isomorphic to ${\mab Z}$ and since 
there exists a natural morphism 
${\cal K}_{(X,Z)/S} \lo 
a_{{\rm conv}*}
({\cal K}_{(D,Z\vert_D)/S})$,
we have a cohomology class 
\begin{align*}
c^{\rm conv}_{(X,Z)/S}(D) & \in  
{\cal E}{\it xt}^0_{{\cal K}_{(X,Z)/S}}
({\cal K}_{(X,Z)/S},
{\cal K}_{(X,Z)/S}[2]) \\
{} & =:
\ul{\cal H}_{{\log}{\textrm -}{\rm conv}}^2((X,Z)/S).
\end{align*}
If $Z=\emptyset$, 
we denote $G^{\rm conv}_{D/(X,Z)}$ 
and $c^{\rm conv}_{(X,Z)/S}(D)$ simply by 
$G^{\rm conv}_{D/X}$ and $c^{\rm conv}_X(D)$, 
respectively.

\begin{prop}\label{prop:gysbcg}
Let $\pi'$ be a nonzero element of $\pi{\cal V}$. 
Let $u \col S'  \lo S$ be a morphism of 
$p$-adic formal ${\cal V}$-schemes. 
Set $S'_1:=\ul{\rm Spec}_{S'}({\cal O}_{S'}/\pi'{\cal O}_{S'})$. 
Let $g \col Y \lo S'_1$ be a smooth morphism of schemes 
fitting into the following commutative diagram 
\begin{equation*} 
\begin{CD} 
Y @>>> X \\ 
@VVV @VVV \\ 
S'_1 @>>> S_1. 
\end{CD} 
\end{equation*}
Set $E:=D\times_XY$ and $W:=Z\times_XY$. 
Assume that $E$ and $W$ are transversal relative SNCD's on $Y$ 
over $S_1$. 
Let $b\col (E,W\vert_E) \os{\sus}{\lo} (Y,W)$ 
be the natural closed immersion. 
Then the image of $c^{\rm conv}_{(X,Z)/S}(D)$ in
$(\ref{eqn:lcycls})$ by the natural morphism
$${\cal E}{\it xt}^0_{{\cal K}_{(X,Z)/S}}
(a_{{\rm conv}*}({\cal K}_{(D,Z\vert_D)/S}
\otimes_{\mab Z}\vp^{(1)}_{{\rm conv}}(D/S;Z)), 
{\cal K}_{(X,Z)/S}[2]) 
\lo $$
$${\cal E}{\it xt}^0_{{\cal K}_{(Y,W)/S'}}
(b_{{\rm conv}*}
({\cal K}_{(E,W\vert_E)/S'}
\otimes_{\mab Z}\vp^{(1)}_{{\rm conv}}(E/S';W)), 
{\cal K}_{(Y,W)/S'}[2])$$ 
is equal to $c^{\rm conv}_{(Y,W)/S'}(E)$.
\end{prop}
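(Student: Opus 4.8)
The plan is to reduce the statement to the compatibility of the Poincaré residue isomorphisms and the linearization functors with base change, exactly parallel to the proof of \cite[(2.9.3)]{nh2}. First I would unwind the definition of $c^{\rm conv}_{(X,Z)/S}(D)$: it is built from the boundary morphism of the exact sequence (\ref{eqn:xzl}), which in turn comes from the short exact sequence (\ref{eqn:omresd}) of log de Rham complexes obtained from the Poincaré residue isomorphism, after applying $L^{\rm conv}_{(X_{\bul},Z_{\bul})/S}$ and $R\pi_{(X,Z)/S{\rm conv}*}$. So the whole construction is functorial in the data $(X, D\cup Z)\os{\sus}{\lo}{\cal P}_{\bul}$ once we know that each ingredient commutes with pullback along the morphism $g$ (here specialized to the morphism $Y\lo X$ covering $S'\lo S$, with $\pi{\cal V}\supset \pi'{\cal V}$).

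The key steps, in order, are as follows. First, choose compatible affine open coverings and simplicial immersions: an affine open covering of $X$ inducing one of $Y$, and immersions $(X_{\bul},D_{\bul}\cup Z_{\bul})\os{\sus}{\lo}{\cal P}_{\bul}$ over $S$ together with $(Y_{\bul},E_{\bul}\cup W_{\bul})\os{\sus}{\lo}{\cal P}'_{\bul}:={\cal P}_{\bul}\times_S S'$ over $S'$, so that the exactifications and the log structures ${\cal M}^{\rm ex}_{\bul}$, ${\cal N}^{\rm ex}_{\bul}$ pull back correctly; here one uses that $E=D\times_XY$, $W=Z\times_XY$, together with (\ref{eqn:tzcyb}), (\ref{lemm:bcue}) and (\ref{eqn:tfib}) to identify the relevant universal enlargements after base change. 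Second, by (\ref{prop:rescos}) (applied with the morphism $g$ having all multidegrees equal to $1$, i.e.\ the hypothesis on minimal generators is the transversality-plus-pullback situation here), the Poincaré residue isomorphism (\ref{eqn:omresd}) for $X$ pulls back to the one for $Y$; hence the short exact sequence (\ref{eqn:xzl}) for $(X,Z)$ pulls back, via $g_{\rm conv}$, to the one for $(Y,W)$. This uses (\ref{linc})/(\ref{coh:com}) for the compatibility of the log convergent linearization functor with the closed immersion $b$, and (\ref{lemm:git}) for the compatibility with the morphism ${\cal P}^{\rm ex}\lo{\cal Q}^{\rm ex}$. Third, applying $R\pi_{(X,Z)/S{\rm conv}*}$ and using the cohomological descent together with (\ref{theo:pl}) and (\ref{theo:cpvcs}) to identify the terms with $R\eps^{\rm conv}_{(X,D\cup Z,Z)/S*}({\cal K}_{(X,D\cup Z)/S})$ etc., the triangle (\ref{eqn:oealog}) for $X$ maps to the one for $Y$; chasing the boundary morphism $d$ through this map of triangles gives that $c^{\rm conv}_{(X,Z)/S}(D)$ maps to $c^{\rm conv}_{(Y,W)/S'}(E)$. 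Finally, one checks the compatibility of the identifications $\vp^{(1)}_{\rm conv}(D/S;Z)\simeq{\mab Z}\simeq \vp^{(1)}_{\rm conv}(E/S';W)$ and of the natural morphisms ${\cal K}_{(X,Z)/S}\lo a_{{\rm conv}*}({\cal K}_{(D,Z\vert_D)/S})$ under pullback, which is routine since the orientation sheaves are pulled back by construction and all the morphisms in sight are the evident functorial ones.

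The main obstacle I expect is the bookkeeping in the first step: one must verify that a single choice of simplicial immersions over $S$ restricts to an admissible choice over $S'$ in a way that is compatible with all the exactifications, the decompositions (\ref{eqn:mdpn}) of the log structures, and the systems of universal enlargements, so that (\ref{prop:rescos}) genuinely applies. In particular one needs that the hypothesis of (\ref{prop:rescos}) on minimal generators of ${\cal M}^{\rm ex}$ versus ${\cal M}'{}^{\rm ex}$ is satisfied for the morphism $(\os{\circ}{\cal P}{}^{\rm ex},{\cal M}^{\rm ex})\lo(\os{\circ}{\cal P}{}'^{\rm ex},{\cal M}'^{\rm ex})$ coming from $Y\lo X$ — which holds because $E$ is the scheme-theoretic pullback of $D$, so each smooth component of $D$ pulls back to exactly one smooth component of $E$ with no extra components appearing. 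Once that local compatibility is in place, the rest of the argument is formal manipulation of triangles and is the same as in \cite[(2.9.3)]{nh2}, so I would simply cite that proof rather than repeat it, noting only that torsion plays no role here since everything is already $\otimes_{\mab Z}{\mab Q}$.
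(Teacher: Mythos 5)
Your proposal is correct and follows essentially the route the paper intends: the paper omits the proof of this proposition, referring to the identical argument in the log crystalline case \cite[(2.8)]{nh2}, and your reduction to the functoriality of the Poincar\'{e} residue sequence via (\ref{prop:rescos}), (\ref{linc}), (\ref{theo:pl}), (\ref{theo:cpvcs}) and the triangle (\ref{eqn:oealog}) is exactly that argument transported to the log convergent setting, with the hypothesis of (\ref{prop:rescos}) indeed satisfied with multiplicity $1$ because $E=D\times_XY$. The one imprecision is the choice ${\cal P}'_{\bul}:={\cal P}_{\bul}\times_SS'$, into which $Y_{\bul}$ need not embed when $Y\lo X$ is not an immersion (and the preimages of affine opens of $X$ need not be affine in $Y$); one should instead take the product of ${\cal P}_{\bul}\times_SS'$ with a separately chosen simplicial embedding of $(Y_{\bul},E_{\bul}\cup W_{\bul})$ over $S'$, as in the proof of (\ref{theo:ifc}), after which everything you wrote goes through unchanged.
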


\par
Finally we express the edge morphism $d_1^{\bul \bul}$ of
(\ref{ali:wtfapwt}) by the \v{C}ech-Gysin morphism.
\par
Henceforth $D$ denotes a (not necessarily smooth) 
relative SNCD on $X$ over $S_1$ 
which meets $Z$ transversally.
First, fix a decomposition 
$\{D_{\lam}\}_{\lam \in \Lam}$ of $D$ 
by smooth components of $D$ over $S_1$ 
\cite[(2.1.8)]{nh2} and fix a total order on $\Lam$. 
Set $\ul{\lam}:=\{\lam_0,  \ldots, \lam_{k-1}\}$ 
for $\lam_0<\cdots<\lam_{k-1}$ 
and 
$\ul{\lam}_j:=\{\lam_0,\ldots,  \wh{\lam}_j,  
\ldots, \lam_{k-1}\}$ $(k\in {\mab Z}_{\geq 1})$. 
Here $~\wh{}~$ means the elimination. 
Set 
$D_{\ul{\lam}}
:=D_{\lam_0}\cap \cdots \cap D_{\lam_{k-1}}$ 
and $D_{\ul{\lam}_j}:=D_{\lam_0}\cap \cdots \cap 
\wh{D}_{{\lam}_j}\cap \cdots \cap D_{\lam_{k-1}}$ for $k\geq 2$ 
and $D_{\ul{\lam}_0}:=X$ for $k=1$. 
Assume that
$D_{\ul{\lam}}\not= \emptyset$. 
Then $D_{\ul{\lam}}$
is a smooth divisor on $D_{\ul{\lam}_j}$ over $S_1$.
Let 
$\iota^{\ul{\lam}_j}_{\ul{\lam}} 
\col (D_{\ul{\lam}},Z\vert_{D_{\ul{\lam}}}) 
\os{\subset}{\lo} 
(D_{\ul{\lam}_j},Z\vert_{D_{\ul{\lam}_j}})$ 
be the closed immersion.
As in \cite[p.~81]{nh2}, 
we have the following orientation sheaves  
$$\vp_{\ul{\lam}{\rm conv}}(D/S;Z):=
\vp_{\lam_0 \cdots \lam_{k-1}{\rm conv}}(D/S;Z)$$ 
and
$$\vp_{\ul{\lam}_j{\rm conv}}(D/S;Z):=
\vp_{\lam_0 \cdots \wh{\lam}_j 
\cdots \lam_{k-1}{\rm conv}}(D/S;Z)$$
associated to the sets $\{D_{\lam_i}\}_{i=0}^k$ 
and $\{D_{\lam_i}\}_{i=0}^k\setminus 
\{D_{\lam_j}\}$, respectively.   
We denote by $(\lam_0\cdots \lam_{k-1})$ 
the local section giving a basis of 
$\vp_{\lam_0\cdots \lam_{k-1}{\rm conv}}(D/S;Z)$; 
we also denote by $(\lam_0\cdots \lam_{k-1})$ 
the local section giving a basis of 
$\vp_{\lam_0\cdots \lam_{k-1}{\rm zar}}(D/S)$. 
By (\ref{eqn:gdxz}) we have a morphism
\begin{align*}
G^{{\ul{\lam}_j}{\rm conv}}_{\ul{\lam}}:=
G^{\rm conv}_{D_{\ul{\lam}}/(D_{\ul{\lam}_j},Z\vert_{D_{\ul{\lam}_j}})} 
\col & \iota^{\ul{\lam}_j{\log}}_{\ul{\lam}{\rm conv}*}
({\cal K}_{(D_{\ul{\lam}},
Z\vert_{D_{\ul{\lam}}})/S}
\otimes_{\mab Z}\vp_{\lam_j{\rm conv}}(D/S;Z))
\tag{10.1.1}\label{eqn:glmoo} \\
{} & \lo 
{\cal K}_{(D_{\ul{\lam}_j},
Z\vert_{D_{\ul{\lam}_j}})/S}[2].
\end{align*}
We fix an isomorphism 
\begin{equation*}
\vp_{\lam_j{\rm conv}}(D/S;Z)
\otimes_{\mab Z}\vp_{\ul{\lam}_j{\rm conv}}(D/S;Z) 
\os{\sim}{\lo}\vp_{\ul{\lam}{\rm conv}}(D/S;Z)
\tag{10.1.2}\label{eqn:coxts}
\end{equation*}
by the following morphism
$$(\lam_j)\otimes (\lam_0\cdots  
\wh{\lam}_j \cdots \lam_{k-1})
\lom (-1)^j(\lam_0\cdots \lam_{k-1}).$$
We identify 
$\vp_{\lam_j{\rm conv}}(D/S;Z)\otimes_{\mab Z} 
\vp_{\ul{\lam}_j{\rm conv}}(D/S;Z)$ with 
$\vp_{\ul{\lam}{\rm conv}}(D/S;Z)$ 
by this isomorphism.
We also have the following composite morphism
\begin{equation*}
(-1)^jG^{{\ul{\lam}_j}{\rm conv}}_{\ul{\lam}}
\col \iota^{\ul{\lam}_j{\log}}_{\ul{\lam}{\rm conv}*}
({\cal K}_{(D_{\ul{\lam}},Z\vert_{D_{\ul{\lam}}})/S}
\otimes_{\mab Z}\vp_{\ul{\lam}{\rm conv}}(D/S;Z)) 
\os{\sim}{\lo} \tag{10.1.3}\label{eqn:m1gom}
\end{equation*} 
$$\iota^{\ul{\lam}_j{\log}}_{\ul{\lam}{\rm conv}*}
({\cal K}_{(D_{\ul{\lam}},Z\vert_{D_{\ul{\lam}}})/S}
\otimes_{\mab Z}
\vp_{\lam_j{\rm conv}}(D/S;Z)\otimes_{\mab Z} 
\vp_{\ul{\lam}_j{\rm conv}}(D/S;Z)) 
\os{G^{{\ul{\lam}_j}{\rm conv}}_{\ul{\lam}}\otimes 1}{\lo} $$
$${\cal K}_{(D_{\ul{\lam}_j},Z\vert_{D_{\ul{\lam}_j}})/S}
\otimes_{\mab Z}\vp_{\ul{\lam}_j{\rm conv}}(D/S;Z)[2]$$ 
defined by 
\begin{equation*}{\rm ``} x\otimes (\lam_0 \cdots  \lam_{k-1}) \lom 
(-1)^jG^{{\ul{\lam}_j}{\rm conv}}_{\ul{\lam}}(x)\otimes 
(\lam_0\cdots \wh{\lam}_j \cdots
\lam_{k-1}){\rm "}.
\tag{10.1.4}\label{eqn:ooolll}
\end{equation*}
The morphism (\ref{eqn:ooolll}) induces 
the following morphism 
of log convergent cohomologies:
\begin{equation*}
(-1)^j G^{{\ul{\lam}_j}{\rm conv}}_{\ul{\lam}} \col 
R^{h-k}
f^{\rm conv}_{(D_{\ul{\lam}},Z\vert_{D_{\ul{\lam}}})/S*}
({\cal K}_{(D_{\ul{\lam}},Z\vert_{D_{\ul{\lam}}})/S}
\otimes_{\mab Z}
\vp_{\ul{\lam}{\rm conv}}(D/S;Z))\lo 
\tag{10.1.5}\label{eqn:cohgxdz}
\end{equation*}
$$R^{h-k+2}
f^{\rm conv}_{(D_{\ul{\lam}_j},Z\vert_{D_{\ul{\lam}_j}})/S*}
({\cal K}_{(D_{\ul{\lam}_j},Z\vert_{D_{\ul{\lam}_j}})/S} 
\otimes_{\mab Z}\vp_{\ul{\lam}_j{\rm conv}}(D/S;Z)).$$ 
If $D_{\{\lam_0,  \ldots, \lam_{k-1}\}}= \emptyset$, set
$(-1)^jG^{{\ul{\lam}_j}{\rm conv}}_{\ul{\lam}}:=0$.
\par
Denote by $a_{\ul{\lam}}$ (resp.~$a_{\ul{\lam}_j}$)
the natural exact closed immersion 
$(D_{\ul{\lam}},Z\vert_{D_{\ul{\lam}}}) \os{\subset}{\lo} (X,Z)$
(resp.~$(D_{\ul{\lam}_j},Z\vert_{D_{\ul{\lam}_j}}) 
\os{\subset}{\lo} (X,Z)$).

\begin{prop}\label{prop:dbd} 
Let 
$d_1^{-k, h+k} \col 
E_1^{-k, h+k} \lo E_1^{-k+1, h+k}$ 
be the boundary
morphism of {\rm (\ref{ali:wtfapwt})}. 
Set 
$G^{\rm conv}:=\sum_{\{\lam_0,\ldots, \lam_{k-1}~\vert~\lam_i 
\not= \lam_j~(i\not= j)\}}
\sum_{j=0}^{k-1}(-1)^j
G^{{\ul{\lam}_j}{\rm conv}}_{\ul{\lam}}$.  
Then
$d_1^{-k, h+k} =-G^{\rm conv}$. 
\end{prop}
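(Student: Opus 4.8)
The strategy is to imitate the proof of the corresponding crystalline statement \cite[(2.8)]{nh2} step by step, since all the geometric input (the residue sequence (\ref{eqn:omresd}), the linearization machinery, the identification of the graded pieces in (\ref{eqn:grpwwd}) and (\ref{prop:grla})) has already been set up in the convergent setting in \S\ref{sec:lcs} and \S\ref{sec:wfcipp}. First I would reduce to a local computation on the simplicial formal scheme ${\cal P}^{\rm ex}_{\bul}$ together with its exactified residue filtration $P^{{\cal P}^{\rm ex}_{\bul}/{\cal Q}^{\rm ex}_{\bul}}$. The point is that the boundary morphism $d_1^{-k,h+k}$ is, by construction of the weight spectral sequence (\ref{ali:wtfapwt}) from the filtration $\{P^D_k C_{\rm conv}\}_k$, induced on cohomology by the connecting morphism of the short exact sequence
\begin{equation*}
0 \lo {\rm gr}^{P^D}_{k-1} \lo P^D_k/P^D_{k-2} \lo {\rm gr}^{P^D}_k \lo 0
\end{equation*}
of subquotients of $C_{\rm conv}({\cal K}_{(X,D\cup Z)/S})$. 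So the task is to compute this connecting morphism under the identifications furnished by the Poincar\'e residue isomorphisms (\ref{eqn:regrp})--(\ref{eqn:regarp}), and to show it agrees with the sum of (signed) log convergent Gysin morphisms $(-1)^j G^{\ul{\lam}_j {\rm conv}}_{\ul{\lam}}$ built in (\ref{eqn:glmoo})--(\ref{eqn:cohgxdz}).

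\textbf{Key steps.} (i) Recall from \S\ref{sec:bd} that $G^{\rm conv}_{D/(X,Z)}=-d$ where $d$ is the connecting morphism of the triangle (\ref{eqn:oealog}), which itself arises by applying $R\pi_{(X,Z)/S{\rm conv}*}$ to the short exact sequence (\ref{eqn:xzl}) of linearized log de Rham complexes; this exhibits the single-smooth-divisor Gysin morphism as a connecting map of exactly the same type. (ii) For a general relative SNCD $D$ with chosen decomposition $\{D_\lam\}_{\lam\in\Lam}$, one writes the graded piece ${\rm gr}^{P^D}_k$ as a direct sum over $k$-element subsets $\ul{\lam}$, via (\ref{eqn:grpwwd}) and the local residue computation in the proof of (\ref{prop:grla}), of the complexes $a^{(k)}_{{\rm conv}*}({\cal K}_{(D_{\ul{\lam}},Z\vert_{D_{\ul{\lam}}})/S}\otimes_{\mab Z}\vp_{\ul{\lam}{\rm conv}}(D/S;Z))[-k]$. (iii) Unwinding the boundary of the filtration by $P^D$ on the level of the local model $({\cal K}_{\os{\to}{T}}\otimes_{{\cal O}_{{\cal P}^{\rm ex}}}\Om^{\bul}_{{\cal P}^{\rm ex}/S},P^D)$, one sees that the component from the $\ul{\lam}$-summand of ${\rm gr}^{P^D}_k$ to the $\ul{\lam}_j$-summand of ${\rm gr}^{P^D}_{k-1}$ is, after matching residues, precisely the connecting morphism defining $G^{\ul{\lam}_j{\rm conv}}_{\ul{\lam}}$, up to the sign coming from the chosen identification (\ref{eqn:coxts}) of orientation sheaves; this reproduces the $(-1)^j$ and the overall minus sign. (iv) Finally, sum over all $\ul{\lam}$ and all $j$ to get $d_1^{-k,h+k}=-G^{\rm conv}$. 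Throughout one uses the compatibility (\ref{linc}) of $L^{\rm conv}$ with the closed immersions $\iota^{\ul{\lam}_j}_{\ul{\lam}}$, the Poincar\'e lemma (\ref{theo:pl}), and the cohomological descent along $\pi_{(X,Z)/S{\rm conv}}$, exactly as in the crystalline argument.

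\textbf{Main obstacle.} As in \cite[(2.8)]{nh2}, the genuinely nontrivial point is the sign bookkeeping: one must check that the Koszul-type signs in the residue morphism (\ref{eqn:regrp}), the signs in the convention (\ref{eqn:coxts}) relating $\vp_{\lam_j}\otimes\vp_{\ul{\lam}_j}$ to $\vp_{\ul{\lam}}$, the sign $-d$ in the definition (\ref{eqn:gdxz}) of the Gysin morphism, and the signs in the differential of the filtered complex all combine to give exactly $-G^{\rm conv}$ rather than $+G^{\rm conv}$ or some other sign pattern. This requires carefully fixing a sign convention (the paper refers to the conventions of \cite{nh2} and \cite{nh3}) and tracking it through the connecting morphism computation; since the computation is formally identical to the crystalline one and the geometric statements (\ref{prop:grla}), (\ref{theo:injf}), (\ref{linc}), (\ref{prop:rescos}) are already available in the convergent context, the proof is otherwise a routine transcription, and I would simply refer to \cite[(2.8)]{nh2} for the details as the excerpt already indicates it intends to do.
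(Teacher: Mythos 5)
Your proposal is correct and follows exactly the route the paper intends: the paper itself omits the proof of this proposition, stating at the start of \S10 that it is the same as that of \cite[(2.8)]{nh2}, and your outline (identifying $d_1$ with the connecting morphism of $0\to{\rm gr}^{P^D}_{k-1}\to P^D_k/P^D_{k-2}\to{\rm gr}^{P^D}_k\to 0$, decomposing the graded pieces over $k$-element subsets $\ul{\lam}$ via the residue isomorphisms, matching each component with $(-1)^jG^{\ul{\lam}_j{\rm conv}}_{\ul{\lam}}$, and tracking the sign conventions) is precisely that crystalline argument transcribed to the convergent setting. No discrepancy to report.
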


\section{Comparison theorem}\label{sec:ct}
In this section we compare 
the weight-filtered isozariskian complex 
of a certain $N$-truncated simplicial smooth scheme 
with a certain $N$-truncated simplicial relative SNCD
and the weight-filtered zariskian complex defined in 
\cite{nh2} and \cite{nh3}. 
First we consider the non-filtered case.  
The following has been proved in \cite[(6.1)]{nh3}:

\begin{lemm}[{\cite[(6.1)]{nh3}}]\label{lemm:disj}
Let $Y_{\bul}$ be 
a split simplicial fine log $($formal$)$ scheme 
over a fine log $($formal$)$ scheme $Y$ over 
a fine log $($formal$)$ scheme $T$.
Then there exists 
a fine split simplicial log $($formal$)$ scheme 
$Y'_{\bul}$ with a morphism $Y'_{\bul} \lo Y_{\bul}$ 
of simplicial log $($formal$)$ schemes over $Y$ 
satisfying the following conditions$:$
\medskip 
\parno
$(1)$ $Y'_m$ $(m\in {\mab N})$ is the disjoint 
union of affine log $($formal$)$ open subschemes 
which cover $Y_m$ and whose images in 
$Y$ are contained in affine log $($formal$)$ open subschemes of $Y$.
\medskip 
\parno  
$(2)$ If $\os{\circ}{Y}_m$ $(m\in {\mab N})$ 
is quasi-compact, then the 
number of the open subschemes in $(1)$ 
can be assumed to be finite.
\medskip
\parno
In particular, there exists a natural morphism
$Y_{\bul \bul} \lo Y_{\bul}$ over $T$  
by setting 
$Y_{mn}:=
{\rm cosk}_0^{Y_m}(Y_m')_n$.
Moreover, for each $n\in {\mab N}$, 
$Y_{\bul n}$ is split. 
\end{lemm}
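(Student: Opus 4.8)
The plan is to reduce the statement to a degree-by-degree construction of an affine ``semi-simplicial refinement'' of the splitting of $Y_\bullet$, and then to read off the two ``in particular'' assertions formally. First I would recall the bookkeeping of split simplicial objects: to give a split simplicial fine log (formal) scheme $Y_\bullet$ together with its splitting is the same as to give a semi-simplicial object $N_\bullet$ (its ``non-degenerate part''); one has $Y_m=\coprod_{[m]\twoheadrightarrow[q]}N_q$, the degeneracies act by pre-composing the indexing surjection $\varphi\colon[m]\twoheadrightarrow[q]$ with a codegeneracy, and the faces are assembled from the transition morphisms $\iota^{\ast}\colon N_q\to N_{q'}$ attached to the injections $\iota\colon[q']\hookrightarrow[q]$ together with the epi--mono factorizations of the $\varphi\circ\delta_i$; the augmentation $Y_\bullet\to Y$ corresponds to a compatible family of morphisms $g_q\colon N_q\to Y$. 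Hence it is enough to construct a semi-simplicial fine log (formal) scheme $N'_\bullet$ with a morphism $N'_\bullet\to N_\bullet$ lying over the $g_q$'s such that, for every $q$, the morphism $N'_q\to N_q$ is the disjoint union of affine open log subschemes covering $N_q$, each mapping into an affine open log subscheme of $Y$: the associated split simplicial object $Y'_\bullet$, with $Y'_m=\coprod_{[m]\twoheadrightarrow[q]}N'_q$, together with the induced morphism $Y'_\bullet\to Y_\bullet$ over $Y$, will then satisfy (1), and (2) will follow from a quasi-compactness count.

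Next I would build $N'_\bullet$ by induction on $q$, controlling the compatibility of the face operators by means of matching objects. For $q=0$, take $N'_0\to N_0$ to be the disjoint union of an affine open covering of $N_0$, refined so that each member maps into an affine open of $Y$ (pull back an affine open covering of $Y$ and refine by affines). Assuming $N'_{<q}\to N_{<q}$ constructed, let $M_q$ (resp.\ $M'_q$) be the $q$-th matching object of $N_\bullet$ (resp.\ of $N'_{<q}$), i.e.\ the limit over the proper injections $[q']\hookrightarrow[q]$ of the $N_{q'}$ (resp.\ $N'_{q'}$); the morphisms $N'_{q'}\to N_{q'}$ induce $M'_q\to M_q$. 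The key point to check is that, after refining by affines, $M'_q\to M_q$ is again (componentwise) a disjoint union of affine open subschemes covering $M_q$; granting this, its base change $P_q:=N_q\times_{M_q}M'_q\to N_q$ is a disjoint union of open subschemes covering $N_q$. One then takes $N'_q\to P_q$ to be the disjoint union of an affine open covering refining $P_q$ with each member mapping into an affine open of $Y$; the composite $N'_q\to P_q\to M'_q$ supplies the semi-simplicial extension (its being a morphism into $M'_q$ is precisely what forces the new face operators to compose correctly), while $N'_q\to P_q\to N_q$ is the required disjoint union of affine opens covering $N_q$. This proves (1). For (2), if $\os{\circ}{Y}_m$ is quasi-compact then so is $\os{\circ}{N}_q$ for every $q\le m$ (it is an open and closed piece of $Y_q$, and $Y_q$ is a retract of $Y_m$), hence so are $M_q$, $M'_q$, $P_q$; taking all the coverings in degrees $\le m$ finite, and since there are only finitely many surjections $[m]\twoheadrightarrow[q]$, $Y'_m$ becomes a finite disjoint union.

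Finally, for the ``in particular'' part: for fixed $m$, $n\mapsto{\rm cosk}_0^{Y_m}(Y'_m)_n$ is the \v{C}ech nerve of the covering $Y'_m\to Y_m$, and since $Y'_\bullet\to Y_\bullet$ is a morphism of simplicial schemes over $Y$ it is functorial in $m$ as well, giving a bisimplicial object $Y_{\bullet\bullet}$ with a morphism $Y_{\bullet\bullet}\to Y_\bullet$. Because $Y'_\bullet\to Y_\bullet$ respects the splittings (it is $N'_q\to N_q$ on the summand indexed by $\varphi$), a point of $Y'_m\times_{Y_m}\cdots\times_{Y_m}Y'_m$ lies over a single summand $N_q$ of $Y_m$; hence $Y_{mn}=\coprod_{[m]\twoheadrightarrow[q]}{\rm cosk}_0^{N_q}(N'_q)_n$, which exhibits $Y_{\bullet n}$, for each fixed $n$, as a split simplicial scheme with splitting $\{{\rm cosk}_0^{N_q}(N'_q)_n\}_q$. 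The main obstacle is the sub-claim used in the inductive step --- that forming the $q$-th matching object preserves the class ``disjoint union of affine open coverings'' (equivalently, that a common refinement of the pullbacks, along the finitely many face operators, of the coverings already chosen in lower degrees can again be chosen of this type), where one must refine intersections of affine opens by affine opens since the log formal schemes are not assumed separated. Everything else is formal manipulation with split simplicial structures; it is purely topological, the log structures being carried along by restriction.
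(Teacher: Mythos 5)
First, a point of reference: the paper does not prove this lemma at all — it is quoted from \cite[(6.1)]{nh3} with no argument reproduced — so your proposal has to stand on its own. It has two genuine gaps.

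The reduction in your opening paragraph rests on a false equivalence. A split simplicial object together with its splitting is \emph{not} the same datum as a semi-simplicial object $N_{\bullet}$: the face maps restricted to a non-degenerate summand $N_q\subset Y_q$ may land in \emph{degenerate} summands of $Y_{q-1}$ (already for split simplicial sets one can have a non-degenerate $2$-simplex $\sigma$ with $d_1\sigma=s_0(d_0d_0\sigma)$), so there are in general no transition morphisms $N_q\to N_{q'}$ attached to injections, and your description of the faces via epi--mono factorizations only captures a special class of split objects. This is not cosmetic: it makes your matching objects $M_q=\varprojlim N_{q'}$ undefined. The correct matching object is $M_qY=\varprojlim_{[q']\hookrightarrow[q],\,q'<q}Y_{q'}$, and the extension principle you invoke should read: giving $Y'_{\le q}$ split over $Y_{\le q}$ extending $Y'_{\le q-1}$ amounts to giving $N'_q$ together with a morphism to $N_q\times_{M_qY}M_qY'$.

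More seriously, the step you yourself flag as ``the key point'' and ``the main obstacle'' --- that $M_qY'\to M_qY$ is a disjoint union of open subschemes \emph{covering} $M_qY$, equivalently that $P_q:=N_q\times_{M_qY}M_qY'\to N_q$ is surjective --- is precisely the content of the lemma, so ``granting this'' is granting the result. Surjectivity means that every point $x$ of $N_q$ admits lifts of \emph{all} of its faces $\iota^{*}x\in Y_{q'}$ to $Y'_{q'}$ that are simultaneously compatible under all face and degeneracy operators. Each face can be lifted separately, but a chosen lift of one face forces, via the face maps of $Y'_{\bullet}$, the lifts of its own faces, and these forced choices must agree whenever two faces of $x$ share a common face; for an arbitrary system of coverings this fails. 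One has to produce the compatible family by an induction over the skeleton of $x$: write each face as $\iota^{*}x=s_{\iota}^{*}z_{\iota}$ with $z_{\iota}$ non-degenerate (Eilenberg--Zilber), lift the non-degenerate faces in increasing order of dimension using the inductive hypothesis that $N'_r\to P_r$ is a covering for $r<q$, take equal lifts for face-indices identified by the degeneracy structure of $x$, and propagate to the degenerate faces by applying degeneracies of $Y'_{\bullet}$. Until this (or a substitute) is written out, the construction may produce a $Y'_m$ that fails to cover $Y_m$, and condition $(1)$ is not established; the affine refinement, the finiteness count in $(2)$, and the splitting of $Y_{\bullet n}$ are indeed formal once it is.
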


\parno 
In [loc.~cit.] we have called 
the simplicial log (formal) scheme 
$Y'_{\bul}$ satisfying $(1)$ and $(2)$ in 
(\ref{lemm:disj}) the {\it disjoint union of the members of 
an affine simplicial open covering} of $Y_{\bul}/Y$; 
we have called the bisimplicial scheme 
$Y_{\bul \bul}$ in  (\ref{lemm:disj}) 
the {\it \v{C}ech diagram} of $Y'_{\bul}$
over $Y_{\bul}/Y$. 

\par 
The following has been proved in \cite[(6.3)]{nh3}: 
\begin{prop}[{\cite[(6.3)]{nh3}}]\label{prop:dissmp}
$(1)$ Let $Y_{\bul} \lo Z_{\bul}$ 
be a morphism of 
split simplicial fine log $($formal$)$ schemes 
over a morphism $Y\lo Z$ of  
fine log $($formal$)$ schemes 
over a fine log $($formal$)$ scheme $T$. 
Then there exist the disjoint unions of 
the members of affine simplicial open coverings
$Y'_{\bul}$ and $Z'_{\bul}$
of $Y_{\bul}/Y$ and $Z_{\bul}/Z$, 
respectively, which fit into 
the following commutative 
diagram$:$
\begin{equation*}
\begin{CD}
Y'_{\bul} @>>> Z'_{\bul}  \\
@VVV @VVV  \\
Y_{\bul} @>>> Z_{\bul}.
\end{CD}
\tag{11.2.1}\label{cd:cechbs} 
\end{equation*}  
\par
$(2)$  Let $Y_{\bul} \lo Y\lo T$ be as in 
{\rm (\ref{lemm:disj})}. 
Let $Y'_{\bul}$ and $Y''_{\bul}$ 
be two disjoint unions of the members of 
affine simplicial open coverings of $Y_{\bul}/Y$. 
Then there exists a disjoint union of 
the members of 
an affine simplicial open covering $Y'''_{\bul}$
of $Y_{\bul}/Y$ 
fitting into the following 
commutative diagram$:$
\begin{equation*}
\begin{CD}
Y'''_{\bul} @>>> Y''_{\bul}\\
@VVV @VVV  \\
Y'_{\bul} @>>> Y_{\bul}.
\end{CD}
\tag{11.2.2}\label{cd:ceccov}
\end{equation*} 
\end{prop}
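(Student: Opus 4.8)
The plan is to observe that the inductive, degree-by-degree construction underlying (\ref{lemm:disj}) is flexible enough to accommodate a prescribed refinement condition at each simplicial degree, and then to apply this observation twice. Recall that in that construction one uses the split structure to decompose $Y_m$ over its matching object, so that producing the refinement amounts to choosing, inductively on $m$, an affine open covering of the non-degenerate locus of degree $m$ which is compatible (via the coskeleton-type gluing used in the proof of (\ref{lemm:disj})) with the truncated simplicial log (formal) scheme already built in degrees $<m$. The elementary point that makes everything work is that any finite family of open coverings of a quasi-compact scheme admits a common refinement by affine open subschemes (pass to the common refinement and cover each of its members by affine opens), so one may impose any further finite set of refinement requirements at each stage without obstruction; this is also where the finiteness clause (\ref{lemm:disj})(2) enters.

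For (1), first apply (\ref{lemm:disj}) to $Z_{\bul}/Z/T$ to produce a disjoint union of the members of an affine simplicial open covering $Z'_{\bul}\lo Z_{\bul}$. One cannot merely base-change $Z'_{\bul}$ along $Y_{\bul}\lo Z_{\bul}$, since $Y_{\bul}\times_{Z_{\bul}}Z'_{\bul}$ is neither split over $Y_{\bul}$ nor affine in each degree, and its components need not have images contained in affine open subschemes of $Y$. Instead, re-run the construction of (\ref{lemm:disj}) on $Y_{\bul}/Y/T$, imposing additionally at degree $m$ that the chosen affine open covering of the non-degenerate locus refine the covering induced through $Y_m\lo Z_m$ by the $m$-th part of $Z'_{\bul}$; by the remark above this is always possible, simultaneously with the condition that images in $Y$ lie in affine opens. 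The coskeleton-type gluing then yields split simplicial log (formal) schemes $Y'_{\bul}$ and $Z'_{\bul}$ together with a morphism $Y'_{\bul}\lo Z'_{\bul}$ over $Y_{\bul}\lo Z_{\bul}$ (assembled from the compatible maps of non-degenerate parts by functoriality of the gluing), so that (\ref{cd:cechbs}) commutes, while conditions (1) and (2) of (\ref{lemm:disj}) hold for both terms by construction.

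For (2), apply the same flexible construction to $Y_{\bul}/Y/T$, this time requiring at degree $m$ that the affine open covering of the non-degenerate locus refine \emph{both} the $m$-th parts of $Y'_{\bul}$ and of $Y''_{\bul}$ (pulled back along the given morphisms $Y'_{m}\lo Y_{m}$ and $Y''_{m}\lo Y_{m}$); again a common refinement exists. The resulting split $Y'''_{\bul}$ comes with morphisms $Y'''_{\bul}\lo Y'_{\bul}$ and $Y'''_{\bul}\lo Y''_{\bul}$ over $Y_{\bul}$, whence the commutative square (\ref{cd:ceccov}), and conditions (1), (2) of (\ref{lemm:disj}) again hold by construction. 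I expect the main obstacle throughout — and the only genuinely nontrivial point, which is exactly the content of \cite[\S6]{nh3} — to be the verification that the extra refinement requirements are compatible with the matching-object (coskeleton) constraint forced by the split structure at each inductive step, together with the bookkeeping that keeps every open subscheme arising in the process both affine and open in the relevant $Y_m$ and with image in $Y$ inside an affine open subscheme; granting (\ref{lemm:disj}) this is routine but not formal.
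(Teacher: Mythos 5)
The paper itself gives no proof here — the proposition is quoted from \cite[(6.3)]{nh3} — but the argument in that reference is exactly what you propose: rerun the inductive construction of (\ref{lemm:disj}), requiring the affine open covering chosen at each simplicial degree to refine the coverings pulled back from $Z'_{\bul}$ (for (1)) or from both $Y'_{\bul}$ and $Y''_{\bul}$ (for (2)). Your proposal is therefore correct and takes essentially the same approach as the cited source; the one point you rightly flag as needing care is that the refinement choices must be made coherently with the face and degeneracy maps so that the arrows in (\ref{cd:cechbs}) and (\ref{cd:ceccov}) are genuinely simplicial.
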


\begin{rema}\label{rema:trhd} 
Let $N$ be a nonnegative integer. 
The $N$-truncated versions of (\ref{lemm:disj})
and (\ref{prop:dissmp}) also hold. 
\end{rema}

\par 
Let ${\cal V}$, $\pi$, $S$ and $S_1$ 
be as in \S\ref{sec:logcd}. 
Assume that $\pi{\cal V}=p{\cal V}$. 
Let $f \col Y_{\bul \leq N}\lo S_1$ be a fine 
$N$-truncated simplicial log scheme over $S_1$.  
Then we have the log crystalline topos 
$({Y_{\bul \leq N}/S})_{\rm crys}$ of $Y_{\bul \leq N}/(S,p{\cal O}_S,[~])$, 
where $[~]$ is the canonical PD-structure on 
$p{\cal O}_S$. 
Let 
$$u^{\rm crys}_{Y_{\bul \leq N}/S} \col 
(({Y_{\bul \leq N}/S})_{\rm crys},{\cal O}_{Y_{\bul \leq N}/S}) 
\lo  
((Y_{\bul \leq N})_{\rm zar},f^{-1}({\cal O}_S))$$ 
be the canonical projection.

\begin{theo}[{\bf Comparison theorem}]\label{theo:hct} 
Assume that $\pi{\cal V}=p{\cal V}$. 
Let $N$ be a nonnegative integer. Let $Y_{\bul \leq N}$ be 
a log smooth 
$N$-truncated simplicial log scheme over $S_1$ 
which has the disjoint union $Y'_{\bul \leq N}$ 
of the members of an affine $N$-truncated simplicial 
open covering of $Y_{\bul \leq N}$. 
Denote by 
\begin{equation*}
\Xi_{\bul \leq N} \col {\rm Isoc}_{\rm conv}(Y_{\bul \leq N}/S)
(:={\rm Isoc}_{\rm conv, zar}(Y_{\bul \leq N}/S)) 
\lo {\rm Isoc}_{\rm crys}(Y_{\bul \leq N}/S)
\tag{11.4.1}\label{eqn:icym}
\end{equation*} 
the relative simplicial version of 
the functor defined in {\rm \cite[Theorem 5.3.1]{s1}}
and denoted by $\Phi$ in $[${\rm loc.~cit.}$]$. 
Let $E^{\bul \leq N}$ be an object of 
${\rm Isoc}_{\rm conv}(Y_{\bul \leq N}/S)$. 
Then there exists a functorial isomorphism 
\begin{equation*}
Ru^{\rm conv}_{Y_{\bul \leq N}/S*}(E^{\bul \leq N})
\os{\sim}{\lo}
Ru^{\rm crys}_{Y_{\bul \leq N}/S*}(\Xi_{\bul \leq N}(E^{\bul \leq N})).
\tag{11.4.2}\label{eqn:kymc}  
\end{equation*} 
\end{theo}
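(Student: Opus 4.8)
The plan is to reduce the $N$-truncated simplicial comparison isomorphism to the comparison isomorphism for a single log smooth scheme, which is the content of \cite[Theorem 5.3.1]{s1}, via cohomological descent. First I would fix a disjoint union $Y'_{\bul\leq N}$ of the members of an affine $N$-truncated simplicial open covering of $Y_{\bul\leq N}$, whose existence is assumed in the statement (and guaranteed by the truncated version of (\ref{lemm:disj}), see (\ref{rema:trhd})), and form the \v{C}ech diagram $Y_{\bul\bul\leq N}$ with $Y_{mn}:={\rm cosk}_0^{Y_m}(Y'_m)_n$. For each fixed $m\leq N$, the simplicial scheme $Y_{m\bul}$ is split and, crucially, each $Y_{mn}$ is affine log smooth over $S_1$; so on the $(m,n)$-component the comparison isomorphism $Ru^{\rm conv}_{Y_{mn}/S*}(E^{mn})\os{\sim}{\lo} Ru^{\rm crys}_{Y_{mn}/S*}(\Xi(E^{mn}))$ holds by \cite[Theorem 5.3.1]{s1}. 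These isomorphisms are compatible with the transition morphisms of the bisimplicial scheme because $\Xi$ is functorial and both linearization/projection formalisms are compatible with pullback along morphisms of log schemes (this compatibility is exactly the kind of statement established in \S\ref{sec:llf} on the convergent side and is standard on the crystalline side).

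Next I would pass from the bisimplicial level to the simplicial level by taking $R\eta_{{\rm conv}*}$ (resp. its crystalline analogue) componentwise, i.e. for each $m$ apply the spectral sequence / cohomological descent for the \v{C}ech diagram $Y_{m\bul}\lo Y_m$. Since $Y'_{m\bul}$ is an affine simplicial open covering of $Y_m$, the natural morphism $Ru^{\bullet}_{Y_m/S*}(F)\lo R\eta_{m*}Ru^{\bullet}_{Y_{m\bul}/S*}(\eta_m^*F)$ is an isomorphism for both the convergent and the crystalline theory; combined with the $(m,n)$-level comparison above and the commutativity of the relevant squares of topoi, this yields the comparison isomorphism on each $Y_m$, i.e. $Ru^{\rm conv}_{Y_m/S*}(E^m)\os{\sim}{\lo} Ru^{\rm crys}_{Y_m/S*}(\Xi(E^m))$, functorially in $m\leq N$. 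Finally, assembling these over the $N$-truncated simplicial index category via $R\pi_{\rm conv*}$ (resp. $R\pi_{\rm crys*}$) and using that both sides compute the direct image along $Y_{\bul\leq N}\lo S_1$, one gets the desired functorial isomorphism (\ref{eqn:kymc}). For the functoriality assertion, one checks that all the isomorphisms introduced are natural in $E^{\bul\leq N}$ and compatible with morphisms of truncated simplicial log schemes, which follows from the functoriality of $\Xi_{\bul\leq N}$ and of the two cohomological descent spectral sequences; the independence of the chosen affine covering follows from (\ref{prop:dissmp}) (truncated version) by the usual argument comparing two coverings through a common refinement.

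The main obstacle I expect is not the descent bookkeeping but ensuring that the single-scheme comparison of \cite[Theorem 5.3.1]{s1} is applied in a way that is genuinely functorial for all morphisms appearing in the \v{C}ech bidiagram — in particular that the isomorphism $Ru^{\rm conv}_{Y_{mn}/S*}\os{\sim}{\lo} Ru^{\rm crys}_{Y_{mn}/S*}\circ\Xi$ is compatible with restriction to open subschemes and with the simplicial face and degeneracy maps. On the convergent side this rests on the Poincar\'e lemma (\ref{theo:pl}) and the computation (\ref{prop:cdfza}) of $Ru^{\rm conv}_{Y/S*}$ via the de Rham complex on the universal enlargement, together with the compatibility of linearization functors with closed immersions (\S\ref{sec:llf}); one must verify these are strictly natural, not merely natural up to a sign or up to a non-canonical choice, so that they glue along the bisimplicial diagram. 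A secondary, more technical point is that $\Xi_{\bul\leq N}$ must be shown to exist and be functorial at the level of $N$-truncated simplicial isocrystals, i.e. one needs the relative simplicial version of \cite[Theorem 5.3.1]{s1} stated in (\ref{eqn:icym}); granting that (as the statement does), the rest of the argument is the routine cohomological descent sketched above.
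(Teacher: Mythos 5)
Your proposal correctly identifies the crux of the matter in its last paragraph, but it does not resolve it, and the resolution is precisely where the content of the proof lies. You propose to invoke the single-scheme comparison isomorphism of \cite[Theorem 5.3.1]{s1} on each component $Y_{mn}$ of the \v{C}ech bidiagram and then ``assemble'' these isomorphisms over the truncated simplicial index category by cohomological descent. The problem is that the comparison isomorphism for a single scheme is constructed via a \emph{choice} of embedding into a formally log smooth formal scheme, and the resulting isomorphisms in the derived category do not automatically glue along the face and degeneracy maps of the (bi)simplicial scheme: isomorphisms in derived categories over a diagram cannot be assembled into an isomorphism of the diagram unless they are realized compatibly at the level of complexes. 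Asserting that ``$\Xi$ is functorial and both linearization formalisms are compatible with pullback'' does not supply this, because the needed compatibility is between comparison morphisms built from \emph{different} auxiliary embeddings on different components.

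What the paper does instead is construct a single global morphism of complexes at the bisimplicial level before passing to the derived category. The key device is Tsuzuki's functor $\Gamma^{\cal C}_N$: starting from one closed immersion $Y'_N\os{\sus}{\lo}{\cal Y}'(N)$ of the top-degree affine piece, it produces a compatible $N$-truncated simplicial immersion $Y'_{\bul\leq N}\os{\sus}{\lo}\Gam{\cal Y}'(N)_{\bul\leq N}$, hence (after taking $\mathrm{cosk}_0^S$) an immersion of the whole \v{C}ech bidiagram $Y_{\bul\leq N,\bul}\os{\sus}{\lo}\Gam{\cal Y}'(N)_{\bul\leq N,\bul}$. With this one system of embeddings in hand, the canonical morphisms ${\mathfrak D}(N)_{lm}\lo({\mathfrak T}(N)_{lm})_k$ from the log PD-envelopes to the universal enlargements (which exist because $a^p=p!\,a^{[p]}$ on the PD-ideal) induce an honest morphism of bisimplicial de Rham complexes (11.4.8)--(11.4.9), and via (\ref{eqn:uybo}), its crystalline analogue and cohomological descent this becomes the morphism (\ref{eqn:kymc}). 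Only \emph{after} the morphism is constructed does one check it is an isomorphism by restricting to each $Y_m$ and invoking the proof of \cite[Theorem 3.1.1]{s2}; the independence of the covering and the functoriality are then handled by comparing two coverings/embeddings through a common refinement, again using Tsuzuki's functor. Without this (or an equivalent mechanism for producing a globally compatible system of embeddings and a morphism of complexes), your descent argument does not go through.
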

\begin{proof}  
Since $Y'_N$ is affine, 
we have a closed immersion 
$Y'_N \os{\sus}{\lo} {\cal Y}'(N)$ into 
a formally log smooth log noetherian formal scheme over $S$. 
To construct a natural morphism 
\begin{equation*}
Ru^{\rm conv}_{Y_{\bul \leq N}/S*}(E^{\bul \leq N}) \lo
Ru^{\rm crys}_{Y_{\bul \leq N}/S*}(\Xi_{\bul \leq N}(E^{\bul \leq N})),
\tag{11.4.3}\label{eqn:ntkymc}  
\end{equation*} 
we need Tsuzuki's functor 
(\cite[\S11]{ctze}, \cite[(7.3.1)]{tzcp}) 
as follows. 
\par
Let $\Del$ be the standard simplicial 
category; an object of $\Del$ is denoted by 
$[m]:=\{0, \ldots, m\}$ $(m\in {\mab N})$. 
Let ${\cal C}$ be a category which has 
finite inverse limits.   
Recall a simplicial object 
$\Gam:=\Gam^{\cal C}_{n}(X):=
\Gam^{\cal C}_{n}(X)^{\leq n}$ in 
${\cal C}$
for an object $X\in {\cal C}$ and 
$n\in {\mab N}$
(\cite[(7.3.1)]{tzcp}):
for an object $[m]$ of $\Del$, set 
$$\Gam_{{}{m}}:=\prod_{\del \in {\rm Hom}_{\Del}([n],[m])}X_{\del}$$ 
with $X_{\del}=X$; 
for a morphism $\al \col [m'] \lo [m]$ 
in $\Del$, $\al_{\Gam} \col \Gam_{m} \lo 
\Gam_{m'}$ is defined to be the following:
``$(c_\gam) \lom (d_{\bet})$'' with $d_{\bet}=c_{\al \bet}$. 
In fact, we have a functor
\begin{equation*}
\Gam^{\cal C}_N(?) \col {\cal C} \lo 
{\cal C}^{\Del}:=
\{\text{simplicial objects in ${\cal C}$}\}.
\tag{11.4.4}
\end{equation*} 
\par 
Let $X_{\bul \leq N}$ be 
an $N$-truncated simplicial object of ${\cal C}$ and 
let $f \col X_N \lo Y$ be a morphism in ${\cal C}$.
Then we have a morphism
\begin{equation*}
X_{\bul \leq N} \lo 
\Gam^{\cal C}_{{}{N}}(Y)_{\bul \leq N}
\tag{11.4.5}\label{eqn:cgam}
\end{equation*}
defined by the following commutative diagram:
\begin{equation*}
\begin{CD}
X_m @>>> \prod_{\del \in {\rm Hom}_{\Del}([N],[m])}Y_{\del}\\ 
@V{X(\gam)}VV @VV{\rm proj.}V \\
X_N @>{f}>> Y_{\gam}=Y
\end{CD}
\tag{11.4.6}\label{cd:embdf}
\end{equation*} 
for any morphism $\gam \col [N]\lo [m]$ in $\Del$. 
\par 
Let ${\cal C}$ be the category of fine log formal schemes over $S$. 
Set 
$\Gam{\cal Y}'(N)_{\bul}:=
\Gam^{\cal C}_N({\cal Y}'(N))$. 
Then we have an immersion 
$Y'_{\bul \leq N} \os{\sus}{\lo} \Gam{\cal Y}'(N)_{\bul \leq N}$. 
Set  $Y_{lm}:={\rm cosk}_0^{Y_l}(Y'_l)_m$ 
$(l,m\in{\mab N})$ 
and   
$\Gam{\cal Y}'(N)_{lm}:=
{\rm cosk}_0^S(\Gam{\cal Y}'(N)_l)_m$ 
$(l, m\in {\mab N})$. 
Then we have an immersion 
$Y_{\bul \leq N,\bul} \os{\sus}{\lo} 
\Gam{\cal Y}'(N)_{\bul \leq N,\bul}$. 
Let 
${\mathfrak T}(N)_{\bul \leq N,\bul}=\{({\mathfrak T}(N)_{\bul \leq N,\bul})_k\}_{k=1}^{\infty}$ 
(resp.~${\mathfrak D}(N)_{\bul \leq N,\bul}$) 
be the system of the universal enlargements 
(resp.~the log PD-envelopes) of this immersion. 
Because the ideal of definition of 
$Y_{lm} \os{\sus}{\lo} \Gam{\cal Y}'(N)_{lm}$ for 
each $0\leq l \leq N$ and  each $m\in {\mab N}$ 
is locally finitely generated and because 
$a^p=p! a^{[p]}$ for a local section $a$ of the PD-ideal of 
${\mathfrak D}(N)_{l m}$, we have a natural morphism 
\begin{equation*}
(g_{lm})_k \col 
{\mathfrak D}(N)_{l m} \lo ({\mathfrak T}(N)_{lm})_k \quad 
\tag{11.4.7}\label{eqn:dyt}
\end{equation*}
for some nonnegative integer $k$ by the universality of ${\mathfrak T}(N)_{lm}$
(cf.~\cite[(7.6)]{oc}, the proof of \cite[Theorem 5.3.1]{s1}). 
For any nonnegativer integer $k'\geq k$, the morphisms 
$(g_{lm})_{k'}$ and $(g_{lm})_k$ are compatible in the obvious sense.  
Let $f(N)_{\bul \leq N,\bul}$ be 
the structural morphism $\Gam{\cal Y}'(N)_{\bul \leq N,\bul}\lo S$. 
Let $({\cal E}_{\os{\to}{{\mathfrak T}(N)}_{\bul \leq N,\bul}},\nabla_T)$ 
(resp.~
$({\cal E}_{{\mathfrak D(N)}_{\bul \leq N,\bul}},\nabla_D)
\otimes_{\mab Z}{\mab Q}$)
be the coherent crystal of 
${\cal K}_{\os{\to}{{\mathfrak T}(N)}_{\bul \leq N,\bul}}$-modules 
(resp.~the coherent 
${\cal O}_{{\mathfrak D(N)}_{\bul \leq N,\bul}}
\otimes_{\mab Z}{\mab Q}$-modules) 
with integrable connection corresponding to 
$E^{\bul \leq N,\bul}$ 
(resp.~$\Xi_{\bul \leq N,\bul}(E^{\bul \leq N,\bul})$). 
The morphism (\ref{eqn:dyt}) induces the following morphism 
\begin{equation*}
{\cal E}_{({\mathfrak T}(N)_{lm})_k}
\otimes_{{\cal O}_{\Gam{\cal Y}'(N)_{lm}}}
\Om^{\bul}_{\Gam{\cal Y}'(N)_{lm}/S}
\lo \tag{11.4.8}\label{eqn:toyuml}
\end{equation*} 
$$(g_{lm})_{k*}
({\cal E}_{{\mathfrak D}(N)_{lm}})
\otimes_{{\cal O}_{\Gam{\cal Y}'(N)_{lm}}}
\Om^{\bul}_{\Gam{\cal Y}'(N)_{lm}/S}$$  
(See also \cite[Theorem 7.7]{oc}.) 
Let 
$\eta \col 
(({Y}_{\bul \leq N, \bul})_{\rm zar},f^{-1}_{\bul \leq N,\bul}({\cal O}_S)) 
\lo 
(({Y}_{\bul \leq N})_{\rm zar},f^{-1}_{\bul \leq N}({\cal O}_S))$ 
be the natural morphism of ringed topoi, 
where $f_{\bul \leq N,\bul} \col Y_{\bul \leq N,\bul} \lo S$ 
and $f_{\bul \leq N} \col Y_{\bul \leq N} \lo S$
are the structural morphisms. 
Set $(U(N)_{l\bul})_k:=
({\mathfrak T}(N)_{l\bul})_k
\times_{\Gam{\cal Y}'(N)_{l\bul}}Y_{l\bul}$ 
with natural morphism 
$(\bet_{l\bul})_k \col (U(N)_{l\bul})_k\lo Y_{l\bul}$. 
Identify ${{\mathfrak T}(N)_{l\bul}}_{\rm zar}$ with 
${U(N)_{l\bul}}_{\rm zar}$. 
The morphism (\ref{eqn:toyuml}) induces the following morphism 
\begin{equation*}
R\eta_*
\vpl_kR(\bet_{\bul \leq N,\bul})_{k*}
({\cal E}_{({\mathfrak T}(N)_{\bul \leq N,\bul})_k}
\otimes_{{\cal O}_{\Gam{\cal Y}'(N)_{\bul \leq N,\bul}}}
\Om^{\bul}_{\Gam{\cal Y}'(N)_{\bul \leq N,\bul}/S}
\otimes_{\mab Z}{\mab Q})
\lo 
\tag{11.4.9}\label{eqn:bbyuml}
\end{equation*}
$$R\eta_{*}
({\cal E}_{{\mathfrak D}(N)_{\bul \leq N,\bul}}
\otimes_{{\cal O}_{\Gam{\cal Y}'(N)_{\bul \leq N,\bul}}}
\Om^{\bul}_{\Gam{\cal Y}'(N)_{\bul \leq N,\bul}/S}
\otimes_{\mab Z}{\mab Q})  
$$
in $D^+(f^{-1}_{\bul \leq N}({\cal K}_S))$. 
By (\ref{eqn:uybo}), the crystalline analogue of 
(\ref{eqn:uybo}) (\cite[(6.4)]{klog1}) 
and the cohomological descent, 
the morphism (\ref{eqn:bbyuml}) 
turns out to be the following morphism 
\begin{equation*}
Ru^{\rm conv}_{Y_{\bul \leq N}/S*}(E^{\bul \leq N}) 
\lo
Ru^{\rm crys}_{Y_{\bul \leq N}/S*}
(\Xi_{\bul \leq N}(E^{\bul \leq N})).
\tag{11.4.10}\label{eqn:uyms}
\end{equation*} 
\par 
We claim that the morphism (\ref{eqn:uyms}) 
is independent of the choice of 
an affine simplicial open covering of $Y_{\bul \leq N}/S$. 
Indeed, let $Y''_{\bul \leq N}$ be another affine simplicial open covering of $Y_{\bul \leq N}/S$. 
Then there exists an affine simplicial open covering 
$Y'''_{\bul \leq N}$ of $Y_{\bul \leq N}/S$ in (\ref{cd:ceccov}). 
Therefore we may assume that there exists a morphism 
$Y'_{\bul \leq N}\lo Y''_{\bul \leq N}$ over $Y_{\bul \leq N}$. 
Then the rest of the proof of 
the independence is a routine work. 
\par 
By the proof of \cite[Theorem (3.1.1)]{s2} 
(see also the proof of \cite[Proposition 1.9]{bfi}), 
we see that the following morphism 
\begin{equation*}
Ru^{\rm conv}_{Y_m/S*}(E^m) \lo
Ru^{\rm crys}_{Y_m/S*}(\Xi_m(E^m)) \quad (m\leq N)
\tag{11.4.11}\label{eqn:umxs}
\end{equation*} 
is an isomorphism. Consequently 
the morphism (\ref{eqn:uyms}) is an isomorphism. 
\par 
Finally we prove the functoriality of the morphism 
(\ref{eqn:kymc}). 
Let 
\begin{equation*} 
\begin{CD} 
Y_{\bul \leq N} @>{g}>> Z_{\bul \leq N} \\ 
@VVV @VVV \\ 
S_1 @>>> S'_1 \\ 
@V{\bigcap}VV @VV{\bigcap}V \\ 
S @>>> S' 
\end{CD}
\end{equation*}
be a commutative diagram, where 
$S'$ is a fine log flat $p$-adic formal ${\cal V}'$-scheme, 
where ${\cal V}'$ is a complete discrete valuation ring 
of mixed characteristics and $S'_1$ 
is a closed log subscheme 
defined by the ideal sheaf $p{\cal O}_{S'}$.  
Then there exists the obvious $N$-truncated version of 
the commutative diagram (\ref{cd:cechbs}). 
For any nonnegative integer $n$, 
we also have the following commutative diagram 
\begin{equation*} 
\begin{CD} 
Y'_N @>{\subset}>> {\cal Y}'(N) \\
@VVV @VVV \\ 
Z'_N @>{\subset}>> {\cal Z}'(N), 
\end{CD} 
\end{equation*}
where the horizontal morphisms are closed immersions 
into formally log smooth log noetherian formal schemes over $S$ and $S'$. 
Let $F^{\bul \leq N}$ be an object of  
${\rm Isoc}_{\rm conv}(Z_{\bul \leq N}/S)$ with a morphism 
$g^*(F^{\bul \leq N}) \lo E^{\bul \leq N}$ in 
${\rm Isoc}_{\rm conv}(Y_{\bul \leq N}/S)$. 
Using Tsuzuki's functor, 
we have the following commutative diagram 
\begin{equation*}
\begin{CD} 
Rg_*Ru^{\rm conv}_{Y_{\bul \leq N}/S*}(E^{\bul \leq N}) @>{\sim}>> 
Rg_*
Ru^{\rm crys}_{Y_{\bul \leq N}/S*}(\Xi_{\bul \leq N}(E^{\bul \leq N})) \\ 
@AAA @AAA \\ 
Ru^{\rm conv}_{Z_{\bul \leq N}/S'*}(F^{\bul \leq N}) @>{\sim}>> 
Ru^{\rm crys}_{Z_{\bul \leq N}/S'*}(\Xi_{\bul \leq N}(F^{\bul \leq N})).   
\end{CD} 
\tag{11.4.12}\label{eqn:funuyz}
\end{equation*} 
We complete the proof of (\ref{theo:hct}). 
\end{proof}

\begin{rema}\label{rema:nth} 
(\ref{theo:hct}) is an $N$-truncated simplicial 
and relative sheafied version of Shiho's comparison theorem 
(\cite[Theorem 3.1.1]{s2}).  
See also \cite[Theorem 2.3.6]{s3} 
for the relative version of [loc.~cit.].  
\end{rema}


The following is a generalization of (\cite[(7.9)]{oc}) and 
(\cite[Corollary 3.1.1]{s2}).  
\begin{coro}\label{coro:pf} 
$($We do not assume that $\pi{\cal V}=p{\cal V}$ in this corollary.$)$ 
Let $r$ be a positive integer. 
Set $\ul{\bul}_r:=(\bul \cdots \bul)$ $(r$-points$)$.  
Let $f_{\ul{\bul}_r}\col Y_{\ul{\bul}_r} \lo S_1$ 
be a log smooth $r$-simplicial fine scheme over $S_1$.   
Assume that 
$\os{\circ}{f}_{\ul{\bul}_r} \col \os{\circ}{Y}_{\ul{\bul}_r} \lo 
\os{\circ}{S}_1$ 
is proper. 
Let $\ul{N}_r:=(N_1, \ldots, N_r)$ be an element of 
${\mab N}^r$. Endow ${\mab N}^r$ with the following order 
induced by the order of ${\mab N}:$ 
$(m_1,\ldots,m_r) \leq (n_1,\ldots,n_r) 
\us{\rm def.}{\Longleftrightarrow} 
m_i \leq n_i$ $(1\leq \forall i \leq r)$.  
Let $E^{\ul{\bul}_r}$ be an object of 
${\rm Isoc}_{\rm conv}(Y_{\ul{\bul}_r}/S)$. 
Set 
$f^{\rm conv}_{Y_{\ul{\bul}_r \leq \ul{N}_r}/S}:=
f_{\ul{\bul}_r \leq \ul{N}_r}\circ 
u^{\rm conv}_{Y_{\ul{\bul}_r \leq \ul{N}_r}/S}$. 
Then 
$Rf^{\rm conv}_{Y_{\ul{\bul}_r \leq \ul{N}_r}/S*}
(E^{\ul{\bul}_r\leq \ul{N}_r})$ 
is quasi-isomorphic to a strictly perfect complex of 
${\cal K}_S$-modules. 
\end{coro}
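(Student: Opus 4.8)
The plan is to reduce the statement to the case of a single log scheme, where it is \cite[(7.9)]{oc} (resp.~\cite[Corollary 3.1.1]{s2}), by means of the cohomological descent for truncated (multi)simplicial log convergent topoi together with the elementary fact that the class of strictly perfect complexes of ${\cal K}_S$-modules is stable under finite iterated mapping cones.

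First I would treat the case $r=1$. Write $\ul{N}_1=N$ and consider the $N$-truncated simplicial log scheme $Y_{\bul \leq N}$ over $S_1$, with structural morphism $f_{\bul \leq N}$. By the (filtered) cohomological descent for truncated simplicial ringed topoi (cf.~\S\ref{sec:rlct} and \cite[\S1]{nh2}), choosing compatible $u^{\rm conv}_{Y_s/S*}$-acyclic resolutions --- for instance via the log convergent linearization functor, as in (\ref{prop:cdfza}) --- the object $Rf^{\rm conv}_{Y_{\bul \leq N}/S*}(E^{\bul \leq N})$ is represented by the total (single) complex of the double complex whose $s$-th column $(0\leq s\leq N)$ is a representative of $Rf^{\rm conv}_{Y_s/S*}(E^s)$ and whose horizontal differentials are the alternating sums of the maps induced by the coface morphisms. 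Here $E^s$ denotes the $s$-component of $E^{\bul \leq N}$, which is a convergent isocrystal on $Y_s/S$ by the definition of an object of ${\rm Isoc}_{\rm conv}(Y_{\bul \leq N}/S)$, and $\os{\circ}{f}_s \col \os{\circ}{Y}_s \lo \os{\circ}{S}_1$ is proper because $\os{\circ}{f}_{\bul \leq N}$ is.

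Next, for each $s$ with $0\leq s\leq N$, the complex $Rf^{\rm conv}_{Y_s/S*}(E^s)$ is strictly perfect over ${\cal K}_S$ by \cite[(7.9)]{oc} (resp.~\cite[Corollary 3.1.1]{s2}); replace it by a bounded complex of finite locally free ${\cal K}_S$-modules. The double complex above is then concentrated in finitely many columns (because of the truncation) and each column is a bounded complex of finite locally free ${\cal K}_S$-modules, so its total complex is again a bounded complex of finite locally free ${\cal K}_S$-modules, i.e.~a strictly perfect complex. Since this total complex is quasi-isomorphic to $Rf^{\rm conv}_{Y_{\bul \leq N}/S*}(E^{\bul \leq N})$, the case $r=1$ is settled. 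For general $r$ I would induct on $r$: regarding $Y_{\ul{\bul}_r}$ as a simplicial object in the category of $(r-1)$-simplicial fine log schemes over $S_1$, the same cohomological descent in the first simplicial direction represents $Rf^{\rm conv}_{Y_{\ul{\bul}_r\leq \ul{N}_r}/S*}(E^{\ul{\bul}_r\leq \ul{N}_r})$ as the total complex of a double complex whose $m$-th column $(0\leq m\leq N_1)$ is $Rf^{\rm conv}_{Y_{m,\ul{\bul}_{r-1}\leq \ul{N}_{r-1}}/S*}(E^{m,\bul})$, which is strictly perfect by the inductive hypothesis; one concludes as before. Equivalently, $Rf^{\rm conv}_{Y_{\ul{\bul}_r\leq \ul{N}_r}/S*}(E^{\ul{\bul}_r\leq \ul{N}_r})$ is the total complex of an $(r+1)$-fold multicomplex with terms (representatives of) $Rf^{\rm conv}_{Y_{\ul{m}}/S*}(E^{\ul{m}})$ for $\ul{m}\leq \ul{N}_r$, which is finite in the $r$ simplicial directions and whose pieces are bounded complexes of finite locally free ${\cal K}_S$-modules, hence strictly perfect.

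The point that requires the most care is the cohomological-descent step: one must verify that $Rf^{\rm conv}_{Y_{\ul{\bul}_r\leq \ul{N}_r}/S*}$ is genuinely computed by the finite total complex built out of the strictly perfect pieces, with the correct boundedness --- this is where the truncation is essential and where one should invoke, with the usual care about choosing compatible resolutions, the machinery recalled in \S\ref{sec:rlct} and \cite[\S1]{nh2}. Everything else (properness of each $\os{\circ}{f}_{\ul{m}}$, stability of strict perfectness under finite total complexes and mapping cones, the identification of the components of $E^{\ul{\bul}_r}$ as convergent isocrystals on the $Y_{\ul{m}}/S$) is routine. Note in particular that, since we do not assume $\pi{\cal V}=p{\cal V}$, we cannot pass through the comparison theorem (\ref{theo:hct}) to the log crystalline finiteness results; the argument stays entirely within the log convergent world.
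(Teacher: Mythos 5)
There are two genuine gaps here, and together they make the argument as written fail.

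First, the base case. You assert that for each single component $Y_{\ul{m}}/S$ the strict perfectness of $Rf^{\rm conv}_{Y_{\ul{m}}/S*}(E^{\ul{m}})$ is already available from \cite[(7.9)]{oc} (resp.~\cite[Corollary 3.1.1]{s2}), and you explicitly decline to use the reduction to $\pi=p$ and the comparison theorem (\ref{theo:hct}) on the grounds that $\pi{\cal V}=p{\cal V}$ is not assumed. This is a misreading of the logic: the corollary does not \emph{assume} $\pi{\cal V}=p{\cal V}$, but its proof is allowed to \emph{reduce} to that case, and this reduction is legitimate by the invariance statement (\ref{prop:toi}) (3) (cf.~(\ref{eqn:ksep})), exactly as in \cite[(7.9)]{oc}. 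Without that reduction your base case has no proof in the generality needed here: the paper explicitly presents the constant relative case over a general $p$-adic formal ${\cal V}$-scheme $S$ with general $\pi$ as a \emph{generalization} of \cite[(7.9)]{oc} and \cite[Corollary 3.1.1]{s2}, and it obtains the finite tor-dimension and coherence of each $Rf^{\rm conv}_{Y_{\ul{m}}/S*}(E^{\ul{m}})$ precisely by passing through (\ref{theo:hct}) to the known finiteness of log crystalline cohomology. So the step you refuse to take is the one that makes the single-scheme case true.

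Second, the rectification problem in the descent step. Even granting strict perfectness of each column, replacing each $Rf^{\rm conv}_{Y_s/S*}(E^s)$ by a quasi-isomorphic bounded complex of finite locally free ${\cal K}_S$-modules does not produce a double complex: the coface maps between columns are morphisms in the derived category, and you would have to lift them to actual chain maps between the chosen strictly perfect representatives, compatibly with the cosimplicial identities, before you may form a total complex. Over a general (formal) base this lifting is not automatic, and ``quasi-isomorphic to a strictly perfect complex'' is not obviously stable under cones of derived-category morphisms (perfectness is; \emph{global} strict perfectness requires a representability criterion). The paper sidesteps this entirely: it propagates the invariant properties --- finite tor-dimension and coherent, bounded cohomology --- through the spectral sequence of the truncated cosimplicial object (inducting on $r$), and only at the very end invokes \cite[(7.15)]{bob} once to convert these properties into a single strictly perfect representative. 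If you want to keep your total-complex strategy, you must either carry out the lifting of the cosimplicial structure to the level of strictly perfect complexes, or switch to the paper's route of tracking tor-dimension and coherence and concluding with the representability theorem.
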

\begin{proof}
As in \cite[(7.9)]{oc}, we may assume that ${\cal V}={\cal W}$ 
and that $\pi=p$.   
By the comparison theorem (\ref{theo:hct}) 
for the constant simplicial case and 
by the finite tor-dimension and the finite generation
of log crystalline cohomologies, 
$Rf^{\rm conv}_{(Y_{i_1\cdots i_r},
M_{i_1\cdots i_r})/S*}(E^{i_1\cdots i_r})$ 
has finite tor-dimension and coherent cohomology. 
Set 
$$E_1^{ij}:=
R^jf^{\rm conv}_{Y_{\ul{\bul}_{r-1} \leq \ul{N}_{r-1},i}/S*}
(E^{\ul{\bul}_r\leq \ul{N}_{r-1},i})$$ 
for $0\leq i\leq N_r$ and $E_1^{ij}=0$ for $i> N_r$. 
Then we have the following spectral sequence 
$$E_1^{ij} \Lo R^{i+j}
f^{\rm conv}_{Y_{\ul{\bul} \leq \ul{N}_r}/S*}
(E^{\ul{\bul}_r\leq \ul{N}_r}).$$
Truncating the cosimplicial degrees similarly, 
we see that 
$$Rf^{\rm conv}_{Y_{\ul{\bul}_{r-1} \leq \ul{N}_{r-1},i}/S*}
(E^{\ul{\bul}_{r-1} \leq \ul{N}_{r-1},i})$$  
has finite tor-dimension and finitely generated cohomology. 
Hence (\ref{coro:pf}) follows from (\ref{rema:nth}) and \cite[(7.15)]{bob}.  
\end{proof}

\par 
Next we give the comparison theorem stated 
in the beginning of this section. 
When we consider the log crystalline setting, we consider the canonical PD-structure 
on $p{\cal O}_S$. 
\par 
Let $N$ be a nonnegative integer. 
Let $(X_{\bul \leq N},D_{\bul \leq N}\cup Z_{\bul \leq N})$ 
be a smooth  $N$-truncated simplicial scheme over $S~{\rm mod}~p$ 
with transversal $N$-truncated simplicial relative SNCD's 
$D_{\bul \leq N}$ and $Z_{\bul \leq N}$ on 
$X_{\bul \leq N}/S~{\rm mod}~p$. 
Let 
\begin{align*} 
\eps^{\rm crys}_{(X_{\bul \leq N},
D_{\bul \leq N}\cup Z_{\bul \leq N},Z_{\bul \leq N})/S} 
\col & 
(({(X_{\bul \leq N},
D_{\bul \leq N}\cup Z_{\bul \leq N})/S})_{\rm crys},
{\cal O}_{(X_{\bul \leq N},
D_{\bul \leq N}\cup Z_{\bul \leq N})/S}) \\
{} & \lo 
(({(X_{\bul \leq N},Z_{\bul \leq N})/S})_{\rm crys},
{\cal O}_{(X_{\bul \leq N},Z_{\bul \leq N})/S}) 
\end{align*} 
be the morphism of ringed topoi forgetting the log structure 
along $D_{\bul \leq N}$. 
Let $f_{\bul \leq N}\col X_{\bul \leq N}\lo S$ be the structural morphism. 
Then, set 
\begin{align*} 
& (E_{\rm crys}({\cal O}_{(X_{\bul \leq N},
D_{\bul \leq N}\cup Z_{\bul \leq N})/S}), P^{D_{\bul \leq N}}):= 
\tag{11.6.1}\label{eqn:essdf}\\ 
&(R\eps^{\rm crys}_{(X_{\bul \leq N},
D_{\bul \leq N}\cup Z_{\bul \leq N},Z_{\bul \leq N})/S*}
({\cal O}_{(X_{\bul \leq N},
D_{\bul \leq N}\cup Z_{\bul \leq N})/S}),
\tau^{\bul \leq N})
\end{align*}
in 
${\rm D}^+{\rm F}
({\cal O}_{(X_{\bul \leq N},Z_{\bul \leq N})/S})$. 
Let $I^{\bul  \leq N,\bul}$ be a flasque resolution of 
${\cal O}_{(X_{\bul \leq N},
D_{\bul \leq N}\cup Z_{\bul \leq N})/S}$. 
Here the left degree of $I^{\bul \leq N,\bul}$ 
is the $N$-truncated cosimplicial degree 
corresponding to the $N$-truncated simplicial degree of $X_{\bul \leq N}$ 
and the second degree is the complex degree. 
The filtered complex 
$(E_{\rm crys}({\cal O}_{(X_{\bul \leq N},
D_{\bul \leq N}\cup Z_{\bul \leq N})/S}),
P^{D_{\bul \leq N}})$ 
is represented by 
$$(\eps^{\rm crys}_{(X_{\bul \leq N},
D_{\bul \leq N}\cup Z_{\bul \leq N},Z_{\bul \leq N})/S*}
(I^{\bul  \leq N,\bul}),\tau^{\bul \leq N}).$$ 
Let 
\begin{equation*} 
u^{\rm crys}_{(X_{\bul \leq N},Z_{\bul \leq N})/S} 
\col 
(({(X_{\bul \leq N},Z_{\bul \leq N})/S})_{\rm crys},
{\cal O}_{(X_{\bul \leq N},Z_{\bul \leq N})/S}) 
\lo 
({X_{\bul \leq N}}_{\rm zar},f^{-1}_{\bul \leq N}({\cal O}_S)) 
\end{equation*} 
be the natural morphism of ringed topoi. 
Set  
\begin{align*} 
& (E_{\rm zar}
({\cal O}_{(X_{\bul \leq N},
D_{\bul \leq N}\cup Z_{\bul \leq N})/S}),
P^{D_{\bul \leq N}}):= \tag{11.6.2}\label{eqn:cssdf}\\
& Ru^{{\rm crys}}_{(X_{\bul \leq N},Z_{\bul \leq N})/S*}
(E_{\rm crys}({\cal O}_{(X_{\bul \leq N},D_{\bul \leq N}
\cup Z_{\bul \leq N})/S}),P^{D_{\bul \leq N}})
\end{align*} 
in
${\rm D}^+{\rm F}(f^{-1}_{\bul \leq N}({\cal O}_S))$. 

\begin{defi}
We call 
$(E_{\rm crys}({\cal O}_{(X_{\bul \leq N},D_{\bul \leq N}\cup Z_{\bul \leq N})/S}),
P^{D_{\bul \leq N}})$
$$({\rm resp}.~(E_{\rm zar}
({\cal O}_{(X_{\bul \leq N},
D_{\bul \leq N}\cup Z_{\bul \leq N})/S}),P^{D_{\bul \leq N}}))$$ 
the $N${\it -truncated cosimplicial weight-filtered 
vanishing cycle crystalline complex} 
(resp.~$N${\it -truncated cosimplicial weight-filtered 
vanishing cycle zariskian complex})
of $(X_{\bul \leq N},D_{\bul \leq N}\cup Z_{\bul \leq N})/S$ 
{\it with respect to} $D_{\bul \leq N}$. 
\end{defi}

\par 
For the time being we consider the constant case: 
let $X$ be a smooth scheme 
over $S~{\rm mod}~p$ and let $D$ and $Z$ 
be transversal relative SNCD's on $X/S~{\rm mod}~p$. 
In \cite[(2.5.7), (2.7.5)]{nh2}  
we have essentially proved the following 
which is a special case of 
(\ref{theo:excp}) below:

\begin{theo}[{\bf \cite[(2.5.7), (2.7.5)]{nh2}}]\label{theo:cfi} 
Let $(X_{\bul},D_{\bul}\cup Z_{\bul})_{\bul \in {\mab N}}$ 
be the \v{C}ech diagram of an affine open covering of 
$(X,D\cup Z)$ with structural morphism 
$f_{\bul} \col (X_{\bul},D_{\bul}\cup Z_{\bul})\lo S$.    
Let 
\begin{equation*} 
(X_{\bul},D_{\bul}) \os{\sus}{\lo} `{\cal R}_{\bul}
\quad {\rm and} \quad 
(X_{\bul},Z_{\bul}) \os{\sus}{\lo} {\cal R}_{\bul} 
\tag{11.8.1}\label{cd:cdcpq}
\end{equation*}  
be immersions into  
formally log smooth simplicial 
log noetherian $p$-adic formal schemes over $S$ 
such that 
$\os{\circ}{`{\cal R}}_{\bul}=\os{\circ}{\cal R}_{\bul}$. 
Assume that $\os{\circ}{\cal R}_{\bul}$ is 
topologically of finite type over $S$.    
Let ${\cal P}_{\bul}$ and 
${\cal Q}_{\bul}={\cal Q}^{\rm ex}_{\bul}$ 
be the simplicial log schemes in the beginning of 
{\rm \S\ref{sec:wfcipp}}. 
Let ${\mathfrak D}_{\bul}$ 
be the log PD-envelope of the immersion 
$(X_{\bul},D_{\bul}\cup Z_{\bul})  
\os{\sus}{\lo} {\cal P}_{\bul}$ over $(S,p{\cal O}_S,[~])$. 
Let ${\cal P}^{\rm ex}_{\bul}$ 
be the exactifcation of the immersion 
$(X_{\bul},D_{\bul}\cup Z_{\bul}) 
\os{\sus}{\lo} {\cal P}_{\bul}$.  
Let 
\begin{equation*} 
\pi_{\rm zar} \col 
(({X}_{\bul})_{\rm zar},f^{-1}_{\bul}({\cal O}_S))
\lo 
({X}_{\rm zar},f^{-1}({\cal O}_S))
\end{equation*} 
be a natural morphism of ringed topoi. 
Then there exists the following functorial 
isomorphism 
\begin{align*} 
(E_{\rm zar}
({\cal O}_{(X,D\cup Z)/S}),P^D) \os{\sim}{\lo} 
R\pi_{{\rm zar}*}
({\cal O}_{{\mathfrak D}_{\bul}}
{\otimes}_{{\cal O}_{{\cal P}^{\rm ex}_{\bul}}}
\Om^{\bul}_{{\cal P}^{\rm ex}_{\bul}/S},
P^{{\cal P}^{\rm ex}_{\bul}/{\cal Q}^{\rm ex}_{\bul}})
\tag{11.8.2}\label{eqn:clzci}
\end{align*}
in ${\rm D}^+{\rm F}(f^{-1}({\cal O}_S))$. 
$($Here the right hand side 
is, by definition, equal to 
$(C_{\rm zar}({\cal O}_{(X,D\cup Z)/S}),P^D)$ 
{\rm (cf.~\cite[(2.5.7), (2.5.8)]{nh2})}.$)$ 
Here the functoriality means the functoriality for 
the pair of the following three commutative diagrams  
\begin{equation*} 
\begin{CD} 
(X,D\cup Z) @>>> (X',D'\cup Z') \\
@VVV @VVV \\
S_1 @>>> S'_1, 
\end{CD}
\end{equation*}
where $S$ and $S'$ are in {\rm (\ref{cd:bpsmlgs})} 
and $(X',D'\cup Z')$ is in {\rm (\ref{cd:mpsmlgs})}, 
\begin{equation*} 
\begin{CD} 
(X_{\bul},D_{\bul})@>{\sus}>> `{\cal R}_{\bul} \\
@VVV @VVV \\
(X'_{\bul},D'_{\bul}) @>{\sus}>> `{\cal R}'_{\bul} 
\end{CD}
\end{equation*}
and 
\begin{equation*} 
\begin{CD} 
(X_{\bul},Z_{\bul})@>{\sus}>> {\cal R}_{\bul} \\
@VVV @VVV \\
(X'_{\bul},Z'_{\bul}) @>{\sus}>> {\cal R}'_{\bul},  
\end{CD}
\end{equation*}
over $u\col S\lo S'$ in {\rm (\ref{cd:bpsmlgs})}, 
where the four horizontal morphisms above are immersions  
into formally log smooth simplicial 
log $p$-adic formal schemes over $S$ 
such that 
$`\os{\circ}{\cal R}_{\bul}=`\os{\circ}{\cal R}{}'_{\bul}$ 
and 
$\os{\circ}{\cal R}_{\bul}=\os{\circ}{\cal R}{} '_{\bul}$ . 
\end{theo}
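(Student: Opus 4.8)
\textbf{Proof proposal for (\ref{theo:cfi}).}

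The plan is to reduce the statement to the log crystalline version of the computation already carried out in \S\ref{sec:lcs} and \S\ref{sec:wfcipp} for the log convergent setting, i.e.\ to the crystalline avatar of (\ref{theo:ifc}) and (\ref{prop:czc}). The first step is to set up the crystalline analogue of the linearization picture: starting from the commutative diagrams (\ref{cd:cdcpq}) and the simplicial exactifications ${\cal P}^{\rm ex}_{\bul}$, ${\cal Q}^{\rm ex}_{\bul}$, one forms the log PD-envelope ${\mathfrak D}_{\bul}$ of $(X_{\bul},D_{\bul}\cup Z_{\bul})\os{\sus}{\lo}{\cal P}_{\bul}$ and invokes the log crystalline Poincar\'e lemma (\cite[(6.4)]{klog1}) together with the crystalline linearization functor of \cite{bob} to obtain a canonical isomorphism $R\eps^{\rm crys}_{(X_{\bul},D_{\bul}\cup Z_{\bul},Z_{\bul})/S*}({\cal O}_{(X_{\bul},D_{\bul}\cup Z_{\bul})/S})\os{\sim}{\lo} R\pi'_{\rm crys*}L^{\rm crys}_{(X_{\bul},Z_{\bul})/S}(\Om^{\bul}_{{\cal P}^{\rm ex}_{\bul}/S})$, where $\pi'_{\rm crys}$ is the simplicial-to-constant morphism of log crystalline topoi. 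This is the crystalline counterpart of (\ref{eqn:ceslvc}) and (\ref{theo:pl}); the key point, already isolated in \cite[(2.2.17)]{nh2} and recalled here in (\ref{eqn:qli})--(\ref{eqn:lni}), is that the filtration $P^{{\cal P}^{\rm ex}_{\bul}/{\cal Q}^{\rm ex}_{\bul}}$ on $\Om^{\bul}_{{\cal P}^{\rm ex}_{\bul}/S}$ behaves well after linearization precisely because we use the \emph{exactified} ${\cal P}^{\rm ex}_{\bul}$ (the non-exact filtration $P^{{\cal P}/{\cal R}}$ being useless, cf.\ (\ref{rema:unusefil})). First I would record the injectivity of ${\cal O}_{{\mathfrak D}_{\bul}}\otimes_{{\cal O}_{{\cal P}^{\rm ex}_{\bul}}}P_k\Om^{\bul}_{{\cal P}^{\rm ex}_{\bul}/S}\lo {\cal O}_{{\mathfrak D}_{\bul}}\otimes_{{\cal O}_{{\cal P}^{\rm ex}_{\bul}}}\Om^{\bul}_{{\cal P}^{\rm ex}_{\bul}/S}$, which is the crystalline analogue of (\ref{theo:injf})(1) and follows from flatness considerations on the PD-envelope.

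The second step is to push down by $Ru^{\rm crys}_{(X_{\bul},Z_{\bul})/S*}$ and use the crystalline cohomological descent together with the crystalline linearization-then-de-Rham formula of \cite[(6.4)]{klog1} (the analogue of (\ref{prop:cdfza}) and (\ref{prop:ulcrz})). Applying $Ru^{\rm crys}_*$ to the filtered complex $(L^{\rm crys}_{(X_{\bul},Z_{\bul})/S}(\Om^{\bul}_{{\cal P}^{\rm ex}_{\bul}/S}),P^{{\cal P}^{\rm ex}_{\bul}/{\cal Q}^{\rm ex}_{\bul}})$ yields $({\cal O}_{{\mathfrak D}_{\bul}}\otimes_{{\cal O}_{{\cal P}^{\rm ex}_{\bul}}}\Om^{\bul}_{{\cal P}^{\rm ex}_{\bul}/S},P^{{\cal P}^{\rm ex}_{\bul}/{\cal Q}^{\rm ex}_{\bul}})$ on each simplicial degree, and taking $R\pi_{\rm zar*}$ produces the right-hand side of (\ref{eqn:clzci}). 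That this is filtered-isomorphic to $(E_{\rm zar}({\cal O}_{(X,D\cup Z)/S}),P^D)$ is then obtained exactly as in the proof of (\ref{theo:wtvsca}): one constructs the canonical morphism from $(R\eps^{\rm crys}_{(X,D\cup Z,Z)/S*}({\cal O}_{(X,D\cup Z)/S}),\tau)$ using the crystalline version of (\ref{lemm:canhigh}), then checks it is a filtered isomorphism by computing ${\rm gr}_k$ of both sides and comparing with the crystalline $p$-adic purity $R^k\eps^{\rm crys}_{(X,D\cup Z,Z)/S*}({\cal O}_{(X,D\cup Z)/S})=a^{(k)}_{{\rm crys}*}({\cal O}_{(D^{(k)},Z\vert_{D^{(k)}})/S}\otimes_{\mab Z}\vp^{(k)}_{\rm crys}(D/S;Z))$, i.e.\ the statement of \cite[(2.7.1)]{nh2}, via the crystalline Poincar\'e residue isomorphism (\ref{prop:rescos})'s crystalline analogue, which is \cite[(4.8)]{nh3}. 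The well-definedness (independence of the open covering and the immersions (\ref{cd:cdcpq})) is the crystalline version of (\ref{theo:ifc}), proved by the same bisimplicial \v{C}ech-diagram argument using (\ref{prop:dissmp}) and a filtered flasque resolution compatible with the simplicial degrees.

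The last step is the functoriality assertion. Given the three compatible commutative diagrams over $u\colon S\lo S'$, one has induced morphisms of the associated simplicial exactified log formal schemes and hence of the log PD-envelopes ${\mathfrak D}_{\bul}\lo {\mathfrak D}'_{\bul}$; these are compatible with the Poincar\'e residue maps by the crystalline analogue of (\ref{prop:rescos}), and compatible with the comparison morphism $(\ref{eqn:ltpd})$--$(\ref{eqn:repto})$ by naturality of (\ref{lemm:canhigh}). So the functoriality of (\ref{eqn:clzci}) follows by chasing the same diagram as (\ref{eqn:funuyz}) in the crystalline setting, together with the independence already established. I expect the main obstacle to be bookkeeping rather than conceptual: correctly matching the two indices (simplicial degree vs.\ complex degree) in the filtered flasque resolutions $(J^{\bul\leq N,\bul},\{J^{\bul\leq N,\bul}_k\})$ and verifying that the filtration $P^{{\cal P}^{\rm ex}_{\bul}/{\cal Q}^{\rm ex}_{\bul}}$ is genuinely strict after applying $Ru^{\rm crys}_*$ and $R\pi_{\rm zar*}$ — which is where the crystalline analogue of (\ref{theo:injf})(2) is used, and where one must be careful that, unlike the convergent case where (\ref{theo:injf})(2) was a nontrivial key point, the crystalline version is the comparatively routine statement already contained in \cite[(2.2.17)(2)]{nh2}. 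Since this theorem is explicitly flagged as ``essentially proved in \cite[(2.5.7), (2.7.5)]{nh2}'', the proof will mostly amount to assembling those references into the simplicial/relative format used here and citing \cite{nh3} for the Poincar\'e residue and descent ingredients.
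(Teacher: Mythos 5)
Your overall architecture matches what the cited references actually do (the paper itself gives no proof here beyond the citation to \cite[(2.5.7), (2.7.5)]{nh2} and the remark that (\ref{theo:excp}) generalizes it), but there is one concrete step in your plan that fails as written. You propose to verify that the comparison morphism is a filtered isomorphism by computing ${\rm gr}_k$ of both sides \emph{in the log crystalline topos} and invoking ``the crystalline $p$-adic purity $R^k\eps^{\rm crys}_{(X,D\cup Z,Z)/S*}({\cal O}_{(X,D\cup Z)/S})=a^{(k)}_{{\rm crys}*}({\cal O}_{(D^{(k)},Z\vert_{D^{(k)}})/S}\otimes_{\mab Z}\vp^{(k)}_{\rm crys}(D/S;Z))$.'' That identity is precisely what the paper emphasizes is \emph{false} in general: see (\ref{eqn:nvpcrs}), where already ${\cal O}_{X/S}\lo R^0\eps^{\rm crys}_{(X,D)/S*}({\cal O}_{(X,D)/S})$ fails to be an isomorphism. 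Relatedly, the filtration you want to compare is not even well defined as a filtration by subobjects in the crystalline topos, because $L^{\rm crys}(P_k\Om^{\bul}_{{\cal P}^{\rm ex}_{\bul}/S})\lo L^{\rm crys}(\Om^{\bul}_{{\cal P}^{\rm ex}_{\bul}/S})$ is not injective in general ((\ref{eqn:lni}), i.e.\ \cite[(2.7.11)]{nh2}); your closing sentence, which dismisses this as ``the comparatively routine statement already contained in \cite[(2.2.17)(2)]{nh2},'' confuses the restricted-site statement (\ref{eqn:qli}) with the unrestricted one.

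The correct route, and the one the cited proof takes, inserts the restricted crystalline topos: one pulls everything back by $Q^{{\rm crys}*}_{X/S}$, where the injectivity (\ref{eqn:qli}) makes $(C_{\rm Rcrys},P)$ well defined and where the $p$-adic purity (\ref{eqn:vpuros}) \emph{does} hold, obtains the filtered isomorphism (\ref{eqn:qpc}) there by the ${\rm gr}_k$/Poincar\'e-residue computation you describe, and only then descends to the Zariski level via $R\ol{u}^{\rm crys}_{X/S*}Q^{{\rm crys}*}_{X/S}=Ru^{\rm crys}_{X/S*}$ to get (\ref{eqn:upuros}), which is the statement (\ref{eqn:clzci}). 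With that correction your remaining steps --- the injectivity of ${\cal O}_{{\mathfrak D}_{\bul}}\otimes P_k\Om^{\bul}\lo{\cal O}_{{\mathfrak D}_{\bul}}\otimes\Om^{\bul}$ at the PD-envelope level, the bisimplicial \v{C}ech argument for independence, and the functoriality via naturality of the comparison morphism and of the residue maps --- are sound and agree with the intended argument.
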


The following is 
an $N$-truncated simplicial version of (\ref{theo:cfi}): 

\begin{theo}[{\bf cf.~\cite[(6.17)]{nh3}, 
Explicit description of cosimplicial 
weight-filtered vanishing cycle zariskian complex}]
\label{theo:excp}
Let $N$ be a nonnegative integer. 
Assume that 
$(X_{\bul \leq N},D_{\bul \leq N}\cup Z_{\bul \leq N})/S~{\rm mod}~p$ 
has the disjoint union of the member of 
an affine $N$-truncated 
simplicial open covering of 
$(X_{\bul \leq N},D_{\bul \leq N}\cup Z_{\bul \leq N})/S~{\rm mod}~p$. 
Let 
$(X_{\bul \leq N,\bul},
D_{\bul \leq N,\bul}\cup Z_{\bul \leq N\bul})$ 
be the \v{C}ech diagram of 
this affine $N$-truncated simplicial open covering.  
Let 
\begin{equation*} 
(X_{\bul \leq N,\bul},D_{\bul \leq N,\bul}) 
\os{\sus}{\lo} `{\cal R}_{\bul \leq N,\bul}
\quad {\rm and} \quad 
(X_{\bul \leq N,\bul},Z_{\bul \leq N,\bul}) 
\os{\sus}{\lo} {\cal R}_{\bul \leq N,\bul} 
\tag{11.9.1}\label{cd:xdzpd} 
\end{equation*}  
be an $(N,\infty)$-truncated version of 
{\rm (\ref{cd:cdcpq})}. 
Let 
${\cal P}_{\bul \leq N,\bul}$, 
${\cal Q}_{\bul \leq N,\bul}
={\cal Q}^{\rm ex}_{\bul \leq N,\bul}$,   
${\mathfrak D}_{\bul \leq N,\bul}$ and 
${\cal P}^{\rm ex}_{\bul \leq N,\bul}$ 
be the $(N,\infty)$-truncated versions of 
${\cal P}_{\bul}$, 
${\cal Q}_{\bul}={\cal Q}^{\rm ex}_{\bul}$,   
${\mathfrak D}_{\bul}$ and     
${\cal P}^{\rm ex}_{\bul}$ 
in {\rm (\ref{theo:cfi})}. 
Let 
\begin{equation*} 
\eta_{\rm zar} \col 
(({X}_{\bul \leq N, \bul})_{\rm zar},
f^{-1}_{\bul \leq N, \bul}({\cal O}_S))
\lo (({X}_{\bul \leq N})_{\rm zar},
f^{-1}_{\bul \leq N}({\cal O}_S))
\end{equation*} 
be a natural morphism of ringed topoi. 
Then 
\begin{equation*} 
(E_{\rm zar}({\cal O}_{(X_{\bul \leq N},D_{\bul \leq N} \cup 
Z_{\bul \leq N})/S}),P^{D_{\bul \leq N}})= 
\tag{11.9.2}\label{eqn:czzpd}
\end{equation*} 
$$R\eta_{{\rm zar}*}
({\cal O}_{{\mathfrak D}_{\bul \leq N,\bul}}
{\otimes}_{{\cal O}_{{\cal P}^{\rm ex}_{\bul \leq N,\bul}}}
\Om^{\bul}_{{\cal P}^{\rm ex}_{\bul \leq N,\bul}/S},
P^{{\cal P}^{\rm ex}_{\bul \leq N,\bul}/
{\cal Q}^{\rm ex}_{\bul \leq N,\bul}})$$
in ${\rm D}^+{\rm F}(f^{-1}_{\bul \leq N}({\cal O}_S))$. 
\end{theo}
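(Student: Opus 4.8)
\textbf{Proof plan for Theorem \ref{theo:excp}.}
The statement is an $N$-truncated simplicial version of Theorem \ref{theo:cfi}, so the plan is to reduce it to that theorem levelwise and then patch the levels together via a filtered flasque resolution that is compatible with the truncation. First I would fix the \v{C}ech diagram $(X_{\bul\leq N,\bul},D_{\bul\leq N,\bul}\cup Z_{\bul\leq N,\bul})$ and the $(N,\infty)$-truncated immersions (\ref{cd:xdzpd}), and form all the associated objects ${\cal P}_{\bul\leq N,\bul}$, ${\cal Q}^{\rm ex}_{\bul\leq N,\bul}$, ${\mathfrak D}_{\bul\leq N,\bul}$, ${\cal P}^{\rm ex}_{\bul\leq N,\bul}$. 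For each fixed first index $m$ with $0\leq m\leq N$, the row $(X_{m\bul},D_{m\bul}\cup Z_{m\bul})$ together with the immersions ${\cal P}_{m\bul}$, ${\cal R}_{m\bul}$ is exactly a datum to which Theorem \ref{theo:cfi} applies (it is the \v{C}ech diagram of an affine open covering of $(X_m,D_m\cup Z_m)$); hence, writing $\eta_m\col (X_{m\bul})_{\rm zar}\lo (X_m)_{\rm zar}$ for the induced morphism, Theorem \ref{theo:cfi} gives a functorial filtered isomorphism
\begin{equation*}
(E_{\rm zar}({\cal O}_{(X_m,D_m\cup Z_m)/S}),P^{D_m})
\os{\sim}{\lo}
R\eta_{m*}({\cal O}_{{\mathfrak D}_{m\bul}}
\otimes_{{\cal O}_{{\cal P}^{\rm ex}_{m\bul}}}
\Om^{\bul}_{{\cal P}^{\rm ex}_{m\bul}/S},
P^{{\cal P}^{\rm ex}_{m\bul}/{\cal Q}^{\rm ex}_{m\bul}}).
\end{equation*}

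Next I would globalize over the truncated simplicial index $m$. The key device is the existence, as in \cite[(1.5.0.4)]{nh2} and the proof of (\ref{theo:ifc}), of a filtered flasque resolution $(I^{\bul\leq N,\bul\bul},P)$ of
$({\cal O}_{{\mathfrak D}_{\bul\leq N,\bul}}\otimes_{{\cal O}_{{\cal P}^{\rm ex}_{\bul\leq N,\bul}}}\Om^{\bul}_{{\cal P}^{\rm ex}_{\bul\leq N,\bul}/S},P^{{\cal P}^{\rm ex}_{\bul\leq N,\bul}/{\cal Q}^{\rm ex}_{\bul\leq N,\bul}})$ whose $m$-th row $(I^{m\bul\bul},P)$ is a filtered flasque resolution of the corresponding row complex, for each $m$. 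Then $R\eta_{\rm zar*}$ applied to the bisimplicial complex is computed by $\eta_{\rm zar*}$ of $(I^{\bul\leq N,\bul\bul},P)$, and its restriction to each row $m$ computes $R\eta_{m*}$ of the $m$-th row. Since on each row the resulting filtered complex is, by the displayed isomorphism above, canonically $(E_{\rm zar}({\cal O}_{(X_m,D_m\cup Z_m)/S}),P^{D_m})$, and since by its very definition (\ref{eqn:cssdf}) the left side $(E_{\rm zar}({\cal O}_{(X_{\bul\leq N},D_{\bul\leq N}\cup Z_{\bul\leq N})/S}),P^{D_{\bul\leq N}})$ is $Ru^{\rm crys}_{(X_{\bul\leq N},Z_{\bul\leq N})/S*}$ of the cosimplicial crystalline complex whose $m$-th row is $(E_{\rm crys}({\cal O}_{(X_m,D_m\cup Z_m)/S}),P^{D_m})$, the two sides agree after restriction to every row. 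To pass from ``agrees on every row'' to ``agrees as truncated cosimplicial filtered complexes'', I would invoke the filtered cohomological descent machinery of \cite[Lemma 1.5.1]{nh2} together with the commutativity of the diagram in \cite[Lemma 1.3.4.1]{nh2}, exactly as in the proofs of (\ref{theo:ifc}) and (\ref{theo:wtvsca}): the morphism one writes down row-by-row is compatible with the cosimplicial structure maps because each such map is induced by a morphism of the corresponding immersion data, and Theorem \ref{theo:cfi} was stated with the precise functoriality needed for this.

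The main obstacle I expect is not any single hard computation but rather the bookkeeping of compatibility: one must check that the row-wise isomorphisms of Theorem \ref{theo:cfi} are compatible with the face and degeneracy operators of the $N$-truncated simplicial direction, i.e. that for a morphism $[m']\lo [m]$ in $\Del_{\leq N}$ the induced square
\begin{equation*}
\begin{CD}
(E_{\rm zar}({\cal O}_{(X_{m'},D_{m'}\cup Z_{m'})/S}),P^{D_{m'}}) @>{\sim}>> R\eta_{m'*}(\cdots) \\
@VVV @VVV \\
(E_{\rm zar}({\cal O}_{(X_{m},D_{m}\cup Z_{m})/S}),P^{D_{m}}) @>{\sim}>> R\eta_{m*}(\cdots)
\end{CD}
\end{equation*}
commutes in the filtered derived category. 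This is handled precisely because the functoriality clause of Theorem \ref{theo:cfi} covers morphisms of the immersion data $`{\cal R}_{m\bul}\lo {}`{\cal R}_{m'\bul}$ and ${\cal R}_{m\bul}\lo {\cal R}_{m'\bul}$ over $S\lo S$, which is exactly what the simplicial structure of ${\cal P}_{\bul\leq N,\bul}$ supplies; the rest is the standard argument that a morphism of bisimplicial filtered complexes which is a filtered quasi-isomorphism on each row induces a filtered quasi-isomorphism after $R\eta_{\rm zar*}$, via the truncated spectral sequence for the cosimplicial degree and $\textrm{gr}$-compatibility of $R\eta_{\rm zar*}$ (Lemma \ref{lemm:grrdld}). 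Finally, the independence of all choices — the affine $N$-truncated simplicial open covering and the immersions (\ref{cd:xdzpd}) — follows, as in (\ref{theo:ifc}), by passing to a common refinement using the $N$-truncated version of (\ref{prop:dissmp}) recorded in (\ref{rema:trhd}), together with the bisimplicial comparison already established, so that the isomorphism (\ref{eqn:czzpd}) is canonical.
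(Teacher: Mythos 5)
The paper gives no proof of this theorem beyond the single sentence ``The same proof as that of \cite[(6.17)]{nh3} works,'' so there is nothing to compare against line by line; your reconstruction — reduce to the constant case (\ref{theo:cfi}) row by row in the truncated simplicial direction, use the functoriality clause of (\ref{theo:cfi}) for the face and degeneracy maps, and glue via a filtered flasque resolution compatible with the rows as in the proof of (\ref{theo:ifc}) — is exactly the argument the citation is standing in for, and it is sound.

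One point of logical ordering deserves tightening. A compatible family of row-wise isomorphisms in the filtered derived categories ${\rm D}^+{\rm F}(f_m^{-1}({\cal O}_S))$ does not by itself assemble into a morphism in ${\rm D}^+{\rm F}(f^{-1}_{\bul\leq N}({\cal O}_S))$: derived-category morphisms do not glue over a diagram. The correct order is to construct the comparison morphism \emph{globally} on the $(N,\infty)$-truncated bisimplicial topos first — it comes from the crystalline linearization and the vanishing-cycle Poincar\'e lemma applied to the whole truncated bisimplicial immersion datum (\ref{cd:xdzpd}), together with (\ref{lemm:canhigh}) to pass from $\tau$ to $P$, all of which are genuine morphisms of (filtered) complexes of sheaves and hence automatically compatible with the cosimplicial structure — and only \emph{then} to use the row-compatible filtered flasque resolution of \cite[(1.5.0.4)]{nh2} to verify that this single global morphism is a filtered quasi-isomorphism by restricting to each row $m$ and invoking (\ref{theo:cfi}), or equivalently by computing ${\rm gr}_k$ with (\ref{lemm:grrdld}) and the residue isomorphism. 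You have all the needed ingredients in hand; only the direction of the assembly needs to be reversed. The independence-of-choices argument via common refinement ((\ref{prop:dissmp}) and (\ref{rema:trhd})) is as you describe.
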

\begin{proof}
The same proof as that of \cite[(6.17)]{nh3} works. 
\end{proof}


\par 
Let 
\begin{align*} 
\eps^{\rm conv}_{(X_{\bul \leq N},D_{\bul \leq N}\cup Z_{\bul \leq N},Z_{\bul \leq N})/S} 
\col & (({(X_{\bul \leq N},D_{\bul \leq N}\cup Z_{\bul \leq N})/S})_{\rm conv},
{\cal K}_{(X_{\bul \leq N},D_{\bul \leq N}\cup Z_{\bul \leq N})/S}) 
 \\
{} & \lo (({(X_{\bul \leq N},Z_{\bul \leq N})/S})_{\rm conv},
{\cal K}_{(X_{\bul \leq N},Z_{\bul \leq N})/S}) 
\end{align*} 
be the morphism of topoi forgetting the log structure 
along $D_{\bul \leq N}$. 
Then, set 
\begin{align*} 
& (E_{\rm conv}
({\cal K}_{(X_{\bul \leq N},D_{\bul \leq N}\cup Z_{\bul \leq N})/S}), P^{D_{\bul \leq N}})
:= \\
& (R\eps^{\rm conv}_{
(X_{\bul \leq N},D_{\bul \leq N}\cup Z_{\bul \leq N},Z_{\bul \leq N})/S*} 
({\cal K}_{(X_{\bul \leq N},D_{\bul \leq N}\cup Z_{\bul \leq N})/S}),\tau^{\bul \leq N})
\end{align*} 
in 
${\rm D}^+{\rm F}({\cal K}_{(X_{\bul \leq N},Z_{\bul \leq N})/S})$,  
and 
\begin{align*} 
&(E_{\rm isozar}({\cal K}_{(X_{\bul \leq N},D_{\bul \leq N} \cup Z_{\bul \leq N})/S}),P^{D_{\bul \leq N}})
:= \\ 
& 
Ru^{\rm conv}_{(X_{\bul \leq N},Z_{\bul \leq N})/S*}
(E_{\rm conv}
({\cal K}_{(X_{\bul \leq N},D_{\bul \leq N}\cup Z_{\bul \leq N})/S}), P^{D_{\bul \leq N}})
\end{align*} 
in ${\rm D}^+{\rm F}(f^{-1}_{\bul \leq N}({\cal K}_S))$.

\begin{lemm}\label{lemm:kevp}
Let ${\cal V}'$ and $\pi'$ be as in {\rm (\ref{prop:toi})}.  
Assume that ${\cal V}'={\cal V}$. Set $S'_1:=S~{\rm mod}~\pi'$.   
Set $(X',D'\cup Z'):=(X,D\cup Z)\times_{S_1}S'_1$. 
Then 
\begin{align*} 
(E_{\rm conv}
({\cal K}_{(X_{\bul \leq N},D_{\bul \leq N}\cup Z_{\bul \leq N})/S}), P^{D_{\bul \leq N}})
=
(E_{\rm conv}
({\cal K}_{(X'_{\bul \leq N},D'_{\bul \leq N}\cup Z'_{\bul \leq N})/S}), P^{D'_{\bul \leq N}}). 
\tag{11.10.1}\label{eqn:covzpd}
\end{align*} 
Consequently 
\begin{align*} 
(E_{\rm isozar}
({\cal K}_{(X_{\bul \leq N},D_{\bul \leq N}\cup Z_{\bul \leq N})/S}), P^{D_{\bul \leq N}})
=
(E_{\rm isozar}({\cal K}_{(X'_{\bul \leq N},D'_{\bul \leq N}\cup Z'_{\bul \leq N})/S}), P^{D'_{\bul \leq N}}). 
\tag{11.10.2}\label{eqn:coazzpd}
\end{align*} 
\end{lemm}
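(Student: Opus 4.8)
The plan is to deduce Lemma~\ref{lemm:kevp} from Proposition~\ref{prop:toi} exactly as its non-truncated, non-simplicial analogue is deduced there. The only ingredient beyond that proposition is the compatibility of the morphism $i_{\rm conv}$ with truncated simplicial structures and with the morphism $\eps^{\rm conv}$ forgetting the log structure along $D$. So first I would set up notation: let $i\col (X'_{\bul \leq N},D'_{\bul \leq N}\cup Z'_{\bul \leq N})\lo (X_{\bul \leq N},D_{\bul \leq N}\cup Z_{\bul \leq N})$ be the natural closed immersion induced by $S'_1\os{\subset}{\lo}S_1$, and similarly $i^Z$ for the pairs with only $Z$. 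Since $S'_1=S~{\rm mod}~\pi'$ with ${\cal V}'={\cal V}$ and the morphism ${\cal V}\lo{\cal V}'$ the identity, Proposition~\ref{prop:toi}(1) (applied degreewise in the truncated simplicial direction) gives that each $i_{{\rm conv},m*}$ $(0\leq m\leq N)$ is exact, hence $Ri_{{\rm conv}*}=i_{{\rm conv}*}$ as a functor of $N$-truncated cosimplicial filtered complexes, and likewise for $i^Z$.

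Next I would record the two compatibilities. On the one hand, the square
\begin{equation*}
\begin{CD}
({(X'_{\bul \leq N},D'_{\bul \leq N}\cup Z'_{\bul \leq N})/S})_{\rm conv}
@>{\eps^{\rm conv}_{(X'_{\bul \leq N},D'_{\bul \leq N}\cup Z'_{\bul \leq N},Z'_{\bul \leq N})/S}}>>
({(X'_{\bul \leq N},Z'_{\bul \leq N})/S})_{\rm conv} \\
@V{i_{\rm conv}}VV @VV{i^Z_{\rm conv}}V \\
({(X_{\bul \leq N},D_{\bul \leq N}\cup Z_{\bul \leq N})/S})_{\rm conv}
@>{\eps^{\rm conv}_{(X_{\bul \leq N},D_{\bul \leq N}\cup Z_{\bul \leq N},Z_{\bul \leq N})/S}}>>
({(X_{\bul \leq N},Z_{\bul \leq N})/S})_{\rm conv}
\end{CD}
\end{equation*}
commutes, being the $N$-truncated simplicial version of the commutativity used already in \S\ref{sec:vflvc} (this is just the functoriality of forgetting the log structure under a strict base change). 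On the other hand, the equality $i_{{\rm conv}*}({\cal K}_{Y'/S})={\cal K}_{Y/S}$ from (\ref{eqn:iuoe}) holds degreewise, so $i_{{\rm conv}*}({\cal K}_{(X'_{\bul \leq N},D'_{\bul \leq N}\cup Z'_{\bul \leq N})/S})={\cal K}_{(X_{\bul \leq N},D_{\bul \leq N}\cup Z_{\bul \leq N})/S}$. Combining:
\begin{align*}
i^Z_{{\rm conv}*}(E_{\rm conv}({\cal K}_{(X'_{\bul \leq N},D'_{\bul \leq N}\cup Z'_{\bul \leq N})/S}),P^{D'_{\bul \leq N}})
&= i^Z_{{\rm conv}*}(R\eps^{\rm conv}_{*}({\cal K}_{(X'_{\bul \leq N},D'_{\bul \leq N}\cup Z'_{\bul \leq N})/S}),\tau^{\bul \leq N})\\
&= (R\eps^{\rm conv}_{*}i_{{\rm conv}*}({\cal K}_{(X'_{\bul \leq N},D'_{\bul \leq N}\cup Z'_{\bul \leq N})/S}),\tau^{\bul \leq N})\\
&= (R\eps^{\rm conv}_{*}({\cal K}_{(X_{\bul \leq N},D_{\bul \leq N}\cup Z_{\bul \leq N})/S}),\tau^{\bul \leq N})\\
&= (E_{\rm conv}({\cal K}_{(X_{\bul \leq N},D_{\bul \leq N}\cup Z_{\bul \leq N})/S}),P^{D_{\bul \leq N}}),
\end{align*}
where the second equality uses $Ri_{{\rm conv}*}=i_{{\rm conv}*}$ together with the commutative square above, and the fact that $\tau^{\bul \leq N}$ is preserved because $i^Z_{{\rm conv}*}$ is exact (so it commutes with taking canonical truncations). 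This is (\ref{eqn:covzpd}); note the statement should be read as an identity of objects over the same topos $({(X_{\bul \leq N},Z_{\bul \leq N})/S})_{\rm conv}$ after applying $i^Z_{{\rm conv}*}$ — I would add that clarifying phrase, exactly as in (\ref{prop:iekp}) and (\ref{coro:cc}). For (\ref{eqn:coazzpd}) one applies $Ru^{\rm conv}_{(X_{\bul \leq N},Z_{\bul \leq N})/S*}$ to (\ref{eqn:covzpd}) and uses $u^{\rm conv}_{(X_{\bul \leq N},Z_{\bul \leq N})/S}\circ i^Z_{\rm conv}$ expressed in terms of $\os{\circ}{i}{}^Z$ and $u^{\rm conv}_{(X'_{\bul \leq N},Z'_{\bul \leq N})/S}$, exactly as (\ref{eqn:ziccp}) follows from (\ref{eqn:iccp}).

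The main (and really the only) obstacle is bookkeeping rather than mathematics: one must make sure that Proposition~\ref{prop:toi} legitimately applies degreewise here — i.e.\ that the base change from $S_1$ to $S'_1$ is the one covered by that proposition with the roles ${\cal V}'={\cal V}$, identity ${\cal V}\lo{\cal V}'$, $S'=S$, $Y'=Y\times_{S_1}S'_1$ — and that all the functorial identities ($i_{{\rm conv}*}$ exact, $Ri_{{\rm conv}*}=i_{{\rm conv}*}$, preservation of $\tau$, (\ref{eqn:iuoe}), the commutative square for $\eps^{\rm conv}$) are compatible in the derived category of $N$-truncated cosimplicial filtered complexes. Since all these are degreewise statements and the truncated simplicial degree is finite, there is no convergence or coherence issue; the proof is a short diagram chase once the notation is fixed, and I would present it in two or three lines referring to (\ref{prop:toi}), (\ref{eqn:iuoe}), (\ref{cd:epep}) (in its $N$-truncated form), and the passage from (\ref{eqn:iccp}) to (\ref{eqn:ziccp}).
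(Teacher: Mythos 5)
Your proposal is correct and follows essentially the same route as the paper: the paper's proof is the one-line remark that (11.10.1) follows from (2.1.5), implicitly invoking the same chain of equalities you write out (exactness of $i_{{\rm conv}*}$ from (2.1), commutation with $\eps^{\rm conv}_*$, and $i_{{\rm conv}*}({\cal K}_{Y'/S'})={\cal K}_{Y/S}$), exactly as in the proof of (8.13). Your remark that the equality must be read after applying $i^Z_{{\rm conv}*}$, as in (8.13) and (8.14), is a fair clarification of the paper's abuse of notation rather than a deviation.
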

\begin{proof} 
The equality (\ref{eqn:covzpd}) immediately follows from (\ref{eqn:iuoe}). 
\end{proof} 

\par 
By (\ref{theo:wtvsca}), (\ref{prop:cdfza}) 
and the same proof as that of  
(\ref{theo:excp})(=the proof of \cite[(6.17)]{nh3}), 
we obtain the following:

\begin{theo}[{\bf Explicit description of 
cosimplicial weight-filtered isozariskian complex}]\label{theo:excop}
Let the notations be as in {\rm (\ref{theo:excp})}. 
Let 
$(\{(U_{\bul \leq N,\bul})_k\}_{k\in {\mab Z}_{>0}},
\{(T_{\bul \leq N,\bul})_k\}_{k\in {\mab Z}_{>0}})$ 
be the system of the universal enlargements of 
the immersion 
$(X_{\bul \leq N,\bul},
D_{\bul \leq N,\bul}\cup Z_{\bul \leq N,\bul})
\os{\sus}{\lo}
{\cal P}_{\bul \leq N,\bul}$. 
Let $\bet_k \col (U_{\bul \leq N,\bul})_k 
\lo X_{\bul \leq N, \bul}$ 
$(k\in {\mab Z}_{>0})$ 
be the natural morphism. 
Identify $((T_{\bul \leq N, \bul})_k)_{{\rm zar}}$ with 
$((U_{\bul \leq N,\bul})_k)_{{\rm zar}}$. 
Let $\eta_{{\rm zar}*} \col 
((U_{\bul \leq N, \bul})_k)_{{\rm zar}} 
\lo 
(U_{\bul \leq N})_{{\rm zar}}$ 
be the natural morphism of topoi. 
Then 
\begin{equation*} 
(E_{\rm isozar}
({\cal K}_{(X_{\bul \leq N},D_{\bul \leq N} \cup Z_{\bul \leq N})/S}), 
P^{D_{\bul \leq N}}) =
\tag{11.11.1}\label{eqn:cozzpd}
\end{equation*} 
$$R\eta_{{\rm zar}*}\vpl_kR\bet_{k*}
({\cal K}_{T_{\bul \leq N,\bul k}}
{\otimes}_{{\cal O}_{{\cal P}^{\rm ex}_{\bul \leq N,\bul}}}
\Om^{\bul}_{{\cal P}^{\rm ex}_{\bul \leq N,\bul}/S},
P^{{\cal P}^{\rm ex}_{\bul \leq N,\bul}
/{\cal Q}^{\rm ex}_{\bul \leq N,\bul}})$$
in ${\rm D}^+{\rm F}(f^{-1}_{\bul \leq N}({\cal K}_S))$. 
\end{theo}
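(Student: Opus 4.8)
The plan is to follow the same strategy used to prove Theorem~\ref{theo:excp} (which itself follows the proof of \cite[(6.17)]{nh3}), transplanting it to the convergent setting by replacing the log PD-envelopes with the systems of universal enlargements and the structure sheaves with isostructure sheaves. First I would reduce the statement to the explicit description of $(C_{\rm isozar}({\cal K}_{(X_{\bul \leq N},D_{\bul \leq N}\cup Z_{\bul \leq N})/S}),P^{D_{\bul \leq N}})$: by (\ref{theo:wtvsca}) applied degreewise to the $N$-truncated cosimplicial scheme $(X_{\bul \leq N},D_{\bul \leq N}\cup Z_{\bul \leq N})$, there is a filtered isomorphism
\begin{equation*}
(E_{\rm conv}({\cal K}_{(X_{\bul \leq N},D_{\bul \leq N}\cup Z_{\bul \leq N})/S}),P^{D_{\bul \leq N}})
\os{\sim}{\lo}
(C_{\rm conv}({\cal K}_{(X_{\bul \leq N},D_{\bul \leq N}\cup Z_{\bul \leq N})/S}),P^{D_{\bul \leq N}}),
\end{equation*}
so it suffices to compute $Ru^{\rm conv}_{(X_{\bul \leq N},Z_{\bul \leq N})/S*}$ of the right-hand side and to identify it with the stated complex of universal enlargements. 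Here I would use that each truncated degree has the disjoint union of an affine $N$-truncated simplicial open covering, so the \v{C}ech diagram $(X_{\bul \leq N,\bul},D_{\bul \leq N,\bul}\cup Z_{\bul \leq N,\bul})$ and the immersions (\ref{cd:xdzpd}) are available, together with the exactified simplicial formal schemes ${\cal P}^{\rm ex}_{\bul \leq N,\bul}$, ${\cal Q}^{\rm ex}_{\bul \leq N,\bul}$.

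Next I would assemble the computation through the cohomological descent along $\eta \col (X_{\bul \leq N,\bul})_{\rm zar}\lo (X_{\bul \leq N})_{\rm zar}$. Using the definition (\ref{ali:dfg})--(\ref{ali:dfztg}) of $(C_{\rm conv},P^D)$ and $(C_{\rm isozar},P^D)$ in the simplicial form, together with (\ref{eqn:ubys}) and (\ref{eqn:uxzl}) (the computation of $Ru^{\rm conv}_{(X,Z)/S*}$ of a convergent linearization in terms of $\vpl_n\bet_{n*}$ of the de Rham complex on the universal enlargements), the right-hand side of the desired equality (\ref{eqn:cozzpd}) is precisely what one obtains. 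The remaining point is to check that the filtration $P^{{\cal P}^{\rm ex}_{\bul \leq N,\bul}/{\cal Q}^{\rm ex}_{\bul \leq N,\bul}}$ passes correctly through $\vpl_k$ and $R\bet_{k*}$; this is where (\ref{theo:injf})~(2) (the injectivity of $P^D_kL^{\rm conv} \lo L^{\rm conv}$, whose proof already contains the verification that $r^*$ is exact on the relevant de Rham pieces, and (\ref{lemm:rex}), which gives exactness of $\bet_k^*$) is essential: it guarantees that the $P$-filtered pieces form honest subcomplexes and that taking $\vpl_k R\bet_{k*}$ commutes with the filtration, exactly as in the passage from (\ref{prop:ulcrz}) to the filtered statement. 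The compatibility of $Ru^{\rm conv}_{(X_{\bul \leq N},Z_{\bul \leq N})/S*}$ with $R\eta_{\rm zar*}$ and $R\pi_{\rm zar*}$ is provided by the commutative squares (\ref{eqn:ubys}) and (\ref{eqn:retuys}).

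Finally, I would address well-definedness, that is, independence of the right-hand side of (\ref{eqn:cozzpd}) from the chosen affine simplicial open coverings and the chosen immersions (\ref{cd:xdzpd}). This is handled exactly as in the proof of Theorem~\ref{theo:ifc}: given two choices, one forms the fiber-product bi-\v{C}ech diagram and the product of the two systems of formal embeddings, and shows via the filtered flasque resolutions (as in (\ref{eqn:lxzso})) and the graded-piece computation (\ref{lemm:grkpd})/(\ref{prop:grla}) that the comparison morphism is a filtered quasi-isomorphism; the graded pieces reduce, via the Poincar\'e residue isomorphisms (\ref{eqn:regrp})--(\ref{eqn:regarp}) and (\ref{prop:rescos}), to convergent cohomology of the $D^{(k)}$'s with the orientation sheaves, which is independent of the choices. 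The main obstacle I anticipate is the same one that already makes this theory subtle in the excerpt, namely the non-exactness phenomena recorded in (\ref{rema:utne}), (\ref{rema:neagain}) and (\ref{rema:ntb}): one must be careful that at every step the pull-back functors that appear ($\bet_k^*$, $r^*$, and the transition maps of the universal-enlargement system) are applied only to the \emph{exact} situations where (\ref{lemm:rex}) and (\ref{theo:injf})~(2) apply, and that no attempt is made to commute $\vpl_k$ with a tensor product that only becomes flat after $\otimes_{\mab Z}{\mab Q}$ — in other words, the whole argument genuinely lives in the isogeny category, and the proof of (\ref{theo:injf})~(2) is the linchpin that makes the filtered statement go through.
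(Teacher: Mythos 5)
Your proposal is correct and follows essentially the same route as the paper: the authors derive (\ref{eqn:cozzpd}) precisely from (\ref{theo:wtvsca}), the computation of $Ru^{\rm conv}_*$ of linearizations ((\ref{prop:cdfza}), of which your citation (\ref{eqn:uxzl}) is the filtered form), and the same argument as in (\ref{theo:excp}), with well-definedness handled as in (\ref{theo:ifc}). Your additional remarks on (\ref{theo:injf})~(2) and (\ref{lemm:rex}) correctly identify where the filtered statement is secured.
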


\begin{coro}[{\bf Comparison theorem of weight-filtered 
vanishing cycle isozariskian and zariskian complexes}]\label{coro:ccco}
There exists a canonical isomorphism  
\begin{equation*} 
(E_{\rm isozar}
({\cal K}_{(X_{\bul \leq N},
D_{\bul \leq N} \cup Z_{\bul \leq N})/S}), 
P^{D_{\bul \leq N}}) \os{\sim}{\lo}
(E_{\rm zar}
({\cal O}_{(X_{\bul \leq N},D_{\bul \leq N} \cup 
Z_{\bul \leq N})/S}), 
P^{D_{\bul \leq N}})\otimes_{\mab Z}{\mab Q}
\tag{11.12.1}\label{eqn:cccopd}
\end{equation*} 
in ${\rm D}^+{\rm F}(f^{-1}_{\bul \leq N}({\cal K}_S))$. 
Here the left complex is defined for 
a general nonzero element $\pi$ of the maximal ideal of ${\cal V}$ 
and the right complex is defined for the prime number $p$ of the maximal ideal of ${\cal V}$. 
\end{coro}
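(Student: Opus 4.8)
The plan is to combine the two explicit descriptions established just above—Theorem~\ref{theo:excp} (the zariskian side) and Theorem~\ref{theo:excop} (the isozariskian side)—and to recognize that, after tensoring with $\mab Q$, the right-hand sides agree term by term. First I would recall from (\ref{eqn:czzpd}) that
$(E_{\rm zar}({\cal O}_{(X_{\bul \leq N},D_{\bul \leq N}\cup Z_{\bul \leq N})/S}),P^{D_{\bul \leq N}})$
is computed by
$R\eta_{{\rm zar}*}({\cal O}_{{\mathfrak D}_{\bul \leq N,\bul}}\otimes_{{\cal O}_{{\cal P}^{\rm ex}_{\bul \leq N,\bul}}}\Om^{\bul}_{{\cal P}^{\rm ex}_{\bul \leq N,\bul}/S},P^{{\cal P}^{\rm ex}_{\bul \leq N,\bul}/{\cal Q}^{\rm ex}_{\bul \leq N,\bul}})$,
while from (\ref{eqn:cozzpd}) the isozariskian complex is computed by
$R\eta_{{\rm zar}*}\vpl_kR\bet_{k*}({\cal K}_{T_{\bul \leq N,\bul k}}\otimes_{{\cal O}_{{\cal P}^{\rm ex}_{\bul \leq N,\bul}}}\Om^{\bul}_{{\cal P}^{\rm ex}_{\bul \leq N,\bul}/S},P^{{\cal P}^{\rm ex}_{\bul \leq N,\bul}/{\cal Q}^{\rm ex}_{\bul \leq N,\bul}})$.
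Thus the only thing to prove is a comparison between the simplicial log PD-envelope ${\mathfrak D}_{\bul \leq N,\bul}$ and the system of universal enlargements $\{T_{\bul \leq N,\bul k}\}_k$, compatible with the weight filtration $P^{{\cal P}^{\rm ex}/{\cal Q}^{\rm ex}}$ on the de Rham complex.

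The key technical input is the canonical morphism from the log PD-envelope to the $k$-th universal enlargement, $(g_{lm})_k\col {\mathfrak D}(N)_{lm}\lo (T(N)_{lm})_k$, already used in the proof of Theorem~\ref{theo:hct} (see (\ref{eqn:dyt}), coming from $a^p=p!a^{[p]}$ together with the assumption $\pi{\cal V}=p{\cal V}$ implicit in the setting mod $p$). Second I would pass to the isocrystalline side: after $\otimes_{\mab Z}{\mab Q}$, the transition morphisms induce an isomorphism on de Rham complexes, exactly as in \cite[Theorem~7.7]{oc} and in the proof of \cite[Theorem~3.1.1]{s2}; this is the non-filtered version already invoked to produce the isomorphism (\ref{eqn:uyms}) in the proof of Theorem~\ref{theo:hct}. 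So the non-filtered statement
$(E_{\rm isozar}\dots)\os{\sim}{\lo}(E_{\rm zar}\dots)\otimes_{\mab Z}{\mab Q}$
follows from Theorem~\ref{theo:hct} applied to each cosimplicial degree together with $Ru^{\rm conv}_*$, $Ru^{\rm crys}_*$. Third, to upgrade this to a filtered isomorphism, I would invoke the standard criterion: a morphism of filtered complexes is a filtered quasi-isomorphism if and only if it induces quasi-isomorphisms on all graded pieces ${\rm gr}_k$. Using Lemma~\ref{lemm:grrdld} (commutation of ${\rm gr}_k$ with $Rf_*$) on both sides, the graded pieces of the isozariskian complex are identified by (\ref{eqn:grpwwd})-type computations (see the proof of (\ref{lemm:grkpd})) with $a^{(k)\log}_{{\rm conv}*}({\cal K}_{(D^{(k)},Z|_{D^{(k)}})/S}\otimes_{\mab Z}\vp^{(k)\log}_{\rm conv}(D/S;Z))[-k]$, while the graded pieces of the zariskian complex are identified by \cite[(2.5.7)]{nh2} with $a^{(k)\log}_{{\rm crys}*}({\cal O}_{(D^{(k)},\dots)/S}\otimes_{\mab Z}\vp^{(k)\log}_{\rm crys}(D/S;Z))[-k]$; these agree after $\otimes_{\mab Z}{\mab Q}$ by the constant (non-filtered) comparison theorem applied to each $D^{(k)}$, because $\vp^{(k)\log}_{\rm conv}$ and $\vp^{(k)\log}_{\rm crys}$ are corresponding orientation sheaves.

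The main obstacle I expect is bookkeeping: one must check that the comparison morphism $(g_{lm})_k$ respects the subcomplexes $P^{{\cal P}^{\rm ex}_{\bul \leq N,\bul}/{\cal Q}^{\rm ex}_{\bul \leq N,\bul}}_k\Om^{\bul}$, i.e. that the filtration is defined the same way on both the PD-envelope and the universal enlargement side and is preserved under $\otimes_{{\cal O}_{{\cal P}^{\rm ex}}}{\cal O}_{{\mathfrak D}}$ resp. $\otimes_{{\cal O}_{{\cal P}^{\rm ex}}}{\cal K}_{T_k}$. But this follows directly from the construction of $P^{{\cal P}^{\rm ex}/{\cal Q}^{\rm ex}}$ in (\ref{eqn:lexfpw}) as an ${\cal O}_{{\cal P}^{\rm ex}}$-submodule and the injectivity statements of Theorem~\ref{theo:injf}, together with the fact that both $\otimes_{{\cal O}_{{\cal P}^{\rm ex}}}{\cal O}_{{\mathfrak D}}$ (PD-envelope) and $\otimes_{{\cal O}_{{\cal P}^{\rm ex}}}{\cal K}_{T_k}$ (enlargement, by (\ref{lemm:rex})) behave well on these subobjects. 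The functoriality of the isomorphism—needed implicitly for later applications—is a routine consequence of the functoriality already built into Theorem~\ref{theo:hct} (the commutative diagram (\ref{eqn:funuyz})) and into Theorem~\ref{theo:cfi}. Hence the corollary follows by assembling these pieces; no new idea beyond Theorems~\ref{theo:excp}, \ref{theo:excop}, \ref{theo:hct} and Lemma~\ref{lemm:grrdld} is required, and the proof is a short deduction exactly of the type indicated by the phrase ``the same proof as that of \cite[(6.17)]{nh3} works'' for the preceding theorems.
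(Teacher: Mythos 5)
Your proposal is correct and follows essentially the same route as the paper: construct the filtered morphism from the explicit descriptions (\ref{eqn:czzpd}) and (\ref{eqn:cozzpd}) via the PD-envelope-to-enlargement map (\ref{eqn:dyt}), then verify it is a filtered quasi-isomorphism by identifying the graded pieces on both sides with the (convergent resp.\ crystalline) cohomology of the $D^{(k)}$'s twisted by orientation sheaves and invoking the non-filtered comparison theorem. The only point you gloss over is that the hypothesis $\pi{\cal V}=p{\cal V}$ is not automatic but is arranged at the outset by the invariance lemma (\ref{lemm:kevp}), which lets one replace $\pi$ by $p$ on the convergent side.
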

\begin{proof}
By (\ref{lemm:kevp}) we can replace $\pi$ by $p$. 
By the explicit descriptions (\ref{eqn:czzpd}) and 
(\ref{eqn:cozzpd}), 
the morphism (\ref{eqn:dyt}) induces the morphism 
(\ref{eqn:cccopd}). 
Let $m \leq N$ be a nonnegative integer.  
For a nonnegative integer $k$, we have 
\begin{align*}
& {\rm gr}^{P^{D_m}}_kE_{\rm isozar}
({\cal K}_{(X_m,D_m \cup Z_m)/S})
= \tag{11.12.2}\label{ali:cgrmcopd}\\ 
& a^{(k)}_*Ru^{\rm conv}_{(D^{(k)}_m,
Z_m\vert_{D^{(k)}_m})/S*}
({\cal K}_{(D^{(k)}_m,
Z_m\vert_{D^{(k)}_m})/S}\otimes_{\mab Z}
\vp^{(k)}_{{\rm conv}}(D/S;Z))
\end{align*}
and 
\begin{align*}
& {\rm gr}^{P^{D_m}}_k
E_{\rm zar}
({\cal O}_{(X_m,D_m \cup Z_m)/S})
=\tag{11.12.3}\label{ali:cgrmcnvpd} \\
& a^{(k)}_*Ru^{\rm crys}_{(D^{(k)}_m,
Z_m\vert_{D^{(k)}_m})/S}
({\cal O}_{(D^{(k)}_m,
Z_m\vert_{D^{(k)}_m})/S}\otimes_{\mab Z}
\vp^{(k)}_{{\rm crys}}(D/S;Z))
\end{align*}
by (\ref{cd:grfcty}), (\ref{caseali:grtkr}) and \cite[(2.6.1.2), (2.7.3.2)]{nh2}. 
By the relative version of Shiho's comparison isomorphism 
\cite[Corollary 2.3.9]{s2} 
(or (\ref{eqn:kymc})), the right hand sides of (\ref{ali:cgrmcopd}) and 
(\ref{ali:cgrmcnvpd}) are isomorphic. 
Hence the morphism (\ref{eqn:cccopd}) 
is a filtered quasi-isomorphism. 
By using (\ref{prop:dissmp}) and the proof of (\ref{theo:hct}), 
we can prove, in a standard way, that 
the isomorphism (\ref{eqn:cccopd}) is independent 
of the choice of the affine simplicial open covering of 
$(X_{\bul \leq N},D_{\bul \leq N}\cup Z_{\bul \leq N})$ 
and the immersions in (\ref{cd:xdzpd}).
\end{proof}

\par 
Let $(X_{\bul},D_{\bul}\cup Z_{\bul})$ be 
a smooth simplicial scheme 
with transversal simplicial relative SNCD's 
over $S_1$. 
\par 
Set $f^{\rm conv}_{(X_{\bul},D_{\bul}\cup Z_{\bul})/S}:=
f \circ u^{\rm conv}_{(X_{\bul},D_{\bul}\cup Z_{\bul})/S}$.  
To construct a weight spectral sequence of 
$R^hf^{\rm conv}_{(X_{\bul},
D_{\bul}\cup Z_{\bul})/S*}({\cal K}_{(X_{\bul}, 
D_{\bul}\cup Z_{\bul})/S})$ $(h\in {\mab N})$,  
let us recall the cosimplicial diagonal filtration 
in \cite{dh3} and \cite{nh3}. 
\par 
Let $({\cal T}, {\cal A})$ be a ringed topos. 
Let $({\cal T}_t, {\cal A}^t)_{t\in {\mab N}}$ 
be a constant simplicial ringed topos 
defined by $({\cal T}, {\cal A})$: 
${\cal T}_t={\cal T}$, ${\cal A}^t={\cal A}$.
Let $M$ be an object of 
${\rm C}({\cal A}^{\bul})$. 
The object $M$ defines a double complex 
$M^{\bul \bul}
=(M^{t \bul})_{t\in {\mab N}}$ of ${\cal A}$-modules 
and the single complex 
${\bf s}(M)$ (${\bf s}(M)^n=\bigoplus_{t+s=n}M^{ts})$ 
with the following boundary morphism:  
\begin{align*}
d(x^{ts}) =&  
\sum_{i= 0}^{t+1}(-1)^{i}
\del^i(x^{ts})+(-1)^td_M(x^{ts}) \; (x^{ts}\in M^{ts}),
\tag{11.12.4}\label{eqn:bdsignss}
\end{align*} 
where 
$d_M \col M^{ts} 
\lo  M^{t,s+1}$ 
is the boundary morphism arising 
from the boundary 
morphism of the complex $M$ and 
$\del^i \col M^{ts} 
\lo M^{ts}$ 
$(0 \leq i \leq t+1)$ is a 
standard coface morphism.
Let $L$ be the stupid
filtration on ${\bf s}(M)$ 
with respect to the first index: 
\begin{equation*}
L^t({\bf s}(M))=\bigoplus_{t'\geq t}M^{t'\bul}. 
\tag{11.12.5}
\end{equation*}
\par
Next we consider the filtered version of the above. 
\par 
Let $(M, P):=(M,\{P_kM\}_{k\in {\mab Z}})$ 
be a complex of increasingly filtered 
${\cal A}^{\bul}$-modules.
Then $(M,P)$ defines a filtered double complex 
$(\bigoplus_{t\geq 0,s}M^{ts},
\{\bigoplus_{t\geq 0,s}P_kM^{ts}\}_{k\in {\mab Z}})$
of ${\cal A}$-modules. 
\par
Let 
$\del(L,P)$ be the diagonal filtration of 
$L$  and $P$ on ${\bf s}(M)$
(cf.~\cite[(7.1.6.1), (8.1.22)]{dh3}): 
\begin{align*}
\del(L,P)_k{\bf s}(M) & =  
\bigoplus_{t \geq 0,s}
P_{t+k}M^{t s} \tag{11.12.6}\label{ali:ddifl} \\
{} & =\sum_{t\geq 0}
L^t({\bf s}(M))\cap {\bf s}(P_{t+k}M). 
\end{align*} 
Then we have the following equality: 
\begin{equation*}
{\rm gr}_k^{\del(L,P)}{\bf s}(M)=
\bigoplus_{t\in {\mab N}^r}
{\rm gr}^P_{t+k}
M^{t \bul}[-t].
\tag{11.12.7}\label{eqn:grdif}
\end{equation*}

\par 
Set $f^{\rm crys}_{(X_{\bul},D_{\bul}\cup Z_{\bul})/S}:=
f \circ u^{\rm crys}_{(X_{\bul},D_{\bul}\cup Z_{\bul})/S}$. 
Next let us construct a spectral sequence of 
$R^hf^{\rm conv}_{(X_{\bul},
D_{\bul}\cup Z_{\bul})/S*}({\cal K}_{(X_{\bul}, 
D_{\bul}\cup Z_{\bul})/S})$ $(h\in {\mab N})$ 
and compare the spectral sequence of 
$R^hf^{\rm crys}_{(X_{\bul},
D_{\bul}\cup Z_{\bul})/S*}({\cal O}_{(X_{\bul}, 
D_{\bul}\cup Z_{\bul})/S})\otimes_{\mab Z}{\mab Q}$ 
constructed in \cite[\S4]{nh3}. 
\par 
Let $S_{\bul}$ be 
the constant simplicial formal scheme defined by $S$. 
Let 
$I^{\bul \bul}_{\rm conv}$ be an injective resolution of 
${\cal K}_{(X_{\bul},D_{\bul}\cup Z_{\bul})/S}$. 
Let $(J^{\bul \bul},P)$ be a filtered injective resolution of 
$(\eps^{\rm conv}_{(X_{\bul},D_{\bul}\cup Z_{\bul},Z_{\bul})
/S_{\bul}*}(I^{\bul \bul}_{\rm conv}),\tau)$ 
(\cite[(1.1.7)]{nh2}). 
Set 
$$(K^{\bul \bul}, P):=
u^{\rm conv}_{(X_{\bul},Z_{\bul})/S_{\bul}*}
(J^{\bul \bul},P).$$ 
\par 
Let $\del(L,P)$ be the diagonal filtration of $L$ and $P$ 
on ${\bf s}(K^{\bul \bul})$. 
Then, by (\ref{eqn:ppuri}), we have 
\begin{equation*}
\begin{split}
{\cal H}^h({\rm gr}_k^{\del(L,P)}{\bf s}(K^{\bul \bul})) &  
=  \bigoplus_{t \geq 0}{\cal H}^h
({\rm gr}^P_{t+k}J^{t \bul}[-t]) \\ 
& =\bigoplus_{t\geq 0}R^{h-2t-k}
f^{\rm conv}_{(D_{t}^{(t+k)}, 
Z\vert_{D^{(t+k)}_{t}})/S*}
({\cal K}_{(D_{t}^{(t+k)}, 
Z\vert_{D^{(t+k)}_t})/S} 
\otimes_{\mab Z}\\
&\phantom{=\bigoplus_{t\geq 0}R^{h-2t-k}
f^{\rm conv}_{(D_t^{(t+k)},Z\vert_{D^{(t+k)}}}}
\vp^{(t+k)}_{\rm conv}
(D_t/S;Z_t))(-(t+k)).
\end{split} 
\tag{11.12.8}\label{ali:genwt}
\end{equation*}
Hence we have the following 
spectral sequence as in \cite[(5.1.3)]{nh3}: 
\begin{equation*}
\begin{split}
E_{1,{\rm conv}}^{-k, h+k}((X_{\bul}, 
D_{\bul}\cup Z_{\bul})/S_K)
= & \bigoplus_{t\geq 0}
R^{h-2t-k}f^{\rm conv}_{(D_{t}^{(t+k)}, 
Z\vert_{D_{t}^{(t+k)}})/S*} \\ 
&\phantom{\bigoplus_{t\geq 0}R} 
({\cal K}_{(D_{t}^{(t+k)},
Z\vert_{D_{t}^{(t+k)}})/S}
\otimes_{\mab Z}\vp^{(t+k)}_{\rm conv}
(D_{t}/S;Z_{t}))\\ 
{} & \phantom{({\cal K}_{(D_{t}^{(t+k)},
Z\vert_{D_{t}^{(t+k)}})/S}
\otimes_{\mab Z}\vp^{(t+k)}}
(-(t+k)) \\ 
{} & \Lo 
R^hf^{\rm conv}_{(X_{\bul},D_{\bul}\cup Z_{\bul})/S*}
({\cal K}_{(X_{\bul},D_{\bul}\cup Z_{\bul})/S}). 
\end{split} 
\tag{11.12.9}
\label{ali:simppws}
\end{equation*}
In \cite{nh3} we have constructed 
the following analogous spectral sequence:  
\begin{equation*}
\begin{split}
E_{1,{\rm crys}}^{-k, h+k}((X_{\bul}, 
D_{\bul}\cup Z_{\bul})/S)
= & \bigoplus_{t\geq 0}
R^{h-2t-k}f^{\rm crys}_{(D_{t}^{(t+k)}, 
Z\vert_{D_{t}^{(t+k)}})/S*} \\ 
&\phantom{\bigoplus_{t\geq 0}
R} 
({\cal O}_{(D_t^{(t+k)},
Z\vert_{D_t^{(t+k)}})/S}
\otimes_{\mab Z}\vp^{(t+k)}_{\rm crys}(D_t/S;Z_t))\\ 
{} & \phantom{({\cal O}_{(D_t^{(t+k)},Z\vert_{D_t^{(t+k)}})/S}
\otimes_{\mab Z}\vp^{(t+k)}}
(-(t+k)) \\ 
{} & \Lo 
R^hf^{\rm crys}_{(X_{\bul},D_{\bul}\cup Z_{\bul})/S*}
({\cal O}_{(X_{\bul},D_{\bul}\cup Z_{\bul})/S}). 
\end{split} 
\tag{11.12.10}\label{ali:opws}
\end{equation*}

As a corollary of (\ref{coro:ccco}), we have the following: 

\begin{coro}\label{coro:compss}
Assume that $(X_{\bul},D_{\bul}\cup Z_{\bul})~{\rm mod}~p$ 
has the disjoint union of an affine simplicial open covering 
over $S$. 
Then the spectral sequences 
$(\ref{ali:simppws})$ and 
$(\ref{ali:opws})\otimes_{\mab Z}{\mab Q}$ 
are canonically isomorphic. 
\end{coro}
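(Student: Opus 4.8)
\textbf{Proof plan for (\ref{coro:compss}).}
The plan is to deduce the isomorphism of the two spectral sequences from the isomorphism of the filtered complexes that produce them, namely from (\ref{coro:ccco}). Recall that (\ref{ali:simppws}) arises by applying the functor $\delta(L,P)$ (the diagonal of the stupid filtration $L$ on the cosimplicial direction and the filtration $P$ coming from $\tau$) to the single complex ${\bf s}(K^{\bul\bul})$, where $(K^{\bul\bul},P)=u^{\rm conv}_{(X_{\bul},Z_{\bul})/S_{\bul}*}(J^{\bul\bul},P)$ is a filtered injective resolution of $(E_{\rm conv}({\cal K}_{(X_{\bul},D_{\bul}\cup Z_{\bul})/S}),P^{D_{\bul}})$ restricted degreewise; and similarly (\ref{ali:opws}) arises from the analogous crystalline filtered complex of \cite{nh3}, which represents $(E_{\rm zar}({\cal O}_{(X_{\bul},D_{\bul}\cup Z_{\bul})/S}),P^{D_{\bul}})$. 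So the first step is to record, using the hypothesis that $(X_{\bul},D_{\bul}\cup Z_{\bul})$ admits the disjoint union of an affine simplicial open covering over $S$, that (\ref{coro:ccco}) (applied with the truncation $N$ running over all nonnegative integers, i.e.\ to the full simplicial object, which is legitimate since the constructions of \S\ref{sec:ct} are compatible with the truncation functors and the spectral sequences are built from the $E_1$-terms degreewise) gives a canonical filtered isomorphism
\begin{equation*}
(E_{\rm isozar}({\cal K}_{(X_{\bul},D_{\bul}\cup Z_{\bul})/S}),P^{D_{\bul}})
\os{\sim}{\lo}
(E_{\rm zar}({\cal O}_{(X_{\bul},D_{\bul}\cup Z_{\bul})/S}),P^{D_{\bul}})\otimes_{\mab Z}{\mab Q}
\end{equation*}
in ${\rm D}^+{\rm F}(f^{-1}_{\bul}({\cal K}_S))$, and hence (by the explicit descriptions (\ref{theo:excp}), (\ref{theo:excop})) a degreewise filtered isomorphism of the cosimplicial filtered complexes whose single complexes (with the diagonal filtration) define (\ref{ali:simppws}) and (\ref{ali:opws})$\otimes_{\mab Z}{\mab Q}$.

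The second step is the formal one: the functors ${\bf s}(-)$ and $\delta(L,P)$ that convert a cosimplicial filtered complex into the input of the spectral sequence are functorial, so a filtered isomorphism in the filtered derived category between the two cosimplicial filtered complexes induces an isomorphism between the associated spectral sequences from the $E_1$-page on. Here I would invoke (\ref{coro:ccco}) together with (\ref{lemm:grrdld}) to see that the graded pieces correspond: explicitly, ${\rm gr}_k^{\delta(L,P)}{\bf s}(K^{\bul\bul})$ has the description (\ref{ali:genwt}), whose crystalline analogue is the $E_1$-term of (\ref{ali:opws}), and the isomorphism (\ref{eqn:cccopd}) on graded pieces (proved via the relative Shiho comparison isomorphism \cite[Corollary 2.3.9]{s2}, equivalently (\ref{eqn:kymc}), exactly as in the proof of (\ref{coro:ccco})) identifies these $E_1$-terms compatibly with the differentials. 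Since both spectral sequences converge to $R^hf^{\rm conv}_{(X_{\bul},D_{\bul}\cup Z_{\bul})/S*}({\cal K}_{(X_{\bul},D_{\bul}\cup Z_{\bul})/S})$ and $R^hf^{\rm crys}_{(X_{\bul},D_{\bul}\cup Z_{\bul})/S*}({\cal O}_{(X_{\bul},D_{\bul}\cup Z_{\bul})/S})\otimes_{\mab Z}{\mab Q}$ respectively, which are identified by the constant-simplicial-direction case of (\ref{theo:hct}) (applied after first reducing to the split case via (\ref{prop:dissmp}), then to the affine case, where the comparison of log convergent and log crystalline cohomology of the trivial-coefficient isocrystal holds), the abutments also correspond, so we get a genuine isomorphism of spectral sequences.

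The main obstacle, as usual for such ``deduce the spectral sequence from the filtered complex'' arguments, is the independence/canonicity bookkeeping: one must check that the isomorphism of spectral sequences does not depend on the auxiliary choices (the affine simplicial open covering, the simplicial immersions (\ref{cd:cdcpq}) and their $(N,\infty)$-truncated analogues, and the injective/filtered-injective resolutions used to represent the filtered complexes), and that the two ways of building the $E_1$-term — via $\tau$ on $R\eps_*$ versus via $P^{D}$ on the $C$-complex — are matched by (\ref{theo:wtvsca}) in a way compatible with the cosimplicial differentials. This is handled exactly as in the proof of (\ref{coro:ccco}) and of (\ref{theo:excp}) (the proof of \cite[(6.17)]{nh3}): one passes to a common refinement of two choices using (\ref{prop:dissmp}) and (\ref{rema:trhd}), uses the functoriality of Tsuzuki's $\Gamma$-construction from the proof of (\ref{theo:hct}), and checks that the resulting square of filtered complexes commutes up to filtered homotopy, which is a routine (if lengthy) diagram chase that I would only sketch, citing the cited proofs for the repetitive parts.
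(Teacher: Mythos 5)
Your proposal is correct in substance and follows essentially the same route as the paper: both deduce the isomorphism of the spectral sequences from the filtered comparison isomorphism (\ref{eqn:cccopd}) of (\ref{coro:ccco}), using that the $E_1$-terms and differentials of (\ref{ali:simppws}) and (\ref{ali:opws}) are computed from the graded pieces of the diagonal filtration, which match by (\ref{ali:genwt}) and its crystalline analogue.

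The one place where your formulation needs the paper's more careful treatment is the parenthetical claim that (\ref{coro:ccco}) may be ``applied with the truncation $N$ running over all nonnegative integers, i.e.\ to the full simplicial object.'' The comparison theorem (\ref{theo:hct}) and hence (\ref{coro:ccco}) are only established for $N$-truncated simplicial objects, because the proof uses Tsuzuki's functor $\Gam^{\cal C}_N$ built from a single embedding $Y'_N\os{\sus}{\lo}{\cal Y}'(N)$ of the top term; there is no direct statement for the untruncated object, and one cannot simply pass to a limit over $N$ of isomorphisms in the filtered derived category without checking their compatibility. The paper instead fixes the cohomological degree $h$, chooses $N$ sufficiently large, obtains $E_{r,{\rm conv}}^{-k,h+k}\simeq E_{r,{\rm crys}}^{-k,h+k}$ for the $N$-truncated spectral sequences from (\ref{eqn:cccopd}), and then identifies these with the $E_r$-terms of the untruncated spectral sequences using the bound of \cite[(2.2)]{nh3} (as in (\ref{theo:kcrsp})). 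Your remark that ``the spectral sequences are built from the $E_1$-terms degreewise'' is morally this argument, but you should make the fix-$h$-and-truncate step explicit rather than asserting the comparison for the full simplicial object. With that adjustment your proof coincides with the paper's.
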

\begin{proof} 
Fix $h\in {\mab N}$. Let $N$ be a sufficiently large integer. 
Then we have the spectral sequence
of 
$$R^hf^{\rm conv}_{(X_{\bul \leq N},D_{\bul \leq N}
\cup Z_{\bul \leq N})/S*}
({\cal K}_{(X_{\bul \leq N},
D_{\bul \leq N}\cup Z_{\bul \leq N})/S})$$ 
which is analogous to 
(\ref{ali:simppws}). 
We also have the following spectral sequence of 
$$R^hf^{\rm crys}_{(X_{\bul \leq N},
D_{\bul \leq N}\cup Z_{\bul \leq N})/S*}
({\cal O}_{(X_{\bul \leq N},
D_{\bul \leq N}\cup Z_{\bul \leq N})/S})$$ 
which is analogous to (\ref{ali:opws}). 
Let 
$$E_{r,{\rm conv}}^{-k, h+k}((X_{\bul \leq N}, 
D_{\bul \leq N}\cup Z_{\bul \leq N})/S_K) 
\quad (r\geq 1)$$ 
and 
$$E_{r,{\rm crys}}^{-k, h+k}((X_{\bul \leq N},
D_{\bul \leq N}\cup Z_{\bul \leq N})/S)\quad  (r\geq 1)$$ 
be the $E_r$-terms of these spectral sequences, respectively.  
Then 
$E_{r,{\rm conv}}^{-k, h+k}((X_{\bul \leq N}, 
D_{\bul \leq N}\cup Z_{\bul \leq N})/S_K) \simeq 
E_{r,{\rm crys}}^{-k, h+k}((X_{\bul \leq N},
D_{\bul \leq N}\cup Z_{\bul \leq N})/S)$
by (\ref{eqn:cccopd}). 
Because $N$ is sufficiently large, 
$E_{r,{\rm conv}}^{-k, h+k}((X_{\bul \leq N}, 
D_{\bul \leq N}\cup Z_{\bul \leq N})/S_K) \simeq 
E_{r,{\rm conv}}^{-k, h+k}((X_{\bul}, 
D_{\bul}\cup Z_{\bul})/S_K)$ and  
$E_{r,{\rm crys}}^{-k, h+k}((X_{\bul \leq N},
D_{\bul \leq N}\cup Z_{\bul \leq N})/S) 
\simeq 
E_{r,{\rm crys}}^{-k, h+k}
((X_{\bul},D_{\bul}\cup Z_{\bul})/S)$. 
Hence 
$E_{r,{\rm conv}}^{-k, h+k}((X_{\bul}, 
D_{\bul}\cup Z_{\bul})/S_K) \simeq 
E_{r,{\rm crys}}^{-k, h+k}((X_{\bul}, 
D_{\bul}\cup Z_{\bul})/S_K)$ (cf.~\cite[(2.2)]{nh3}). 
We complete the proof. 
\end{proof}


Using (\ref{coro:compss}) we obtain the following: 

\begin{theo}[{\bf $E_2$-degeneration}]\label{theo:ssd}
The spectral sequence 
$(\ref{ali:simppws})$ degenerates at $E_2$ if 
$X_{\bul}$ is proper over $S_1$ and if 
$Z_{\bul}=\emptyset$. 
\end{theo}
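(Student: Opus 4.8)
\textbf{Proof plan for (\ref{theo:ssd}).}
The strategy is to reduce the $E_2$-degeneration of the convergent weight spectral sequence (\ref{ali:simppws}) to that of the crystalline weight spectral sequence (\ref{ali:opws}), which has already been established in \cite{nh3}. The bridge between the two is (\ref{coro:compss}): under the hypothesis that $(X_{\bul},D_{\bul})$ admits the disjoint union of an affine simplicial open covering over $S$ — which is automatic when $X_{\bul}$ is proper over $S_1$, since each $\os{\circ}{X}_m$ is then quasi-compact, so (\ref{lemm:disj}) applies — the spectral sequences (\ref{ali:simppws}) and $(\ref{ali:opws})\otimes_{\mab Z}{\mab Q}$ are canonically isomorphic (we are in the case $Z_{\bul}=\emptyset$). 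Thus the plan is: first, invoke (\ref{lemm:disj}) to produce the required affine simplicial open covering and hence to apply (\ref{coro:compss}); second, cite the $E_2$-degeneration of (\ref{ali:opws}) from \cite{nh2} and \cite{nh3}, valid because $\os{\circ}{X}_{\bul}$ is proper over $\os{\circ}{S}_1$; third, transport this degeneration across the isomorphism of (\ref{coro:compss}). Since $E_2$-degeneration is a statement purely about the differentials $d_r$ $(r\geq 2)$ being zero, and an isomorphism of spectral sequences carries differentials to differentials, $d_r=0$ for $(\ref{ali:opws})\otimes_{\mab Z}{\mab Q}$ immediately forces $d_r=0$ for (\ref{ali:simppws}).

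Concretely, I would first recall that the $E_2$-degeneration of (\ref{ali:opws}) over $\mab Q$ follows from the $E_2$-degeneration of the crystalline weight spectral sequence of a proper smooth simplicial scheme with simplicial SNCD over $S_1$, which in turn rests on the $p$-adic weight-monodromy / purity arguments of \cite{nh2} (the $E_2$-degeneration being one of the main corollaries obtained there via the comparison with the de Rham--Witt complex and Mokrane's / Nakkajima's weight arguments). The only subtlety is that (\ref{ali:opws}) is built from the crystalline cohomology sheaves $R^{h-2t-k}f^{\rm crys}_{(D^{(t+k)}_t)/S*}({\cal O}_{\cdots}\otimes\vp^{(t+k)}_{\rm crys})$, which are finitely generated ${\cal O}_S$-modules up to torsion; tensoring with $\mab Q$ kills the torsion, and the degeneration after $\otimes_{\mab Z}{\mab Q}$ is exactly what \cite{nh2}, \cite{nh3} provide in the proper case. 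I would cite this as a known input rather than reprove it.

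The main (and essentially only) obstacle is a bookkeeping one: making sure that the isomorphism furnished by (\ref{coro:compss}) is genuinely an isomorphism of spectral sequences — i.e. compatible with the $d_r$'s — and not merely a termwise isomorphism of the $E_1$- (or $E_r$-) pages. Inspecting the proof of (\ref{coro:compss}), the isomorphism is induced for each fixed $h$ at a truncation level $N\gg 0$ by the filtered quasi-isomorphism (\ref{eqn:cccopd}) of (\ref{coro:ccco}), which is a morphism in the filtered derived category; a morphism in ${\rm D}^+{\rm F}$ induces a morphism of the associated spectral sequences, and a filtered quasi-isomorphism induces an isomorphism of spectral sequences from the $E_1$-term onward. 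Hence compatibility with differentials is automatic, and the degeneration transports. I would also note, as in \cite[(2.2)]{nh3} and the proof of (\ref{coro:compss}), that the passage from the truncated spectral sequences to the untruncated ones (\ref{ali:simppws}) and (\ref{ali:opws}) is harmless for any fixed total degree once $N$ is large enough, so the $E_2$-degeneration statement is unaffected by the truncation. Assembling these three ingredients — (\ref{lemm:disj}) to get the covering, the known degeneration of (\ref{ali:opws})$\otimes_{\mab Z}{\mab Q}$, and the spectral-sequence isomorphism (\ref{coro:compss}) — completes the proof.
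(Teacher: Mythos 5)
Your proposal is correct and follows essentially the same route as the paper: the paper's proof simply cites the $E_2$-degeneration of $(\ref{ali:opws})\otimes_{\mab Z}{\mab Q}$ established in \cite{nh3} (by the argument of \cite[(2.17.2)]{nh2}) in the proper case with $Z_{\bul}=\emptyset$, and then transports it through the comparison isomorphism (\ref{coro:compss}). Your additional remarks — that properness supplies the affine simplicial open covering via (\ref{lemm:disj}), and that (\ref{coro:compss}) gives an isomorphism of spectral sequences compatible with the differentials because it comes from the filtered quasi-isomorphism (\ref{eqn:cccopd}) — are just explicit verifications of points the paper leaves implicit.
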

\begin{proof} 
In \cite[(5.6)]{nh3}, by the same proof as that of 
\cite[(2.17.2)]{nh2}, 
we have proved the $E_2$-degeneration of 
$(\ref{ali:opws})\otimes_{\mab Z}{\mab Q}$ 
when $X_{\bul}$ is proper over $S_1$ and 
when $Z_{\bul}=\emptyset$. 
Hence (\ref{theo:ssd}) follows from 
(\ref{coro:compss}). 
\end{proof}


\begin{prop}[{\bf Base change theorem}]\label{prop:bchange} 
Let ${\cal V}'$, $K'$, $\pi'$, $S'$ and $S_1'$ 
be as in {\rm \S\ref{sec:fpw}}. 
Let  
\begin{equation*}
\begin{CD} 
S' @>{u}>> S \\
@VVV @VVV \\
{\rm Spf}({\cal V}')  @>>> {\rm Spf}({\cal V})
\end{CD}
\tag{11.15.1}\label{cd:bbsmlgs}
\end{equation*}
be  a commutative diagram of $p$-adic formal schemes. 
Let $Y$ $($resp.~$Y')$ 
be a quasi-compact smooth scheme over 
$S_1$ $($resp.~$S'_1)$ $($with trivial log structure$)$. 
Let $N$ be a nonnegative integer. 
Let 
$f \col 
(X_{\bul \leq N},D_{\bul \leq N}\cup 
Z_{\bul \leq N}) \lo Y$ 
be a morphism from a smooth 
$N$-truncated simplicial scheme with 
transversal $N$-truncated simplicial relative SNCD's
$D_{\bul \leq N}$ and $Z_{\bul \leq N}$ 
such that the morphism 
$\os{\circ}{f} \col X_{\bul \leq N} \lo Y$ 
is smooth, quasi-compact and quasi-separated.   
Let
\begin{equation*}
\begin{CD}
(X'_{\bul \leq N},D'_{\bul \leq N}\cup Z'_{\bul \leq N}) 
@>{g_{\bul \leq N}}>> 
(X_{\bul \leq N},D_{\bul \leq N}\cup Z_{\bul \leq N})\\
 @V{f'}VV  @VV{f}V \\
Y' @>{h}>> Y \\
@V{}VV  @VV{}V \\
(S',p{\cal O}_{S'},[~]') @>{u}>> (S,p{\cal O}_S,[~])
\end{CD}
\end{equation*}
be a commutative diagram of 
$N$-truncated simplicial $($log$)$ schemes 
such that the upper rectangle is cartesian, 
where $[~]'$ and $[~]$ are the canonical PD-structures on 
$p{\cal O}_{S'}$ and $p{\cal O}_S$, respectively.  
Let 
$$f_{(X_{\bul \leq N},Z_{\bul \leq N})} 
\col (X_{\bul \leq N},Z_{\bul \leq N}) \lo Y \quad  
{\text and} \quad f_{(X'_{\bul \leq N},Z'_{\bul \leq N})} 
\col (X'_{\bul \leq N},Z'_{\bul \leq N}) \lo Y'$$ 
be the induced morphisms by $f$ and $f'$, respectively. 
Then the base change morphism 
\begin{equation*}
{\cal K}_{S'}\otimes^L_{{\cal K}_S}
(Rf^{\rm conv}_{(X_{\bul \leq N},D_{\bul \leq N}\cup Z_{\bul \leq N})/S*}
({\cal K}_{(X_{\bul \leq N},D_{\bul \leq N}\cup Z_{\bul \leq N})/S}),
\del(L,P^{D_{\bul \leq N}}))
\os{\sim}{\lo} 
\tag{11.15.2}\label{eqn:simbct}
\end{equation*}
$$
(Rf^{\rm conv}_{(X'_{\bul \leq N},D'_{\bul \leq N}\cup Z'_{\bul \leq N})/S*}
({\cal K}_{(X'_{\bul \leq N},D'_{\bul \leq N}\cup Z'_{\bul \leq N})/S}),
\del(L,P^{D_{\bul \leq N}}))$$
is an isomorphism in the filtered derived category  
${\rm DF}({\cal K}_{Y'/S'})$. 
\end{prop}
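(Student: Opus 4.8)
The plan is to deduce the statement from the corresponding filtered base change theorem in the log crystalline setting obtained in \cite{nh2}, by first reducing it to a statement about the graded pieces of the diagonal filtration $\del(L,P)$ and then identifying those pieces via $p$-adic purity. As a preliminary reduction I would pass to the case $\pi{\cal V}=p{\cal V}$, so that the canonical PD-structure on $p{\cal O}_S$ is available and the log crystalline machinery applies: this uses the invariance results (\ref{prop:toi}) and (\ref{lemm:kevp}) exactly as in the proof of (\ref{coro:ccco}), together with the comparison (\ref{theo:wtvsca}), which lets me represent $(E_{\rm conv}({\cal K}_{(X_{\bul\le N},D_{\bul\le N}\cup Z_{\bul\le N})/S}),P^{D_{\bul\le N}})$, hence $(Rf^{\rm conv}_{(X_{\bul\le N},D_{\bul\le N}\cup Z_{\bul\le N})/S*}({\cal K}),\del(L,P^{D_{\bul\le N}}))$, by the explicit de Rham--type filtered complexes on the systems of universal enlargements of a \v{C}ech--bisimplicial embedding. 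The base change morphism of the statement is the one induced by these explicit descriptions, where the needed base change at the level of the universal enlargements and of the log differentials comes from (\ref{lemm:bcue}) and the flatness of the formal schemes over ${\cal V}$; one checks directly that it is a morphism of filtered complexes for $\del(L,P)$.

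Since $\del(L,P)$ is exhaustive and, in each fixed cohomological degree, bounded, and since $-\otimes^L_{{\cal K}_S}{\cal K}_{S'}$ commutes with the graded functors and with $\vpl_n$ once one knows the relevant complexes have coherent cohomology of finite tor-amplitude over ${\cal K}_S$ (this is where the finiteness results of the paper, notably (\ref{coro:pf}) applied to the strata over $Y$, together with (\ref{lemm:grrdld}), enter), it suffices to prove that ${\rm gr}^{\del(L,P)}_k$ of the base change morphism is an isomorphism for every $k$. By (\ref{eqn:grdif}) and the computation (\ref{ali:genwt}), which rests on $p$-adic purity (\ref{theo:calvc}), the complex ${\rm gr}^{\del(L,P)}_k$ is a \emph{finite} direct sum over $t$ of shifts and Tate twists of
$$Rf^{\rm conv}_{(D^{(t+k)}_t,Z\vert_{D^{(t+k)}_t})/S*}\bigl({\cal K}_{(D^{(t+k)}_t,Z\vert_{D^{(t+k)}_t})/S}\otimes_{\mab Z}\vp^{(t+k)}_{\rm conv}(D_t/S;Z_t)\bigr),$$
pushed forward to $Y$, and the base change morphism respects this decomposition (here one must check compatibility with the Poincar\'e residue and Gysin identifications of the graded pieces, as in \S\ref{sec:bd}). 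Because the orientation sheaf is, locally on its base, the constant isocrystal ${\mab Z}$, hence a crystal of flat ${\cal K}$-modules, the problem is reduced to the base change theorem for the relative log convergent cohomology of a coherent isocrystal on a smooth scheme with a relative SNCD over $Y$.

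To finish, I would invoke the relative-over-$Y$ version of Shiho's comparison theorem (the relative analogue of (\ref{theo:hct}); cf.\ (\ref{rema:nth})) to identify this relative log convergent cohomology, after $\otimes_{\mab Z}{\mab Q}$, with relative log crystalline cohomology, where the base change theorem is the one recorded in \cite{nh2} (equivalently, Berthelot--Ogus), its hypotheses being guaranteed by the coherence and finite tor-amplitude furnished again by (\ref{coro:pf}); alternatively, one can argue directly within the convergent framework using (\ref{theo:pl}), (\ref{prop:cdfza}) and (\ref{lemm:bcue}). The independence of the resulting isomorphism from the chosen affine simplicial open covering and the chosen embeddings is then a routine argument, parallel to the one in the proof of (\ref{theo:hct}) and using (\ref{prop:dissmp}). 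The main obstacle I anticipate is the bookkeeping at the level of the base change morphism itself: producing it as an honest morphism in ${\rm DF}({\cal K}_{Y'/S'})$ compatibly, all at once, with the $N$-truncated simplicial, the \v{C}ech-bisimplicial, and the universal-enlargement inverse systems, and then verifying strict compatibility with $\del(L,P)$ and with the residue isomorphisms computing the graded pieces; a closely related technical point is the interchange of $-\otimes^L_{{\cal K}_S}{\cal K}_{S'}$ with the inverse limit $\vpl_n$ occurring in the isozariskian/convergent description, which is precisely what the strict perfectness input is needed for and which, since $X_{\bul\le N}\to Y$ is only assumed smooth and quasi-compact, must be applied over $Y$ (using properness of the strata over $Y$ where available, or a local-on-$Y$ reduction). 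Modulo these points the statement is a formal consequence of results already established in the excerpt.
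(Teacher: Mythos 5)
Your proposal is correct in substance, and its ultimate engine is the same as the paper's: transfer the filtered base change theorem from the log crystalline side, where it was proved in \cite[(2.10.3)]{nh2}, to the log convergent side via the convergent/crystalline comparison. The paper's actual proof is essentially one line: it cites the crystalline filtered base change theorem and then applies the comparison isomorphism (\ref{eqn:cccopd}) to the \emph{whole} filtered complex at once, exactly as in the proof of (\ref{theo:ssd}). You instead first reduce to the graded pieces of $\del(L,P^{D_{\bul\leq N}})$, identify them via (\ref{eqn:grdif}), (\ref{ali:genwt}) and the $p$-adic purity (\ref{theo:calvc}) as finite sums of relative log convergent cohomologies of the strata twisted by orientation sheaves, and only then invoke the (relative) comparison theorem and the crystalline base change on each summand. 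This detour is logically sound but redundant: once (\ref{eqn:cccopd}) is in hand as a \emph{filtered} isomorphism, there is no need to pass to graded pieces, and doing so forces you to verify additional compatibilities (with the residue/Gysin identifications and with the decomposition over $t$) that the direct argument avoids. What your route buys, on the other hand, is an explicit treatment of two points the paper glosses over: that the comparison and the finiteness/tor-amplitude inputs must be applied relative to $Y$ rather than over $S$ (the relative version of Shiho's theorem alluded to in (\ref{rema:nth})), and that the commutation of $\otimes^L_{{\cal K}_S}{\cal K}_{S'}$ with $\varprojlim_n$ in the universal-enlargement description requires the coherence and finite tor-dimension furnished by (\ref{coro:pf}). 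Either way the statement is established; your version is longer but more self-aware about where the hypotheses are used.
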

\begin{proof}
In \cite[(5.9)]{nh2} 
we have proved that the log crystalline version of 
the morphism (\ref{eqn:simbct}) is a filtered isomorphism.  
Thus (\ref{prop:bchange}) immediately follows 
from the comparison theorem (\ref{eqn:cccopd}). 
\end{proof}

\begin{theo}[{\bf K\"{u}nneth formula}]\label{theo:kcrsp}
Let $h$ be a nonnegative integer and 
let $N$ be a nonnegative integer satisfying 
the following inequality in {\rm \cite[(2.2.1)]{nh3}}$:$ 
\begin{equation*}
N > \max \{i+2^{-1}(h-i+1)(h-i+2)~\vert~0\leq i \leq h\}
=2^{-1}(h+1)(h+2). 
\end{equation*} 
Let $Y$ and 
$f^i \col (X^i_{\bul \leq N}, 
D^i_{\bul \leq N}\cup Z^i_{\bul \leq N})\lo Y$ 
$(i=1,2)$ be as in {\rm (\ref{prop:bchange})}. 
Set $f_3:=f_1\times_Y f_2$, 
$X^3_{\bul \leq N}:=
X^1_{\bul \leq N}\times_Y X^2_{\bul \leq N}$, 
$D^3_{\bul \leq N}:=
(D^1_{\bul \leq N}\times_Y X^2_{\bul \leq N})
\cup(X^1_{\bul \leq N}\times_Y D^2_{\bul \leq N})$ and 
$Z^3_{\bul \leq N}:=(Z^1_{\bul \leq N}
\times_Y X^2_{\bul \leq N})\cup
(X^1_{\bul \leq N}\times_Y Z^2_{\bul \leq N})$. 
Set 
\begin{align*} 
(F^{\bul}_i,\del_i)  := &
\{Rf_{(X^i_{{\bul}\leq {N}}, 
Z^i_{{\bul}\leq {N}}){\rm conv}*}
(E^{\log,Z^i_{{\bul}\leq {N}}}_{\rm conv}
({\cal O}_{(X^i_{{\bul}\leq {N}},
D^i_{{\bul}\leq {N}}\cup 
Z^i_{{\bul}\leq {N}})/S})), \del({L},P^{D^i_{{\bul}\leq {N}}})\} \\
{} &  (i=1,2,3). 
\end{align*}
Then, for an integer $k$, 
there exists a canonical isomorphism
\begin{equation*}
{\cal H}^{h}[
(\del_1{\otimes}_{{\cal O}_S}^L\del_2)_k
\{(F^{\bul}_1,\del_1){\otimes}_{{\cal O}_S}^L
(F^{\bul}_2,\del_2)\}] 
\os{\sim}{\lo}
{\cal H}^h[(\del_3)_k(F^{\bul}_3,\del_3)].
\tag{11.16.1}\label{eqn:kcrsp}
\end{equation*} 
\end{theo}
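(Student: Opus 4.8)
The plan is to reduce the convergent Künneth formula (\ref{eqn:kcrsp}) to the crystalline Künneth formula of \cite{nh3}, exactly as (\ref{prop:bchange}) was reduced to its crystalline counterpart via the comparison isomorphism. First I would note that the diagonal-filtered complex $(F^{\bul}_i,\del_i)$ is, up to the cohomological-descent/single-complex machinery recalled in \S\ref{sec:ct}, nothing but $Ru^{\rm conv}_{(X^i_{\bul\le N},Z^i_{\bul\le N})/S*}$ applied to $(E_{\rm conv}({\cal K}_{(X^i_{\bul\le N},D^i_{\bul\le N}\cup Z^i_{\bul\le N})/S}),P^{D^i_{\bul\le N}})$ and then passed through the diagonal filtration $\del(L,P)$ of (\ref{ali:ddifl}); by (\ref{coro:ccco}) (the comparison theorem for weight-filtered vanishing cycle isozariskian and zariskian complexes) each such object is canonically isomorphic, after $\otimes_{\mab Z}{\mab Q}$, to the crystalline object $(E_{\rm zar}({\cal O}_{(X^i_{\bul\le N},D^i_{\bul\le N}\cup Z^i_{\bul\le N})/S}),P^{D^i_{\bul\le N}})\otimes_{\mab Z}{\mab Q}$ pushed forward. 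Since ${\cal K}_{S'}={\cal O}_S\otimes_{\mab Z}{\mab Q}$ and $\otimes_{\mab Z}{\mab Q}$ is exact and commutes with everything in sight, the derived tensor products $\otimes^L_{{\cal O}_S}$ and $\otimes^L_{{\cal K}_S}$ match up, and the bifiltration $\del_1\otimes^L\del_2$ transports correctly. This is the step where (\ref{coro:ccco}), (\ref{lemm:kevp}) (so that we may take $\pi=p$), and (\ref{theo:hct})/Shiho's comparison are the essential inputs.

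Second, I would invoke the K\"unneth formula in \cite{nh3} for the crystalline side: by the argument recalled there (using the inequality on $N$ in \cite[(2.2.1)]{nh3}, which guarantees that the truncated simplicial computations stabilize in degree $\le h$), there is a canonical isomorphism ${\cal H}^h[(\del_1\otimes^L_{{\cal O}_S}\del_2)_k\{\cdots\}]\os{\sim}{\lo}{\cal H}^h[(\del_3)_k(\cdots)]$ for the crystalline vanishing-cycle complexes. Transporting this isomorphism through the comparison isomorphism of the previous paragraph yields (\ref{eqn:kcrsp}). I would also need to check compatibility of the external product structure: the natural product $(X^1\times_Y X^2)$-side complex receives a morphism from the tensor product of the two factor complexes, and this morphism is compatible with the comparison isomorphism (\ref{eqn:cccopd}) — this follows from the functoriality of Tsuzuki's functor $\Gam^{\cal C}_N$ in the proof of (\ref{theo:hct}) together with the multiplicativity of the de Rham linearization, which is already built into the dga-description (\ref{theo:afkz}).

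Concretely the order of steps would be: (1) reduce to $\pi=p$ by (\ref{lemm:kevp}); (2) rewrite each $(F^{\bul}_i,\del_i)$ in terms of $(E_{\rm conv}({\cal K}_{\cdots}),P^{D^i_{\bul\le N}})$ and the diagonal filtration, using (\ref{coro:compss})'s framework for the spectral sequence and (\ref{eqn:grdif}) for the graded pieces; (3) apply (\ref{coro:ccco}) to pass to the crystalline isozariskian complexes $\otimes_{\mab Z}{\mab Q}$, checking that $\del(L,P)$ is preserved; (4) apply the multiplicativity/external-product compatibility of the comparison map; (5) quote the crystalline K\"unneth isomorphism of \cite{nh3} and transport it back. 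The main obstacle I expect is step (4): verifying that the external cup-product morphism on the convergent side is carried to the one on the crystalline side by the comparison isomorphism, i.e. that all the identifications — Thom–Whitney single complexes, $\Gam^{\cal C}_N$, the maps (\ref{eqn:dyt}) from PD-envelopes to universal enlargements, and the de Rham–linearization products — are mutually compatible in a way that respects both the simplicial structure and the diagonal filtration $\del(L,P^D)$. This is essentially a bookkeeping problem about signs and filtrations, but it is the part that cannot simply be cited; everything else is a direct application of results already established in the excerpt and in \cite{nh2}, \cite{nh3}.
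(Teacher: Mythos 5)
Your proposal is correct and follows essentially the same route as the paper: the paper's proof simply cites the preweight-filtered Künneth formula for the log crystalline case from \cite[(5.10)]{nh3} and transports it through the comparison isomorphism (\ref{eqn:cccopd}), arguing "as in (\ref{theo:ssd})" via the truncation argument of (\ref{coro:compss}). The extra care you devote to the multiplicativity of the comparison map (your step (4)) is a reasonable point that the paper elides as immediate, but it does not change the structure of the argument.
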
 
\begin{proof} 
In \cite[(5.10)]{nh3} we have proved 
the analogous preweight-filtered K\"{u}nneth formula 
for the log crystalline case. 
Thus (\ref{eqn:kcrsp}) immediately follows from 
the comparison theorem (\ref{eqn:cccopd})  as in (\ref{theo:ssd}).  
\end{proof}

\begin{theo}[{\bf Strict compatibility}]\label{theo:stcom}  
Assume that ${\cal V}'={\cal V}$ and $S'=S$ 
in {\rm (\ref{prop:bchange})}.  
Let $f \col (X_{\bul},D_{\bul}) \lo S_1$ 
and $f' \col (X'_{\bul},D'_{\bul})\lo S_1$ 
be proper smooth simplicial schemes 
with relative SNCD's over $S_1$.
Let 
$g\col (X'_{\bul},D'_{\bul}) \lo (X_{\bul},D_{\bul})$ 
be a morphism of simplicial log 
schemes over $S_1$. 
Let $h$ be an integer. 
Then the induced morphism 
\begin{equation*} 
g^* \col 
R^hf^{\rm conv}_{(X_{\bul},D_{\bul})/S*}
({\cal K}_{(X_{\bul},D_{\bul})/S})
\lo R^hf^{\rm conv}_{(X'_{\bul},D'_{\bul})/S*}
({\cal K}_{(X'_{\bul},D'_{\bul})/S}) 
\tag{11.17.1}\label{eqn:hfcds}
\end{equation*}
is strictly compatible with the weight filtration.
\end{theo}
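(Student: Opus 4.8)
The plan is to reduce the strict compatibility assertion for convergent cohomology to the corresponding statement in log crystalline cohomology, which was established in \cite{nh2} (it is proved there by the same argument as \cite[(2.17.2)]{nh2}, using the $E_2$-degeneration of the weight spectral sequence). The bridge is the comparison isomorphism of (\ref{coro:ccco}) together with the $E_2$-degeneration (\ref{theo:ssd}). Concretely, the weight filtration on $R^hf^{\rm conv}_{(X_{\bul},D_{\bul})/S*}({\cal K}_{(X_{\bul},D_{\bul})/S})$ is the one induced (via the abutment of the $E_2$-degenerate weight spectral sequence (\ref{ali:simppws}) with $Z_{\bul}=\emptyset$) by the diagonal filtration $\del(L,P^{D_{\bul}})$ on $Rf^{\rm conv}_{(X_{\bul},D_{\bul})/S*}({\cal K}_{(X_{\bul},D_{\bul})/S})$.

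First I would pass to a sufficiently large $N$ (depending on $h$, as in (\ref{coro:compss}) and \cite[(2.2.1)]{nh3}) so that the truncated objects $(X_{\bul\leq N},D_{\bul\leq N})$ and $(X'_{\bul\leq N},D'_{\bul\leq N})$ compute the cohomology in degree $h$ and its weight filtration; I would also choose the disjoint unions of affine simplicial open coverings of both simplicial log schemes compatibly with $g$, using (\ref{prop:dissmp}) (1) and (\ref{rema:trhd}). This gives \v{C}ech diagrams and, by (\ref{theo:excop}) and (\ref{theo:excp}), explicit filtered representatives of $(E_{\rm isozar},P^{D_{\bul\leq N}})$ and $(E_{\rm zar},P^{D_{\bul\leq N}})$, together with the comparison filtered quasi-isomorphism (\ref{eqn:cccopd}); this comparison is functorial in $g$ by the functoriality built into the proof of (\ref{theo:hct}) (the commutative diagram (\ref{eqn:funuyz})) and the functoriality statement in (\ref{theo:cfi}). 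Next I would invoke (\ref{theo:fcpwczo}) (1) (or rather the induced map on the abutment of (\ref{ali:simppws})) to get the morphism $g^*$ in (\ref{eqn:hfcds}) as the realization of a morphism of filtered complexes, and identify it under (\ref{eqn:cccopd}) with the corresponding log crystalline pullback $g^*\otimes_{\mab Z}{\mab Q}$ on $R^hf^{\rm crys}_{(X_{\bul},D_{\bul})/S*}({\cal O}_{(X_{\bul},D_{\bul})/S})\otimes_{\mab Z}{\mab Q}$, compatibly with the weight filtrations (which correspond under (\ref{eqn:cccopd}) by the $E_2$-degeneration of both spectral sequences, (\ref{theo:ssd}) and its crystalline counterpart in \cite{nh2}, \cite{nh3}).

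Finally, since the weight filtration on $R^hf^{\rm crys}_{(X_{\bul},D_{\bul})/S*}({\cal O}_{(X_{\bul},D_{\bul})/S})\otimes_{\mab Z}{\mab Q}$ is the one for which strict compatibility of $g^*$ was proved in \cite{nh2} (via $E_2$-degeneration, exactly as in \cite[(2.17.2)]{nh2}), and the isomorphism (\ref{eqn:cccopd}) is a \emph{filtered} isomorphism compatible with $g^*$, strict compatibility transfers to the convergent side. The main obstacle I expect is the bookkeeping needed to make the comparison isomorphism (\ref{eqn:cccopd}) genuinely functorial with respect to the morphism $g$ of simplicial log schemes at the level of the diagonal-filtered complexes $(\,\bullet\,,\del(L,P))$ — i.e. checking that the \v{C}ech/Tsuzuki-functor construction in the proofs of (\ref{theo:hct}), (\ref{theo:excp}) and (\ref{theo:excop}) can be run simultaneously for $(X_{\bul},D_{\bul})$ and $(X'_{\bul},D'_{\bul})$ in a way that is strictly compatible with $g$ and with the passage to the diagonal filtration of $L$ and $P^{D_{\bul}}$. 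Once that compatibility is in hand, the strictness is purely formal, being inherited from the already-established crystalline case; alternatively, one can argue directly that the spectral sequence (\ref{ali:simppws}) degenerates at $E_2$ (by (\ref{theo:ssd})), so the weight filtration on the abutment is $g^*$-functorially a sub quotient filtration of an $E_2$-degenerate spectral sequence, whence strict compatibility, again reducing to (\ref{coro:compss}).
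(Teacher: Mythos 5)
Your proposal is correct and follows essentially the same route as the paper: the strict compatibility is inherited from the log crystalline case established in \cite{nh2} (and its simplicial extension in \cite{nh3}) via the filtered comparison isomorphism (\ref{eqn:cccopd}), whose functoriality in $g$ is exactly the point you flag and is handled in the paper by the functoriality built into (\ref{theo:hct}) and (\ref{theo:cfi}). The only discrepancy is a citation detail: the paper attributes the crystalline strict compatibility to \cite[(2.18.2)]{nh2} rather than \cite[(2.17.2)]{nh2}, the latter being the $E_2$-degeneration statement.
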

\begin{proof} 
In \cite[(2.18.2)]{nh2} we have proved 
the strict compatibility of $g^*$ 
with respect to the weight filtration in the case of 
log crystalline cohomologies for the constant case. 
In fact we obtain the strict compatibility 
in the case of log crystalline cohomologies 
for the simplicial case (cf.~\cite[(8.5)]{nh3}) by the same proof as that of [loc.~cit.]. 
Now (\ref{theo:stcom}) follows from the comparison theorem 
(\ref{eqn:cccopd}). 
\end{proof}

\section{Log convergent cohomologies  
with compact supports}\label{sec:cptsc}
In this section we give fundamental results for the log convergent cohomology sheaf
with compact support. 
Because the proofs of the results are almost 
the same as those in \cite[(2.11)]{nh2}, we omit several of them. 
\par  
Let $S$ be as in \S\ref{sec:logcd}. 
Let $Y$ be a log smooth scheme over $S_1$. 
Let $U$ be a log open subscheme of $Y$. 
Let $\iota \col U \os{\sus}{\lo} {\cal U}$ 
be an immersion  into a log smooth noetherian 
log $p$-adic formal scheme over $S$ 
which is topologically of finite type over $S$. 
Let $\{T_n\}_{n=1}^{\infty}$ be 
the system of the universal enlargements 
of $\iota$. 
Let $(Y/S)_{\rm Rconv}$ be 
a subtopos of ${\rm Conv}(Y/S)$ 
whose objects are $T_n$'s for all 
immersions $\iota$'s above.  
Let 
\begin{equation*} 
Q^{\rm conv}_{Y/S} \col ({Y/S})_{\rm Rconv} \lo (Y/S)_{\rm conv} 
\tag{12.0.1}\label{eqn:rccd}
\end{equation*} 
be a morphism of topoi such that 
$Q^{{\rm conv}*}_{Y/S}(E)$ for an object $E\in (Y/S)_{\rm conv}$ 
is the natural restriction of $E$. 
Set 
$$\ol{u}{}^{\rm conv}_{Y/S}
:=u^{\rm conv}_{Y/S}\circ Q^{\rm conv}_{Y/S}
\col ({Y/S})_{\rm Rconv} \lo Y_{\rm zar}.$$

\begin{lemm}\label{lemm:nex}
Let $(U,{\cal U},\iota)$ be as above.   
Let $\bet_n\col {\mathfrak T}_{U,n}({\cal U})\lo U$ and 
$\bet_n(1) \col {\mathfrak T}_{U,n}({\cal U}\times_S{\cal U})\lo U$ 
be the natural morphisms. 
Then the following sequence 
\begin{equation*} 
u^{\rm conv}_{Y/S*}(E)\vert_U \lo 
\vpl_n\bet_{n*}(E_{{\mathfrak T}_{U,n}({\cal U})})
\os{\lo}{\lo} \vpl_n\bet_n(1)_*
(E_{{\mathfrak T}_{U,n}({\cal U}\times_S{\cal U})}) 
\tag{12.1.1}\label{eqn:rcucd}
\end{equation*} 
is exact. 
\end{lemm}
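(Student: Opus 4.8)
The plan is to establish the exactness of (\ref{eqn:rcucd}) by reducing it to a local statement about the restricted convergent topos and then invoking the sheaf axiom together with the already-developed machinery of universal enlargements. First I would recall that $u^{\rm conv}_{Y/S*}(E)\vert_U$ is characterized by the formula $u^{{\rm conv}*}_{Y/S}(F)(T)=\Gam(W,u^{-1}(F))$ for an enlargement $T=(W,T,\iota,u)$, so that for a sheaf $E$ in $(Y/S)_{\rm conv}$ and for $U$ fixed, $u^{\rm conv}_{Y/S*}(E)\vert_U$ is the sheafification on $U_{\rm zar}$ of the presheaf $V\mapsto \Gam((U/S)_{\rm conv}\vert_V, E)$. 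Using that $\ol{u}{}^{\rm conv}_{Y/S}=u^{\rm conv}_{Y/S}\circ Q^{\rm conv}_{Y/S}$ and that $Q^{{\rm conv}*}_{Y/S}$ is simply the restriction, the question becomes: does evaluating $E$ on the (restricted) covering object $\{{\mathfrak T}_{U,n}({\cal U})\}_n$ compute $u^{\rm conv}_{Y/S*}(E)\vert_U$? This is exactly the content asserted by (\ref{eqn:rcucd}), where the two parallel arrows are the two projections arising from the \v{C}ech nerve of the cover ${\mathfrak T}_{U,n}({\cal U})\to h_U$.

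The key steps, in order, would be: (1) By (\ref{eqn:chynhn}) (or its quasi-prewidening form (\ref{eqn:hynhn})) we have $h_U=\vil_n h_{{\mathfrak T}_{U,n}({\cal U})}$ in ${\rm Conv}(Y/S)$ for the immersion $\iota\colon U\os{\sus}{\lo}{\cal U}$, so the family $\{{\mathfrak T}_{U,n}({\cal U})\to h_U\}_n$ is a covering of $h_U$ in the convergent site — indeed, after passing to universal enlargements each $T_n$ is an object of ${\rm Conv}(Y/S)$ and their inductive limit of representables surjects onto $h_U$. (2) Apply the sheaf condition for $E$ with respect to this covering: for any sheaf $E$, $\Gam(h_U,E)=\mathrm{eq}\bigl(\prod_n E_{{\mathfrak T}_{U,n}({\cal U})}\rightrightarrows \prod_{n,m} E_{{\mathfrak T}_{U,n}({\cal U})\times_{h_U}{\mathfrak T}_{U,m}({\cal U})}\bigr)$; one then identifies $E_{{\mathfrak T}_{U,n}({\cal U})}=\bet_{n*}(E_{{\mathfrak T}_{U,n}({\cal U})})$ after pushing forward to $U_{\rm zar}$, using $\vpl_n$ compatibly since these are compatible systems along the transition morphisms $\phi_n$. (3) Compute the fibre product ${\mathfrak T}_{U,n}({\cal U})\times_{h_U}{\mathfrak T}_{U,m}({\cal U})$: here I would use (\ref{lemm:bcue}) (base change for universal enlargements) together with the observation that the self-product of the widening $(U,{\cal U},\iota,{\rm id})$ over $h_U$ is computed by the diagonal immersion $U\os{\sus}{\lo}{\cal U}\times_S{\cal U}$, so that $\vpl_n$ of the \v{C}ech-degree-$1$ terms is precisely $\vpl_n\bet_n(1)_*(E_{{\mathfrak T}_{U,n}({\cal U}\times_S{\cal U})})$, matching the right-hand term of (\ref{eqn:rcucd}); one must check indices match up after reindexing the bisimplicial $n,m$ pair by a single $n$ using cofinality, as in the proofs of \cite[(2.1)]{oc} and \cite[Lemma 2.1.23]{s2}. (4) Finally, sheafify over $V\os{\sus}{\lo}U$ varying: since the formation of $\vpl_n\bet_{n*}$ and of the universal enlargements is compatible with Zariski localization on $U$ (by (\ref{lemm:bcue}), second statement, for the flat — in fact open — immersions $V\os{\sus}{\lo}U$), the equalizer diagram of presheaves on $U_{\rm zar}$ is already a diagram of sheaves and its exactness at the presheaf level gives (\ref{eqn:rcucd}).

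The main obstacle I anticipate is step (3): correctly identifying the \v{C}ech-degree-one term of the covering $\{{\mathfrak T}_{U,n}({\cal U})\to h_U\}$ with $\vpl_n\bet_n(1)_*(E_{{\mathfrak T}_{U,n}({\cal U}\times_S{\cal U})})$. The subtlety is that the fibre product of the two enlargements ${\mathfrak T}_{U,n}({\cal U})$ over $h_U$ in the topos is \emph{not} literally ${\mathfrak T}_{U,n}({\cal U}\times_S{\cal U})$ at each finite level $n$ — only the inductive systems agree (this is exactly the phenomenon flagged in (\ref{rema:ntb})(2), where $T_{Z,n}(Y(1))\to T_{Z,n}(Y)\times_Y T_{Y,n}(Y(1))$ fails to be an isomorphism for $n\geq 2$). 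So one cannot argue levelwise; instead one argues with the inductive limits $h_{{\mathfrak T}_{U,n}({\cal U})}$, where the relevant comparison does hold by the analogue of \cite[(2.5)]{oc} invoked in the proof of (\ref{lemm:lcl}), and then uses that $\vpl_n$ only sees the pro-object. A secondary point requiring care is that $\vpl_n$ is merely left exact, so one must verify that the equalizer survives the inverse limit — but this is automatic since equalizers are themselves a limit and limits commute, and no $R^1\vpl_n$ obstruction enters because we are only asserting exactness of a three-term (left-exact) sequence, not a long exact sequence. The rest of the argument is a formal manipulation with the sheaf axiom and the definition of $\ol{u}{}^{\rm conv}_{Y/S}$, entirely parallel to \cite[I Proposition 4.2.1]{bb} and the proof of (\ref{lemm:uys1}).
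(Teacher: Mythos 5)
Your overall architecture is the right one and in fact coincides with the paper's proof, which is the hands-on version of exactly this \v{C}ech-descent argument (following \cite[IV Proposition 2.3.2]{bbm}): a section of $u^{\rm conv}_{Y/S*}(E)\vert_U$ is a compatible family $(s_T)$ indexed by the enlargements $(V,T,j,v)$ with $v$ factoring through $U$; such a family maps into the equalizer, and conversely an equalized family $\{s_n\}_n$ determines a compatible family. Your step (3) --- that the degree-one term must be taken as the ind-object $\{{\mathfrak T}_{U,n}({\cal U}\times_S{\cal U})\}_{n}$ rather than as a levelwise fibre product, because of the failure recorded in (\ref{rema:ntb}) (2) --- is the right precaution, and your observation that only left exactness of $\vpl_n$ is needed is also correct.

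The gap is in step (1), and it sits at the only place where the lemma has content. The formula (\ref{eqn:chynhn}) identifies $\vil_n h_{{\mathfrak T}_{U,n}({\cal U})}$ with $h_{(U,{\cal U},\iota,u)}$, the sheaf of morphisms of prewidenings into $(U,{\cal U},\iota,u)$: an enlargement $(V,T)$ maps to it only together with a chosen morphism $T\lo {\cal U}$ compatible with $V\lo U$. That is not the object whose sections compute $u^{\rm conv}_{Y/S*}(E)\vert_U$; the latter is ${\rm Hom}(u^{{\rm conv}*}_{Y/S}(h_U),E)$, where $u^{{\rm conv}*}_{Y/S}(h_U)$ records only that $v$ factors through the Zariski open $U$. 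So (\ref{eqn:chynhn}) does not exhibit your family as a covering of the relevant object. What you must actually prove is that the natural map $\vil_n h_{{\mathfrak T}_{U,n}({\cal U})}\lo u^{{\rm conv}*}_{Y/S}(h_U)$ is an epimorphism, i.e.\ that for every enlargement $(V,T,j,v)$ with $v(V)\subseteq U$ the morphism $V\lo U\os{\sus}{\lo}{\cal U}$ lifts, Zariski-locally on $T$, to a morphism $T\lo {\cal U}$ and hence to $T\lo {\mathfrak T}_{U,n}({\cal U})$ for some $n$. This is where the formal log smoothness of ${\cal U}/S$ --- nowhere invoked in your proposal --- enters, and it is precisely the sentence ``there exists a positive integer $n$ such that $V\lo U$ lifts to a morphism $T\lo {\mathfrak T}_{U,n}({\cal U})$'' on which the paper's converse direction rests. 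Once that surjection is supplied (together with the well-definedness of $s_T$ for two different lifts, which is exactly what the equalizer over ${\mathfrak T}_{U,n}({\cal U}\times_S{\cal U})$ buys you), the rest of your argument goes through.
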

\begin{proof} 
(The proof is the obvious log convergent analogue of the proof 
in \cite[IV Proposition 2.3.2]{bb}.) 
To give a section of $u^{\rm conv}_{Y/S*}(E)\vert_U$ is 
to give compatible sections $s_T$'s of $E_{(V,T,j,v)}$'s 
for all objects $(V,T,j,v)$ of $(Y/S)_{\rm conv}$ 
with $v\col V\lo U$:  for any morphism 
$g\col (V',T',j',v')\lo (V,T,j,v)$, $s_{T'}=g^*(s_T)$ in ${\rm Conv}(Y/S)$. 
Hence we have the following natural morphism 
\begin{equation*} 
u^{\rm conv}_{Y/S*}(E)\vert_U \lo 
{\rm Ker}(\vpl_n\bet_{n*}(E_{{\mathfrak T}_{U,n}({\cal U})})
\os{\lo}{\lo} \vpl_n\bet_n(1)_*
(E_{{\mathfrak T}_{U,n}({\cal U}\times_S{\cal U})})). 
\end{equation*} 
Conversely let $\{s_n\}_{n=1}^{\infty}$ be a section of  
$\vpl_n\bet_{n*}(E_{{\mathfrak T}_{U,n}({\cal U})})$ such that 
the pull-back of the two projections 
${\mathfrak T}_{U,n}({\cal U}\times_S{\cal U})
\os{\lo}{\lo}{\mathfrak T}_{U,n}({\cal U})$ 
are equal. 
Let $(V,T,j,v)$ of $(Y/S)_{\rm conv}$ 
with $v\col V\lo U$.  
Then there exists a positive integer 
$n$ such that $V\lo U$ lifts to a morphism 
$T\lo {\mathfrak T}_{U,n}({\cal U})$. 
Now the rest of the proof is a routine work. 
\end{proof} 

\begin{coro}\label{coro:rex} 
Let $F$ be a sheaf of sets in $(Y/S)_{\rm Rconv}$. 
Let $(U,{\cal U},\iota)$, $\bet_n$ and $\bet_n(1)$ be as in 
{\rm (\ref{lemm:nex})}.  
Then the following sequence 
\begin{equation*} 
\ol{u}{}^{\rm conv}_{Y/S*}(F)\vert_U \lo 
\vpl_n\bet_{n*}(F_{{\mathfrak T}_{U,n}({\cal U})})
\os{\lo}{\lo} \vpl_n\bet_n(1)_*
(F_{{\mathfrak T}_{U,n}({\cal U}\times_S{\cal U})}) 
\tag{12.2.1}\label{eqn:rclucd}
\end{equation*} 
is exact. 
\end{coro}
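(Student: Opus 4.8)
The plan is to derive (\ref{coro:rex}) from Lemma (\ref{lemm:nex}) by pushing $F$ forward along the morphism $Q^{\rm conv}_{Y/S}$ of (\ref{eqn:rccd}). One sets $E:=Q^{\rm conv}_{Y/S*}(F)$, a sheaf of sets in $(Y/S)_{\rm conv}$. Since $\ol{u}{}^{\rm conv}_{Y/S}=u^{\rm conv}_{Y/S}\circ Q^{\rm conv}_{Y/S}$ by definition, one has $\ol{u}{}^{\rm conv}_{Y/S*}(F)=u^{\rm conv}_{Y/S*}(Q^{\rm conv}_{Y/S*}(F))=u^{\rm conv}_{Y/S*}(E)$, so the left-hand term of the sequence in (\ref{coro:rex}) is the left-hand term of the sequence in (\ref{lemm:nex}) applied to $E$. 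It therefore suffices to identify the values of $E$ at the relevant objects with those of $F$.

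The key point is that ${\mathfrak T}_{U,n}({\cal U})$ and ${\mathfrak T}_{U,n}({\cal U}\times_S{\cal U})$ are, by construction, objects of the restricted convergent site ${\rm Rconv}(Y/S)$. Since $Q^{{\rm conv}*}_{Y/S}$ is the restriction functor along the inclusion of sites and the topologies on ${\rm Conv}(Y/S)$ and ${\rm Rconv}(Y/S)$ are subcanonical (the coverings being Zariski coverings), for every object $T$ of ${\rm Rconv}(Y/S)$, and in particular for every open subscheme of one of the ${\mathfrak T}_{U,n}(-)$, the sheaf $Q^{{\rm conv}*}_{Y/S}(h_T)$ on ${\rm Rconv}(Y/S)$ is the one represented by $T$. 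By adjunction this gives $E_T=(Q^{\rm conv}_{Y/S*}(F))_T=F_T$ as Zariski sheaves on $T$. Hence the second and third terms of the sequence of (\ref{lemm:nex}) for $E$ are $\vpl_n\bet_{n*}(F_{{\mathfrak T}_{U,n}({\cal U})})$ and $\vpl_n\bet_n(1)_*(F_{{\mathfrak T}_{U,n}({\cal U}\times_S{\cal U})})$, and the two parallel arrows coincide with those of (\ref{coro:rex}), both being induced by the two projections ${\cal U}\times_S{\cal U}\rightrightarrows{\cal U}$. Applying (\ref{lemm:nex}) to $E$ then yields the exactness of (\ref{eqn:rclucd}).

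The only step requiring genuine care — and where I expect to spend the most effort — is the identification $E_T=F_T$, i.e.\ the verification that $Q^{{\rm conv}*}_{Y/S}$ carries $h_T$ to the representable sheaf of $T$ on ${\rm Rconv}(Y/S)$ when $T$ already lies in ${\rm Rconv}(Y/S)$; this is where the precise meaning of ``subtopos'' and ``induced topology'' for ${\rm Rconv}(Y/S)$ must be pinned down. If one prefers to avoid this reduction, an equally viable route is to repeat the proof of (\ref{lemm:nex}) verbatim with $(Y/S)_{\rm conv}$ replaced by $(Y/S)_{\rm Rconv}$: a section of $\ol{u}{}^{\rm conv}_{Y/S*}(F)\vert_U$ is a compatible family $\{s_T\}$ of sections over all objects $T$ of ${\rm Rconv}(U/S)$, and for such a $T$, say $T={\mathfrak T}_{U',m}({\cal U}')$ with $U'\to U$, the universal enlargements ${\mathfrak T}_{U',m}({\cal U}'\times_S{\cal U})$ provide — by the universal property (\ref{prop:exue}) together with the base-change isomorphism (\ref{lemm:bcue}) — compatible morphisms ${\mathfrak T}_{U',m}({\cal U}'\times_S{\cal U})\lo{\mathfrak T}_{U,m}({\cal U})$ and ${\mathfrak T}_{U',m}({\cal U}'\times_S{\cal U})\lo T$, forcing $\{s_T\}$ to be determined by, and reconstructible from, a compatible system of sections over the ${\mathfrak T}_{U,n}({\cal U})$ satisfying the cocycle condition on ${\mathfrak T}_{U,n}({\cal U}\times_S{\cal U})$.
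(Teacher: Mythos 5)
Your proposal is correct and follows essentially the same route as the paper: the paper's proof also reduces to Lemma (\ref{lemm:nex}) via the formulas $\ol{u}{}^{\rm conv}_{Y/S*}(F)=u^{\rm conv}_{Y/S*}(Q^{\rm conv}_{Y/S*}(F))$ and $Q^{\rm conv}_{Y/S*}(F)_{{\mathfrak T}_{U,n}({\cal U})}=F_{{\mathfrak T}_{U,n}({\cal U})}$, $Q^{\rm conv}_{Y/S*}(F)_{{\mathfrak T}_{U,n}({\cal U}\times_S{\cal U})}=F_{{\mathfrak T}_{U,n}({\cal U}\times_S{\cal U})}$ (following the pattern of Berthelot's IV Proposition 2.3.3). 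The identification $E_T=F_T$ that you single out as the delicate step is exactly what the paper asserts and uses, so your care there is well placed but does not constitute a departure from the paper's argument.
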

\begin{proof} 
The proof is the same as that of \cite[IV Proposition 2.3.3]{bb}: 
we have only to use the formulas:
$\ol{u}{}^{\rm conv}_{Y/S*}(F)=
u^{\rm conv}_{Y/S*}(Q^{\rm conv}_{Y/S*}(F))$,  
$Q^{\rm conv}_{Y/S*}(F)_{{\mathfrak T}_{U,n}({\cal U})}
=F_{{\mathfrak T}_{U,n}({\cal U})}$ and 
$Q^{\rm conv}_{Y/S*}(F)_{{\mathfrak T}_{U,n}({\cal U}\times_S{\cal U})}
=F_{{\mathfrak T}_{U,n}({\cal U}\times_S{\cal U})}$. 
\end{proof}

\begin{coro}\label{coro:reux} 
Let $E$ be a sheaf of sets in $(Y/S)_{\rm conv}$. 
Then there exists the following canonical isomorphism$:$
\begin{equation*} 
u^{\rm conv}_{Y/S*}(E) 
\os{\sim}{\lo}\ol{u}{}^{\rm conv}_{Y/S*}(Q^{{\rm conv}*}_{Y/S}(E)).  
\end{equation*} 
\end{coro}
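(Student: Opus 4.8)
The plan is to reformulate the statement in terms of the adjunction unit. Since $\ol{u}{}^{\rm conv}_{Y/S}=u^{\rm conv}_{Y/S}\circ Q^{\rm conv}_{Y/S}$ as morphisms of topoi, we have $\ol{u}{}^{\rm conv}_{Y/S*}(Q^{{\rm conv}*}_{Y/S}(E))=u^{\rm conv}_{Y/S*}(Q^{\rm conv}_{Y/S*}Q^{{\rm conv}*}_{Y/S}(E))$, and the canonical morphism in the assertion is nothing but $u^{\rm conv}_{Y/S*}(\eta_E)$, where $\eta_E\col E\lo Q^{\rm conv}_{Y/S*}Q^{{\rm conv}*}_{Y/S}(E)$ is the unit of the adjunction $(Q^{{\rm conv}*}_{Y/S},Q^{\rm conv}_{Y/S*})$. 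Thus it suffices to prove that $u^{\rm conv}_{Y/S*}(\eta_E)$ is an isomorphism; this is the log convergent analogue of Berthelot's identity $R\ol{u}{}^{\rm crys}_{Y/S*}Q^{{\rm crys}*}_{Y/S}=Ru^{\rm crys}_{Y/S*}$.

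The verification is local on $Y$. For an affine open $U\subset Y$ I would choose an immersion $\iota\col U\os{\sus}{\lo}{\cal U}$ into a formally log smooth noetherian log $p$-adic formal scheme over $S$ topologically of finite type over $S$ (e.g.\ a formal affine space over $S$ equipped with a suitable chart), so that the systems of universal enlargements $\{{\mathfrak T}_{U,n}({\cal U})\}_{n=1}^{\infty}$ and $\{{\mathfrak T}_{U,n}({\cal U}\times_S{\cal U})\}_{n=1}^{\infty}$ consist of objects of ${\rm Rconv}(Y/S)$. Applying (\ref{lemm:nex}) to $E$ and (\ref{coro:rex}) to the sheaf $F:=Q^{{\rm conv}*}_{Y/S}(E)$ in $(Y/S)_{\rm Rconv}$, and using that $Q^{{\rm conv}*}_{Y/S}$ is the restriction functor (so $F_{{\mathfrak T}_{U,n}({\cal U})}=E_{{\mathfrak T}_{U,n}({\cal U})}$, $F_{{\mathfrak T}_{U,n}({\cal U}\times_S{\cal U})}=E_{{\mathfrak T}_{U,n}({\cal U}\times_S{\cal U})}$, and $(Q^{\rm conv}_{Y/S*}F)_{{\mathfrak T}_{U,n}({\cal U})}=F_{{\mathfrak T}_{U,n}({\cal U})}$), one gets that both $u^{\rm conv}_{Y/S*}(E)\vert_U$ and $\ol{u}{}^{\rm conv}_{Y/S*}(F)\vert_U$ are realized as the equalizer of the same pair of arrows $\vpl_n\bet_{n*}(E_{{\mathfrak T}_{U,n}({\cal U})})\os{\lo}{\lo}\vpl_n\bet_n(1)_*(E_{{\mathfrak T}_{U,n}({\cal U}\times_S{\cal U})})$, and hence are canonically isomorphic.

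The remaining point — and the only one needing genuine care — is to check that the abstractly defined morphism $u^{\rm conv}_{Y/S*}(\eta_E)\vert_U$ agrees with the identification of the two equalizers, rather than with some other isomorphism between the same objects. This follows by unwinding the constructions of the exact sequences in (\ref{lemm:nex}) and (\ref{coro:rex}): they are built directly from the sections defining $u^{\rm conv}_{Y/S*}$ and $\ol{u}{}^{\rm conv}_{Y/S*}$, they are natural in the sheaf argument, and $\eta_E$ restricts to the identity on the objects ${\mathfrak T}_{U,n}({\cal U})$ and ${\mathfrak T}_{U,n}({\cal U}\times_S{\cal U})$ of ${\rm Rconv}(Y/S)$, so the compatibility is tautological. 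Consequently $u^{\rm conv}_{Y/S*}(\eta_E)\vert_U$ is an isomorphism for each affine open $U$; since such $U$ cover $Y$, the morphism $u^{\rm conv}_{Y/S*}(\eta_E)$ is an isomorphism, which is the assertion.
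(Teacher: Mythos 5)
Your proof is correct and follows the paper's own route: the paper proves this corollary precisely by combining (\ref{lemm:nex}) and (\ref{coro:rex}) (applied to $F=Q^{{\rm conv}*}_{Y/S}(E)$, whose realizations on the universal enlargements agree with those of $E$) so that both sides are the equalizer of the same diagram, this being the log convergent analogue of \cite[IV Proposition 2.3.5]{bb}. Your additional check that the canonical morphism coincides with the identification of equalizers is the same tautological compatibility the paper leaves implicit.
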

\begin{proof} 
By using (\ref{eqn:rcucd}) and (\ref{eqn:rclucd}), 
the proof is the obvious log convergent analogue of the proof 
in \cite[IV Proposition 2.3.5]{bb}; we omit the proof. 
\end{proof}

\begin{prop}\label{prop:injq}
Any injective module in $(Y/S)_{\rm Rconv}$ 
is of the form $Q^{{\rm conv}*}_{Y/S}(I)$ for an 
injective module $I$ in $(Y/S)_{\rm conv}$.  
\end{prop}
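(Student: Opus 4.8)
The plan is to follow Berthelot's treatment of the restricted crystalline topos (\cite[IV, 2.3]{bb}), using the lemmas (\ref{lemm:nex}), (\ref{coro:rex}), (\ref{coro:reux}) established just above together with the formal properties of the pair $(Q^{{\rm conv}*}_{Y/S},Q^{\rm conv}_{Y/S*})$. The two inputs I would isolate first are: (i) $Q^{{\rm conv}*}_{Y/S}$ is exact, being the inverse image functor of a morphism of topoi; and (ii) $Q^{\rm conv}_{Y/S*}$ is fully faithful, i.e.\ the counit $Q^{{\rm conv}*}_{Y/S}\circ Q^{\rm conv}_{Y/S*}\lo {\rm id}$ is an isomorphism on abelian modules of $(Y/S)_{\rm Rconv}$.

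For (ii), the point is that ${\rm Rconv}(Y/S)$ is a full subcategory of ${\rm Conv}(Y/S)$ carrying the induced topology and that every covering family of an object $T_n$ of ${\rm Rconv}(Y/S)$ stays inside ${\rm Rconv}(Y/S)$: a Zariski open covering of a universal enlargement is again a family of universal enlargements, by (\ref{prop:fc}) and (\ref{lemm:uys1}). Consequently the right Kan extension along ${\rm Rconv}(Y/S)\hookrightarrow {\rm Conv}(Y/S)$ of a sheaf is already a sheaf, and its value on an object $T$ of ${\rm Rconv}(Y/S)$ is simply $F_T$; this is precisely the computation underlying (\ref{coro:rex}) and (\ref{coro:reux}), and it yields the canonical isomorphism $Q^{{\rm conv}*}_{Y/S}Q^{\rm conv}_{Y/S*}(F)\os{\sim}{\lo}F$.

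Granting (i) and (ii), the proof itself is short: since $Q^{\rm conv}_{Y/S*}$ is right adjoint to the exact functor $Q^{{\rm conv}*}_{Y/S}$, it carries injective modules to injective modules. Hence, for an injective module $J$ in $(Y/S)_{\rm Rconv}$, the module $I:=Q^{\rm conv}_{Y/S*}(J)$ is injective in $(Y/S)_{\rm conv}$, and by (ii) one has $Q^{{\rm conv}*}_{Y/S}(I)=Q^{{\rm conv}*}_{Y/S}Q^{\rm conv}_{Y/S*}(J)\os{\sim}{\lo}J$, which is the assertion. In particular this route avoids having to embed $Q^{\rm conv}_{Y/S*}(J)$ into an injective and then descend an idempotent, a step that would require more than is available.

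I expect the main obstacle to be the rigorous verification of (ii), equivalently that the right Kan extension of a sheaf on ${\rm Rconv}(Y/S)$ satisfies the sheaf axiom on all of ${\rm Conv}(Y/S)$; this is the log convergent analogue of the pushforward computations in \cite[IV, 2.3.2--2.3.5]{bb}. The delicate part is checking the sheaf condition at a general enlargement $c$ of $Y/S$ that is not itself of the form ${\mathfrak T}_{U,n}({\cal U})$, for which one uses that $c$ admits, Zariski-locally, a morphism to a universal enlargement lying in ${\rm Rconv}(Y/S)$: embed the open piece $U_c$ into a formally log smooth $p$-adic formal scheme, lift $c$ into it by formal smoothness, and invoke (\ref{eqn:chynhn}) to factor the resulting map through some ${\mathfrak T}_{U_c,n}$. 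Once (ii) is in place, everything else is formal as above.
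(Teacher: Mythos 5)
Your argument is correct and is exactly the one the paper intends: the paper omits the proof and simply refers to \cite[IV Proposition 2.2.5]{bb}, whose argument is precisely that $Q^{\rm conv}_{Y/S*}$ preserves injectives (its left adjoint $Q^{{\rm conv}*}_{Y/S}$ being exact) together with the counit isomorphism $Q^{{\rm conv}*}_{Y/S}Q^{\rm conv}_{Y/S*}\cong {\rm id}$, so that $I:=Q^{\rm conv}_{Y/S*}(J)$ does the job. The only point to phrase more carefully is your justification of the counit isomorphism --- a Zariski open of a universal enlargement need not itself be a universal enlargement, since $\os{\circ}{\mathfrak T}_{U,n}({\cal U})$ is in general not homeomorphic to $\os{\circ}{U}$ --- but full faithfulness of $Q^{\rm conv}_{Y/S*}$ holds anyway by the adjunction computation ${\rm Hom}(Q^{{\rm conv}*}_{Y/S}h_T,F)={\rm Hom}(h_T,Q^{\rm conv}_{Y/S*}F)$ combined with the identification $Q^{{\rm conv}*}_{Y/S}(h_T)=h_T$ for $T$ an object of the full subcategory ${\rm Rconv}(Y/S)$ with the induced topology.
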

\begin{proof}
The proof is the obvious log convergent analogue of the proof 
in \cite[IV Proposition 2.2.5]{bb}; we omit the proof. 
\end{proof}

\parno 
The morphism 
$Q^{\rm conv}_{Y/S}$ induces the following morphism of 
ringed topoi 
\begin{equation*} 
(({Y/S})_{\rm Rconv},Q^{{\rm conv}*}_{Y/S}({\cal K}_{Y/S}))
\lo 
((Y/S)_{\rm conv},{\cal K}_{Y/S}),   
\end{equation*}  
which we denote by $Q^{\rm conv}_{Y/S}$ again. 
Consequently we have the following morphism 
of ringed topoi: 
$$\ol{u}{}^{\rm conv}_{Y/S}
\col 
(({Y/S})_{\rm Rconv},
Q^{{\rm conv}*}_{Y/S}({\cal K}_{Y/S})) 
\lo (Y_{\rm zar},f^{-1}({\cal K}_S)).$$  

Then the following hold$:$

\begin{coro}[{\bf cf.~\cite[V Corollaire
1.3.3]{bb}}]\label{coro:ubq}
$R\ol{u}_{Y/S*}Q^*_{Y/S}
=Ru_{Y/S*}$. 
\end{coro}



\par 
Let $I_Y$ be a quasi-coherent ideal sheaf of the log structure $M_Y$ of $Y$. 
Let $(U,T,\iota,u)$ be an object of the log convergent site ${\rm Conv}(Y/S)$. 
Let $M_U$ and $M_T$ be the log structures of $U$ and $T$, respectively. 
Because $\iota \col U \lo T$ 
is an exact closed immersion,
$M_T/{\cal O}_{T}^*=M_U/{\cal O}_{U}^*$ 
on $U_{\rm zar}=T_{\rm zar}$.
Hence local sections of   
$I_Y\vert_U$ on $U$ lift locally to local sections $m$'s of $M_T$.
We define an ideal sheaf 
${\cal I}^{{\rm conv}}_{Y/S}\subset {\cal K}_{(X,D\cup Z)/S}$ by the following: 
${\cal I}^{{\rm conv}}_{Y/S}(T)$=
the ideal generated by the image of the above $m$'s by  
the following composite morphism 
$M_T \lo {\cal O}_T \lo {\cal K}_T$. 
\par 

\begin{prop}\label{prop:risoc}
The sheaf $\mathcal{I}^{\rm conv}_{Y/S}$ defines an isocrystal on $(Y/S)_{\rm Rconv}$. 
\end{prop}
\begin{proof}
Because the problem is local on $Y$, we can assume 
that there exists a chart 
$I \longrightarrow I_Y$ of $I_Y$. Then, for an enlargement 
$(U,T,\iota,u)$ in $(Y/S)_{\rm Rconv}$, the associated sheaf 
of $\mathcal{I}^{\rm conv}_{Y/S}$
at $T_{\rm zar}$ is 
an the ideal sheaf of $\mathcal{K}_T$
generated by the images of local sections of 
$\mathcal{M}_T$ which lift local sections of ${\rm Im}(I \longrightarrow I_Y|_U)$. 
Denote this sheaf by $\langle I \rangle \mathcal{K}_{T}$. 
Let $g\col (U'_{n'}, T'_{n'}, \iota'_{n'}, u'_{n'}) \longrightarrow 
(U_n, T_n, \iota_n, u_n)$ be a morphism in 
$(Y/S)_{\rm Rconv}$, where 
$(U_n, T_n, \iota_n, u_n)$ (resp.\ $(U'_{n'}, T'_{n'}, \iota'_{n'}, u'_{n'})$) is 
the $n$-th (resp.\ $n'$-th) member of the system of the universal enlargements 
of the exact closed immersion $i\col U \overset{\subset}{\longrightarrow} \mathcal{U}$ (resp.\  $i'\col U' \overset{\subset}{\longrightarrow} \mathcal{U}'$), where 
$U$ (resp.~$U'$) is an affine log open subscheme of $Y$ and $\mathcal{U}$ 
(resp.~$\mathcal{U}')$ is an affine noetherian log 
$p$-adic formal scheme which is formally log smooth over $S$. 
We claim that the following morphism 
\begin{equation*}
g^*\col \langle I \rangle \mathcal{K}_{T_n} \otimes_{\mathcal{K}_{T_n}} 
\mathcal{K}_{T'_{n'}} \longrightarrow \langle I \rangle \mathcal{K}_{T'_{n'}} 
\tag{12.6.1} \label{eqn:12.6.1}
\end{equation*}
induced by $g$ is an isomorphism, which proves (\ref{prop:risoc}).  
\par 
First we prove that it suffices to prove that our claim holds 
only in the case $n'=n$. 
If $n>n'$, the morphism $g$ is factoralized as 
$$ (U'_{n'}, T'_{n'}, \iota'_{n'}, u'_{n'}) \longrightarrow 
(U'_n, T'_n, \iota'_n, u'_n) \longrightarrow 
(U_n, T_n, \iota_n, u_n) $$
and the morphism \eqref{eqn:12.6.1} 
is equal to the following composite morphism 
\begin{equation*}
\langle I \rangle \mathcal{K}_{T_n} \otimes_{\mathcal{K}_{T_n}} \mathcal{K}_{T'_{n'}} 
= 
(\langle I \rangle \mathcal{K}_{T_n} \otimes_{\mathcal{K}_{T_n}} \mathcal{K}_{T'_n} ) \otimes_{\mathcal{K}_{T'_n}} \mathcal{K}_{T'_{n'}} 
\longrightarrow 
\langle I \rangle \mathcal{K}_{T'_n}  \otimes_{\mathcal{K}_{T'_n}} \mathcal{K}_{T'_{n'}} \longrightarrow 
\langle I \rangle \mathcal{K}_{T'_{n'}}. 
\tag{12.6.2} \label{eqn:12.6.2}
\end{equation*}
Since the second arrow in (\ref{eqn:12.6.2}) is an isomorphism by 
the exactness of the functor $\iota^*_{n,n'}$ in the proof of (3.3), 
it suffices to prove that (\ref{eqn:12.6.1}) is an isomorphism only 
in the case $n'=n$, as claimed. 
If $n<n'$, the morphism $g$ fits into the diagram 
$$ (U'_{n'}, T'_{n'}, \iota'_{n'}, u'_{n'}) 
\overset{g}{\longrightarrow} 
(U_n, T_n, \iota_n, u_n) \longrightarrow 
(U_{n'}, T_{n'},\iota_{n'},u_{n'}), $$
and this composite morphism induces the morphism 
$$ 
\langle I \rangle \mathcal{K}_{T_{n'}} 
\otimes_{\mathcal{K}_{T_{n'}}} \mathcal{K}_{T'_{n'}} 
\longrightarrow \langle I \rangle \mathcal{K}_{T'_{n'}}.$$
This morphism is equal to the following composite morphism 
\begin{equation*}
\langle I \rangle \mathcal{K}_{T_{n'}} \otimes_{\mathcal{K}_{T_{n'}}} \mathcal{K}_{T'_{n'}}
= 
(\langle I \rangle \mathcal{K}_{T_{n'}} \otimes_{\mathcal{K}_{T_{n'}}} \mathcal{K}_{T_n} ) \otimes_{\mathcal{K}_{T_n}} \mathcal{K}_{T'_{n'}} 
\longrightarrow 
\langle I \rangle \mathcal{K}_{T_n}  
\otimes_{\mathcal{K}_{T_n}} \mathcal{K}_{T'_{n'}} \longrightarrow 
\langle I \rangle \mathcal{K}_{T'_{n'}}. 
\tag{12.6.3} \label{eqn:12.6.3}
\end{equation*}
Since the first morphism in (\ref{eqn:12.6.3}) is an isomorphism again 
by the exactness of the functor $\iota^*_{n',n}$ in the proof of (3.3), 
it suffices to prove that (\ref{eqn:12.6.1}) is an isomorphism only in the case $n'=n$, 
as claimed.  
Hence we can and do assume that $n'=n$ in the rest of the proof. 
\par 
Let $\widehat{\mathcal{U}}$ and  $\widehat{\mathcal{U}}'$ be the completions of 
$\mathcal{U}$ and $\mathcal{U}'$ along $U$ and $U'$, respectively, and 
let $\widehat{i}\col  U \overset{\subset}{\longrightarrow} \widehat{\mathcal{U}}$ and 
$\widehat{i}'\col U' \overset{\subset}{\longrightarrow} \widehat{\mathcal{U}}'$ be 
the exact closed immersions induced by $i$ and $i'$, respectively. 
Let $\widehat{\mathcal{U}}'|_{U \cap U'}$ be the log formal open subscheme of 
$\widehat{\mathcal{U}}'$ whose underlying topological space is equal to 
that of $U \cap U'$. 
Because the morphism $u'_n\col U'_n \longrightarrow Y$ factors through 
the morphism $U'_n \overset{g}{\longrightarrow} U_n \longrightarrow U$ and 
the morphism $U'_n \longrightarrow U'$, 
it factors through $U \cap U'$ and hence the morphism 
$(U'_n, T'_n, \iota_n, u_n) \longrightarrow (U', \widehat{U}', \widehat{i}', j')$ factors through 
$(U \cap U', \widehat{U}'|_{U \cap U'}, \widehat{i}'|_{U \cap U'}, j'|_{U \cap U'})$. Then, 
by checking the universal property, 
one easily sees that 
$(U'_n, T'_n, \iota_n, u_n)$ is the $n$-th member of the system of 
the universal enlargements of $(U \cap U', \widehat{U}'|_{U \cap U'}, \widehat{i}'|_{U \cap U'}, j'|_{U \cap U'})$. Hence we may replace $(U', \widehat{U}', \widehat{i}', j')$ by $(U \cap U', \widehat{U}'|_{U \cap U'}, \widehat{i}'|_{U \cap U'}, j'|_{U \cap U'})$ and 
we may assume that $U'$ is contained in $U$ to prove that 
the morphism (\ref{eqn:12.6.1}) for the case $n'=n$ is an isomorphism. 
\par 
Next, let $\widehat{\mathcal{U}}|_{U'}$ be the log formal open subscheme of $\widehat{\mathcal{U}}$ 
whose underlying topological space is equal to $U'$, and let 
$(U_n|_{U'}, T_n|_{U'}, \iota_n|_{U'}, u_n|_{U'})$ be 
the $n$-th member of the system of the universal enlargements of 
$(U', \widehat{\mathcal{U}}|_{U'}, \widehat{i}|_{U'}, j|_{U'})$. 
Then $U_n|_{U'}, T_n|_{U'}$ are open in $U_n, T_n$ respectively and the morphism $g$ factors through $(U_n|_{U'}, T_n|_{U'}, \iota_n|_{U'}, u_n|_{U'})$. 
Hence the morphism \eqref{eqn:12.6.1} factors as 
\begin{equation*}
\langle I \rangle \mathcal{K}_{T_n} \otimes_{\mathcal{K}_{T_n}} \mathcal{K}_{T'_n} 
= 
(\langle I \rangle \mathcal{K}_{T_n} \otimes_{\mathcal{K}_{T_n}} \mathcal{K}_{T_n|_{U'}} ) \otimes_{\mathcal{K}_{T_n|_{U'}}} \mathcal{K}_{T'_n} 
\longrightarrow 
\langle I \rangle \mathcal{K}_{T_n|_{U'}}  \otimes_{\mathcal{K}_{T_n|_{U'}}} \mathcal{K}_{T'_n} \longrightarrow 
\langle I \rangle \mathcal{K}_{T'_n}. 
\tag{12.6.4} \label{eqn:12.6.4}
\end{equation*}
Since $T_n|_{U'}$ is open in $T_n$, the morphism 
$\mathcal{K}_{T_n} \longrightarrow \mathcal{K}_{T_n|_{U'}}$ 
is flat and hence the first morphism in (\ref{eqn:12.6.4}) is an isomorphism. 
Hence we are reduced to showing that the second morphism in (\ref{eqn:12.6.4}) 
is an isomorphism. Thus we may replace $(U_n,T_n,\iota_n,u_n)$ and $(U,\widehat{\mathcal{U}}, \widehat{i},j)$ by $(U_n|_{U'}, T_n|_{U'}, \iota_n|_{U'}, u_n|_{U'})$ and 
$(U', \widehat{U}|_{U'}, \widehat{i}|_{U'}, j|_{U'})$, respectively, namely, 
we may assume that $U = U'$ to prove the assertion. 
\par 
Now let $\widehat{\mathcal{U}}''$ 
be the exactification of the immersion 
$i''\col U \overset{\subset}{\longrightarrow} \mathcal{U} \times_S \mathcal{U}'$ 
induced by $i$ and $i'$, and let 
$\widehat{i}''\col U \overset{\subset}{\longrightarrow} \widehat{\mathcal{U}}''$ 
be the exact closed immersion induced by $i''$. 
Let $(U''_n, T''_n, \iota''_n, u''_n)$ be the $n$-th member of the system of 
the universal enlargements of $(U, \widehat{\mathcal{U}}'', \widehat{i}'', j)$. 
Then the projections 
$$ 
(U, \widehat{\mathcal{U}}'', \widehat{i}'', j) \longrightarrow 
(U, \widehat{\mathcal{U}}, \widehat{i}, j), \quad 
(U, \widehat{\mathcal{U}}'', \widehat{i}'', j) \longrightarrow 
(U, \widehat{\mathcal{U}}', \widehat{i}', j) $$
induce morphisms 
$$ 
p_1\col (U''_n, T''_n, \iota''_n, u''_n) \longrightarrow 
(U_n, T_n, \iota_n, u_n), \quad 
p_2\col (U''_n, T''_n, \iota''_n, u''_n) \longrightarrow 
(U'_n, T'_n, \iota'_n, u'_n)
$$
repectively, and the morphisms 
\begin{align*}
& (U'_n, T'_n, \iota'_n, u'_n) \overset{g}{\longrightarrow} 
(U_n, T_n, \iota_n, u_n) \longrightarrow 
(U, \widehat{\mathcal{U}}, \widehat{i},j), \\
& (U'_n, T'_n, \iota'_n, u'_n) \longrightarrow 
(U, \widehat{\mathcal{U}}', \widehat{i}', j)
\end{align*}
induce a morphism 
$$h\col (U'_n, T'_n, \iota'_n, u'_n) \longrightarrow 
(U''_n, T''_n, \iota''_n, u''_n). $$
We can check the equalities 
$p_1 \circ h = g, p_2 \circ h = {\rm id}_{(U'_n, T'_n, \iota'_n, u'_n)}$ 
by using the universal property. 
Thus the morphism 
(\ref{eqn:12.6.1}) for the case $n'=n$ is factorized as 
\begin{equation*}
g^*\col 
\langle I \rangle \mathcal{K}_{T_n} 
\otimes_{\mathcal{K}_{T_n}} \mathcal{K}_{T'_n} 
= 
(\langle I \rangle \mathcal{K}_{T_n} 
\otimes_{\mathcal{K}_{T_n},p_1^*} \mathcal{K}_{T''_n} ) 
\otimes_{\mathcal{K}_{T''_n},h^*} \mathcal{K}_{T'_n} 
\os{p_1^*\otimes {\rm id}_{\mathcal{K}_{T''_n}}}{\longrightarrow} 
\langle I \rangle \mathcal{K}_{T''_n}  
\otimes_{\mathcal{K}_{T''_n},h^*} \mathcal{K}_{T'_n} \os{h^*}{\longrightarrow} 
\langle I \rangle \mathcal{K}_{T'_n}. 
\tag{12.6.5} \label{eqn:12.6.5}
\end{equation*}
Since $\widehat{i}\col U 
\overset{\subset}{\longrightarrow} \widehat{\mathcal{U}}$, $\widehat{i}''\col U \overset{\subset}{\longrightarrow} \widehat{\mathcal{U}}''$ are closed immersions and $\overset{\circ}{\widehat{\mathcal{U}}''} \longrightarrow \overset{\circ}{\widehat{\mathcal{U}}}$ is formally smooth, 
$p_1^*\col \mathcal{K}_{T_n} \longrightarrow \mathcal{K}_{T''_n}$ is flat by 
(\ref{lemm:eisd}) (2) (cf.~(\ref{prop:lef})). 
Hence the first morphism in \eqref{eqn:12.6.5} is an isomorphism. 
Moreover, the second morphism in \eqref{eqn:12.6.5} fits into the following commutative diagram: 
\begin{equation*}
\begin{CD}
(\langle I \rangle \mathcal{K}_{T'_n} \otimes_{\mathcal{K}_{T'_n},p_2^*} \mathcal{K}_{T''_n} ) \otimes_{\mathcal{K}_{T''_n},h^*} \mathcal{K}_{T'_n} 
@>{p_2^*\otimes {\rm id}_{\mathcal{K}_{T'_n}}}>> 
\langle I \rangle \mathcal{K}_{T''_n}  \otimes_{\mathcal{K}_{T''_n},h^*} \mathcal{K}_{T'_n} \\
@| @VV{h^*}V \\ 
\langle I \rangle \mathcal{K}_{T'_n} @= \langle I \rangle \mathcal{K}_{T'_n}.
\end{CD}
\tag{12.6.6} \label{eqn:12.6.6}
\end{equation*}
Since $\widehat{i}'\col U \overset{\subset}{\longrightarrow} \widehat{\mathcal{U}}'$, $\widehat{i}''\col U \overset{\subset}{\longrightarrow} \widehat{\mathcal{U}}''$ are closed immersions and $\overset{\circ}{\widehat{\mathcal{U}}''} \longrightarrow \overset{\circ}{\widehat{\mathcal{U}'}}$ is formally smooth, 
$p_2^*\col \mathcal{K}_{T'_n} \longrightarrow \mathcal{K}_{T''_n}$ is flat by 
(\ref{lemm:eisd}) (2). Hence the top horizontal morphism in \eqref{eqn:12.6.6} is an isomorphism, 
and then the commutativity of the diagram \eqref{eqn:12.6.6} implies that the second
morphism $h^*$ in \eqref{eqn:12.6.5} is also an isomorphism. Consequently we see that 
the morphism \eqref{eqn:12.6.1} is an isomorphism. 
We can complete the proof of the lemma. 
\end{proof}

\begin{rema}
The proposition (\ref{prop:risoc}) includes another 
and more direct proof of \cite[Lemma 5.3]{tsp}. 
\end{rema}

\par 
Let the notations be as in \S\ref{sec:lcs}. 
Let $(U,T,\iota,u)$ be an object of the 
log convergent site 
${\rm Conv}((X,D\cup Z)/S)={\rm Conv}((X,M({D\cup Z}))/S)$. 
Because the morphism $u\col U\lo (X,D\cup Z)$ is strict, 
$M_U:=u^*(M({D\cup Z}))$. 
Let $M_T$ be the log structure of $T$. 
Because
$\iota \col U \lo T$ is an exact closed immersion,
$M_T/{\cal O}_{T}^*=M_U/{\cal O}_{U}^*$ 
on $U_{\rm zar}=T_{\rm zar}$.
Hence the local generator of $u^*(M({D\cup Z}))/{\cal O}_U^*=
u^{-1}(M({D\cup Z})/{\cal O}_X^*)$ 
lifts to a local section $m$ of $M_T$.
We define an ideal sheaf 
${\cal I}^{D,{\rm conv}}_{(X,D\cup Z)/S}
\subset {\cal K}_{(X,D\cup Z)/S}$ 
by the following: 
${\cal I}^{D,{\rm conv}}_{(X,D\cup Z)/S}(T)$=
the ideal generated by
the image of $m$ by  
the following composite morphism 
$M_{T} \lo {\cal O}_{T} \lo {\cal K}_T$.

\begin{defi}\label{defi:csc}
We call the higher direct image sheaf
$R^hf_{(X,D\cup Z)/S*}
({\cal I}^{D,{\rm conv}}_{(X,D\cup Z)/S})$ $(h\in {\mab N})$ 
in ${S}_{\rm zar}$ the 
{\it log convergent cohomology sheaf with compact support} 
{\it with respect to} $D$ and denote it by
$R^hf_{(X, D\cup Z)/S*, 
{\rm c}}({\cal K}_{(X,D\cup Z;Z)/S})$. 
\end{defi}

\par
Let $\{D_{\lam}\}_{\lam}$ be 
a decomposition of $D$ by smooth 
components of $D$.
Fix a total order on $\Lam$. 
Let the notations be as in \S\ref{sec:bd}.
The exact closed immersion 
$\iota^{\ul{\lam}_j}_{\ul{\lam}} \col 
(D_{\ul{\lam}}, Z\vert_{D_{\ul{\lam}}}) 
\os{\subset}{\lo} 
(D_{\ul{\lam}_j}, Z\vert_{D_{\ul{\lam}_j}}) $
induces the following morphism  
\begin{equation*}
(-1)^j\iota_{\ul{\lam}{\rm conv}}^{\ul{\lam}_j*}
\col 
{\cal K}_{(D_{\ul{\lam}_j},Z\vert_{D_{\ul{\lam}_j}})/S} 
\otimes_{\mab Z}
\vp_{\ul{\lam}_j{\rm conv}}(D/S;Z) 
\lo 
\tag{12.8.1}\label{eqn:defcbd}
\end{equation*}
$$\iota_{\ul{\lam}{\rm conv}*}^{\ul{\lam}_j}
({\cal K}_{(D_{\ul{\lam}}, Z\vert_{D_{\ul{\lam}}})/S})
\otimes_{\mab Z}\vp_{\ul{\lam}{\rm conv}}(D/S;Z)$$ 
defined by
$x\otimes (\lam_0 \cdots \wh{\lam}_j \cdots \lam_{k-1}) 
\lom (-1)^j\iota_{\ul{\lam}{\rm conv}}^{\ul{\lam}_j*}
(x)\otimes (\lam_0 \cdots  \lam_{k-1})$. 
Set 
\begin{equation*}
\iota^{(k-1)*}_{\rm conv}:=
\sum_{\{\lam_0, \lam_1, \cdots, \lam_{k-1}~\vert~\lam_i 
\not= \lam_l~(i\not=l)\}}
\sum_{j=0}^{k-1}
a_{\ul{\lam}_j{\rm conv}*} \circ ((-1)^j
\iota_{\ul{\lam}{\rm conv}}^{\ul{\lam}_j*}) 
\col
\tag{12.8.2}\label{eqn:descbd}
\end{equation*}
$$a_{{\rm conv}*}^{(k-1)}
({\cal K}_{(D^{(k-1)},Z\vert_{D^{(k-1)}})/S}
\otimes_{\mab Z}\vp^{(k-1)}_{\rm conv}(D/S;Z)) 
\lo $$
$$a_{{\rm conv}*}^{(k)}
({\cal K}_{(D^{(k)},Z\vert_{D^{(k)}})/S}
\otimes_{\mab Z}\vp^{(k)}_{\rm conv}(D/S;Z)).$$
The composite morphism
$\iota^{(k)*}_{\rm conv}\circ 
\iota^{(k-1)*}_{\rm conv}$ vanishes. 
The morphism $\iota^{(k-1)*}_{\rm conv}$ is 
independent of the choice of 
the decomposition of 
$\{D_{\lam}\}_{\lam}$ by smooth components of $D/S_1$ 
as in \cite[(2.11.2)]{nh2}. 

\begin{lemm}\label{lemm:exercr}
Let 
\begin{equation*}
\eps_{\rm conv}: ({(X,D \cup Z)/S})_{\rm Rconv} \lo 
({(X,Z)/S})_{\rm Rconv} 
\end{equation*}
be a morphism of topoi forgetting the log structure of $D$. 
Then there exists a morphism of topoi 
\begin{equation*}
\eps_{\rm Rconv}: ({(X,D \cup Z)/S})_{\rm Rconv} \lo 
({(X,Z)/S})_{\rm Rconv} 
\end{equation*}
fitting into the following commutative diagram of topoi$:$
\begin{equation*}
\begin{CD}
({(X,D\cup Z)/S})_{\rm Rconv} @>{\eps_{\rm Rconv}}>> 
({(X,Z)/S})_{\rm Rconv} \\
@V{Q_{(X,D\cup Z)/S}}VV @VV{Q_{(X,Z)/S}}V \\ 
({(X,D\cup Z)/S})_{\rm conv} @>{\eps_{\rm conv}}>> 
({(X,Z)/S})_{\rm conv}. 
\end{CD}
\tag{12.9.1}\label{diag:ercrys}
\end{equation*}
\end{lemm}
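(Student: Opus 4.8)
The plan is to realize $\eps_{\rm Rconv}$ as the morphism of topoi induced by a functor on the underlying restricted convergent sites, and then to check the commutativity of (\ref{diag:ercrys}) by comparing the two resulting functors on those sites. Recall that $Q^{\rm conv}_{Y/S}$ of (\ref{eqn:rccd}) is the morphism of topoi whose inverse image is restriction of sheaves from ${\rm Conv}(Y/S)$ to the full subsite ${\rm Rconv}(Y/S)$, and that $\eps^{\rm conv}_{(X,D\cup Z,Z)/S}$ is the morphism induced, via the functoriality of log convergent topoi, by the morphism $(X,M(D\cup Z))\to (X,M(Z))$ forgetting the log structure along $D$; on objects, its associated pull-back sends an enlargement $(U,T,\iota,u)$ of $(X,Z)/S$ to the enlargement of $(X,D\cup Z)/S$ obtained by enriching the log structure of $T$ along $D$, exactly as in the local description (\ref{eqn:fibes}), (\ref{eqn:fibu}). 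First I would define $v\colon {\rm Rconv}((X,Z)/S)\to {\rm Rconv}((X,D\cup Z)/S)$ to be the restriction of this object-level pull-back, and verify that $v$ is well defined, continuous, and commutes with finite projective limits; it then induces the desired morphism of topoi $\eps_{\rm Rconv}$.

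The substance is the well-definedness of $v$: one must show that the $\eps$-pull-back of the $n$-th universal enlargement ${\mathfrak T}_{U,n}({\cal U})$ of an immersion $U\os{\subset}{\lo}{\cal U}$ of $(X,Z)|_U$ into a log smooth $p$-adic formal ${\cal V}$-scheme ${\cal U}$ over $S$ is again a universal enlargement of an immersion of $(X,D\cup Z)|_U$ into such a formal scheme. By (\ref{prop:fci})(2) and (\ref{lemm:uys1}) this is local on $X$, so one may assume $\iota$ has a chart. In that case one is precisely in the situation of \S\ref{sec:lcs}: taking ${\cal R}={\cal U}$, one constructs, as in (\ref{eqn:pvq})--(\ref{eqn:pevq}), a log smooth $p$-adic formal scheme ${\cal P}$ over $S$ with $\os{\circ}{\cal P}=\os{\circ}{\cal U}$, a forgetful morphism ${\cal P}\to{\cal U}$, and an immersion $(X,D\cup Z)|_U\os{\subset}{\lo}{\cal P}$, with ${\cal Q}^{\rm ex}=(\os{\circ}{\cal P}{}^{\rm ex},{\cal N}^{\rm ex})$ having the same underlying formal scheme as ${\cal P}^{\rm ex}$. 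Since the system of universal enlargements depends only on the underlying formal scheme, the defining ideal, and the pulled-back log structure (cf. (\ref{prop:fci}), (\ref{lemm:bcue})), ${\mathfrak T}_{U,n}({\cal P})$ has underlying formal scheme equal to that of ${\mathfrak T}_{U,n}({\cal U})$ and coincides with the $\eps$-pull-back of ${\mathfrak T}_{U,n}({\cal U})$; thus $v({\mathfrak T}_{U,n}({\cal U}))={\mathfrak T}_{U,n}({\cal P})$ is again of the required form. Continuity of $v$ is clear because coverings in these sites are Zariski open coverings, which are visibly preserved, and the compatibility of $v$ with fibre products is the content of (\ref{lemm:increp}) together with (\ref{eqn:fibes}), (\ref{eqn:fibu}).

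Finally, I would check the commutativity of (\ref{diag:ercrys}) by comparing the two composites on sections over objects of the convergent sites. Both $Q^{\rm conv}_{(X,Z)/S}\circ\eps_{\rm Rconv}$ and $\eps^{\rm conv}_{(X,D\cup Z,Z)/S}\circ Q^{\rm conv}_{(X,D\cup Z)/S}$, applied to a sheaf $F$ on $((X,D\cup Z)/S)_{\rm Rconv}$, produce the sheaf whose value on $(U,T,\iota,u)\in{\rm Conv}((X,Z)/S)$ is $\varprojlim_n F_{v({\mathfrak T}_{U,n}({\cal U}))}$: on one side via the restriction formula for $Q^{\rm conv}$ and the definition of $v$, on the other via the $\eps$-enrichment of $(U,T,\iota,u)$ followed by the restriction formula; these agree because "enrich the log structure along $D$" commutes with "form the universal enlargement", which is exactly the identification used in the previous paragraph. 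This is the log convergent analogue of \cite[IV Proposition 2.3.2]{bb}, and the argument there goes through verbatim once $v$ is available; (\ref{lemm:ymjeps}), (\ref{coro:rex}) and (\ref{coro:reux}) handle the bookkeeping with the projection $\ol{u}{}^{\rm conv}_{Y/S}$.

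The main obstacle I expect is the well-definedness of $v$: this is not a formal consequence of anything about $\eps$, but genuinely uses the \S\ref{sec:lcs} construction of a log smooth ${\cal P}$ over $S$ with prescribed underlying formal scheme $\os{\circ}{\cal U}$ and a forgetful morphism to ${\cal U}$, together with the independence of the system of universal enlargements from all the choices made (so that the local construction glues); once this is settled, the continuity, left-exactness, and commutativity are routine.
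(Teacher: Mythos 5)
Your construction runs in the wrong direction, and the functor $v$ it rests on does not exist. The morphism $\eps_{\rm Rconv}$ is induced by the canonical \emph{forgetting} functor on sites, $T_M=(U,T,M_T,\iota)\mapsto T_N=(U,T,N_T,\iota)$ from \S\ref{sec:vflvc}, which goes from ${\rm Rconv}((X,D\cup Z)/S)$ to ${\rm Rconv}((X,Z)/S)$; being continuous and cocontinuous it yields a morphism of topoi in the stated direction with $\eps^*_{\rm Rconv}(G)(T_M)=G(T_N)$, and the only thing to prove is that $T_N$ again lies in the restricted site. By contrast, your ``enrichment'' $v$ is not canonically defined: the log structure $M(D)$ lives on $U\subset X$ and does not extend canonically to the formal thickening $T$, so $\eps^{{\rm conv}*}$ of a representable object is \emph{not} representable. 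The formulas (\ref{eqn:fibes}) and (\ref{eqn:fibu}) that you invoke belong to the localized morphism $\eps_{\rm conv}\vert_T$, where an enrichment $T_M$ of the base object has already been fixed and objects over $T_N$ are enriched by pulling back the log structure from $T_M$; there is no such datum in the global situation, and any $v$ you write down requires choosing a lift of $D$, with functoriality and independence of choices being exactly what fails.

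Moreover, the identification at the heart of your well-definedness argument is false. Even after constructing ${\cal P}$ over ${\cal R}={\cal U}$ as in (\ref{eqn:pvq}) (which presupposes an auxiliary immersion of $(X,D)\vert_U$ into a log formal scheme with underlying formal scheme $\os{\circ}{\cal U}$ that you must first manufacture), the universal enlargement ${\mathfrak T}_{U,n}({\cal P})$ does \emph{not} have the same underlying formal scheme as ${\mathfrak T}_{U,n}({\cal U})$: both are computed on exactifications, and $\os{\circ}{\cal P}{}^{\rm ex}\neq\os{\circ}{\cal R}{}^{\rm ex}$ in general. In the example of (\ref{rema:unusefil}), $\os{\circ}{\cal R}{}^{\rm ex}$ is the formal spectrum of (a completion of) ${\cal V}\{x\}[[y-x]]$ while $\os{\circ}{\cal P}{}^{\rm ex}={\rm Spf}({\cal V}\{x\}[[u-1]])$ with $y=ux$, and the rings obtained by adjoining $(y-x)^n/\pi$, respectively $(u-1)^n/\pi=(y-x)^n/(\pi x^n)$, are genuinely different. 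So ``enrich the log structure of ${\mathfrak T}_{U,n}({\cal U})$'' does not produce ${\mathfrak T}_{U,n}({\cal P})$, and the commutativity argument collapses with it. The actual proof goes the other way: given $T\in{\rm Rconv}((X,D\cup Z)/S)$ arising from an immersion of $(U,(D\cup Z)\vert_U)$ into a log smooth $({\cal U},M_{\cal U})$, one factors this immersion Zariski locally as an exact closed immersion followed by a log \'etale morphism, reduces by \cite[(2.1.5)]{nh2} to an admissible immersion into $({\cal U},{\cal D}\cup{\cal Z})$, and observes that the forgotten log structure $N_T$ is then the pull-back of the ${\cal Z}$-log structure, so that $(U,T,N_T,\iota)$ is the universal enlargement of $(U,Z\vert_U)\os{\sus}{\lo}({\cal U},{\cal Z})$ and hence lies in ${\rm Rconv}((X,Z)/S)$.
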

\begin{proof}
First we show the existence of $\eps_{\rm Rconv}$. 
To show this, it suffices to see that, for an object 
$T:=(U,T,M_T,\iota) \in ((X,D \cup Z)/S)_{\rm Rconv}$, 
the object $(U,T,N_T,\iota)$ constructed in 
\S\ref{sec:vflvc} belongs to $((X,Z)/S)_{\rm Rconv}$ 
Zariski locally on $T$ because we can define the exact functor 
$\eps_{\rm Rconv}^*$ in the same way as $\eps^*$ in \cite[p.~85]{nh2}. 
Assume that $T$ is an enlargement $T_n$ for some $n$, 
where $\{T_n\}_{n=1}^{\infty}$ is the universal enlargement of a closed immersion 
$i: (U,(D\cup Z)|_U) \os{\subset}{\lo} ({\cal U},M_{{\cal U}})$ 
into a log smooth formal scheme over $S$.  
Since the log structure $M_{{\cal U}}$ is defined on the 
Zariski site of ${\cal U}$, we have a factorization 
$$ (U,(D\cup Z)|_U) \os{\subset}{\lo} ({\cal U}',M_{{\cal U}'}) 
\lo ({\cal U},M_{{\cal U}}) $$ 
of $i$ Zariski locally on ${\cal U}$ 
such that the first morphism is an exact closed immersion and that 
the second morphism is log \'{e}tale. 
Then $T$ is a member of the universal enlargement of the first 
morphism. Hence we may assume that $i$ is an exact closed immersion. Then, by 
\cite[(2.1.5)]{nh2}, 
we may assume that $i$ is an admissible closed immersion 
$(U,(D\cup Z)|_U) \os{\subset}{\lo} ({\cal U},{\cal D}\cup {\cal Z})$. 
In this case, the log structure $N_T$ on $T$ is nothing but the 
pull-back of the log structure on ${\cal U}$ defined by ${\cal Z}$. 
Consequently $(U,T,N_T,\iota)$ is a member of 
the universal enlargement of the exact closed immersion 
$(U,Z|_U) \os{\subset}{\lo} ({\cal U},{\cal Z})$. 
Hence $(U,T,N_T,\iota)$ belongs to 
$((X,Z)/S)_{\rm Rconv}$ Zariski locally on $T$. 
Now it is clear that 
we have the morphism $\eps_{\rm Rconv}$ of topoi. It is easy to see 
that we have the commutative diagram (\ref{diag:ercrys}). 
\end{proof} 

\begin{lemm}\label{lemm:notba} 
Let the notations be as in {\rm (\ref{lemm:exercr})}. 
Then the following natural morphism of functors 
$$ Q^*_{(X,Z)/S} R\eps_{{\rm conv}*}
\lo R\eps_{{\rm Rconv}*} Q^*_{(X,D\cup Z)/S} $$ 
for ${\cal O}_{(X,D\cup Z)/S}$-modules 
is an isomorphism. 
\end{lemm}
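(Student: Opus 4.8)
The plan is to reduce the assertion to the explicit description of $\eps^*_{\rm Rconv}$ furnished by the proof of (\ref{lemm:exercr}), together with Berthelot's homological package for a restricted topos. Concretely, I would establish three facts and then combine them formally. First, the \emph{underived} compatibility $Q^*_{(X,Z)/S}\circ \eps_* = \eps_{{\rm Rconv}*}\circ Q^*_{(X,D\cup Z)/S}$ as functors on ${\cal O}_{(X,D\cup Z)/S}$-modules: for a module $G$ and a restricted enlargement $T'=(U,T,N_T,\iota)$ of $(X,Z)/S$, both sides evaluated at $T'$ compute $G$ at $\eps^*(T')$, using from the proof of (\ref{lemm:exercr}) that $\eps^*(T')$ is, Zariski locally on $T$, again a restricted enlargement and that $\eps^*$ and $\eps^*_{\rm Rconv}$ agree on such objects; sheafifying over the Zariski covering of $T$ finishes this point. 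Second, the restriction functor $Q^*_{(X,D\cup Z)/S}$ is exact. Third, $Q^*_{(X,D\cup Z)/S}$ sends injective ${\cal O}_{(X,D\cup Z)/S}$-modules to $\eps_{{\rm Rconv}*}$-acyclic (indeed injective) modules; this is part of the proof of (\ref{prop:injq}), i.e.~the log convergent analogue of \cite[IV 2.2]{bb}, where one shows $Q^*$ preserves injectives because it admits an exact left adjoint coming from the inclusion of the restricted log convergent site into the full one.

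Granting these, pick an injective resolution $F\os{\sim}{\lo} I^{\bullet}$ in the category of ${\cal O}_{(X,D\cup Z)/S}$-modules, so that $R\eps_*F=\eps_*(I^{\bullet})$. Applying the first fact term by term gives $Q^*_{(X,Z)/S}(\eps_*I^{\bullet})=\eps_{{\rm Rconv}*}(Q^*_{(X,D\cup Z)/S}I^{\bullet})$. By the second fact $Q^*_{(X,D\cup Z)/S}F\os{\sim}{\lo} Q^*_{(X,D\cup Z)/S}I^{\bullet}$ is again a resolution, and by the third fact its terms are $\eps_{{\rm Rconv}*}$-acyclic, so $\eps_{{\rm Rconv}*}(Q^*_{(X,D\cup Z)/S}I^{\bullet})$ computes $R\eps_{{\rm Rconv}*}(Q^*_{(X,D\cup Z)/S}F)$. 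Chaining the identifications yields an isomorphism $Q^*_{(X,Z)/S}R\eps_*F \os{\sim}{\lo} R\eps_{{\rm Rconv}*}Q^*_{(X,D\cup Z)/S}F$, and a routine check with the units and counits of the adjunctions attached to diagram (\ref{diag:ercrys}) identifies it with the canonical base change morphism of the statement; naturality in $F$ is clear.

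The main obstacle is the third fact: proving that $Q^*$ preserves injectivity (equivalently $\eps_{{\rm Rconv}*}$-acyclicity) in the present log convergent setting. This is the genuinely site-theoretic point of \cite[IV 2.2]{bb}, requiring one to check that the inclusion of the restricted log convergent site into the full log convergent site is suitably continuous and cocontinuous, so that $Q^*$ has an exact left adjoint and the adjoint-functor criterion for preservation of injectives applies. Once this is in place --- and it is precisely what underlies the already-used statement (\ref{prop:injq}) --- everything else is formal, and the first fact is just an unwinding of the definition of $\eps^*_{\rm Rconv}$.
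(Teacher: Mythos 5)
Your strategy (underived compatibility of $Q^*$ with $\eps_*$, exactness of $Q^*$, and acyclicity of $Q^*$ of injectives, then a formal injective-resolution argument) is genuinely different from the paper's, which instead reduces the claim to showing that $R^q\eps_*$ carries \emph{parasitic} modules (those vanishing on all universal enlargements, i.e.\ killed by $Q^*_{(X,D\cup Z)/S}$) to parasitic modules, and proves that by computing $(R^q\eps_*F)_{T_n}$ with the \v{C}ech--Alexander complex attached to the product embeddings $(U,(D\cup Z)\vert_U)\os{\sus}{\lo}{\cal U}\times_S{\cal Y}^m$. Unfortunately your version has a genuine gap, and it sits exactly where you declare the argument to be ``just an unwinding,'' namely in your first fact.

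The functor $\eps_*$ is the right adjoint of $\eps^*$, and $\eps^*$ is given by precomposition with the de-logging functor $T\mapsto T_N$ on objects of ${\rm Conv}((X,D\cup Z)/S)$; hence $(\eps_*G)_{T'}={\rm Hom}(\eps^*h_{T'},G)$, and the sheaf $\eps^*h_{T'}$ (whose sections on $T$ are the morphisms $T_N\lo T'$) is \emph{not} representable: there is no canonical way to re-equip an enlargement $T'$ of $(X,Z)/S$ with the log structure along $D$, so there is no single object ``$\eps^*(T')$'' at which to evaluate $G$. Your appeal to (\ref{lemm:exercr}) is in the wrong direction --- that proof de-logs a restricted object of $((X,D\cup Z)/S)_{\rm Rconv}$ to produce a restricted object of $((X,Z)/S)_{\rm Rconv}$; it does not produce a (restricted) enlargement of $(X,D\cup Z)/S$ over a given restricted $T'$. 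Even Zariski-locally on $T'$ the possible re-loggings form a groupoid of choices, and computing ${\rm Hom}(\eps^*h_{T'},G)$ forces one to cover $\eps^*h_{T'}$ by the representables coming from ${\cal U}\times_S{\cal Y}^m$ and take the resulting equalizer --- precisely the degree-zero part of the \v{C}ech--Alexander computation that constitutes the paper's proof, with the higher degrees needed for $R^q\eps_*$, $q>0$. So your fact 1 is not free; establishing it already requires essentially the whole argument. A secondary issue: (\ref{prop:injq}) as stated says every injective of the restricted topos is of the form $Q^*(I)$, which is not the same as saying $Q^*$ carries every injective to an injective (or to an $\eps_{{\rm Rconv}*}$-acyclic object); your fact 3 would need a separate proof before the formal part of your argument can run.
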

\begin{proof} 
We say that, for an ${\cal O}_{(X,D\cup Z)/S}$-module $F$, 
$F$ is parasitic if $F_{T_n}=0$ for all $n\in {\mab Z}_{\geq 1}$, 
where $\{T_n\}_{n=1}^{\infty}$ is the system of the universal enlargements 
of any immersion $\iota \col (U,D\cup Z) \os{\sus}{\lo} {\cal U}$ 
into a log smooth noetherian 
log $p$-adic formal scheme over $S$ 
which is topologically of finite type over $S$. 
As in \cite[(2.11.5)]{nh2}, we have to prove that 
for any parasitic ${\cal O}_{(X,D\cup Z)/S}$-module 
$F$ of $((X,D\cup Z)/S)_{\rm conv}$, 
$R^q\eps_*(F)$ is also parasitic for any $q\geq 0$.  
\par 
We may assume that there exists a closed immersion 
$\iota: (U,Z) \os{\subset}{\lo} {\cal U}$ 
into an affine log smooth scheme over $S$, where $(U,Z)$ is a log open subscheme of 
$(X,Z)$. Let $\{T_n\}_{n=1}^{\infty}$ be the system of the universal enlargements 
of any immersion $\iota$.  
On the other hand, let 
us take a closed immersion $\iota':(U,D\cup Z) \os{\subset}{\lo} {\cal Y}$ 
into an affine log smooth formal scheme $S$. 
Then, for any $m \in {\mab Z}_{\geq 1}$, we have the closed immersion 
$\iota_m: (U,(D\cup Z)|_U) \os{\subset}{\lo} {\cal U} \times_S {\cal Y}^m$ 
induced by $\iota \circ (\eps \vert_U)$ and $\iota'$. Let $\{T(m)_n\}_{n=1}^{\infty}$ be the 
system of the closed immersion $\iota_m$ over $S$. 
By the log convergent version of \cite[V 1.2.5]{bb}, 
we have the following equalities: 
$$ (R^q\eps_*(F))_{T_n} = R^qf_{(U,(D\cup Z)|_U)/T_n *}(F) 
= R^q(\iota \circ (\eps|_U))_*\text{{\rm \v{C}A}}(F)_n, $$ 
where $\text{{\rm \v{C}A}}(F)_n = F_{T(\bul)_n}$ is 
the \v{C}ech-Alexander complex of $F$ (cf.~\cite[V 1.2.3]{bb}). Since 
$F$ is parasitic, we have $F_{T(m)_n} = 0$ for any $m$. 
Hence $(R^q\eps_*(F))_{T_n}=0$. 
\end{proof}

\begin{theo}\label{theo:dcscy}
Let the notations be as in {\rm (\ref{lemm:exercr})}. 
Set 
\begin{align*}
E_{{\rm conv},c}({\cal K}_{(X,D\cup Z)/S})
:= ( & {\cal K}_{(X,Z)/S}\otimes_{\mab Z} 
\vp^{(0)}_{\rm conv}(D/S;Z) 
\os{\iota^{(0)*}_{\rm conv}}{\lo} 
\tag{12.11.1}\label{eqn:eoxdz}\\ 
{} & a^{(1)}_{{\rm conv}*}
({\cal K}_{(D^{(1)},Z\vert_{D^{(1)}})/S}
\otimes_{\mab Z} 
\vp^{(1)}_{\rm conv}(D/S;Z))
\os{\iota^{(1)*}_{\rm conv}}{\lo} \\
{} & a^{(2)}_{{\rm conv}*}
({\cal K}_{(D^{(2)},Z\vert_{D^{(2)}})/S}
\otimes_{\mab Z} 
\vp^{(2)}_{\rm conv}(D/S;Z))
\os{\iota^{(2)*}_{\rm conv}}{\lo} \cdots). 
\end{align*} 
Then there exists 
the following canonical isomorphism in
$D^+({\cal K}_{(X,Z)/S}):$
\begin{equation*}
Q^*_{(X,Z)/S}R\eps_{{\rm conv}*}
({\cal I}^{D,{\rm conv}}_{(X,D\cup Z)/S}) \os{\sim}{\lo}  
Q^*_{(X,Z)/S}E_{{\rm conv},c}({\cal K}_{(X,D\cup Z)/S}).
\tag{12.11.2}\label{ali:csct} 
\end{equation*} 
\end{theo}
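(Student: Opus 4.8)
The plan is to reduce the statement to a purely local computation on the system of universal enlargements and then apply the $p$-adic purity theorem already established in this paper. First I would recall that $(C_{\rm conv}({\cal K}_{(X,D\cup Z)/S}),P^D)$ was constructed via the simplicial immersion $(X_{\bul},D_{\bul}\cup Z_{\bul})\os{\sus}{\lo}{\cal P}_{\bul}$, and that by (\ref{prop:czc}) and (\ref{theo:calvc}) we have $R^k\eps^{\rm conv}_{(X,D\cup Z,Z)/S*}({\cal K}_{(X,D\cup Z)/S})=a^{(k)}_{{\rm conv}*}({\cal K}_{(D^{(k)},Z\vert_{D^{(k)}})/S}\otimes_{\mab Z}\vp^{(k){\rm log}}_{{\rm conv}}(D/S;Z))$. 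The object $E_{{\rm conv},c}({\cal K}_{(X,D\cup Z)/S})$ is the complex whose degree-$k$ term is exactly this $E_2$-term (up to the orientation sheaf bookkeeping in (\ref{eqn:descbd})), with differential the alternating sum of Gysin-type restriction morphisms $\iota^{(k)*}_{\rm conv}$. So the content of (\ref{ali:csct}) is that, after applying $Q^*_{(X,Z)/S}$, the complex $R\eps_{(X,D\cup Z,Z)/S*}({\cal I}^{D,{\rm conv}}_{(X,D\cup Z)/S})$ (the vanishing cycle sheaf of the \emph{ideal} of $D$, not the structure sheaf) is quasi-isomorphic to the ``Gysin complex'' built from the $D^{(k)}$'s.

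The key steps, in order, are as follows. First I would work locally as in the proof of (\ref{lemm:risoc}) and (\ref{prop:grla}): choose an affine open $U$ of $X$ together with an exact closed immersion into an affine log smooth $({\cal P}=({\cal X},{\cal D}\cup {\cal Z}),{\cal M}^{\rm ex})$, so that the system of universal enlargements $\{T_n\}$ is explicit via (\ref{eqn:ldra}), and identify ${\cal I}^{D,{\rm conv}}_{(X,D\cup Z)/S}$ on $T_n$ with the ideal generated by the lift of the equation of $D$. Second, I would compute $R\eps_{{\rm conv}*}$ applied to this ideal sheaf by the log convergent linearization technique: there is a short exact sequence of filtered log de Rham complexes, analogous to (\ref{eqn:xzl}), relating $\Om^{\bul}_{{\cal Q}^{\rm ex}/S}$ (which computes ${\cal K}_{(X,Z)/S}$), $\Om^{\bul}_{{\cal P}^{\rm ex}/S}$ (which computes $R\eps_{{\rm conv}*}({\cal K}_{(X,D\cup Z)/S})$ by (\ref{theo:cpvcs})), and the Poincar\'e residue complexes on the $D^{(k)}({\cal M}^{\rm ex})$'s; the ideal ${\cal I}^{D,{\rm conv}}$ should correspond to the subcomplex $I^{\bul}$ with $I^i=\langle m\rangle \Om^i_{{\cal P}^{\rm ex}/S}$, and the key algebraic lemma is that multiplication by $m$ (the $D$-equation) on $L^{\rm conv}$ fits into an exact sequence whose cokernel, after taking successive Poincar\'e residues, reassembles into the complex $E_{{\rm conv},c}$. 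Third, I would pass to $(Y/S)_{\rm Rconv}$ and use (\ref{lemm:risoc}) to know ${\cal I}^{D,{\rm conv}}$ is an isocrystal there, and use the compatibility (\ref{diag:ercrys}) together with $Q^*_{(X,Z)/S}R\eps_*\simeq R\eps_{{\rm Rconv}*}Q^*_{(X,D\cup Z)/S}$ (the lemma immediately preceding (\ref{theo:dcscy})) to transport the computation; this is where the restriction pull-back $Q^*_{(X,Z)/S}$ in the statement genuinely enters. Fourth, I would identify the resulting complex term by term with $E_{{\rm conv},c}$ using (\ref{lemm:zcor}) and (\ref{prop:rescos}) to match the orientation sheaves and check that the differential induced by the de Rham differential on $L^{\rm conv}$ agrees with $\sum_k\iota^{(k)*}_{\rm conv}$, invoking the routine sign/orientation bookkeeping of (\ref{eqn:descbd}) and the fact that $d^{(k)}\circ d^{(k-1)}=0$ both here and there. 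Finally, I would check independence of all choices by the cohomological-descent argument used throughout (e.g.\ in the proof of (\ref{theo:ifc}) and (\ref{coro:ccco})).

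The main obstacle I expect is the second step: establishing the exact sequence of log de Rham complexes that expresses $R\eps_{{\rm conv}*}$ of the \emph{ideal} sheaf ${\cal I}^{D,{\rm conv}}$ in terms of the complexes attached to $(X,Z)$ and to the strata $D^{(k)}$. Unlike the structure-sheaf case treated in \S\ref{sec:vflvc}, one is now resolving the kernel of ${\cal K}_{(X,D\cup Z)/S}\to a^{(0)}_{{\rm conv}*}{\cal K}_{(X,Z)/S}$, equivalently the ideal $\langle m\rangle$ inside the log de Rham complex, and one must check that multiplication by $m$ interacts correctly with the weight filtration $P^{{\cal P}^{\rm ex}/{\cal Q}^{\rm ex}}$ and with the exactness results (\ref{lemm:rex}), (\ref{theo:injf}) — in particular that the relevant pull-backs along $T_n$ remain injective so that the quotient complexes genuinely compute the Poincar\'e residue terms. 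Once that homological input is in place, the rest is a matching of canonically-isomorphic objects, carried out over the \v{C}ech diagram and descended, exactly in the style of the proofs of (\ref{theo:ifc}) and (\ref{theo:wtvsca}); I would mostly cite those proofs rather than repeat them.
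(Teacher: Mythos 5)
Your proposal follows essentially the same route as the paper: identify ${\cal I}^{D,{\rm conv}}_{(X,D\cup Z)/S}$ with the isocrystal whose linearized de Rham complex is $\langle m\rangle\Om^{\bul}_{{\cal P}^{\rm ex}_{\bul}/S}=\Om^{\bul}_{{\cal X}_{\bul}/S}(\log({\cal Z}_{\bul}-{\cal D}_{\bul}))$, resolve this by the Deligne--Illusie complex of restrictions to the strata ${\cal D}^{(k)}_{\bul}$ (the differentials are the restriction maps $\iota^{(k)*}$ rather than residues, but that is only a naming issue), and verify by an explicit local flatness computation that the linearization functor preserves the exactness of that resolution --- precisely the step you single out as the crux. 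The passage to the restricted topos via $Q^*_{(X,Z)/S}R\eps_{*}\simeq R\eps_{{\rm Rconv}*}Q^*_{(X,D\cup Z)/S}$, the compatibility of $\eps_*$ with linearization, and the cohomological-descent argument for independence of choices are likewise as you describe.
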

\begin{proof} 
(By using (\ref{theo:cpvcs}), 
the proof is the same as that of 
\cite[(2.11.3)]{nh2}.)
Assume that we are given 
the commutative diagram (\ref{cd:pfcd}) for $(X,D\cup Z)$.  
Let 
$b^{(k)}_{\bul} \col {\cal D}^{(k)}_{\bul} \lo
{\cal X}_{\bul}$ be the natural morphism. 
Let $\{(U_{\bul n},T_{\bul n})\}_{n=1}^{\infty}$ be 
the system of 
the universal enlargements of the simplicial immersion 
$(X_{\bul},D_{\bul}\cup Z_{\bul})  \os{\sus}{\lo} {\cal P}_{\bul}$ 
over $S$. Note that the notation $\{(U_{\bul n},T_{\bul n})\}_{n=1}^{\infty}$ 
denotes the different objects in this section and \S\ref{sec:wfcipp}. 
By (\ref{prop:risoc}) we have the following connection 
\begin{align*} 
\{({\cal I}^D_{(X_{\bul},D_{\bul}\cup Z_{\bul})/S})_{{T}_{\bul n}}\}_{n=1}^{\infty} \lo 
\{({\cal I}^D_{(X_{\bul},D_{\bul}\cup Z_{\bul})/S})_{{T}_{\bul n}}
\otimes_{{\cal O}_{{\cal P}_{\bul}}}
\Omega^1_{{\cal P}_{\bul}/S}\}_{n=1}^{\infty}. 
\end{align*} 
Let ${\cal F}^{\bul}$ 
be the isocrystal on $(X_{\bul}, D_{\bul}\cup Z_{\bul})/S$ 
corresponding to the integrable log connection 
$\vpl_n({\cal I}^D_{(X,D\cup Z)/S})_{{T}_{\bul n}} \lo 
\vpl_n({\cal I}^D_{(X,D\cup Z)/S})_{{T}_{\bul n}}\otimes_{{\cal O}_{{\cal P}_{\bul}}}
\Omega^1_{{\cal P}_{\bul}/S}$. 
By taking the exactification of the immersion 
$(X_{\bul}, D_{\bul}\cup Z_{\bul})\os{\sus}{\lo} {\cal P}_{\bul}$, 
we may assume that ${\cal P}_{\bul}=({\cal X}_{\bul},{\cal D}_{\bul}\cup {\cal Z}_{\bul})$, 
where ${\cal D}_{\bul}$ and ${\cal Z}_{\bul}$ are transversal simplicial SNCD's on 
${\cal X}_{\bul}/S$ such that ${\cal Q}^{\rm ex}_{\bul}=
({\cal X}_{\bul},{\cal Z}_{\bul})$. 
Let 
\begin{equation*}
\eps_{\bul{\rm conv}}: ({(X_{\bul},D_{\bul} \cup Z_{\bul})/S})_{\rm conv} \lo 
({(X_{\bul},Z_{\bul})/S})_{\rm conv} 
\end{equation*}
and 
\begin{equation*}
\eps_{\bul{\rm Rconv}}: ({(X_{\bul},D_{\bul} \cup Z_{\bul})/S})_{\rm Rconv} \lo 
({(X_{\bul},Z_{\bul})/S})_{\rm Rconv} 
\end{equation*}
be the simplicial versions of $\eps_{\rm conv}$ and $\eps_{\rm Rconv}$, respectively. 
Then we obtain the following isomorphisms: 
{\allowdisplaybreaks{
\begin{align*}
& \phantom{\,\,\,\os{=}{\lo}} 
Q^*_{(X,Z)/S}R\eps_*({\cal I}^D_{(X,D\cup Z)/S}) 
\tag{12.11.3}\label{ali:omnx}\\ 
& \os{=}{\lo} 
Q^*_{(X,Z)/S}R\eps_*
R\pi_{(X,D\cup Z)/S{\rm conv}*} \pi^{-1}_{(X,D\cup Z)/S{\rm conv}}
({\cal I}^D_{(X,D\cup Z)/S}) \\
& \os{=}{\lo} 
R\eps_{{\rm Rconv}*}R\pi_{(X,D\cup Z)/S{\rm Rconv}*}
Q^*_{(X_{\bul},D_{\bul}\cup Z_{\bul})/S} 
\pi^{-1}_{(X,D\cup Z)/S{\rm conv}} ({\cal I}^D_{(X,D\cup Z)/S}) \\ 
& \os{=}{\longleftarrow}
R\eps_{{\rm Rconv}*}R\pi_{(X,D\cup Z)/S{\rm Rconv}*}
Q^*_{(X_{\bul},D_{\bul}\cup Z_{\bul})/S} ({\cal F}^{\bul}) \\ 
& \os{=}{\longrightarrow}
R\pi_{(X,Z)/S{\rm Rconv}*}R\eps_{\bul{\rm Rconv}*}
Q^*_{(X_{\bul},D_{\bul}\cup Z_{\bul})/S} ({\cal F}^{\bul}) \\ 
& \os{=}{\longleftarrow} 
R\pi_{(X,Z)/S{\rm Rconv}*}
Q^*_{(X_{\bul},Z_{\bul})/S}R\eps_{\bul *} ({\cal F}^{\bul}) \\ 
& \os{=}{\lo} 
R\pi_{(X,Z)/S{\rm Rconv}*}
Q^*_{(X_{\bul},Z_{\bul})/S}R\eps_{\bul *}
L^{\rm conv}_{(X_{\bul},D_{\bul}\cup Z_{\bul})/S}(\Omega^{\bul}_{{\cal X}_{\bul}/S}
(\log ({\cal Z}_{\bul}-{\cal D}_{\bul}))\otimes_{\mab Z}{\mab Q}) \\ 
& \os{=}{\longleftarrow} 
R\pi_{(X,Z)/S{\rm Rconv}*}
Q^*_{(X_{\bul},Z_{\bul})/S}
L^{\rm conv}_{(X_{\bul},Z_{\bul})/S}(\Omega^{\bul}_{{\cal X}_{\bul}/S}
(\log ({\cal Z}_{\bul}-{\cal D}_{\bul}))\otimes_{\mab Z}{\mab Q}). 
\end{align*}}}
Here we have used the simplicial version of (\ref{lemm:notba}). 
By the same argument as that in \cite[(4.2.2) (a), (c)]{di}, 
the following sequence
\begin{equation*} 
0 \lo  \Om^{\bul}_{{\cal X}_{\bul}/S}
(\log ({\cal Z}_{\bul}-{\cal D}_{\bul})) 
\lo  \Om^{\bul}_{{\cal X}_{\bul}/S}(\log {\cal Z}_{\bul})
\otimes_{\mab Z}
\vp_{\rm zar}^{(0)}({\cal D}_{\bul}/S)
\tag{12.11.4}
\label{eqn:ombcx}
\end{equation*} 
$$\os{\iota_{\bul, {\rm zar}}^{(0)*}}{\lo} 
b^{(1)}_{{\bul}*}(\Om^{\bul}_{{\cal D}_{\bul}^{(1)}/S}
(\log {\cal Z}_{\bul}\vert_{{\cal D}^{(1)}_{\bul}})
\otimes_{\mab Z}
\vp_{\rm zar}^{(1)}({\cal D}_{\bul}/S)) 
\os{\iota_{\bul, {\rm zar}}^{(1)*}}{\lo} \cdots $$
is exact. Here we define $\iota_{\bul, {\rm zar}}^{(k)*}$ 
similarly as for $\iota^{(k)*}_{\rm conv}$. 
Hence  
$\Om^{\bul}_{{\cal X}_{\bul}/S}
(\log ({\cal Z}_{\bul}-{\cal D}_{\bul}))$ 
is quasi-isomorphic 
to the single complex of the following double complex

\begin{equation*}
{\footnotesize{
\begin{CD}
\cdots @>>> \cdots \\
@A{d}AA  @A{-d}AA  \\
\Om^2_{{\cal X}_{\bul}/S}(\log {\cal Z}_{\bul})\otimes_{\mab Z}
\vp_{\rm zar}^{(0)}({\cal D}_{\bul}/S) 
@>{\iota_{\bul, {\rm zar}}^{(0)*}}>> 
b^{(1)}_{{\bul}*}(\Om^2_{{\cal D}_{\bul}^{(1)}/S}
(\log {\cal Z}_{\bul}\vert_{{\cal D}_{\bul}^{(1)}})
\otimes_{\mab Z}
\vp_{\rm zar}^{(1)}({\cal D}_{\bul}/S))  
\\ 
@A{d}AA  @A{-d}AA \\
\Om^1_{{\cal X}_{\bul}/S}
(\log {\cal Z}_{\bul})\otimes_{\mab Z}
\vp_{\rm zar}^{(0)}({\cal D}_{\bul}/S)   
@>{\iota_{\bul, {\rm zar}}^{(0)*}}>> 
b^{(1)}_{{\bul}*}(\Om^1_{{\cal D}_{\bul}^{(1)}/S}
(\log {\cal Z}_{\bul}\vert_{{\cal D}_{\bul}^{(1)}})
\otimes_{\mab Z}\vp_{\rm zar}^{(1)}({\cal D}_{\bul}/S)) \\ 
@A{d}AA  @A{-d}AA  \\
{\cal O}_{{\cal X}_{\bul}}
\otimes_{\mab Z}\vp_{\rm zar}^{(0)}({\cal D}_{\bul}/S)   
@>{\iota_{\bul, {\rm zar}}^{(0)*}}>>
b^{(1)}_{{\bul}*}({\cal O}_{{\cal D}_{\bul}^{(1)}}
\otimes_{\mab Z}\vp_{\rm zar}^{(1)}({\cal D}_{\bul}/S)) 
\end{CD}}}
\tag{12.11.5}
\label{cd:bgcsrl}
\end{equation*}
\begin{equation*}
{\footnotesize{
\begin{CD}
@>>> \cdots @>>> \cdots\\
@. @A{d}AA  \\
@>{\iota_{\bul, {\rm zar}}^{(1)*}}>> 
b^{(2)}_{{\bul}*}(\Om^2_{{\cal D}_{\bul}^{(2)}/S}
(\log {\cal Z}_{\bul}\vert_{{\cal D}_{\bul}^{(2)}})
\otimes_{\mab Z}\vp_{\rm zar}^{(2)}({\cal D}_{\bul}/S))  
@>{\iota_{\bul, {\rm zar}}^{(2)*}}>> \cdots \\
@. @A{d}AA   \\
@>{\iota_{\bul, {\rm zar}}^{(1)*}}>> 
b^{(2)}_{{\bul}*}(\Om^1_{{\cal D}_{\bul}^{(2)}/S}
(\log {\cal Z}_{\bul}\vert_{{\cal D}_{\bul}^{(2)}})
\otimes_{\mab Z}\vp_{\rm zar}^{(2)}({\cal D}_{\bul}/S)) 
@>{\iota_{\bul, {\rm zar}}^{(2)*}}>> \cdots \\
@. @A{d}AA \\
@>{\iota_{\bul, {\rm zar}}^{(1)*}}>> 
b^{(2)}_{{\bul}*}({\cal O}_{{\cal D}_{\bul}^{(2)}}
\otimes_{\mab Z}\vp_{\rm zar}^{(2)}({\cal D}_{\bul}/S)) 
@>{\iota_{\bul, {\rm zar}}^{(2)*}}>> \cdots.
\end{CD}}}
\end{equation*} 
We claim that the following sequence 
\begin{align*} 
&0 \lo Q^*_{(X_{\bul},Z_{\bul})/S}
L^{\rm conv}_{(X_{\bul},Z_{\bul})/S}
(\Om^{\bul}_{{\cal X}_{\bul}/S}
(\log({\cal Z}_{\bul}-{\cal D}_{\bul}))\otimes_{\mab Z}{\mab Q}) \lo \tag{12.11.6}
\label{eqn:lics} \\ 
& Q^*_{(X_{\bul},Z_{\bul})/S}\{L^{\rm conv}_{(X_{\bul},Z_{\bul})/S}
(\Om^{\bul}_{{\cal X}_{\bul}/S}
(\log {\cal Z}_{\bul})) \otimes_{\mab Z}\vp_{\rm conv}^{(0)}(D_{\bul}/S)\otimes_{\mab Z}{\mab Q}\} 
\os{Q^*_{(X_{\bul},Z_{\bul})/S}(\iota_{\bul,{\rm conv}}^{(0)*})}{\lo}\\ 
& Q^*_{(X_{\bul},Z_{\bul})/S}a^{(1)}_{{\bul}{\rm conv}*}
\{L^{\rm conv}_{(X_{\bul},Z_{\bul})/S}((\Om^{\bul}_{{\cal D}_{\bul}^{(1)}/S}
(\log {\cal Z}_{\bul}\vert_{{\cal D}^{(1)}_{\bul}}) 
\otimes_{\mab Z}\vp_{\rm conv}^{(1)}(D_{\bul}/S)\otimes_{\mab Z}{\mab Q})\}
\os{Q^*_{(X_{\bul},Z_{\bul})/S}(\iota_{\bul,{\rm conv}}^{(1)*})}{\lo} 
\cdots
\end{align*}
is exact.  Indeed, the question is local and 
we have only to prove that the sequence 
(\ref{eqn:lics}) for $\bul=m$ is exact for a fixed $m \in {\mab N}$. 
Since the question is local, we may ignore the orientation sheaves 
$\vp_{\rm conv}^{(k)}(D_{\bul}/S)$ and $\vp_{\rm zar}^{(k)}({\cal D}_{\bul}/S)$. 
That is, 
we have only to prove that the following sequence 
\begin{align*}
0 & \lo {\cal K}_{T_{mn}}\otimes_{{\cal O}_{{\cal X}_m}}
\Om^{\bul}_{{\cal X}_m/S} (\log({\cal Z}_m-{\cal D}_m)) 
\lo {\cal K}_{T_{mn}}\otimes_{{\cal O}_{{\cal X}_m}}
\Om^{\bul}_{{\cal X}_m/S} (\log {\cal Z}_m)
\tag{12.11.7}\label{eqn:ilics} \\ 
& \os{\iota_{m,{\rm zar}}^{(0)*}}{\lo}
{\cal K}_{T_{mn}}\otimes_{{\cal O}_{{\cal X}_m}}
b^{(1)}_{m*}
(\Om^{\bul}_{{\cal D}_m^{(1)}/S}
(\log {\cal Z}_m\vert_{{\cal D}^{(1)}_m})
\os{\iota_{m,{\rm zar}}^{(1)*}}{\lo} \cdots 
\end{align*}
is exact.
We may have cartesian diagrams 
(\ref{cd:dxs}) and (\ref{cd:xxd}) for the SNCD 
${\cal D}_m\cup {\cal Z}_m$ on ${\cal X}_m$; 
we assume that 
${\cal D}_m$ (resp.~${\cal Z}_m$) 
is defined by an equation 
$x_1\cdots x_t=0$ (resp.~$x_{t+1}\cdots x_s=0$). 
Set ${\cal J}_m:=(x_{d+1}, \ldots, x_{d'}){\cal O}_{{\cal X}_m}$. 
Set ${\cal X}'_m:=
\ul{\rm Spec}_{{\cal X}_m}({\cal O}_{{\cal X}_m}/{\cal J}_m)$ 
and ${\cal X}'':=\ul{\rm Spec}_S
({\cal O}_S[x_{d+1}, \ldots, x_{d'}])$. 
Let ${\cal D}'_m$ (resp.~${\cal Z}'_m$) be 
the closed subscheme of 
${\cal X}'_m$ defined by an equation $x_1\cdots x_t=0$ 
(resp.~$x_{t+1}\cdots x_s=0$). 
Let $b'{}^{(k)}_m$ $(k\in {\mab Z}_{>0})$ and 
$\iota'{}^{(k)*}_{m,{\rm zar}}$ $(k\in {\mab Z}_{\geq 0})$ 
be analogous morphisms to $b^{(k)}_m$  and 
$\iota_{m,{\rm zar}}^{(k)*}$, respectively, for ${\cal X}'_m$, 
${\cal D}'_m$ and ${\cal Z}'_m$. 
Then we have an exact sequence 
\begin{equation*}
0 \lo 
\Om^{\bul}_{{\cal X}'_m/S}
(\log({\cal Z}'_m-{\cal D}'_m)) \lo 
\Om^{\bul}_{{\cal X}'_m/S}
(\log {\cal Z}'_m)\os{\iota'{}^{(0)*}_{m,{\rm zar}}}{\lo}
b'{}^{(1)}_{ m*}
(\Om^{\bul}_{{\cal D}_m'{}^{(1)}/S}
(\log {\cal Z}'_m\vert_{{\cal D}'{}^{(1)}_m})
\os{\iota'{}^{(1)*}_{m,{\rm zar}}}{\lo} \cdots. 
\tag{12.11.8}
\label{eqn:ilicsp}
\end{equation*}
As in the proof of (\ref{lemm:eisd}) (2), 
we see that 
\begin{align*}
({\cal K}_S[[x_{d+1},\ldots, x_{d'}]]
[t_{\ul{m}}~\vert~\ul{m} \in {\mab N}^r,\vert\ul{m}\vert =n]
/(x^{\ul{m}}-\pi t_{\ul{m}}))^{\wh{}} 
\end{align*}
is a flat ${\cal K}_S$-module. 
Hence to apply the tensor product  
\begin{align*}
\wh{\otimes}_{{\cal K}_S}
({\cal K}_S[[x_{d+1},\ldots, x_{d'}]]
[t_{\ul{m}}~\vert~\ul{m} \in {\mab N}^r,\vert\ul{m}\vert =n]
/(x^{\ul{m}}-\pi t_{\ul{m}}))^{\wh{}} 
\otimes_{{\cal O}_{{\cal X}''}}\Om^q_{{\cal X}''/S}\quad (q\in {\mab N})
\end{align*} 
to the exact sequence (\ref{eqn:ilicsp}) preserves the exactness. 
Because 
\begin{align*}
&{\cal K}_{T_{mn}}\otimes_{{\cal O}_{{\cal X}_m}}
\Om^{\bul}_{{\cal X}_m/S}
(\log({\cal Z}_m-{\cal D}_m)) \\
&\simeq 
\Om^{\bul}_{{\cal X}'_m/S}
(\log({\cal Z}'_m-{\cal D}'_m))\otimes_{{\cal O}_S} 
{} 
({\cal K}_S[[x_{d+1},\ldots, x_{d'}]]
[t_{\ul{m}}~\vert~\ul{m} \in {\mab N}^r,\vert\ul{m}\vert =n]
/(x^{\ul{m}}-\pi t_{\ul{m}}))^{\wh{}} 
\otimes_{{\cal O}_{{\cal X}''}}\Om^{\bul}_{{\cal X}''/S}
\end{align*} 
and because the similar formula for 
${\cal O}_{T_{mn}}\otimes_{{\cal X}_m}b^{(k)}_{m*}
(\Om^{\bul}_{{\cal D}_m^{(k)}/S}
(\log {\cal Z}_m\vert_{{\cal D}^{(k)}_m})
\otimes_{\mab Z}\vp_{\rm zar}^{(k)}({\cal D}_m/S))$ 
$(k\in {\mab N})$ holds, 
we have the exactness of (\ref{eqn:ilics}). 
\par 
By (\ref{linc}) and (\ref{eqn:lics}), 
we have the following quasi-isomorphism

\begin{align*}
& Q^*_{(X_{\bul},Z_{\bul})/S}
L^{\rm conv}_{(X_{\bul},Z_{\bul})/S}(\Om^{\bul}_{{\cal X}_{\bul}/S}
(\log ({\cal Z}_{\bul}-{\cal D}_{\bul}))\otimes_{\mab Z}{\mab Q}) 
\tag{12.11.9}\label{ali:lncsc} \\ 
&\os{\sim}{\lo} 
[Q^*_{(X_{\bul},Z_{\bul})/S}\{L^{\rm conv}_{(X_{\bul},Z_{\bul})/S}
(\Om^{\bul}_{{\cal X}_{\bul}/S}(\log {\cal Z}_{\bul})\otimes_{\mab Z}{\mab Q})
\otimes_{\mab Z}\vp_{\rm conv}^{(0)}(D_{\bul}/S)\} \\
& \lo (Q^*_{(X_{\bul},Z_{\bul})/S}\{a^{(1){\log}}_{\bul{\rm conv}*}
L^{\rm conv}_{(D^{(1)}_{\bul},Z_{\bul}\vert_{D^{(1)}_{\bul}})/S}
(\Om^{\bul}_{{\cal D}_{\bul}^{(1)}/S}
(\log {\cal Z}_{\bul}\vert_{{\cal D}^{(1)}_{\bul}})\otimes_{\mab Z}{\mab Q})  \\ 
&  
\otimes_{\mab Z}\vp_{\rm conv}^{(1)}(D_{\bul}/S)\}, -d) \lo \cdots]. 
\end{align*}
Applying the direct image 
$R{\pi}_{(X,Z)/S{\rm Rconv}*}$ 
to (\ref{ali:lncsc}),
we obtain the following isomorphism:  
\begin{equation*} 
R{\pi}_{(X,Z)/S{\rm Rconv}*}
Q^*_{(X_{\bul},Z_{\bul})/S}L^{\rm conv}_{(X_{\bul},Z_{\bul})/S}
(\Om^{\bul}_{{\cal X}_{\bul}/S}
(\log({\cal Z}_{\bul} -{\cal D}_{\bul}))\otimes_{\mab Z}{\mab Q})\os{\sim}{\lo} \tag{12.11.10}
\label{ali:bggcsl}
\end{equation*} 
$$
[R{\pi}_{(X,Z)/S{\rm Rconv}*}
Q^*_{(X_{\bul},Z_{\bul})/S}\{L^{\rm conv}_{(X_{\bul},Z_{\bul})/S}
(\Om^{\bul}_{{\cal X}_{\bul}/S}
(\log {\cal Z}_{\bul}))\otimes_{\mab Z}{\mab Q}
\otimes_{\mab Z}\vp_{\rm conv}^{(0)}(D_{\bul}/S)\}  \lo $$ 
\begin{align*}
(R{\pi}_{(X,Z)/S{\rm Rconv}*}
Q^*_{(X_{\bul},Z_{\bul})/S}a^{(1)}_{\bul{\rm conv}*}
& \{L^{\rm conv}_{(D^{(1)}_{\bul},Z_{\bul}\vert_{D^{(1)}})/S}
(\Om^{\bul}_{{\cal D}_{\bul}^{(1)}/S}
(\log({\cal Z}_{\bul}\vert_{{\cal D}_{\bul}^{(1)}}))\otimes_{\mab Z}{\mab Q}) \\ 
& 
\otimes_{\mab Z}\vp_{\rm conv}^{(1)}(D_{\bul}/S)\},-d) 
\lo \cdots ]. 
\end{align*}
By (\ref{ali:omnx}) we see that 
the isomorphism (\ref{ali:bggcsl}) is nothing 
but an isomorphism (\ref{ali:csct}).
\par
We have to show that the isomorphism 
(\ref{ali:csct}) is independent of the datum (\ref{cd:pfcd}). 
\par
Let the notations be as in \S\ref{sec:rlct} for the data (\ref{cd:pfcd})'s. 
Let 
\begin{align*}
R\eta_{{\rm Rconv}*} \col 
{\rm D}^+{\rm F}(Q^*_{(X_{\bul \bul},Z_{\bul \bul})/S}
(& {\cal K}_{(X_{\bul \bul},Z_{\bul \bul})/S})) \\ 
& \lo
{\rm D}^+{\rm F}
(Q^*_{(X_{\bul},Z_{\bul})/S}({\cal K}_{(X_{\bul},Z_{\bul})/S}))
\end{align*}
be a morphism of filtered derived categories as similarly defined in 
in \S\ref{sec:rlct}. 
Then we have the following commutative diagram by 
the cohomological descent:
\begin{equation*}
\tiny{
\begin{CD}
R{\pi}_{(X,Z)/S{\rm Rconv}*}
Q^*_{(X_{\bul},Z_{\bul})/S}L^{\rm conv}_{(X_{\bul},Z_{\bul})/S}
(\Om^{\bul}_{{\cal X}_{\bul}/S}
(\log({\cal Z}_{\bul} -{\cal D}_{\bul}))\otimes_{\mab Z}{\mab Q})
@>{\sim}>>  \quad \quad \quad \\
@VVV \\ 
R{\pi}_{(X,Z)/S{\rm Rconv}*} 
R\eta_{{\rm Rconv}*}Q^*_{(X_{\bul \bul},Z_{\bul \bul})/S}
L^{\rm conv}_{(X_{\bul \bul},Z_{\bul \bul})/S}
(\Om^{\bul}_{{\cal X}_{\bul \bul}/S}
(\log({\cal Z}_{\bul \bul}-{\cal D}_{\bul \bul}))\otimes_{\mab Z}{\mab Q})
@>{\sim}>> \quad \quad \quad 
\end{CD}}
\end{equation*} 
\begin{equation*}
\tiny{
\begin{CD} 
[R{\pi}_{(X,Z)/S{\rm Rconv}*}
Q^*_{(X_{\bul},Z_{\bul})/S}\{L^{\rm conv}_{(X_{\bul},Z_{\bul})/S}
(\Om^{\bul}_{{\cal X}_{\bul}/S}(\log {\cal Z}_{\bul})
\otimes_{\mab Z}{\mab Q})
\otimes_{\mab Z}
\vp_{\rm conv}^{(0)}(D_{\bul}/S)\} 
\lo\\
@VVV  \\
\{R{\pi}_{(X,Z)/S{\rm Rconv}*}
R\eta_{{\rm Rconv}*}
Q^*_{(X_{\bul \bul},Z_{\bul \bul})/S}\{L^{\rm conv}_{(X_{\bul \bul},Z_{\bul \bul})/S}
(\Om^{\bul}_{{\cal X}_{\bul \bul}/S}
(\log {\cal Z}_{\bul \bul})\otimes_{\mab Z}{\mab Q})
\otimes_{\mab Z}
\vp_{\rm conv}^{(0)}(D_{\bul \bul}/S)\}
\lo
\end{CD}}
\end{equation*}
\begin{equation*}
\tiny{
\begin{CD}
\begin{aligned}
(R({\pi}_{(X,Z)/S{\rm Rconv}}
Q^*_{(X_{\bul},Z_{\bul})/S}a^{(1)}_{\bul{\rm conv}*}
\{L^{\rm conv}_{(D^{(1)}_{\bul},Z_{\bul}\vert_{D^{(1)}})/S}
(& \Om^{\bul}_{{\cal D}_{\bul}^{(1)}/S}
(\log{\cal Z}_{\bul}\vert_{{\cal D}_{\bul}^{(1)}})\otimes_{\mab Z}{\mab Q}) \\ 
& \otimes_{\mab Z}\vp_{\rm conv}^{(1)}(D_{\bul}/S)\},-d) 
\lo 
\cdots\}
\end{aligned}
\\
@VVV \\
\begin{aligned}
(R{\pi}_{(X,Z)/S{\rm Rconv}*} 
R\eta_{{\rm Rconv}*} 
Q^*_{(X_{\bul \bul},Z_{\bul \bul})/S}a^{(1)}_{\bul \bul{\rm conv}*}
& \{L^{\rm conv}_{(D^{(1)}_{\bul \bul},Z_{\bul \bul}\vert_{D^{(1)}})/S}
(\Om^{\bul}_{{\cal D}_{\bul \bul}^{(1)}/S}
(\log {\cal Z}_{\bul \bul}\vert_{{\cal D}_{\bul \bul}^{(1)}})
\otimes_{\mab Z}{\mab Q} \\ 
& \otimes_{\mab Z}
\vp_{\rm conv}^{(1)}(D_{\bul \bul}/S)\},-d) 
\lo \cdots \}.
\end{aligned}
\end{CD}}
\end{equation*}
\parno
Hence the isomorphism (\ref{ali:csct}) is independent of the data
 (\ref{cd:pfcd})'s. 
\end{proof}

\par
Let 
$P^D_{\rm c}:=\{P^{D,k}_{\rm c}\}_{k\in {\mab Z}}$ 
be the stupid filtration on 
$E_{{\rm conv},c}({\cal K}_{(X,D\cup Z)/S})$: 
$$P^{D,k}_{\rm c}E_{{\rm conv},c}
({\cal K}_{(X,D\cup Z)/S})
=a^{(\bul \geq k){\log}}_{{\rm conv}*}
({\cal K}_{(D^{(\bul \geq k)},Z\vert_{D^{(\bul \geq k)}})/S}
\otimes_{\mab Z} 
\vp^{(\bul \geq k)\log}_{\rm conv}(D/S;Z)).$$ 
Then, by (\ref{theo:dcscy}), 
we have a filtered complex  
$(E_{{\rm conv},c}({\cal K}_{(X,D\cup Z)/S}), 
P^{D}_{\rm c}) \in
{\rm D}^+{\rm F}({\cal K}_{(X,Z)/S})$.

\begin{defi}\label{defi:czcs}
We call 
$(E_{{\rm conv},c}
({\cal K}_{(X,D\cup Z)/S}),P^D_{\rm c})$ 
the {\it weight-filtered vanishing cycle convergent complex 
with compact support} of ${\cal K}_{(X, D\cup Z)/S}$ 
(or $(X,D\cup Z)/S$) 
with respect to $D$. 
Set 
\begin{equation*}
(E_{{\rm isozar},c}
({\cal K}_{(X, D\cup Z)/S}),P^D_{\rm c}):=
Ru^{\rm conv}_{(X,Z)/S*}
(E_{{\rm conv},c}
({\cal K}_{(X, D\cup Z)/S}),P^D_{\rm c}). 
\end{equation*}
We call 
$(E_{{\rm isozar},c}
({\cal K}_{(X,D\cup Z)/S}),P^D_{\rm c})$ 
the {\it weight-filtered vanishing cycle isozariskian complex 
with compact support} of ${\cal K}_{(X, D\cup Z)/S}$ 
(or $(X,D\cup Z)/S$) with respect to $D$. 
\end{defi}

By the definition of 
$(E_{{\rm isozar},c}
({\cal K}_{(X,D\cup Z)/S}),P^D_{\rm c})$, 
there exists the following canonical  isomorphism 
in $D^+(f^{-1}({\cal K}_S)):$ 
\begin{align*}
E_{{\rm isozar},c}
({\cal K}_{(X,D\cup Z)/S})
& \os{\sim}{\lo}  
\{Ru^{\rm conv}_{(X,Z)/S*}
({\cal K}_{(X,Z)/S}
\otimes_{\mab Z}\vp^{(0)\log}_{\rm conv}(D/S;Z))
 \lo \tag{12.12.1}\label{ali:csz}\\
{} &  a^{(1)}_{{\rm zar}*}
(Ru^{\rm conv}_{(D^{(1)},Z\vert_{D^{(1)}})/S*}
({\cal K}_{(D^{(1)},Z\vert_{D^{(1)}})/S} 
\otimes_{\mab Z}\vp^{(1)\log}_{\rm conv}(D/S;Z)),-d)\\
{} & \lo \cdots\}.
\notag
\end{align*}

\begin{rema-defi}\label{remadefi:stdfcs} 
Because the notation for the right hand side 
of (\ref{ali:csz}) is only suggestive, 
we should give the precise definition of it; 
we can give it as in  \cite[(2.11.8)]{nh2}. 
\end{rema-defi}

\begin{coro}\label{coro:cszdes}
$E_{{\rm isozar},c}
({\cal K}_{(X, D\cup Z)/S})
=Ru^{\rm conv}_{(X,D\cup Z)/S*}
({\cal I}^{D,{\rm conv}}_{(X,D\cup Z)/S})$.
\end{coro}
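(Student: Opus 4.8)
The final statement, Corollary~\ref{coro:cszdes}, asserts that $E_{{\rm isozar},c}({\cal K}_{(X,D\cup Z)/S}) = Ru^{\rm conv}_{(X,D\cup Z)/S*}({\cal I}^{D,{\rm conv}}_{(X,D\cup Z)/S})$. The plan is to combine the definition of $E_{{\rm isozar},c}$ (namely $Ru^{\rm conv}_{(X,Z)/S*}$ applied to $E_{{\rm conv},c}({\cal K}_{(X,D\cup Z)/S})$) with the comparison isomorphism \eqref{ali:csct} of Theorem~\ref{theo:dcscy}, passed through the projection $\ol{u}{}^{\rm conv}_{(X,Z)/S}$, and then collapse the composite projection using the identity $R\ol{u}_{Y/S*}Q^*_{Y/S}=Ru_{Y/S*}$ of Corollary~\ref{coro:ubq} together with the equality $u^{\rm conv}_{(X,Z)/S}\circ \eps^{\rm conv}_{(X,D\cup Z,Z)/S}=u^{\rm conv}_{(X,D\cup Z)/S}$ (the convergent analogue of \eqref{eqn:nbepu}).

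First I would start from
$$
E_{{\rm isozar},c}({\cal K}_{(X,D\cup Z)/S})
= Ru^{\rm conv}_{(X,Z)/S*}E_{{\rm conv},c}({\cal K}_{(X,D\cup Z)/S}).
$$
Applying $R\ol{u}{}^{\rm conv}_{(X,Z)/S*}$ to the isomorphism \eqref{ali:csct}, which reads
$Q^*_{(X,Z)/S}R\eps_{(X,D\cup Z,Z)/S*}({\cal I}^{D,{\rm conv}}_{(X,D\cup Z)/S})\os{\sim}{\lo}Q^*_{(X,Z)/S}E_{{\rm conv},c}({\cal K}_{(X,D\cup Z)/S})$, and using $R\ol{u}{}^{\rm conv}_{(X,Z)/S*}Q^*_{(X,Z)/S}=Ru^{\rm conv}_{(X,Z)/S*}$ (Corollary~\ref{coro:ubq}) on both sides, I obtain
$$
Ru^{\rm conv}_{(X,Z)/S*}R\eps_{(X,D\cup Z,Z)/S*}({\cal I}^{D,{\rm conv}}_{(X,D\cup Z)/S})
\os{\sim}{\lo}
Ru^{\rm conv}_{(X,Z)/S*}E_{{\rm conv},c}({\cal K}_{(X,D\cup Z)/S}).
$$
The left-hand side equals $R(u^{\rm conv}_{(X,Z)/S}\circ \eps^{\rm conv}_{(X,D\cup Z,Z)/S})_*({\cal I}^{D,{\rm conv}}_{(X,D\cup Z)/S}) = Ru^{\rm conv}_{(X,D\cup Z)/S*}({\cal I}^{D,{\rm conv}}_{(X,D\cup Z)/S})$ by the convergent version of \eqref{eqn:nbepu}, while the right-hand side is $E_{{\rm isozar},c}({\cal K}_{(X,D\cup Z)/S})$ by definition. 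This gives the claimed identity.

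The main obstacle will be verifying that $R\ol{u}{}^{\rm conv}_{(X,Z)/S*}$ can legitimately be applied to \eqref{ali:csct} to yield an isomorphism between the stated objects, i.e.\ that the $Q^*$ on both sides of \eqref{ali:csct} is exactly the pull-back for which Corollary~\ref{coro:ubq} applies, and that the filtered/complex-level structure is compatible so that no derived-functor subtlety intervenes. This amounts to a diagram chase that is essentially formal once one notes that both sides of \eqref{ali:csct} are in $D^+({\cal K}_{(X,Z)/S})$ and $Q^*_{(X,Z)/S}$ is the restriction functor of \eqref{eqn:rccd}; the identity $R\ol{u}_{Y/S*}Q^*_{Y/S}=Ru_{Y/S*}$ then directly removes the restricted topos from the computation. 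I would also remark (as in \cite[(2.11.8)]{nh2}) that on the $E_{{\rm conv},c}$ side the complex is the explicit one in \eqref{eqn:eoxdz}, so pushing it forward by $Ru^{\rm conv}_{(X,Z)/S*}$ produces precisely the complex \eqref{ali:csz} describing $E_{{\rm isozar},c}$, which confirms the two descriptions agree term by term; but the cleanest route is the one-line reduction above via Corollary~\ref{coro:ubq}, so I would present that and only invoke the explicit description as a sanity check.
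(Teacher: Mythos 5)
Your proposal is correct and follows the same route the paper intends: the paper states this corollary without a written proof, immediately after defining $E_{{\rm isozar},c}$ as $Ru^{\rm conv}_{(X,Z)/S*}E_{{\rm conv},c}$, and the intended justification is exactly your combination of (\ref{ali:csct}), the identity $R\ol{u}{}^{\rm conv}_{*}Q^{*}=Ru^{\rm conv}_{*}$ of (\ref{coro:ubq}), and $u^{\rm conv}_{(X,Z)/S}\circ \eps^{\rm conv}_{(X,D\cup Z,Z)/S}=u^{\rm conv}_{(X,D\cup Z)/S}$ (which is (\ref{eqn:nbepu}) itself, already stated in the convergent setting, not merely its analogue). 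The only cosmetic point is that the two sides of (\ref{ali:csct}) live in the restricted topos $((X,Z)/S)_{\rm Rconv}$, so the projection you apply is $R\ol{u}{}^{\rm conv}_{(X,Z)/S*}$ there, exactly as you note.
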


\par
By applying $Rf_{*}$ to both hand sides of 
(\ref{ali:csz}) (cf.~(\ref{remadefi:stdfcs})), 
we have the following canonical isomorphism
\begin{align*}
Rf_{(X,D\cup Z)/S*,{\rm c}}({\cal K}_{(X,D\cup Z;Z)/S}) 
\os{\sim}{\lo} &
\{Rf_{(X,Z)/S*}({\cal K}_{(X,Z)/S}
\otimes_{\mab Z}\vp_{\rm conv}^{(0)\log}(D/S;Z)) 
\tag{12.14.1}\label{ali:scs}
\end{align*}
$$ \lo
(Rf_{(D^{(1)},Z\vert_{D^{(1)}})/S*}
({\cal K}_{(D^{(1)},Z\vert_{D^{(1)}})/S}
\otimes_{\mab Z}\vp_{\rm conv}^{(1)\log}(D/S;Z)),-d) 
\lo \cdots\}.$$

\par
Next we give the filtered base change theorem, 
the filtered K\"{u}nneth formula and 
the filtered Poincar\'{e} duality of 
the log convergent cohomology sheaf
with compact support.
\par 
In \cite[(2.11)]{nh2} we have set 
\begin{align*}
E_{\rm zar,c}({\cal O}_{(X,D \cup Z)/S}) 
:= &
\{Ru_{(X,Z)/S*}({\cal O}_{(X,Z)/S}
\otimes_{\mab Z}\vp_{\rm crys}^{(0)}(D/S;Z)) 
\end{align*}
$$ \lo
(Ru_{(D^{(1)},Z\vert_{D^{(1)}})/S*}
({\cal O}_{(D^{(1)},Z\vert_{D^{(1)}})/S}
\otimes_{\mab Z}\vp_{\rm crys}^{(1)}(D/S;Z)),-d) 
\lo \cdots\}.$$
Let $P^D$ be the stupid filtration on 
$E_{\rm zar,c}({\cal O}_{(X,D\cup Z)/S})$.

\begin{theo}[{\bf Comparison isomorphism}]\label{theo:cic}
There exists a canonical isomorphism  
\begin{equation*}  
(E_{\rm isozar,c}({\cal K}_{(X,D \cup Z)/S}), P^D) 
\os{\sim}{\lo}
(E_{\rm zar,c}({\cal O}_{(X,D \cup Z)/S}),P^D)
\otimes_{\mab Z}{\mab Q}
\tag{12.15.1}\label{eqn:ccomp}
\end{equation*} 
in ${\rm D}^+{\rm F}(f^{-1}({\cal K}_S))$. 
\end{theo}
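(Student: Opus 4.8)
The plan is to reduce Theorem~\ref{theo:cic} to the non-compact-support comparison theorem already available in this paper, namely (\ref{eqn:cccopd}) of (\ref{coro:ccco}), applied componentwise to the diagram defining $E_{{\rm isozar},c}$ and $E_{{\rm zar},c}$. The key observation is that both filtered complexes are, by construction, the single complex (with stupid filtration $P^D_{\rm c}$) of a diagram whose $k$-th term is a derived pushforward over $(X,Z)/S$ (respectively over $(X,Z)/S~{\rm mod}~p$ in the crystalline case) of $a^{(k)}_{*}$ applied to the constant isocrystal (resp.\ crystal) on $(D^{(k)},Z\vert_{D^{(k)}})$ twisted by the orientation sheaf. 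Thus it suffices to produce, for each $k$, a canonical isomorphism
\begin{equation*}
Ru^{\rm conv}_{(D^{(k)},Z\vert_{D^{(k)}})/S*}
({\cal K}_{(D^{(k)},Z\vert_{D^{(k)}})/S}\otimes_{\mab Z}\vp^{(k)\log}_{\rm conv}(D/S;Z))
\os{\sim}{\lo}
Ru^{\rm crys}_{(D^{(k)},Z\vert_{D^{(k)}})/S*}
({\cal O}_{(D^{(k)},Z\vert_{D^{(k)}})/S}\otimes_{\mab Z}\vp^{(k)\log}_{\rm crys}(D/S;Z))\otimes_{\mab Z}{\mab Q},
\end{equation*}
compatible with the boundary morphisms $\iota^{(k)*}_{\rm conv}$ and their crystalline counterparts, and then assemble these into an isomorphism of the single complexes respecting the stupid filtrations.

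First I would reduce to the case $\pi{\cal V}=p{\cal V}$: the compact-support complexes satisfy the same invariance as in (\ref{lemm:kevp}) (which rests on (\ref{eqn:iuoe}) via (\ref{prop:toi})), so after base change along $S~{\rm mod}~p \os{\sus}{\lo} S_1$ we may assume the crystalline site is available. Next, for the componentwise isomorphism, I would invoke the relative simplicial comparison theorem (\ref{theo:hct})/(\ref{eqn:kymc}) (equivalently Shiho's relative comparison, cf.\ \cite[Corollary 2.3.9]{s2}) applied to the smooth scheme $D^{(k)}$ with relative SNCD $Z\vert_{D^{(k)}}$ over $S$; the orientation sheaf twist is harmless since $\vp^{(k)\log}_{\rm conv}(D/S;Z)$ is, Zariski-locally on $D^{(k)}$, canonically isomorphic to ${\mab Z}$ and matches $\vp^{(k)\log}_{\rm crys}(D/S;Z)$ under Tsuzuki's functor (this is exactly the local freeness and the identification already used in the proof of (\ref{coro:ccco}) at (\ref{ali:cgrmcopd})--(\ref{ali:cgrmcnvpd})). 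The compatibility with the \v{C}ech--Gysin / restriction boundary morphisms $\iota^{(k)*}$ follows because both are induced by the exact closed immersions $\iota^{\ul{\lam}_j}_{\ul{\lam}}$ together with the sign conventions on orientation sheaves, and Tsuzuki's functor (used in the proof of (\ref{theo:hct})) is functorial for such immersions; this is the same bookkeeping as in the proof of (\ref{theo:dcscy}) where the diagram (\ref{cd:bgcsrl}) is handled.

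For the assembly step I would choose, as in the proof of (\ref{theo:dcscy}), an affine (truncated) simplicial open covering $(X_{\bul},D_{\bul}\cup Z_{\bul})$ of $(X,D\cup Z)$ together with immersions (\ref{cd:pfcd}), realize $E_{{\rm isozar},c}$ via the explicit local description coming from (\ref{eqn:lics})/(\ref{ali:csz}) in terms of $\Om^{\bul}_{{\cal X}_{\bul}/S}(\log({\cal Z}_{\bul}-{\cal D}_{\bul}))$, and realize $E_{{\rm zar},c}$ via the analogous de Rham complex with the log PD-envelope in place of the universal enlargement (as in \cite[(2.11)]{nh2}). The morphism (\ref{eqn:dyt}) ${\mathfrak D}(N)_{lm}\lo ({\mathfrak T}(N)_{lm})_k$ — already exploited in the proofs of (\ref{theo:hct}) and (\ref{coro:ccco}) — then induces a morphism of these double (in fact multiple) complexes compatible with the stupid filtrations $P^D_{\rm c}$; that it is a filtered quasi-isomorphism is checked $\mathrm{gr}_k$-wise, which is precisely the componentwise statement of the previous paragraph. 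Finally, independence of the isomorphism (\ref{eqn:ccomp}) from the choice of covering and of the diagram (\ref{cd:pfcd}) is proved by the same cohomological-descent argument as at the end of the proof of (\ref{theo:dcscy}), using (\ref{prop:dissmp}).

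The main obstacle I anticipate is not any single deep input — every ingredient (invariance under change of $\pi$, the relative comparison theorem, the orientation-sheaf identification, the map (\ref{eqn:dyt})) is already in hand — but rather the careful verification that the componentwise comparison isomorphisms are compatible with the \emph{signed} boundary morphisms $\iota^{(k)*}_{\rm conv}$ in a way that is strictly functorial, so that they glue to a genuine morphism of single complexes rather than just to a morphism in each cohomological degree. Concretely, one must trace through Tsuzuki's functor $\Gam^{\cal C}_N(?)$ on the \v{C}ech diagram of the $D^{(k)}$'s and check it commutes with the exact closed immersions $\iota^{\ul{\lam}_j}_{\ul{\lam}}$ and with the identifications (\ref{eqn:coxts}) of tensor products of orientation sheaves; this is the analogue of \cite[(2.11.3)]{nh2} in the convergent setting and, as there, is a routine but somewhat lengthy diagram chase which I would organize exactly parallel to the proof of (\ref{theo:dcscy}) above, so that no new idea is required beyond combining (\ref{theo:dcscy}), (\ref{coro:ccco}) and the map (\ref{eqn:dyt}).
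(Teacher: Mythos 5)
Your proposal is correct and follows essentially the same route as the paper: the paper's (very terse) proof likewise reduces to $\pi{\cal V}=p{\cal V}$, uses the morphism (\ref{eqn:dyt}) together with the explicit descriptions of the zariskian and isozariskian complexes to produce the comparison map componentwise on the $D^{(k)}$-terms, and concludes by the comparison theorem (\ref{coro:ccco}). Your additional discussion of compatibility with the signed boundary morphisms $\iota^{(k)*}$ and of independence of choices only fills in details the paper leaves implicit.
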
 
\begin{proof}  
By (\ref{eqn:coazzpd}) we may assume  that $\pi{\cal V}= p{\cal V}$. 
By using the morphism (\ref{eqn:dyt}) for 
the constant simplicial case 
and the explicit descriptions (\ref{eqn:czzpd}) and 
(\ref{eqn:cozzpd}) for the constant simplicial case, 
we immediately obtain (\ref{theo:cic}) by (\ref{coro:ccco}). 
\end{proof}

\begin{prop}\label{prop:cssp}
Assume that $f \col X \lo S_1$ is proper.
Then 
$$(Rf_{(X,D\cup Z)/S*,{\rm c}}
({\cal K}_{(X,D\cup Z;Z)/S}),P^D_{\rm c})$$ 
is a filteredly strictly perfect complex of ${\cal K}_S$-modules 
in the sense of {\rm \cite[(2.10.8)]{nh2}}.  
\end{prop}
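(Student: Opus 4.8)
\textbf{Proof proposal for (\ref{prop:cssp}).}
The plan is to reduce the assertion to the corresponding log crystalline statement proved in \cite[(2.10.8)]{nh2}, using the comparison isomorphism (\ref{theo:cic}) which identifies
$(E_{\rm isozar,c}({\cal K}_{(X,D\cup Z)/S}),P^D)$ with
$(E_{\rm zar,c}({\cal O}_{(X,D\cup Z)/S}),P^D)\otimes_{\mab Z}{\mab Q}$
in ${\rm D}^+{\rm F}(f^{-1}({\cal K}_S))$. First I would apply $Rf_*$ to both sides of (\ref{eqn:ccomp}). On the left, by (\ref{ali:scs}) (and the definition of $Rf_{(X,D\cup Z)/S*,{\rm c}}$ via (\ref{ali:csz}) together with (\ref{remadefi:stdfcs})), $Rf_*$ of $(E_{\rm isozar,c}({\cal K}_{(X,D\cup Z)/S}),P^D_{\rm c})$ is exactly
$(Rf_{(X,D\cup Z)/S*,{\rm c}}({\cal K}_{(X,D\cup Z;Z)/S}),P^D_{\rm c})$; on the right one gets $Rf_*$ of the weight-filtered zariskian complex with compact support defined in \cite{nh2}, tensored with ${\mab Q}$. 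Thus the filtered complex we must control is the rationalization of the one treated in \cite[(2.10.8)]{nh2}.

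The next step is to invoke \cite[(2.10.8)]{nh2}: when $f\col X\lo S_1$ is proper, the filtered complex $Rf_*$ of $(E_{\rm zar,c}({\cal O}_{(X,D\cup Z)/S}),P^D)$ is filteredly strictly perfect over ${\cal O}_S$. Here I would use the fact that the notion of a filteredly strictly perfect complex (in the sense of \cite[(2.10.8)]{nh2}) is compatible with the exact base change $\otimes_{\mab Z}{\mab Q}\col {\rm D}^+{\rm F}(f^{-1}({\cal O}_S))\lo {\rm D}^+{\rm F}(f^{-1}({\cal K}_S))$: applying a flat ring map to each term of a filtered complex which, graded-piece by graded-piece, is represented by a bounded complex of finite locally free modules, again yields such a complex, and strictness of the filtration is preserved because $\otimes_{\mab Z}{\mab Q}$ is exact. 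Concretely, the graded pieces of $(E_{\rm zar,c}({\cal O}_{(X,D\cup Z)/S}),P^D)$ under $Rf_*$ are, by the analogue of (\ref{ali:csz})–(\ref{ali:scs}) in the crystalline setting, the complexes $Rf^{\rm crys}_{(D^{(k)},Z\vert_{D^{(k)}})/S*}(\cdots)$ shifted and twisted; \cite[(2.10.8)]{nh2} gives strict perfectness of these, and rationalizing yields strict perfectness of $Rf^{\rm conv}_{(D^{(k)},Z\vert_{D^{(k)}})/S*}$ of the corresponding ${\cal K}$-coefficient sheaves, which by (\ref{eqn:kymc}) (Shiho's comparison, or the relative version \cite[Corollary 2.3.9]{s2}) are precisely the graded pieces on the convergent side.

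Finally I would assemble these: a filtered complex whose associated graded (under $Rf_*$) consists of strictly perfect complexes of ${\cal K}_S$-modules, with strict filtration, is filteredly strictly perfect; this is a standard dévissage along the (finite) weight filtration, of the same kind carried out in \cite[(2.10.8)]{nh2}. I expect the only genuine point requiring care — hence the main obstacle — to be the verification that the comparison isomorphism (\ref{eqn:ccomp}) is \emph{filtered} (not merely an isomorphism of underlying complexes) and that it is compatible with $Rf_*$, so that the filtration $P^D_{\rm c}$ on the convergent side is carried to the rationalized $P^D$ on the zariskian side; but this is exactly what (\ref{theo:cic}) asserts, so the argument is essentially a matter of citing (\ref{theo:cic}), \cite[(2.10.8)]{nh2}, and (\ref{eqn:kymc}), and checking the (routine) stability of filtered strict perfectness under $\otimes_{\mab Z}{\mab Q}$.
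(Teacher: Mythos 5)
Your proposal is correct and follows the paper's own route: reduce via the comparison isomorphism (\ref{theo:cic}) to the already-established filtered strict perfectness of the log crystalline compact-support complex in \cite{nh2}, the rationalization step being routine. The only slip is a citation: \cite[(2.10.8)]{nh2} is the \emph{definition} of a filteredly strictly perfect complex, whereas the crystalline proposition you need to invoke is \cite[(2.11.12)]{nh2}.
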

\begin{proof}
(\ref{prop:cssp}) follows from (\ref{theo:cic}) and \cite[(2.11.12)]{nh2}. 
\end{proof}

\begin{prop}\label{prop:bccsic}
Let $Y$ be a quasi-compact smooth scheme over 
$S_1$ $($with trivial log structure$)$. 
Let $f \col (X,D\cup Z) \lo Y$ be 
a morphism of log schemes such that the 
underlying morphism $\os{\circ}{f} \col X\lo Y$ is smooth, 
quasi-compact and quasi-separated 
and such that $D\cup Z$ is a relative SNCD
over $Y$. $($In particular, $D\cup Z$ is 
also a relative SNCD on $X$ over $S_1.)$  
Let $f_{(X,Z)} \col (X,Z) \lo Y$ 
be the induced morphism by $f$. 
Let
\begin{equation*}
\begin{CD}
(X',D'\cup Z') @>{g}>> (X,D\cup Z)\\
 @V{f'}VV  @VV{f}V \\
Y' @>{h}>> Y \\
@V{}VV  @VV{}V \\
S'_1 @>{}>> S_1
\end{CD}
\end{equation*}
be a commutative diagram such that 
the upper rectangle is cartesian,  
where $Y'$ is a quasi-compact smooth scheme over $S'_1$.  
Then the following hold$:$
\par 
$(1)$ 
The natural morphism 
\begin{equation*} 
{\cal O}_{S'}\otimes^L_{{\cal O}_S}
Rf_{(X,Z)/S*}
(E_{{\rm conv},c}
({\cal K}_{(X,D\cup Z)/S}),P^D_{\rm c})
\lo 
\tag{12.17.1}\label{eqn:18.10.?}
\end{equation*}
$$ 
R{f'}_{(X',Z')/S'*}
(E_{{\rm conv},c}({\cal K}_{(X',D'\cup Z')/S'}),
P^{D'}_{\rm c})
$$
is an isomorphism. \par 
$(2)$ 
There exists a natural isomorphism 
\begin{equation*}
{\cal O}_{S'}\otimes^L_{{\cal O}_S} 
Rf_{{\rm conv}*}
({\cal I}^D_{(X,D\cup Z)/S}) 
\lo 
R{f'}_{{\rm conv}*}
({\cal I}^{D'}_{(X',D'\cup Z')/S'}) 
\tag{12.17.2}\label{eqn:18.10.??}
\end{equation*}
which is compatible with the isomorphism 
{\rm (\ref{eqn:18.10.?})}. 
\end{prop}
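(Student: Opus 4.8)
The statement to prove is Proposition~\ref{prop:bccsic}, the base change theorem for the log convergent cohomology sheaf with compact support. The plan is to reduce everything to the corresponding (and already established) base change statement in the log crystalline setting, using the comparison isomorphism. First I would invoke (\ref{theo:cic}) (or rather its relative variant over $Y$, which follows by exactly the same argument since the comparison morphism in (\ref{eqn:dyt}) and the explicit descriptions (\ref{eqn:czzpd}) and (\ref{eqn:cozzpd}) are functorial in the base): this identifies $E_{{\rm conv},c}({\cal K}_{(X,D\cup Z)/S})$ with $E_{{\rm zar},c}({\cal O}_{(X,D\cup Z)/S})\otimes_{\mab Z}{\mab Q}$ as filtered complexes, hence identifies $Rf_{(X,Z)/S*}(E_{{\rm conv},c}({\cal K}_{(X,D\cup Z)/S}),P^D_{\rm c})$ with $Rf_{(X,Z)/S*}(E_{{\rm zar},c}({\cal O}_{(X,D\cup Z)/S}),P^D)\otimes^L_{\mab Z}{\mab Q}$ (tensoring with ${\mab Q}$ is exact and commutes with $Rf_*$ for the complexes in question, which are bounded below with quasi-coherent cohomology). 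As a preliminary step I would note that by (\ref{lemm:kevp}) (or (\ref{coro:cc}) (2)) we may replace $\pi$ by $p$, so $\pi{\cal V}=p{\cal V}$ and the log crystalline machinery of \cite{nh2} is directly available.

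Next I would carry over the log crystalline base change theorem. The relevant statement is the compact-support analogue proved (or indicated) in \cite[(2.11)]{nh2}: for the proper-over-$Y$ setup, ${\cal O}_{S'}\otimes^L_{{\cal O}_S}Rf_{(X,Z)/S*}(E_{{\rm zar},c}({\cal O}_{(X,D\cup Z)/S}),P^D)$ is filteredly isomorphic to $Rf'_{(X',Z')/S'*}(E_{{\rm zar},c}({\cal O}_{(X',D'\cup Z')/S}),P^{D'})$. This is itself deduced, graded piece by graded piece via the stupid filtration, from the ordinary (non-compact-support) crystalline base change theorem applied to each $D^{(k)}$ and to $(X,Z)$, which is the content of \cite[(2.10.3)]{nh2}; the same reduction works on the isozariskian side using ${\rm gr}^{P^D}$ together with (\ref{cd:grfcty}), (\ref{ali:csz}) and the Poincar\'{e}-type residue description of the graded pieces. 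Concretely, applying ${\rm gr}^{P^D_{\rm c}}_k$ to (\ref{eqn:18.10.?}) turns the claim into the base change isomorphism
\begin{equation*}
{\cal O}_{S'}\otimes^L_{{\cal O}_S}Rf_{(D^{(k)},Z\vert_{D^{(k)}})/S*}
({\cal K}_{(D^{(k)},Z\vert_{D^{(k)}})/S}\otimes_{\mab Z}\vp^{(k)\log}_{\rm conv}(D/S;Z))
\os{\sim}{\lo}
Rf'_{(D'^{(k)},Z'\vert_{D'^{(k)}})/S'*}({\cal K}_{(D'^{(k)},Z'\vert_{D'^{(k)}})/S'}\otimes_{\mab Z}\vp^{(k)\log}_{\rm conv}(D'/S';Z')),
\end{equation*}
which holds by the relative convergent base change theorem (the convergent incarnation of \cite[(2.10.3)]{nh2}, equivalently the isozariskian translation of it via (\ref{eqn:cccopd})). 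Since the filtration is finite and bounded (it is the stupid filtration, concentrated in the range $0\le k\le \dim X$), the graded isomorphisms assemble to the filtered isomorphism (\ref{eqn:18.10.?}). For part (2), I would simply recall (\ref{coro:cszdes}), which gives $E_{{\rm isozar},c}({\cal K}_{(X,D\cup Z)/S})=Ru^{\rm conv}_{(X,D\cup Z)/S*}({\cal I}^{D,{\rm conv}}_{(X,D\cup Z)/S})$, and apply $Rf_*$; then (\ref{eqn:18.10.??}) is (\ref{eqn:18.10.?}) without the filtration, after post-composing $Rf_{(X,Z)/S*}$ with the projection $Rf_{Y/S*}$, and its compatibility with (\ref{eqn:18.10.?}) is built into the construction (everything is obtained from the single morphism (\ref{eqn:dyt}) by functorial operations).

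\textbf{Main obstacle.} The delicate point is not the reduction itself but checking that the comparison isomorphism (\ref{theo:cic}) is genuinely \emph{compatible with base change along $h\colon Y'\to Y$} and with the morphism $g$, i.e.\ that the square relating the two comparison isomorphisms (for $(X,D\cup Z)/S$ over $Y$ and for $(X',D'\cup Z')/S'$ over $Y'$) commutes in the filtered derived category. This requires tracing through the construction in the proof of (\ref{coro:ccco}) and (\ref{theo:excop}): the comparison arises from the morphism (\ref{eqn:dyt}) $(g_{lm})_k\colon {\mathfrak D}(N)_{lm}\to ({\mathfrak T}(N)_{lm})_k$ between log PD-envelopes and universal enlargements of a \v{C}ech diagram, and one must verify this morphism is natural in the \v{C}ech diagram and in the ambient embeddings, which in turn uses (\ref{prop:dissmp}) to compare choices of affine simplicial open coverings compatibly on both sides. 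I would handle this exactly as in the independence-of-covering arguments already run in the proofs of (\ref{theo:hct}) and (\ref{coro:ccco}) — a standard but somewhat lengthy diagram chase — and then the base change statement drops out. A secondary (minor) point to be careful about: one must ensure the complexes involved really do have ${\cal O}_S$-flat-or-bounded-tor-dimension behaviour so that $\otimes^L$ and $\otimes_{\mab Z}{\mab Q}$ interact correctly with $Rf_*$; this is guaranteed by (\ref{prop:cssp}) in the proper case and, in the quasi-compact quasi-separated case of the present proposition, by the finite tor-dimension statements underlying the crystalline base change theorem of \cite[(2.10.3)]{nh2}.
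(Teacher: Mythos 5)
Your proposal is correct and follows exactly the route the paper intends: the paper omits the proof of this proposition (having announced at the start of \S\ref{sec:cptsc} that the arguments are the same as in \cite[(2.11)]{nh2}), and the surrounding results (\ref{prop:cssp}), (\ref{theo:kcsf}), (\ref{theo:pdci}) are all proved precisely by reducing $\pi$ to $p$ via (\ref{lemm:kevp}) and then combining the comparison isomorphism (\ref{theo:cic}) with the corresponding crystalline statement in \cite[(2.11)]{nh2}, which is what you do. Your additional remarks on the graded-piece reduction and on the compatibility of the comparison isomorphism with base change are consistent with how the paper handles the analogous point in (\ref{prop:bchange}).
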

\begin{proof} 
This follows from (\ref{theo:cic}) and \cite[(2.11.14)]{nh2}. 
\end{proof}

\par
Using the filtered complex  
$(E_{\rm conv,c}
({\cal K}_{(X,D\cup Z)/S}),P^D_{\rm c})$, 
we have the following spectral sequence
\begin{equation*}
E_{1,{\rm c}}^{k,h-k}((X,D\cup Z)/S)=
R^{h-k}f_{(D^{(k)},Z\vert_{D^{(k)}})/S*}
({\cal K}_{(D^{(k)},Z\vert_{D^{(k)}})/S}
\otimes_{\mab Z}\vp_{\rm conv}^{(k){\log}}(D/S;Z)) 
\tag{12.17.3}\label{eqn:spcsz}
\end{equation*}
\begin{equation*}
\Lo
R^hf_{(X,D\cup Z)/S*,{\rm c}}
({\cal K}_{(X,D\cup Z;Z)/S}).
\end{equation*}

\begin{defi}
We call the spectral sequence (\ref{eqn:spcsz}) 
the {\it weight spectral sequence of} 
$(X, D\cup Z)/S$ 
{\it with respect to} $D$ 
{\it for the log convergent cohomology 
with compact support}. If $Z=\emptyset$, then 
we call (\ref{eqn:spcsz}) 
the {\it weight spectral sequence of} 
$(X, D)/S$ 
{\it for the log convergent cohomology 
with compact support}.
\end{defi}

Let $P^{D, \bul}_{\rm c}$ be the filtration on 
$R^hf_{(X,D\cup Z)/S*,{\rm c}}
({\cal K}_{(X,D\cup Z;Z)/S})$ 
induced by the spectral sequence 
(\ref{eqn:spcsz}). 
Since $P^{D,\bul}_{\rm c}$ is a decreasing filtration, 
we also consider the following increasing filtration 
$P^D_{{\bul},{\rm c}}$: 
\begin{equation*}
P^D_{h-{\bul}, {\rm c}}R^hf_{(X,D\cup Z)/S*,{\rm c}}
({\cal K}_{(X,D\cup Z;Z)/S})=
P^{D,\bul}_{\rm c}R^hf_{(X,D\cup Z)/S*,{\rm c}}
({\cal K}_{(X,D\cup Z;Z)/S}).
\tag{12.19.1}\label{eqn:dfpwcs}
\end{equation*}


\begin{prop}\label{prop:csbd}
The boundary morphism 
$d_1^{k,h-k} \col 
E_{1,c}^{k,h-k}((X,D\cup Z)/S) \lo 
E_{1,c}^{k+1,h-k}((X,D\cup Z)/S)$ 
is equal to 
$\iota^{(k){\log}*}_{\rm conv}$.
\end{prop}

\begin{prop}\label{prop:ovfcc}
Let $u$ be the morphism in {\rm (\ref{cd:bpsmlgs})}.  
Let $u_1 \col S_1 \lo S'_1$ be the induced morphism by $u$. 
Let $(X,D\cup Z)$ and $(X',D'\cup Z')$
be  smooth schemes 
with relative SNCD's over $S_1$ and $S'_1$, respectively.
Let 
\begin{equation*}
\begin{CD} 
(X,D\cup Z) @>{g}>> (X',D'\cup Z')\\
@VVV @VVV \\
S_1  @>{u_1}>> S'_1
\end{CD}
\tag{12.20.1}\label{cd:mplgc}
\end{equation*}
be  a commutative diagram of log schemes 
such that the morphism $g$ induces morphisms 
$g^{(k)} \col (D^{(k)},Z\vert_{D^{(k)}})
\lo 
(D'{}^{(k)}, Z'\vert_{D'{}^{(k)}})$ 
of log schemes over $u_1$ 
for all $k\in {\mab N}$.
Then the isomorphism $(\ref{eqn:18.10.?})$ 
and the spectral 
sequence $(\ref{eqn:spcsz})$ are 
functorial with respect to 
$g_{\rm conv}^*$.
\end{prop}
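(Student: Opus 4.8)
The plan is to deduce Proposition \ref{prop:ovfcc} from the functoriality results already established, namely the comparison isomorphisms and the explicit constructions of the weight-filtered vanishing cycle complexes with compact support. First I would reduce the statement to a question about the filtered complex $(E_{\rm conv,c}({\cal K}_{(X,D\cup Z)/S}),P^D_{\rm c})$: once we know that $g$ induces a morphism of filtered complexes compatible with the base change morphism \eqref{eqn:18.10.?}, the functoriality of the spectral sequence \eqref{eqn:spcsz} follows formally, since \eqref{eqn:spcsz} is the spectral sequence associated to the filtration $P^D_{\rm c}$ on $E_{\rm conv,c}({\cal K}_{(X,D\cup Z)/S})$ after applying $Rf_*$.

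The key steps, in order, would be the following. First, using the explicit description of $E_{\rm conv,c}({\cal K}_{(X,D\cup Z)/S})$ as the complex \eqref{ali:csz} built from $Ru^{\rm conv}_{(D^{(k)},Z\vert_{D^{(k)}})/S*}$ of the coefficient sheaves $\otimes_{\mab Z}\vp^{(k)\log}_{\rm conv}(D/S;Z)$, together with the assumption that $g$ induces $g^{(k)}\col (D^{(k)},Z\vert_{D^{(k)}})\lo (D'{}^{(k)},Z'\vert_{D'{}^{(k)}})$ compatible with orientation sheaves, I would construct a morphism $g^*_{\rm c}$ of filtered complexes $(E_{\rm conv,c}({\cal K}_{(X',D'\cup Z')/S'}),P^{D'}_{\rm c})\lo Rg_{(X,Z){\rm conv}*}(E_{\rm conv,c}({\cal K}_{(X,D\cup Z)/S}),P^D_{\rm c})$; here the compatibility of $g^{(k)}$ with the $\vp$'s is exactly what is needed to make the face-maps $\iota^{(k)*}_{\rm conv}$ commute with the pullback. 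Second, I would check that this morphism is compatible with the defining isomorphism $E_{\rm isozar,c}({\cal K}_{(X,D\cup Z)/S})=Ru^{\rm conv}_{(X,D\cup Z)/S*}({\cal I}^{D,{\rm conv}}_{(X,D\cup Z)/S})$ of \eqref{coro:cszdes}, so that under $g^*$ the complex-level morphism on the $E_{\rm conv,c}$ side matches the natural pullback $g^*_{\rm conv}$ on ${\cal I}^{D,{\rm conv}}$; this uses \eqref{theo:dcscy} and the naturality of $\eps_{\rm conv}$ in the commutative diagram \eqref{cd:epep} of \S\ref{sec:rlct}. Third, applying $Rf_*$ (more precisely the base change morphism for the cartesian-up-to-the-log-structure square underlying \eqref{cd:mplgc}) and invoking \eqref{prop:bccsic} for the base change compatibility, I would conclude that both \eqref{eqn:18.10.?} and the spectral sequence \eqref{eqn:spcsz} are functorial with respect to $g^*_{\rm conv}$.

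I expect the main obstacle to be the bookkeeping of signs and orientation sheaves in showing that $g^*$ genuinely commutes with the face morphisms $\iota^{(k)*}_{\rm conv}$ of the complex \eqref{eqn:eoxdz}: the maps $(-1)^j\iota^{\ul\lam_j*}_{\ul\lam{\rm conv}}$ of \eqref{eqn:defcbd} involve a choice of ordering of the smooth components $\{D_{\lam}\}$ and the isomorphism $\vp_{\lam_j}\otimes\vp_{\ul\lam_j}\os\sim\lo\vp_{\ul\lam}$ of \eqref{eqn:coxts}, and one must verify that the correspondence between components of $D$ and $D'$ induced by $g$ (which is what makes $g^{(k)}$ well-defined, cf.~\eqref{prop:mmoo}) is compatible with these sign conventions. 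This is a routine but careful diagram chase, entirely analogous to the log crystalline case treated in \cite[(2.11)]{nh2}, so I would largely refer to that argument. The remaining verifications—that the resulting morphism of filtered complexes is independent of the auxiliary data \eqref{cd:pfcd} (done exactly as in the proof of \eqref{theo:dcscy}) and that passing to $Rf_*$ preserves all the compatibilities—are formal consequences of the cohomological descent and the results already in place, so the proof reduces to assembling these pieces and noting that the whole construction is visibly natural in $g$.
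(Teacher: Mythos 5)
Your proposal is correct and follows exactly the route the paper intends: the paper omits this proof (as announced at the start of \S\ref{sec:cptsc}, deferring to the log crystalline analogue in \cite[(2.11)]{nh2}), and your sketch — constructing the pullback on the complex \eqref{ali:csz} via the $g^{(k)}$ and the orientation sheaves, checking compatibility with the face maps $\iota^{(k)*}_{\rm conv}$ and with the identification of \eqref{coro:cszdes}, and then deducing the functoriality of \eqref{eqn:18.10.?} and \eqref{eqn:spcsz} formally — is precisely that argument transposed to the convergent setting. The points you flag (sign and orientation-sheaf bookkeeping, independence of the auxiliary data \eqref{cd:pfcd}) are indeed the only nontrivial verifications, and they go through as in the crystalline case.
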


\begin{theo}[{\bf K\"{u}nneth formula}]\label{theo:kcsf} 
Let the notations be as in {\rm (\ref{theo:kcrsp})} 
for the constant case. 
Then the following hold$:$ 
\par 
$(1)$  Set 
$$H_{i.{\rm c}}:= Rf_{(X_i, Z_i){\rm conv}*}
(E^{\log,Z_i}_{{\rm conv},{\rm c}}
({\cal K}_{(X_i,D_i\cup Z_i)/S}), P^{D_i}_{\rm c}).$$ 
Then there exists a canonical isomorphism
\begin{equation*}
H_{1,{\rm c}}\otimes_{{\cal K}_{Y/S}}^LH_{2,{\rm c}} 
\os{\sim}{\lo} H_{3,{\rm c}}.
\tag{12.21.1}\label{eqn:flcptkc}
\end{equation*}
\par
$(2)$ The isomorphism {\rm (\ref{eqn:flcptkc})} 
is compatible 
with the base change isomorphism.
\end{theo}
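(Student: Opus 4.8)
The plan is to deduce the K\"{u}nneth formula for the compact-support log convergent cohomology from the corresponding statement in the log crystalline setting, exactly the way the other theorems in this section (e.g.\ (\ref{theo:ssd}), (\ref{theo:kcrsp}), (\ref{theo:stcom})) are proved: first reduce to $\pi{\cal V}=p{\cal V}$ using (\ref{eqn:coazzpd}) (more precisely its compact-support analogue via (\ref{eqn:ziccp})/(\ref{eqn:covzpd})), then invoke the comparison isomorphism (\ref{theo:cic}) to identify $(E_{{\rm isozar},c}({\cal K}_{(X_i,D_i\cup Z_i)/S}),P^{D_i})$ with $(E_{{\rm zar},c}({\cal O}_{(X_i,D_i\cup Z_i)/S}),P^{D_i})\otimes_{\mab Z}{\mab Q}$, and finally transport the log crystalline K\"{u}nneth isomorphism constructed in \cite[(5.10)]{nh3} (or, for the constant case, the preweight-filtered K\"{u}nneth formula of \cite{nh2}) through this identification. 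The statement of (\ref{theo:kcsf}) is phrased at the level of convergent topoi rather than zariskian topoi, so I would first lift the comparison isomorphism to the convergent level: $Rf_{(X_i,Z_i){\rm conv}*}$ applied to $(E^{\log,Z_i}_{{\rm conv},{\rm c}},P^{D_i}_{\rm c})$ agrees, after $u^{\rm conv}_{(X_i,Z_i)/S*}$, with the isozariskian version by (\ref{prop:cdfza}) and the definition of $(E_{{\rm isozar},c},P^D_{\rm c})$, and the log crystalline side behaves the same way by \cite[(2.11)]{nh2}.

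First I would set up the three complexes $H_{i,{\rm c}}$ for $i=1,2,3$ and the two natural cup-product-type maps: the external tensor map on the level of the explicit filtered complexes $(E_{{\rm conv},c}({\cal K}_{(X_i,D_i\cup Z_i)/S}),P^{D_i}_{\rm c})$ coming from wedge product of log differential forms (the same formalism that produces the filtered tensor structure in \cite[(5.10)]{nh3}), followed by $Rf_{(X_3,Z_3){\rm conv}*}$ and the Leray–type comparison of $Rf_{(X_1,Z_1){\rm conv}*}\otimes^L Rf_{(X_2,Z_2){\rm conv}*}$ with $Rf_{(X_3,Z_3){\rm conv}*}$ of the external product. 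The construction of this morphism is routine once one uses the identification ${\cal I}^{D_3,{\rm conv}}_{(X_3,D_3\cup Z_3)/S}\simeq {\cal I}^{D_1}\boxtimes {\cal I}^{D_2}$ on the convergent topos of the product, which follows from (\ref{lemm:risoc}) applied componentwise and the compatibility of the ideal sheaves ${\cal I}^{D,{\rm conv}}$ under the closed immersions and projections $X_3\to X_1, X_3\to X_2$. Then, to prove it is an isomorphism, I would reduce mod torsion, reduce to $\pi=p$, and apply (\ref{theo:cic}) to all three $H_{i,{\rm c}}$, thereby reducing to the log crystalline K\"{u}nneth isomorphism which is already known; one must check that the cup-product maps match under (\ref{theo:cic}), which is a diagram-chase using that the comparison morphism (\ref{eqn:dyt}) is multiplicative with respect to the algebra structures on $\Om^{\bul}$, exactly as in the proof of (\ref{theo:cic}).

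Part (2), compatibility with base change, then follows by combining (\ref{prop:bccsic}) (which gives the compact-support base change isomorphism and its compatibility with the vanishing cycle complex) with the naturality of all the maps involved; concretely, one forms the cube whose two faces are the K\"{u}nneth isomorphisms for $Y$ and $Y'$ and whose side faces are the base change isomorphisms of (\ref{eqn:18.10.?})/(\ref{eqn:18.10.??}) for $f_1, f_2, f_3$, and checks that it commutes — this reduces again, via (\ref{theo:cic}) and the functoriality stated in (\ref{prop:ovfcc}), to the corresponding compatibility in the log crystalline case, which is part of \cite[(5.10)]{nh3}.

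The main obstacle I anticipate is not the reduction itself but verifying the multiplicativity of the comparison morphism (\ref{eqn:dyt}) at the level of the compact-support subcomplexes ${\cal I}^{D,{\rm conv}}\subset {\cal K}$: one needs that the external product of ideal-sheaf-valued log de Rham complexes on the universal enlargements of $X_1$ and $X_2$ maps compatibly (under (\ref{eqn:dyt})) to that of the log PD-envelopes, and that this is compatible with the residue morphisms that compute ${\rm gr}^{P^D}_k$ — i.e.\ that the K\"{u}nneth map respects the weight filtrations $P^{D_i}_{\rm c}$ strictly enough to induce the graded-piece isomorphism. This is essentially bookkeeping with the Poincar\'{e} residue isomorphisms of \S\ref{sec:lcs} applied to products $D^{(k_1)}_1\times D^{(k_2)}_2$, together with the combinatorial identity $\vp^{(k_1)}_{\rm conv}(D_1/S;Z_1)\otimes \vp^{(k_2)}_{\rm conv}(D_2/S;Z_2)\simeq \vp^{(k_1+k_2)}_{\rm conv}(D_3/S;Z_3)$ on the appropriate components; all of it parallels \cite{nh2}, \cite{nh3}, so I would state it and refer to those computations rather than reproduce them.
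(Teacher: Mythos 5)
Your proposal is correct and follows essentially the same route as the paper: reduce to $\pi{\cal V}=p{\cal V}$ via (\ref{eqn:coazzpd}), apply the comparison theorem (\ref{theo:cic}), and transport the known log crystalline compact-support K\"{u}nneth formula (the precise reference is \cite[(2.11.19)]{nh2} rather than \cite[(5.10)]{nh3}, which treats the non-compact simplicial case). The additional construction of the convergent-level cup product and the multiplicativity checks you outline are not carried out in the paper, which simply defines the isomorphism by transport of structure through (\ref{theo:cic}); your extra verifications are harmless but not needed for the argument as given.
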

\begin{proof}
By (\ref{eqn:coazzpd}) we may assume  that $\pi{\cal V}= p{\cal V}$. 
This follows from (\ref{theo:cic}) and \cite[(2.11.19)]{nh2}. 
\end{proof}

\begin{theo}[{\bf Poincar\'{e} duality}]\label{theo:pdci} 
Assume that $X/S$ is projective and that 
the relative dimension of $X/S$ is 
of pure dimension $d$.
Then there exists a perfect pairing 
\begin{equation*}
R^hf^{\rm conv}_{(X,D)/S*,{\rm c}}({\cal K}_{(X,D)/S}) 
\otimes R^{2d-h}f^{\rm conv}_{(X,D)/S*}({\cal K}_{(X,D)/S})
\lo {\cal K}_S(-d),
\tag{12.22.1}\label{eqn:pdpa}
\end{equation*}
which is strictly compatible 
with the weight filtration. That is, 
the natural morphism 
\begin{equation*}
R^hf_{(X,D)/S*,{\rm c}}({\cal K}_{(X,D)/S})
\lo {\cal  H}{\it om}_{{\cal K}_{S}}
(R^{2d-h}f_{(X,D)/S*}({\cal K}_{(X,D)/S}), {\cal K}_{S}(-d))
\tag{12.22.2}\label{eqn:pdism}
\end{equation*} 
is an isomorphism of 
weight-filtered convergent $F$-isocrystals on 
$S/V$.
\end{theo}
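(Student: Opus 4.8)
The plan is to deduce the filtered Poincar\'{e} duality (\ref{theo:pdci}) from the corresponding statement in the log crystalline case (which is \cite[(2.11.20)]{nh2}, or the analogous statement there), using the comparison theorems established in this paper. More precisely, the key inputs are: the comparison isomorphism (\ref{eqn:ccomp}) for the weight-filtered vanishing cycle isozariskian complex with compact support; the comparison isomorphism (\ref{eqn:uec}) (equivalently (\ref{eqn:crs}) together with (\ref{theo:wtvsca})) for the weight-filtered isozariskian complex without compact support; and the comparison of the log convergent cohomology sheaves with the rationalized log crystalline cohomology sheaves, i.e.~(\ref{theo:hct}) applied to $(X,D)/S$ and to $(X,Z)/S$ with $Z=\emptyset$, together with (\ref{coro:cszdes}) identifying $E_{{\rm isozar},c}({\cal K}_{(X,D)/S})$ with $Ru^{\rm conv}_{(X,D)/S*}({\cal I}^{D,{\rm conv}}_{(X,D)/S})$.

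First I would construct the pairing (\ref{eqn:pdpa}). Since $\pi{\cal V}$ may not equal $p{\cal V}$, I would first reduce to the case $\pi{\cal V}=p{\cal V}$ by invariance: by (\ref{prop:iekp}) and (\ref{eqn:coazzpd}) all the objects in sight are unchanged under replacing $\pi$ by $p$ (here $X/S_1$ and $X\times_{S_1}S'_1$ have the same log convergent cohomology by (\ref{prop:toi})). Once $\pi{\cal V}=p{\cal V}$, one has on the log crystalline side a cup-product pairing between $R^hf^{\rm crys}_{(X,D)/S*,{\rm c}}({\cal O}_{(X,D)/S})\otimes_{\mab Z}{\mab Q}$ and $R^{2d-h}f^{\rm crys}_{(X,D)/S*}({\cal O}_{(X,D)/S})\otimes_{\mab Z}{\mab Q}$ landing in ${\cal K}_S(-d)$, which is perfect and strictly compatible with the weight filtrations by the log crystalline Poincar\'{e} duality of \cite{nh2} (the projectivity and pure relative dimension $d$ hypotheses are exactly what is needed there). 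Transporting this pairing through the comparison isomorphisms (\ref{eqn:ccomp}) on the compact-support side and (\ref{eqn:uec}) on the ordinary side, and using (\ref{coro:cszdes}) and the identification $Rf^{\rm conv}_{(X,D)/S*}=Rf_*Ru^{\rm conv}_{(X,D)/S*}$, yields the desired pairing (\ref{eqn:pdpa}); its perfectness and the strict compatibility with the weight filtration follow because these properties are preserved under the comparison isomorphisms, which are filtered isomorphisms in ${\rm D}^+{\rm F}(f^{-1}({\cal K}_S))$.

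The remaining point is to check that the pairing so defined is intrinsic — i.e.~independent of the auxiliary choices of affine coverings and simplicial immersions — and is a morphism of weight-filtered convergent $F$-isocrystals on $S/{\cal V}$, not merely of filtered ${\cal K}_S$-modules. Intrinsicality follows from the functoriality built into (\ref{theo:cfi}), (\ref{theo:excop}), (\ref{coro:ccco}) and (\ref{theo:hct}); the $F$-isocrystal structure and the Frobenius-compatibility come from the construction of the Frobenius action in \S\ref{sec:fpw} (see the discussion around (\ref{ali:wtfapwt})) together with the fact, already used in (\ref{theo:hct})'s proof via Tsuzuki's functor, that the comparison maps are compatible with the Frobenius endomorphisms on both the convergent and the crystalline sides. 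Concretely, one would run the argument for the relative Frobenius $F\colon(X,D)\lo(X',D')$ over $F_{S_1}$ and observe that the pairing intertwines the Frobenius actions up to the Tate twist, which is precisely the statement that (\ref{eqn:pdism}) is an isomorphism of weight-filtered convergent $F$-isocrystals.

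The main obstacle I anticipate is the careful bookkeeping needed to ensure that the cup-product pairing on the log crystalline side, which is itself defined via the weight-filtered vanishing cycle complexes $(E_{\rm zar}({\cal O}_{(X,D)/S}),P^D)$ and $(E_{\rm zar,c}({\cal O}_{(X,D)/S}),P^D)$ and the trace map, matches up under the comparison isomorphisms with a pairing that can be described purely convergently; in other words, one must verify that the comparison isomorphisms (\ref{eqn:ccomp}), (\ref{eqn:uec}) are compatible with cup products and with the trace morphism $Rf^{\rm crys}_{X/S*}({\cal O}_{X/S})[2d](d)\lo {\cal O}_S$ (rationalized). This is the analogue of checking multiplicativity and trace-compatibility of Shiho's comparison isomorphism, and while it is expected to hold, it requires unwinding the Thom-Whitney/linearization descriptions on both sides and matching the explicit de Rham representatives in (\ref{eqn:czzpd}) and (\ref{eqn:cozzpd}); everything else is a formal consequence of results already in the excerpt.
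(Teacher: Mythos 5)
Your proposal is correct and follows essentially the same route as the paper: the paper's proof reduces to $\pi{\cal V}=p{\cal V}$ via (\ref{eqn:coazzpd}) and then deduces the statement from the comparison theorems (\ref{coro:ccco}) and (\ref{theo:cic}) together with the log crystalline Poincar\'{e} duality of \cite{nh2}. Your closing concern about cup-product and trace compatibility of the comparison isomorphisms is a legitimate point that the paper's two-line proof leaves implicit rather than a divergence in strategy.
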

\begin{proof}
By (\ref{eqn:coazzpd}) we may assume  that $\pi{\cal V}= p{\cal V}$. 
Now this theorem follows from 
the comparison theorems (\ref{coro:ccco}), (\ref{theo:cic}) and 
\cite[(2.19.1)]{nh2}.
\end{proof}

\bigskip
\bigskip
\parno
Yukiyoshi Nakkajima 
\parno
Department of Mathematics,
Tokyo Denki University,
5 Asahi-cho Senju Adachi-ku,
Tokyo 120--8551, Japan.
\parno
{\it E-mail address\/}: nakayuki@cck.dendai.ac.jp
\bigskip
\bigskip
\parno
Atsushi Shiho
\parno
Graduate School of Mathematical Sciences, 
University of Tokyo,
3-8-1, Komaba Meguro-ku Tokyo 153--8914, Japan. 
\parno
{\it E-mail address\/}: shiho@ms.u-tokyo.ac.jp
\end{document}